\documentclass[11pt, reqno]{amsart}
\usepackage{txfonts}
\usepackage{amsmath,amssymb,amsthm,mathrsfs,enumerate,bm,xcolor,multirow,pbox}
\usepackage{graphicx,color,framed,tikz,caption,subcaption}
\usepackage{enumitem}
\setlist{leftmargin=9mm}
\usepackage[colorlinks,linkcolor=black,citecolor=black,urlcolor=black]{hyperref}
\allowdisplaybreaks[4]
\numberwithin{equation}{section}
\newcommand{\N}{\mathbb{N}}
\newcommand{\R}{\mathbb{R}}

\newcommand{\pnorm}[2]{\lVert #1\rVert_{#2}}
\newcommand{\bigpnorm}[2]{\big\lVert#1\big\rVert_{#2}}
\newcommand{\biggpnorm}[2]{\bigg\lVert#1\bigg\rVert_{#2}}
\newcommand{\abs}[1]{\lvert#1\rvert}
\newcommand{\bigabs}[1]{\big\lvert#1\big\rvert}
\newcommand{\biggabs}[1]{\bigg\lvert#1\bigg\rvert}
\newcommand{\iprod}[2]{\langle#1,#2\rangle}
\newcommand{\bigiprod}[2]{\big\langle#1,#2\big\rangle}
\newcommand{\biggiprod}[2]{\bigg\langle#1,#2\bigg\rangle}

\renewcommand{\epsilon}{\varepsilon}

\renewcommand{\d}[1]{\mathrm{d}#1}

\newcommand{\smallop}{\mathfrak{o}_{\mathbf{P}}}

\newcommand{\smallo}{\mathfrak{o}}
\newcommand{\bigo}{\mathcal{O}}

\renewcommand{\hat}{\widehat}
\renewcommand{\tilde}{\widetilde}

\DeclareMathOperator{\E}{\mathbb{E}}
\DeclareMathOperator{\Prob}{\mathbb{P}}

\DeclareMathOperator{\tr}{tr}
\DeclareMathOperator{\var}{Var}
\DeclareMathOperator{\cov}{Cov}

\DeclareMathOperator{\op}{op}

\DeclareMathOperator{\dof}{\mathsf{dof}}
\DeclareMathOperator{\err}{\mathsf{err}}
\DeclareMathOperator{\seq}{\mathsf{seq}}

\DeclareMathOperator{\prox}{\mathsf{prox}}

\DeclareMathOperator{\res}{\mathsf{res}}
\DeclareMathOperator{\pred}{\mathsf{pred}}
\DeclareMathOperator{\est}{\mathsf{est}}
\DeclareMathOperator{\ins}{\mathsf{in}}
\DeclareMathOperator{\SNR}{\mathsf{SNR}}

\DeclareMathOperator{\CV}{\mathsf{CV}}
\DeclareMathOperator{\GCV}{\mathsf{GCV}}
\DeclareMathOperator{\dR}{\mathsf{dR}}

\DeclareMathOperator*{\argmax}{arg\,max\,}
\DeclareMathOperator*{\argmin}{arg\,min\,}


\newcommand{\beq}{\begin{equation}}
\newcommand{\eeq}{\end{equation}}
\newcommand{\beqa}{\begin{equation} \begin{aligned}}
\newcommand{\eeqa}{\end{aligned} \end{equation}}
\newcommand{\beqas}{\begin{equation*} \begin{aligned}}
\newcommand{\eeqas}{\end{aligned} \end{equation*}}

\newcommand{\bit}{\begin{itemize}}
	\newcommand{\eit}{\end{itemize}}
\newcommand{\bmat}{\begin{bmatrix}}
	\newcommand{\emat}{\end{bmatrix}}

\theoremstyle{definition}\newtheorem{problem}{Problem}[section]
\theoremstyle{definition}
\theoremstyle{remark}\newtheorem{assumption}{Assumption}

\theoremstyle{remark}\newtheorem{remark}{Remark}
\theoremstyle{definition}
\theoremstyle{plain}\newtheorem{theorem}[problem]{Theorem}
\theoremstyle{plain}
\theoremstyle{plain}\newtheorem{lemma}[problem]{Lemma}
\theoremstyle{plain}\newtheorem{proposition}[problem]{Proposition}
\theoremstyle{plain}
\theoremstyle{plain}

\AtBeginDocument{%
	\def\MR#1{}
}

\begin{document}

\title[The Distribution of Ridgeless Interpolators]{The Distribution of Ridgeless Least Squares Interpolators}

\thanks{}

\author[Q. Han]{Qiyang Han}

\address[Q. Han]{
Department of Statistics, Rutgers University, Piscataway, NJ 08854, USA.
}
\email{qh85@stat.rutgers.edu}

\author[X. Xu]{Xiaocong Xu}

\address[X. Xu]{
	Data Sciences and Operations Department, University of Southern California, Los Angeles, CA 90089, USA
}
\email{xuxiaoco@marshall.usc.edu}

\date{\today}

\keywords{comparison inequality, cross validation, minimum norm interpolator, random matrix theory, ridge regression, universality}
\subjclass[2000]{60E15, 60G15}

\begin{abstract}
The Ridgeless minimum $\ell_2$-norm interpolator in overparametrized linear regression has attracted considerable attention in recent years in both machine learning and statistics communities. While it seems to defy conventional wisdom that overfitting leads to poor prediction, recent theoretical research on its $\ell_2$-type risks reveals that its norm minimizing property induces an `implicit regularization' that helps prediction in spite of interpolation.

This paper takes a further step that aims at understanding its precise stochastic behavior as a statistical estimator. Specifically, we characterize the distribution of the Ridgeless interpolator in high dimensions, in terms of a Ridge estimator in an associated Gaussian sequence model with positive regularization, which provides a precise quantification of the prescribed implicit regularization in the most general distributional sense. Our distributional characterizations hold for general non-Gaussian random designs and extend uniformly to positively regularized Ridge estimators. 

As a direct application, we obtain a complete characterization for a general class of weighted $\ell_q$ risks of the Ridge(less) estimators that are previously only known for $q=2$ by random matrix methods. These weighted $\ell_q$ risks not only include the standard prediction and estimation errors, but also include the non-standard covariate shift settings. Our uniform characterizations further reveal a surprising feature of the commonly used generalized and $k$-fold cross-validation schemes: tuning the estimated $\ell_2$ prediction risk by these methods alone lead to simultaneous optimal $\ell_2$ in-sample, prediction and estimation risks, as well as the optimal length of debiased confidence intervals. 
\end{abstract}

\maketitle

\setcounter{tocdepth}{1}
\tableofcontents

\sloppy

\section{Introduction}

\subsection{Overview}

Consider the standard linear regression model
\begin{align}\label{eqn:model}
Y_i=X_i^\top \mu_0 + \xi_i,\quad 1\leq i\leq m,
\end{align}
where we observe i.i.d. feature vectors $X_i \in \R^{n}$ and responses $Y_i\in \R$, and $\xi_i$'s are unobservable errors. For notational simplicity, we write $X=[X_1\cdots X_m]^\top \in \R^{m\times n}$ as the design matrix that collects all the feature vectors, and $Y=(Y_1,\ldots,Y_m)^\top \in \R^m$ as the response vector. The feature vectors $X_i$'s are assumed to satisfy $\E X_1 =0$ and $\cov(X_1)=\Sigma$, and the errors satisfy $\E \xi_1=0$ and $\var(\xi_1)=\sigma_\xi^2$.  

Throughout this paper, we reserve $m$ for the sample size, and $n$ for the signal dimension. The aspect ratio $\phi$, i.e., the number of samples per dimension, is then defined as $\phi\equiv m/n$. Accordingly, we refer to $\phi^{-1}>1$ as the \emph{overparametrized regime}, and $\phi^{-1}<1$ as the \emph{underparametrized regime}.

Within the linear model (\ref{eqn:model}), the main object of interest is to recover/estimate the unknown signal $\mu_0 \in \R^n$. While a large class of regression techniques can be used for the purpose of signal recovery under various structural assumptions on $\mu_0$, here we will focus our attention on one widely used class of regression estimators, namely, the \emph{Ridge estimator} (cf. \cite{hoerl1970ridge}) with regularization $\eta>0$,
\begin{align}\label{def:ridge_est}
\hat{\mu}_{\eta} =\argmin_{\mu \in \R^n} \bigg\{\frac{1}{2n}\pnorm{Y-X\mu}{}^2+\frac{\eta}{2}\pnorm{\mu}{}^2\bigg\}=\frac{1}{n}\bigg(\frac{1}{n}X^\top X+\eta I_n\bigg)^{-1}X^\top Y,
\end{align}
and the \emph{Ridgeless estimator} (also known as the \emph{minimum-norm interpolator}), 
\begin{align}\label{def:minimum_norm_est}
\hat{\mu}_{0}=\argmin_{\mu \in \R^n}\big\{ \pnorm{\mu}{}^2:  Y=X\mu\big\}=(X^\top X)^{-}X^\top Y,
\end{align}
which is almost surely (a.s.) well-defined in the overparametrized regime $\phi^{-1}>1$. Here $A^{-}$ is the Moore-Penrose pseudo-inverse of $A$. The notation $\hat{\mu}_0$ is justified since for $\phi^{-1}>1$, $\hat{\mu}_\eta\to \hat{\mu}_0$ a.s. as $\eta\downarrow 0$.
 
From a conventional statistical point of view, the Ridgeless estimator seems far from an obviously good choice: As $\hat{\mu}_0$ perfectly interpolates the data, it is susceptible to high variability due to the widely recognized bias-variance tradeoff inherent in `optimal' statistical estimators \cite{james2021introduction,derumigny2023lower}. On the other hand, as the Ridgeless estimator $\hat{\mu}_0$ is the limit point of the gradient descent algorithm run on the squared loss in the overparametrized regime $\phi^{-1}>1$, it provides a simple yet informative test case for understanding one major enigma of modern machine learning methods: these methods typically interpolate training data perfectly; still, they enjoy good generalization properties \cite{jacot2018neural,du2019gradient,allen2019convergence,belkin2019reconciling,chizat2019lazy,zhang2021understanding}. 

Inspired by this connection, recent years have witnessed a surge of interest in understanding the behavior of the Ridgeless estimator $\hat{\mu}_0$ and its closely related Ridge estimator $\hat{\mu}_\eta$, with an exclusive focus on their prediction risks, cf. \cite{tulino2004random,elkaroui2013asymptotic,hsu2014random,dicker2016ridge,dobriban2018high,elkaroui2018impact,advani2020high,belkin2020two,muthukumar2020harmless,wu2020optimal,bartlett2020benign,bartlett2021deep,chang2021provable,koehler2021uniform,richards2021asymptotics,hastie2022surprises,tsigler2023benign,cheng2024dimension,zhou2024optimistic}. The readers are referred to \cite[Sections 1.2 \& 9]{tsigler2023benign} for a thorough review on the relation between various $\ell_2$ risk results for the Ridge(less) estimator. A unique insight from these works is the existence of `implicit regularization' within the Ridgeless interpolator $\hat{\mu}_0$, so that for certain scenarios of $(\Sigma,\mu_0)$, the prediction risk of $\hat{\mu}_0$ could be small (i.e., benign overfitting) or even optimal \cite{kobak2020optimal,hastie2022surprises,tsigler2023benign}.

Despite substantial progress in understanding the $\ell_2$ risk behavior of the Ridgeless estimator $\hat{\mu}_0$, our understanding of its stochastic behavior as a statistical estimator remains limited. This gap is particularly important if we aim to consider $\hat{\mu}_0$ also as a `good' estimator that can be applied in a broader context of statistical inference tasks, rather than merely viewing it as a theoretical proxy for modern interpolating learning algorithms.

The main goal of this paper is to advance our understanding of the precise stochastic behavior of the Ridge(less) estimator $\hat{\mu}_\eta$. We achieve this by developing a high-dimensional \emph{distributional} characterization in the so-called proportional regime where $m$ and $n$ is of the same order. This approach allows us to move beyond the exclusive focus in the existing literature on $\ell_2$-type risks of $\hat{\mu}_\eta$. As will be clear, the distributional characterization of the Ridge(less) estimator $\hat{\mu}_\eta$ not only provides a precise quantitative understanding of the `implicit regularization' phenomenon for the Ridgeless interpolator $\hat{\mu}_0$ in the most general distributional sense, but also unveils major new insights on the utility of the widely used cross-validation schemes in machine learning/statistics practice.

\subsection{Distribution of Ridge(less) estimators}

Before formally describing our high dimensional distributional characterization, it is insightful to consider the low dimensional regime $\phi^{-1}\ll 1$ where the sample size $m$ far exceeds the signal dimension $n$. In this regime, with $(\bar{\eta},\bar{\sigma}^2_\xi)\equiv (\eta,\sigma^2_\xi)/\phi$, using the closed form of (\ref{def:ridge_est}) and the fact that $m^{-1}X^\top X\approx \Sigma$, we may safely regard $
\hat{\mu}_\eta \approx \big(\Sigma+\bar{\eta} I_n\big)^{-1} \big(\Sigma \mu_0+ m^{-1}X^\top \xi\big)$. Using central limit theorem for $m^{-1}X^\top \xi\stackrel{d}{\approx} \bar{\sigma}_\xi\cdot n^{-1/2}\Sigma^{1/2}g$ where $g\sim \mathcal{N}(0,I_n)$, we have
\begin{align}\label{eqn:ridge_dist_lowd}
\hat{\mu}_\eta \stackrel{d}{\approx} (\Sigma+\bar{\eta} I_n)^{-1}\Sigma^{1/2} \big(\Sigma^{1/2}\mu_0+n^{-1/2}\cdot\bar{\sigma}_\xi g\big),\quad \phi^{-1}\ll 1.
\end{align}
A principled way to understand the above formula (\ref{eqn:ridge_dist_lowd}) is to consider an `effective regression problem' in the \emph{Gaussian sequence model}. Suppose for a given pair of $(\Sigma,\mu_0)$ and a noise level $\gamma>0$, we observe
\begin{align}\label{eqn:gaussian_seq}
y^{\seq}_{(\Sigma,\mu_0)}(\gamma) \equiv  \Sigma^{1/2}\mu_0+n^{-1/2}\cdot \gamma g,\quad g\sim \mathcal{N}(0,I_n).
\end{align}
The Ridge estimator $\hat{\mu}_{(\Sigma,\mu_0)}^{\seq}(\gamma;\tau)$ with regularization $\tau\geq 0$ in the Gaussian sequence model (\ref{eqn:gaussian_seq}) is defined as 
\begin{align}\label{def:ridge_seq}
\hat{\mu}_{(\Sigma,\mu_0)}^{\seq}(\gamma;\tau)&\equiv \argmin_{\mu \in \R^n} \bigg\{\frac{1}{2}\pnorm{\Sigma^{1/2}\mu-y_{(\Sigma,\mu_0)}^{\seq}(\gamma)}{}^2+ \frac{\tau}{2}\pnorm{\mu}{}^2\bigg\}\nonumber\\
& = (\Sigma+\tau I_n)^{-1}\Sigma^{1/2} \big(\Sigma^{1/2}\mu_0+n^{-1/2}\cdot \gamma g\big).
\end{align}
Here, the subscript $(\Sigma,\mu_0)$ emphasizes the dependence on the underlying Gaussian sequence model with covariance $\Sigma$ and signal $\mu_0$.
Comparing (\ref{eqn:ridge_dist_lowd}) and (\ref{def:ridge_seq}), it is clear that we may interpret (\ref{eqn:ridge_dist_lowd}) as $\hat{\mu}_\eta \stackrel{d}{\approx} \hat{\mu}_{(\Sigma,\mu_0)}^{\seq}(\bar{\sigma}_\xi;\bar{\eta})$. In the proportional regime $\phi^{-1}\asymp 1$, the aforementioned interpretation still applies, but a crucial modification will be needed: the pair of the (scaled) original noise and regularization $(\bar{\sigma}_\xi,\bar{\eta})$ must be replaced by a pair of `\emph{effective noise and regularization}' 
\begin{align}\label{def:gamma-tau}
    (\gamma_{\eta,\ast},\tau_{\eta,\ast}) \equiv \text{unique solution of the fixed point equation (\ref{eqn:fpe})}
\end{align}
when $\eta>0$ and when $\eta=0$ in the overparametrized regime (cf. Proposition \ref{prop:fpe_est_simplify}).

More precisely, in the overparametrized regime $\phi^{-1}>1$, under standard assumptions on (i) the design matrix $X=\Sigma^{1/2}Z$, where $Z$ consists of independent mean $0$, unit-variance and light-tailed entries, and (ii) the error vector $\xi$ with light-tailed components, we show in Theorems \ref{thm:min_norm_dist} and \ref{thm:universality_min_norm} that the distribution $\hat{\mu}_\eta$ can be characterized via $\hat{\mu}_{(\Sigma,\mu_0)}^{\seq}(\gamma_{\eta,\ast};\tau_{\eta,\ast})$ in the following sense: for any $1$-Lipschitz function $\mathsf{g}: \R^n \to \R$ and any $K>0$, with high probability,
\begin{align}\label{eqn:ridge_dist_uniform}
\sup_{\eta \in [0,K]} \bigabs{\mathsf{g}(\hat{\mu}_\eta)- \E\mathsf{g}\big( \hat{\mu}_{(\Sigma,\mu_0)}^{\seq}(\gamma_{\eta,\ast};\tau_{\eta,\ast})\big)}\approx 0.
\end{align}
A particularly important technical aspect of (\ref{eqn:ridge_dist_uniform}) is that the distributional approximation (\ref{eqn:ridge_dist_uniform}) holds uniformly down to the interpolation regime $\eta=0$ for $\phi^{-1}>1$. This uniform guarantee will prove essential in the results ahead.

Interestingly, the distributional characterization (\ref{eqn:ridge_dist_uniform}) offers a principled approach to understand the `implicit regularization' phenomenon for the Ridgeless interpolator $\hat{\mu}_0$, through the lens of its distributionally equivalent Ridge estimator $\hat{\mu}_{(\Sigma,\mu_0)}^{\seq}(\gamma_{0,\ast},\tau_{0,\ast})$ in the Gaussian sequence model (\ref{eqn:gaussian_seq}). Specifically, the prescribed implicit regularization can be directly attributed to the quantity $\tau_{0,\ast}>0$ that can be solved as the unique positive solution to the equation
\begin{align}\label{eqn:implicit_reg_intro}
\phi = \frac{1}{n} \tr\big((\Sigma+\tau_{0,\ast} I_n)^{-1}\Sigma\big).
\end{align}
While this interpretation has been suggested in the context of $\ell_2$ risks \cite{hastie2022surprises,cheng2024dimension} via a-posterior calculations, our theory (\ref{eqn:ridge_dist_uniform}) provides a formal justification for this equivalent understanding of the implicit regularization phenomenon for $\hat{\mu}_0$ via $\hat{\mu}_{(\Sigma,\mu_0)}^{\seq}$, in the most precise and general distributional sense. The readers are referred to Section \ref{subsection:intro_further_literature} for a more detailed comparison on the relation of our characterization of the implicit regularization via $\tau_{0,\ast}$ and a different line of interpretation in \cite{bartlett2020benign,bartlett2021deep,tsigler2023benign}.

\subsection{General $\ell_q$-type risk formulae}\label{subsection:intro_phase_transition}
As mentioned above, most prior works on the risk properties of Ridge(less) estimators $\hat{\mu}_\eta$ have focused exclusively on $\ell_2$-type risks, leveraging random matrix theory (RMT)  \cite{tulino2004random,elkaroui2013asymptotic,dicker2016ridge,dobriban2018high,elkaroui2018impact,advani2020high,wu2020optimal,bartlett2021deep,richards2021asymptotics,hastie2022surprises,cheng2024dimension}. This RMT approach is viable due to a direct reduction of $\ell_2$-type risks of $\hat{\mu}_\eta$ to the spectrum of $X$. In contrast, the more general $\ell_q$ risks depend not only on the spectrum but also on the structure of $X$'s singular vectors in a highly nontrivial manner; therefore, the feasibility of a similar RMT-based analysis is in question.

Our uniform distributional theory in (\ref{eqn:ridge_dist_uniform}) is strong enough to characterize all $\ell_q$ risks of the Ridge(less) estimator. Specifically, for any $q \in [1,\infty)$ and a p.s.d. matrix $\mathsf{A} \in \R^{n\times n}$, with high probability,
\begin{align}\label{eqn:intro_lq_risk}
	\frac{\pnorm{\mathsf{A}\hat{\mu}_\eta - \mu_0}{q}}{n^{-1/2}\pnorm{\mathrm{diag}\big(\Gamma_{\eta;(\Sigma,\pnorm{\mu_0}{})}^{\mathsf{A}}\big) }{q/2}^{1/2}M_q  } \approx 1,
\end{align}
where $M_q=  \E^{1/q}\abs{\mathcal{N}(0,1)}^q$ and $\Gamma_{\eta;(\Sigma,\pnorm{\mu_0}{})}^{\mathsf{A}} =  \mathsf{A}(\Sigma+\tau_{\eta,\ast} I_n)^{-1}( \tilde{\gamma}_{\eta,\ast}^2(\pnorm{\mu_0}{}) \Sigma+\tau_{\eta,\ast} ^2 \pnorm{\mu_0}{}^2 I_n)(\Sigma+\tau_{\eta,\ast} I_n)^{-1}\mathsf{A}$; see Theorem \ref{thm:lq_risk} for the precise definition of $\tilde{\gamma}_{\eta,\ast}^2(\pnorm{\mu_0}{})$ and the formal statement of the above result (\ref{eqn:intro_lq_risk}).

Beyond providing a precise characterization of all $\ell_q$ risks, the uniform nature of (\ref{eqn:ridge_dist_uniform}) also illuminates novel insights into certain global, qualitative behavior of the most commonly studied $\ell_2$ risks for finite samples. To fix notation, we define
\begin{itemize}
	\item (\emph{prediction risk}) $R^{\pred}_{(\Sigma,\mu_0)}(\eta)\equiv \pnorm{\Sigma^{1/2}(\hat{\mu}_{\eta}-\mu_0)}{}^2$,
	\item (\emph{estimation risk}) $R^{\est}_{(\Sigma,\mu_0)}(\eta)\equiv \pnorm{\hat{\mu}_{\eta}-\mu_0}{}^2$,
	\item (\emph{in-sample risk}) $R^{\ins}_{(\Sigma,\mu_0)}(\eta)\equiv n^{-1}\pnorm{X(\hat{\mu}_{\eta}-\mu_0)}{}^2$.
\end{itemize}
Using our uniform distributional characterization in (\ref{eqn:ridge_dist_uniform}), we show that for `most' $\mu_0$'s, the global optimum of $\eta \mapsto R^{\#}_{(\Sigma,\mu_0)}(\eta)$ for all $\# \in \{\pred,\est,\ins\}$ will be achieved approximately \emph{at the same point $\eta_\ast = \SNR_{\mu_0}^{-1}$} with high probability\footnote{Here $
\SNR_{\mu_0} = \pnorm{\mu_0}{}^2/\sigma_\xi^2$ is the usual notion of signal-to-noise ratio; when $\mu_0\neq 0$ and $\sigma_\xi^2=0$, we shall interpret  $\SNR_{\mu_0}^{-1}=0$.} (cf. Theorem \ref{thm:small_intep}).

It must be stressed that, for different $\# \in \{\pred,\est,\ins\}$, the empirical risk curves $\eta \mapsto R^{\#}_{(\Sigma,\mu_0)}(\eta)$ concentrate on genuinely different deterministic counterparts $\eta\mapsto \bar{R}^{\#}_{(\Sigma,\mu_0)}(\eta)$ with different mathematical expressions (cf. Theorem \ref{thm:errors_explicit}). As such, there are no apriori reasons to expect that these risk curves share approximately the same global minimum.  Remarkably, as a consequence of the approximate formulae for the deterministic risk curves $\eta\mapsto \bar{R}^{\#}_{(\Sigma,\mu_0)}(\eta)$ (cf. Theorem \ref{thm:error_rmt}), we show that the curves $\eta\mapsto \bar{R}^{\#}_{(\Sigma,\mu_0)}(\eta)$ are qualitatively similar, in that they approximately behave locally like a quadratic function centered around $\eta_\ast=\SNR_{\mu_0}^{-1}$ (cf. Proposition \ref{prop:derivative_R_simplify}), at least for `most' signal $\mu_0$'s.

\subsection{Cross-validation: optimality beyond prediction}

\begin{figure}[t]
	\begin{minipage}[t]{0.3\textwidth}
		\includegraphics[width=\textwidth]{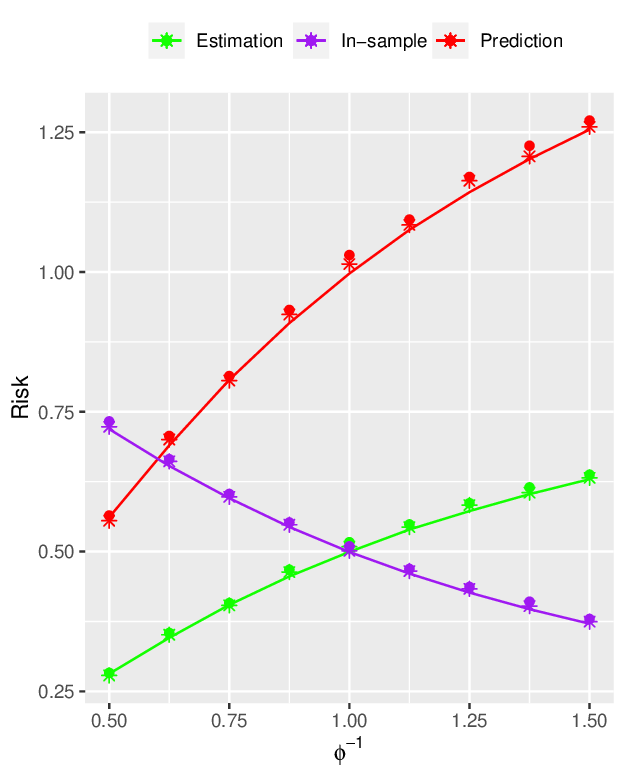}
	\end{minipage}
	\begin{minipage}[t]{0.3\textwidth}
		\includegraphics[width=\textwidth]{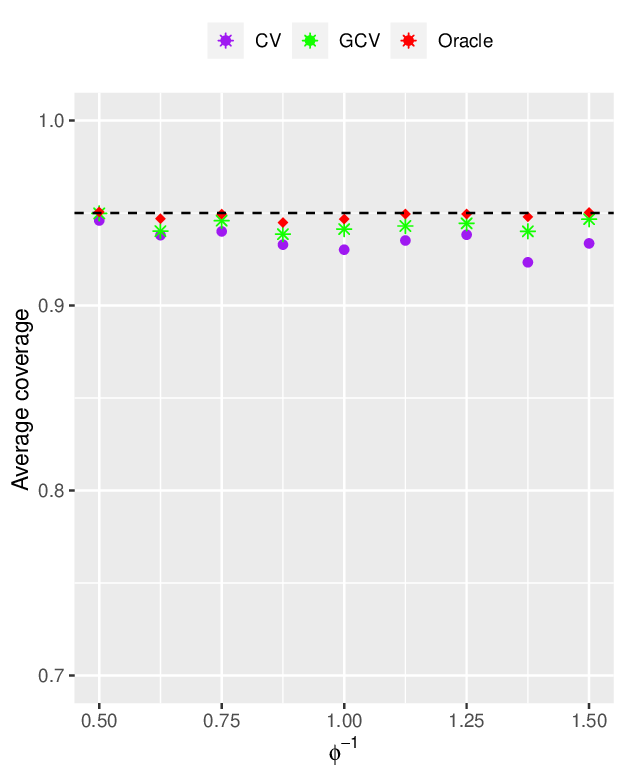}
	\end{minipage}
	\begin{minipage}[t]{0.3\textwidth}
		\includegraphics[width=\textwidth]{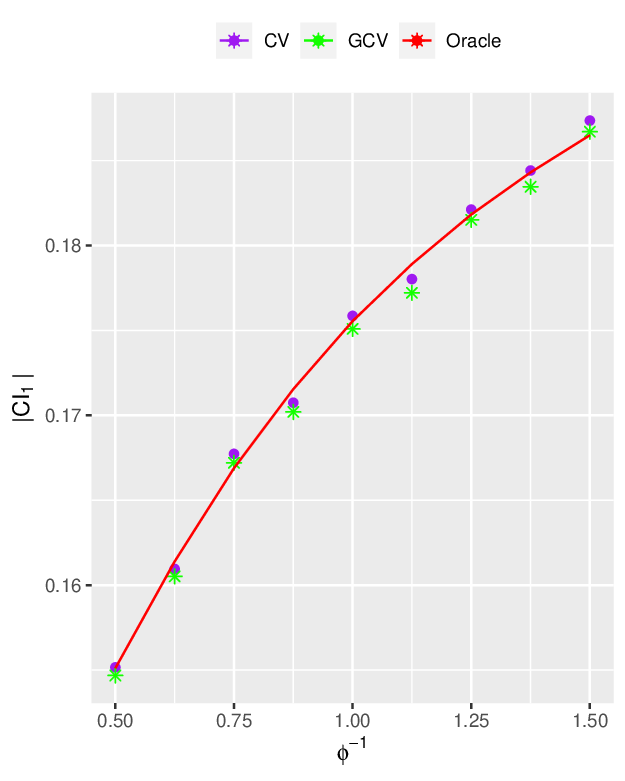}
	\end{minipage}
	\caption{\emph{Left panel}: Comparison between empirical risks and theoretical risks for $\ast=\GCV$ and $\bullet = \CV$ with $k=5$.  \emph{Middle panel}: Averaged CI coverage $\mathscr{C}^{\dR}(\hat{\eta}^{\#})$ for $\# \in \{\GCV,\CV\}$ and the oracle $\mathscr{C}^{\dR}(\eta_\ast)$. \emph{Right panel}: CI length of  $\mathrm{CI}_1(\hat{\eta}^{\#})$ for $\# \in \{\GCV,\CV\}$ and the oracle CI length. See Section \ref{section:application} for the precise definitions.}
	\label{fig:2}
\end{figure}

The discussion in Section \ref{subsection:intro_phase_transition} naturally raises the question of how one can choose the optimal regularization in a data-driven manner. Here we study two widely used adaptive tuning methods, namely,
\begin{enumerate}
	\item the generalized cross-validation scheme $\hat{\eta}^{\GCV}$, and
	\item the $k$-fold cross-validation scheme $\hat{\eta}^{\CV}$. 
\end{enumerate}
The readers are referred to (\ref{def:res_tuning}) and (\ref{def:CV_tuning}) for precise definitions and literature review of $\hat{\eta}^{\GCV},\hat{\eta}^{\CV}$ in the context of Ridge regression.

By design, both methods $\hat{\eta}^{\GCV},\hat{\eta}^{\CV}$ are intended to estimate the prediction risk, so it is natural to expect that they perform well for the task of prediction. Interestingly, the insight from Section \ref{subsection:intro_phase_transition} suggests a far broader utility of these adaptive tuning methods. Indeed, as all the empirical risk curves $\eta \mapsto R^{\#}_{(\Sigma,\mu_0)}(\eta)$ are approximately minimized at the same point $\eta_\ast = \SNR_{\mu_0}^{-1}$, it is reasonable to conjecture that  $\hat{\eta}^{\GCV},\hat{\eta}^{\CV}$ could also yield optimal performance for estimation and in-sample risks. We show in Theorems \ref{thm:tuning_res} and \ref{thm:tuning_cv} that this is indeed the case: for `most' signal $\mu_0$'s and all $\# \in \{\pred,\est,\ins\}$, with high probability,
\begin{align}\label{eqn:intro_cv}
R^{\#}_{(\Sigma,\mu_0)}(\hat{\eta}^{\GCV}), R^{\#}_{(\Sigma,\mu_0)}(\hat{\eta}^{\CV})\approx\min_{\eta \in [0,K]} R^{\#}_{(\Sigma,\mu_0)}(\eta).
\end{align}
A typical simulation for this phenomenon is reported in the left panel of Figure \ref{fig:2} above, where empirical risks tuned by $\hat{\eta}^{\GCV}$ (in $\ast$) and $\hat{\eta}^{\CV}$ (in $\bullet$) achieve optimal theoretical risks (in solid lines) for estimation and in-sample risks as well.

Even more surprisingly, the optimality of $\hat{\eta}^{\GCV},\hat{\eta}^{\CV}$ extends to the much more challenging task of statistical inference. In fact, we show in Theorem \ref{thm:CI_cv} that within the so-called debiased Ridge scheme, these two adaptive tuning methods $\hat{\eta}^{\GCV},\hat{\eta}^{\CV}$ yield an asymptotically valid construction of confidence intervals for the coordinates of $\mu_0$ with the shortest possible length. This is numerically validated in the middle and right panels of Figure \ref{fig:2}.

To the best of our knowledge, theoretical optimality properties for the  cross-validation schemes beyond the realm of prediction accuracy has not been established in the literature, either for Ridge regression or for other regularized regression estimators. 

On the other hand, in the related Lasso setting, some numerical evidence for the broader utility of cross-validation and other adaptive tuning methods is reported in \cite[Figure 1]{miolane2021distribution}. There it is shown that  the SURE method, which is designed to tune in-sample risk, nearly matches the performance of 
$k$-fold cross-validation in prediction tasks, despite not being expected to perform well in prediction apriori. Our findings here in the context of Ridge regression can therefore be viewed as a first step toward understanding the broader potential of cross validation and other adaptive tuning schemes for a wider range of statistical inference problems.

\subsection{Further literature}\label{subsection:intro_further_literature}

\subsubsection{Relation to mean-field asymptotics}

Our distributional theory (\ref{eqn:ridge_dist_uniform}) for the Ridge(less) estimator is closely related to a recent line of research that examines the mean-field behavior of statistical estimators in the proportional regime $m\asymp n$, see, e.g. \cite{bayati2012lasso,donoho2016high,thrampoulidis2018precise,sur2019modern,li2021minimum,miolane2021distribution,liang2022precise,celentano2023lasso,han2023universality} for an incomplete list and many more references can be found therein. 

A key feature of this line of works is the use of a simplified `effective' regression problem to understand the complicated behavior of the original statistical estimator. For instance, in the closely related Lasso setting, the `equivalence' between the Lasso estimator $\hat{\mu}_\eta^{\mathsf{L}}$ in the linear model and a corresponding Lasso estimator in the sequence model $\hat{\mu}^{\seq,\mathsf{L}}_{\Sigma,\mu_0}(\gamma_{\eta,\ast}^{\mathsf{L}}, \tau_{\eta,\ast}^{\mathsf{L}})$ has been established under Gaussian designs with positive regularization. This equivalence was first shown for $\ell_2$-type risks in \cite{bayati2012lasso}, and later in the distributional sense akin to (\ref{eqn:ridge_dist_uniform}) in \cite{miolane2021distribution,celentano2023lasso}. Such equivalence for Lasso is further extended to the interpolating regime in \cite{li2021minimum} for the $\ell_2$ risk under a standard Gaussian isotropic design. Our theory (\ref{eqn:ridge_dist_uniform}) here can thus be placed into a similar position as the progress made in  \cite{miolane2021distribution,celentano2023lasso} over the Lasso risk characterization in \cite{bayati2012lasso}, but now in the context of Ridge(less) estimator beyond a purely $\ell_2$ risk as obtained in the references cited above.

While we have developed our distributional theory (\ref{eqn:ridge_dist_uniform}) primarily in the proportional regime $m\asymp n$, we conjecture that our theory (\ref{eqn:ridge_dist_uniform}) remains valid in the full nonparametric regime in which the $\ell_2$ risk of the Ridge(less) estimator exceeds $\bigo(m^{-1/2})$. Some progress in this direction is made in \cite{han2023noisy} in a related context of convex-constrained least squares estimator under a Gaussian design. 

\subsubsection{Relation to existing interpretation of `implicit regularization'}
A separate line of research  \cite{bartlett2020benign,bartlett2021deep,tsigler2023benign} offers a different perspective on the implicit regularization phenomenon within the Ridgeless interpolator $\hat{\mu}_0$. Specifically, by writing $X = [X_{\leq k}, X_{>k}]$ with `effective dimension' $k$ and expressing the Ridgeless interpolator as $\hat{\mu}_0 = X^\top (X_{\leq k}X_{\leq k}^\top + X_{> k}X_{> k}^\top )^{-1}Y$, this line of research identifies covariance structures $\Sigma$ for which $X_{> k}X_{> k}^\top$ scales proportionally to the identity matrix (in a suitable sense). This implies that  $X_{> k}X_{> k}^\top$ qualitatively plays the same role as if positive Ridge regularization were applied to the effective data $X_{\leq k}$. In particular, this line of theory suggests that the prediction risk of $\hat{\mu}_0$ can indeed vanish (i.e., benign overfitting), provided that the eigen-decay of $\Sigma$ is neither too fast nor too slow. 

While this approach is insightful, it falls short of providing an \emph{exact} understanding for the emergence of the implicit regularization phenomenon. This is so, as this approach seeks sufficient conditions for $X_{>k}X_{>k}^\top\asymp I$, and produces risk bounds for $\hat{\mu}_0$ modulo unspecified multiplicative constants. In contrast, our characterization of the implicit regularization via  (\ref{eqn:implicit_reg_intro}) is exact up to the leading constant order, and is susceptible to be also exact in other regimes as well; see \cite{cheng2024dimension} for some recent partial progress along this line. 

Furthermore, both the approaches of \cite{bartlett2020benign,bartlett2021deep,tsigler2023benign} and \cite{cheng2024dimension} rely heavily on the closed form of the Ridgeless interpolator and thus do not generalize to more general interpolators. In contrast, a significant technical advantage of our characterization of implicit regularization via (\ref{eqn:implicit_reg_intro}) lies in its natural connection to the mean-field theory for general regression estimators. This suggests a general paradigm to quantify the implicit regularization for a large class of interpolators via mean-field asymptotics. For instance, the minimum $\ell_1$-norm interpolator studied in \cite{li2021minimum} demonstrates implicit regularization in the prediction risk that can be characterized via $\tau_{0,\ast}^{\mathsf{L}}$ as the `Lasso' version of (\ref{eqn:implicit_reg_intro}). Our approach developed here for Ridgeless interpolator is expected to be useful for quantifying the implicit regularization phenomenon for a more general class of interpolators.

\subsection{Organization}
The rest of the paper is organized as follows. In Section \ref{section:main_results}, we present our main results on the distributional characterizations (\ref{eqn:ridge_dist_uniform}) of the Ridge(less) estimator $\hat{\mu}_\eta$. In Section \ref{sec:risk_asymptotic}, we provide a number of approximate $\ell_q$ risk formulae, and derive the optimal regularization for $\ell_2$ risks. In Section \ref{section:application}, we give a formal validation for the two cross validation schemes mentioned above, both in terms of (\ref{eqn:intro_cv}) and statistical inference via the debiased Ridge estimator. Due to the high technicalities involved in the proof of (\ref{eqn:ridge_dist_uniform}), a proof outline will be given in Section \ref{section:proof_outline}. All the proof details are then presented in Appendices \ref{section:preliminaries} to \ref{section:proof_application}.

\subsection{Notation}
For any positive integer $n$, let $[n]=[1:n]$ denote the set $\{1,\ldots,n\}$. For $a,b \in \R$, $a\vee b\equiv \max\{a,b\}$ and $a\wedge b\equiv\min\{a,b\}$. For $a \in \R$, let $a_\pm \equiv (\pm a)\vee 0$. For $x \in \R^n$, let $\pnorm{x}{p}$ denote its $p$-norm $(0\leq p\leq \infty)$, and $B_{n;p}(R)\equiv \{x \in \R^n: \pnorm{x}{p}\leq R\}$. We simply write $\pnorm{x}{}\equiv\pnorm{x}{2}$ and $B_n(R)\equiv B_{n;2}(R)$. For a matrix $M \in \R^{m\times n}$, let $\pnorm{M}{\op},\pnorm{M}{F}$ denote the spectral and Frobenius norm of $M$, respectively. $I_n$ is reserved for an $n\times n$ identity matrix, written simply as $I$ (in the proofs) if no confusion arises. For a square matrix $M\in \R^{n\times n}$, we let $\mathrm{diag}(M)\equiv (M_{ii})_{i=1}^n \in \R^n$.

We use $C_{x}$ to denote a generic constant that depends only on $x$, whose numeric value may change from line to line unless otherwise specified. $a\lesssim_{x} b$ and $a\gtrsim_x b$ mean $a\leq C_x b$ and $a\geq C_x b$, abbreviated as $a=\bigo_x(b), a=\Omega_x(b)$ respectively;  $a\asymp_x b$ means $a\lesssim_{x} b$ and $a\gtrsim_x b$, abbreviated as $a=\Theta_x(b)$. $\bigo$ and $\smallo$ (resp. $\mathcal{O}_{\mathbf{P}}$ and $\mathfrak{o}_{\mathbf{P}}$) denote the usual big and small O notation (resp. in probability). For a random variable $X$, we use $\Prob_X,\E_X$ (resp. $\Prob^X,\E^X$) to indicate that the probability and expectation are taken with respect to $X$ (resp. conditional on $X$).

For a measurable map $f:\R^n \to \R$, let $\pnorm{f}{\mathrm{Lip}}\equiv \sup_{x\neq y} \abs{f(x)-f(y)}/\pnorm{x-y}{}$. $f$ is called \emph{$L$-Lipschitz} iff $\pnorm{f}{\mathrm{Lip}}\leq L$. For a proper, closed convex function $f$ defined on $\R^n$, its \emph{Moreau envelope} $\mathsf{e}_f(\cdot;\tau)$ and \emph{proximal operator} $\prox_f(\cdot;\tau)$ for any $\tau>0$ are defined by $
\mathsf{e}_f(x;\tau)\equiv \min_{z\in \R^n}\big\{\frac{1}{2\tau}\pnorm{x-z}{}^2+f(z)\big\}$ and $\prox_f(x;\tau)\equiv \argmin_{z \in \R^n} \big\{\frac{1}{2\tau}\pnorm{x-z}{}^2+f(z)\big\}$. 

Throughout this paper, for an invertible covariance matrix $\Sigma \in \R^{n\times n}$, we write $
\mathcal{H}_\Sigma\equiv \tr(\Sigma^{-1})/n$
as the harmonic mean of the eigenvalues of $\Sigma$.

\section{Distribution of Ridge(less) estimators}\label{section:main_results}

\subsection{Some definitions}

For $K>1$, let
\begin{align}\label{def:Xik}
\Xi_K\equiv [\bm{1}_{\phi^{-1}< 1+1/K}K^{-1},K].
\end{align}
This notation will be used throughout the paper for uniform-in-$\eta$ statements. In particular, in the overparametrized regime $\phi^{-1}\geq 1+1/K$, we have $\Xi_K=[0,K]$. 

Next, for $\gamma,\tau\geq 0$, recall $\hat{\mu}_{(\Sigma,\mu_0)}^{\seq}(\gamma;\tau)$ in (\ref{def:ridge_seq}), and we define its associated estimation error $\err_{(\Sigma,\mu_0)}(\gamma;\tau)$ and the degrees-of-freedom $\dof_{(\Sigma,\mu_0)}(\gamma;\tau)$ as
\begin{align}
\begin{cases}
\err_{(\Sigma,\mu_0)}(\gamma;\tau)\equiv \pnorm{\Sigma^{1/2}\big(\hat{\mu}_{(\Sigma,\mu_0)}^{\seq}(\gamma;\tau)-\mu_0\big)}{}^2,\nonumber\\
\dof_{(\Sigma,\mu_0)}(\gamma;\tau)\equiv \bigiprod{\frac{\gamma g}{\sqrt{n}}}{ \Sigma^{1/2}\big(\hat{\mu}_{(\Sigma,\mu_0)}^{\seq}(\gamma;\tau)-\mu_0\big) }.
\end{cases}
\end{align}
The $\dof_{(\Sigma,\mu_0)}(\gamma;\tau)$ defined above is naturally related to the usual notion of degrees-of-freedom (cf. \cite{stein1981estimation,efron2004estimation}) for $\hat{\mu}_{(\Sigma,\mu_0)}^{\seq}(\gamma;\tau)$, in the sense that $
\mathrm{df}\big(\hat{\mu}_{(\Sigma,\mu_0)}^{\seq}(\gamma;\tau)\big) \equiv \sum_{j=1}^n \frac{1}{\gamma^2/n}\cov\big( (\Sigma^{1/2}\hat{\mu}_{(\Sigma,\mu_0)}^{\seq})_j, y_{(\Sigma,\mu_0),j}^{\seq} \big) = \frac{n}{\gamma^2}\E \dof_{(\Sigma,\mu_0)}(\gamma;\tau)$.

\subsection{Working assumptions}

\begin{assumption}\label{assump:design}
$X=Z\Sigma^{1/2}$, where (i) $Z\in \R^{m\times n}$ has independent, mean-zero, unit variance, uniformly sub-gaussian entries, and (ii) $\Sigma \in \R^{n\times n}$ is an invertible covariance matrix with eigenvalues $\lambda_1\geq \cdots\geq \lambda_n>0$.
\end{assumption}
Here `uniform sub-gaussianity'  means $\sup_{i \in [m],j\in [n]} \pnorm{Z_{ij}}{\psi_2}\leq C$ for some universal $C>0$, where $\psi_2$ is the Orlicz 2-norm (cf. \cite[Section 2.2, pp. 95]{van1996weak}).

We shall often write the Gaussian design as $Z=G$, where $G \in \R^{m\times n}$ consists of i.i.d. $\mathcal{N}(0,1)$ entries.

\begin{assumption}\label{assump:noise}
$\xi=\sigma_\xi\cdot \xi_0$ for some $\xi_0$ with i.i.d. mean zero, unit variance and uniform sub-gaussian entries.
\end{assumption}

\begin{remark}
The requirement on the noise level $\sigma_\xi^2$ will be specified in concrete 
results below. We assume sub-gaussian noise for simplicity, but our proofs use it only through the high probability events in Section \ref{sec:hpb-events}. With easy modifications, all results extend to more general noise distributions, including certain heavy-tailed or weakly dependent cases for which these events still hold.
\end{remark}

\subsection{The fixed point equation}

Fix $\eta\geq 0$. Consider the following fixed point equation in $(\gamma,\tau)$:
\begin{align}\label{eqn:fpe}
\begin{cases}
\phi \gamma^2  = \sigma_\xi^2+ \E\err_{(\Sigma,\mu_0)}(\gamma;\tau),\\
\phi-\frac{\eta}{\tau}  = \frac{1}{n}\tr\big((\Sigma+\tau I_n)^{-1}\Sigma\big) =\frac{1}{\gamma^2}\E\dof_{(\Sigma,\mu_0)}(\gamma;\tau).
\end{cases}
\end{align}

Fixed point equations of the type described above have appeared in the general mean-field theory for high dimensional regularized least squares estimators (LSEs), see, e.g. \cite{bayati2012lasso,bu2021algorithmic,li2021minimum,han2023noisy,celentano2023lasso} for a sample of this type of equations in the i.i.d. sampling setting, and \cite{bao2025leave}  in the i.n.i.d. sampling setting. A common theme of these works characterizes the behavior of the regularized LSE in the linear model---at various levels of generality---via a regularized LSE in the equivalent sequence model, whose `effective noise' and `effective regularization' are determined by the solution pair to the fixed point equation.
	
In the context of Ridge regression, the form of the fixed point equation (\ref{eqn:fpe}) appeared in \cite{bartlett2021deep,cheng2024dimension} for the purpose of characterizing $\ell_2$ risks for the Ridge(less) estimator $\hat{\mu}_\eta$. It is now well understood that for the purpose of distributional characterizations of $\hat{\mu}_\eta$, further stability properties for the solution pair to the fixed point equation will be needed \cite{miolane2021distribution,celentano2023lasso,han2023noisy}. We establish these properties for the solution to (\ref{eqn:fpe}) in the following proposition. Recall $\Xi_K$ from (\ref{def:Xik}).

\begin{proposition}\label{prop:fpe_est_simplify}
	Recall $\mathcal{H}_\Sigma=\tr(\Sigma^{-1})/n$. The following hold.
	\begin{enumerate}
		\item The fixed point equation (\ref{eqn:fpe}) admits a unique solution $(\gamma_{\eta,\ast},\tau_{\eta,\ast}) \in (0,\infty)^2$, for all $(m,n)\in \N^2$ when $\eta>0$ and $m<n$ when $\eta=0$. 
		\item Suppose $1/K\leq \phi^{-1}\leq K$ and $\pnorm{\Sigma}{\op} \vee \mathcal{H}_\Sigma\leq K$ for some $K>1$. Then there exists some $C=C(K)>1$ such that uniformly in $\eta \in \Xi_K$, 
		\begin{align*}
		1/C\leq \tau_{\eta,\ast} \leq C,\quad 1/C\leq (-1)^{q+1}\partial_\eta^q \tau_{\eta,\ast}\leq C, \quad q\in \{1,2\}.
		\end{align*}
		If furthermore $1/K\leq \sigma_\xi^2\leq K$ and $\pnorm{\mu_0}{}\leq K$, then uniformly in $\eta \in \Xi_K$,
		\begin{align*}
		1/C\leq \gamma_{\eta,\ast}\leq C,\quad \abs{ \partial_\eta \gamma_{\eta,\ast} }\leq C.
		\end{align*}
		\item Suppose $1/K\leq \phi^{-1}\leq K$ and $\pnorm{\Sigma}{\op} \vee \mathcal{H}_\Sigma\leq K$ for some $K>1$. Then there exists some $C=C(K)>1$ such that the following hold. For any $\epsilon \in (0,1/2]$, we may find some $\mathcal{U}_\epsilon\subset B_n(1)$ with $\mathrm{vol}(\mathcal{U}_\epsilon)/\mathrm{vol}(B_n(1))\geq 1- C \epsilon^{-1}e^{-n\epsilon^2/C}$, 
		\begin{align*}
		\sup_{\mu_0 \in \mathcal{U}_\epsilon} \sup_{\eta \in \Xi_K} \bigabs{\gamma_{\eta,\ast}^2- \tilde{\gamma}_{\eta,\ast}^2(\pnorm{\mu_0}{}) } \leq \epsilon,
		\end{align*}
		where $
		\tilde{\gamma}_{\eta,\ast}^2(\pnorm{\mu_0}{})\equiv \sigma_\xi^2 \partial_\eta \tau_{\eta,\ast}+\pnorm{\mu_0}{}^2(\tau_{\eta,\ast}-\eta\partial_\eta \tau_{\eta,\ast})>0$. When $\Sigma=I_n$, we may take $\mathcal{U}_\epsilon=B_n(1)$ and the above inequality holds with $\epsilon=0$.
	\end{enumerate}
	
\end{proposition}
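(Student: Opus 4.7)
The plan is to prove the three claims in sequence: (1) by decoupling the two equations of (2.2), (2) by implicit differentiation combined with direct spectral estimates on $\Sigma$, and (3) by recognizing $\tilde\gamma_{\eta,\ast}^2$ as the ``direction-averaged'' version of $\gamma_{\eta,\ast}^2$ and invoking Hanson--Wright-type concentration on the ball.

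For (1), the second equation of (2.2) is independent of $\gamma$ and can be written as $F(\tau) := \tau(\phi - T(\tau)) = \eta$ with $T(\tau) := \frac{1}{n}\tr((\Sigma+\tau I)^{-1}\Sigma)$; a direct calculation yields $F'(\tau) = \phi - V(\tau)$ with $V(\tau) := \frac{1}{n}\tr(\Sigma^2(\Sigma+\tau I)^{-2})$. The term-by-term inequality $V(\tau) < T(\tau)$ for $\tau>0$ shows that $F$ is strictly increasing on $[\tau_{0,\ast},\infty)$, where $\tau_{0,\ast}$ is the unique positive root of $T(\tau) = \phi$, which exists exactly when $m < n$; this gives a unique $\tau_{\eta,\ast}$ in every admissible case. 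Plugging into the first equation of (2.2), after computing $\E\err_{(\Sigma,\mu_0)}(\gamma;\tau) = B(\tau) + \gamma^2 V(\tau)$ with $B(\tau) := \tau^2\mu_0^\top \Sigma(\Sigma+\tau I)^{-2}\mu_0$, one solves the linear-in-$\gamma^2$ equation to obtain $\gamma_{\eta,\ast}^2 = (\sigma_\xi^2 + B(\tau_{\eta,\ast}))/(\phi - V(\tau_{\eta,\ast})) > 0$.

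For (2), implicit differentiation of $F(\tau_{\eta,\ast}) = \eta$ produces $\partial_\eta \tau_{\eta,\ast} = (\phi - V(\tau_{\eta,\ast}))^{-1}$ and $\partial_\eta^2 \tau_{\eta,\ast} = V'(\tau_{\eta,\ast})(\phi - V(\tau_{\eta,\ast}))^{-3}$, where $V'(\tau) = -\frac{2}{n}\tr(\Sigma^2(\Sigma+\tau I)^{-3}) < 0$ gives the claimed sign pattern. All the two-sided bounds then reduce to (i) $1/C \leq \tau_{\eta,\ast} \leq C$ and (ii) $1/C \leq \phi - V(\tau_{\eta,\ast}) \leq C$ together with $1/C \leq |V'(\tau_{\eta,\ast})| \leq C$. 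The key estimate for (ii) is the elementary lower bound
\begin{align*}
\phi - V(\tau_{\eta,\ast}) = \frac{\eta}{\tau_{\eta,\ast}} + \tau_{\eta,\ast}\cdot \frac{1}{n}\sum_{i}\frac{\lambda_i}{(\lambda_i+\tau_{\eta,\ast})^2} \,\geq\, \frac{\tau_{\eta,\ast}}{(\pnorm{\Sigma}{\op}+\tau_{\eta,\ast})^2}\cdot \frac{\tr(\Sigma)}{n},
\end{align*}
combined with the harmonic-arithmetic mean bound $\tr(\Sigma)/n \geq 1/\mathcal{H}_\Sigma \geq 1/K$. The bounds in (i) follow from $F(\tau) \geq \tau\phi - \tr(\Sigma)/n$ (upper), and from $\tau_{0,\ast} \geq (1-\phi)/\mathcal{H}_\Sigma$ in the strictly overparametrized regime supplemented by $\tau_{\eta,\ast} \geq \eta/\phi$ in the remaining range of $\Xi_K$ (lower). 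The estimates on $\gamma_{\eta,\ast}$ and $\partial_\eta \gamma_{\eta,\ast}$ then follow by the chain rule applied to $\gamma_{\eta,\ast}^2 = (\sigma_\xi^2 + B(\tau_{\eta,\ast}))\partial_\eta \tau_{\eta,\ast}$ and routine spectral estimates on $B(\tau)$ and $B'(\tau)$.

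For (3), a clean identity verified directly using $\tau(\phi - T(\tau)) = \eta$ and $(\partial_\eta \tau)^{-1} = \phi - V(\tau)$ at $\tau_{\eta,\ast}$ reads
\begin{align*}
\tilde\gamma_{\eta,\ast}^2(\pnorm{\mu_0}{}) = \Big(\sigma_\xi^2 + \pnorm{\mu_0}{}^2 \cdot \tau_{\eta,\ast}^2 \cdot \tfrac{1}{n}\tr\big(\Sigma(\Sigma+\tau_{\eta,\ast} I)^{-2}\big)\Big) \partial_\eta \tau_{\eta,\ast},
\end{align*}
which is exactly $\gamma_{\eta,\ast}^2$ with the signal-dependent bias $B(\tau_{\eta,\ast}) = \mu_0^\top A(\tau_{\eta,\ast})\mu_0$ replaced by its direction-averaged value $\pnorm{\mu_0}{}^2\tr(A(\tau_{\eta,\ast}))/n$, where $A(\tau) := \tau^2\Sigma(\Sigma+\tau I)^{-2}$ is bounded in operator norm by $\tau/4$. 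The deviation $\gamma_{\eta,\ast}^2 - \tilde\gamma_{\eta,\ast}^2$ therefore reduces to the quadratic-form fluctuation $|\mu_0^\top A(\tau_{\eta,\ast})\mu_0 - \pnorm{\mu_0}{}^2 \tr(A(\tau_{\eta,\ast}))/n|$. A Hanson--Wright type concentration on $B_n(1)$ yields, for any fixed $\eta$, an exceptional set of volume fraction at most $e^{-cn\epsilon^2}$; a standard $\epsilon$-net in $\eta \in \Xi_K$ with $O(1/\epsilon)$ points, together with the Lipschitz continuity of $\eta \mapsto A(\tau_{\eta,\ast})$ inherited from (2), upgrades this to a uniform statement at the cost of the factor $\epsilon^{-1}$. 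When $\Sigma = I_n$, $A(\tau)$ is a scalar multiple of the identity and no exceptional set is needed. The main obstacle is the uniform quantitative control of $\tau_{\eta,\ast}$ all the way down to $\eta = 0$, in particular establishing $\phi - V(\tau_{\eta,\ast}) \geq 1/C$, since this single quantity governs all the derivative bounds via implicit differentiation; the harmonic-arithmetic trace inequality is the essential input preventing any degeneracy in the interpolation limit.
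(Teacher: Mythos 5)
Your proof is correct and follows essentially the same strategy as the paper's (which splits the statement across Propositions 8.1 and 8.4): monotonicity-based existence and uniqueness for $\tau_{\eta,\ast}$, the explicit formula $\gamma_{\eta,\ast}^2 = (\sigma_\xi^2 + B(\tau_{\eta,\ast}))/(\phi - V(\tau_{\eta,\ast}))$, implicit differentiation for the derivative bounds, the AM--HM trace inequality as the key non-degeneracy input giving $\phi - V(\tau_{\eta,\ast}) \gtrsim 1$ uniformly down to $\eta = 0$, and a Hanson--Wright concentration under a uniform prior on $B_n(1)$ for part (3).

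The only genuine differences are presentational, and they slightly streamline the argument. You recast the second fixed-point equation as $F(\tau) = \tau(\phi - T(\tau)) = \eta$ with $F$ increasing on the relevant range, whereas the paper works with $\mathsf{f}(\tau) = T(\tau) + \eta/\tau$ decreasing; the payoff of your choice is the clean closed form $\partial_\eta\tau_{\eta,\ast} = 1/(\phi - V(\tau_{\eta,\ast}))$, which instantly gives $\gamma_{\eta,\ast}^2 = (\sigma_\xi^2 + B(\tau_{\eta,\ast}))\,\partial_\eta\tau_{\eta,\ast}$ and exposes $\phi - V$ as the single quantity controlling everything. For part (3) the paper verifies the form of $\tilde\gamma_{\eta,\ast}^2$ by passing through the Stieltjes-transform identities $\tau_{\eta,\ast} = \mathfrak{m}_\eta^{-1}$, $\partial_\eta\tau_{\eta,\ast} = \mathfrak{m}_\eta'/(\phi\mathfrak{m}_\eta^2)$ (Proposition 8.4-(2)), whereas you verify the same identity $\tau_{\eta,\ast} - \eta\partial_\eta\tau_{\eta,\ast} = \tau_{\eta,\ast}^2 T_{-2,1}(\tau_{\eta,\ast})\,\partial_\eta\tau_{\eta,\ast}$ directly from $\tau(\phi - V) = \eta + \tau^2 T_{-2,1}$, which is more elementary and self-contained. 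The remainder (direction-averaging, the $\epsilon^{-1}$ factor from a net in $\eta$, the $\Sigma = I_n$ degeneracy) matches the paper's construction around its Lemma 9.1.
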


The above proposition combines parts of Propositions \ref{prop:fpe_est} and \ref{prop:tau_gamma_m}. 

As an important qualitative consequence of (2), under the condition $\pnorm{\Sigma}{\op}\vee \mathcal{H}_\Sigma\leq K$, the effective regularization $\eta\mapsto \tau_{\eta,\ast}$ is a strictly increasing and concave function of $\eta$. Moreover, in the overparametrized regime $\phi^{-1}>1$, the quantity $\tau_{0,\ast}$---also known as `\emph{implicit regularization}' in the literature \cite{bartlett2020benign,bartlett2021deep,hastie2022surprises,tsigler2023benign,cheng2024dimension}---is strictly bounded away from zero. 

The claim in (3)  offers a useful approximate representation of the effective noise $\gamma_{\eta,\ast}^2$ in terms of the original noise $\sigma_\xi^2$, the effective regularization $\tau_{\eta,\ast}$ and the signal energy $\pnorm{\mu_0}{}$ without explicitly dependence of $\Sigma$. This representation will prove useful in understanding some qualitative aspects of the risk curves in Section \ref{section:err_phase_trans} ahead.

\subsection{Distribution of Ridge(less) estimators}

In addition to $\hat{\mu}_{\eta}$, we will also consider the distribution of the (scaled) residual $\hat{r}_{\eta}$, defined by 
\begin{align}
\hat{r}_{\eta}\equiv \frac{1}{\sqrt{n}}\big(Y-X\hat{\mu}_{\eta}\big).
\end{align}
We define the `population' version of $\hat{r}_\eta$ as
\begin{align}
r_{\eta,\ast} \equiv \frac{\eta}{\phi\tau_{\eta,\ast}}\bigg(-\sqrt{\phi\gamma_{\eta,\ast}^2 - \sigma_\xi^2 }\cdot \frac{h}{\sqrt{n}} + \frac{\xi}{\sqrt{n}}  \bigg).
\end{align}
Here $h\sim \mathcal{N}(0,I_m)$ is independent of $\xi$. 

We are now in a position to state our main results on the distributional results for the Ridge(less) estimator $\hat{\mu}_{\eta}$ and the residual $\hat{r}_{\eta}$.

First we work under the Gaussian design $Z=G$, and we write $\hat{\mu}_\eta=\hat{\mu}_{\eta;G},\hat{r}_\eta=\hat{r}_{\eta;G}$. Recall $\mathcal{H}_\Sigma=\tr(\Sigma^{-1})/n$ and $\Xi_K$ from (\ref{def:Xik}). 

\begin{theorem}\label{thm:min_norm_dist}
Suppose Assumption \ref{assump:design} holds with $Z=G$ and the following hold for some $K>0$.
\begin{itemize}
	\item $1/K\leq \phi^{-1}\leq K$, $\pnorm{\Sigma}{\op}\vee \mathcal{H}_\Sigma \leq K$.
	\item Assumption \ref{assump:noise} holds with $\sigma_\xi^2 \in [1/K,K]$. 
\end{itemize}
Then there exists some constant $C=C(K)>0$ such that the following hold.
\begin{enumerate}
	\item For any $1$-Lipschitz function $\mathsf{g}:\R^n\to \R$ and $\epsilon \in (0,1/2]$,
	\begin{align*}
	\sup_{\mu_0 \in B_n(1)}\Prob\Big(\sup_{\eta \in \Xi_K}\bigabs{\mathsf{g}(\hat{\mu}_{\eta;G})-\E \mathsf{g}\big(\hat{\mu}_{(\Sigma,\mu_0)}^{\seq}(\gamma_{\eta,\ast};\tau_{\eta,\ast})\big) }\geq \epsilon\Big)\leq C n e^{-n\epsilon^4/C}. 
	\end{align*}
    Here we recall that $\hat{\mu}_{\eta;G}$ is defined in \eqref{def:ridge_est} with $X = G\Sigma^{1/2}$, and that $\hat{\mu}_{(\Sigma,\mu_0)}^{\seq}$ with effective noise and regularization pair $(\gamma_{\eta,\ast},\tau_{\eta,\ast})$ is defined in \eqref{def:ridge_seq}-\eqref{def:gamma-tau}.
	\item For any $\epsilon \in (0,1/2]$, $\xi \in \R^m$ satisfying  $\abs{\,\pnorm{\xi}{}^2/m-\sigma_\xi^2}\leq \epsilon^2/C$, and $1$-Lipschitz function $\mathsf{h}:\R^m\to \R$ (which may depend on $\xi$),
	\begin{align*}
	\sup_{\mu_0 \in B_n(1)}\Prob^\xi \Big(\sup_{\eta \in [1/K,K]}\abs{\mathsf{h}(\hat{r}_{\eta;G}) - \E^\xi \mathsf{h}(r_{\eta,\ast}) }\geq \epsilon \Big)\leq C n e^{-n\epsilon^4/C}.
	\end{align*}
\end{enumerate}
\end{theorem}

The choice $\mu_0 \in B_n(1)$  is made merely for simplicity of presentation; it can be replaced by $\mu_0\in B_n(R)$ with another constant $C$ that depends further on $R$. The assumption $\mathcal{H}_\Sigma\lesssim 1$ is quite common in the literature of Ridge(less) regression; see, e.g., \cite[Assumption 4.12]{bartlett2021deep} or a slight variant in \cite[Assumption 1]{montanari2023generalization}. The major assumption in the above theorem is the Gaussianity on the design $X$. This may be lifted at the cost of a set of slightly stronger conditions.

\begin{theorem}\label{thm:universality_min_norm}
	Suppose Assumption \ref{assump:design} holds and the following hold for some $K>0$. 
	\begin{itemize}
		\item $1/K\leq \phi^{-1}\leq K$, $\pnorm{\Sigma}{\op}\vee \pnorm{\Sigma^{-1}}{\op} \leq K$.
		\item Assumption \ref{assump:noise} holds with $\sigma_\xi^2 \in [1/K,K]$.
	\end{itemize}
	Fix $\vartheta \in (0,1/18)$. There exist some $C=C(K,\vartheta)>0$ and two measurable sets $\mathcal{U}_{\vartheta}\subset B_n(1),\mathcal{E}_{\vartheta}\subset \R^m$ with $\min\{\mathrm{vol}(\mathcal{U}_\vartheta)/\mathrm{vol}(B_n(1)),\Prob(\xi \in \mathcal{E}_\vartheta)\}\geq 1-Ce^{-n^{2\vartheta}/C}$, such that the following hold.
	\begin{enumerate}
		\item For any $1$-Lipschitz function $\mathsf{g}:\R^n\to \R$, and $\epsilon \in (0,1/2]$,
		\begin{align*}
		\sup_{\mu_0 \in \mathcal{U}_\vartheta}\Prob\Big(\sup_{\eta \in \Xi_K }\bigabs{\mathsf{g}(\hat{\mu}_{\eta})-\E \mathsf{g}\big(\hat{\mu}_{(\Sigma,\mu_0)}^{\seq}(\gamma_{\eta,\ast};\tau_{\eta,\ast})\big) }\geq \epsilon \Big)\leq C \epsilon^{-13}n^{-1/6+3\vartheta}. 
		\end{align*}
        Here we recall that $\hat{\mu}_{\eta}$ is defined in \eqref{def:ridge_est}, and that $\hat{\mu}_{(\Sigma,\mu_0)}^{\seq}$ with effective noise and regularization pair $(\gamma_{\eta,\ast},\tau_{\eta,\ast})$ is defined in \eqref{def:ridge_seq}-\eqref{def:gamma-tau}.
		\item For any $\epsilon \in (0,1/2]$, $\xi \in \mathcal{E}_\vartheta$ and $1$-Lipschitz function $\mathsf{h}:\R^m\to \R$ (which may depend on $\xi$),
		\begin{align*}
		\sup_{\mu_0 \in \mathcal{U}_\vartheta}\Prob^\xi \Big(\sup_{\eta \in [1/K,K]}\abs{\mathsf{h}(\hat{r}_{\eta}) - \E^\xi \mathsf{h}(r_{\eta,\ast}) }\geq \epsilon \Big)\leq C \epsilon^{-7} n^{-1/6+3\vartheta}.
		\end{align*}
	\end{enumerate}
    Concrete forms of $\mathcal{U}_\vartheta,\mathcal{E}_\vartheta$ are specified in Proposition \ref{prop:delocal_u_v}.
\end{theorem}

\begin{remark}
Compared with the Gaussian case (Theorem \ref{thm:min_norm_dist}), which admits exponential tails via Gaussian concentration and a direct CGMT argument, the sub-Gaussian universality in Theorem \ref{thm:universality_min_norm} yields only polynomial rates. This stems from the quantitative
comparison inequalities \cite{han2023universality} employed in the universality step. Extending these
bounds to exponential decay would likely require methods beyond the comparison framework, which is beyond
the scope of the present paper.
\end{remark}

Theorems \ref{thm:min_norm_dist} and \ref{thm:universality_min_norm} are proved in Section \ref{section:proof_gaussian_design} and Section \ref{section:proof_general_design}, respectively. Due to the high technicalities in the proof, a sketch is outlined in Section \ref{section:proof_outline}. These distributional results are the main input for all the applications developed in the subsequent sections. In particular, the flexibility in the choice of the test functions $\mathsf{g}$ and $\mathsf{h}$ allows us to obtain a variety of functionals of interest. By choosing  the test functions appropriately, we derive (i) in Section~\ref{sec:risk_asymptotic} the $\ell_q$-risk asymptotics for general $q\in [1,\infty)$, extending the classical $\ell_2$-risk formulas that are typically accessible via random matrix theory, and (ii) in Section \ref{section:application} the optimality of cross-validation tuning rules.

We mention two particular important features on the theorems above:
\begin{enumerate}
	\item The distributional characterizations for $\hat{\mu}_{\eta}$ in both theorems above are uniformly valid down to the interpolation regime $\eta=0$ for $\phi^{-1}>1$. This uniform control will play a crucial role in our non-asymptotic analysis of cross-validation methods to be studied in Section \ref{section:application} ahead.
	\item The distribution of the residual $\hat{r}_\eta$ in (2) is formulated \emph{conditional on the noise $\xi$}. A fundamental reason for adopting this formulation is that the distribution of $\hat{r}_\eta$ is \emph{not} universal with respect to the law of $\xi$. In other words, one cannot simply assume Gaussianity of $\xi$ in Theorem \ref{thm:min_norm_dist} in hope of proving universality of $\hat{r}_\eta$ in Theorem \ref{thm:universality_min_norm}.
\end{enumerate}

In the context of distributional characterizations for regularized regression estimators in the proportional regime, results in similar vein to Theorem \ref{thm:min_norm_dist} have been obtained in the closely related Lasso setting for isotropic $\Sigma=I_n$ in \cite{miolane2021distribution}, and for general $\Sigma$ in \cite{celentano2023lasso}, both under Gaussian designs and with strictly non-vanishing regularization. A substantially simpler, isotropic ($\Sigma=I_n$) version of Theorem \ref{thm:universality_min_norm} is obtained in \cite{han2023universality} that holds pointwise in non-vanishing regularization level $\eta>0$. As will be clear from the proof sketch in Section \ref{section:proof_outline}, in addition to the complications due to the implicit nature of the solution to the fixed point equation (\ref{eqn:fpe})  for general $\Sigma$, the major difficulty in proving Theorems \ref{thm:min_norm_dist} and \ref{thm:universality_min_norm} rests in handling the singularity of the optimization problem (\ref{def:ridge_est}) as $\eta \downarrow 0$.

\section{General $\ell_q$-type risk formulae}\label{sec:risk_asymptotic}
As a demonstration of the analytic power of Theorems \ref{thm:min_norm_dist} and \ref{thm:universality_min_norm}, this section will be devoted to a detailed study for the $\ell_q$-type risks for Ridgeless interpolators. We then conduct a more in-depth study of $\ell_2$ risks, where techniques from RMT lead to a detailed characterization of the optimal regularization strategy for risk minimization.

\subsection{Weighted $\ell_q$ risks and delocalization}\label{subsection:lq_risk}
We compute below the weighted $\ell_q$ risk $\pnorm{\mathsf{A}(\hat{\mu}_\eta-\mu_0)}{q}$ for a well-behaved matrix $\mathsf{A}\in \R^{n\times n}$ and $q \in [1,\infty)$. Recall $\Xi_K$ from (\ref{def:Xik}).

\begin{theorem}\label{thm:lq_risk}
	Suppose the same conditions in Theorem \ref{thm:universality_min_norm} hold for some $K>0$. Fix $q \in [1,\infty)$ and a p.s.d. matrix $\mathsf{A} \in \R^{n\times n}$ with $\pnorm{\mathsf{A}}{\op}\vee \pnorm{\mathsf{A}^{-1}}{\op}\leq K$. Then there exist constants $C>1, \vartheta \in (0,1/50)$ depending on $K,q$, and a measurable set $\mathcal{U}_{\vartheta}\subset B_n(1)$ with $\mathrm{vol}(\mathcal{U}_{\vartheta})/\mathrm{vol}(B_n(1))\geq 1-C e^{-n^{\vartheta}/C}$, such that 
	\begin{align*}
	\sup_{\mu_0 \in \mathcal{U}_{\vartheta} }\Prob\bigg(\sup_{\eta \in \Xi_K} \biggabs{  \frac{\pnorm{ \mathsf{A}(\hat{\mu}_\eta-\mu_0)}{q}}{ \bar{R}_{(\Sigma,\mu_0);q}^{\mathsf{A}}(\eta) }- 1 }\geq n^{-\vartheta}\bigg)\leq C n^{-1/7}.
	\end{align*}
	Here $\bar{R}_{(\Sigma,\mu_0);q}^{\mathsf{A}}(\eta) \in \big\{\E \pnorm{ \mathsf{A}\big(\hat{\mu}_{(\Sigma,\mu_0)}^{\seq}(\gamma_{\eta,\ast};\tau_{\eta,\ast})-\mu_0\big) }{q}, n^{-1/2}\pnorm{\mathrm{diag}\big(\Gamma_{\eta;(\Sigma,\pnorm{\mu_0}{})}^{\mathsf{A}}\big) }{q/2}^{1/2}M_q\big\}$, where $M_q\equiv \E^{1/q}\abs{\mathcal{N}(0,1)}^q=2^{1/2}\big\{\Gamma\big((q+1)/2\big)/\sqrt{\pi}\big\}^{1/q}$, 
	\begin{align}\label{def:Gamma_matrix}
	\Gamma_{\eta;(\Sigma,\pnorm{\mu_0}{})}^{\mathsf{A}}\equiv \mathsf{A}(\Sigma+\tau_{\eta,\ast} I_n)^{-1}\Big(\tilde{\gamma}_{\eta,\ast}^2(\pnorm{\mu_0}{})\Sigma+\tau_{\eta,\ast} ^2 \pnorm{\mu_0}{}^2 I_n\Big)(\Sigma+\tau_{\eta,\ast} I_n)^{-1}\mathsf{A},
	\end{align}
	and $\tilde{\gamma}_{\eta,\ast}^2(\pnorm{\mu_0}{})$ is defined in Proposition \ref{prop:fpe_est_simplify}-(3).
\end{theorem}

The proof of the above theorem can be found in Section \ref{subsection:proof_lq_risk}. To the best of our knowledge, general weighted $\ell_q$ risks for the Ridge(less) estimator $\hat{\mu}_\eta$ have not be available in the literature except for the special case $q=2$, for which $\pnorm{ \mathsf{A}(\hat{\mu}_\eta-\mu_0)}{2}$ admits a closed-form expression in terms of the spectral statistics of $X$ that facilitates direct applications of RMT techniques, cf. \cite{tulino2004random,elkaroui2013asymptotic,dicker2016ridge,dobriban2018high,elkaroui2018impact,advani2020high,wu2020optimal,bartlett2021deep,richards2021asymptotics,hastie2022surprises,cheng2024dimension}.

For $\mathsf{A}\neq \Sigma$, Theorem \ref{thm:lq_risk} above characterizes the out-of-distribution $\ell_q$ risk for the Ridge(less) estimators. This setting is naturally related to the covariate shift setting, where $\ell_2$-type risks are studied in \cite{patil2024optimal,tang2024benign} using random matrix methods in slightly different specific settings.

Let us remark that obtaining $\ell_q$ risks for $q \in [1,2]$ via our Theorems \ref{thm:min_norm_dist} and \ref{thm:universality_min_norm} is relatively easy, as $x\mapsto \pnorm{x}{q}/n^{1/q-1/2}$ is $1$-Lipschitz with respect to $\pnorm{\cdot}{}$ for $q\in [1,2]$. The stronger norm case $q\in (2,\infty)$ is significantly harder. In fact, we need additionally the following delocalization result for $\hat{\mu}_\eta$.

\begin{proposition}\label{prop:delocal_u_v_simplify}
	Suppose the same conditions as in Theorem \ref{thm:lq_risk} hold for some $K>0$. Fix $\vartheta \in (0,1/2]$. Then there exist some constant $C=C(K,\vartheta)>0$ and a measurable set $\mathcal{U}_\vartheta\subset B_n(1)$ with $\mathrm{vol}(\mathcal{U}_\vartheta)/\mathrm{vol}(B_n(1))\geq 1-Ce^{-n^{2\vartheta}/C}$, such that
	\begin{align*}
	\sup_{\mu_0 \in \mathcal{U}_\vartheta} \Prob\Big(\sup_{\eta \in \Xi_K}\pnorm{\mathsf{A}(\hat{\mu}_\eta-\mu_0)}{\infty}\geq C n^{-1/2+\vartheta}\Big)\leq C n^{-100}. 
	\end{align*}
\end{proposition}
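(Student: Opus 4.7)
The plan is a direct coordinate-wise analysis based on the resolvent identity
\[
\hat{\mu}_\eta - \mu_0 = -\eta M_\eta^{-1}\mu_0 + M_\eta^{-1} X^\top \xi/n, \qquad M_\eta \equiv X^\top X/n + \eta I_n,
\]
which extends to the interpolation limit by continuity in the overparametrized regime, with $\eta M_\eta^{-1}\to I - P_X$ and $M_\eta^{-1}X^\top \to (X^\top X)^{-}X^\top$ as $\eta\downarrow 0$, where $P_X$ denotes the projection onto the row span of $X$. For each $j \in [n]$ I would decompose $e_j^\top \mathsf{A}(\hat{\mu}_\eta - \mu_0)$ into a noise contribution $\iprod{\mathsf{A}^\top e_j}{M_\eta^{-1}X^\top \xi/n}$ and a bias contribution $-\iprod{\eta M_\eta^{-1}\mathsf{A}^\top e_j}{\mu_0}$, aiming to control each at tail probability at most $n^{-101}$ before union-bounding over $j \in [n]$ and an $n^{-C}$-net in $\eta \in \Xi_K$.

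For the noise contribution, conditioning on $X$ reduces the estimate to a sub-Gaussian linear form in $\xi$ with conditional variance $\sigma_\xi^2\pnorm{XM_\eta^{-1}\mathsf{A}^\top e_j}{}^2/n^2$. Using $\pnorm{M_\eta^{-1/2}X^\top/\sqrt{n}}{\op}\leq 1$ and $\pnorm{M_\eta^{-1/2}\mathsf{A}^\top e_j}{}\leq C(K)$ uniformly in $\eta \in \Xi_K$ (via overparametrization plus sub-Gaussian smallest-singular-value bounds on $X/\sqrt n$), this variance is $O(n^{-1})$, so a standard sub-Gaussian tail bound gives the noise contribution bounded by $n^{-1/2+\vartheta}$ with conditional probability at least $1-2e^{-n^{2\vartheta}/C}$ on a high-probability event of $X$. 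For the bias contribution, the coefficient vector $w_{j,\eta}(X)\equiv \eta M_\eta^{-1}\mathsf{A}^\top e_j$ satisfies $\pnorm{w_{j,\eta}(X)}{}\leq K$ uniformly in $(j,\eta,X)$ since $\pnorm{\eta M_\eta^{-1}}{\op}\leq 1$. The key step is to approximate $w_{j,\eta}(X)$ by a deterministic counterpart $\tilde w_{j,\eta}$ built from $\Sigma,\mathsf{A},\tau_{\eta,\ast}$ via an anisotropic local law for the resolvent $M_\eta^{-1}$, with error $\sup_{j,\eta}\pnorm{w_{j,\eta}(X)-\tilde w_{j,\eta}}{}$ polynomially small on a high-probability event. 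Then $\mathcal{U}_\vartheta$ can be defined \emph{deterministically} as the set of $\mu_0 \in B_n(1)$ with $\sup_{j \in [n],\,\eta \in \Xi_K}|\iprod{\tilde w_{j,\eta}}{\mu_0}|\leq n^{-1/2+\vartheta}/2$; Gaussian-like concentration on the sphere combined with a union bound over $j$ and an $n^{-C}$-net in $\eta$ yields $\mathrm{vol}(\mathcal{U}_\vartheta)/\mathrm{vol}(B_n(1))\geq 1-Ce^{-n^{2\vartheta}/C}$.

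Uniformity in $\eta$ on top of pointwise control is handled by a polynomial Lipschitz estimate $\sup_j|\partial_\eta e_j^\top \mathsf{A}(\hat{\mu}_\eta - \mu_0)|\leq n^{O(1)}$, obtained by differentiating the resolvent formula and controlling $\pnorm{M_\eta^{-1}}{\op}$ by $n^{O(1)}$ generically and by $C(K)$ after projection onto the range of $M_\eta$. The lift from the Gaussian design to the general sub-Gaussian design in Assumption \ref{assump:design} proceeds by applying the comparison inequalities of \cite{han2022universality} to each of the $n$ coordinate functionals with a smoothed maximum, making use of the already-established $\ell_2$ control of $\hat{\mu}_\eta$.

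The main obstacle is the anisotropic local law for $M_\eta^{-1}$ that must hold uniformly in $\eta \in \Xi_K$ down to $\eta=0$ in the overparametrized regime. Tracial laws for $n^{-1}\tr(f(M_\eta^{-1}))$ follow from the Stieltjes transform identity in Proposition \ref{prop:rmt_effp}, but entry-wise control of $e_j^\top M_\eta^{-1} e_{j'}$-type quantities tight enough to survive a union bound over $(j,j')$ requires a more refined argument; maintaining this uniformly as $\eta \downarrow 0$ depends crucially on the non-vanishing gap $\lambda^+_{\min}(X^\top X/n)\gtrsim_K 1$ furnished by the overparametrization hypothesis $\phi^{-1}\geq 1+1/K$. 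A secondary obstacle is ensuring that the approximation error $\sup_{j,\eta}\pnorm{w_{j,\eta}(X)-\tilde w_{j,\eta}}{}$ is small enough so that only the deterministic family $\{\tilde w_{j,\eta}\}$ must be covered in the definition of $\mathcal{U}_\vartheta$, keeping $\mathcal{U}_\vartheta$ genuinely independent of the random design $X$ as required by the statement.
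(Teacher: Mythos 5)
Your overall strategy (anisotropic local laws plus a deterministic construction of $\mathcal{U}_\vartheta$ via a uniform prior on $\mu_0$) is the same as the paper's, but there is a genuine gap in the treatment of the bias contribution.

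You claim that the coefficient vector $w_{j,\eta}(X)\equiv \eta M_\eta^{-1}\mathsf{A}^\top e_j$ can be approximated by a \emph{deterministic} $\tilde w_{j,\eta}$ with $\sup_{j,\eta}\pnorm{w_{j,\eta}(X)-\tilde w_{j,\eta}}{}$ polynomially small. This is false. As $\eta\downarrow 0$ in the overparametrized regime, $\eta M_\eta^{-1}\to I-P_X$ where $P_X$ is the (random) orthogonal projection onto the row span of $X$; the direction of $(I-P_X)\mathsf{A}^\top e_j$ is genuinely random and cannot be captured by a fixed vector. To see this concretely, take $\Sigma=\mathsf{A}=I_n$ and $m=n/2$: the anisotropic local law says $\iprod{e_k}{(I-P_X)e_j}\approx \tfrac{1}{2}\bm{1}_{j=k}+\mathcal{O}(n^{-1/2+\vartheta})$ coordinate-wise, and the best deterministic candidate is $\tilde w_j = e_j/2$, but then $\pnorm{(I-P_X)e_j-e_j/2}{}^2\approx \tfrac12 - \tfrac12 + \tfrac14 = \tfrac14$, which is $\Theta(1)$, not $o(1)$. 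In general, coordinate-wise errors of size $n^{-1/2+\vartheta}$ sum over $n$ coordinates to an $\ell_2$-error of size $n^{\vartheta}$, not $n^{-1/2+\vartheta}$. The local law controls scalar bilinear forms $\iprod{u}{M_\eta^{-1}v}$ for \emph{fixed deterministic} $u,v$, never the full vector $M_\eta^{-1}v$ in $\ell_2$. The correct formulation, which the paper uses (via the companion $m\times m$ resolvent and the $(1,1)$-block of \cite[Eqn.~(3.10)]{knowles2017anisotropic}, cf.~Proposition \ref{prop:delocal_u_v}), is to apply the local law directly to the scalar $\iprod{w_{j,\eta}(X)}{\mu_0}$ for each fixed $\mu_0$, obtaining $\iprod{w_{j,\eta}(X)}{\mu_0}=\iprod{\tilde w_{j,\eta}}{\mu_0}+\mathcal{O}(n^{-1/2+\vartheta})$ with high probability, and only then define $\mathcal{U}_\vartheta$ as the set of $\mu_0$ on which the deterministic main term $\sup_{j,\eta}\abs{\iprod{\tilde w_{j,\eta}}{\mu_0}}$ is small. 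Your construction of $\mathcal{U}_\vartheta$ is salvageable, but the intermediate vector-level approximation step must be deleted and replaced by the scalar local-law statement.

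A secondary point: the lift to general sub-Gaussian designs via the comparison inequalities of \cite{han2022universality} is unnecessary and would in any case degrade the probability estimate far below $n^{-100}$. The anisotropic local laws of \cite{knowles2017anisotropic} already hold under the sub-Gaussian moment conditions of Assumption \ref{assump:design}, so the paper applies them directly to the non-Gaussian design without any universality step. You should do the same. Also, your auxiliary bound $\pnorm{M_\eta^{-1/2}\mathsf{A}^\top e_j}{}\leq C(K)$ fails as $\eta\downarrow 0$ for $m<n$ (since $M_\eta^{-1/2}$ has eigenvalues $\eta^{-1/2}$ on the null space of $X^\top X$); the correct variance bound for the noise term comes from the combined operator $M_\eta^{-1}(X^\top X/n)M_\eta^{-1}$, whose eigenvalues $\lambda/(\lambda+\eta)^2$ vanish on the null space and are $\mathcal{O}(1)$ on the range once the nonzero spectrum of $X^\top X/n$ is bounded below, so the final variance estimate $\mathcal{O}(\sigma_\xi^2/n)$ is correct even though the stated intermediate inequality is not.
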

The above proposition is a simplified version of Proposition \ref{prop:delocal_u_v}, proved via the anisotropic local laws developed in \cite{knowles2017anisotropic}. In essence, delocalization allows us to apply Theorems \ref{thm:min_norm_dist} and \ref{thm:universality_min_norm} with a truncated version of the $\ell_q$ norm ($q>2$) with a well-controlled Lipschitz constant with respect to $\ell_2$. Moreover, delocalization of $\hat{\mu}_\eta$ also serves as a key technical ingredient in proving the universality Theorem \ref{thm:universality_min_norm}; the readers are referred to Section \ref{section:proof_outline} for a detailed account on the technical connection between delocalization and universality.

\vspace{0.5em}

\noindent \textbf{Convention on probability estimates}:

\begin{enumerate}
	\item When $Z=G$, $n^{-1/7}$ in Theorem \ref{thm:lq_risk} can be replaced by $n^{-D}$ for any $D>0$.
	\item $n^{-100}$ in Proposition \ref{prop:delocal_u_v_simplify} can be replaced by $n^{-D}$ for any $D>0$.
\end{enumerate}
The cost will be a possibly enlarged constant $C>0$ that depends further on $D$. This convention applies to other statements in the following sections in which the probability estimates $n^{-1/7}, n^{-100}$ appear. 

\subsection{$\ell_2$ risk formulae and optimal regularization}\label{section:err_phase_trans}

In this subsection, we will study in some detail the behavior of various $\ell_2$ risks associated with $\hat{\mu}_\eta$. As will be clear below, a major analytic advantage of studying  $\ell_2$ risks is their close connection to techniques from RMT.

Recall the notation $R^{\#}_{(\Sigma,\mu_0)}(\eta)$ defined in Section \ref{subsection:intro_phase_transition}. Let their `theoretical' versions be defined as follows:
\begin{itemize}
	\item 
	 $\bar{R}^{\pred}_{(\Sigma,\mu_0)}(\eta)\equiv \tau_{\eta,\ast}^2 \pnorm{(\Sigma+\tau_{\eta,\ast} I_n)^{-1}\Sigma^{1/2}\mu_0}{}^2+ \frac{\gamma_{\eta,\ast}^2}{n} \tr\big(\Sigma^2 (\Sigma+\tau_{\eta,\ast} I_n)^{-2}\big)$.
	\item $\bar{R}^{\est}_{(\Sigma,\mu_0)}(\eta)\equiv \tau_{\eta,\ast}^2 \pnorm{(\Sigma+\tau_{\eta,\ast} I_n)^{-1}\mu_0}{}^2+ \frac{\gamma_{\eta,\ast}^2}{n} \tr\big(\Sigma (\Sigma+\tau_{\eta,\ast} I_n)^{-2}\big)$.
	\item  $\bar{R}^{\ins}_{(\Sigma,\mu_0)}(\eta)\equiv \big(\frac{\eta \gamma_{\eta,\ast}}{\tau_{\eta,\ast}}\big)^2+\phi \sigma_\xi^2\cdot \big(1- \frac{2\eta}{\phi \tau_{\eta,\ast}}\big)$.
\end{itemize}
We also define the residual and its theoretical version as
\begin{itemize}
	\item $R^{\res}_{(\Sigma,\mu_0)}(\eta)\equiv n^{-1}\pnorm{Y-X\hat{\mu}_{\eta}}{}^2$, 	$\bar{R}^{\res}_{(\Sigma,\mu_0)}(\eta)\equiv \big(\frac{\eta \gamma_{\eta,\ast}}{\tau_{\eta,\ast}}\big)^2$.
\end{itemize}
From Theorems \ref{thm:min_norm_dist} and \ref{thm:universality_min_norm}, it is natural to expect that for $\# \in \{\pred,\est,\ins,\res\}$,
\begin{align}\label{eqn:errors_explicit}
\sup_{\eta \in \Xi^{\#}}\abs{R^{\#}_{(\Sigma,\mu_0)}(\eta)-\bar{R}^{\#}_{(\Sigma,\mu_0)}(\eta)}\approx 0\hbox{ with high probability}.
\end{align}
A rigorous statement of (\ref{eqn:errors_explicit}) is deferred to Theorem \ref{thm:errors_explicit}; its proof and the proofs for all other results in this section can be found in Section \ref{section:proof_error}.

Using the so-called Stieltjes transformation $\mathfrak{m}(\cdot)$ in the RMT literature (defined formally via (\ref{eq:StieljesTransform}) in Section \ref{subsection:fpe_rmt} ahead), the following theorem provides an efficient RMT representation of $\bar{R}^{\#}_{(\Sigma,\mu_0)}(\eta)$ that holds for `most' $\mu_0$'s. Recall $\Xi_K$ from (\ref{def:Xik}).

\begin{theorem}\label{thm:error_rmt}
Suppose $1/K\leq \phi^{-1}\leq K$, $\sigma_\xi^2 \in [0,K]$ and $\pnorm{\Sigma}{\op}\vee \mathcal{H}_\Sigma\leq K$ for some $K>0$. There exists some constant $C=C(K)>0$ such that for any $\epsilon \in (0,1/2]$, we may find a measurable set $\mathcal{U}_\epsilon\subset B_n(1)$ with $\mathrm{vol}(\mathcal{U}_\epsilon)/\mathrm{vol}(B_n(1))\geq 1- C \epsilon^{-1}e^{-n\epsilon^2/C}$, 
\begin{align}\label{ineq:error_rmt}
\sup_{\mu_0 \in \mathcal{U}_\epsilon} \sup_{\eta \in \Xi_K}\abs{\bar{R}^{\#}_{(\Sigma,\mu_0)}(\eta)- \mathscr{R}^{\#}_{(\Sigma,\mu_0)}(\eta)} \leq \epsilon,\quad \# \in \{\pred,\est,\ins\}.
\end{align}
Here with $\SNR_{\mu_0}=\pnorm{\mu_0}{}^2/\sigma_\xi^2$, $ \mathfrak{m}_\eta\equiv  \mathfrak{m}(-\eta/\phi)$ and $ \mathfrak{m}_\eta'\equiv  \mathfrak{m}'(-\eta/\phi)$, 
\begin{itemize}
	\item $\mathscr{R}^{\pred}_{(\Sigma,\mu_0)}(\eta)\equiv  \sigma_\xi^2\cdot\Big\{\frac{1}{\mathfrak{m}_\eta^2}\Big(\phi\cdot \SNR_{\mu_0} \mathfrak{m}_\eta-\big(\eta\cdot \SNR_{\mu_0}-1\big) \mathfrak{m}_\eta'\Big)-1\Big\}$,
	\item $\mathscr{R}^{\est}_{(\Sigma,\mu_0)}(\eta)\equiv \sigma_\xi^2 \cdot \Big\{\SNR_{\mu_0}(1-\phi)+  \mathfrak{m}_\eta + \frac{\eta}{\phi}\big(\eta\cdot \SNR_{\mu_0}-1\big) \mathfrak{m}_\eta'\Big\}$,
	\item $\mathscr{R}^{\ins}_{(\Sigma,\mu_0)}(\eta)\equiv\sigma_\xi^2 \cdot \frac{\eta^2 }{\phi } \Big(\phi\cdot \SNR_{\mu_0} \mathfrak{m}_\eta-\big(\eta\cdot \SNR_{\mu_0}-1\big) \mathfrak{m}_\eta'\Big)+  \sigma_\xi^2\cdot (\phi- 2\eta\mathfrak{m}_\eta)$,
	\item $\mathscr{R}^{\res}_{(\Sigma,\mu_0)}(\eta)\equiv  \sigma_\xi^2 \cdot \frac{\eta^2 }{\phi } \Big(\phi\cdot \SNR_{\mu_0} \mathfrak{m}_\eta-\big(\eta\cdot \SNR_{\mu_0}-1\big) \mathfrak{m}_\eta'\Big)$. 
\end{itemize}
When $\Sigma=I_n$, we may take $\mathcal{U}_\epsilon =  B_n(1)$ and (\ref{ineq:error_rmt}) holds with $\epsilon=0$.
\end{theorem}

The RMT representation above yields the following crucial insight into the extremal behavior of the risk maps $\eta\mapsto \bar{R}^{\#}_{(\Sigma,\mu_0)}(\eta)$.

\begin{proposition}\label{prop:derivative_R_simplify}
		Suppose $1/K\leq \phi^{-1}\leq K$ and $\pnorm{\Sigma}{\op} \vee \mathcal{H}_\Sigma\leq K$ for some $K>0$. Then there exists some $C=C(K)>0$  such that for all $\# \in \{\pred,\est,\ins\} $, the derivative formulae
		\begin{align*}
		\partial_\eta \mathscr{R}^{\#}_{(\Sigma,\mu_0)}(\eta) = \sigma_\xi^2 \cdot \mathfrak{M}^{\#}(\eta)\cdot  \big(\eta\cdot \SNR_{\mu_0}-1\big),\quad \eta\geq 0
		\end{align*}
		hold for some measurable functions $\big\{\mathfrak{M}^{\#}:\R_{\geq 0}\to [1/C,C] \big\}$.
		
		Consequently, for all $\# \in \{\pred,\est,\ins\} $, $\mathscr{R}^{\#}_{(\Sigma,\mu_0)}(\cdot)$ attains its global minimum at the same $\eta_\ast\equiv \SNR_{\mu_0}^{-1} \in \Xi_K$, and $
		1/C\leq {\abs{\mathscr{R}^{\#}_{(\Sigma,\mu_0)}(\eta)- \mathscr{R}^{\#}_{(\Sigma,\mu_0)}(\eta_\ast)}}\big/\{ \pnorm{\mu_0}{}^2(\eta-\eta_\ast)^2\}\leq C$.
\end{proposition}

A more general version of the above proposition with precise formulae for $\mathfrak{M}^{\#}$ can be found in Proposition \ref{prop:derivative_R}. As the maps $\eta \mapsto \mathscr{R}^{\#}_{(\Sigma,\mu_0)}(\eta)$ are almost quadratic with the same global minimizer $\eta_\ast=\SNR_{\mu_0}^{-1}$ for all $\# \in \{\pred,\est,\ins\}$, in view of (\ref{eqn:errors_explicit}) and Theorem \ref{thm:error_rmt}, it is natural to expect that for `most' signal $\mu_0$'s and all $\# \in \{\pred,\est,\ins\}$,
\begin{align}\label{eqn:opt_reg_l2}
R^{\#}_{(\Sigma,\mu_0)}(\eta_\ast)\approx\min_{\eta \in \Xi_L} R^{\#}_{(\Sigma,\mu_0)}(\eta) \hbox{ with high probability}.
\end{align}
A rigorous formulation of (\ref{eqn:opt_reg_l2}) is given in Theorem \ref{thm:small_intep}, which, along with its proof, is provided in Section \ref{sec:thm_statement_opt_reg_l2}.

\section{Cross-validation: optimality beyond prediction }\label{section:application}

This section is devoted to the validation of the broad optimality of two widely used cross-validation schemes beyond the prediction risk. Some consequences to statistical inference via debiased Ridge(less) estimators will also be discussed.

\subsection{Estimation of effective noise and regularization}

We shall first take a slight detour, by considering estimation of the effective regularization $\tau_{\eta,\ast}$ and the effective noise $\gamma_{\eta,\ast}$. We propose the following estimators:
\begin{align}\label{def:estimator_tau_gamma}
\begin{cases}
\hat{\tau}_\eta\equiv \Big\{\frac{1}{m}\tr\big(\frac{1}{m}XX^{\top}+ \frac{\eta}{\phi} I_m\big)^{-1}\Big\}^{-1} = \big\{\tr (XX^\top+\eta\cdot nI_m)^{-1}\big\}^{-1},\\
\hat{\gamma}_\eta \equiv  \frac{\hat{\tau}_\eta}{\sqrt{n}}\Big(\eta^{-1}\pnorm{Y-X\hat{\mu}_\eta}{}\bm{1}_{\phi^{-1}<1}+ \pnorm{(XX^\top/n)^{-1}X\hat{\mu}_\eta}{}\bm{1}_{\phi^{-1}\geq 1}\Big).
\end{cases}
\end{align}
It can be easily shown that
\begin{align}\label{eqn:noi_reg_est}
    \sup_{\eta \in \Xi_K}  \abs{\hat{\tau}_\eta-\tau_{\eta,\ast}}, \; \sup_{\eta \in \Xi_K}  \abs{\hat{\gamma}_\eta-\gamma_{\eta,\ast}} \approx 0  \hbox{ with high probability}.
\end{align}
A rigorous statement of (\ref{eqn:noi_reg_est}) is deferred to Theorem \ref{thm:trace_app}; its proof and the proofs for all other results in this section can be found in Section \ref{section:proof_application}. These estimators will not only be useful in their own rights, they will also play an important rule in understanding the generalized cross-validation scheme in the next subsection.

\subsection{Validation of generalized cross-validation}

Consider choosing $\eta$ by minimizing the estimated effective noise $\hat{\gamma}_\eta$ given in (\ref{def:estimator_tau_gamma}): for any $L>0$,
\begin{align}\label{def:res_tuning}
\hat{\eta}^{\GCV}_L\in \argmin_{\eta \in \Xi_L} \hat{\gamma}_\eta.
\end{align}d
Here we recall $\Xi_K$ from (\ref{def:Xik}). The subscript on $L$ in $\hat{\eta}^{\GCV}_L$ will usually be suppressed for notational simplicity. 

The proposal (\ref{def:res_tuning}) is known in the literature as the \emph{generalized cross validation} \cite{craven1978smoothing,golub1979generalized}, and is strongly tied to the so-called shortcut formula for leave-one-out cross validation that exists uniquely for Ridge regression, cf. \cite[Eqn. (46)]{hastie2022surprises}. Here we take a different perspective on (\ref{def:res_tuning}). From our developed theory, this tuning scheme is easily believed to ``work'' since
\begin{align}\label{ineq:res_tuning_heuristic}
\hat{\gamma}_\eta^2 \stackrel{\Prob}{\approx} \gamma_{\eta,\ast}^2 = \phi^{-1}\big(\sigma_\xi^2+\bar{R}^{\pred}_{(\Sigma,\mu_0)}(\eta)\big)\stackrel{\Prob}{\approx} \phi^{-1}\big(\sigma_\xi^2+R^{\pred}_{(\Sigma,\mu_0)}(\eta)\big).
\end{align}
So minimization of $\eta \mapsto \hat{\gamma}_\eta$ is approximately the same as that of $\eta \mapsto R^{\pred}_{(\Sigma,\mu_0)}(\eta)$, and therefore simultaneously of $\eta \mapsto R^{\#}_{(\Sigma,\mu_0)}(\eta)$ for $\# \in \{\est,\ins\}$ as per (\ref{eqn:opt_reg_l2}). We make precise the foregoing heuristics in the following theorem. 

\begin{theorem}\label{thm:tuning_res}
	Suppose Assumption \ref{assump:design} holds, and the following hold some $K>0$.
	\begin{itemize}
		\item $1/K\leq \phi^{-1} \leq K$, $\pnorm{\Sigma^{-1}}{\op}\vee \pnorm{\Sigma}{\op}\leq K$.
		\item Assumption \ref{assump:noise} holds with either (i)
		$\sigma_\xi^2 \in [1/K,K]$ or (ii) $\sigma_\xi^2 \in [0,K]$ with $\phi^{-1}\geq 1+1/K$. 
	\end{itemize}
	Fix $\delta \in (0,1/2]$, $L\geq K/\delta^2$ and a small enough $\vartheta \in (0,1/50)$. There exist a constant $C=C(K,L,\delta,\vartheta)>0$ and a measurable set $\mathcal{U}_{\delta,\vartheta}\subset B_n(1)\setminus B_n(\delta)$ with $\mathrm{vol}(\mathcal{U}_{\delta,\vartheta})/\mathrm{vol}(B_n(1)\setminus B_n(\delta))\geq 1- C e^{-n^{\vartheta}/C}$, such that for $\# \in \{\pred,\est,\ins\}$,
	\begin{align*}
	&\sup_{\mu_0 \in \mathcal{U}_{\delta,\vartheta}}\Prob\Big(R^{\#}_{(\Sigma,\mu_0)}(\hat{\eta}^{\GCV}_L)\geq \min_{\eta \in \Xi_{L}} R^{\#}_{(\Sigma,\mu_0)}(\eta)+n^{-\vartheta}\Big)\leq C n^{-1/7}.
	\end{align*}
\end{theorem}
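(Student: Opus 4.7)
The plan proceeds in two reductions. First, I would show that $\hat{\eta}_L^{\GCV}$ approximately minimizes the deterministic prediction-risk curve $\eta\mapsto \bar{R}^{\pred}_{(\Sigma,\mu_0)}(\eta)$. Second, I would convert this control into near-optimality for all three empirical risks by exploiting that the deterministic curves share a common minimizer. For the first reduction, Theorem \ref{thm:trace_app}-(2), combined with the lower bound $\gamma_{\eta,\ast}\gtrsim 1$ from Proposition \ref{prop:fpe_est_simplify}-(2), gives (with probability at least $1-Cn^{-1/7}$ and for $\mu_0$ in a high-volume subset) the uniform estimate $\sup_{\eta\in \Xi_L}|\hat{\gamma}_\eta^2-\gamma_{\eta,\ast}^2|\lesssim n^{-\vartheta}$. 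The first equation of the fixed-point system (\ref{eqn:fpe}) identifies $\phi\gamma_{\eta,\ast}^2-\sigma_\xi^2$ with $\bar{R}^{\pred}_{(\Sigma,\mu_0)}(\eta)$, so minimizing $\hat{\gamma}_\eta$ over $\Xi_L$ is, up to additive $O(n^{-\vartheta})$ slack, equivalent to minimizing $\bar{R}^{\pred}_{(\Sigma,\mu_0)}$.

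The structural input for the second reduction is the local quadraticity of the deterministic risk curves. Proposition \ref{prop:derivative_R_simplify} asserts that $\partial_\eta \mathscr{R}^{\#}_{(\Sigma,\mu_0)}(\eta)= \sigma_\xi^2 \mathfrak{M}^{\#}(\eta)(\eta\cdot \SNR_{\mu_0}-1)$ vanishes at the common point $\eta_\ast := \SNR_{\mu_0}^{-1}$ for every $\#\in\{\pred,\est,\ins\}$; the curvature bound (\ref{eqn:M_stable}) further supplies $\mathfrak{M}^{\#}(\eta)\asymp 1$ uniformly on $\Xi_L$. Integrating yields the two-sided estimate
\begin{align*}
\mathscr{R}^{\#}_{(\Sigma,\mu_0)}(\eta)- \mathscr{R}^{\#}_{(\Sigma,\mu_0)}(\eta_\ast)\asymp (\eta-\eta_\ast)^2, \quad \eta \in \Xi_L.
\end{align*}
Combining the approximate minimization from the first reduction with the RMT identification $\bar{R}^{\pred}_{(\Sigma,\mu_0)}\approx \mathscr{R}^{\pred}_{(\Sigma,\mu_0)}$ from Theorem \ref{thm:error_rmt}, the quadratic lower bound at $\#=\pred$ forces $|\hat{\eta}_L^{\GCV}-\eta_\ast|^2\lesssim n^{-\vartheta}$; here $\eta_\ast$ lies in $\Xi_L$ as an interior point thanks to the choice $L\geq K/\delta^2$ and $\mu_0\in B_n(1)\setminus B_n(\delta)$ in the noisy case, and $\eta_\ast=0\in\Xi_L$ in the noiseless case. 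Feeding this location estimate into the quadratic \emph{upper} bound for each $\#\in\{\est,\ins\}$ (and directly for $\#=\pred$) gives $\bar{R}^{\#}_{(\Sigma,\mu_0)}(\hat{\eta}_L^{\GCV})\leq \bar{R}^{\#}_{(\Sigma,\mu_0)}(\eta_\ast)+O(n^{-\vartheta})$, and Theorem \ref{thm:errors_explicit} transfers this to the empirical risks $R^{\#}_{(\Sigma,\mu_0)}$. In the noiseless case I would additionally invoke Theorem \ref{thm:small_intep}-(2) to compare $\min_{\eta\in\Xi_L}R^{\#}_{(\Sigma,\mu_0)}(\eta)$ with $R^{\#}_{(\Sigma,\mu_0)}(0)$. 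Taking $\mathcal{U}_{\delta,\vartheta}$ as the intersection of the high-volume signal sets from Theorems \ref{thm:trace_app}-(2), \ref{thm:errors_explicit}, \ref{thm:error_rmt} (and \ref{thm:small_intep} in the noiseless case), restricted to $B_n(1)\setminus B_n(\delta)$, and tuning $\vartheta$ small enough to absorb all error exponents completes the argument.

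The main obstacle in this scheme is the uniform quadratic-growth bound down to the interpolation edge $\eta=0$, which requires the curvature estimate $\mathfrak{M}^{\#}\gtrsim 1$ on all of $\Xi_L$. The integral representation in Proposition \ref{prop:derivative_R_simplify} only yields $\mathfrak{M}^{\#}\geq 0$; strict positivity uniform in $\eta$ must instead be derived by re-expressing $\mathfrak{M}^{\#}$ in terms of the effective regularization $\tau_{\eta,\ast}$ and invoking the stability and derivative estimates for $\eta\mapsto \tau_{\eta,\ast}$ from Proposition \ref{prop:fpe_est_simplify}-(2), which rest on the working assumption $\pnorm{\Sigma}{\op}\vee \pnorm{\Sigma^{-1}}{\op}\leq K$. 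Once this curvature bound is secured, the remaining steps are a bookkeeping of error exponents across Theorems \ref{thm:trace_app}-(2), \ref{thm:errors_explicit}, and \ref{thm:error_rmt}, together with a union bound over their respective good signal sets.
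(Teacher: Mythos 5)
Your proposal reproduces the paper's argument in essentially the same structure: control $\hat{\gamma}_\eta$ via Theorem \ref{thm:trace_app}-(2), identify $\phi\gamma_{\eta,\ast}^2-\sigma_\xi^2$ with $\bar{R}^{\pred}_{(\Sigma,\mu_0)}$, pass to $\mathscr{R}^{\pred}_{(\Sigma,\mu_0)}$ via Theorem \ref{thm:error_rmt}, exploit the two-sided quadratic growth at the common minimizer $\eta_\ast$ from Proposition \ref{prop:derivative_R} to pin down $|\hat{\eta}^{\GCV}-\eta_\ast|$, propagate through the upper quadratic bound for $\#\in\{\est,\ins\}$, and transfer back to empirical risks via Theorems \ref{thm:errors_explicit}/\ref{thm:error_rmt}. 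The only small divergence is in the noiseless case: you invoke Theorem \ref{thm:small_intep}-(2), which is not actually needed; what is needed (and what the paper uses) is Lemma \ref{lem:university_squared_risk}, the extension of Theorem \ref{thm:errors_explicit} to $\sigma_\xi^2\in[0,K]$ when $\phi^{-1}\geq 1+1/K$, since Theorem \ref{thm:errors_explicit} as stated requires $\sigma_\xi^2\in[1/K,K]$.
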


\begin{remark}
Formally, the set $\mathcal{U}_{\delta,\vartheta}$ is defined as $\mathcal{U}_{\delta,\vartheta} \equiv \mathcal{U}_\vartheta \setminus B_n(\delta)$, where $\mathcal{U}_\vartheta$ is defined in Proposition \ref{prop:delocal_u_v}. The cutoff $ \pnorm{\mu_0}{} \geq \delta$ excludes vanishing signals and ensures that $\eta_\ast = \SNR_{\mu_0}^{-1}$ is uniformly bounded, so that $\eta_\ast \in \Xi_L$ whenever $L \ge K/\delta^2$.
\end{remark}

Earlier low-dimension results for generalized cross validation in Ridge regression include \cite{stone1974cross,stone1977asymptotics,craven1978smoothing,li1985from,li1986asymptotic,li1987asymptotic,dudoit2005asymptotics}. In the proportional high-dimensional regime, \cite{hastie2022surprises,patil2021uniform} validate the optimality of $\hat{\eta}^{\GCV}$ with respect to the prediction risk $R^{\pred}_{(\Sigma,\mu_0)}$ with increasing generality. In Theorem \ref{thm:tuning_res} above, we prove that the optimality of $\hat{\eta}^{\GCV}$ holds \emph{simultaneously} for all the three indicated risks. To the best of our knowledge, such optimality of $\hat{\eta}^{\GCV}$ beyond the prediction risk has not been previously observed in the literature.

\subsection{Validation of $k$-fold cross-validation}
Next we consider the widely used $k$-fold cross-validation. We need some further notation:
\begin{itemize}
	\item Let $m_\ell$ be the sample size of batch $\ell \in [k]$, so $\sum_{\ell \in [k]} m_\ell = m$. In the standard $k$-fold cross validation, we choose equal sized batch with $m_\ell = m/k$ (assumed to be integer without loss of generality). 
	\item Let $X^{(\ell)}\in \R^{m_\ell\times n}$ (resp. $Y^{(\ell)} \in \R^{m_\ell}$) be the submatrix of $X$ (resp. subvector of $Y$) that contains all rows corresponding to the training data in batch $\ell$.
	\item  In a similar fashion, let $X^{(-\ell)}\in \R^{(m-m_\ell)\times n}$ (resp. $Y^{(-\ell)} \in \R^{m-m_\ell}$) be the submatrix of $X$ (resp. subvector of $Y$) that removes all rows corresponding to $X^{(\ell)}$ (resp. $Y^{(\ell)}$). 
\end{itemize}

The $k$-fold cross-validation works as follows. For $\ell \in [k]$, let $\hat{\mu}^{(\ell)}_\eta\equiv \argmin_{\mu \in \R^n} \big\{\frac{1}{2n}\pnorm{Y^{(-\ell)}-X^{(-\ell)}\mu}{}^2+\frac{\eta}{2}\pnorm{\mu}{}^2\big\}$ be the Ridge estimator over $(X^{(-\ell)}, Y^{(-\ell)})$ with regularization $\eta\geq 0$. We then pick the tuning parameter that minimizes the averaged test errors of $\hat{\mu}^{(\ell)}_\eta$ over $(X^{(\ell)},Y^{(\ell)})$: for any $L>0$,
\begin{align}\label{def:CV_tuning}
\hat{\eta}^{\CV}_L \in \argmin_{\eta \in \Xi_L} \bigg\{\frac{1}{k}\sum_{\ell \in [k]} \frac{1}{m_\ell}\pnorm{Y^{(\ell)}-X^{(\ell)} \hat{\mu}^{(\ell)}_{\eta} }{}^2\bigg\}\equiv \argmin_{\eta \in \Xi_L} R^{\CV,k}_{(\Sigma,\mu_0)}(\eta).
\end{align}
We shall often omit the subscript $L$ in $\hat{\eta}^{\CV}_L$. 

Intuitively, due to the independence between $\hat{\mu}^{(\ell)}_{\eta}$ and $(X^{(\ell)},Y^{(\ell)})$, $R^{\CV,k}_{(\Sigma,\mu_0)}(\eta)$ can be viewed as an estimator of the generalization error $R^{\pred}_{(\Sigma,\mu_0)}(\eta)+\sigma_\xi^2$. So it is natural to expect that $\hat{\eta}^{\CV}$ approximately minimizes $\eta \mapsto R^{\pred}_{(\Sigma,\mu_0)}(\eta)$. Based on the same heuristics as for $\hat{\eta}^{\GCV}$ in (\ref{def:res_tuning}), we may therefore expect that $\hat{\eta}^{\CV}$ in (\ref{def:CV_tuning}) simultaneously provides optimal prediction, estimation and in-sample risks for `most' signal $\mu_0$'s. This is the content of the following theorem. 
\begin{theorem}\label{thm:tuning_cv}
    Suppose the same conditions as in Theorem \ref{thm:tuning_res} and $\max_{\ell \in [k]} m_\ell/n\leq 1/(2K)$ hold for some $K>0$. Fix $\delta \in (0,1/2]$, $L\geq K/\delta^2$ and a small enough $\vartheta \in (0,1/50)$. Further assume $\min_{\ell \in [k]} m_\ell \geq \log^{2/\delta} m$. There exist a constant $C=C(K,L,\delta,\vartheta)>0$ and a measurable set $\mathcal{U}_{\delta,\vartheta}\subset B_n(1)\setminus B_n(\delta)$ with $\mathrm{vol}(\mathcal{U}_{\delta,\vartheta})/\mathrm{vol}(B_n(1)\setminus B_n(\delta))\geq 1- C e^{-n^{\vartheta}/C}$, such that for $\# \in \{\pred,\est,\ins\}$, 
	\begin{align*}
	&\sup_{\mu_0 \in \mathcal{U}_{\delta,\vartheta}}\Prob\bigg(R^{\#}_{(\Sigma,\mu_0)}(\hat{\eta}^{\CV}_L)\geq \min_{\eta \in \Xi_L} R^{\#}_{(\Sigma,\mu_0)}(\eta) +C\cdot\bigg\{\frac{1}{k}\sum_{\ell\in [k]}\frac{1}{m_\ell^{(1-\delta)/2}}+\frac{1}{k}+n^{-\vartheta}\bigg\}\bigg)\nonumber\\
	&\qquad \leq C (1+\mathcal{L}_{\{m_\ell\}})\cdot n^{-1/7}.
	\end{align*}
	Here  $\mathcal{L}_{\{m_\ell\}}\equiv \sum_{\ell \in [k]} (m_\ell/m)^{-1}$. 
\end{theorem}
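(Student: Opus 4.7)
The plan is to mimic the proof of Theorem \ref{thm:tuning_res} for GCV, but the main difference is that the CV objective cannot be written as a spectral functional of a single sample covariance. Instead, it is a sum over $k$ independent out-of-sample test errors. The strategy therefore has two main stages: (i) show that $\eta \mapsto R^{\CV,k}_{(\Sigma,\mu_0)}(\eta)$ concentrates around $\sigma_\xi^2 + \bar R^{\pred}_{(\Sigma,\mu_0)}(\eta)$ up to the stated error, uniformly in $\eta \in \Xi_L$; and (ii) conclude optimality across $\#\in\{\pred,\est,\ins\}$ exactly as in Theorem \ref{thm:tuning_res}, by exploiting the fact that all three curves are approximately locally quadratic with common minimizer $\eta_\ast=\SNR_{\mu_0}^{-1}$ for most $\mu_0$.

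For stage (i), I would fix $\ell\in[k]$ and $\eta$ and condition on $(X^{(-\ell)}, Y^{(-\ell)})$, which freezes $\hat\mu^{(\ell)}_\eta$. The test samples $(X^{(\ell)}_j,\xi^{(\ell)}_j)$ are i.i.d.\ and independent of $\hat\mu^{(\ell)}_\eta$, so
\begin{align*}
\frac{1}{m_\ell}\pnorm{Y^{(\ell)}-X^{(\ell)}\hat\mu^{(\ell)}_\eta}{}^2 = \frac{1}{m_\ell}\sum_{j=1}^{m_\ell}\bigl(\xi^{(\ell)}_j-(X^{(\ell)}_j)^\top(\hat\mu^{(\ell)}_\eta-\mu_0)\bigr)^2
\end{align*}
has conditional mean $\sigma_\xi^2+R^{\pred,(\ell)}_{(\Sigma,\mu_0)}(\eta)$, where $R^{\pred,(\ell)}_{(\Sigma,\mu_0)}(\eta)\equiv \pnorm{\Sigma^{1/2}(\hat\mu^{(\ell)}_\eta-\mu_0)}{}^2$. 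A quadratic-form Bernstein/Hanson--Wright estimate, together with the delocalization-type bound $\pnorm{\Sigma^{1/2}(\hat\mu^{(\ell)}_\eta-\mu_0)}{}\lesssim 1$ inherited from the per-fold versions of Theorem \ref{thm:errors_explicit} and Proposition \ref{prop:delocal_u_v_simplify}, gives a pointwise deviation of order $m_\ell^{-(1-\delta)/2}$. I would then upgrade to a uniform-in-$\eta$ bound via an $\epsilon$-net on $\Xi_L$, using the fact that $\eta\mapsto \hat\mu^{(\ell)}_\eta$ is operator-norm Lipschitz on $\Xi_L$ (which gives the CV objective a Lipschitz modulus depending on $\pnorm{X^{(\ell)}}{\op}$, controlled by standard sub-Gaussian concentration). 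A union bound over $\ell\in[k]$ produces the weighted error $k^{-1}\sum_\ell m_\ell^{-(1-\delta)/2}$ appearing in the theorem, and contributes the probability factor $\mathcal{L}_{\{m_\ell\}}$ from summing $k$ per-fold applications.

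Next, Theorem \ref{thm:errors_explicit} applied to each reduced sample $(X^{(-\ell)},Y^{(-\ell)})$—whose aspect ratio $\phi^{(\ell)}=(m-m_\ell)/n$ lies in the same regime thanks to $\max_\ell m_\ell/n\leq 1/(2K)$—replaces $R^{\pred,(\ell)}_{(\Sigma,\mu_0)}(\eta)$ by its theoretical counterpart $\bar R^{\pred,(\ell)}_{(\Sigma,\mu_0)}(\eta)$ evaluated at $\phi^{(\ell)}$. Stability of the fixed point equation (\ref{eqn:fpe}) in the aspect ratio (an analog of Proposition \ref{prop:fpe_est_simplify}-(2), which can be obtained by implicit differentiation in $\phi$) then yields
\begin{align*}
\bar R^{\pred,(\ell)}_{(\Sigma,\mu_0)}(\eta)=\bar R^{\pred}_{(\Sigma,\mu_0)}(\eta)+\bigo(m_\ell/n),
\end{align*}
uniformly in $\eta\in\Xi_L$, which averages to a $\bigo(1/k)$ systematic bias---the source of the $1/k$ term in the stated error. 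Combining these steps yields $R^{\CV,k}_{(\Sigma,\mu_0)}(\eta)= \sigma_\xi^2+\bar R^{\pred}_{(\Sigma,\mu_0)}(\eta)+\text{err}(\eta)$ uniformly in $\eta\in\Xi_L$, with $\sup_\eta|\text{err}(\eta)|$ bounded by the right-hand side in the theorem. At this point the argument of Theorem \ref{thm:tuning_res} applies verbatim: for $\mu_0$ in the appropriate high-volume set, $\eta\mapsto \bar R^{\pred}_{(\Sigma,\mu_0)}(\eta)$ is approximately quadratic around $\eta_\ast=\SNR_{\mu_0}^{-1}$ (via Proposition \ref{prop:fpe_est_simplify}-(3), Proposition \ref{prop:derivative_R_simplify}, and the lower bound (\ref{eqn:M_stable})); hence $|\hat\eta^{\CV}_L-\eta_\ast|$ is controlled by the square root of the error, and Theorem \ref{thm:small_intep}, together with the same local quadratic behavior of the $\est$ and $\ins$ curves, converts this into the claimed simultaneous optimality for all $\#\in\{\pred,\est,\ins\}$.

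The main obstacle I expect is Step (i): making the per-fold test-error concentration uniform over $\eta\in\Xi_L$ with the sharp rate $m_\ell^{-(1-\delta)/2}$, while maintaining a high-probability bound after the union over $\ell$. The delocalization and stability properties of $\hat\mu^{(\ell)}_\eta$ (which control both the Lipschitz modulus and the sub-Gaussian tail of the quadratic form) must be propagated uniformly in $\eta$ down to the interpolation regime, and the condition $\min_\ell m_\ell\geq \log^{2/\delta}m$ is precisely what is needed to turn the $m_\ell^{-1/2}$ raw Bernstein rate into the stated polynomial rate after the moment/tail tradeoff. The $\phi$-stability of the fixed point equation that produces the $1/k$ bias is comparatively straightforward but must be established with constants uniform in $\eta\in\Xi_L$, which is a mild extension of the existing stability estimates.
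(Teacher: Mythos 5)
Your proposal is correct and follows essentially the same route as the paper's proof: per-fold quadratic-form concentration conditionally on the training fold (Lemma~\ref{lem:conc_quad}), per-fold apriori bounds on $\hat\mu^{(\ell)}_\eta$, aspect-ratio stability of the fixed point equation to control the $\bigo(1/k)$ bias (Lemma~\ref{lem:fpe_cv_stability}), an $\epsilon$-net in $\eta$ with a union bound over folds yielding the $\mathcal{L}_{\{m_\ell\}}$ factor, and then the same local-quadratic argument around $\eta_\ast$ as in Theorem~\ref{thm:tuning_res}. You correctly identify the role of each hypothesis, including why $\min_\ell m_\ell\geq\log^{2/\delta}m$ is needed to absorb the exponential tail $e^{-m_\ell^\delta/C}$ into a polynomial rate.
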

Non-asymptotic results of this type for $k$-fold cross validation are previously obtained for $R^{\pred}_{(\Sigma,\mu_0)}(\hat{\eta}^{\CV})$ in the Lasso setting \cite[Proposition 4.3]{miolane2021distribution} under isotropic $\Sigma=I_n$, where the range of the regularization must be strictly away from the interpolation regime. In contrast, our results above are valid down to $\eta=0$ when $\phi^{-1}>1$, and allow for general anisotropic $\Sigma$.   

Interestingly, the error bound in the above theorem reflects the folklore tension between the bias and variance in the selection of $k$ in the cross validation scheme (cf. \cite[Chapter 5]{james2021introduction}):
\begin{itemize}
	\item For a small number of $k$, $R^{\CV,k}_{(\Sigma,\mu_0)}(\eta)$ is biased for estimating $R^{\pred}_{(\Sigma,\mu_0)}(\eta)$; this corresponds to the term $\bigo(1/k)$ in the error bound, which is known to be of the optimal order in Ridge regression (cf. \cite{liu2019ridge}). 
	\item For a large number of $k$,  $R^{\CV,k}_{(\Sigma,\mu_0)}(\eta)$ has large fluctuations; this corresponds to the term $\bigo\big(k^{-1}\sum_{\ell \in [k]} m_\ell^{-(1-\delta)/2}\big)=\bigo\big((k/m)^{(1-\delta)/2})$ in the equal-sized case. By a central limit heuristic (cf. \cite{kissel2022high,austern2025asymptotics}), we also expect this term to be of a near optimal order. 
\end{itemize} 

For the choice of $k$, it is instructive to consider the common equal-sized–folds case $m_\ell = m/k$. In this case, the error bound in Theorem~\ref{thm:tuning_cv} suggests that the optimal theoretical value of $k$ is $k \sim m^{1/3}$ (when $\delta$ is small). In our numerical experiments, choosing $k=5$ already yields cross-validation performance that is close to the theoretically optimal behavior (cf.\ Figure~\ref{fig:2}), whereas the theoretically prescribed optimal value of $k$ offers limited practical gain.

\subsection{Implications to statistical inference via $\hat{\mu}_\eta$}

As Ridge(less) estimators $\hat{\mu}_\eta$ are in general biased, debiasing is necessary for statistical inference of $\mu_0$, cf. \cite{bellec2023debias}. Here the debiasing scheme for $\hat{\mu}_\eta$ can be readily read off from the distributional characterizations in Theorems \ref{thm:min_norm_dist} and \ref{thm:universality_min_norm}. Assuming known covariance $\Sigma$, let the debiased Ridge(less) estimator be defined as 
\begin{align}\label{def:debiased_ridge}
\hat{\mu}_\eta^{\dR}\equiv (\Sigma+\tau_{\eta,\ast} I)\Sigma^{-1} \hat{\mu}_\eta. 
\end{align}
Note that $\tau_{\eta,\ast}$ and $\hat{\tau}_\eta$ is interchangeable in the above display due to known $\Sigma$. Using Theorems \ref{thm:min_norm_dist} and \ref{thm:universality_min_norm}, we expect that $\hat{\mu}_\eta^{\dR}\stackrel{d}{\approx} \mu_0+ \gamma_{\eta,\ast} \Sigma^{-1/2} g/\sqrt{n}$. This motivates the following confidence intervals for $\{\mu_{0,j}\}$:
\begin{align}\label{def:CI_debiased_ridge}
\mathrm{CI}_j(\eta) \equiv \Big[\hat{\mu}_{\eta,j}^{\dR} \pm \hat{\gamma}_\eta\cdot (\Sigma^{-1})_{jj}^{1/2} \cdot\frac{z_{\alpha/2}}{\sqrt{n}}\Big],\quad j \in [n].
\end{align}
Here $z_\alpha$ is the normal upper-$\alpha$ quantile defined via $\Prob(\mathcal{N}(0,1)>z_\alpha)=\alpha$. It is easy to see from the above definition that minimization of $\eta \mapsto \hat{\gamma}_\eta$ is equivalent to that of the CI length. As the former minimization procedure corresponds exactly to the proposal $\hat{\eta}^{\GCV}$ in (\ref{def:res_tuning}), we expect that $\{\mathrm{CI}_j(\hat{\eta}^{\GCV})\}$ provide the shortest (asymptotic) $(1-\alpha)$-CIs along the regularization path, and so do $\{\mathrm{CI}_j(\hat{\eta}^{\CV})\}$. 

Below we give a rigorous statement on the above informal discussion. Let $\mathscr{C}^{\dR}(\eta)\equiv n^{-1}\sum_{j=1}^n \bm{1}(\mu_{0,j} \in \mathrm{CI}_j(\eta))$ denote the averaged coverage of $\{\mathrm{CI}_j(\eta)\}$ for $\{\mu_{0,j}\}$. We have the following.

\begin{theorem}\label{thm:CI_cv}
	Suppose the same conditions as in Theorem \ref{thm:tuning_res} (resp. Theorem \ref{thm:tuning_cv}) for $\hat{\eta}^{\GCV}$ (resp. $\hat{\eta}^{\CV}$) hold for some $K>0$. Fix $\alpha \in (0,1/4],\delta \in (0,1/2]$, $L\geq K/\delta^2$ and a small enough $\vartheta \in (0,1/50)$. There exist a constant $C=C(K,L,\delta,\vartheta)>0$ and a measurable set $\mathcal{U}_{\delta,\vartheta}\subset B_n(1)\setminus B_n(\delta)$ with $\mathrm{vol}(\mathcal{U}_{\delta,\vartheta})/\mathrm{vol}(B_n(1)\setminus B_n(\delta))\geq 1- C e^{-n^{\vartheta}/C}$, such that  the CI length and the averaged coverage satisfy
	\begin{align*}
	&\sup_{\mu_0 \in \mathcal{U}_{\delta,\vartheta}}\Big\{\Prob\Big(\sqrt{n} z_{\alpha/2}^{-1}\cdot \max_{j \in [n]} \bigabs{ |\mathrm{CI}_j(\hat{\eta}_L^{\#})|-\min_{\eta \in \Xi_L} |\mathrm{CI}_j(\eta)| }\geq C\mathcal{E}^{\#}_n\Big)\\
	&\qquad\qquad  \vee \Prob\Big( \abs{ \mathscr{C}^{\dR}(\hat{\eta}^{\#}_L)-(1-\alpha)}\geq C(\mathcal{E}^{\#}_n)^{1/4}\Big) \Big\}\leq C\mathfrak{p}_n^{\#}.
	\end{align*}
	Here for $\# \in \{\GCV,\CV\}$, the quantities $\mathcal{E}^{\#}_n,\mathfrak{p}_n^{\#}$ are defined via
	\vspace{0.5ex}
	\renewcommand{\arraystretch}{1.2} 
	\begin{center}
		\begin{tabular}{|c||c|c|}
			\hline 
			& $\mathcal{E}^{\#}_n$ & $\mathfrak{p}_n^{\#}$\\
			\hline\hline
			$\#=\GCV$ & $n^{-\vartheta}$ & $n^{-1/7}$\\
			\hline
			$\#=\CV$ & $k^{-1}\sum_{\ell\in [k]}{m_\ell^{-(1-\delta)/2}}+k^{-1}+n^{-\vartheta}$ & $(1+\mathcal{L}_{\{m_\ell\}})\cdot n^{-1/7}$\\
			\hline
		\end{tabular}
	\end{center}
\end{theorem}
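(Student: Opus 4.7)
The proof decomposes into bounds on the CI length and on the averaged coverage.

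\textbf{CI length.} A direct computation yields $\abs{\mathrm{CI}_j(\eta)}=2\hat{\gamma}_\eta(\Sigma^{-1})_{jj}^{1/2}z_{\alpha/2}/\sqrt{n}$, which factorizes into a $j$-dependent and an $\eta$-dependent piece. Hence $\sqrt{n}z_{\alpha/2}^{-1}\max_{j\in[n]} \bigabs{ \abs{\mathrm{CI}_j(\hat{\eta}_L^{\#})} - \min_{\eta\in\Xi_L}\abs{\mathrm{CI}_j(\eta)}} = 2\max_j(\Sigma^{-1})_{jj}^{1/2}\cdot\bigabs{\hat{\gamma}_{\hat{\eta}_L^{\#}}-\min_{\eta\in\Xi_L}\hat{\gamma}_\eta}$, and under $\pnorm{\Sigma^{-1}}{\op}\leq K$ it suffices to bound $\hat{\gamma}_{\hat{\eta}_L^{\#}}-\min_\eta\hat{\gamma}_\eta$. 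For $\#=\GCV$ this vanishes by the definition of $\hat{\eta}^{\GCV}$. For $\#=\CV$, Theorem \ref{thm:trace_app}-(2) gives $\sup_\eta\abs{\hat{\gamma}_\eta-\gamma_{\eta,\ast}}\lesssim n^{-\vartheta}$ with high probability, so it suffices to bound $\gamma_{\hat{\eta}^{\CV},\ast}-\min_\eta\gamma_{\eta,\ast}$. The first equation of (\ref{eqn:fpe}), together with $\E\err_{(\Sigma,\mu_0)}(\gamma_{\eta,\ast};\tau_{\eta,\ast})=\bar{R}^{\pred}_{(\Sigma,\mu_0)}(\eta)$, yields $\phi\gamma_{\eta,\ast}^2=\sigma_\xi^2+\bar{R}^{\pred}_{(\Sigma,\mu_0)}(\eta)$; combining Theorem \ref{thm:tuning_cv} (applied with $\#=\pred$) and Theorem \ref{thm:errors_explicit} then gives $\bar{R}^{\pred}_{(\Sigma,\mu_0)}(\hat{\eta}^{\CV})-\min_\eta\bar{R}^{\pred}_{(\Sigma,\mu_0)}(\eta)\lesssim\mathcal{E}^{\CV}_n$. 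Since $\gamma_{\eta,\ast}$ is bounded above and away from zero by Proposition \ref{prop:fpe_est_simplify}-(2), one can pass from $\gamma^2_{\eta,\ast}$ to $\gamma_{\eta,\ast}$, closing this part.

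\textbf{Coverage.} The key proxy is the oracle Gaussian object $\mathscr{C}^{\ast}\equiv n^{-1}\sum_j\bm{1}(\abs{(\Sigma^{-1/2}g)_j}\leq(\Sigma^{-1})_{jj}^{1/2}z_{\alpha/2})$ with $g\sim\mathcal{N}(0,I_n)$: since $\hat{\mu}^{\dR}_\eta-\mu_0\approx\gamma_{\eta,\ast}\Sigma^{-1/2}g/\sqrt{n}$ and the CI half-width equals $\hat{\gamma}_\eta(\Sigma^{-1})_{jj}^{1/2}z_{\alpha/2}/\sqrt{n}$ with $\hat{\gamma}_\eta\approx\gamma_{\eta,\ast}$, the common $\gamma_{\eta,\ast}$ factor cancels, rendering the leading-order coverage $\eta$-independent with mean $1-\alpha$ (each $(\Sigma^{-1/2}g)_j/(\Sigma^{-1})_{jj}^{1/2}$ being marginally standard Gaussian). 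To make this precise, smooth the indicator $\bm{1}(\abs{x}\leq c)$ by a $1/\delta$-Lipschitz approximation $\psi_\delta$, producing $\mathscr{C}^{\dR}_\delta(\eta)$ and $\mathscr{C}^{\ast}_\delta$. A gradient-norm computation shows that $\mathscr{C}^{\dR}_\delta(\eta)$ is $O(1/\delta)$-Lipschitz in $\hat{\mu}_\eta^{\dR}$, and hence in $\hat{\mu}_\eta$ (since $\hat{\mu}_\eta^{\dR}$ is a bounded linear image of $\hat{\mu}_\eta$ under the current assumptions). The uniform distributional characterizations in Theorems \ref{thm:min_norm_dist} and \ref{thm:universality_min_norm}, combined with Theorem \ref{thm:trace_app} to replace $\hat{\gamma}_\eta$ by $\gamma_{\eta,\ast}$ in the threshold (the induced flip-region carrying mass $O(n^{-\vartheta})$ by anti-concentration), yield $\sup_{\eta\in\Xi_L}\bigabs{\mathscr{C}^{\dR}_\delta(\eta)-\mathscr{C}^{\ast}_\delta}\lesssim n^{-\vartheta}/\delta$ with high probability. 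The smoothing errors $\bigabs{\mathscr{C}^{\dR}-\mathscr{C}^{\dR}_\delta}$ and $\bigabs{\mathscr{C}^{\ast}-\mathscr{C}^{\ast}_\delta}$ are $O(\delta)$ by anti-concentration of the approximately Gaussian coordinates, while Gaussian concentration applied to the $O(1/\delta)$-Lipschitz map $g\mapsto\mathscr{C}^{\ast}_\delta$ gives $\bigabs{\mathscr{C}^{\ast}_\delta-\E\mathscr{C}^{\ast}_\delta}=O((\delta\sqrt{n})^{-1})$ and $\bigabs{\E\mathscr{C}^{\ast}_\delta-(1-\alpha)}\lesssim\delta$. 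Uniformity in $\eta$ allows substitution of the data-dependent $\hat{\eta}^{\#}$; optimizing $\delta$ against the combined distributional and optimality scale $\mathcal{E}^{\#}_n$ produces the stated $(\mathcal{E}^{\#}_n)^{1/4}$ rate.

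\textbf{Main obstacle.} The principal difficulty lies in handling the non-Lipschitz indicator functions defining the coverage through a smoothing scheme that must interface simultaneously with (i) the Lipschitz-in-$\hat{\mu}_\eta$ form of the distributional characterization, which requires sharp tracking of the $O(1/\delta)$ Lipschitz blow-up when pushing $\psi_\delta$ through Theorems \ref{thm:min_norm_dist}--\ref{thm:universality_min_norm}, and (ii) the anti-concentration of the near-Gaussian marginals of $\hat{\mu}_\eta^{\dR}-\mu_0$, which is what controls both the smoothing gap and the error from replacing $\hat{\gamma}_\eta$ by $\gamma_{\eta,\ast}$ inside the indicator. Throughout, the uniform-in-$\eta$ nature of Theorems \ref{thm:min_norm_dist}--\ref{thm:universality_min_norm} and Theorem \ref{thm:trace_app} is essential so that the data-dependent $\hat{\eta}^{\#}$ can be substituted without any separate delta-method style argument.
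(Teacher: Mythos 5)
Your CI-length argument is essentially the paper's, and the observation that the GCV CI-length defect is \emph{exactly} zero (since $\hat\gamma_{\hat\eta^{\GCV}}=\min_\eta\hat\gamma_\eta$ by definition of $\hat\eta^{\GCV}$) is correct and simpler than what the paper does for that subcase; the route via $\phi\gamma_{\eta,\ast}^2=\sigma_\xi^2+\bar R^{\pred}$, Theorem~\ref{thm:tuning_cv}, and Theorem~\ref{thm:errors_explicit} for the CV case also works. For the coverage, the overall strategy (smoothing, Lipschitz distributional comparison, oracle anti-concentration, optimize the bandwidth) matches the paper's, but there are three genuine gaps.

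\textbf{The flip-region argument is circular.} You justify the replacement of $\hat\gamma_\eta$ by $\gamma_{\eta,\ast}$ inside the indicator by ``anti-concentration of the near-Gaussian marginals of $\hat\mu^{\dR}_\eta-\mu_0$.'' No anti-concentration for the coordinates of the actual $\hat\mu^{\dR}_\eta$ is established or available; only the Gaussian oracle has a density, and transferring to that oracle requires the very smoothed comparison you are in the middle of setting up. The same circularity infects your smoothing-error term $\abs{\mathscr{C}^{\dR}-\mathscr{C}^{\dR}_\delta}=O(\delta)$. The paper avoids this by exploiting monotonicity: it chooses a trapezoidal kernel $\mathsf{g}_{0,\Delta}$ satisfying $\bm{1}_{[-1,1]}\le\mathsf{g}_{0,\Delta}\le\bm{1}_{[-1-\Delta,1+\Delta]}$ and a \emph{one-sided} deterministic enlargement of the CI (using $\hat\gamma_{\hat\eta^\#}\le\gamma_{\eta_\ast,\ast}+\epsilon$, which follows from the CI-length estimate (\ref{ineq:CI_cv_1})), so that all smoothing and threshold errors land on the side of the standard Gaussian, where anti-concentration is free.

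\textbf{The test function cannot depend on $\eta$.} Your $\mathscr{C}^{\dR}_\delta(\eta)$ uses a threshold involving $\gamma_{\eta,\ast}$, making the test function $\eta$-dependent; Theorems~\ref{thm:min_norm_dist} and~\ref{thm:universality_min_norm} give uniformity over $\eta$ for a \emph{single fixed} Lipschitz $\mathsf{g}$. Applying them to an $\eta$-indexed family would require a separate chaining argument in $\eta$, which you do not supply. The paper sidesteps this by fixing the threshold at the $\eta$-independent deterministic value $\gamma_{\eta_\ast,\ast}+\epsilon$ and passing $\hat\eta^\#\to\eta_\ast$ via the Lipschitz-in-$\eta$ estimate (\ref{ineq:CI_cv_3}), fed by $\abs{\hat\eta^\#-\eta_\ast}\lesssim\epsilon^{1/2}$. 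This is exactly why the paper's CI-length step proves the stronger intermediate bounds (\ref{ineq:CI_cv_1}) rather than just the length statement — your GCV ``exactly zero'' shortcut, while correct for the length claim, discards the estimates on $\abs{\hat\eta^{\GCV}-\eta_\ast}$ and $\abs{\gamma_{\hat\eta^{\GCV},\ast}-\gamma_{\eta_\ast,\ast}}$ that the coverage part actually needs.

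\textbf{The claimed optimization does not yield $(\mathcal{E}^\#_n)^{1/4}$.} Your error budget is $O(n^{-\vartheta}/\delta+\delta+(\delta\sqrt{n})^{-1})$; balancing gives $\delta\asymp n^{-\vartheta/2}$ and a rate $(\mathcal{E}^\#_n)^{1/2}$, not $(\mathcal{E}^\#_n)^{1/4}$. The paper's fourth-root comes from the additional $O(\epsilon^{1/2}/\Delta)$ term generated by the $\hat\eta^\#\to\eta_\ast$ substitution in (\ref{ineq:CI_cv_5}), a step your outline does not perform. So either your uniform-in-$\eta$ bound (if it could be made rigorous, which gaps 1--2 prevent) would deliver a sharper rate than stated, or you are silently dropping the substitution cost; in either case the arithmetic does not match the claim, which suggests the final line is pattern-matched to the statement rather than derived.
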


A somewhat non-standard special case of the above theorem is the noiseless setting $\sigma_\xi^2=0$ in the overparametrized regime $\phi^{-1}>1$. In this case, exact recovery of $\mu_0$ is impossible and our CI's above provide a precise scheme for partial recovery of $\mu_0$. As the effective noise $\phi \gamma_{\eta,\ast}^2(0) = \bar{R}^{\pred}_{(\Sigma,\mu_0)}(\eta)$, Theorem \ref{thm:error_rmt} and Proposition \ref{prop:derivative_R_simplify} suggest that $\eta\mapsto \gamma^2_{\eta,\ast}(0)$ is approximately minimized at $\eta=0$ for `most' $\mu_0$'s. This means that, in this noiseless case, the length of the adaptively tuned CIs is also approximately minimized at the interpolation regime for `most' $\mu_0$'s.

\section{Proof outlines}\label{section:proof_outline}

\subsection{Technical tools}\label{subsection:CGMT}

The main technical tool we use for the proof of Theorem \ref{thm:min_norm_dist} is the following version of convex Gaussian min-max theorem, taken from \cite[Corollary G.1]{miolane2021distribution}.

\begin{theorem}[Convex Gaussian Min-Max Theorem]\label{thm:CGMT}
	Suppose $D_u \in \R^{n_1+n_2}, D_v \in \R^{m_1+m_2}$ are compact sets, and $Q: D_u\times D_v \to \R$ is continuous. Let $G=(G_{ij})_{i \in [n_1],j\in[m_1]}$ with $G_{ij}$'s i.i.d. $\mathcal{N}(0,1)$, and $g \sim \mathcal{N}(0,I_{n_1})$, $h \sim \mathcal{N}(0,I_{m_1})$ be independent Gaussian vectors. For $u \in \R^{n_1+n_2}, v \in \R^{m_1+m_2}$, write $u_1\equiv u_{[n_1]}\in \R^{n_1}, v_1\equiv v_{[m_1]} \in \R^{m_1}$. Define
	\begin{align*}
	\Phi^{\textrm{p}} (G)& = \min_{u \in D_u}\max_{v \in D_v} \Big( u_1^\top G v_1 + Q(u,v)\Big), \nonumber\\
	\Phi^{\textrm{a}}(g,h)& = \min_{u \in D_u}\max_{v \in D_v} \Big(\pnorm{v_1}{} g^\top u_1 + \pnorm{u_1}{} h^\top v_1+ Q(u,v)\Big).
	\end{align*}
	Then the following hold.
	\begin{enumerate}
		\item For all $t \in \R$, $
		\Prob\big(\Phi^{\textrm{p}} (G)\leq t\big)\leq 2 \Prob\big(\Phi^{\textrm{a}}(g,h)\leq t\big)$. 
		\item If $(u,v)\mapsto u_1^\top G v_1+ Q(u,v)$ satisfies the conditions of Sion's min-max theorem for  the pair $(D_u,D_v)$ a.s. (for instance, $D_u,D_v$ are convex, and $Q$ is convex-concave), then for any $t \in \R$, $
		\Prob\big(\Phi^{\textrm{p}} (G)\geq t\big)\leq 2 \Prob\big(\Phi^{\textrm{a}}(g,h)\geq t\big)$. 
	\end{enumerate}
\end{theorem}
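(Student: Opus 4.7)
My plan is to derive both parts as consequences of Gordon's classical Gaussian min-max comparison inequality (the natural min-max extension of Slepian's lemma), applied to the bilinear Gaussian form $u_1^\top G v_1$. The guiding principle is that, conditionally on any deterministic $Q$, the primal process $X_{u,v} = u_1^\top G v_1$ and the auxiliary process $Y_{u,v} = \pnorm{v_1}{} g^\top u_1 + \pnorm{u_1}{} h^\top v_1$ are both centered Gaussian with a clean covariance structure that fits Gordon's hypothesis exactly.

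The first step is the covariance calculation. A direct computation gives
\begin{align*}
\E X_{u,v} X_{u',v'} - \E Y_{u,v} Y_{u',v'} = \bigl(\iprod{u_1}{u_1'} - \pnorm{u_1}{}\pnorm{u_1'}{}\bigr)\bigl(\iprod{v_1}{v_1'} - \pnorm{v_1}{}\pnorm{v_1'}{}\bigr),
\end{align*}
which is non-negative by Cauchy--Schwarz, with equality whenever $u_1$ and $u_1'$ are parallel (in particular when $u = u'$). This is exactly the Gordon comparison hypothesis: the primal covariances dominate the auxiliary ones off the diagonal, with equality on the diagonal. For part (1), I would then invoke Gordon's min-max lemma on the compact sets $D_u,D_v$, which yields the tail comparison $\Prob(\Phi^{\mathrm{p}}(G)\le t) \le 2\,\Prob(\Phi^{\mathrm{a}}(g,h)\le t)$; the deterministic offset $Q(u,v)$ is absorbed unchanged on both sides since it does not affect covariances. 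The factor of $2$ is the standard artefact of passing from an expectation comparison to a tail comparison via a median/symmetrization argument.

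For part (2), I would exploit convex-concavity and Sion's theorem to swap $\min_u \max_v = \max_v \min_u$. The processes $-X_{u,v}$ and $-Y_{u,v}$ satisfy the same covariance inequality (the sign is immaterial once squared), and so the identical argument applied to $(v,u)$ in place of $(u,v)$, with $-X,-Y$ in place of $X,Y$, converts the upper-tail statement for $\max_v \min_u$ back into the desired lower-tail control $\Prob(\Phi^{\mathrm{p}}(G)\ge t)\le 2\,\Prob(\Phi^{\mathrm{a}}(g,h)\ge t)$. A small subtlety is that Gordon's inequality itself must be shown once (typically via Gaussian interpolation: differentiating $\phi(\lambda) = \E F\bigl(\sqrt{\lambda}\,X + \sqrt{1-\lambda}\,Y\bigr)$ along a well-chosen $F$ encoding the min-max structure, and then using convexity of $F$ to sign the resulting integral by the covariance inequality above).

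The main obstacle I foresee is not the covariance identity itself but the interpolation-plus-smoothing argument that turns the infinitesimal covariance sign into a min-max tail inequality: one has to mollify $\min$ and $\max$ by soft-min/soft-max, carry out the interpolation, and then pass to the limit using compactness of $D_u,D_v$ and continuity of $Q$. Convexity in part (2) enters precisely to justify Sion's swap so that the same interpolation argument applies to both tails; without it, only the one-sided bound in part (1) survives.
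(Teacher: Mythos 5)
The paper does not prove this theorem at all: it is quoted verbatim as \cite[Corollary G.1]{miolane2021distribution} (and ultimately goes back to Gordon and to Thrampoulidis--Oymak--Hassibi), so there is no in-paper argument to compare yours against. That said, your proposal does follow the standard Gordon-comparison route, and the overall plan (covariance domination, Gordon's min-max lemma, Sion's swap for part (2)) is the right one.

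There are, however, two concrete slips worth flagging. First, the covariance identity you write down is not the covariance difference of $X_{u,v} = u_1^\top G v_1$ and $Y_{u,v} = \pnorm{v_1}{} g^\top u_1 + \pnorm{u_1}{} h^\top v_1$. A direct computation gives
\begin{align*}
\E X_{u,v} X_{u',v'} &= \iprod{u_1}{u_1'}\iprod{v_1}{v_1'},\\
\E Y_{u,v} Y_{u',v'} &= \pnorm{v_1}{}\pnorm{v_1'}{}\iprod{u_1}{u_1'} + \pnorm{u_1}{}\pnorm{u_1'}{}\iprod{v_1}{v_1'},
\end{align*}
so on the diagonal $\E X_{u,v}^2 = \pnorm{u_1}{}^2\pnorm{v_1}{}^2$ while $\E Y_{u,v}^2 = 2\pnorm{u_1}{}^2\pnorm{v_1}{}^2$: the variances do not match, and Gordon's hypothesis is violated. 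The identity you quote,
\begin{align*}
\bigl(\iprod{u_1}{u_1'} - \pnorm{u_1}{}\pnorm{u_1'}{}\bigr)\bigl(\iprod{v_1}{v_1'} - \pnorm{v_1}{}\pnorm{v_1'}{}\bigr),
\end{align*}
is in fact the covariance difference for the \emph{augmented} primal process $\tilde X_{u,v} \equiv u_1^\top G v_1 + \pnorm{u_1}{}\pnorm{v_1}{}\, z$ with $z\sim\mathcal{N}(0,1)$ independent of $G,g,h$; with this extra term the diagonal matches and the off-diagonal signs come out exactly as Gordon requires. You need to include this $z$-augmentation explicitly, or the interpolation step simply does not start. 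Second, the factor of $2$ is not a median/symmetrization artefact. It comes precisely from the $z$-augmentation: Gordon's lemma applied to $\tilde X$ and $Y$ gives a tail comparison without any factor of $2$, and one then conditions on the event $\{z\le 0\}$ (probability $1/2$), on which $\tilde X_{u,v}\le X_{u,v}$ pointwise and hence $\min_u\max_v(\tilde X+Q)\le \Phi^{\textrm{p}}(G)$; the factor $2$ is the reciprocal of $\Prob(z\le 0)$. Once both points are repaired, your argument for part (1), and the Sion-plus-sign-flip reduction for part (2), go through and recover the cited result.
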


Clearly, $\geq$ (resp. $\leq$) in (1) (resp. (2)) can be replaced with $>$ (resp $<$). In the proofs below, we shall assume without loss of generality that $G,g,h$ are independent Gaussian matrix/vectors defined on the same probability space. 

As mentioned above, the CGMT above has been utilized for deriving precise risk/distributional asymptotics for a number of canonical statistical estimators across various important models; we only refer the readers to \cite{thrampoulidis2015regularized,thrampoulidis2018precise,salehi2019impact,loureiro2021learning,deng2022model,celentano2023lasso,han2023noisy,liang2022precise,wang2022does,zhang2022modern,montanari2023generalization} for some selected references.

\subsection{Reparametrization and further notation}

Consider the reparametrization
\begin{align*}
w=\Sigma^{1/2}(\mu-\mu_0),\quad \hat{w}_{\eta;Z}\equiv \Sigma^{1/2}(\hat{\mu}_{\eta;Z}-\mu_0).
\end{align*}
Then with 
\begin{align}\label{def:F_fcn}
F(w)\equiv F_{(\Sigma,\mu_0)}(w)=\frac{1}{2}\pnorm{\mu_0+\Sigma^{-1/2}w}{}^2,
\end{align}
we have the following reparametrized version of $\hat{\mu}_{\eta;Z}$:
\begin{align*}
\hat{w}_{\eta;Z}=
\begin{cases}
\argmin_{w \in \R^n} \big\{F(w): Zw=\xi\big\}, & \eta =0;\\
\argmin_{w \in \R^n} \big\{F(w)+\frac{1}{\eta}\cdot \frac{1}{2 n}\pnorm{Zw-\xi}{}^2\big\}, & \eta>0.
\end{cases}
\end{align*}
Next we give some further notation for cost functions. Let for $\eta\geq 0$,
\begin{align}\label{def:h_l}
h_{\eta;Z}(w,v)&\equiv \frac{1}{\sqrt{n}}\iprod{v}{Zw-\xi}+F(w)-\frac{\eta\pnorm{v}{}^2}{2},\nonumber\\
\ell_\eta(w,v)&\equiv \frac{1}{\sqrt{n}}\Big( -\pnorm{v}{}\iprod{g}{w}+\pnorm{w}{}\iprod{h}{v}-\iprod{v}{\xi}\Big)+F(w)-\frac{\eta\pnorm{v}{}^2}{2},
\end{align}
and for $L_v \in [0,\infty]$,
\begin{align}\label{def:H_L}
H_{\eta;Z}(w;L_v)&\equiv  \max_{v \in B_n(L_v)} h_{\eta;Z}(w,v)\equiv  \max_{v \in B_n(L_v)}\bigg\{ \frac{\iprod{v}{Zw-\xi}}{\sqrt{n}}+F(w)-\frac{\eta \pnorm{v}{}^2}{2}\bigg\},\\ 
L_\eta(w;L_v)&\equiv \max_{v \in B_n(L_v)} \ell_\eta(w,v)=\max_{\beta\in [0,L_v]}\bigg\{\frac{\beta}{\sqrt{n}} \Big(\bigpnorm{\pnorm{w}{}h-\xi}{}-\iprod{g}{w}\Big)+F(w)- \frac{\eta \beta^2}{2}\bigg\}. \nonumber
\end{align}
We shall simply write $H_{\eta;Z}(\cdot)=H_{\eta;Z}(\cdot;\infty)$ and $L_\eta(\cdot)=L_\eta(\cdot;\infty)$. When $Z=G$, we sometimes write $h_{\eta;G}=h_\eta$ and $H_{\eta;G}=H_\eta$ for simplicity of notation.

Let the empirical noise $\sigma_m^2$ and its modified version be
\begin{align}\label{def:sigma_pm}
\sigma_m^2\equiv \frac{\pnorm{\xi}{}^2}{\pnorm{h}{}^2},\quad \sigma_{\pm}^2(L_w)\equiv \bigg(\sigma_m^2\pm 2L_w \frac{\abs{\iprod{h}{\xi}}}{\pnorm{h}{}^2}\bigg)_+.
\end{align}

Finally we define $\mathsf{D}_{\eta,\pm}$ and its deterministic version $\overline{\mathsf{D}}_\eta$ as follows:
\begin{align}\label{def:D_fcn}
\mathsf{D}_{\eta,\pm}(\beta,\gamma)&\equiv \frac{\beta}{2}\bigg(\gamma \big(\phi e_h^2-e_g^2\big)+\frac{\sigma_\pm^2}{\gamma}\bigg)-\frac{\eta\beta^2}{2}+\mathsf{e}_F\bigg(\frac{\gamma}{\sqrt{n}} g;\frac{\gamma}{\beta}\bigg),\nonumber\\
\overline{\mathsf{D}}_\eta(\beta,\gamma)& \equiv \frac{\beta}{2}\bigg(\gamma \big(\phi-1\big)+\frac{\sigma_\xi^2}{\gamma}\bigg)-\frac{\eta\beta^2}{2}+\E\mathsf{e}_F\bigg(\frac{\gamma}{\sqrt{n}} g;\frac{\gamma}{\beta}\bigg).
\end{align}
Here recall $\mathsf{e}_F$ is the Moreau envelope of $F$ in (\ref{def:F_fcn}). Note that $\mathsf{D}_{\eta,\pm}$ depends on the choice of $L_w$, but for notational convenience we drop this dependence here.

\subsection{Proof outline for Theorem \ref{thm:min_norm_dist} for $\eta=0$}\label{subsection:proof_outline_gaussian}

We shall outline below the main steps for the proof of Theorem \ref{thm:min_norm_dist} for $\eta=0$ in the regime $\phi^{-1}>1$ under a stronger condition $\pnorm{\Sigma^{-1}}{\op}\lesssim 1$. 
The high level strategy of the proof shares conceptual similarities to \cite{miolane2021distribution,celentano2023lasso}, but the details differ significantly. 

\vspace{1.5mm}

\noindent (\emph{\textbf{Step 1}: Localization of the primal optimization}). In this step, we show that for $L_w,L_v>0$ such that $L_w\wedge L_v \gtrsim 1$, with high probability (w.h.p.),
\begin{align}\label{ineq:proof_outline_1}
\min_{w \in B_n(L_w)} H_0(w;L_v)= \min_{w \in \R^n} H_0(w). 
\end{align}
A formal statement of the above localization can be found in Proposition \ref{prop:H_local}. The key point here is that despite $\min_w H_0(w)$ optimizes a deterministic function with a random constraint, it can be efficiently rewritten (in a probabilistic sense) in a minimax form indexed by \emph{compact sets} that facilitate the application of the convex Gaussian min-max Theorem \ref{thm:CGMT}.

\vspace{1.5mm}

\noindent (\emph{\textbf{Step 2}: Characterization of the Gordon cost optimum}). In this step, we show that a suitably localized version of $\min_w L_0(w)$ concentrates around some \emph{deterministic} quantity involving the function $\overline{\mathsf{D}}_0$ in (\ref{def:D_fcn}). In particular, we show in Theorem \ref{thm:char_gordon_cost_opt} that for $L_w,L_v\asymp 1$ chosen large enough, w.h.p.,
\begin{align}\label{ineq:proof_outline_2}
\min_{w \in B_n(L_w)}L_0(w;L_v) \approx \max_{\beta>0}\min_{\gamma >0} \overline{\mathsf{D}}_0(\beta,\gamma). 
\end{align}
The proof of (\ref{ineq:proof_outline_2}) is fairly involved, as the minimax problem $\min_w L_0(w)=\min_w \max_v \ell_0(w,v)$ (and its suitably localized versions) cannot be computed exactly. We get around this technical issue by the following bracketing strategy:
\begin{itemize}
	\item (\emph{\textbf{Step 2.1}}). We show in Proposition \ref{prop:L_local} that for the prescribed choice of $L_w,L_v$, w.h.p., both
	\begin{align*}
	\max_{\beta>0}\min_{\gamma>0} \mathsf{D}_{0,-}(\beta,\gamma)\leq \min_{w \in B_n(L_w)}L_0(w;L_v)\leq \max_{\beta>0}\min_{\gamma>0} \mathsf{D}_{0,+}(\beta,\gamma),
	\end{align*}
	and the localization 
	\begin{align*}
	\max_{\beta>0}\min_{\gamma>0} \mathsf{D}_{0,\pm}(\beta,\gamma) = \max_{1/C \leq \beta \leq C } \min_{1/C\leq \gamma \leq C} \mathsf{D}_{0,\pm}(\beta,\gamma)
	\end{align*}
	hold for some large $C>0$.
	\item (\emph{\textbf{Step 2.2}}). We show in Proposition \ref{prop:D_pm_barD} that for localized minimax problems, we may replace $\mathsf{D}_{0,\pm}$ by $\overline{\mathsf{D}}_{0}$: w.h.p.,
	\begin{align*}
	\max_{1/C \leq \beta \leq C} \min_{1/C\leq \gamma \leq C} \mathsf{D}_{0,\pm}(\beta,\gamma) \approx \max_{1/C \leq \beta \leq C} \min_{1/C\leq \gamma \leq C} \overline{\mathsf{D}}_0(\beta,\gamma).
	\end{align*}
	\item (\emph{\textbf{Step 2.3}}). We show in Proposition \ref{prop:local_barD} that (de)localization holds for the (deterministic) max-min optimization problem with $\overline{\mathsf{D}}_0$:
	\begin{align*}
	\max_{\beta>0}\min_{\gamma >0} \overline{\mathsf{D}}_0(\beta,\gamma) = \max_{1/C \leq \beta \leq C} \min_{1/C\leq \gamma \leq C} \overline{\mathsf{D}}_0(\beta,\gamma). 
	\end{align*}
\end{itemize}
Combining the above Steps 2.1-2.3 yields (\ref{ineq:proof_outline_2}). An important step to prove the (de)localization claims above is to derive apriori estimates for the solutions of the fixed point equation (\ref{eqn:fpe}) and its sample version, to be defined in (\ref{eqn:fpe_sample}). These estimates will be detailed in Section \ref{section:proof_fpe}. 

\vspace{1.5mm}

\noindent (\emph{\textbf{Step 3}: Locating the global minimizer of the Gordon objective}). In this step, we show that a suitably localized version of the Gordon objective $w\mapsto L_0(w)$ attains its global minimum approximately at $w_{0,\ast} \equiv \Sigma^{1/2}\big(\hat{\mu}_{(\Sigma,\mu_0)}^{\seq}(\gamma_{0,\ast};\tau_{0,\ast})-\mu_0\big)$ in the following sense. For any $\epsilon>0$ and any $g:\R^n\to \R$ that is $1$-Lipschitz with respect to $\pnorm{\cdot}{\Sigma^{-1}}$, let $
D_{0;\epsilon}(\mathsf{g})\equiv \big\{w\in \R^n: \abs{\mathsf{g}(w)-\E \mathsf{g}(w_{0,\ast})}\geq \epsilon\big\}$ be the `exceptional set'. We show in Theorem \ref{thm:gordon_gap} that again for $L_w,L_v\asymp 1$ chosen large enough, w.h.p.,
\begin{align}\label{ineq:proof_outline_4}
\min_{w \in D_{0;\epsilon}(\mathsf{g})\cap B_n(L_w)}L_0(w;L_v) \geq  \max_{\beta>0}\min_{\gamma >0} \overline{\mathsf{D}}_0(\beta,\gamma)  + \Omega_\epsilon(1).
\end{align}
The main challenge in proving (\ref{ineq:proof_outline_4}) is partly attributed to the possible violation of strong convexity of the map $w\mapsto L_0(w;L_v)$, due to the necessity of working with non-Gaussian $\xi$'s. We will get around this technical issue in similar spirit to Step 2 by another bracketing strategy. In particular:
\begin{itemize}
	\item (\emph{\textbf{Step 3.1}}). In Lemma \ref{lem:L_L_pm_property}, we will use surrogate, strongly convex functions $L_{0,\pm}(\cdot;L_v)$, formally defined in (\ref{def:L_eta_pm}), to provide a sufficiently tight bracket for $L_0(\cdot;L_v)$ over large enough compact sets. 
	\item (\emph{\textbf{Step 3.2}}). In Proposition \ref{prop:L_pm_minimizer}, we show that the minimizers of $w\mapsto L_{0,\pm}(\cdot;L_v)$ can be computed exactly and are close enough to $w_{0,\ast}$. 
	\item (\emph{\textbf{Step 3.3}}). In Proposition \ref{prop:L_minimizer}, combined with the tight bracketing and certain apriori estimates, we then conclude that all minimizers of $w \mapsto L_0(\cdot;L_v)$ must be close to $w_{0,\ast}$.
\end{itemize}
With all the above steps, finally we prove (\ref{ineq:proof_outline_4}) by (i) using the proximity of $L_0$ and its surrogate $L_{0,\pm}$ and (ii) exploiting the strong convexity of $L_{0,\pm}$.

\vspace{1.5mm}

\noindent (\emph{\textbf{Step 4}: Putting pieces together and establishing uniform guarantees}). In this final step, we shall use the convex Gaussian min-max theorem to translate the estimates (\ref{ineq:proof_outline_2}) in Step 2 and (\ref{ineq:proof_outline_4}) in Step 3 to their counterparts with primal cost function $H_0$. For the global cost optimum, with the help of the localization in (\ref{ineq:proof_outline_1}), by choosing $L_w, L_v\asymp 1$, we have w.h.p.,
\begin{align*}
\min_{w \in \R^n} H_0(w) \stackrel{(\ref{ineq:proof_outline_1})}{=} \min_{w \in B_n(L_w)} H_0(w;L_v)\stackrel{\Prob}{\approx}  \min_{w \in B_n(L_w)} L_0(w;L_v) \stackrel{(\ref{ineq:proof_outline_2})}{\approx}\max_{\beta>0}\min_{\gamma >0} \overline{\mathsf{D}}_0(\beta,\gamma).
\end{align*}
For the cost over the exceptional set, we have w.h.p.,
\begin{align*}
\min_{w \in D_{0;\epsilon}(\mathsf{g})\cap B_n(L_w)}H_0(w)&\geq \min_{w \in D_{0;\epsilon}(\mathsf{g})\cap B_n(L_w)}H_0(w;L_v) \\
&\stackrel{\Prob}{\geq}\min_{w \in D_{0;\epsilon}(\mathsf{g})\cap B_n(L_w)}L_0(w;L_v) &\stackrel{(\ref{ineq:proof_outline_4})}{\geq} \max_{\beta>0}\min_{\gamma >0} \overline{\mathsf{D}}_0(\beta,\gamma)  + \Omega_\epsilon(1). 
\end{align*}
Combining the above two displays, we then conclude that w.h.p., $\hat{w}_0\notin D_{0;\epsilon}(\mathsf{g})\cap B_n(L_w)$. Finally using apriori estimate on $\pnorm{\hat{w}_0}{}$ we may conclude that w.h.p., $\hat{w}_0\notin D_{0;\epsilon}(\mathsf{g})$, i.e., $\abs{\mathsf{g}(\hat{w}_0)-\E \mathsf{g}(w_{0,\ast})}\leq \epsilon$. 

The uniform guarantee in $\eta$ is then proved by (i) extending the above arguments to include any positive $\eta>0$, and (ii) establishing (high probability) Lipschitz continuity (w.r.t. $\pnorm{\cdot}{\Sigma^{-1}}$) of the maps $\eta \mapsto \hat{w}_\eta$ and $\eta \mapsto w_{\eta,\ast}$. 

Details of the above outline are implemented in Section \ref{section:proof_gaussian_design}. 

\subsection{Proof outline for Theorem \ref{thm:universality_min_norm} for $\eta=0$}\label{subsection:proof_outline_general}

The main tool we will use to prove the universality Theorem \ref{thm:universality_min_norm} is the following set of comparison inequalities developed in \cite{han2023universality}: Suppose $Z$ matches the first two moments of $G$, and possesses enough high moments. Then for any measurable sets $\mathcal{S}_w\subset [-L_n/\sqrt{n},L_n/\sqrt{n}]^n, \mathcal{S}_v \subset [-L_n/\sqrt{n},L_n/\sqrt{n}]^m$, and any smooth test function $\mathsf{T}:\R \to \R$ (standardized with derivatives of order $1$ in $\pnorm{\cdot}{\infty}$), 
\begin{align}\label{ineq:proof_outline_5}
\Big|\E\mathsf{T}\Big(\min_{w \in \mathcal{S}_w}\max_{v \in \mathcal{S}_v} h_{\eta;Z}(w,v)\Big)- \E\mathsf{T}\Big(\min_{w \in \mathcal{S}_w}\max_{v \in \mathcal{S}_v} h_{\eta;G}(w,v)\Big)\Big|&\leq \mathsf{r}_n(L_n),\nonumber\\
\Big|\E\mathsf{T}\Big(\min_{w \in \mathcal{S}_w} H_{\eta;Z}(w)\Big)- \E\mathsf{T}\Big(\min_{w \in \mathcal{S}_w} H_{\eta;G}(w)\Big)\Big|&\leq \mathsf{r}_n(L_n). 
\end{align}
Here $\mathsf{r}_n(L_n)\to 0$ for $L_n=n^{\vartheta}$ with sufficiently small $\vartheta>0$. The readers are referred to Theorems \ref{thm:universality_smooth} and \ref{thm:min_max_universality} for a precise statement of (\ref{ineq:proof_outline_5}). 

An important technical subtlety here is that while the first inequality in (\ref{ineq:proof_outline_5}) holds down to $\eta=0$, the second inequality does not. This is so because $\min_w H_{0;Z}(w)$, which minimizes a deterministic function under a random constraint due to the unbounded constraint in the maximization of $v$, is qualitatively different from $\min_w H_{\eta;Z}(w)$ for any $\eta>0$.

Now we shall sketch how the comparison inequalities (\ref{ineq:proof_outline_5}) lead to universality. 

\vspace{1.5mm}

\noindent (\emph{\textbf{Step 1}: Universality of the global cost optimum}). In this step, we shall use the first inequality in (\ref{ineq:proof_outline_5}) to establish the universality of the global Gordon cost:
\begin{align}\label{ineq:proof_outline_6}
\min_{w \in \R^n} H_{0;Z}(w)=\min_{w \in \R^n} \max_{v \in \R^m} h_{0;Z}(w,v)\stackrel{\Prob}{\approx} \min_{w \in \R^n} \max_{v \in \R^m} h_{0;G}(w,v).
\end{align}
See Theorem \ref{thm:universality_global_cost} for a formal statement of (\ref{ineq:proof_outline_6}). 

The crux to establish (\ref{ineq:proof_outline_6}) via the first inequality of (\ref{ineq:proof_outline_5}) is to show that, the ranges of the minimum and the maximum of $\min_w \max_v h_{0;Z}(w,v)$ can be localized into an $L_\infty$ ball of order close to $\bigo(1/\sqrt{n})$. This amounts to showing that the stationary points $(\hat{w}_{0;Z},\hat{v}_{0;Z})$, where $\hat{w}_{\eta;Z}=\Sigma^{1/2}(\hat{\mu}_{\eta;Z}-\mu_0)$ and  $\hat{v}_{\eta;Z}=-n^{-1/2}(XX^\top/n+\eta I_m)^{-1}Y$ (cf. Eqn. (\ref{eqn:u_v_alter_form})), are delocalized. We prove such delocalization properties in Proposition \ref{prop:delocal_u_v}  for `most' $\mu_0 \in B_n(1)$.

\vspace{1.5mm}

\noindent (\emph{\textbf{Step 2}: Universality of the cost over exceptional sets}). In this step, we shall use the second inequality in (\ref{ineq:proof_outline_5}) to establish the universality of the Gordon cost over exceptional sets $D_{0;\epsilon}(\mathsf{g})$. In particular, we show in Theorem \ref{thm:universality_exceptional_set} that with $L_n=Cn^{\vartheta}$ for sufficiently small $\vartheta>0$ and a large enough $C_0>0$, w.h.p.,
\begin{align}\label{ineq:proof_outline_8}
\min_{w \in D_{0;\epsilon}(\mathsf{g})\cap B_{(2,\infty)}(C_0, {L_n}/{\sqrt{n}}) }H_{0;Z}(w)\geq \max_{\beta>0}\min_{\gamma >0}\overline{\mathsf{D}}_0(\beta,\gamma)+\Omega_\epsilon(1).
\end{align}
Here $B_{(2,\infty)}(C_0,L_n/\sqrt{n})=B_n(C_0)\cap L_\infty(L_n/\sqrt{n})$. A technical difficulty to apply the second inequality of (\ref{ineq:proof_outline_5}) rests in its singular behavior near the interpolation regime $\eta=0$. Also, we note that for a general exceptional set $D_{0;\epsilon}(\mathsf{g})$, the maximum over $v$ in $\min_{w \in D_{0;\epsilon}(\mathsf{g})} H_{0;Z}(w)=\min_{w \in D_{0;\epsilon}(\mathsf{g})} \max_v h_{0;Z}(w,v)$ need not be delocalized, so the first inequality of (\ref{ineq:proof_outline_5}) cannot be applied. This singularity issue will be resolved in two steps:
\begin{itemize}
	\item (\emph{\textbf{Step 2.1}}). First, we use the second inequality of (\ref{ineq:proof_outline_5}) to show that,  (\ref{ineq:proof_outline_8}) is valid for a version with small enough $\eta>0$:
	\begin{align*}
	\Prob\bigg(\min_{w \in D_{\eta;\epsilon}(\mathsf{g})\cap B_{(2,\infty)}(C_0, {L_n}/{\sqrt{n}}) }H_{\eta;Z}(w)\geq \max_{\beta>0}\min_{\gamma >0}\overline{\mathsf{D}}_\eta(\beta,\gamma)+\Omega_\epsilon(1)\bigg)\geq 1-c_\eta\cdot \mathfrak{o}(1).
	\end{align*}
	See (\ref{ineq:gordon_cost_except_universality_2}) for a precise statement. As expected, $c_\eta$ blows up as $\eta \downarrow 0$. 
	\item (\emph{\textbf{Step 2.2}}). Next, by using the `stability' of the set $D_{\eta;\epsilon}(\mathsf{g})$ (cf. Lemma \ref{lem:D_0_eta}) and $\max_{\beta>0}\min_{\gamma  >0}\overline{\mathsf{D}}_\eta(\beta,\gamma)$ (cf. Eqn. (\ref{ineq:cont_D_eta})) with respect to $\eta$, for a small enough $\eta>0$, we have the following series of inequalities:
	\begin{align*}
	&\min_{w \in D_{0;\epsilon}(\mathsf{g})\cap B_{(2,\infty)}(C_0, \frac{L_n}{\sqrt{n}}) }H_{0;Z}(w)\\
	&\geq \min_{w \in D_{0;\epsilon}(\mathsf{g})\cap B_{(2,\infty)}(C_0, \frac{L_n}{\sqrt{n}}) }H_{\eta;Z}(w)\quad \hbox{(by definition of $H_{\eta;Z}$)}\\
	&\geq \min_{w \in D_{\eta;\epsilon_\eta}(\mathsf{g})\cap B_{(2,\infty)}(C_0, \frac{L_n}{\sqrt{n}}) }H_{\eta;Z}(w)\quad (\hbox{$\epsilon_\eta\approx \epsilon$ by Lemma \ref{lem:D_0_eta}})\\
	&\stackrel{\Prob}{\geq}\max_{\beta>0}\min_{\gamma >0}\overline{\mathsf{D}}_\eta(\beta,\gamma)+\Omega_\epsilon(1)\quad \hbox{(by Step 2.1 above)}\\
	&\geq  \max_{\beta>0}\min_{\gamma >0}\overline{\mathsf{D}}_0(\beta,\gamma)-\bigo(\eta)+\Omega_\epsilon(1)\quad \hbox{(by Eqn. (\ref{ineq:cont_D_eta}))}. 
	\end{align*}
	Now for a given $\epsilon>0$, we may choose $\eta>0$ small enough so that the term $-\bigo(\eta)$ is absorbed into $\Omega_\epsilon(1)$, and therefore concluding (\ref{ineq:proof_outline_8}). 
\end{itemize}
A complete proof of the above outline is detailed in Section \ref{section:proof_general_design}.

\section{Proof preliminaries}\label{section:preliminaries}

\subsection{Some properties of $\mathsf{e}_F$ and $\prox_F$}

We write $g_n\equiv g/\sqrt{n}$ in this subsection. First we give an explicit expression for $\E \err_{(\Sigma,\mu_0)}(\gamma;\tau)$ and $\E \dof_{(\Sigma,\mu_0)}(\gamma;\tau)$.

\begin{lemma}\label{lem:est_dof}
	For any $(\gamma,\tau) \in (0,\infty)^2$,
	\begin{align*}
	\E \err_{(\Sigma,\mu_0)}(\gamma;\tau)& = \tau^2 \pnorm{(\Sigma+\tau I)^{-1}\Sigma^{1/2}\mu_0}{}^2+ \gamma^2\cdot n^{-1} \tr\big(\Sigma^2 (\Sigma+\tau I)^{-2}\big),\\
	\E \dof_{(\Sigma,\mu_0)}(\gamma;\tau)
	& = \gamma^2\cdot n^{-1} \tr\big(\Sigma (\Sigma+\tau I)^{-1}\big).
	\end{align*}
\end{lemma}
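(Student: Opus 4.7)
The plan is to prove both identities by direct computation from the closed form of $\hat{\mu}^{\seq}_{(\Sigma,\mu_0)}(\gamma;\tau)$ given in (\ref{def:ridge_seq}), exploiting that $g\sim\mathcal{N}(0,I_n)$ is centered with identity covariance and that all relevant matrices are simultaneous polynomials in $\Sigma$.

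First I would compute the residual $\hat{\mu}^{\seq}_{(\Sigma,\mu_0)}(\gamma;\tau)-\mu_0$ explicitly. Starting from
\[
\hat{\mu}^{\seq}_{(\Sigma,\mu_0)}(\gamma;\tau)=(\Sigma+\tau I_n)^{-1}\Sigma\,\mu_0+\frac{\gamma}{\sqrt{n}}(\Sigma+\tau I_n)^{-1}\Sigma^{1/2}g,
\]
and using the identity $(\Sigma+\tau I_n)^{-1}\Sigma-I_n=-\tau(\Sigma+\tau I_n)^{-1}$ (valid since both sides are polynomials in $\Sigma$), I obtain
\[
\hat{\mu}^{\seq}_{(\Sigma,\mu_0)}(\gamma;\tau)-\mu_0=-\tau(\Sigma+\tau I_n)^{-1}\mu_0+\frac{\gamma}{\sqrt{n}}(\Sigma+\tau I_n)^{-1}\Sigma^{1/2}g.
\]
Applying $\Sigma^{1/2}$ gives a clean affine-in-$g$ expression.

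For the first identity, I would square the norm and take expectation. Since $g$ is centered, the cross term vanishes, leaving the deterministic bias part $\tau^2\|\Sigma^{1/2}(\Sigma+\tau I_n)^{-1}\mu_0\|^2$, which equals $\tau^2\|(\Sigma+\tau I_n)^{-1}\Sigma^{1/2}\mu_0\|^2$ by commutativity, plus the variance part $(\gamma^2/n)\E\|\Sigma^{1/2}(\Sigma+\tau I_n)^{-1}\Sigma^{1/2}g\|^2$. The latter equals $(\gamma^2/n)\tr\bigl(\Sigma^{1/2}(\Sigma+\tau I_n)^{-1}\Sigma(\Sigma+\tau I_n)^{-1}\Sigma^{1/2}\bigr)=(\gamma^2/n)\tr\bigl(\Sigma^2(\Sigma+\tau I_n)^{-2}\bigr)$ by cyclicity of trace.

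For the second identity, I would plug the same decomposition into $\dof_{(\Sigma,\mu_0)}(\gamma;\tau)=\langle \gamma g/\sqrt{n},\Sigma^{1/2}(\hat{\mu}^{\seq}-\mu_0)\rangle$. The bias part contributes $-\tau\gamma n^{-1/2}\langle g,\Sigma^{1/2}(\Sigma+\tau I_n)^{-1}\mu_0\rangle$, whose expectation is zero. The remaining term is $(\gamma^2/n)\E\langle g,\Sigma^{1/2}(\Sigma+\tau I_n)^{-1}\Sigma^{1/2}g\rangle=(\gamma^2/n)\tr\bigl(\Sigma(\Sigma+\tau I_n)^{-1}\bigr)$, again by cyclicity.

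There is no serious obstacle: the lemma is a direct bias–variance decomposition, and the only mild observation needed is the commutativity of $\Sigma^{1/2}$, $(\Sigma+\tau I_n)^{-1}$, and $\Sigma$, which makes the various equivalent trace/norm expressions interchangeable. The entire proof is a few lines.
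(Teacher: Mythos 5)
Your proposal is correct and matches the paper's approach exactly: the paper also reduces everything to the affine-in-$g$ identity $\Sigma^{1/2}\big(\hat{\mu}^{\seq}_{(\Sigma,\mu_0)}(\gamma;\tau)-\mu_0\big)=(\Sigma+\tau I)^{-1}\Sigma^{1/2}\big(-\tau\mu_0+\gamma\Sigma^{1/2}g/\sqrt{n}\big)$ and then invokes ``direct calculations.'' You have simply written out the bias--variance split and the trace manipulations that the paper leaves implicit.
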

\begin{proof}
	Using the closed-form of $\hat{\mu}_{(\Sigma,\mu_0)}^{\seq}$, we may compute
	\begin{align}\label{ineq:est_dof_1}
	\Sigma^{1/2}\big(\hat{\mu}_{(\Sigma,\mu_0)}^{\seq}(\gamma;\tau)-\mu_0\big)& = (\Sigma+\tau I)^{-1}\Sigma^{1/2} \big(-\tau \mu_0+ \gamma\Sigma^{1/2}g_n\big).
	\end{align}
	The claims follow from direct calculations.
\end{proof}

Next we give explicit expression for $\prox_F(\gamma g_n;\tau)$ and $\mathsf{e}_F(\gamma g_n;\tau)$.

\begin{lemma}\label{lem:prox_env_F}
	It holds that
	\begin{align*}
	\prox_F(\gamma g_n;\tau)&=\Sigma^{1/2}\big(\hat{\mu}_{(\Sigma,\mu_0)}^{\seq}(\gamma;\tau)-\mu_0\big),\\
	\mathsf{e}_F(\gamma g_n;\tau)&=\frac{1}{2\tau}\pnorm{\Sigma^{1/2} \hat{\mu}_{(\Sigma,\mu_0)}^{\seq}(\gamma;\tau)-y_{(\Sigma,\mu_0)}^{\seq}(\gamma) }{}^2 +\frac{1}{2}\pnorm{\hat{\mu}_{(\Sigma,\mu_0)}^{\seq}(\gamma;\tau)}{}^2.
	\end{align*}
	Furthermore,
	\begin{align*}
	\E \mathsf{e}_F(\gamma g_n;\tau)& = \frac{1}{2\tau}\big(\E \err_{(\Sigma,\mu_0)}(\gamma;\tau)-2 \E \dof_{(\Sigma,\mu_0)}(\gamma;\tau)+\gamma^2\big)\\
	&\qquad + \frac{1}{2}\Big(\pnorm{(\Sigma+\tau I)^{-1}\Sigma \mu_0}{}^2+ \gamma^2\cdot \frac{1}{n}\tr(\Sigma (\Sigma+\tau I)^{-2})\Big).
	\end{align*}
\end{lemma}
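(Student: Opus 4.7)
The plan is to prove the three claims in order, with each following by direct substitution and routine linear algebra built on $\prox_F$ being the unique stationary point of a strongly convex quadratic.

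For the proximal-operator formula, I would write the first-order optimality condition for $z = \prox_F(\gamma g_n;\tau)$, namely $\tau^{-1}(z - \gamma g_n) + \Sigma^{-1/2}(\mu_0 + \Sigma^{-1/2}z) = 0$, which rearranges to $(\tau^{-1}I + \Sigma^{-1})z = \tau^{-1}\gamma g_n - \Sigma^{-1/2}\mu_0$. Inverting $(\tau^{-1}I + \Sigma^{-1}) = \tau^{-1}\Sigma^{-1}(\Sigma + \tau I)$ (using that $\Sigma$ and $(\Sigma+\tau I)^{-1}$ commute as functions of $\Sigma$) then yields $z = (\Sigma + \tau I)^{-1}\Sigma^{1/2}\bigl(\gamma\Sigma^{1/2}g_n - \tau \mu_0\bigr)$. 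On the other side, the closed form of $\hat{\mu}^{\seq}_{(\Sigma,\mu_0)}(\gamma;\tau)$ combined with the identity $\Sigma^{1/2}\mu_0 = (\Sigma+\tau I)^{-1}\Sigma\cdot\Sigma^{1/2}\mu_0 + \tau(\Sigma+\tau I)^{-1}\Sigma^{1/2}\mu_0$ gives precisely the same expression for $\Sigma^{1/2}(\hat{\mu}^{\seq}_{(\Sigma,\mu_0)}(\gamma;\tau) - \mu_0)$, proving the first claim.

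For the Moreau envelope, I would substitute the just-obtained formula into the definition $\mathsf{e}_F(\gamma g_n;\tau) = \tfrac{1}{2\tau}\|\gamma g_n - \prox_F(\gamma g_n;\tau)\|^2 + F(\prox_F(\gamma g_n;\tau))$. Setting $w^\ast = \Sigma^{1/2}(\hat{\mu}^{\seq}_{(\Sigma,\mu_0)}(\gamma;\tau) - \mu_0)$, the identity $\mu_0 + \Sigma^{-1/2}w^\ast = \hat{\mu}^{\seq}_{(\Sigma,\mu_0)}(\gamma;\tau)$ simplifies the regularizer to $\tfrac{1}{2}\|\hat{\mu}^{\seq}_{(\Sigma,\mu_0)}(\gamma;\tau)\|^2$, while the relation $\gamma g_n - w^\ast = y^{\seq}_{(\Sigma,\mu_0)}(\gamma) - \Sigma^{1/2}\hat{\mu}^{\seq}_{(\Sigma,\mu_0)}(\gamma;\tau)$ turns the first term into the stated squared residual.

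For the expectation, I would expand the two squared norms and apply Lemma \ref{lem:est_dof} term by term. Specifically, $\|\Sigma^{1/2}\hat{\mu}^{\seq} - y^{\seq}\|^2 = \err_{(\Sigma,\mu_0)}(\gamma;\tau) - 2\,\dof_{(\Sigma,\mu_0)}(\gamma;\tau) + \gamma^2\|g_n\|^2$, whose expectation combined with $\E\|g\|^2 = n$ and Lemma \ref{lem:est_dof} produces the first bracketed quantity. For $\E\|\hat{\mu}^{\seq}\|^2$, writing $\hat{\mu}^{\seq} = (\Sigma+\tau I)^{-1}\Sigma\mu_0 + \gamma(\Sigma+\tau I)^{-1}\Sigma^{1/2}g_n$ and expanding, the cross term vanishes by $\E g = 0$, and the quadratic term evaluates via $\E\|(\Sigma+\tau I)^{-1}\Sigma^{1/2}g_n\|^2 = n^{-1}\tr\bigl(\Sigma(\Sigma+\tau I)^{-2}\bigr)$; summing gives the second bracket. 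The whole argument is entirely computational, so there is no real obstacle---the only thing to keep track of is the commutativity of $\Sigma^{1/2}$ with $(\Sigma + \tau I)^{-1}$ and the algebraic identity $(\Sigma+\tau I)^{-1}\Sigma = I - \tau(\Sigma+\tau I)^{-1}$ used to line up the two sides of the proximal-operator claim.
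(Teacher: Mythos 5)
Your proposal is correct and follows the same route as the paper's own (very terse) proof: both derive the proximal operator from the first-order condition of the strongly convex quadratic, substitute into the definition of the Moreau envelope, and then expand the two squared norms in expectation using $\E g = 0$, $\E\|g\|^2 = n$, and the formulas from Lemma \ref{lem:est_dof}. Your write-up simply spells out the linear-algebra steps that the paper leaves implicit with the phrase ``follows from the definition of $F$.''
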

\begin{proof}
	The two identities in the first display follows from the definition of $F$. For the second display, note that $\E \mathsf{e}_F(\gamma g_n;\tau)$ is equal to
	\begin{align*}
	 \frac{1}{2\tau}\big(\E \err_{(\Sigma,\mu_0)}(\gamma;\tau)-2 \E \dof_{(\Sigma,\mu_0)}(\gamma;\tau)+\gamma^2\big)+\frac{1}{2} \E \pnorm{\hat{\mu}_{(\Sigma,\mu_0)}^{\seq}(\gamma;\tau)}{}^2 .
	\end{align*}
	Using $
	\E \pnorm{\hat{\mu}_{(\Sigma,\mu_0)}^{\seq}(\gamma;\tau)}{}^2 = \pnorm{(\Sigma+\tau I)^{-1}\Sigma \mu_0}{}^2+ \gamma^2\cdot n^{-1}\tr\big(\Sigma (\Sigma+\tau I)^{-2}\big)$
	to conclude. 
\end{proof}

The derivative formula below for $\mathsf{e}_{F}$ will be useful.

\begin{lemma}\label{lem:derivative_eF}
	It holds that
	\begin{align*}
	\nabla_x \mathsf{e}_{F}(x;\tau) = \frac{1}{\tau}\big(x-\prox_{F}(x;\tau)\big), \quad	\partial_\tau \mathsf{e}_{F}(x;\tau) = -\frac{1}{2\tau^2}\pnorm{x-\prox_{F}(x;\tau)}{}^2.
	\end{align*}
\end{lemma}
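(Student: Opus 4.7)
The plan is to derive both formulas as instances of the envelope (Danskin-type) theorem applied to the strongly convex minimization problem defining $\mathsf{e}_F(x;\tau)$. Since $F(w)=\tfrac{1}{2}\pnorm{\mu_0+\Sigma^{-1/2}w}{}^2$ is smooth and strictly convex (using the invertibility of $\Sigma$ from Assumption \ref{assump:design}), and $\tau>0$, the objective
\[
\Phi(z;x,\tau)\equiv \frac{1}{2\tau}\pnorm{x-z}{}^2+F(z)
\]
is jointly continuous and, for each $(x,\tau)$, strongly convex in $z$. Hence the minimizer $z^\ast(x,\tau)=\prox_F(x;\tau)$ is uniquely defined and continuous in $(x,\tau)$, which is the standing regularity needed to differentiate through the minimum.

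For the gradient in $x$, I would first compute the partial of $\Phi$ in $x$ at fixed $z$, namely $\nabla_x \Phi(z;x,\tau)=(x-z)/\tau$. By the envelope theorem (or equivalently, by writing $\mathsf{e}_F(x+\delta;\tau)-\mathsf{e}_F(x;\tau)$, inserting the suboptimal point $z^\ast(x,\tau)$ on one side and $z^\ast(x+\delta,\tau)$ on the other, and using the Lipschitz continuity of $\prox_F$ in $x$), one obtains
\[
\nabla_x \mathsf{e}_F(x;\tau)=\nabla_x \Phi(z;x,\tau)\big|_{z=z^\ast(x,\tau)}=\frac{1}{\tau}\bigl(x-\prox_F(x;\tau)\bigr),
\]
which is the first claim.

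For the derivative in $\tau$, the same envelope argument applies. Since $\partial_\tau \Phi(z;x,\tau)=-\tfrac{1}{2\tau^2}\pnorm{x-z}{}^2$ depends continuously on $z$, and $z\mapsto z^\ast(x,\tau)$ varies continuously in $\tau$, the standard two-sided comparison (sandwiching $\mathsf{e}_F(x;\tau+\delta)-\mathsf{e}_F(x;\tau)$ between $\Phi(z^\ast(x,\tau);x,\tau+\delta)-\Phi(z^\ast(x,\tau);x,\tau)$ and $\Phi(z^\ast(x,\tau+\delta);x,\tau+\delta)-\Phi(z^\ast(x,\tau+\delta);x,\tau)$) yields
\[
\partial_\tau \mathsf{e}_F(x;\tau)=\partial_\tau \Phi(z;x,\tau)\big|_{z=z^\ast(x,\tau)}=-\frac{1}{2\tau^2}\pnorm{x-\prox_F(x;\tau)}{}^2.
\]

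There is essentially no obstacle here; the only item requiring care is the justification of exchanging $\partial_\tau$ (or $\nabla_x$) with the minimization. That justification is routine thanks to the strong convexity of $\Phi(\cdot;x,\tau)$ and the resulting continuity of $\prox_F(x;\tau)$ in $(x,\tau)$. If one wishes to avoid invoking the envelope theorem, an alternative is to use the stationarity condition $\tfrac{1}{\tau}(z^\ast-x)+\nabla F(z^\ast)=0$, differentiate $\mathsf{e}_F(x;\tau)=\Phi(z^\ast(x,\tau);x,\tau)$ by the chain rule, and observe that the contribution of $\partial_x z^\ast$ and $\partial_\tau z^\ast$ vanishes after pairing with $\nabla_z \Phi(z^\ast;x,\tau)=0$; this gives the same two formulas directly.
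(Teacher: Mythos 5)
Your proof is correct. The paper itself does not give a proof here—it simply cites \cite[Lemmas B.5 and D.1]{thrampoulidis2018precise}—so there is no internal argument to compare against; your envelope-theorem (Danskin) derivation, and equally the alternative you sketch via the stationarity condition $\tfrac{1}{\tau}(z^\ast-x)+\nabla F(z^\ast)=0$ plus the chain rule, is precisely the standard way these Moreau-envelope derivative identities are established, and it is the argument the cited lemmas carry out.

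Two minor remarks. First, you mention that $F$ is strictly convex via invertibility of $\Sigma$, but this is not actually what you need or use: the strong convexity of $\Phi(\cdot;x,\tau)$ comes entirely from the $\tfrac{1}{2\tau}\pnorm{x-z}{}^2$ term and holds for any proper closed convex $F$, which is exactly the generality in which these identities are usually stated (the gradient formula for $\nabla_x\mathsf{e}_F$ in particular is valid even when $F$ is nonsmooth). Second, for the $\partial_\tau$ formula your sandwiching argument gives one-sided bounds on the difference quotient from above and below; to conclude you need continuity of $\tau\mapsto\pnorm{x-\prox_F(x;\tau)}{}^2$ so that both bounds converge to the same limit as $\delta\to 0$. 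You correctly flag this as the only item requiring care and it does follow from the continuity of $\prox_F(x;\cdot)$, so the argument is complete.
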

\begin{proof}
	See e.g., \cite[Lemmas B.5 and D.1]{thrampoulidis2018precise}.
\end{proof}

Finally we provide a concentration inequality for $\mathsf{e}_F(\gamma g_n;\tau)$.

\begin{proposition}\label{prop:conc_e_F}
	There exists some universal constant $C>0$ such that
	\begin{align*}
	&\Prob\Big(\bigabs{\mathsf{e}_F(\gamma g_n;\tau)-\E \mathsf{e}_F(\gamma g_n;\tau)} \geq C\Big\{ v \E^{1/2}  \mathsf{e}_F(\gamma g_n;\tau) \sqrt{\frac{t}{n} }+ v^2\cdot \frac{t}{n}\Big\}\Big)\leq Ce^{-t/C}
	\end{align*}
	holds for any $t\geq 0$. Here $v^2\equiv v^2(\gamma,\tau) \equiv \gamma^2\big(\tau \pnorm{(\Sigma+\tau I)^{-1}}{\op}^2+\pnorm{(\Sigma+\tau I)^{-1}\Sigma^{1/2}}{\op}^2\big)$.
\end{proposition}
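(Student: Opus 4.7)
The starting point is a closed-form identity for the envelope that is cleaner than the one recorded in Lemma \ref{lem:prox_env_F}. Writing $u\equiv \Sigma^{1/2}\mu_0 + \gamma g_n$ and $M\equiv (\Sigma+\tau I)^{-1}$, the explicit formula for $\hat{\mu}^{\seq}_{(\Sigma,\mu_0)}(\gamma;\tau) = M\Sigma^{1/2}u$ gives $\Sigma^{1/2}\hat{\mu}^{\seq} - y^{\seq} = -\tau M u$ and $\pnorm{\hat{\mu}^{\seq}}{}^2 = u^\top \Sigma M^2 u$, so that the two pieces in Lemma \ref{lem:prox_env_F} combine (via $\tau M^2 + \Sigma M^2 = M$) into
\[
\mathsf{e}_F(\gamma g_n;\tau) \;=\; \tfrac{1}{2}\, u^\top M u.
\]
This identity is the structural step that makes the whole proof work: direct Gaussian Lipschitz concentration is ruled out because $\nabla_g \mathsf{e}_F(\gamma g_n;\tau) = (\gamma/\sqrt{n})Mu$ grows linearly in $\pnorm{g}{}$, but once $\mathsf{e}_F$ is displayed as a genuine quadratic form in a Gaussian vector the problem becomes tractable.

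With the closed form in hand, I expand
\[
\mathsf{e}_F - \E\mathsf{e}_F \;=\; \frac{\gamma}{\sqrt{n}}\,\mu_0^\top \Sigma^{1/2} M g \;+\; \frac{\gamma^2}{2n}\bigl(g^\top M g - \tr M\bigr),
\]
and apply the standard Gaussian tail bound to the linear piece and the Hanson--Wright inequality to the quadratic piece. This yields, with probability at least $1-Ce^{-t/C}$,
\[
|\mathsf{e}_F - \E\mathsf{e}_F| \;\leq\; C\Big\{\frac{\gamma}{\sqrt{n}}\pnorm{M\Sigma^{1/2}\mu_0}{}\sqrt{t} + \frac{\gamma^2}{n}\pnorm{M}{F}\sqrt{t} + \frac{\gamma^2}{n}\pnorm{M}{\op}\,t\Big\}.
\]

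Matching this with the target bound boils down to three comparisons, of which only the first is non-routine. The one non-trivial step is the spectral inequality $\gamma^2\pnorm{M}{\op}\leq v^2$: writing $(\lambda_n+\tau)^{-1} = \tau(\lambda_n+\tau)^{-2} + \lambda_n(\lambda_n+\tau)^{-2}$ and using $\lambda_n(\lambda_n+\tau)^{-2} \leq \max_i \lambda_i(\lambda_i+\tau)^{-2} = \pnorm{M\Sigma^{1/2}}{\op}^2$ gives exactly this; this handles the $t$-linear term. For the two $\sqrt{t}$-terms, the closed form yields the lower bound $\E\mathsf{e}_F = \tfrac12 \pnorm{M^{1/2}\Sigma^{1/2}\mu_0}{}^2 + \tfrac{\gamma^2}{2n}\tr M$, from which: (i) $\E\mathsf{e}_F \geq \gamma^2 \tr M /(2n) \geq \gamma^2 \pnorm{M}{F}^2/(2n\pnorm{M}{\op})$ combined with the spectral inequality controls the Frobenius term, and (ii) $\E\mathsf{e}_F \geq \pnorm{M\Sigma^{1/2}\mu_0}{}^2/(2\pnorm{M}{\op})$ combined with the same spectral inequality controls the linear-form term. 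Substituting the three bounds collapses the right-hand side into the advertised $C\bigl(v\,\E^{1/2}\mathsf{e}_F\sqrt{t/n} + v^2 t/n\bigr)$.
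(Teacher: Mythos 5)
Your proof is correct, and it takes a genuinely different route from the paper's. The paper never writes $\mathsf{e}_F$ as a quadratic form; instead it computes the gradient $\nabla_g\mathsf{e}_F$ directly (via the chain rule through $\hat\mu^{\seq}$ and $y^{\seq}$) and establishes the self-bounding inequality $\pnorm{\nabla_g\mathsf{e}_F}{}^2\leq 4\gamma^2 n^{-1}\big(\tau\pnorm{M}{\op}^2+\pnorm{M\Sigma^{1/2}}{\op}^2\big)\,\mathsf{e}_F$, then invokes Proposition~\ref{prop:conc_H_generic} (a Gaussian log-Sobolev/Herbst argument tailored to non-negative functions with self-bounding gradients) to produce the Bernstein-type tail, with $\E^{1/2}\mathsf{e}_F$ appearing automatically from that abstract lemma. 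Your argument replaces this with the identity $\mathsf{e}_F=\tfrac12 u^\top M u$, a linear--quadratic split, and separate applications of the Gaussian linear tail bound and the Hanson--Wright inequality; the price is that the comparisons $\gamma^2\pnorm{M}{\op}\leq v^2$, $\gamma^2 n^{-1}\pnorm{M}{F}\sqrt t\lesssim v\,\E^{1/2}\mathsf{e}_F\sqrt{t/n}$, and $\gamma n^{-1/2}\pnorm{M\Sigma^{1/2}\mu_0}{}\sqrt t\lesssim v\,\E^{1/2}\mathsf{e}_F\sqrt{t/n}$ must then be verified by hand, which you do correctly using $\pnorm{M}{F}^2\leq\pnorm{M}{\op}\tr M$ and $\pnorm{M\Sigma^{1/2}\mu_0}{}^2\leq\pnorm{M}{\op}\pnorm{M^{1/2}\Sigma^{1/2}\mu_0}{}^2$. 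The paper's approach is more modular (the same lemma is reused for $\Xi_\ell$ in Lemma~\ref{lem:conc_Xi}) and abstracts away the $\E^{1/2}$ bookkeeping, while yours is more explicit, identifies the exact distributional structure, and makes the appearance of the operator/Frobenius norms transparent; both deliver the same Bernstein-type bound.
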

\begin{proof}
	Using that $
	\nabla_g \hat{\mu}_{(\Sigma,\mu_0)}^{\seq}(\gamma;\tau) = \frac{\gamma}{\sqrt{n}} (\Sigma+\tau I)^{-1}\Sigma^{1/2}$ and $\nabla_g y^{\seq}(\gamma) = \frac{\gamma}{\sqrt{n}} I$, 
	\begin{align*}
	\nabla_g \mathsf{e}_F(\gamma g_n;\tau) &= \frac{1}{\tau}\cdot \frac{\gamma}{\sqrt{n}}\big((\Sigma+\tau I)^{-1}\Sigma-I\big)\big(\Sigma^{1/2} \hat{\mu}_{(\Sigma,\mu_0)}^{\seq}(\gamma;\tau)-y_{(\Sigma,\mu_0)}^{\seq}(\gamma) \big)\\
	&\qquad +\frac{\gamma}{\sqrt{n}}(\Sigma+\tau I)^{-1}\Sigma^{1/2} \nabla_g \hat{\mu}_{(\Sigma,\mu_0)}^{\seq}(\gamma;\tau).
	\end{align*} 
	This means
	\begin{align}\label{ineq:conc_eF_1}
	\pnorm{\nabla_g \mathsf{e}_F(\gamma g_n;\tau)}{}^2&\leq 2\gamma^2\cdot n^{-1}\Big\{\pnorm{(\Sigma+\tau I)^{-1}}{\op}^2\pnorm{\Sigma^{1/2} \hat{\mu}_{(\Sigma,\mu_0)}^{\seq}(\gamma;\tau)-y_{(\Sigma,\mu_0)}^{\seq}(\gamma) }{}^2\nonumber\\
	&\qquad\qquad\qquad\qquad  + \pnorm{(\Sigma+\tau I)^{-1}\Sigma^{1/2}}{\op}^2  \pnorm{ \nabla_g \hat{\mu}_{(\Sigma,\mu_0)}^{\seq}(\gamma;\tau) }{}^2 \Big\} \nonumber\\
	&\leq 4\gamma^2\cdot  n^{-1}\Big(\tau \pnorm{(\Sigma+\tau I)^{-1}}{\op}^2+\pnorm{(\Sigma+\tau I)^{-1}\Sigma^{1/2}}{\op}^2 \Big)\cdot  \mathsf{e}_F(\gamma g_n;\tau).
	\end{align}
	From here we may conclude by setting $
	H(g)\equiv \mathsf{e}_F\big(\gamma g_n;\tau\big)$ and $\Gamma^2 \equiv 4\gamma^2 n^{-1}\big(\tau \pnorm{(\Sigma+\tau I)^{-1}}{\op}^2+\pnorm{(\Sigma+\tau I)^{-1}\Sigma^{1/2}}{\op}^2 \big)$
	in Proposition \ref{prop:conc_H_generic}.
\end{proof}

\subsection{Some high probability events}\label{sec:hpb-events}
Let
\begin{align}\label{def:eh_eg}
e_h^2={\pnorm{h}{}^2}/{m}, \quad e_g^2\equiv {\pnorm{g}{}^2}/{n}.
\end{align}
For $M,\delta>0$, consider the event 
\begin{align*}
\mathscr{E}_0(M)&\equiv \big\{ ({\pnorm{G}{\op}}/{\sqrt{n}})\vee \big[\pnorm{({GG^\top}/{n})^{-1}}{\op}\bm{1}_{\eta=0}\big]\leq M\big\},\\
\mathscr{E}_{1,0}(\delta)&\equiv \big\{\abs{e_g^2-1}\vee \abs{e_h^2-1}\vee  \abs{ n^{-1/2}\iprod{\Sigma^{1/2}g}{\mu_0}}\vee \abs{n^{-1}\iprod{h}{\xi}} \leq \delta \big\},\\
\mathscr{E}_{1,\xi}(\delta)&\equiv \big\{\abs{ (\pnorm{\xi}{}^2/m)-\sigma_\xi^2} \leq \delta \big\},\\
\mathscr{E}_1(\delta)&\equiv \mathscr{E}_{1,0}(\delta)\cap \mathscr{E}_{1,\xi}(\delta).
\end{align*}
Here in the definition of $\mathscr{E}_0(M)$, we interpret $\infty\cdot 0 =0$.  Typically we think of $M\asymp 1$ and $\delta \asymp 1/\sqrt{n}$.

\begin{lemma}\label{lem:sigma_pm_E1}
Fix $\delta \in (0,1/2)$ and $L_w>0$. Then $\mathscr{E}_1(\delta)\subset \mathscr{E}_2\big(4(\sigma_\xi^2+1+\phi^{-1}L_w)\delta,L_w\big)$, where $\mathscr{E}_2(\delta,L_w)\equiv \big\{ \abs{ \sigma_\pm^2(L_w)-\sigma_\xi^2 }\leq \delta \big\}$.
\end{lemma}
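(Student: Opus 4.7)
The plan is to directly unfold the definitions of $\sigma_m^2$ and $\sigma_\pm^2(L_w)$, control each piece on $\mathscr{E}_1(\delta)$ using the four quantitative bounds encoded there, and then exploit the fact that taking positive part is a contraction toward $[0,\infty)\ni \sigma_\xi^2$.

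\textbf{Step 1: Control $\sigma_m^2-\sigma_\xi^2$.} Write $\sigma_m^2=(\pnorm{\xi}{}^2/m)/e_h^2$. On $\mathscr{E}_1(\delta)$ one has $e_h^2\in[1-\delta,1+\delta]\subset[1/2,3/2]$ and $\abs{\pnorm{\xi}{}^2/m-\sigma_\xi^2}\leq\delta$. A one-line algebra
\[
\bigabs{\sigma_m^2-\sigma_\xi^2}=\frac{1}{e_h^2}\bigabs{\pnorm{\xi}{}^2/m-\sigma_\xi^2 e_h^2}\leq \frac{1}{e_h^2}\big(\abs{\pnorm{\xi}{}^2/m-\sigma_\xi^2}+\sigma_\xi^2\abs{1-e_h^2}\big)
\]
then yields $\abs{\sigma_m^2-\sigma_\xi^2}\leq 2(1+\sigma_\xi^2)\delta$ for $\delta\leq 1/2$.

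\textbf{Step 2: Control the correction $2L_w\abs{\iprod{h}{\xi}}/\pnorm{h}{}^2$.} Using $\abs{n^{-1}\iprod{h}{\xi}}\leq\delta$ (so $\abs{\iprod{h}{\xi}}\leq n\delta$) together with $\pnorm{h}{}^2\geq m(1-\delta)\geq m/2$, and the identity $n/m=\phi^{-1}$, one obtains
\[
\frac{\abs{\iprod{h}{\xi}}}{\pnorm{h}{}^2}\leq \frac{n\delta}{m(1-\delta)}\leq 2\phi^{-1}\delta,
\]
hence the correction term is bounded by $4L_w\phi^{-1}\delta$.

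\textbf{Step 3: Absorb the positive part and combine.} The key observation is that since $\sigma_\xi^2\geq 0$, the map $x\mapsto x_+$ is a $1$-Lipschitz projection onto $[0,\infty)$ and thus $\abs{a_+-\sigma_\xi^2}\leq\abs{a-\sigma_\xi^2}$ for any $a\in\R$. Applying this with $a=\sigma_m^2\pm 2L_w\abs{\iprod{h}{\xi}}/\pnorm{h}{}^2$ and then the triangle inequality together with Steps 1--2 gives
\[
\bigabs{\sigma_\pm^2(L_w)-\sigma_\xi^2}\leq \bigabs{\sigma_m^2-\sigma_\xi^2}+\frac{2L_w\abs{\iprod{h}{\xi}}}{\pnorm{h}{}^2}\leq 2(1+\sigma_\xi^2)\delta+4L_w\phi^{-1}\delta\leq 4(\sigma_\xi^2+1+\phi^{-1}L_w)\delta,
\]
which is exactly the defining inequality of $\mathscr{E}_2(4(\sigma_\xi^2+1+\phi^{-1}L_w)\delta,L_w)$.

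The proof is entirely elementary; there is no real obstacle, but one should be careful that the contraction argument in Step 3 is what allows a single bound to cover both signs and to tolerate the case where the raw quantity $\sigma_m^2-2L_w\abs{\iprod{h}{\xi}}/\pnorm{h}{}^2$ turns negative, in which case $\sigma_-^2(L_w)=0$ but the inequality still holds since $\sigma_\xi^2$ is itself already controlled by the right-hand side.
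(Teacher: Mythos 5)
Your proof is correct and follows essentially the same route as the paper: decompose the deviation into the error of $\sigma_m^2$ against $\sigma_\xi^2$ plus the $2L_w\abs{\iprod{h}{\xi}}/\pnorm{h}{}^2$ correction, and bound each piece with the quantitative controls from $\mathscr{E}_1(\delta)$. The one improvement over the paper's terse argument is that you explicitly justify why the $(\cdot)_+$ truncation in $\sigma_-^2(L_w)$ cannot hurt, via the $1$-Lipschitz projection property relative to $\sigma_\xi^2\geq 0$; the paper leaves this implicit.
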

\begin{proof}
Using the definition of $\sigma_\pm^2(L_w)$ in (\ref{def:sigma_pm}), on $\mathscr{E}_1(\delta)$, we have
\begin{align*}
\bigabs{\sigma_\pm^2(L_w)-\sigma_\xi^2}\leq \frac{\pnorm{\xi}{}^2}{\pnorm{h}{}^2}\abs{e_h^2-1}+\biggabs{ \frac{\pnorm{\xi}{}^2}{m}-\sigma_\xi^2}+ \frac{2L_w \abs{\iprod{h}{\xi}} }{\pnorm{h}{}^2}\leq 4(\sigma_\xi^2+1+\phi^{-1}L_w)\delta.
\end{align*}
The claim follows.
\end{proof}

\begin{lemma}\label{lem:conc_E_0}
	Suppose $1/K\leq \phi^{-1}-\bm{1}_{\eta=0}\leq K$. Then there exists some $C=C(K)>0$ such that $\Prob(\mathscr{E}_0(C))\geq 1-Ce^{-n/C}$.
\end{lemma}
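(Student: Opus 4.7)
The lemma is a standard extreme singular value estimate for a Gaussian matrix; my plan is to split the event $\mathscr{E}_0(M)$ into its operator norm part and its pseudo-inverse part, control each by Gaussian Lipschitz concentration around the Davidson--Szarek/Gordon means, and combine via a union bound. The hypothesis $1/K\leq \phi^{-1}-\bm{1}_{\eta=0}\leq K$ does two things: it keeps $\phi=m/n$ bounded above and below in all cases, and in the interpolation case $\eta=0$ it additionally forces $\phi\leq K/(K+1)<1$, which is exactly what is needed for $GG^\top/n$ to be invertible with an operator-norm-controlled inverse.

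\textbf{Step 1 (operator norm bound).} For the first half of $\mathscr{E}_0(M)$, I will use that $G\mapsto \pnorm{G}{\op}$ is $1$-Lipschitz in the Frobenius norm on $\R^{m\times n}$, together with Gordon's inequality $\E \pnorm{G}{\op}\leq \sqrt{m}+\sqrt{n}$. Gaussian Lipschitz concentration then yields
\[
\Prob\bigl(\pnorm{G}{\op}/\sqrt{n}\geq 1+\sqrt{\phi}+t\bigr)\leq e^{-nt^2/2}.
\]
Since $\phi\leq K+1$ under our hypothesis, picking $t$ a suitable constant depending only on $K$ bounds $\pnorm{G}{\op}/\sqrt{n}$ by some $M_1=M_1(K)$ with probability at least $1-e^{-n/C_1(K)}$.

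\textbf{Step 2 (inverse bound).} For the second half, which is only active when $\eta=0$, the hypothesis forces $\phi^{-1}\geq 1+1/K$, so $\phi\leq K/(K+1)$ and hence $1-\sqrt{\phi}\geq c_K>0$. Since $m\leq n$, the smallest singular value $\sigma_{\min}(G)$ of the $m\times n$ matrix $G$ satisfies $\E \sigma_{\min}(G)\geq \sqrt{n}-\sqrt{m}$ by Gordon, and is $1$-Lipschitz in $G$. Gaussian concentration gives
\[
\Prob\bigl(\sigma_{\min}(G)/\sqrt{n}\leq (1-\sqrt{\phi})-t\bigr)\leq e^{-nt^2/2}.
\]
Choosing $t=c_K/2$ yields $\sigma_{\min}(G)\geq \sqrt{n}\cdot c_K/2$ with probability at least $1-e^{-n c_K^2/8}$, so that
\[
\bigpnorm{(GG^\top/n)^{-1}}{\op}=\frac{n}{\sigma_{\min}(G)^2}\leq \frac{4}{c_K^2}\equiv M_2=M_2(K)
\]
on the same event.

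\textbf{Step 3 (conclusion).} Taking $C=C(K)\equiv \max\{M_1,M_2,C_1,8/c_K^2\}$ and union-bounding the two events produces $\Prob(\mathscr{E}_0(C))\geq 1-Ce^{-n/C}$, as desired. There is no real obstacle here; the only thing to be careful about is that the pseudo-inverse term is multiplied by $\bm{1}_{\eta=0}$, so when $\eta>0$ the second bound is vacuous and only Step~1 needs to be invoked, while when $\eta=0$ the improved upper bound on $\phi$ provided by the hypothesis is essential to keep the constant $c_K$ strictly positive.
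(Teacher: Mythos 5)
Your proof is correct and follows essentially the same route as the paper: bound $\pnorm{G}{\op}/\sqrt{n}$ by standard concentration, and control $\pnorm{(GG^\top/n)^{-1}}{\op}$ via a lower bound on the smallest singular value of $G$ (the paper cites Rudelson--Vershynin, while you spell out the Gaussian-specific Davidson--Szarek/Gordon concentration argument directly, which is fine since $G$ is Gaussian here). The only tiny slip is that the hypothesis gives $\phi\leq K$ rather than $\phi\leq K+1$, but this is immaterial since only $\phi\lesssim_K 1$ is used.
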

\begin{proof}
	The claim for $\pnorm{G}{\op}/\sqrt{n}$ follows from standard concentration estimates. The claim for $\pnorm{(GG^\top/n)^{-1}}{\op}$ follows from, e.g., \cite[Theorem 1.1]{rudelson2009smallest}.
\end{proof}

\begin{lemma}\label{lem:conc_E_12}
	Suppose $1/K\leq \phi^{-1}\leq K$, and $\pnorm{\mu_0}{}\vee \pnorm{\Sigma}{\op} \leq K$ for some $K>0$, and Assumption \ref{assump:noise} hold with $\sigma_\xi^2>0$. There exists some constant $C=C(K,\sigma_\xi)>0$ such that for all $t\geq 0$, with $\delta(t,n)\equiv C(\sqrt{t/n}+t/n)$, for $\xi \in \mathscr{E}_{1,\xi}(\delta(t,n))$, we have $\Prob^\xi(\mathscr{E}_1(\delta(t,n)))\geq 1-e^{-t}$.
\end{lemma}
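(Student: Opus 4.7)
The plan is to bound each of the four events entering $\mathscr{E}_{1,0}(\delta)$ separately, observing that the fifth event defining $\mathscr{E}_{1,\xi}$ holds automatically by the conditioning hypothesis. Since $\mathscr{E}_{1,\xi}(\delta(t,n))$ is a deterministic condition on $\xi$, for any $\xi \in \mathscr{E}_{1,\xi}(\delta(t,n))$ we have $\Prob^\xi(\mathscr{E}_1(\delta(t,n))) = \Prob^\xi(\mathscr{E}_{1,0}(\delta(t,n)))$, so the task reduces to showing that each of the four terms $|e_g^2-1|$, $|e_h^2-1|$, $|n^{-1/2}\iprod{\Sigma^{1/2}g}{\mu_0}|$, $|n^{-1}\iprod{h}{\xi}|$ is bounded by $\delta(t,n)$ with probability at least $1 - e^{-t}/4$ (absorbing the factor $4$ into the constant $C$ at the end).

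For the two chi-squared terms, $\pnorm{g}{}^2$ and $\pnorm{h}{}^2$ are standard chi-squared variables with $n$ and $m$ degrees of freedom respectively. A standard Laurent--Massart bound gives $\Prob(|e_g^2 - 1| \geq 2\sqrt{t/n} + 2t/n) \leq 2e^{-t}$, and analogously for $e_h^2 - 1$ with $m$ in place of $n$; since $m/n \in [1/K, K]$ under our assumption $1/K \leq \phi^{-1} \leq K$, this gives a bound of the required form $C(\sqrt{t/n} + t/n)$ with $C = C(K)$.

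The two linear forms are Gaussian and handled by the standard Gaussian tail. First, $n^{-1/2}\iprod{\Sigma^{1/2}g}{\mu_0} \sim \mathcal{N}(0, n^{-1}\pnorm{\Sigma^{1/2}\mu_0}{}^2)$ with variance at most $n^{-1}\pnorm{\Sigma}{\op}\pnorm{\mu_0}{}^2 \leq K^3/n$, so $\Prob(|\cdot| \geq K^{3/2}\sqrt{2t/n}) \leq 2e^{-t}$. Second, conditional on $\xi \in \mathscr{E}_{1,\xi}(\delta(t,n))$, $n^{-1}\iprod{h}{\xi} \sim \mathcal{N}(0, \pnorm{\xi}{}^2/n^2)$; and from $\pnorm{\xi}{}^2/m \leq \sigma_\xi^2 + \delta(t,n)$, we get $\pnorm{\xi}{}^2/n^2 \leq K(\sigma_\xi^2 + \delta(t,n))/n$. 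Standard Gaussian tails then yield $\Prob^\xi(|\cdot| \geq C(K,\sigma_\xi)\sqrt{t/n}) \leq 2e^{-t}$, where we absorb $\delta(t,n)$ into the constant on the regime $\delta(t,n) \leq 1$; in the complementary regime $\delta(t,n) > 1$ the conclusion of the lemma becomes trivial by enlarging $C$ so that the claimed inequality becomes vacuous (since all four quantities are bounded a.s.\ by constants depending on $K, \sigma_\xi$ with overwhelming probability, or alternatively one can upper bound the tail by $1 \leq e^{-t}\cdot e^t$ and absorb).

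A union bound over the four events, followed by redefining $C$ to absorb the constants $\log 8$ and the numerical prefactors, yields the required $\Prob^\xi(\mathscr{E}_1(\delta(t,n))) \geq 1 - e^{-t}$. There is no real obstacle: the argument is purely an application of classical one-dimensional Gaussian and chi-squared concentration plus a union bound. The only mild subtlety is handling the large-$t$ regime where $\delta(t,n)$ might exceed $1$, which is addressed by the $t/n$ term in the definition of $\delta(t,n)$ and by enlarging the constant $C = C(K, \sigma_\xi)$ appropriately.
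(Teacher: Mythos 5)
Your decomposition---using the conditioning to eliminate the $\mathscr{E}_{1,\xi}$ part, then bounding each of the four events making up $\mathscr{E}_{1,0}$ separately via Laurent--Massart for the two normalized chi-squared terms and one-dimensional Gaussian tails for the two linear forms (with $\pnorm{\xi}{}^2$ controlled through the conditioning hypothesis) before a union bound---is exactly the ``standard concentration inequalities'' argument that the paper invokes without writing out, and each individual estimate you record is correct.

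The one step that is stated too quickly is ``absorbing the factor $4$ into the constant $C$.'' To get each of the four events to fail with probability $\leq e^{-t}/4$ from a per-event bound of the form $\Prob(\cdot \geq 2\sqrt{s/n}+2s/n)\leq 2e^{-s}$, you need $s = t + \log 8$, and the resulting threshold $2\sqrt{(t+\log 8)/n}+2(t+\log 8)/n$ is dominated by $C(\sqrt{t/n}+t/n)$ with a $t$-independent $C$ only when $t$ is bounded away from zero: as $t\downarrow 0$ the left side tends to a positive quantity $\asymp\sqrt{(\log 8)/n}$ while the right side tends to $0$. So the union-bound argument you describe (and, implicitly, the paper's) does not close for $t$ near zero, and in fact a small-ball estimate shows the literal claim ``for all $t\geq 0$'' cannot hold with any fixed $C$: for small $\delta$ the probability $\Prob(\mathscr{E}_{1,0}(\delta))$ decays like $\delta^4 n^2$, which after substituting $\delta = C\sqrt{t/n}$ gives $\asymp C^4 t^2$, strictly below the target $1-e^{-t}\approx t$ once $t$ drops below $\sigma_\xi\pnorm{\Sigma^{1/2}\mu_0}{}/C^4$. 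This is harmless in context---every invocation of the lemma in the paper has $t\geq C'\log(en)$, well inside the regime where your argument works---but it is a real gap in the literal statement that your write-up, which only flags the large-$t$ regime, does not address.
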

\begin{proof}
	The claim follows by standard concentration inequalities.
\end{proof}

\subsection{Some connections of the fixed point equation (\ref{eqn:fpe}) to RMT}\label{subsection:fpe_rmt}

The second equation of (\ref{eqn:fpe}) has a natural connection to RMT. To detail this connection, let $\hat{\Sigma}\equiv \Sigma^{1/2}G^\top G \Sigma^{1/2}/m \in \R^{n\times n}$ and $\check{\Sigma}\equiv G\Sigma G^\top/m \in \R^{m\times m}$ be the sample covariance matrix and its dimension flipped, companion matrix. For $z\in\mathbb{C}^+\equiv\{z \in \mathbb{C}: \Im z> 0\}$, let $\mathfrak{m}_n(z) \equiv m^{-1}\tr \big( \check{\Sigma}  - zI_m\big)^{-1}$ and $\mathfrak{m}(z)$ be the Stieltjes transforms of the empirical spectral distribution and the asymptotic eigenvalue density (cf. \cite[Definition 2.3]{knowles2017anisotropic}) of $\check{\Sigma}$, respectively. It is well-known that $\mathfrak{m}(z)$ can be determined uniquely via the fixed point equation
\begin{align}\label{eq:StieljesTransform}
z = -\frac{1}{\mathfrak{m}(z)} + \frac{1}{\phi}\cdot \frac{1}{n}\tr \Big(\big( I_n + \Sigma \mathfrak{m}(z) \big)^{-1}\Sigma\Big). 
\end{align}
See, e.g., \cite[Lemma 2.2]{knowles2017anisotropic} for more technical details and historical references. We also note that while the above equation is initially defined for $z \in \mathbb{C}^{+}$, it can be straightforwardly extended to the real axis provided that $z$ lies outside the support of the asymptotic spectrum of $\check{\Sigma}$. 

The following proposition provides a precise connection between the effective regularization $\tau_{\eta,\ast}$ defined via the second equation of (\ref{eqn:fpe}), and the Stieltjes transform $\mathfrak{m}$. This connection will prove important in some of the results ahead.
\begin{proposition}\label{prop:rmt_effp}
	For any $\eta>0$ and $\eta=0$ with $\phi^{-1}>1$, 
	\begin{align}\label{eq:traceSigma_to_m}
	n^{-1}\mathrm{tr} \big(( \Sigma + \tau_{\eta,\ast}I_n )^{-1}\Sigma\big) =\phi - \eta \cdot \mathfrak{m}(-\eta/\phi).
	\end{align}
\end{proposition}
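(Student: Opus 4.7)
The plan is to verify the algebraic identity $\tau_{\eta,\ast} = 1/\mathfrak{m}(-\eta/\phi)$, from which the stated equality follows immediately by substitution into the defining relation for $\tau_{\eta,\ast}$.

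First I would isolate the equation that characterizes $\tau_{\eta,\ast}$. The second line of the fixed point system (\ref{eqn:fpe}) decouples from $\gamma$: its middle expression $n^{-1}\mathrm{tr}((\Sigma+\tau I_n)^{-1}\Sigma)$ depends on $\tau$ alone, so $\tau_{\eta,\ast}\in(0,\infty)$ is the unique positive solution of the scalar equation
\[
\phi - \frac{\eta}{\tau} \;=\; \frac{1}{n}\mathrm{tr}\big((\Sigma+\tau I_n)^{-1}\Sigma\big).
\]
Uniqueness (asserted already in Proposition \ref{prop:fpe_est_simplify}(1)) is transparent here, as the left-hand side is strictly increasing in $\tau$ while the right-hand side strictly decreases from $1$ at $\tau=0^+$ to $0$ at $\tau=\infty$; in the case $\eta=0$, a solution exists precisely when the right-hand side can equal $\phi<1$, which is the overparametrized regime $\phi^{-1}>1$.

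Next I would plug $z:=-\eta/\phi$ into the Stieltjes-transform equation (\ref{eq:StieljesTransform}) and set $\tau:=1/\mathfrak{m}(-\eta/\phi)$. Using the elementary identity
\[
\big(I_n + \Sigma\,\mathfrak{m}(z)\big)^{-1}\Sigma \;=\; \tau\,(\Sigma+\tau I_n)^{-1}\Sigma,
\]
equation (\ref{eq:StieljesTransform}) rewrites as
\[
-\frac{\eta}{\phi} \;=\; -\tau \,+\, \frac{\tau}{\phi}\cdot\frac{1}{n}\mathrm{tr}\big((\Sigma+\tau I_n)^{-1}\Sigma\big).
\]
Multiplying through by $-\phi/\tau$ gives exactly the scalar equation characterizing $\tau_{\eta,\ast}$ above, so $\tau=1/\mathfrak{m}(-\eta/\phi)$ is a positive solution, hence equals $\tau_{\eta,\ast}$ by uniqueness. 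Substituting back, $\eta/\tau_{\eta,\ast}=\eta\,\mathfrak{m}(-\eta/\phi)$, which rearranges to the target formula (\ref{eq:traceSigma_to_m}).

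The only non-algebraic point is justifying that $\mathfrak{m}$ has a meaningful value at the real point $-\eta/\phi$ and that (\ref{eq:StieljesTransform}) continues to hold there. For $\eta>0$ this is straightforward: $-\eta/\phi<0$ lies strictly to the left of the support of the asymptotic spectrum of $\check\Sigma\subset[0,\infty)$, so $\mathfrak{m}(-\eta/\phi)=\lim_{\epsilon\downarrow 0}\mathfrak{m}(-\eta/\phi+i\epsilon)$ is a well-defined positive real extension, and (\ref{eq:StieljesTransform}) extends by continuity. For $\eta=0$ with $\phi^{-1}>1$, the main ingredient I would invoke is the standard fact (anisotropic Marchenko--Pastur law, cf.\ \cite{knowles2017anisotropic}) that under the spectral assumption $\pnorm{\Sigma}{\op}\vee\mathcal{H}_\Sigma\leq K$ the support of $\check\Sigma$ is bounded away from $0$, so $\mathfrak{m}(0)\in(0,\infty)$ is well-defined and (\ref{eq:StieljesTransform}) holds at $z=0$. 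This is essentially the only non-trivial step; everything else is a direct algebraic manipulation plus the uniqueness from Proposition \ref{prop:fpe_est_simplify}(1).
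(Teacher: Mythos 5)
Your proposal is correct and takes essentially the same approach as the paper: the paper's one-line proof simply identifies $\tau_{\eta,\ast}=1/\mathfrak{m}(-\eta/\phi)$ by comparing (\ref{eq:StieljesTransform}) with the second equation of (\ref{eqn:fpe}), which is exactly the algebraic manipulation you carry out. Your additional remarks on uniqueness via Proposition \ref{prop:fpe_est_simplify}(1) and on the well-definedness of $\mathfrak{m}$ at the real point $-\eta/\phi$ (including $\eta=0$ when $\phi^{-1}>1$) merely spell out what the paper flags informally in the paragraph preceding the proposition.
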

\begin{proof}
	By comparing (\ref{eq:StieljesTransform}) and the second equation of (\ref{eqn:fpe}), we may identify the two equations by setting $
	\tau_{\eta,\ast} \equiv {1}/{\mathfrak{m}(-z_\eta)}$ with $z_\eta\equiv  \eta/\phi$, as claimed.
\end{proof}
While (\ref{eq:traceSigma_to_m}) appears somewhat purely algebraic, it actually admits a natural statistical interpretation. Suppose $\xi$ is also Gaussian. We may then compute 
\begin{align}\label{eqn:dof_calculation}
\mathrm{df}(\hat{\mu}_\eta)&=\sum_{j=1}^n \frac{\cov^X \big( (X\hat{\mu}_\eta)_j, Y_j\big)}{\sigma_\xi^2}= \tr\big((\hat{\Sigma}+z_\eta I_n)^{-1}\hat{\Sigma}\big)=n\big(\phi-\eta\cdot \mathfrak{m}_n(-z_\eta)\big).
\end{align}
Now comparing the above display with (\ref{eq:traceSigma_to_m}), we arrive at the following intriguing equivalence between the averaged law in RMT, and the proximity of $\hat{\mu}_\eta$ and $\hat{\mu}_{(\Sigma,\mu_0)}^{\seq}(\gamma_{\eta,\ast};\tau_{\eta,\ast})$ in terms of ``degrees-of-freedom'':
\begin{align*}
\mathfrak{m}_n(-z_\eta)\stackrel{\Prob}{\approx} \mathfrak{m}(-z_\eta)\,\Leftrightarrow\, \mathrm{df}(\hat{\mu}_\eta)/n \stackrel{\Prob}{\approx}  \mathrm{df}\big(\hat{\mu}_{(\Sigma,\mu_0)}^{\seq}(\gamma_{\eta,\ast};\tau_{\eta,\ast})\big)/n.
\end{align*}

\section{Properties of the fixed point equations}\label{section:proof_fpe}

\subsection{The fixed point equation (\ref{eqn:fpe})}

\begin{proposition}\label{prop:fpe_est}
	The following hold.
	\begin{enumerate}
		\item The fixed point equation (\ref{eqn:fpe}) admits a unique solution $(\gamma_{\eta,\ast},\tau_{\eta,\ast}) \in (0,\infty)^2$, for all $(m,n)\in \N^2$ when $\eta>0$ and $m<n$ when $\eta=0$. 
		\item The following apriori bounds hold:
		\begin{align*}
		\frac{1-\phi+\sqrt{\big(1-\phi\big)^2+4\mathcal{H}_\Sigma \eta} }{2\mathcal{H}_\Sigma} &\leq \tau_{\eta,\ast} \leq \inf_{k \in [0:\min\{m-1,n\}]} \bigg\{\frac{\sum_{j>k}\lambda_j}{m-k} + \frac{n}{m-k}\cdot \eta\bigg\},\\
		\frac{\sigma_\xi^2}{\phi}&\leq\gamma_{\eta,\ast}^2\leq \frac{\sigma_\xi^2+\pnorm{\Sigma}{\op}\pnorm{\mu_0}{}^2}{\phi}\bigg(1+\frac{\pnorm{\Sigma}{\op}}{\tau_{\eta,\ast}}\bigg).
		\end{align*}
		\item If $1/K\leq \phi^{-1}\leq K$ and $\pnorm{\Sigma}{\op} \vee \mathcal{H}_\Sigma\leq K$ for some $K>1$, then there exists some $C=C(K)>1$ such that uniformly in $\eta \in \Xi_K$, 
		\begin{align*}
		1/C\leq \tau_{\eta,\ast} \leq C,\quad 1/C\leq (-1)^{q+1}\partial_\eta^q \tau_{\eta,\ast}\leq C, \quad q\in \{1,2\}.
		\end{align*}
		If furthermore $1/K\leq \sigma_\xi^2\leq K$ and $\pnorm{\mu_0}{}\leq K$, then uniformly in $\eta \in \Xi_K$,
		\begin{align*}
		1/C\leq \gamma_{\eta,\ast}\leq C,\quad \abs{ \partial_\eta \gamma_{\eta,\ast} }\leq C.
		\end{align*}
	\end{enumerate}
	
\end{proposition}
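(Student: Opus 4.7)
The plan is to exploit the fact that, by Lemma \ref{lem:est_dof}, the second equation of (\ref{eqn:fpe}) decouples from $\gamma$ and reduces to the scalar equation $f(\tau) \equiv \eta/\tau + n^{-1}\sum_{j=1}^n \lambda_j/(\lambda_j+\tau) = \phi$. The map $\tau \mapsto f(\tau)$ is continuous and strictly decreasing on $(0,\infty)$, with $f(0^+)=\infty$ when $\eta>0$ (and $f(0^+)=1$ when $\eta=0$) and $f(\infty)=0$; hence a unique positive solution $\tau_{\eta,\ast}$ exists precisely under the stated ranges on $(\phi,\eta,m,n)$ in part (1). Once $\tau_{\eta,\ast}$ is fixed, the first equation of (\ref{eqn:fpe}) becomes linear in $\gamma^2$: setting $\chi(\tau) \equiv \phi - n^{-1}\tr(\Sigma^2(\Sigma+\tau I)^{-2})$, the equation reads $\chi(\tau_{\eta,\ast})\gamma^2 = \sigma_\xi^2 + \tau_{\eta,\ast}^2\pnorm{(\Sigma+\tau_{\eta,\ast} I)^{-1}\Sigma^{1/2}\mu_0}{}^2$. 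Uniqueness of $\gamma_{\eta,\ast}^2>0$ then follows from the identity $\chi(\tau) = \eta/\tau + \tau\cdot n^{-1}\sum_j \lambda_j/(\lambda_j+\tau)^2 > 0$, which is obtained by rearranging the scalar fixed point equation.

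For part (2), the bounds come from direct manipulation of this scalar equation. For the lower bound on $\tau_{\eta,\ast}$, I would write $n^{-1}\sum_j \lambda_j/(\lambda_j+\tau) = 1 - \tau\cdot n^{-1}\sum_j 1/(\lambda_j+\tau) \geq 1 - \tau\mathcal{H}_\Sigma$, substitute into the fixed point equation to obtain $\mathcal{H}_\Sigma \tau_{\eta,\ast}^2 + (1-\phi)\tau_{\eta,\ast} - \eta \geq 0$, and take the positive root of the quadratic. For the upper bound, use the split $n^{-1}\sum_j \lambda_j/(\lambda_j+\tau) \leq k/n + (n\tau)^{-1}\sum_{j>k}\lambda_j$ for $k<m$, which yields $\tau_{\eta,\ast} \leq (n\eta+\sum_{j>k}\lambda_j)/(m-k)$. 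The lower bound $\gamma_{\eta,\ast}^2 \geq \sigma_\xi^2/\phi$ is immediate from numerator $\geq \sigma_\xi^2$ and $\chi(\tau)\leq\phi$; for the upper bound, the key step is $\chi(\tau) \geq \phi\tau/(\pnorm{\Sigma}{\op}+\tau)$, which follows from $\tau/(\lambda_j+\tau) \geq \tau/(\pnorm{\Sigma}{\op}+\tau)$ inside the representation of $\chi$, combined with the fixed point equation for the remaining term.

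For part (3), the two-sided $\Theta(1)$ bounds on $\tau_{\eta,\ast}$ and $\gamma_{\eta,\ast}$ follow by inserting $\pnorm{\Sigma}{\op}\vee\mathcal{H}_\Sigma \leq K$ into the apriori bounds of part (2). For the derivative bounds I would implicitly differentiate. Writing $\Psi_k(\tau) \equiv n^{-1}\sum_j \lambda_j/(\lambda_j+\tau)^k$, one differentiation gives
\[
\partial_\eta \tau_{\eta,\ast} = \frac{\tau_{\eta,\ast}}{\eta+\tau_{\eta,\ast}^2\,\Psi_2(\tau_{\eta,\ast})}>0,
\]
and a second differentiation yields, after algebraic simplification,
\[
\partial_\eta^2 \tau_{\eta,\ast} = -\frac{2\tau_{\eta,\ast}^2\,(\partial_\eta \tau_{\eta,\ast})\,\big(\Psi_2(\tau_{\eta,\ast})-\tau_{\eta,\ast}\Psi_3(\tau_{\eta,\ast})\big)}{\big(\eta+\tau_{\eta,\ast}^2\,\Psi_2(\tau_{\eta,\ast})\big)^2}.
\]
The identity $\Psi_2(\tau)-\tau\Psi_3(\tau) = n^{-1}\sum_j \lambda_j^2/(\lambda_j+\tau)^3 \geq 0$ immediately yields $\partial_\eta^2 \tau_{\eta,\ast}\leq 0$. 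Two-sided $\Theta(1)$ control of every factor then delivers the quantitative bounds, where the crucial lower estimate $n^{-1}\sum_j\lambda_j^2 \geq (n^{-1}\sum_j\lambda_j)^2 \geq 1/\mathcal{H}_\Sigma^2$ comes from Cauchy--Schwarz combined with the AM--HM inequality. Finally, $\abs{\partial_\eta \gamma_{\eta,\ast}}\lesssim 1$ follows via the chain rule applied to $\gamma_{\eta,\ast}^2 = [\sigma_\xi^2+\tau_{\eta,\ast}^2\pnorm{(\Sigma+\tau_{\eta,\ast} I)^{-1}\Sigma^{1/2}\mu_0}{}^2]/\chi(\tau_{\eta,\ast})$, using the already-established $\Theta(1)$ bounds.

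The main obstacle is the \emph{lower} bound on $\abs{\partial_\eta^q \tau_{\eta,\ast}}$ for $q\in\{1,2\}$: upper bounds are essentially routine manipulations, but quantitative strict monotonicity and strict concavity require the subtle interplay between $\pnorm{\Sigma}{\op}\leq K$ (keeping denominators $\lambda_j+\tau$ bounded above) and $\mathcal{H}_\Sigma\leq K$ (preventing the eigenvalues from clustering near zero via AM--HM). Without the $\mathcal{H}_\Sigma \leq K$ side, nothing prevents $\Psi_2$ or $\Psi_2 - \tau\Psi_3$ from vanishing, and the concavity of $\eta \mapsto \tau_{\eta,\ast}$ would degenerate.
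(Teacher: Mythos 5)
Your proposal is correct and follows essentially the same route as the paper's proof: decouple the scalar equation for $\tau_{\eta,\ast}$, establish existence and uniqueness by monotonicity of $\mathsf{f}(\tau)=\eta/\tau+n^{-1}\tr((\Sigma+\tau I)^{-1}\Sigma)$, solve the first equation explicitly for $\gamma_{\eta,\ast}^2$, derive the apriori bounds by splitting the trace sum, and obtain derivative control by implicit differentiation (your $\Psi_k$ and $\chi$ are the paper's $T_{-p,q}$ and $G_2$, and the identity $\Psi_2-\tau\Psi_3=T_{-3,2}$ matches the paper's intermediate step). Your closing remark correctly identifies the role of $\mathcal{H}_\Sigma\le K$ together with Cauchy--Schwarz/AM--HM in producing the quantitative lower bounds on $T_{-2,1}$ and $T_{-3,2}$; the same Cauchy--Schwarz argument you cite for $n^{-1}\sum_j\lambda_j^2$ is also needed (in the weaker form $n^{-1}\sum_j\lambda_j\ge 1/\mathcal{H}_\Sigma$) to bound $\Psi_2$ from below for the $q=1$ case, which is implicit in your ``two-sided $\Theta(1)$ control of every factor.''
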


\begin{proof}
	We shall write $(\gamma_{\eta,\ast},\tau_{\eta,\ast})=(\gamma_\ast,\tau_\ast)$ for notational simplicity. All the constants in $\lesssim,\gtrsim,\asymp$ below may depend on $K$.
	
	\noindent (1). First we prove the existence and uniqueness of $\tau_\ast$. We rewrite the second equation of (\ref{eqn:fpe}) as 
	\begin{align}\label{ineq:fpe_property_1}
	\phi = \frac{1}{n}\tr\big((\Sigma+\tau_\ast I)^{-1}\Sigma\big) + \frac{\eta}{\tau_\ast} = \frac{1}{n}\sum_{j=1}^n \frac{\lambda_j}{\lambda_j+\tau_\ast}+\frac{\eta}{\tau_\ast}\equiv \mathsf{f}(\tau_\ast). 
	\end{align}
	Clearly $\mathsf{f}(\tau)$ is smooth, non-increasing, $\mathsf{f}(0)=1>\phi$ for $\eta=0$ and $\mathsf{f}(0)=\infty$ for $\eta>0$, and $\mathsf{f}(\infty)=0$, so $\tau\mapsto  \mathsf{f}(\tau)-\phi$ must admit a unique zero $\tau_\ast \in (0,\infty)$. 
	
	Next we prove the existence and uniqueness of $\gamma_\ast$. Using Lemma \ref{lem:est_dof}, the equation $\phi\gamma_\ast^2  = \sigma_\xi^2+ \E\err_{(\Sigma,\mu_0)}(\gamma_\ast;\tau_\ast)$ reads
	\begin{align}\label{ineq:fpe_property_3}
	\phi=\frac{1}{\gamma_\ast^{2}}\big(\sigma_\xi^2+\tau_\ast^2 \pnorm{(\Sigma+\tau_\ast I)^{-1}\Sigma^{1/2}\mu_0}{}^2\big)+ \frac{1}{n} \tr\big( (\Sigma+\tau_\ast I)^{-2}\Sigma^2\big).
	\end{align}
	As $n^{-1}\tr\big((\Sigma+\tau_\ast I)^{-2}\Sigma^2 \big)<n^{-1}\tr\big( (\Sigma+\tau_\ast I)^{-1}\Sigma\big)\leq \phi$ by (\ref{ineq:fpe_property_1}) and the fact $\tau_\ast>0$, the above equation admits a unique solution $\gamma_\ast \in (0,\infty)$, analytically given by
	\begin{align}\label{ineq:fpe_property_2}
	\gamma_\ast^2 = \frac{\sigma_\xi^2+\tau_\ast^2 \pnorm{(\Sigma+\tau_\ast I)^{-1}\Sigma^{1/2}\mu_0}{}^2}{\phi- \frac{1}{n} \tr\big( (\Sigma+\tau_\ast I)^{-2}\Sigma^2\big)}= \frac{\sigma_\xi^2+\tau_\ast^2 \pnorm{(\Sigma+\tau_\ast I)^{-1}\Sigma^{1/2}\mu_0}{}^2}{\frac{\eta}{\tau_{\ast}}+\frac{\tau_{\ast}}{n}\tr\big((\Sigma+\tau_{\ast} I)^{-2}\Sigma\big)}.
	\end{align}
	\noindent (2). 	For the upper bound for $\tau_\ast$, using the equation (\ref{ineq:fpe_property_1}), we have 
	\begin{align*}
	m=n\phi \leq k+ \frac{1}{\tau_\ast}\sum_{j>k} \lambda_j +\frac{n\eta}{\tau_\ast},\quad \forall k \in [0:n],\, k\leq m-1.
	\end{align*}
	Solving for $\tau_\ast$ yields the desired upper bound. For the lower bound for $\tau_\ast$, note that (\ref{ineq:fpe_property_1}) leads to
	\begin{align*}
	\phi = 1-\tau_\ast \cdot \frac{1}{n} \sum_{j=1}^n \frac{1}{\lambda_j+\tau_\ast}+\frac{\eta}{\tau_\ast}\geq 1-\tau_\ast \mathcal{H}_\Sigma +\frac{\eta}{\tau_\ast},
	\end{align*}
	or equivalently $
	\mathcal{H}_\Sigma \tau_\ast^2+\big(\phi-1\big)\tau_\ast - \eta\geq 0$. 
	Solving this quadratic inequality yields the lower bound for $\tau_\ast$.

	On the other hand, the lower bound $\gamma_\ast^2\geq \sigma_\xi^2/\phi$ is trivial by (\ref{ineq:fpe_property_2}). For the upper bound for $\gamma_\ast$, using that 
	\begin{align}\label{ineq:fpe_property_4}
	\phi - \frac{1}{n}\tr\big((\Sigma+\tau_\ast I)^{-2}\Sigma^2 \big)&\geq \phi- \frac{1}{n}\tr\big((\Sigma+\tau_\ast I)^{-1}\Sigma \big)\cdot \max_{j \in [n]} \frac{\lambda_j}{\lambda_j+\tau_\ast}\geq  \phi\cdot \frac{\tau_\ast}{\pnorm{\Sigma}{\op}+\tau_\ast},
	\end{align} 
	and the first identity in (\ref{ineq:fpe_property_2}), we have
	\begin{align*}
	\gamma_\ast^2\leq \phi^{-1}\big(\sigma_\xi^2+\pnorm{\Sigma}{\op}\pnorm{\mu_0}{}^2\big)\big(1+{\pnorm{\Sigma}{\op}}/{\tau_\ast}\big).
	\end{align*}
	Collecting the bounds proves the claim.
	
	\noindent (3).  The claim on $\gamma_\ast,\tau_\ast$ is a simple consequence of (2). We shall prove the other claim on their derivatives. Viewing $\tau_\ast=\tau_\ast(\eta)$ and taking derivative with respect to $\eta$ on both sides of (\ref{ineq:fpe_property_1}) yield that, with $T_{-p,q}(\eta)\equiv n^{-1}\tr\big((\Sigma+\tau_\ast(\eta)I)^{-p}\Sigma^q\big)$ for $p,q \in \N$,
	\begin{align*}
	0 = -T_{-2,1}(\eta)\cdot \tau_\ast'(\eta)+\frac{1}{\tau_\ast(\eta)}-\frac{\eta}{\tau_\ast^2(\eta)}\cdot \tau_\ast'(\eta). 
	\end{align*}
	Solving for $\tau_\ast'(\eta)$ yields that
	\begin{align}\label{ineq:fpe_property_5_0}
	\tau_\ast'(\eta)= \frac{ \tau_\ast(\eta)}{\eta+\tau_\ast^2(\eta)\cdot T_{-2,1}(\eta)}\equiv\frac{\tau_\ast(\eta)}{G_0(\eta)} .
	\end{align}
	Further taking derivative with respect to $\eta$ on both sides of the above display (\ref{ineq:fpe_property_5_0}), we have
	\begin{align}\label{ineq:fpe_property_5_1}
	\tau_{\ast}''(\eta)&= \frac{1}{G_0^2(\eta)}\big(\tau_\ast'(\eta) G_0(\eta)-\tau_\ast(\eta)G_0'(\eta) \big)\nonumber\\
	&=\frac{1}{G_0^2(\eta)}\Big\{\tau_\ast(\eta)-\tau_\ast(\eta)\Big(1+2\tau_\ast(\eta)\tau_\ast'(\eta)T_{-2,1}(\eta)- 2\tau_\ast^2(\eta)\tau_\ast'(\eta) T_{-3,1}(\eta) \Big)\Big\}\nonumber\\
	& = \frac{2\tau_\ast^2(\eta)\tau_\ast'(\eta)}{G_0^2(\eta)}\Big(\tau_\ast(\eta) T_{-3,1}(\eta)-T_{-2,1}(\eta)\Big)= -\frac{2\tau_\ast^2(\eta)\tau_\ast'(\eta)}{G_0^2(\eta)} T_{-3,2}(\eta).
	\end{align}
	Using the apriori estimate for $\tau_\ast(\eta)$ proved in (2), it follows that for $q \in \{1,2\}$,
	\begin{align}\label{ineq:fpe_property_5}
	1\lesssim \inf_{\eta \in \Xi_K} (-1)^{q+1}\tau_\ast^{(q)}(\eta)\leq \sup_{\eta \in \Xi_K} (-1)^{q+1}\tau_\ast^{(q)}(\eta)\lesssim 1.
	\end{align}
	For $\gamma_\ast'(\eta)$, let us define 
	\begin{align*}
	G_1(\eta)\equiv \sigma_\xi^2+\tau_\ast^2(\eta) \pnorm{(\Sigma+\tau_\ast(\eta) I)^{-1}\Sigma^{1/2}\mu_0}{}^2,\quad
	 G_2(\eta)\equiv \phi- n^{-1}\tr\big( (\Sigma+\tau_\ast(\eta) I)^{-2}\Sigma^2\big). 
	\end{align*}
	Then 
	\begin{align}\label{ineq:fpe_property_6}
	\gamma_\ast'(\eta)= \frac{G_1'(\eta)G_2(\eta)-G_1(\eta)G_2'(\eta)}{2\gamma_\ast(\eta) G_2^2(\eta)}.
	\end{align}
	We shall now prove bounds for $G_1,G_1',G_2,G_2'$. First, using (\ref{ineq:fpe_property_4}), we have
	\begin{align*}
	\sigma_\xi^2\leq G_1(\eta)\leq \sigma_\xi^2+\frac{\tau_\ast(\eta)}{2}\pnorm{\mu_0}{}^2, \quad  \phi\cdot \frac{\tau_\ast(\eta)}{\pnorm{\Sigma}{\op}+\tau_\ast(\eta)}\leq G_2(\eta)\leq \phi.
	\end{align*}
	In particular, uniformly in $\eta \in \Xi_K$,
	\begin{align}\label{ineq:fpe_property_7}
	G_1(\eta),G_2(\eta)\asymp 1. 
	\end{align}
	The derivatives $G_1', G_2'$ are 
	\begin{align*}
	G_1'(\eta)& = 2\tau_\ast(\eta)\tau_\ast'(\eta) \pnorm{(\Sigma+\tau_\ast(\eta) I)^{-1}\Sigma^{1/2}\mu_0}{}^2\\
	&\qquad -2 \tau_\ast^2(\eta)\pnorm{(\Sigma+\tau_\ast(\eta) I)^{-3/2}\Sigma^{1/2}\mu_0}{}^2\cdot \tau_\ast'(\eta),\\
	G_2'(\eta)& = 2\cdot n^{-1}\tr\big((\Sigma+\tau_\ast(\eta) I)^{-3}\Sigma^2\big)\cdot \tau_\ast'(\eta). 
	\end{align*}
	Using the apriori estimates on $\tau_\ast(\eta)$ and (\ref{ineq:fpe_property_5}), it now follows that 
	\begin{align}\label{ineq:fpe_property_8}
	\sup_{\eta \in \Xi_K} \big\{ \abs{G_1'(\eta)}\vee \abs{G_2'(\eta)}\big\}\lesssim 1.
	\end{align}
	Combining (\ref{ineq:fpe_property_6})-(\ref{ineq:fpe_property_8}) and using apriori estimates on $\gamma_\ast(\eta)$, we arrive at 
	\begin{align}\label{ineq:fpe_property_9}
	\sup_{\eta \in \Xi_K} \abs{\gamma_\ast'(\eta)} \lesssim 1.
	\end{align}
	The claim follows by collecting (\ref{ineq:fpe_property_5}) and (\ref{ineq:fpe_property_9}).
\end{proof}

\subsection{Sample version of (\ref{eqn:fpe})}

Let the sample version of (\ref{eqn:fpe}) be defined by
\begin{align}\label{eqn:fpe_sample}
\begin{cases}
\phi e_h^2 \gamma^2  = \sigma_\pm^2(L_w)+ \err_{(\Sigma,\mu_0)}(\gamma;\tau),\\
\big(\phi e_h^2-\frac{\eta}{\tau}\big)\cdot \gamma^2 =\dof_{(\Sigma,\mu_0)}(\gamma;\tau).
\end{cases}
\end{align}
Here recall that $e_h^2$ is defined in (\ref{def:eh_eg}), and $\sigma_\pm^2(L_w)$ is defined in (\ref{def:sigma_pm}). 
\begin{proposition}\label{prop:fpe_sample_est}
$1/K\leq \phi^{-1},\sigma_\xi^2\leq K$ and $\pnorm{\mu_0}{}\vee \pnorm{\Sigma}{\op}\vee \mathcal{H}_\Sigma \leq K$ for some $K>0$.  There exist some $C, C_0>1$ depending on $K$, such that with $\delta \in (0,1/C^{100})$, $1\leq M\leq \sqrt{n}/C$ and $L_w\leq C$, on the event $\mathscr{E}_1(\delta)\cap \mathscr{E}_{\Delta,\Xi}(M)$, where
\begin{align*}
\mathscr{E}_{\Delta,\Xi}(M)\equiv \Big\{\max_{\ell=1,2}\sup_{\tau\geq 0}\abs{\Delta_\ell(\tau)}\vee \max_{\ell =1,2}\sup_{\tau\geq 0} n^{-1/2}\abs{\Xi_\ell(\tau)-\E \Xi_\ell(\tau)}\leq M\Big\}
\end{align*}
with $\Delta_\ell,\Xi_\ell$ defined in Lemmas \ref{lem:conc_Delta} and \ref{lem:conc_Xi} ahead, the following hold.
\begin{enumerate}
	\item All solutions $(\gamma_{n,\eta,\pm},\tau_{n,\eta,\pm})$ to the system of equations in (\ref{eqn:fpe_sample}) satisfy
	\begin{align*}
	1/C_0\leq \tau_{n,\eta,\pm} \leq C_0,\quad 1/C_0\leq \gamma_{n,\eta,\pm}\leq C_0
	\end{align*}
	uniformly in $\eta \in \Xi_K$. 
	\item Moreover, 
	\begin{align*}
	\sup_{\eta \in \Xi_K}\big\{\abs{\tau_{n,\eta,\pm}-\tau_{\eta,\ast}}\vee  \abs{\gamma_{n,\eta,\pm}-\gamma_{\eta,\ast}}\big\} \leq C_0\cdot \big(M/\sqrt{n}+\delta\big).
	\end{align*}
\end{enumerate}
\end{proposition}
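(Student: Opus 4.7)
The natural approach is a perturbation analysis around the population solutions $(\gamma_{\eta,\ast}, \tau_{\eta,\ast})$, exploiting the stability estimates in Proposition \ref{prop:fpe_est}. On the event $\mathscr{E}_1(\delta)$, Lemma \ref{lem:sigma_pm_E1} gives $|\sigma_\pm^2(L_w) - \sigma_\xi^2| \lesssim \delta$ and $|e_h^2 - 1| \leq \delta$, while on $\mathscr{E}_{\Delta,\Xi}(M)$ the random quantities $\err_{(\Sigma,\mu_0)}(\gamma;\tau)$ and $\dof_{(\Sigma,\mu_0)}(\gamma;\tau)$ differ from their means (computed explicitly in Lemma \ref{lem:est_dof}) by at most $\bigo(M/\sqrt{n})$ uniformly in $\tau$ on any compact range of $(\gamma, \tau)$, via the quantities $\Delta_\ell, \Xi_\ell$. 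Thus the sample FPE (\ref{eqn:fpe_sample}) is a $\bigo(M/\sqrt{n}+\delta)$ perturbation of the population FPE (\ref{eqn:fpe}), and the two parts of the proposition are a matter of (i) existence plus rough apriori bounds and (ii) a quantitative closeness estimate.

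For part (1), I would substitute the closed form of $\E\dof$ into the second equation of (\ref{eqn:fpe_sample}) to obtain a scalar equation for $\tau$ of the form
\begin{align*}
\phi e_h^2 - \frac{\eta}{\tau} = n^{-1}\tr\big((\Sigma+\tau I)^{-1}\Sigma\big) + \mathsf{Fluct}_\pm(\gamma, \tau),
\end{align*}
where $|\mathsf{Fluct}_\pm(\gamma, \tau)| \lesssim M/(\sqrt{n}\,\gamma^2)$ on $\mathscr{E}_{\Delta,\Xi}(M)$. The RHS is smooth and strictly decreasing in $\tau$, so the same continuity/monotonicity argument as in Proposition \ref{prop:fpe_est}-(1) produces a solution $\tau_{n,\eta,\pm}$, and upper/lower bounds derived exactly as in Proposition \ref{prop:fpe_est}-(2) — now with an additional $\bigo(M/\sqrt{n}+\delta)$ slack coming from the perturbations of $e_h^2$ and $\mathsf{Fluct}_\pm$ — yield $\tau_{n,\eta,\pm} \in [1/C_0, C_0]$ uniformly in $\eta \in \Xi_K$, provided $M/\sqrt{n}+\delta$ is sufficiently small relative to the gaps in those apriori bounds. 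Plugging $\tau_{n,\eta,\pm} \asymp 1$ into the first equation of (\ref{eqn:fpe_sample}) and using $\sigma_\pm^2 \asymp 1$ delivers $\gamma_{n,\eta,\pm} \asymp 1$. This step naturally couples the two equations, since the fluctuation term carries a $\gamma^{-2}$ factor, but the coupling closes because the lower bound on $\gamma$ follows from $\sigma_\pm^2 \gtrsim 1$ alone.

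For part (2), I would eliminate $\gamma^2$ between the two equations of (\ref{eqn:fpe_sample}) to get a reduced scalar equation $\mathsf{F}_\eta(\tau) = \mathsf{R}_{n,\eta,\pm}(\tau)$, where $\mathsf{F}_\eta$ is the analogous reduction of (\ref{eqn:fpe}) — so that $\mathsf{F}_\eta(\tau_{\eta,\ast}) = 0$ — and $\mathsf{R}_{n,\eta,\pm}$ collects all the perturbations, with $|\mathsf{R}_{n,\eta,\pm}(\tau)| \lesssim M/\sqrt{n}+\delta$ uniformly on $[1/C_0, C_0]$. The crucial ingredient is that $|\mathsf{F}_\eta'(\tau_{\eta,\ast})|$ is bounded away from zero uniformly in $\eta \in \Xi_K$: differentiating (\ref{ineq:fpe_property_1}) gives the explicit relation (\ref{ineq:fpe_property_5_0}) between $\mathsf{F}_\eta'(\tau_{\eta,\ast})$ and $\tau'_\ast(\eta)$, and Proposition \ref{prop:fpe_est}-(3) supplies $1/C \leq \tau'_\ast(\eta) \leq C$. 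A one-dimensional mean-value argument on $[1/C_0, C_0]$ then delivers $|\tau_{n,\eta,\pm} - \tau_{\eta,\ast}| \lesssim M/\sqrt{n}+\delta$. Substituting into the first equation of (\ref{eqn:fpe_sample}) and using the Lipschitz continuity of $(\gamma,\tau) \mapsto \E\err(\gamma;\tau)$ on the compact region (via the closed form from Lemma \ref{lem:est_dof} and the lower bound on $\gamma$) yields the matching bound for $|\gamma_{n,\eta,\pm} - \gamma_{\eta,\ast}|$.

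The principal difficulty is preserving uniformity in $\eta \in \Xi_K$ all the way down to $\eta = 0$ (when $\phi^{-1} > 1$), where the second equation degenerates and one cannot hope to divide by $\eta$. The saving grace is the stability bound $1/C \leq \tau'_\ast(\eta) \leq C$ from Proposition \ref{prop:fpe_est}-(3), which guarantees a non-degenerate implicit function at $\tau = \tau_{\eta,\ast}$ uniformly in $\eta$; this is what prevents the constants in the mean-value argument from blowing up as $\eta \downarrow 0$. A secondary subtlety, which forces parts (1) and (2) to be proved in the order stated, is that the fluctuation term in the reduced equation carries a $\gamma^{-2}$ prefactor, so one cannot launch the quantitative perturbation bound without first securing the apriori $\gamma_{n,\eta,\pm} \gtrsim 1$.
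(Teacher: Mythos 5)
Your proposal is correct and takes essentially the same route as the paper: both treat (\ref{eqn:fpe_sample}) as an $\bigo(M/\sqrt{n}+\delta)$ perturbation of (\ref{eqn:fpe}) via Lemmas \ref{lem:sigma_pm_E1} and \ref{lem:est_dof}, derive apriori bounds by first extracting $\gamma_n\gtrsim 1$ from the err-equation (so the $\gamma^{-1}$ fluctuation factors become harmless), and then use the uniform-in-$\eta$ non-degeneracy of the reduced scalar $\tau$-equation (the paper directly bounds $|\mathsf{f}'|\gtrsim 1$ on $[1/C_0,C_0]$; your route via Proposition \ref{prop:fpe_est}-(3) and (\ref{ineq:fpe_property_5_0}) is equivalent) to propagate the perturbation quantitatively. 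One small exposition note: your part (1) is written tau-first and then gamma-second, but as you observe at the end of that paragraph the logical order must begin with the lower bound $\gamma_n\gtrsim 1$ from $\sigma_\pm^2\gtrsim 1$ alone, exactly as the paper does, before the dof-equation fluctuations can be controlled.
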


We need two concentration lemmas before the proof of Proposition \ref{prop:fpe_sample_est}.

\begin{lemma}\label{lem:conc_Delta}
Let $
\Delta_\ell(\tau)\equiv -\ell\cdot \tau \bigiprod{(\Sigma+\tau I)^{-\ell}\Sigma^{\ell-1/2}\mu_0}{ g}$ for $\ell=1,2$. Suppose that $\pnorm{\mu_0}{}\vee \pnorm{\Sigma}{\op}\vee \mathcal{H}_\Sigma \leq K$ for some $K>0$. Then there exists some constant $C=C(K)>1$ such that for $t\geq C\log (en)$, 
\begin{align*}
\Prob\Big(\max_{\ell =1,2}\sup_{\tau\geq 0} \abs{\Delta_\ell(\tau)}\geq C\sqrt{t}\Big)\leq e^{-t}. 
\end{align*}
\end{lemma}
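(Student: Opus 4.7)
The plan is to treat $\tau \mapsto \Delta_\ell(\tau)$ as a one-parameter Gaussian process, written as $\Delta_\ell(\tau) = \iprod{v_\ell(\tau)}{g}$ with the $\R^n$-valued curve $v_\ell(\tau):= -\ell\tau(\Sigma+\tau I)^{-\ell}\Sigma^{\ell-1/2}\mu_0$. First I would establish a pointwise variance bound. Diagonalizing $\Sigma=\sum_i \lambda_i u_i u_i^\top$ gives
\begin{align*}
\var\big(\Delta_\ell(\tau)\big) = \ell^2\sum_{i=1}^n \frac{\tau^2 \lambda_i^{2\ell-1}}{(\lambda_i+\tau)^{2\ell}}\,\iprod{u_i}{\mu_0}^2 \le c_\ell\,\pnorm{\Sigma^{1/2}\mu_0}{}^2 \le c_\ell K^3,
\end{align*}
using $\tau/(\lambda_i+\tau)\le 1$ (for $\ell=1$) or the explicit maximum of $s \mapsto s^2/(1+s)^4$ on $[0,\infty)$ (for $\ell=2$). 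Thus each $\Delta_\ell(\tau)$ is a centered Gaussian with variance $\bigo_K(1)$.

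To upgrade pointwise to uniform-in-$\tau$ control, I would bound the total arc-length of the curve $\tau\mapsto v_\ell(\tau)$. Differentiating coordinate-wise in the eigenbasis and applying the triangle inequality,
\begin{align*}
\int_0^\infty \pnorm{v_\ell'(\tau)}{} \d{\tau}\le \sum_{i=1}^n \abs{\iprod{u_i}{\mu_0}}\int_0^\infty \biggabs{\frac{d}{d\tau} \frac{\ell\tau \lambda_i^{\ell-1/2}}{(\lambda_i+\tau)^\ell}} \d{\tau}\lesssim_K \sum_{i=1}^n \lambda_i^{1/2}\abs{\iprod{u_i}{\mu_0}}\le \sqrt{n}\,\pnorm{\Sigma^{1/2}\mu_0}{}\lesssim_K \sqrt{n},
\end{align*}
where each scalar integral is $\lambda_i$-free and bounded after the substitution $s=\tau/\lambda_i$. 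Extended by continuity to $[0,\infty]$ (with $v_\ell(\infty)$ the evident limit), this bounds the curve's total variation by $C(K)\sqrt{n}$.

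I then construct a net $\{\tau_k\}_{k=0}^N\subset[0,\infty]$ with $N\lesssim_K n^3$ satisfying $\pnorm{v_\ell(\tau_{k+1})-v_\ell(\tau_k)}{}\le n^{-2}$, obtained by parametrizing the curve by arc-length. On the net, the pointwise Gaussian tail plus a union bound give $\max_k\abs{\Delta_\ell(\tau_k)}\le C\sqrt{t}$ with probability $\ge 1- N e^{-t/C}\ge 1-e^{-t/2}$ whenever $t\ge C\log(en)$. Between net points the oscillation is deterministically bounded by $\pnorm{v_\ell(\tau)-v_\ell(\tau_k)}{}\cdot \pnorm{g}{}\le n^{-2}\pnorm{g}{}$, which is $\le 2n^{-3/2}$ on an event of probability $\ge 1-e^{-n/C}$. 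Combining these estimates with a final union bound over $\ell\in\{1,2\}$ yields the stated inequality. The main obstacle is that $v_\ell$ is not Lipschitz in $\tau$ with a dimension-free constant: since only $\mathcal{H}_\Sigma$ (not $\pnorm{\Sigma^{-1}}{\op}$) is bounded, a naive increment bound in $\abs{\tau-\tau'}$ blows up near $\tau=0$ when some $\lambda_i$ is small. The arc-length reparametrization circumvents this by replacing $\tau$ with total variation, turning the process into a $\pnorm{g}{}$-Lipschitz Gaussian process on $[0,L]$ with $L\lesssim_K \sqrt{n}$; the resulting polynomial-in-$n$ net size is absorbed by the hypothesis $t\ge C\log(en)$.
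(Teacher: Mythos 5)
Your proof is correct. Both you and the paper use a polynomial-size net in $\tau$ combined with pointwise Gaussian tails and a union bound, but the net construction differs. The paper splits $[0,\infty)$ into $[0,Kn]$ and $[Kn,\infty)$: on the tail piece $\Delta_\ell(\tau)$ is within $\bigo(e_g/\sqrt n)$ of its $\tau=\infty$ limit, so a single pointwise tail suffices; on $[0,Kn]$ it uses a uniform mesh with the Lipschitz bound $\sup_\tau\abs{\partial_\tau\Delta_1(\tau)}\lesssim\sqrt n\pnorm{\mu_0}{}\pnorm{g}{\infty}$, which relies on $\mathcal{H}_\Sigma\leq K$ to control the derivative near $\tau=0$. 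Your arc-length net over the closed range $[0,\infty]$ does both jobs in one step, and it does not actually use the hypothesis $\mathcal{H}_\Sigma\leq K$: the per-eigenvalue total variation is $c_\ell\lambda_i^{1/2}$ (bounded even as $\lambda_i\to0$), whereas the pointwise derivative $\sup_\tau\abs{\partial_\tau f_{\ell,i}(\tau)}\asymp\lambda_i^{-1/2}$ blows up and forces the paper to invoke the averaged control from $\mathcal{H}_\Sigma$. So your argument proves the lemma under the weaker hypothesis $\pnorm{\mu_0}{}\vee\pnorm{\Sigma}{\op}\leq K$. Two small clean-ups: the phrase ``each scalar integral is $\lambda_i$-free'' should say that after the substitution $s=\tau/\lambda_i$ the rescaled integral $\int_0^\infty\abs{\frac{d}{ds}\frac{\ell s}{(1+s)^\ell}}\,\d s$ is a universal constant while a multiplicative factor $\lambda_i^{1/2}$ survives, which is exactly what your subsequent Cauchy--Schwarz step retains; and in the union bound you should first choose the pointwise tail constant so that $\Prob(\abs{\Delta_\ell(\tau_k)}\geq C'\sqrt t)\leq e^{-2t}$, which is possible since $\var(\Delta_\ell(\tau))=\bigo_K(1)$, and then $N e^{-2t}\leq e^{-t}$ follows from $\log N\lesssim\log n\leq t$ once $t\geq C\log(en)$.
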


\begin{lemma}\label{lem:conc_Xi}
Let $
\Xi_\ell(\tau)\equiv \pnorm{(\Sigma+\tau I)^{-\ell/2}\Sigma^{\ell/2}g}{}^2$ for $\ell=1,2$. 
Suppose that $\pnorm{\Sigma}{\op}\vee \mathcal{H}_\Sigma\leq K$ for some $K>0$. Then there exists some constant $C=C(K)>1$ such that for $t\geq C\log (en)$, 
\begin{align*}
\Prob\Big(\max_{\ell =1,2}\sup_{\tau\geq 0} \bigabs{\Xi_\ell(\tau)-\E \Xi_\ell(\tau)}\geq C(\sqrt{nt}+t)\Big)\leq e^{-t}. 
\end{align*}
\end{lemma}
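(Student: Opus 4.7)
The plan is to write $\Xi_\ell(\tau) = g^\top A_\ell(\tau) g$ as a quadratic form in the Gaussian vector $g$, where $A_\ell(\tau) \equiv (\Sigma+\tau I)^{-\ell}\Sigma^\ell$ is positive semidefinite with $\pnorm{A_\ell(\tau)}{\op} = \max_j (\lambda_j/(\lambda_j+\tau))^\ell \leq 1$ and Frobenius norm squared $\pnorm{A_\ell(\tau)}{F}^2 = \sum_j (\lambda_j/(\lambda_j+\tau))^{2\ell} \leq \sum_j 1 = n$ (since each singular value lies in $[0,1]$). The Hanson--Wright inequality then delivers, at each fixed $\tau\geq 0$, the pointwise estimate
\begin{align*}
\Prob\bigl(\bigabs{\Xi_\ell(\tau)-\E \Xi_\ell(\tau)}\geq C_1(\sqrt{nt}+t)\bigr)\leq 2e^{-t},
\end{align*}
and the remaining task is to upgrade this to a supremum bound over $\tau\in[0,\infty)$.

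The central structural observation is that, after diagonalizing $\Sigma$ with eigenvalues $\{\lambda_j\}$ and orthonormal eigenvectors $\{e_j\}$, one has $\Xi_\ell(\tau)=\sum_j \phi_j(\tau)\iprod{g}{e_j}^2$ with $\phi_j(\tau)\equiv (\lambda_j/(\lambda_j+\tau))^\ell$ \emph{monotone decreasing} in $\tau$. Consequently both $\tau\mapsto \Xi_\ell(\tau)$ and $\tau\mapsto\E \Xi_\ell(\tau)=\sum_j \phi_j(\tau)$ are monotone decreasing on $[0,\infty)$, taking values $\pnorm{g}{}^2$ and $n$ at $\tau=0$ and decaying to $0$ at $\tau = \infty$. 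I would then use the deterministic reference trajectory $\tau\mapsto\E\Xi_\ell(\tau)$ to build an adapted net: by the intermediate value theorem there exist $0=\tau_0<\tau_1<\cdots<\tau_N=\infty$ (with $\Xi_\ell(\infty)\equiv 0$) such that $\E\Xi_\ell(\tau_k)-\E\Xi_\ell(\tau_{k+1})\leq \sqrt{nt}$ for every $k$ and $N\leq n/\sqrt{nt}+1\lesssim \sqrt{n/t}$. The two-sided monotonicity then converts this expectation-based net into a pathwise bound: setting $Z(\tau)\equiv \Xi_\ell(\tau)-\E\Xi_\ell(\tau)$, for any $\tau\in [\tau_k,\tau_{k+1}]$,
\begin{align*}
\Xi_\ell(\tau_{k+1})-\E\Xi_\ell(\tau_k)\leq Z(\tau)\leq \Xi_\ell(\tau_k)-\E\Xi_\ell(\tau_{k+1}),
\end{align*}
and hence $\abs{Z(\tau)}\leq \abs{Z(\tau_k)}\vee\abs{Z(\tau_{k+1})}+\sqrt{nt}$.

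To close the argument I would apply the pointwise Hanson--Wright bound at $s=2C_1(\sqrt{nt}+t)$ and union-bound over $\ell\in\{1,2\}$ and the $\lesssim \sqrt{n/t}\leq \sqrt{n}$ net points; this multiplicative entropy factor contributes at most $\tfrac{1}{2}\log n+\bigo(1)$ to the exponent, which is absorbed into $e^{-t}$ once $t\geq C\log(en)$ for $C=C(K)$ sufficiently large. Combining with the monotonicity bound of the previous paragraph yields the asserted supremum estimate, after enlarging $C$ to absorb the additive $\sqrt{nt}$ slack from the net construction. The main obstacle I anticipate is precisely the one the monotonicity argument sidesteps: a naive Lipschitz-in-$\tau$ chaining would need to control $\pnorm{\partial_\tau A_\ell(\tau)}{\op}=\max_j \ell \lambda_j^\ell/(\lambda_j+\tau)^{\ell+1}$, which scales like $1/\lambda_n$ near $\tau=0$, whereas the hypothesis $\mathcal{H}_\Sigma\leq K$ controls only the \emph{average} of $1/\lambda_j$ and not $1/\lambda_n$. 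Using the monotone reparametrization by $\E\Xi_\ell$ is the one nontrivial input beyond Hanson--Wright, and it circumvents this obstruction cleanly.
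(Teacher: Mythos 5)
Your proof is correct, and it takes a genuinely different route from the paper's on the chaining step. For the pointwise estimate, the paper invokes the log-Sobolev-based concentration in Proposition \ref{prop:conc_H_generic} (via the self-bounding inequality $\pnorm{\nabla_g\Xi_\ell(\tau)}{}^2 \leq 4\Xi_\ell(\tau)$), whereas you use Hanson--Wright directly; these are essentially equivalent here. The real divergence is in passing to the supremum over $\tau$. The paper computes the $\tau$-derivative, bounding $\sup_{\tau\in[0,Kn]}\abs{\partial_\tau\Xi_\ell(\tau)}\lesssim n\mathcal{H}_\Sigma\pnorm{g}{\infty}^2$, controls $\pnorm{g}{\infty}$ on a separate high-probability event, discretizes $[0,Kn]$ with a uniform $\epsilon$-net, and handles $\tau>Kn$ by a crude deterministic bound $\Xi_\ell(\tau)\lesssim e_g^2$. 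Your argument instead exploits the observation that each $\phi_j(\tau)=(\lambda_j/(\lambda_j+\tau))^\ell$ is decreasing, so both $\Xi_\ell$ and $\E\Xi_\ell$ decrease monotonically in $\tau$; the net adapted to the deterministic trajectory $\E\Xi_\ell$ (of cardinality $\lesssim\sqrt{n/t}$ covering all of $[0,\infty]$ in one shot) then sandwiches $Z(\tau)$ between endpoint values with only a controlled $\sqrt{nt}$ slack. This is cleaner: it requires no $\pnorm{g}{\infty}$ event, no derivative computation, and no separate treatment of large $\tau$. It also buys something the paper does not claim here: your argument never uses $\mathcal{H}_\Sigma\leq K$ (or even $\pnorm{\Sigma}{\op}\leq K$), since $\pnorm{A_\ell(\tau)}{\op}\leq 1$ and $\pnorm{A_\ell(\tau)}{F}^2\leq n$ hold unconditionally for any p.s.d.\ $\Sigma$, so your constant is actually universal. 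The one advantage of the paper's scheme is that it is indifferent to monotone structure: the identical Lipschitz-plus-net template is reused in the companion Lemma \ref{lem:conc_Delta} for the bilinear forms $\Delta_\ell(\tau)$, where the coefficients $\tau\lambda_j^{\ell-1/2}\mu_{0,j}/(\lambda_j+\tau)^\ell$ are not sign-definite or monotone in $\tau$, so your monotone-reparametrization device would not transfer.
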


The proofs of these lemmas are deferred to the next subsection.

\begin{proof}[Proof of Proposition \ref{prop:fpe_sample_est}]
	All the constants in $\lesssim$, $\gtrsim$, $\asymp$ and $\bigo$ below may possibly depend on $K$. We often suppress the dependence of $\sigma_\pm^2(L_w)$ on $L_w$ for simplicity.

	\noindent (1). We shall write $(\gamma_{n,\eta,\pm},\tau_{n,\eta,\pm})$ as $(\gamma_{n},\tau_{n})$ and $(\gamma_{\eta,\ast},\tau_{\eta,\ast})=(\gamma_\ast,\tau_\ast)$ for notational simplicity. Using (\ref{ineq:est_dof_1}), any solution $(\gamma_n,\tau_n)$ to the equations in (\ref{eqn:fpe_sample}) satisfies
	\begin{align}\label{ineq:fpe_sample_est_1}
	\begin{cases}
	\phi e_h^2-\frac{\eta}{\tau_n} +\frac{\Delta_1(\tau_n)}{\sqrt{n}\gamma_n} =  \frac{1}{n} \tr\big((\Sigma+\tau_n I)^{-1}\Sigma\big)+\frac{1}{n}(\mathrm{id}-\E)\Xi_1(\tau_n),\\
	\phi e_h^2+\frac{\Delta_2(\tau_n)}{\sqrt{n}\gamma_n} = \frac{1}{\gamma_n^2}\big(\sigma_\pm^2 +\tau_n^2 \pnorm{(\Sigma+\tau_n I)^{-1}\Sigma^{1/2}\mu_0}{}^2\big)\\
	\qquad \qquad\qquad\qquad +\frac{1}{n}\tr\big((\Sigma+\tau_n I)^{-2}\Sigma^2\big)+\frac{1}{n}(\mathrm{id}-\E)\Xi_2(\tau_n).
	\end{cases}
	\end{align}
	On the event $\mathscr{E}_1(\delta)\cap \mathscr{E}_{\Delta,\Xi}(M)$ with $\delta \in (0,1/C^{100})$, $1\leq M\leq \sqrt{n}/C$ and $L_w\leq C$, using Lemma \ref{lem:sigma_pm_E1}, the second equation in (\ref{ineq:fpe_sample_est_1}) becomes
	\begin{align*}
	&\phi +\bigo\big(M\big(1\vee \gamma_n^{-1}\big)/\sqrt{n}+\delta\big) \\
	&= \frac{1}{\gamma_n^2}\Big(\sigma_\pm^2 +\tau_n^2 \pnorm{(\Sigma+\tau_n I)^{-1}\Sigma^{1/2}\mu_0}{}^2\Big)+\frac{1}{n}\tr\big((\Sigma+\tau_n I)^{-2}\Sigma^2\big)\gtrsim \frac{1}{\gamma_n^2}. 
	\end{align*}
	Rearranging terms we obtain the inequality
	\begin{align*}
	\frac{1}{\gamma_n^2}\lesssim 1+ \frac{M}{\sqrt{n}}+ \frac{M}{\sqrt{n}\gamma_n}\,\Rightarrow\, \gamma_n \gtrsim \frac{1}{1+M/\sqrt{n}}\gtrsim 1. 
	\end{align*}
	So with $\epsilon_n\equiv \epsilon_n(M,\delta)\equiv M/\sqrt{n}+\delta$, the equations in (\ref{ineq:fpe_sample_est_1}) reduce to
	\begin{align}\label{ineq:fpe_sample_est_2}
	\begin{cases}
	\phi-\frac{\eta}{\tau_n} +\bigo(\epsilon_n) =  \frac{1}{n} \tr\big((\Sigma+\tau_n I)^{-1}\Sigma\big),\\
	\phi+\bigo(\epsilon_n) = \frac{1}{\gamma_n^2}\big(\sigma_\xi^2 +\tau_n^2 \pnorm{(\Sigma+\tau_n I)^{-1}\Sigma^{1/2}\mu_0}{}^2\big) +\frac{1}{n}\tr\big((\Sigma+\tau_n I)^{-2}\Sigma^2\big).
	\end{cases}
	\end{align}
	The above equations match (\ref{eqn:fpe}) up to the small perturbation $\bigo(\epsilon_n)=\bigo(\epsilon_n(M,\delta))$ that can be assimilated into the leading term $\phi$ with small enough $c_0>0$ such that $M\leq c_0\sqrt{n}$. From here the existence (but not uniqueness) and apriori bounds for $\gamma_n,\tau_n$ can be established similarly to the proof of Proposition \ref{prop:fpe_est}. 
	
	\noindent (2). Now we shall prove the claimed error bounds. By using (\ref{ineq:fpe_property_1}) and the first equation of (\ref{ineq:fpe_sample_est_2}), we have
	\begin{align*}
	\frac{1}{n} \tr\big((\Sigma+\tau_n I)^{-1}\Sigma\big)+\frac{\eta}{\tau_n} = \frac{1}{n} \tr\big((\Sigma+\tau_\ast I)^{-1}\Sigma\big)+\frac{\eta}{\tau_\ast}+\bigo(\epsilon_n).
	\end{align*}
	Let $
	\mathsf{f}(\tau)\equiv \frac{1}{n} \tr\big((\Sigma+\tau I)^{-1}\Sigma\big)+\frac{\eta}{\tau}$. 
	Then it is easy to calculate $
	\mathsf{f}'(\tau) = -\frac{1}{n} \tr\big((\Sigma+\tau I)^{-2}\Sigma\big)-\frac{\eta}{\tau^2}\leq 0$, 
	and for any $C_0>1$,
	\begin{align*}
	\inf_{1/C_0\leq \tau\leq C_0}\abs{\mathsf{f}'(\tau)}&\geq \inf_{1/C_0\leq \tau\leq C_0} \frac{1}{n}\tr\big((\Sigma+\tau I)^{-2}\Sigma\big)
	\geq (\pnorm{\Sigma}{\op}+C_0)^{-2}\mathcal{H}_\Sigma^{-1}. 
	\end{align*}
	Now using the apriori estimates on $\tau_\ast,\tau_n$, we may conclude
	\begin{align}\label{ineq:fpe_sample_est_5}
	\sup_{\eta \in \Xi_K}\abs{\tau_n-\tau_\ast}\lesssim \epsilon_n. 
	\end{align}
	On the other hand, using (\ref{ineq:fpe_property_3}) and the second equation of (\ref{ineq:fpe_sample_est_2}), we have
	\begin{align}\label{ineq:fpe_sample_est_6}
	&\frac{1}{\gamma_n^2}\Big(\sigma_\xi^2 +\tau_n^2 \pnorm{(\Sigma+\tau_n I)^{-1}\Sigma^{1/2}\mu_0}{}^2\Big) +\frac{1}{n}\tr\big((\Sigma+\tau_n I)^{-2}\Sigma^2\big)\nonumber\\
	& = \frac{1}{\gamma_\ast^2}\Big(\sigma_\xi^2 +\tau_\ast^2 \pnorm{(\Sigma+\tau_\ast I)^{-1}\Sigma^{1/2}\mu_0}{}^2\Big) +\frac{1}{n}\tr\big((\Sigma+\tau_\ast I)^{-2}\Sigma^2\big)+\bigo(\epsilon_n).
	\end{align}
	Using the error bound in (\ref{ineq:fpe_sample_est_5}) and apriori estimates for $\tau_n,\tau_\ast$, and the fact that $\mathcal{H}_\Sigma\lesssim 1$, by an easy derivative estimate we have
	\begin{itemize}
		\item $\bigabs{\frac{1}{n}\tr\big((\Sigma+\tau_n I)^{-2}\Sigma^2\big)- \frac{1}{n}\tr\big((\Sigma+\tau_\ast I)^{-2}\Sigma^2\big)}\lesssim \epsilon_n$, and
		\item $\bigabs{\tau_n^2 \pnorm{(\Sigma+\tau_n I)^{-1}\Sigma^{1/2}\mu_0}{}^2-\tau_\ast^2 \pnorm{(\Sigma+\tau_\ast I)^{-1}\Sigma^{1/2}\mu_0}{}^2 }\lesssim \epsilon_n$.
	\end{itemize}
	Now plugging these estimates into (\ref{ineq:fpe_sample_est_6}), with $\mathscr{C}_0\equiv \sigma_\xi^2 +\tau_\ast^2 \pnorm{(\Sigma+\tau_\ast I)^{-1}\Sigma^{1/2}\mu_0}{}^2$ satisfying $\mathscr{C}_0\asymp 1$, we arrive at
	\begin{align*}
	\frac{\mathscr{C}_0+\bigo(\epsilon_n)}{\gamma_n^2} = \frac{\mathscr{C}_0}{\gamma_\ast^2}+\bigo(\epsilon_n).
	\end{align*}
	Using apriori estimates on $\gamma_n,\gamma_\ast$, we may then invert the above estimate into
	\begin{align}\label{ineq:fpe_sample_est_9}
	\sup_{\eta \in \Xi_K}\abs{\gamma_n-\gamma_\ast}\lesssim \epsilon_n. 
	\end{align}
	The claimed error bounds follow by combining (\ref{ineq:fpe_sample_est_5}) and (\ref{ineq:fpe_sample_est_9}).
\end{proof}

\subsection{Proofs of Lemmas \ref{lem:conc_Delta} and \ref{lem:conc_Xi}}

\begin{proof}[Proof of Lemma \ref{lem:conc_Delta}]
	We only handle the case $\ell=1$. The case $\ell=2$ is similar. Note that the assumption on $\mu_0$ invariant over orthogonal transforms, so for notational simplicity we assume without loss of generality that $\Sigma$ is diagonal. As $
	\sup_{\tau \geq Kn}\abs{\Delta_1(\tau)}\leq \bigabs{\sum_{j=1}^n \lambda_j^{1/2}\mu_{0,j}g_j}+ C e_g\cdot n^{-1/2}$, a standard concentration for the first term shows for $t\geq 1$, with probability $1-e^{-t}$,
	\begin{align}\label{ineq:conc_delta_1}
	\sup_{\tau \geq Kn}\abs{\Delta_1(\tau)}&\leq C_0 \sqrt{t}.
	\end{align}
	On the other hand, for $\epsilon>0$ to be chosen later, by taking an $\epsilon$-net $\mathcal{S}_\epsilon$ of $[0,Kn]$, a union bound shows that with probability at least $1-(Kn/\epsilon+1)e^{-t}$, 
	\begin{align*}
	\sup_{\tau \in [0,K n]}\abs{\Delta_1(\tau)}&\leq \max_{\tau \in \mathcal{S}_\epsilon}\abs{\Delta_1(\tau)}+ \sup_{\tau,\tau' \in [0,Kn]: \abs{\tau-\tau'}\leq \epsilon} \abs{\Delta_1(\tau)-\Delta_1(\tau')}\\
	&\leq C_1\cdot\Big(\sqrt{t}+ \sqrt{n}(\sqrt{\log n}+\sqrt{t})\epsilon\Big). 
	\end{align*}
	Here in the last inequality we used the simple estimate $
	\sup_{\tau \in [0,K n]}\abs{\partial_\tau\Delta_1(\tau)} \leq C \sqrt{n} \pnorm{\mu_0}{}\pnorm{g}{\infty}$. 
	Finally by choosing $\epsilon\equiv \sqrt{t}/\big\{\sqrt{n}(\sqrt{\log n}+\sqrt{t})\big\}$, we conclude that for $t\geq C_2\log (en)$, with probability $1-e^{-t}$, 
	\begin{align}\label{ineq:conc_delta_2}
	\sup_{\tau \in [0,Kn]}\abs{\Delta_1(\tau)}&\leq C_2 \sqrt{t}.
	\end{align}
	The claim follows by combining (\ref{ineq:conc_delta_1}) and (\ref{ineq:conc_delta_2}).
\end{proof}

\begin{proof}[Proof of Lemma \ref{lem:conc_Xi}]
	We focus on the case $\ell=1$ and will follow a similar idea used in the proof of Lemma \ref{lem:conc_Delta} above. Similarly we assume $\Sigma$ is diagonal without loss of generality. All the constants in $\lesssim,\gtrsim,\asymp$ below may depend on $K$.
	
	First note by a standard concentration, for any $t\geq 1$, with probability at least $1-e^{-t}$, $
	\sup_{\tau >Kn} \abs{\Xi_1(\tau)}\lesssim e_g^2 \lesssim 1+t/n$. 
	Similarly we have $
	\sup_{\tau >Kn} \E\abs{\Xi_1(\tau)}\lesssim 1$. 
	This means for any $t\geq 1$, with probability at least $1-e^{-t}$,
	\begin{align}
	\sup_{\tau >Kn} \Big(\abs{\Xi_1(\tau)}\vee \E \abs{\Xi_1(\tau)}\Big)\lesssim 1+t/n. 
	\end{align}
	Next we handle the suprema over $[0,Kn]$ by discretization over an $\epsilon$-net $\mathcal{S}_\epsilon$. To this end, we shall establish a pointwise concentration. Note that $
	\pnorm{\nabla \Xi_1(\tau)}{}^2 = 4 \pnorm{(\Sigma+\tau I)^{-1}\Sigma  g}{}^2\leq 4 \Xi_1(\tau)$. 
	An application of Proposition \ref{prop:conc_H_generic} then yields that, for each $\tau \geq 0$ and $t\geq 1$, with probability at least $1-e^{-t}$,
	\begin{align*}
	\abs{\Xi_1(\tau)-\E \Xi_1(\tau)}\leq C\big( \E^{1/2} \Xi_1(\tau)\cdot \sqrt{t} + t\big)\lesssim (\sqrt{n t}+t).
	\end{align*}
	On the other hand, as $
	\sup_{\tau \in [0,K n]}\abs{\partial_\tau \Xi_1(\tau)} \lesssim n \pnorm{g}{\infty}^2$ and  $
	\sup_{\tau \in [0,K n]}\abs{\partial_\tau\E\Xi_1(\tau)} \lesssim n\log n$, we deduce that with probability at least $1-(Kn/\epsilon+1)e^{-t}$, 
	\begin{align*}
	\sup_{\tau \in [0,K n]}\abs{\Xi_1(\tau)-\E\Xi_1(\tau)}&\leq \max_{\tau \in \mathcal{S}_\epsilon}\abs{\Xi_1(\tau)-\E \Xi_1(\tau)}+ \sup_{\tau,\tau' \in [0,Kn]: \abs{\tau-\tau'}\leq \epsilon} \abs{\Xi_1(\tau)-\Xi_1(\tau')}\\
	&\qquad\qquad + \sup_{\tau,\tau' \in [0,Kn]: \abs{\tau-\tau'}\leq \epsilon} \abs{\E\Xi_1(\tau)-\E \Xi_1(\tau')}\\
	&\lesssim \sqrt{nt}+ t+n(\log n+t)\epsilon.
	\end{align*}
	From here the claim follows by the same arguments used in the proof of Lemma \ref{lem:conc_Delta} above. 
\end{proof}

\section{Gaussian designs: Proof of Theorem \ref{thm:min_norm_dist}}\label{section:proof_gaussian_design}

We assume without loss of generality that $\Sigma=\mathrm{diag}(\lambda_1,\ldots,\lambda_n)$, so $\mathsf{V}=I$ unless otherwise specified. Recall $\mathcal{H}_\Sigma=\tr(\Sigma^{-1})/n$. 

\subsection{Localization of the primal problem}

\begin{proposition}\label{prop:H_local}
Suppose $1/K\leq \phi^{-1}-\bm{1}_{\eta=0}, \sigma_\xi^2\leq K$, and $\pnorm{\mu_0}{}\vee \pnorm{\Sigma}{\op}\leq K$ for some $K>0$. Fix $M>1,\delta \in (0,1/2)$ and $\eta\geq 0$. On the event $\mathscr{E}_0(M)\cap \mathscr{E}_1(\delta)$, there exists some $C=C(K)>0$ such that for any deterministic choice of $(L_w,L_v)$ with
\begin{align*}
L_w\wedge L_v \geq C\big\{1+ \big(\pnorm{\Sigma^{-1}}{\op}M\bm{1}_{\phi^{-1}\geq 1+1/K}^{-1}\wedge \eta^{-1}\big)\cdot M^2\big\},
\end{align*}
we have $
\min_{w \in B_n(L_w)} H_\eta(w;L_v)= \min_{w \in \R^n} H_\eta(w)$. 
\end{proposition}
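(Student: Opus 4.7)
The plan is to reduce the equality to norm bounds on the primal-dual saddle point of the unconstrained min-max $\min_w \max_v h_\eta(w,v)$, with $h_\eta$ the Lagrangian from (\ref{def:h_l}). For $\eta>0$ the KKT conditions identify this saddle point as $\hat{w}_\eta=\Sigma^{1/2}(\hat{\mu}_\eta-\mu_0)$ with $\hat{\mu}_\eta$ the Ridge estimator of (\ref{def:ridge_est}), together with $\hat{v}_\eta=(\eta\sqrt{n})^{-1}(G\hat{w}_\eta-\xi)$; for $\eta=0$ in the overparametrized regime, $\hat{w}_0$ is the constrained minimizer of $F$ over $\{Gw=\xi\}$ and $\hat{v}_0=-\sqrt{n}(G\Sigma G^\top)^{-1}(\xi+G\Sigma^{1/2}\mu_0)$ is the Lagrange multiplier, which is well-defined on $\mathscr{E}_0(M)$ by invertibility of $GG^\top$. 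Strong duality holds in both cases (linear constraints with feasible affine set, respectively strict convexity), so $\min_w H_\eta(w)=h_\eta(\hat{w}_\eta,\hat{v}_\eta)$. Granted $\hat{w}_\eta\in B_n(L_w)$ and $\hat{v}_\eta\in B_n(L_v)$, the proposition follows from the sandwich
\begin{align*}
\min_{w\in B_n(L_w)}H_\eta(w;L_v) & \le H_\eta(\hat{w}_\eta;L_v) \le H_\eta(\hat{w}_\eta)=h_\eta(\hat{w}_\eta,\hat{v}_\eta)\\
& =\min_{w\in\R^n} h_\eta(w,\hat{v}_\eta)\le \min_{w\in B_n(L_w)} h_\eta(w,\hat{v}_\eta)\le \min_{w\in B_n(L_w)}H_\eta(w;L_v),
\end{align*}
where the penultimate step uses $\hat{v}_\eta\in B_n(L_v)$ as a feasible witness in the inner max.

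\textbf{Bounding the primal.} In the gapped overparametrized regime $\phi^{-1}\ge 1+1/K$, the push-through identity gives the uniform-in-$\eta$ formula $\hat{\mu}_\eta=X^\top(XX^\top+n\eta I_m)^{-1}Y$. On $\mathscr{E}_0(M)\cap\mathscr{E}_1(\delta)$ one has $\pnorm{X}{\op}\lesssim M\sqrt{n}$, $\pnorm{(XX^\top)^{-1}}{\op}\lesssim \pnorm{\Sigma^{-1}}{\op}M/n$ (via $XX^\top\succeq \lambda_{\min}(\Sigma)GG^\top$), and $\pnorm{Y}{}\lesssim M\sqrt{n}$, so $\pnorm{\hat{\mu}_\eta}{}\lesssim \pnorm{\Sigma^{-1}}{\op}M^3$ uniformly in $\eta\ge 0$, which transfers to $\pnorm{\hat{w}_\eta}{}\le \pnorm{\Sigma}{\op}^{1/2}(\pnorm{\hat{\mu}_\eta}{}+\pnorm{\mu_0}{})\lesssim 1+\pnorm{\Sigma^{-1}}{\op}M^3$. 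Outside this regime (necessarily $\eta>0$), I would instead use $\pnorm{(X^\top X/n+\eta I_n)^{-1}}{\op}\le\eta^{-1}$ together with $\pnorm{X^\top Y/n}{}\lesssim M^2$ to obtain $\pnorm{\hat{\mu}_\eta}{}\lesssim M^2/\eta$, which again transfers to $\hat{w}_\eta$.

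\textbf{Bounding the dual — the main obstacle.} This is the delicate step because $\hat{v}_\eta$ carries an explicit $\eta^{-1}$ factor that threatens to diverge as $\eta\downarrow 0$. The one-line algebraic identity
\[
G\hat{w}_\eta-\xi\ =\ X\hat{\mu}_\eta-Y\ =\ -n\eta\,(XX^\top+n\eta I_m)^{-1}Y
\]
cancels this $\eta^{-1}$ exactly. In the gapped overparametrized regime it yields $\pnorm{\hat{v}_\eta}{}\le \sqrt{n}\,\pnorm{(XX^\top+n\eta I_m)^{-1}}{\op}\pnorm{Y}{}\lesssim \pnorm{\Sigma^{-1}}{\op}M^2$ uniformly in $\eta\ge 0$, consistent with the explicit $\eta=0$ multiplier formula above. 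In the complementary regime (necessarily $\eta>0$) the crude Ridge comparison $\pnorm{X\hat{\mu}_\eta-Y}{}^2\le \pnorm{Y}{}^2$ (test point $\mu=0$ in the Ridge objective) gives $\pnorm{\hat{v}_\eta}{}\lesssim M/\eta$. Taking the better of the two bounds reproduces exactly the quantity $1+(\pnorm{\Sigma^{-1}}{\op}M\bm{1}_{\phi^{-1}\ge 1+1/K}^{-1}\wedge\eta^{-1})M^2$ appearing in the statement, completing the proof once $L_w,L_v$ exceed a $K$-dependent constant times this threshold. The principal technical point is precisely this algebraic cancellation, which must remain valid at $\eta=0$ and hinges on the control of $\pnorm{(GG^\top/n)^{-1}}{\op}$ supplied by $\mathscr{E}_0(M)$ in the gapped regime.
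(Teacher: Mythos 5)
Your proposal is correct and follows essentially the same route as the paper: identify the primal-dual saddle point of $\min_w\max_v h_\eta(w,v)$ (which, after the reparametrization $w=\Sigma^{1/2}(\mu-\mu_0)$, is exactly $(w_\ast,v_\ast)=(\hat{w}_\eta,\hat{v}_\eta)$), bound its $\ell_2$ norm on $\mathscr{E}_0(M)\cap\mathscr{E}_1(\delta)$, and conclude that the constraints $B_n(L_w),B_m(L_v)$ are not binding. The paper obtains the norm bounds directly from the explicit KKT formulas for $(w_\ast,v_\ast)$ in terms of $(\phi\check{\Sigma}+\eta I)^{-1}$, whereas you route through the push-through identity for $\hat{\mu}_\eta$ and make the sandwich argument and the $\eta^{-1}$-cancellation explicit; these are expository variants of the same estimates, not a different proof.
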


\begin{proof}
Using the first-order optimality condition for the minimax problem
\begin{align}\label{ineq:localization_H_1}
\min_{w \in \R^n} H_\eta(w) = \min_{w \in \R^n} \max_{v \in \R^m}\bigg\{ \frac{1}{\sqrt{n}}\iprod{v}{Gw-\xi}+F(w)-\frac{\eta\pnorm{v}{}^2}{2}\bigg\},
\end{align}
any saddle point $(w_\ast,v_\ast)$ of (\ref{ineq:localization_H_1}) must satisfy $\nabla F(w_\ast) =- \frac{1}{\sqrt{n}} G^\top v_\ast$ and $\frac{1}{\sqrt{n}}(Gw_\ast-\xi) = \eta v_\ast$, or equivalently,
\begin{align*}
\begin{cases}
w_\ast = -\Sigma^{1/2}\mu_0+\frac{1}{n}\Sigma G ^\top \big(\phi\check{\Sigma}+\eta I\big)^{-1}(G\Sigma^{1/2}\mu_0+\xi),\\
v_\ast = -\frac{1}{\sqrt{n}} \big(\phi\check{\Sigma}+\eta I\big)^{-1}\big(G\Sigma^{1/2}\mu_0+\xi\big).
\end{cases}
\end{align*}
Here recall $\check{\Sigma}= m^{-1}G\Sigma G^\top$. On the event $\mathscr{E}_0(M)$, 
\begin{align*}
\pnorm{(\phi\check{\Sigma}+\eta I)^{-1}}{\op}\lesssim_K \pnorm{\Sigma^{-1}}{\op}M\bm{1}_{\phi^{-1}\geq 1+1/K}^{-1}\wedge \eta^{-1}.
\end{align*}
So on $\mathscr{E}_0(M)\cap \mathscr{E}_1(\delta)$, 
\begin{align*}
\pnorm{w_\ast}{}\vee \pnorm{v_\ast}{}\lesssim_{K} 1+ (\pnorm{\Sigma^{-1}}{\op}M\bm{1}_{\phi^{-1}\geq 1+1/K}^{-1}\wedge \eta^{-1})M^2.
\end{align*}
This means that on the event $\mathscr{E}_0(M)\cap \mathscr{E}_1(\delta)$, for any $L_w,L_v$ chosen as in the statement of the lemma, 
\begin{align*}
\min_{w \in \R^n} H_\eta(w) = \min_{w \in B_n(L_w)} \max_{v \in B_m(L_v)}\bigg\{ \frac{1}{\sqrt{n}}\iprod{v}{Gw-\xi}+F(w)-\frac{\eta\pnorm{v}{}^2}{2}\bigg\}. 
\end{align*}
The proof is complete by recalling the definition of $H_\eta(\cdot;L_v)$. 
\end{proof}

\subsection{Characterization of the Gordon cost optimum}

\begin{theorem}\label{thm:char_gordon_cost_opt}
	Suppose the following hold for some $K>0$.
	\begin{itemize}
		\item $1/K\leq \phi^{-1}\leq K$, $\pnorm{\mu_0}{}\vee \pnorm{\Sigma}{\op}\vee \mathcal{H}_\Sigma \leq K$.
		\item Assumption \ref{assump:noise} with $\sigma_\xi^2 \in [1/K,K]$.
	\end{itemize}
	There exist some $C,C'>1$ depending on $K$ such that for any deterministic choice of $L_w,L_v \in [C,C^2]$, it holds 
	for any $C' \log (en)\leq t \leq n/C'$, $\eta \in \Xi_K$ and $\xi \in \mathscr{E}_{1,\xi}(\sqrt{t/n})$,
	\begin{align*}
	&\Prob^\xi\Big(\bigabs{\min_{w \in B_n(L_w)}L_\eta(w;L_v) -  \max_{\beta>0}\min_{\gamma >0} \overline{\mathsf{D}}_\eta(\beta,\gamma)}  \geq \sqrt{t/n}\Big)\leq C e^{-t/C}.
	\end{align*}
\end{theorem}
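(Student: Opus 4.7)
The plan is to execute the three-layer bracketing outlined in Section \ref{subsection:proof_outline_gaussian} (Step 2), reducing the random Gordon optimum to the saddle value of the deterministic function $\overline{\mathsf{D}}_\eta$.

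First layer. I would obtain the sandwich
\[
\max_{\beta>0}\min_{\gamma>0} \mathsf{D}_{\eta,-}(\beta,\gamma) \,\leq\, \min_{w\in B_n(L_w)}L_\eta(w;L_v) \,\leq\, \max_{\beta>0}\min_{\gamma>0} \mathsf{D}_{\eta,+}(\beta,\gamma).
\]
Maximizing out the direction of $v$ in $L_\eta(w;L_v)$ leaves a scalar $\beta\in[0,L_v]$ coupled to $\bigpnorm{\pnorm{w}{}h-\xi}{}$, and the identity $\bigpnorm{\pnorm{w}{}h-\xi}{}^2=\pnorm{w}{}^2\pnorm{h}{}^2-2\pnorm{w}{}\iprod{h}{\xi}+\pnorm{\xi}{}^2$ combined with $|\pnorm{w}{}\iprod{h}{\xi}|\leq L_w|\iprod{h}{\xi}|$ for $\pnorm{w}{}\leq L_w$ yields the envelopes $\pnorm{h}{}\sqrt{\pnorm{w}{}^2+\sigma_\pm^2(L_w)}$ with $\sigma_\pm^2$ as in (\ref{def:sigma_pm}). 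Dualizing each square root through $\sqrt{AB}=\min_{\gamma>0}(A\gamma+B/\gamma)/2$, exchanging $\min_w$ and $\max_\beta\min_\gamma$ by Sion's theorem (the coupling being convex-concave on compact sets), and performing the inner $w$-minimization via $\min_w\{F(w)+(\beta/2\gamma)\pnorm{w}{}^2-(\beta/\sqrt n)\iprod{g}{w}\}=\mathsf{e}_F(\gamma g/\sqrt n;\gamma/\beta)-\beta\gamma e_g^2/2$ (after completing the square in $w$) reproduce exactly $\mathsf{D}_{\eta,\pm}$ in (\ref{def:D_fcn}). The first-order conditions for $\mathsf{D}_{\eta,\pm}$, under the change of variables $\tau=\gamma/\beta$, coincide with the sample fixed-point system (\ref{eqn:fpe_sample}); Proposition \ref{prop:fpe_sample_est} then confines the corresponding max-min to a deterministic box $(\beta,\gamma)\in[1/C,C]^2$ on $\mathscr{E}_1(\delta)\cap\mathscr{E}_{\Delta,\Xi}(M)$ with $\delta\asymp\sqrt{t/n}$ and $M\asymp\sqrt{t}$.

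Second and third layers. On the compact box I would replace $\mathsf{D}_{\eta,\pm}$ with $\overline{\mathsf{D}}_\eta$ uniformly in $\eta\in\Xi_K$: the differences $e_h^2-1$, $e_g^2-1$, $\sigma_\pm^2(L_w)-\sigma_\xi^2$ are all $\bigo(\sqrt{t/n})$ on $\mathscr{E}_1(\sqrt{t/n})$ by Lemmas \ref{lem:conc_E_12} and \ref{lem:sigma_pm_E1} (together with the conditioning $\xi\in\mathscr{E}_{1,\xi}(\sqrt{t/n})$), while the Moreau-envelope fluctuation $\mathsf{e}_F(\gamma g/\sqrt n;\gamma/\beta)-\E\mathsf{e}_F(\gamma g/\sqrt n;\gamma/\beta)$ is $\bigo(\sqrt{t/n})$ with probability $1-Ce^{-t/C}$ by Proposition \ref{prop:conc_e_F} (since $v^2(\gamma,\tau)\lesssim 1$ on the box). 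Uniformity in $(\beta,\gamma,\eta)$ follows from an $n^{-1}$-net plus the bounds $|\partial_{\beta,\gamma,\eta}\mathsf{D}_{\eta,\pm}|,|\partial_{\beta,\gamma,\eta}\overline{\mathsf{D}}_\eta|\lesssim 1$ on the box (Lemma \ref{lem:derivative_eF} for the envelope derivatives). Independently, the purely deterministic max-min $\max_\beta\min_\gamma\overline{\mathsf{D}}_\eta$ also localizes to the same box $[1/C,C]^2$ uniformly in $\eta\in\Xi_K$: its stationarity conditions, with $\E\mathsf{e}_F$ computed via Lemmas \ref{lem:est_dof} and \ref{lem:prox_env_F}, reproduce (\ref{eqn:fpe}) under $\tau=\gamma/\beta$, and Proposition \ref{prop:fpe_est}-(3) then yields $\tau_{\eta,\ast},\gamma_{\eta,\ast}\asymp_K 1$.

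Assembling the three layers via triangle inequality produces $\bigabs{\min_{w\in B_n(L_w)}L_\eta(w;L_v)-\max_\beta\min_\gamma\overline{\mathsf{D}}_\eta(\beta,\gamma)}\lesssim\sqrt{t/n}$ on an event of probability at least $1-Ce^{-t/C}$ for $t\in[C'\log(en),n/C']$; the bracket gap $\mathsf{D}_{\eta,+}-\mathsf{D}_{\eta,-}$ is itself $\bigo(\sqrt{t/n})$ because $\sigma_+^2(L_w)-\sigma_-^2(L_w)=\bigo(L_w|\iprod{h}{\xi}|/\pnorm{h}{}^2)=\bigo(\sqrt{t/n})$ on the same event. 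The principal obstacle I anticipate is keeping all estimates uniform down to $\eta=0$: the term $-\eta\beta^2/2$ that would ordinarily supply strong concavity in $\beta$ degenerates there, so the $\beta$-localization in the first layer must come entirely from the Moreau-envelope curvature in $\mathsf{D}_{\eta,\pm}$. This is exactly why the hypothesis $\mathcal{H}_\Sigma\leq K$ (rather than the stronger $\pnorm{\Sigma^{-1}}{\op}\leq K$) is sufficient, since the a priori lower bound $\tau_{0,\ast}\gtrsim 1$ in Proposition \ref{prop:fpe_est}-(3) depends only on $\mathcal{H}_\Sigma$.
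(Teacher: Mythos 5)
Your proposal reproduces the paper's own three-step bracketing strategy essentially verbatim: it is the combination of Proposition \ref{prop:L_local} (sandwich by $\mathsf{D}_{\eta,\pm}$ and localization via Proposition \ref{prop:fpe_sample_est}), Proposition \ref{prop:D_pm_barD} (uniform replacement of $\mathsf{D}_{\eta,\pm}$ by $\overline{\mathsf{D}}_\eta$ over the compact box using Proposition \ref{prop:conc_e_F}), and Proposition \ref{prop:local_barD} (deterministic delocalization of $\max_\beta\min_\gamma\overline{\mathsf{D}}_\eta$), assembled by triangle inequality with $M\asymp\sqrt n,\ \delta\asymp\sqrt{t/n}$, and your closing remark correctly pins down why $\mathcal{H}_\Sigma\lesssim 1$ suffices at $\eta=0$.
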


In the next subsection we will show that for large $L_v>0$, the map $w\mapsto L_\eta(w;L_v)$ attains its global minimum in an $\ell_2$ ball of constant order radius (under $\mathcal{H}_\Sigma\lesssim 1$) with high probability. This means that although the initial localization radius for the primal optimization may be highly suboptimal (which involves $\pnorm{\Sigma^{-1}}{\op}$), the Gordon objective can be further localized  into an $\ell_2$ ball with constant order radius. 

To prove Theorem \ref{thm:char_gordon_cost_opt}, we shall first relate $\min_{w \in B_n(L_w)}L_\eta(w;L_v)$ to $\max_{\beta>0}\min_{\gamma>0} \mathsf{D}_{\eta,\pm}(\beta,\gamma)$ and its localized versions.

\begin{proposition}\label{prop:L_local}
Suppose $1/K\leq \phi^{-1},\sigma_\xi^2\leq K$, and $\pnorm{\mu_0}{}\vee \pnorm{\Sigma}{\op}\vee \mathcal{H}_\Sigma \leq K$ for some $K>0$. There exists constant $C=C(K)>1$ such that for any deterministic choice of $L_w,L_v \in [C,C^2]$, on the event $\mathscr{E}_1(\delta)\cap\mathscr{E}_{\Delta,\Xi}(M)$ (defined in Proposition \ref{prop:fpe_sample_est}) with $\delta\in (0,1/C^{100})$ and $M\leq \sqrt{n}/C$, we have for any $\eta \in \Xi_K$, 
\begin{align*}
\max_{\beta>0}\min_{\gamma>0} \mathsf{D}_{\eta,-}(\beta,\gamma)\leq \min_{w \in B_n(L_w)}L_\eta(w;L_v)\leq \max_{\beta>0}\min_{\gamma>0} \mathsf{D}_{\eta,+}(\beta,\gamma),
\end{align*}
and the following localization holds:
\begin{align*}
\max_{\beta>0}\min_{\gamma>0} \mathsf{D}_{\eta,\pm}(\beta,\gamma) = \max_{1/C \leq \beta \leq C } \min_{1/C\leq \gamma \leq C} \mathsf{D}_{\eta,\pm}(\beta,\gamma). 
\end{align*}
\end{proposition}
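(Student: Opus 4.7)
My plan is to unfold the $\ell_2$-norm $\pnorm{\pnorm{w}{}h-\xi}{}$ appearing in $L_\eta(w;L_v)$ by a square-root dualization, turning the problem into a three-variable max-min-min with a joint convex-concave structure amenable to Sion's theorem, and then to close the bracketing with the apriori estimates from Proposition \ref{prop:fpe_sample_est}.

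\emph{Step 1 (surrogate bracketing on $B_n(L_w)$).} Starting from the expression for $L_\eta(w;L_v)$ in (\ref{def:H_L}), I would expand $\pnorm{\pnorm{w}{}h-\xi}{}^2/n = \phi e_h^2(\pnorm{w}{}^2+\sigma_m^2)-2\iprod{h}{\xi}\pnorm{w}{}/n$ and control the cross term in modulus by $2L_w\abs{\iprod{h}{\xi}}\phi e_h^2/\pnorm{h}{}^2$ on $\{\pnorm{w}{}\leq L_w\}$, so that the definition of $\sigma_\pm^2(L_w)$ in (\ref{def:sigma_pm}) yields the pointwise envelope $\phi e_h^2(\pnorm{w}{}^2+\sigma_-^2)\leq \pnorm{\pnorm{w}{}h-\xi}{}^2/n \leq \phi e_h^2(\pnorm{w}{}^2+\sigma_+^2)$. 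Taking square roots and multiplying by $\beta\geq 0$ sandwiches $L_\eta(w;L_v)$ between two surrogate objectives $L_{\eta,\pm}^{\mathrm{surr}}(w)\equiv \max_{\beta\in[0,L_v]}\big\{\beta\sqrt{\phi e_h^2(\pnorm{w}{}^2+\sigma_\pm^2)}-\beta\iprod{g}{w}/\sqrt{n}+F(w)-\eta\beta^2/2\big\}$ uniformly on $B_n(L_w)$; each $L_{\eta,\pm}^{\mathrm{surr}}$ is convex in $w$ since $w\mapsto \sqrt{\pnorm{w}{}^2+c}$ is convex for $c\geq 0$. The task thus reduces to identifying $\min_{B_n(L_w)}L_{\eta,\pm}^{\mathrm{surr}}$ with $\max_\beta\min_\gamma \mathsf{D}_{\eta,\pm}$.

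\emph{Step 2 (dual identification via Sion).} Applying $\sqrt{ab}=\min_{\gamma>0}\{a\gamma/2+b/(2\gamma)\}$ to the square root introduces an auxiliary variable $\gamma$, after which the integrand is jointly convex in $(w,\gamma)$ for every fixed $\beta>0$ --- the only nontrivial term $\beta\pnorm{w}{}^2/(2\gamma)$ is the perspective of $\pnorm{\cdot}{}^2$, hence jointly convex --- and concave quadratic in $\beta$ for fixed $(w,\gamma)$. Coercivity in $\gamma$ lets me restrict $\gamma$ to a compact interval, and Sion's theorem then permits swapping $\min_{w\in\R^n}$ past $\max_\beta\min_\gamma$. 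Completing the square in the innermost unconstrained $\min_w$ produces $\mathsf{e}_F(\gamma g/\sqrt{n};\gamma/\beta)-\beta\gamma e_g^2/2$, and collecting this with the $\beta\gamma\phi e_h^2/2$ term recovers exactly $\mathsf{D}_{\eta,\pm}(\beta,\gamma)$ as in (\ref{def:D_fcn}); thus $\min_{w\in\R^n}L_{\eta,\pm}^{\mathrm{surr}}(w)=\max_{\beta\geq 0}\min_{\gamma>0}\mathsf{D}_{\eta,\pm}(\beta,\gamma)$.

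\emph{Step 3 (closing the bracket, and the main obstacle).} The remaining and principal subtlety is to upgrade $\min_{\R^n}$ to $\min_{B_n(L_w)}$ in the identity of Step 2, i.e., to show that the unconstrained minimizer $w_\ast^{\pm}$ of $L_{\eta,\pm}^{\mathrm{surr}}$ lies in $B_n(L_w)$, and simultaneously to localize the outer max-min to $(\beta,\gamma)\in[1/C,C]^2$. Both follow from Proposition \ref{prop:fpe_sample_est}: the KKT conditions of $\max_\beta\min_\gamma\mathsf{D}_{\eta,\pm}$ reduce, under the correspondence $\gamma\leftrightarrow \gamma_{n,\eta,\pm}$ and $\gamma/\beta\leftrightarrow \tau_{n,\eta,\pm}$, to the sample fixed point equation (\ref{eqn:fpe_sample}), whose solutions satisfy $\gamma_{n,\eta,\pm},\tau_{n,\eta,\pm}\in[1/C_0,C_0]$ uniformly in $\eta\in\Xi_K$ on $\mathscr{E}_1(\delta)\cap\mathscr{E}_{\Delta,\Xi}(M)$. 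Via Lemma \ref{lem:prox_env_F} the unconstrained minimizer is $w_\ast^{\pm}=\Sigma^{1/2}(\hat{\mu}_{(\Sigma,\mu_0)}^{\seq}(\gamma_{n,\eta,\pm};\tau_{n,\eta,\pm})-\mu_0)$ with $\pnorm{w_\ast^{\pm}}{}\lesssim_K 1$, so choosing $L_w,L_v\in[C,C^2]$ with $C=C(K)$ sufficiently large makes both constraints inactive and simultaneously forces $\beta_\ast^{\pm}=\gamma_{n,\eta,\pm}/\tau_{n,\eta,\pm}\in[1/C,C]$, giving the claimed localization. A last delicate point is that the $L_w\leq C$ hypothesis is needed so that Lemma \ref{lem:sigma_pm_E1} keeps $\sigma_\pm^2(L_w)\approx \sigma_\xi^2$ uniformly on $\mathscr{E}_1(\delta)$, ensuring Proposition \ref{prop:fpe_sample_est} applies with comparable $K$-dependent constants to both bracketed systems.
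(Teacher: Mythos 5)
Your proposal is correct and follows essentially the same approach as the paper: sandwich via $\sigma_\pm^2$ using the cross-term control on $B_n(L_w)$, dualize the square root with an auxiliary $\gamma$ and swap orders via Sion to reduce to $\max_\beta\min_\gamma \mathsf{D}_{\eta,\pm}$ through the Moreau envelope $\mathsf{e}_F$, and then invoke Proposition \ref{prop:fpe_sample_est} on the resulting sample fixed-point equations to localize $(\beta,\gamma)$ and hence the minimizer $w_\ast^{\pm}$. The only cosmetic difference is the order of operations—you bracket first and then dualize, producing the surrogate objectives $L_{\eta,\pm}^{\mathrm{surr}}$ (which coincide with the $L_{\eta,\pm}$ defined in (\ref{def:L_eta_pm})), while the paper dualizes first via (\ref{ineq:L_local_1}) and then brackets—but both lead to the same identification and localization.
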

\begin{proof}
We write $g_n\equiv g/\sqrt{n}$ in the proof.
	
\noindent (\textbf{Step 1}). Fix any $L_w,L_v>0$. We may compute 
\begin{align}\label{ineq:L_local_1}
&\min_{w \in B_n(L_w)}L_\eta(w;L_v)  \\
&= \min_{w \in B_n(L_w)}\max_{\beta\in [0, L_v]}\bigg\{\frac{\beta}{\sqrt{n}} \Big(\bigpnorm{\pnorm{w}{}h-\xi}{}-\iprod{g}{w}\Big)+F(w)-\frac{\eta\beta^2}{2}\bigg\}\nonumber\\
& = \max_{\beta\in [0,L_v]}\min_{\gamma>0}\bigg\{\frac{\beta\gamma \pnorm{h}{}^2 }{2 n}-\frac{\eta\beta^2}{2}+\min_{w \in B_n(L_w)}\bigg(\frac{\beta }{2\gamma}\frac{\pnorm{\pnorm{w}{}h-\xi}{}^2}{\pnorm{h}{}^2} - \iprod{w}{\beta g_n}+F(w)  \bigg)\bigg\}\nonumber.
\end{align}
Here in the last line we used Sion's min-max theorem to flip the order of minimum and maximum in $\min_{w \in B_n(L_w)}\max_{\beta\in [0, L_v]}$. The minimum over $\gamma$ is achieved exactly at $\frac{\pnorm{ \pnorm{w}{}h-\xi}{}/\pnorm{h}{}}{\pnorm{h}{}/\sqrt{n}}$, so when $\sigma_-^2\neq 0$, using the simple inequality 
\begin{align}\label{ineq:L_local_2}
\pnorm{w}{}^2+\sigma_{-}^2\leq 
\pnorm{\pnorm{w}{}h-\xi}{}^2/\pnorm{h}{}^2\leq \pnorm{w}{}^2+\sigma_{+}^2,
\end{align}
on the event $\mathscr{E}_1(\delta)$, we may further bound (\ref{ineq:L_local_1}) as follows:
\begin{align}\label{ineq:L_local_3}
&\pm \min_{w \in B_n(L_w)}L_\eta(w;L_v)\leq \pm \max_{\beta\in [0,L_v]}\min_{\gamma >0}\nonumber\\
&\qquad \bigg\{\frac{\beta\gamma \pnorm{h}{}^2 }{2 n}-\frac{\eta\beta^2}{2}+\min_{w \in B_n(L_w)}\bigg(\frac{\beta }{2\gamma}\big(\pnorm{w}{}^2+\sigma_\pm^2(L_w)\big)- \iprod{w}{\beta g_n}+F(w)  \bigg)\bigg\}.
\end{align}
We note that $\sigma_\pm^2$ depends on $L_w$, but this notational dependence will be dropped from now on for convenience.

\noindent (\textbf{Step 2}). Consider the minimax optimization problem in (\ref{ineq:L_local_3}):
\begin{align}\label{ineq:L_local_6}
&\max_{\beta>0}\min_{\gamma>0, w \in \R^n} \bigg\{\frac{\beta\gamma \pnorm{h}{}^2 }{2 n}-\frac{\eta\beta^2}{2} +\bigg(\frac{\beta }{2\gamma}\big(\pnorm{w}{}^2+\sigma_\pm^2\big)- \iprod{w}{\beta g_n}+F(w)  \bigg)\bigg\}\nonumber\\
& = \max_{\beta>0}\min_{\gamma >0} \bigg\{\frac{\beta}{2}\bigg(\gamma \big(\phi e_h^2-e_g^2\big)+\frac{\sigma_\pm^2}{\gamma}\bigg)-\frac{\eta\beta^2}{2}+\mathsf{e}_F(\gamma g_n;{\gamma}/{\beta})  \bigg\}.
\end{align}
Any saddle point $(\beta_{n,\eta,\pm},\gamma_{n,\eta,\pm},w_{n,\eta,\pm})=(\beta_{n,\pm},\gamma_{n,\pm},w_{n,\pm})$ of the above program must satisfy the first-order optimality condition
\begin{align}\label{ineq:L_local_7}
\begin{cases}
0=\frac{1}{2}\big(\gamma_{n,\pm} (\phi e_h^2-e_g^2)+\frac{\sigma_\pm^2}{\gamma_{n,\pm}}\big) -\eta \beta_{n,\pm}+ \partial_\beta \mathsf{e}_F\big(\gamma_{n,\pm} g_n;{\gamma_{n,\pm}}/{\beta_{n,\pm}}\big),\\
0=\frac{\beta_{n,\pm}}{2}\big( (\phi e_h^2-e_g^2)-\frac{\sigma_\pm^2}{\gamma_{n,\pm}^2}\big)+\partial_\gamma \mathsf{e}_F\big(\gamma_{n,\pm} g_n;{\gamma_{n,\pm}}/{\beta_{n,\pm}}\big),\\
w_{n,\pm} = \prox_F \big( \gamma_{n,\pm}g_n;{\gamma_{n,\pm}}/{\beta_{n,\pm}}\big).
\end{cases}
\end{align}
Using the derivative formula in Lemma \ref{lem:derivative_eF} and the form of $\prox_F$ in Lemma \ref{lem:prox_env_F}, we may compute
\begin{align}\label{ineq:L_local_8}
\begin{cases}
\partial_\beta \mathsf{e}_F(\gamma g_n; {\gamma}/{\beta}) = \frac{1}{2\gamma}\Big(\err_{(\Sigma,\mu_0)}(\gamma;\gamma/\beta)-2\dof_{(\Sigma,\mu_0)}(\gamma;\gamma/\beta)+ \gamma^2 e_g^2 \Big),\\
\partial_\gamma  \mathsf{e}_F(\gamma g_n; {\gamma}/{\beta}) 
= \frac{\beta}{2\gamma^2}\Big(\gamma^2 e_g^2 - \err_{(\Sigma,\mu_0)}(\gamma;\gamma/\beta)\Big).
\end{cases}
\end{align}
Plugging (\ref{ineq:L_local_8}) into (\ref{ineq:L_local_7}), the first-order optimality condition for $(\beta_{n,\pm},\gamma_{n,\pm})$ in the minimax program (\ref{ineq:L_local_6}) is given by
\begin{align*}
\begin{cases}
\big(\phi e_h^2-e_g^2\big)\gamma_{n,\pm}^2+\sigma_\pm^2 =2\eta\cdot \gamma_{n,\pm}\beta_{n,\pm} -\err_{(\Sigma,\mu_0)}(\gamma_{n,\pm};\gamma_{n,\pm}/\beta_{n,\pm})\\
\qquad\qquad \qquad\qquad \qquad+2 \dof_{(\Sigma,\mu_0)}(\gamma_{n,\pm};\gamma_{n,\pm}/\beta_{n,\pm})- e_g^2\gamma_{n,\pm}^2,\\
\big(\phi e_h^2-e_g^2\big)\gamma_{n,\pm}^2-\sigma_\pm^2 = -e_g^2\gamma_{n,\pm}^2+ \err_{(\Sigma,\mu_0)}(\gamma_{n,\pm};\gamma_{n,\pm}/\beta_{n,\pm}).
\end{cases}
\end{align*}
Equivalently, 
\begin{align}\label{ineq:L_local_10}
\begin{cases}
\phi e_h^2\gamma_{n,\pm}^2  = \sigma_\pm^2+  \err_{(\Sigma,\mu_0)}(\gamma_{n,\pm};\gamma_{n,\pm}/\beta_{n,\pm}),\\
\big(\phi e_h^2-\frac{\eta}{\gamma_{n,\pm}/\beta_{n,\pm}}\big)\gamma_{n,\pm}^2  =   \dof_{(\Sigma,\mu_0)}(\gamma_{n,\pm};\gamma_{n,\pm}/\beta_{n,\pm}).
\end{cases}
\end{align}
Using the apriori estimates in Proposition \ref{prop:fpe_sample_est}, on the event $\mathscr{E}_1(\delta)\cap\mathscr{E}_{\Delta,\Xi}(M)$ we have $
 {\gamma_{n,\pm}}/{\beta_{n,\pm}} \asymp_{K} 1$ and $\gamma_{n,\pm}^2\asymp_{K} 1$. This implies on the same event,
\begin{align}\label{ineq:L_local_11}
\gamma_{n,\pm}\asymp_{K} 1,\quad  \beta_{n,\pm} \asymp_{K} 1.  
\end{align}
Using the last equation of (\ref{ineq:L_local_7}), we have
\begin{align}\label{ineq:L_local_12}
\pnorm{w_{n,\pm}}{}
& = \biggpnorm{ \Sigma^{1/2}\Big(\Sigma+\frac{\gamma_{n,\pm}}{\beta_{n,\pm}} I\Big)^{-1}\Big(-\frac{\gamma_{n,\pm}}{\beta_{n,\pm}} \mu_0+\gamma_{n,\pm}\Sigma^{1/2}g_n\Big)}{}\lesssim_K 1.
\end{align}
In view of (\ref{ineq:L_local_11})-(\ref{ineq:L_local_12}), by choosing $L_w, L_v \in [C,C^2]$ for large enough $C>0$, the constraints in the optimization in (\ref{ineq:L_local_3}) can be dropped for free. 
\end{proof}

Next we replace the random function $\mathsf{D}_{\eta,\pm}$ in the above proposition by its deterministic counterpart $\overline{\mathsf{D}}_\eta$ in their localized versions. 

\begin{proposition}\label{prop:D_pm_barD}
Suppose $1/K\leq \phi^{-1},\sigma_\xi^2\leq K$, and $\pnorm{\mu_0}{}\vee \pnorm{\Sigma}{\op}\vee \mathcal{H}_\Sigma \leq K$ for some $K>0$.  There exist some $C,C'>1$ depending on $K$ such that for $L_w \in [C,C^2]$, $\delta \in (0,1/C^{100})$, $\xi \in \mathscr{E}_{1,\xi}(\delta)$ and $t\geq C'\log (en)$, 
\begin{align*}
&\Prob^\xi\Big[\sup_{\eta \in \Xi_K}\big|\max_{1/C \leq \beta \leq C} \min_{1/C\leq \gamma \leq C} \mathsf{D}_{\eta,\pm}(\beta,\gamma) -  \max_{1/C \leq \beta \leq C} \min_{1/C\leq \gamma \leq C} \overline{\mathsf{D}}_\eta(\beta,\gamma)\big|\\
&\qquad   \geq C \big(\sqrt{t/n}+t/n+\delta\big)\Big]\leq C e^{-t/C}+\Prob^\xi\big(\mathscr{E}_{1,0}(\delta)^c\big). 
\end{align*}
\end{proposition}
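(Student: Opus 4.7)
The proof plan rests on the observation that $\mathsf{D}_{\eta,\pm}$ and $\overline{\mathsf{D}}_\eta$ differ through three stochastic ingredients: (i) the normalizations $e_h^2, e_g^2$ versus $1$; (ii) the effective noise $\sigma_\pm^2(L_w)$ versus $\sigma_\xi^2$; and (iii) the random Moreau envelope $\mathsf{e}_F(\gamma g/\sqrt{n}; \gamma/\beta)$ versus its expectation. The strategy is to control each of these differences uniformly over the compact rectangle $[1/C,C]^2$ in $(\beta,\gamma)$ and over $\eta \in \Xi_K$, and then invoke the fact that the saddle operator $\max_\beta \min_\gamma$ over a compact domain is $1$-Lipschitz with respect to the sup norm on the integrand.

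First, I would establish the pointwise concentration at a single $(\beta,\gamma,\eta)$. On $\mathscr{E}_{1,0}(\delta)$ we have $|e_h^2-1|\vee|e_g^2-1|\leq \delta$, and Lemma~\ref{lem:sigma_pm_E1} together with $L_w\leq C^2$ yields $|\sigma_\pm^2(L_w)-\sigma_\xi^2|\lesssim_K \delta$ on $\mathscr{E}_1(\delta)$. For the Moreau envelope, with $\tau=\gamma/\beta\in[1/C^2,C^2]$ and $\pnorm{\Sigma}{\op}\leq K$, the sensitivity parameter $v^2$ in Proposition~\ref{prop:conc_e_F} is bounded by a constant depending only on $K$, and Lemma~\ref{lem:prox_env_F} gives $\E\mathsf{e}_F(\gamma g_n;\gamma/\beta)\lesssim_K 1$. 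Hence Proposition~\ref{prop:conc_e_F} produces a pointwise bound
\[
\bigabs{\mathsf{D}_{\eta,\pm}(\beta,\gamma)-\overline{\mathsf{D}}_\eta(\beta,\gamma)}\lesssim_K \delta + \sqrt{t/n}+t/n
\]
with probability at least $1-C e^{-t/C}-\Prob^\xi(\mathscr{E}_{1,0}(\delta)^c)$.

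Next, I would upgrade this to uniformity via an $\epsilon$-net argument on $[1/C,C]^2\times \Xi_K$. A net of cardinality $(C/\epsilon)^3$ and a union bound handle the net points; to extend to the full box, one needs Lipschitz continuity of both $\mathsf{D}_{\eta,\pm}$ and $\overline{\mathsf{D}}_\eta$ in $(\beta,\gamma,\eta)$. The $\eta$-dependence is explicit through the quadratic term $-\eta\beta^2/2$, which is $C^2$-Lipschitz in $\eta$ on our rectangle. For $(\beta,\gamma)$, the derivatives of $\mathsf{e}_F(\gamma g_n;\gamma/\beta)$ are given explicitly in (\ref{ineq:L_local_8}); combined with the control of $\pnorm{g}{}/\sqrt{n}$ on $\mathscr{E}_{1,0}(\delta)$ and the closed form of $\err_{(\Sigma,\mu_0)}$ in Lemma~\ref{lem:est_dof}, these derivatives are bounded by some $C'=C'(K)$. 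Choosing $\epsilon\asymp 1/n$ absorbs the net cost into $t\geq C'\log(en)$, yielding the uniform sup-norm bound. The claim then follows from $1$-Lipschitzness of the saddle operator.

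The main obstacle is less conceptual than bookkeeping: one must separately treat $\mathscr{E}_{1,0}$ (which controls $G,g,h$ and contributes to the failure probability) and $\mathscr{E}_{1,\xi}$ (which is conditioned upon), and must verify the deterministic Lipschitz estimate for $(\beta,\gamma)\mapsto \mathsf{e}_F(\gamma g_n;\gamma/\beta)$ holds on the localized box with a constant depending only on $K$—the potential pitfall being the factor $1/\gamma^2$ in $\partial_\gamma \mathsf{e}_F$, which is harmless since $\gamma\geq 1/C$ is bounded away from zero on the localized region.
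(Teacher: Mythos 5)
Your proposal is correct and follows essentially the same route as the paper: identify the three stochastic ingredients distinguishing $\mathsf{D}_{\eta,\pm}$ from $\overline{\mathsf{D}}_\eta$, control the Moreau envelope term via the pointwise concentration in Proposition~\ref{prop:conc_e_F} together with the Lipschitz estimates in $(\gamma,\tau)$ on the localized box, upgrade to a uniform bound by an $\epsilon$-net whose cost is absorbed by the $t\geq C'\log(en)$ requirement, and finish by the sup-norm contractivity of $\max_\beta\min_\gamma$. The only cosmetic difference is that the paper runs the net argument on $(\mathrm{id}-\E)\mathsf{e}_F$ alone and treats the $\eta$-dependence trivially (the cost function is affine in $\eta$ on the compact rectangle), whereas you discretize the full $(\beta,\gamma,\eta)$ triple; both variants work.
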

\begin{proof}
In the proof, we write $g_n\equiv g/\sqrt{n}$. All the constants in $\lesssim,\gtrsim,\asymp$ and $\bigo$ below may  depend on $K$. 

\noindent (\textbf{Step 1}). We first prove the following: On the event $\mathscr{E}_1(\delta)$, 
for any $C_0>1$, 
\begin{align}\label{ineq:uniform_conc_1}
\sup_{\gamma,\tau \in [1/C_0,C_0]^2} \abs{\partial_\# \mathsf{e}_F\big(\gamma g_n;\tau \big)}\vee \abs{\partial_\# \E \mathsf{e}_F\big(\gamma g_n;\tau \big)}&\lesssim 1.
\end{align}	
To this end, with $\hat{\mu}_{(\Sigma,\mu_0)}^{\seq},y_{(\Sigma,\mu_0)}^{\seq}$ written as $\hat{\mu},y$, and using $
\partial_\gamma \hat{\mu}= (\Sigma+\tau I)^{-1} \Sigma^{1/2} g_n$, $\partial_\gamma y= g_n$, $
 \partial_\tau \hat{\mu}= -\big(\Sigma+\tau I\big)^{-2}\Sigma^{1/2} \big(\Sigma^{1/2}\mu_0+\gamma g_n\big)$, 
\begin{align*}
\partial_\gamma\mathsf{e}_F(\gamma g_n;\tau)&= \tau^{-1}\bigiprod{\Sigma^{1/2}\hat{\mu}-y}{\Sigma^{1/2} \partial_\gamma \hat{\mu}-\partial_\gamma y}+\bigiprod{\hat{\mu}}{ \partial_\gamma \hat{\mu} },\\
\partial_\tau \mathsf{e}_F(\gamma g_n;\tau)& =-\frac{1}{2\tau^2} \pnorm{\Sigma^{1/2}\hat{\mu}-y}{}^2+\frac{1}{\tau}\bigiprod{\Sigma^{1/2}\hat{\mu}-y}{\Sigma^{1/2} \partial_\tau \hat{\mu} }+\iprod{\hat{\mu}}{\partial_\tau \hat{\mu} },
\end{align*} 
on the event $\mathscr{E}_1(\delta)$, we may estimate $
\abs{\partial_\gamma \mathsf{e}_F(\gamma g_n;\tau)}\vee  \abs{\partial_\tau \mathsf{e}_F(\gamma g_n;\tau)}\lesssim 1$. A similar estimate applies to the expectation versions, proving (\ref{ineq:uniform_conc_1}).

\noindent (\textbf{Step 2}). Next we show that for any $C_0>1$, there exists $C_1>0$ such that for $t \geq C_1 \log (en)$, 
\begin{align}\label{ineq:uniform_conc_2}
&\Prob\Big(\sup_{(\gamma,\tau) \in [C_0^{-1},C_0]^2} \bigabs{ (\mathrm{id}-\E)\mathsf{e}_F(\gamma g_n;\tau)} \geq C_1\big(\sqrt{t/n}+t/n\big), \mathscr{E}_1(\delta) \Big)\leq C_1e^{-t/C_1}.
\end{align}
To prove the claim, we fix $\epsilon>0$ to be chosen later, and take an $\epsilon$-net $\mathcal{S}(\epsilon)$ for $[1/C_0,C_0]$. Then $\abs{\mathcal{S}(\epsilon)}\leq C_0/\epsilon+1$. So on the event $\mathscr{E}_1(\delta)$, using the estimate in (\ref{ineq:uniform_conc_1}) and a union bound via the pointwise concentration inequality in Proposition \ref{prop:conc_e_F}, for $t\geq 1$, with probability at least $1-C\epsilon^{-2} e^{-t/C}$,
\begin{align*}
\sup_{(\gamma,\tau)\in  [C_0^{-1},C_0]^2 } \bigabs{ (\mathrm{id}-\E)\mathsf{e}_F(\gamma g_n;\tau)}&\lesssim \sup_{\gamma,\tau \in \mathcal{S}(\epsilon)} \bigabs{(\mathrm{id}-\E) \mathsf{e}_F(\gamma g_n;\tau)}+ C\epsilon \lesssim \sqrt{\frac{t}{n}}+\frac{t}{n}+\epsilon.
\end{align*}
Here in the last inequality we used Lemma \ref{lem:prox_env_F} to estimate $\sup_{(\gamma,\tau) }v^2(\gamma,\tau) \vee \sup_{(\gamma,\tau) } v^2(\gamma,\tau)  \E  \mathsf{e}_F(\gamma g_n;\tau)\lesssim 1$, where $v^2(\gamma,\tau)$ is defined in Proposition \ref{prop:conc_e_F}. The claim (\ref{ineq:uniform_conc_2}) follows by choosing $\epsilon\equiv\sqrt{t/n}+t/n$ and some calculations.

\noindent (\textbf{Step 3}). By (\ref{ineq:uniform_conc_2}), for $t\geq C \log (en)$, on the event $\mathscr{E}_1(\delta)$, it holds with probability at least $1-C_2 e^{-t/C_2}$ that
\begin{align*}
&\max_{1/C \leq \beta \leq C} \min_{1/C\leq \gamma \leq C} \mathsf{D}_{\eta,\pm}(\beta,\gamma)\\
& = \max_{1/C \leq \beta \leq C} \min_{1/C\leq \gamma \leq C}\bigg\{\frac{\beta}{2}\bigg(\gamma \big(\phi e_h^2-e_g^2\big)+\frac{\sigma_\pm^2}{\gamma}\bigg)-\frac{\eta \beta^2}{2}+\mathsf{e}_F(\gamma g_n;\gamma/\beta)\bigg\}\\
& = \max_{1/C \leq \beta \leq C } \min_{1/C\leq \gamma \leq C} \overline{\mathsf{D}}_\eta(\beta,\gamma)  + \bigo\big(\sqrt{t/n}+t/n+\delta\big). 
\end{align*}
The estimate in $\bigo$ is uniform in $\eta \in \Xi_K$, so the claim follows.
\end{proof}

Finally we delocalize the range constraints for $\beta,\gamma$ in the deterministic minimax problem with $\overline{\mathsf{D}}_\eta$ in the above proposition.

\begin{proposition}\label{prop:local_barD}
Suppose $1/K\leq \phi^{-1},\sigma_\xi^2\leq K$, and $\pnorm{\mu_0}{}\vee \pnorm{\Sigma}{\op}\vee \mathcal{H}_\Sigma\leq K$ for some $K>0$. There exists some $C=C(K)>1$ such that for any $\eta \in \Xi_K$, 
\begin{align*}
\max_{\beta>0}\min_{\gamma >0} \overline{\mathsf{D}}_\eta(\beta,\gamma) = \max_{1/C \leq \beta \leq C} \min_{1/C\leq \gamma \leq C} \overline{\mathsf{D}}_\eta(\beta,\gamma).
\end{align*}
Consequently,
\begin{align}\label{ineq:cont_D_eta}
\bigabs{\max_{\beta>0}\min_{\gamma >0} \overline{\mathsf{D}}_\eta(\beta,\gamma)-\max_{\beta>0}\min_{\gamma >0} \overline{\mathsf{D}}_0(\beta,\gamma)}\leq C\eta. 
\end{align}
\end{proposition}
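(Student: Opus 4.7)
The strategy is to identify the saddle point of $\overline{\mathsf{D}}_\eta$ explicitly through its first-order conditions, show via Proposition \ref{prop:fpe_est} that it lies in a fixed box $[1/C, C]^2$, and then use the saddle-point property to collapse the unrestricted max--min onto its localized version.

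First I would verify that $(\beta, \gamma) \mapsto \overline{\mathsf{D}}_\eta(\beta, \gamma)$ is concave in $\beta > 0$ and convex in $\gamma > 0$. Exchanging expectation with the defining minimization of $\mathsf{e}_F$,
\begin{align*}
\overline{\mathsf{D}}_\eta(\beta, \gamma) = \frac{\beta\gamma(\phi-1)}{2} + \frac{\beta\sigma_\xi^2}{2\gamma} - \frac{\eta\beta^2}{2} + \inf_{w(\cdot)}\E\biggl\{\frac{\beta}{2\gamma}\bigpnorm{\gamma g/\sqrt{n} - w(g)}{}^2 + F(w(g))\biggr\},
\end{align*}
where the infimum runs over measurable $w:\R^n \to \R^n$. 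After expanding the squared norm, the integrand is concave in $\beta$ (affine terms plus the concave $-\eta\beta^2/2$) and jointly convex in $(\gamma, w)$; the only non-trivial piece is the perspective-type term $\beta\pnorm{w}{}^2/(2\gamma)$, which is jointly convex for $\beta, \gamma > 0$. Infimizing over $w(\cdot)$ therefore preserves concavity in $\beta$ (infimum of concave functions) and convexity in $\gamma$ (partial infimum of a jointly convex function), so $\overline{\mathsf{D}}_\eta$ is convex-concave. Sion's min-max theorem (applied on suitable compact truncations) then produces a saddle point $(\beta_{\eta,\ast}, \gamma_{\eta,\ast}) \in (0, \infty)^2$, and computing the first-order conditions with the aid of Lemmas \ref{lem:derivative_eF} and \ref{lem:est_dof}---in the same manner as (\ref{ineq:L_local_10})--(\ref{ineq:L_local_11}) but with $e_g^2, e_h^2, \sigma_\pm^2$ replaced by their expected values $1, 1, \sigma_\xi^2$---reduces, upon setting $\tau_{\eta,\ast} := \gamma_{\eta,\ast}/\beta_{\eta,\ast}$, to exactly the fixed point equation (\ref{eqn:fpe}). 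Proposition \ref{prop:fpe_est}-(3) now gives $\gamma_{\eta,\ast}, \tau_{\eta,\ast} \asymp_K 1$ uniformly in $\eta \in \Xi_K$, whence $\beta_{\eta,\ast} = \gamma_{\eta,\ast}/\tau_{\eta,\ast} \asymp_K 1$ as well, and one may choose $C = C(K) > 1$ so that $(\beta_{\eta,\ast}, \gamma_{\eta,\ast}) \in [1/C, C]^2$.

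The saddle-point inequalities $\overline{\mathsf{D}}_\eta(\beta, \gamma_{\eta,\ast}) \leq \overline{\mathsf{D}}_\eta(\beta_{\eta,\ast}, \gamma_{\eta,\ast}) \leq \overline{\mathsf{D}}_\eta(\beta_{\eta,\ast}, \gamma)$ (for all $\beta, \gamma > 0$) then hold a fortiori over $[1/C, C]^2$, so $(\beta_{\eta,\ast}, \gamma_{\eta,\ast})$ is simultaneously a saddle point on $(0,\infty)^2$ and on $[1/C, C]^2$, yielding
\begin{align*}
\max_{\beta > 0}\min_{\gamma > 0}\overline{\mathsf{D}}_\eta(\beta, \gamma) = \overline{\mathsf{D}}_\eta(\beta_{\eta,\ast}, \gamma_{\eta,\ast}) = \max_{1/C \leq \beta \leq C}\min_{1/C \leq \gamma \leq C}\overline{\mathsf{D}}_\eta(\beta, \gamma),
\end{align*}
which is the localization claim. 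For the consequence (\ref{ineq:cont_D_eta}), I would observe that $\overline{\mathsf{D}}_\eta - \overline{\mathsf{D}}_0 \equiv -\eta\beta^2/2$, so on the common box $[1/C, C]^2$ these two functions differ by at most $C^2\eta/2$ pointwise; taking the localized max--min on both sides (valid for both $\eta$ and $0$ whenever both are in $\Xi_K$) and absorbing the factor into the constant yields the stated Lipschitz bound. The main technical subtlety is the joint convexity in $(\gamma, w)$ in Step~1, especially of the perspective piece $\beta\pnorm{w}{}^2/(2\gamma)$, together with justifying the expectation--infimum interchange and the application of Sion's theorem on the unbounded $\gamma$-domain; the latter can be handled by coupling the analysis with the apriori coercivity of $\overline{\mathsf{D}}_\eta$ as $(\beta, \gamma) \to \partial(0,\infty)^2$, much as in the proof of Proposition \ref{prop:L_local}.
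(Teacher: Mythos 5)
Your proof is correct and follows essentially the same approach as the paper: compute the first-order conditions for the saddle point of $\overline{\mathsf{D}}_\eta$, identify them with the fixed point equation (\ref{eqn:fpe}) via Lemmas \ref{lem:derivative_eF} and \ref{lem:est_dof}, and use the apriori bounds from Proposition \ref{prop:fpe_est}-(3) to localize $(\beta_{\eta,\ast},\gamma_{\eta,\ast})$ into a compact box, with the continuity statement then following from $\overline{\mathsf{D}}_\eta-\overline{\mathsf{D}}_0=-\eta\beta^2/2$. Your explicit convex-concave verification via the measurable-$w$ reformulation and the perspective-function argument fills in structural details the paper leaves implicit by appealing to Step 2 of Proposition \ref{prop:L_local}.
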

\begin{proof}
The proof is essentially a deterministic version of Step 2 in the proof of Proposition \ref{prop:L_local}. We give some details below. We write $g_n\equiv g/\sqrt{n}$. First, using similar calculations as that of (\ref{ineq:L_local_8}), 
\begin{align*}
\begin{cases}
\partial_\beta \E \mathsf{e}_F(\gamma g_n; {\gamma}/{\beta}) = \frac{1}{2\gamma}\Big(\E\err_{(\Sigma,\mu_0)}(\gamma;\gamma/\beta)-2\E \dof_{(\Sigma,\mu_0)}(\gamma;\gamma/\beta)+ \gamma^2 \Big),\\
\partial_\gamma \E \mathsf{e}_F(\gamma g_n; {\gamma}/{\beta}) 
 = \frac{\beta}{2\gamma^2}\Big(\gamma^2-\E \err_{(\Sigma,\mu_0)}(\gamma;\gamma/\beta)\Big).
\end{cases}
\end{align*}
Then the first-order optimality condition for $(\beta_\ast,\gamma_\ast)$ to be the saddle point of $\max_{\beta>0}\min_{\gamma >0} \overline{\mathsf{D}}_\eta(\beta,\gamma)$, i.e., a deterministic version of (\ref{ineq:L_local_10}),  is given by
\begin{align*}
\begin{cases}
\phi \gamma_\ast^2  = \sigma_\xi^2+ \E \err_{(\Sigma,\mu_0)}(\gamma_\ast;\gamma_\ast/\beta_\ast),\\
\big(\phi-\frac{\eta}{\gamma_\ast/\beta_\ast}\big)\gamma_\ast^2  = \E \dof_{(\Sigma,\mu_0)}(\gamma_\ast;\gamma_\ast/\beta_\ast).
\end{cases}
\end{align*}
Finally using the apriori estimates in Proposition \ref{prop:fpe_est}, we obtain a deterministic analogue of (\ref{ineq:L_local_11}) in that $
\gamma_\ast \asymp_{K}  1,\quad  \beta_\ast \asymp_{K} 1$. The claimed localization follows. The continuity follows by the definition of $\overline{\mathsf{D}}_\eta$ and the proven localization. 
\end{proof}

\begin{proof}[Proof of Theorem \ref{thm:char_gordon_cost_opt}]
By Propositions \ref{prop:L_local}, \ref{prop:D_pm_barD} and \ref{prop:local_barD}, there exist $C,C'>0$ such that for any $\delta\in (0,1/C^{100})$, $M\leq \sqrt{n}/C$, $t\geq C'\log (en)$, $\xi \in \mathscr{E}_{1,\xi}(\delta)$ and $\eta \in \Xi_K$,
\begin{align*}
&\Prob^\xi\Big[\big|\min_{w \in B_n(L_w)}L_\eta(w;L_v) -  \max_{\beta>0}\min_{\gamma >0} \overline{\mathsf{D}}_\eta(\beta,\gamma)\big| \geq C \big(\sqrt{t/n}+t/n+\delta\big)\Big]\\
&\leq C e^{-t/C}+\Prob^\xi\big(\mathscr{E}_{1,0}(\delta)^c\big)+\Prob(\mathscr{E}_{\Delta,\Xi}(M)^c). 
\end{align*}
The claim now follows from the concentration estimates in Lemmas \ref{lem:conc_E_12}, \ref{lem:conc_Delta} and \ref{lem:conc_Xi}, by choosing $M\equiv \sqrt{n}/C$ and $\delta\equiv C(\sqrt{t/n}+t/n)$, which is valid in the regime $t\leq n/C_0$ for large $C_0$. 
\end{proof}

\subsection{Locating the global minimizer of the Gordon objective}

With $(\gamma_{\eta,\ast},\tau_{\eta,\ast})$ denoting the unique solution to the system of equations (\ref{eqn:fpe}), let 
\begin{align}\label{def:w_ast}
w_{\eta,\ast} \equiv \prox_F \big(\gamma_{\eta,\ast} g/\sqrt{n} ;\tau_{\eta,\ast}\big)=\Sigma^{1/2}\big(\hat{\mu}_{(\Sigma,\mu_0)}^{\seq}(\gamma_{\eta,\ast};\tau_{\eta,\ast})-\mu_0\big).
\end{align}
For any $\epsilon>0$, let the exceptional set be defined as 
\begin{align}\label{def:D_eps}
D_{\eta;\epsilon}(\mathsf{g})\equiv \big\{w\in \R^n: \abs{\mathsf{g}(w)-\E \mathsf{g}(w_{\eta,\ast})}\geq \epsilon\big\}.
\end{align}

\begin{theorem}\label{thm:gordon_gap}
	Suppose the following hold for some $K>0$.
	\begin{itemize}
		\item $1/K\leq \phi^{-1}\leq K$, $\pnorm{\mu_0}{}\vee \pnorm{\Sigma}{\op}\vee \mathcal{H}_\Sigma \leq K$.
		\item Assumption \ref{assump:noise} holds with $\sigma_\xi^2 \in [1/K,K]$. 
	\end{itemize}
    Fix any $\mathsf{g}:\R^n\to \R$ that is $1$-Lipschitz with respect to $\pnorm{\cdot}{\Sigma^{-1}}$. There exist constants $C,C'>10$ depending on $K$ such that for $L_w, L_v\in [C,C^2]$, $C'\log (en)\leq t\leq n/C'$, $\xi \in \mathscr{E}_{1,\xi}(\sqrt{t/n})$ and $\eta \in \Xi_K$, 
	\begin{align*}
	\Prob^\xi\bigg(\min_{w \in D_{\eta;C(t/n)^{1/4}}(\mathsf{g})\cap B_n(L_w)}L_\eta(w;L_v) \leq  \max_{\beta>0}\min_{\gamma >0} \overline{\mathsf{D}}_\eta(\beta,\gamma)  + \sqrt{t/n}\bigg)\leq C e^{-t/C}. 
	\end{align*}
\end{theorem}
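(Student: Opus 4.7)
The plan is to realize the quantitative gap via the bracketing strategy outlined in Step 3 of Section \ref{section:proof_outline}, which sidesteps the possible failure of strong convexity of $w \mapsto L_\eta(w; L_v)$ caused by non-Gaussian $\xi$. Starting from the Sion-flipped expression (\ref{ineq:L_local_1}) and the sandwich $\pnorm{w}{}^2 + \sigma_-^2(L_w) \le \pnorm{\pnorm{w}{}h - \xi}{}^2/\pnorm{h}{}^2 \le \pnorm{w}{}^2 + \sigma_+^2(L_w)$, valid on $\mathscr{E}_1(\delta)\cap B_n(L_w)$, I introduce the two $\xi$-free strongly convex surrogates $L_{\eta,\pm}(\cdot; L_v)$ obtained by replacing the random quadratic $\pnorm{\pnorm{w}{}h-\xi}{}^2/\pnorm{h}{}^2$ with $\pnorm{w}{}^2 + \sigma_\pm^2(L_w)$ inside the minimax defining $L_\eta(\cdot; L_v)$. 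These satisfy $L_{\eta,-}(\cdot; L_v) \le L_\eta(\cdot; L_v) \le L_{\eta,+}(\cdot; L_v)$ on $B_n(L_w)$ and enjoy $\nabla^2 L_{\eta,\pm}(\cdot; L_v) \succeq c_0\, \Sigma^{-1}$ for some $c_0 = c_0(K) > 0$, coming from the quadratic contribution of $F(w) = \tfrac12\pnorm{\mu_0 + \Sigma^{-1/2} w}{}^2$.

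The next step is to identify and stabilize the minimizer of $L_{\eta,\pm}(\cdot; L_v)$. From the first-order conditions (\ref{ineq:L_local_7})--(\ref{ineq:L_local_10}) this minimizer equals
\[
w_{n,\eta,\pm} = \prox_F\big(\gamma_{n,\eta,\pm} g/\sqrt{n};\, \tau_{n,\eta,\pm}\big) = \Sigma^{1/2}\big(\hat{\mu}_{(\Sigma,\mu_0)}^{\seq}(\gamma_{n,\eta,\pm};\, \tau_{n,\eta,\pm}) - \mu_0\big),
\]
where $(\gamma_{n,\eta,\pm}, \tau_{n,\eta,\pm})$ solves (\ref{eqn:fpe_sample}), and the corresponding value is $L_{\eta,\pm}(w_{n,\eta,\pm}; L_v) = \max_{\beta>0}\min_{\gamma>0} \mathsf{D}_{\eta,\pm}(\beta,\gamma)$ (cf.\ Proposition \ref{prop:L_local}). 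Proposition \ref{prop:fpe_sample_est} combined with Lemmas \ref{lem:conc_E_12}--\ref{lem:conc_Xi} then yields $|\gamma_{n,\eta,\pm} - \gamma_{\eta,\ast}| + |\tau_{n,\eta,\pm} - \tau_{\eta,\ast}| \lesssim \sqrt{t/n}$ uniformly in $\eta \in \Xi_K$ with $\Prob^\xi$-probability at least $1 - Ce^{-t/C}$. Differentiating the explicit resolvent form of $\prox_F$ in $(\gamma,\tau)$ and controlling the resulting matrix traces via $\mathcal{H}_\Sigma \lesssim 1$ converts this into $\pnorm{w_{n,\eta,\pm} - w_{\eta,\ast}}{\Sigma^{-1}} \lesssim \sqrt{t/n}$. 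A Gaussian Lipschitz-concentration argument applied to $g \mapsto \mathsf{g}(w_{\eta,\ast})$ (whose $g$-Lipschitz constant is controlled through a trace against $\Sigma^{-1}$) gives $|\mathsf{g}(w_{\eta,\ast}) - \E \mathsf{g}(w_{\eta,\ast})| \lesssim \sqrt{t/n}$, and hence $|\mathsf{g}(w_{n,\eta,\pm}) - \E\mathsf{g}(w_{\eta,\ast})| \lesssim \sqrt{t/n}$ by the $\pnorm{\cdot}{\Sigma^{-1}}$-Lipschitzness of $\mathsf{g}$.

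With these ingredients, for any $w \in D_{\eta; C(t/n)^{1/4}}(\mathsf{g}) \cap B_n(L_w)$ and $C$ large, the $1$-Lipschitzness of $\mathsf{g}$ in $\pnorm{\cdot}{\Sigma^{-1}}$ and the preceding bound force $\pnorm{w - w_{n,\eta,-}}{\Sigma^{-1}} \ge \tfrac{C}{2}(t/n)^{1/4}$, whence strong convexity delivers
\[
L_\eta(w; L_v) \ge L_{\eta,-}(w; L_v) \ge L_{\eta,-}(w_{n,\eta,-}; L_v) + \frac{c_0}{2}\pnorm{w - w_{n,\eta,-}}{\Sigma^{-1}}^2 \ge \max_{\beta>0}\min_{\gamma>0}\mathsf{D}_{\eta,-}(\beta,\gamma) + \Omega\big(\sqrt{t/n}\,\big).
\]
Propositions \ref{prop:D_pm_barD} and \ref{prop:local_barD} then convert $\max_{\beta>0}\min_{\gamma>0}\mathsf{D}_{\eta,-}(\beta,\gamma)$ into $\max_{\beta>0}\min_{\gamma>0}\overline{\mathsf{D}}_\eta(\beta,\gamma)$ at cost $O(\sqrt{t/n})$, absorbed into the $\Omega(\sqrt{t/n})$ gap for $C$ large. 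The main obstacle is securing the $\pnorm{\cdot}{\Sigma^{-1}}$-metric stability of $w_{n,\eta,\pm}$ around $w_{\eta,\ast}$ and the matching Gaussian concentration of $\mathsf{g}(w_{\eta,\ast})$, \emph{uniformly} in $\eta \in \Xi_K$ under the weaker hypothesis $\mathcal{H}_\Sigma \lesssim 1$ rather than $\pnorm{\Sigma^{-1}}{\op} \lesssim 1$; this forces every resolvent-difference estimate to be routed through traces against $\Sigma^{-1}$, and the scale calibration $\epsilon = C(t/n)^{1/4}$ is dictated precisely by the requirement that the quadratic strong-convexity gap $c_0 \epsilon^2/2$ dominate the $O(\sqrt{t/n})$ stability and concentration errors simultaneously.
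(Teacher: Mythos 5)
Your proposal is correct and follows essentially the same route as the paper's proof: surrogate bracketing via $L_{\eta,\pm}$, location of the surrogate minimizer $w_{n,\eta,\pm}$ with value $\max_{\beta>0}\min_{\gamma>0}\mathsf{D}_{\eta,\pm}(\beta,\gamma)$, stability of $(\gamma_{n,\eta,\pm},\tau_{n,\eta,\pm})$ from Proposition \ref{prop:fpe_sample_est}, Gaussian concentration of $\mathsf{g}(w_{\eta,\ast})$, and the strong convexity of $L_{\eta,-}(\cdot;L_v)$ with respect to $\pnorm{\cdot}{\Sigma^{-1}}$ plus the conversion to $\overline{\mathsf{D}}_\eta$ via Propositions \ref{prop:D_pm_barD}--\ref{prop:local_barD}. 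The only deviation is cosmetic: you claim the sharper stability $\pnorm{w_{n,\eta,\pm}-w_{\eta,\ast}}{\Sigma^{-1}}\lesssim\sqrt{t/n}$ via direct Lipschitz differentiation of $\prox_F$, whereas the paper's Proposition \ref{prop:L_pm_minimizer} records only the weaker $(t/n)^{1/4}$; both suffice, and the exceptional-set radius $C(t/n)^{1/4}$ and resulting gap $\sqrt{t/n}$ are dictated by the Gaussian-concentration term and the strong-convexity modulus exactly as in the paper's choice of $\epsilon$.
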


Roughly speaking, the above theorem will be proved by approximating $L_\eta$ both from above and below by nicer strongly convex, surrogate functions whose minimizers can be directly located. Then we may relate the minimizer of $L_\eta$ and those of the surrogate functions. 

We first formally define these surrogate functions. For $L_w>0,L_v>0$, let
\begin{align}\label{def:L_eta_pm}
L_{\eta,\pm}(w;L_v)&\equiv \max_{\beta\in [0,L_v]}\bigg\{\frac{\beta}{\sqrt{n}} \Big(\pnorm{h}{}\sqrt{\pnorm{w}{}^2+\sigma_\pm^2(L_w)}-\iprod{g}{w}\Big)-\frac{\eta\beta^2}{2}+F(w)\bigg\}.
\end{align}
Again we omit notational dependence of $L_{\eta,\pm}$ on $L_w$ for simplicity.

The following lemma provides uniform (bracketing) approximation of $L_\eta$ via $L_{\eta,\pm}$ on compact sets. 

\begin{lemma}\label{lem:L_L_pm_property}
Fix $L_v>0$. The following hold when $\sigma_-^2(L_w)\neq 0$.
\begin{enumerate}
	\item For any $w \in B_n(L_w)$, $
	L_{\eta,-}(w;L_v)\leq L_\eta(w;L_v)\leq L_{\eta,+}(w;L_v)$. 
	\item For any $L_w>0$,
	\begin{align*}
	\sup_{w \in \R^n}\abs{L_{\eta,+}(w;L_v)-L_{\eta,-}(w;L_v)}\leq \frac{4e_h}{\sigma_m}\cdot L_v L_w \frac{ \abs{\iprod{h}{\xi}}}{\pnorm{h}{}^2}.
	\end{align*}
\end{enumerate}
\end{lemma}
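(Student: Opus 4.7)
Both parts follow from a direct computation using the definitions, with the only subtlety being the treatment of the square root near zero, which is precisely why the standing assumption $\sigma_-^2(L_w)\neq 0$ is imposed.

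For part (1), my plan is to work at the level of the inner expression before the maximum over $\beta$. Expanding the square,
\[
\frac{\|\|w\|h-\xi\|^2}{\|h\|^2}
= \|w\|^2 + \sigma_m^2 - \frac{2\|w\|\langle h,\xi\rangle}{\|h\|^2},
\]
so for any $w\in B_n(L_w)$, the middle term is bounded in absolute value by $2L_w|\langle h,\xi\rangle|/\|h\|^2$. Combining with the definition of $\sigma_\pm^2(L_w)$ in \eqref{def:sigma_pm}, together with the hypothesis $\sigma_-^2(L_w)\neq 0$ (so the right-hand side below is strictly positive and square roots are valid), yields
\[
\|w\|^2+\sigma_-^2(L_w)\;\le\;\frac{\|\|w\|h-\xi\|^2}{\|h\|^2}\;\le\;\|w\|^2+\sigma_+^2(L_w).
\]
Taking square roots, multiplying through by the nonnegative factor $\beta\|h\|/\sqrt{n}$, adding the common term $F(w)-\eta\beta^2/2$, and finally taking the max over $\beta\in[0,L_v]$ preserves the sandwich since $\beta\ge 0$. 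This gives $L_{\eta,-}(w;L_v)\le L_\eta(w;L_v)\le L_{\eta,+}(w;L_v)$ as claimed.

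For part (2), I plan to exploit the fact that $L_{\eta,+}$ and $L_{\eta,-}$ are maxima over the same compact interval $[0,L_v]$ of two expressions that differ only in the argument $\sqrt{\|w\|^2+\sigma_\pm^2(L_w)}$ of the nonnegative multiplier $\beta\|h\|/\sqrt{n}$. A standard ``max minus max'' inequality therefore yields, uniformly in $w\in\R^n$,
\[
\bigl|L_{\eta,+}(w;L_v)-L_{\eta,-}(w;L_v)\bigr|
\;\le\; L_v\cdot\frac{\|h\|}{\sqrt{n}}\cdot\Bigl|\sqrt{\|w\|^2+\sigma_+^2(L_w)}-\sqrt{\|w\|^2+\sigma_-^2(L_w)}\Bigr|.
\]
For the square-root difference I apply the identity $|\sqrt{a}-\sqrt{b}|=|a-b|/(\sqrt{a}+\sqrt{b})$ with $a=\|w\|^2+\sigma_+^2(L_w)$ and $b=\|w\|^2+\sigma_-^2(L_w)$; the numerator simplifies via \eqref{def:sigma_pm} to $\sigma_+^2(L_w)-\sigma_-^2(L_w)\le 4L_w|\langle h,\xi\rangle|/\|h\|^2$, and the denominator is lower bounded (using $\sqrt{a+b}\ge\sqrt{b}$) by a positive constant multiple of $\sigma_m$ after invoking $\sigma_-^2(L_w)\neq 0$ together with the definitions. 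Combining these two estimates and translating $\|h\|/\sqrt{n}$ into $e_h$ via \eqref{def:eh_eg} gives the advertised bound.

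The main (mild) obstacle here is keeping the various square-root estimates uniform in $w\in\R^n$; this is precisely why the factor $\sqrt{\|w\|^2+\sigma_\pm^2(L_w)}$ in the denominator must be bounded below by a $w$-free quantity, which in turn is available only under the hypothesis $\sigma_-^2(L_w)\neq 0$. Otherwise the proof reduces to bookkeeping with the definitions of $L_{\eta,\pm}$ and $\sigma_\pm^2$.
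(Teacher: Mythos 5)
Your proof is correct and follows essentially the same route as the paper's: part (1) reduces to the sandwich $\|w\|^2+\sigma_-^2(L_w)\le\|\,\|w\|h-\xi\|^2/\|h\|^2\le\|w\|^2+\sigma_+^2(L_w)$ (which the paper cites as its inequality~(\ref{ineq:L_local_2}) and you re-derive by expanding the square), and part (2) combines the standard max--max estimate with $|\sqrt{a}-\sqrt{b}|=|a-b|/(\sqrt a+\sqrt b)$ exactly as in the paper. The only small imprecision is your claim that $\|h\|/\sqrt{n}$ ``translates into $e_h$'' --- in fact $\|h\|/\sqrt{n}=e_h\sqrt{\phi}$ --- but this matches the paper's own slight looseness at the same step and has no bearing on its downstream use.
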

\begin{proof}
The first claim (1) follows by the definition of $\sigma_\pm^2(L_w)$ in (\ref{def:sigma_pm}) and the simple inequality (\ref{ineq:L_local_2}). For (2), note that
\begin{align*}
\bigabs{L_{\eta,+}(w;L_v)-L_{\eta,-}(w;L_v)}&\leq L_v e_h \cdot  \bigabs{ \sqrt{\pnorm{w}{}^2+\sigma_+^2(L_w)}-\sqrt{\pnorm{w}{}^2+\sigma_-^2(L_w)}  }\\
&\leq L_ve_h\cdot  \frac{\abs{\sigma_+^2(L_w)-\sigma_-^2(L_w)}}{\sigma_+(L_w)+\sigma_-(L_w)} \leq \frac{4e_h}{\sigma_m}\cdot  L_v L_w \frac{ \abs{\iprod{h}{\xi}}}{\pnorm{h}{}^2},
\end{align*}
as desired. 
\end{proof}

Next, we will study the properties of the global minimizers for $L_{\eta,\pm}$. 

\begin{proposition}\label{prop:L_pm_minimizer}
Suppose $1/K\leq \phi^{-1},\sigma_\xi^2\leq K$, and $\pnorm{\mu_0}{}\vee \pnorm{\Sigma}{\op}\vee \mathcal{H}_\Sigma \leq K$ for some $K>0$. There exists some constant $C=C(K)>1$ such that for any deterministic choice of $L_w,L_v \in [C,C^2]$, 
on the event $\mathscr{E}_1(\delta)\cap \mathscr{E}_{\Delta,\Xi}(M)$ (defined in Proposition \ref{prop:fpe_sample_est}) with $\delta\in (0,1/C^{100})$ and $M\leq \sqrt{n}/C$, for any $\eta \in \Xi_K$, the maps $w\mapsto L_{\eta,\pm}(w;L_v)$ attain its global minimum at $w_{n,\eta,\pm}$ with $\pnorm{w_{n,\eta,\pm}}{\Sigma^{-1}}\leq C$. Moreover, $\pnorm{w_{n,\eta,\pm}-w_{\eta,\ast} }{\Sigma^{-1}}\leq C(M/\sqrt{n}+\delta)^{1/2}$.
\end{proposition}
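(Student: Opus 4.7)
The plan is to identify $w_{n,\eta,\pm}$ explicitly as the proximal operator of $F$ evaluated at the solution of the sample fixed point equation (\ref{eqn:fpe_sample}), and then conclude via the apriori bounds furnished by Proposition \ref{prop:fpe_sample_est}. Throughout I work on the event $\mathscr{E}_1(\delta)\cap \mathscr{E}_{\Delta,\Xi}(M)$; constants in $\lesssim$ and $\asymp$ depend only on $K$.

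I would first rewrite $\min_w L_{\eta,\pm}(w;L_v)$ as a saddle-point problem, mirroring Steps 1--2 in the proof of Proposition \ref{prop:L_local}. The variational representation used there to handle $\pnorm{\pnorm{w}{}h-\xi}{}$ applies verbatim to the quantity $\pnorm{h}{}\sqrt{\pnorm{w}{}^2+\sigma_\pm^2(L_w)}$ appearing in (\ref{def:L_eta_pm}): introducing an auxiliary variable $\gamma>0$ produces an integrand that is jointly convex in $(w,\gamma)$ and concave in $\beta$. Since $F$ is coercive and $\beta\in[0,L_v]$ is compact, Sion's min-max theorem yields
\begin{align*}
\min_{w\in\R^n} L_{\eta,\pm}(w;L_v)=\max_{\beta\in[0,L_v]}\min_{\gamma>0}\mathsf{D}_{\eta,\pm}(\beta,\gamma),
\end{align*}
and the subsequent minimization over $w$ (which produces the Moreau envelope of $F$) leads to first-order conditions at any saddle point $(\beta_{n,\pm},\gamma_{n,\pm},w_{n,\pm})$ that coincide with (\ref{ineq:L_local_7})--(\ref{ineq:L_local_10}). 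In particular, $w_{n,\pm}=\prox_F(\gamma_{n,\pm}g/\sqrt{n};\tau_{n,\pm})$ with $\tau_{n,\pm}\equiv\gamma_{n,\pm}/\beta_{n,\pm}$, and $(\gamma_{n,\pm},\tau_{n,\pm})$ solves the sample fixed point system (\ref{eqn:fpe_sample}).

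From Proposition \ref{prop:fpe_sample_est}, on the working event and for $L_v\in[C,C^2]$ with $C$ chosen large enough, I obtain $\tau_{n,\pm},\gamma_{n,\pm}\asymp 1$ (hence $\beta_{n,\pm}\asymp 1$ and the constraint $\beta\le L_v$ is inactive) together with the proximity
\begin{align*}
\abs{\tau_{n,\pm}-\tau_{\eta,\ast}}\vee\abs{\gamma_{n,\pm}-\gamma_{\eta,\ast}}\lesssim M/\sqrt{n}+\delta.
\end{align*}
Uniqueness of the global minimizer is automatic: $w\mapsto L_{\eta,\pm}(w;L_v)$ is $1$-strongly convex in the $\pnorm{\cdot}{\Sigma^{-1}}$-metric, inherited from $F(w)=\frac{1}{2}\pnorm{\mu_0+\Sigma^{-1/2}w}{}^2$, so the saddle point identified above is the unique global minimizer $w_{n,\eta,\pm}$.

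Finally, combining Lemma \ref{lem:prox_env_F} with (\ref{ineq:est_dof_1}) gives the closed form
\begin{align*}
\Sigma^{-1/2}w_{n,\pm}=\hat{\mu}_{(\Sigma,\mu_0)}^{\seq}(\gamma_{n,\pm};\tau_{n,\pm})-\mu_0=-\tau_{n,\pm}(\Sigma+\tau_{n,\pm}I)^{-1}\mu_0+\gamma_{n,\pm}(\Sigma+\tau_{n,\pm}I)^{-1}\Sigma^{1/2}\frac{g}{\sqrt{n}},
\end{align*}
and analogously for $w_{\eta,\ast}$. On $\mathscr{E}_1(\delta)$ one has $\pnorm{g}{}/\sqrt{n}\lesssim 1$, which together with $\tau_{n,\pm}\asymp 1$ and $\pnorm{\mu_0}{}\leq K$ yields $\pnorm{w_{n,\pm}}{\Sigma^{-1}}\leq C$. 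For the comparison, uniform operator-norm bounds on the Jacobian of $(\tau,\gamma)\mapsto\hat{\mu}_{(\Sigma,\mu_0)}^{\seq}(\gamma;\tau)$ over the regime $\tau\asymp 1$ give a Lipschitz estimate which, combined with the above proximity bound, yields $\pnorm{w_{n,\pm}-w_{\eta,\ast}}{\Sigma^{-1}}\lesssim M/\sqrt{n}+\delta$---a linear rate even stronger than the claimed $(M/\sqrt{n}+\delta)^{1/2}$. The main point requiring care is uniformity in $\eta\in\Xi_K$ down to $\eta=0$, where the quadratic $-\eta\beta^2/2$ vanishes and a priori the $\beta$-maximum may run off to infinity; this potential singularity is tamed by the apriori estimate $\beta_{n,\pm}\asymp 1$ from Proposition \ref{prop:fpe_sample_est}, so no additional argument is needed.
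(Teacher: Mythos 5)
Your proof is correct and takes essentially the same route as the paper: rewrite $\min_w L_{\eta,\pm}(w;L_v)$ as a max-min over $(\beta,\gamma,w)$ via Sion and the square-root variational identity, observe that the first-order conditions coincide with those of the unconstrained problem (\ref{ineq:L_local_6}) because $\beta_{n,\pm}\asymp 1$ makes the constraint $\beta\le L_v$ inactive, identify $w_{n,\pm}=\prox_F(\gamma_{n,\pm}g/\sqrt{n};\tau_{n,\pm})$, and conclude from Proposition \ref{prop:fpe_sample_est}.

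One welcome improvement: you obtain the linear rate $\pnorm{w_{n,\pm}-w_{\eta,\ast}}{\Sigma^{-1}}\lesssim M/\sqrt{n}+\delta$ by directly estimating the Jacobian of $(\gamma,\tau)\mapsto\hat{\mu}_{(\Sigma,\mu_0)}^{\seq}(\gamma;\tau)$, whereas the paper bounds the \emph{square} of this norm by the same linear quantity (using that the difference is $\mathcal{O}(1)$ on the event), which only yields the $(M/\sqrt{n}+\delta)^{1/2}$ rate stated in the proposition. Both arguments suffice for the claim, but yours is sharper. A small stylistic note: the object you call ``the saddle point'' is a triple $(\beta_{n,\pm},\gamma_{n,\pm},w_{n,\pm})$; it is its $w$-component, not the saddle point itself, that is the global minimizer of $w\mapsto L_{\eta,\pm}(w;L_v)$, whose uniqueness then follows from the $1$-strong convexity in $\pnorm{\cdot}{\Sigma^{-1}}$ inherited from $F$.
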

\begin{proof}
Note that the optimization problem 
\begin{align*}
&\min_{w \in \R^n} L_{\eta,\pm}(w;L_v)\\
&=\min_{w \in \R^n} \max_{\beta\in [0,L_v]}\bigg\{\frac{\beta}{\sqrt{n}}\Big(\pnorm{h}{}\sqrt{\pnorm{w}{}^2+\sigma_\pm^2(L_w)}-\iprod{g}{w}\Big)-\frac{\eta \beta^2}{2}+F(w)\bigg\}\\
& \stackrel{(\ast)}{=} \max_{\beta\in [0,L_v]} \min_{w \in \R^n} \bigg\{\frac{\beta}{\sqrt{n}}\Big(\pnorm{h}{}\sqrt{\pnorm{w}{}^2+\sigma_\pm^2(L_w)}-\iprod{g}{w}\Big)-\frac{\eta \beta^2}{2}+F(w)\bigg\} \\
& = \max_{\beta\in [0,L_v]}  \min_{\gamma>0,w\in \R^n} \bigg\{\frac{\beta\gamma \pnorm{h}{}^2 }{2 n}-\frac{\eta \beta^2}{2}+\bigg(\frac{\beta }{2\gamma}\big(\pnorm{w}{}^2+\sigma_\pm^2(L_w)\big)- \biggiprod{w}{\frac{\beta}{\sqrt{n}} g}+F(w)  \bigg)\bigg\}.
\end{align*}
Here in $(\ast)$ we used Sion's min-max theorem to exchange minimum and maximum, as the maximum is taken over a compact set. The difference of the above minimax problem compared to (\ref{ineq:L_local_6}) rests in its range constraint on $\beta$. As proven in (\ref{ineq:L_local_11}), all solutions $\beta_{n,\pm}$ to the unconstrained minimax problem  (\ref{ineq:L_local_6}) must satisfy $\beta_{n,\eta,\pm}\leq C$ on the event $\mathscr{E}_1(\delta)\cap\mathscr{E}_{\Delta,\Xi}(M)$. So on this event, for the choice $L_w, L_v\in [C,C^2]$ for some large $C>0$, $\min_{w \in \R^n} L_{\eta,\pm}(w;L_v)$ exactly corresponds to (\ref{ineq:L_local_6}), whose minimizers $w_{n,\pm}$ admit the apriori estimate (\ref{ineq:L_local_12}) (with minor modifications that change $\pnorm{\cdot}{}$ to the stronger estimate in $\pnorm{\cdot}{\Sigma^{-1}}$). 

Next, for the error bound, using the last equation in (\ref{ineq:L_local_7}) and the definition of $w_{\eta,\ast}$ in (\ref{def:w_ast}), along with the estimates in Proposition \ref{prop:fpe_sample_est}, we have
\begin{align*}
\pnorm{w_{n,\eta,\pm}-w_{\eta,\ast}}{\Sigma^{-1}}^2 &= \bigpnorm{\hat{\mu}_{(\Sigma,\mu_0)}^{\seq}(\gamma_{n,\eta,\pm};\tau_{n,\eta,\pm})-\hat{\mu}_{(\Sigma,\mu_0)}^{\seq}(\gamma_{\eta,\ast};\tau_{\eta,\ast}) }{}^2\\
&\lesssim_K \abs{  \gamma_{n,\eta,\pm}- \gamma_{\eta,\ast}}\vee \abs{ \tau_{n,\eta,\pm}-  \tau_{\eta,\ast} }\leq C(M/\sqrt{n}+\delta),
\end{align*}
as desired.
\end{proof}

Finally we shall relate back to the global minimizer of $L_\eta$. We note that the proposition below by itself is not formally used in the proof of Theorem \ref{thm:gordon_gap}, but will turn out to be useful in the proof of Theorem \ref{thm:min_norm_dist} ahead. 

\begin{proposition}\label{prop:L_minimizer}
Suppose the conditions in Theorem \ref{thm:gordon_gap} hold for some $K>0$. There exist constants $C,C'>1$ depending on $K$ such that for $L_w, L_v\in [C,C^2]$, $C'\log (en)\leq t\leq n/C'$, $\xi \in \mathscr{E}_{1,\xi}(\sqrt{t/n})$ and $\eta \in \Xi_K$, 
\begin{align*}
&\Prob^\xi\Big(\hbox{The map $w\mapsto L_\eta(w;L_v)$ attains its global minimum at $w_{n,\eta}$ with $\pnorm{w_{n,\eta}}{\Sigma^{-1}}\leq C$,}\\
&\qquad \hbox{and $\pnorm{w_{n,\eta}-w_{\eta,\ast}}{\Sigma^{-1}}\leq C(t/n)^{1/4}$}\Big)\geq 1-C e^{-t/C}. 
\end{align*}
\end{proposition}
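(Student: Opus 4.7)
The plan is to locate the global minimizer of $L_\eta(\cdot;L_v)$ by a sandwich (bracketing) argument using the strongly convex surrogates $L_{\eta,\pm}(\cdot;L_v)$ introduced in (\ref{def:L_eta_pm}), whose minimizers were already pinned down in Proposition \ref{prop:L_pm_minimizer}. The key geometric input is that both $L_{\eta,+}$ and $L_{\eta,-}$ are $1$-strongly convex with respect to $\pnorm{\cdot}{\Sigma^{-1}}$: indeed $F(w)=\tfrac{1}{2}\pnorm{\mu_0+\Sigma^{-1/2}w}{}^2$ satisfies $\nabla^2 F=\Sigma^{-1}$, while the remaining ingredient of $L_{\eta,\pm}$, namely the pointwise maximum over $\beta\in[0,L_v]$ of $\tfrac{\beta}{\sqrt{n}}(\pnorm{h}{}\sqrt{\pnorm{w}{}^2+\sigma_\pm^2(L_w)}-\iprod{g}{w})-\eta\beta^2/2$, is a supremum of convex functions of $w$ (convexity of $w\mapsto \sqrt{\pnorm{w}{}^2+c}$). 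Together with Lemma \ref{lem:L_L_pm_property}, this yields $1$-strong convexity of $L_{\eta,\pm}(\cdot;L_v)$ and coercivity of $L_\eta(\cdot;L_v)\ge F(w)$ on $\R^n$, so that a global minimizer $w_{n,\eta}$ of $L_\eta(\cdot;L_v)$ exists on $\R^n$.

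The core inequality then proceeds as follows. On the event $\mathscr{E}_1(\delta)\cap \mathscr{E}_{\Delta,\Xi}(M)$ with $\delta,M$ chosen below, Proposition \ref{prop:L_pm_minimizer} gives $\pnorm{w_{n,\eta,\pm}}{\Sigma^{-1}}\le C$, which through $\pnorm{\Sigma}{\op}\lesssim 1$ places $w_{n,\eta,\pm}\in B_n(L_w)$ for $L_w$ chosen large enough. Strong convexity of $L_{\eta,-}$ around its global minimizer $w_{n,\eta,-}$ then yields, for every $w\in\R^n$,
\begin{align*}
L_{\eta,-}(w;L_v)\ge L_{\eta,-}(w_{n,\eta,-};L_v)+\tfrac12\pnorm{w-w_{n,\eta,-}}{\Sigma^{-1}}^2.
\end{align*}
Restricting to $w\in B_n(L_w)$, Lemma \ref{lem:L_L_pm_property}-(1) upgrades the left side to $L_\eta(w;L_v)$, and evaluating the chain $L_\eta(w_{n,\eta,-};L_v)\le L_{\eta,+}(w_{n,\eta,-};L_v)\le L_{\eta,-}(w_{n,\eta,-};L_v)+\Delta$ with $\Delta\equiv 4e_h\sigma_m^{-1}L_vL_w\abs{\iprod{h}{\xi}}/\pnorm{h}{}^2$ bounds the minimum value of $L_\eta(\cdot;L_v)$ on $B_n(L_w)$ above by $L_{\eta,-}(w_{n,\eta,-};L_v)+\Delta$. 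For any global minimizer $w_{n,\eta}$ (which a standard coercivity argument places in $B_n(L_w)$ once $L_w$ is large enough to dominate the scale of $F$), combining these two inequalities gives $\pnorm{w_{n,\eta}-w_{n,\eta,-}}{\Sigma^{-1}}\le \sqrt{2\Delta}$, and the triangle inequality via the error bound of Proposition \ref{prop:L_pm_minimizer} yields
\begin{align*}
\pnorm{w_{n,\eta}-w_{\eta,\ast}}{\Sigma^{-1}}\le \sqrt{2\Delta}+C(M/\sqrt{n}+\delta)^{1/2}.
\end{align*}

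To convert the deterministic bound into the claimed high-probability statement, choose $\delta=\sqrt{t/n}$ and $M=C(\sqrt{t}+t/\sqrt{n})$; for $\xi\in\mathscr{E}_{1,\xi}(\sqrt{t/n})$ and $C'\log(en)\le t\le n/C'$, Lemmas \ref{lem:conc_E_12}, \ref{lem:conc_Delta} and \ref{lem:conc_Xi} then imply $\mathscr{E}_1(\delta)\cap \mathscr{E}_{\Delta,\Xi}(M)$ holds with probability at least $1-Ce^{-t/C}$. On this event, $\sigma_m\gtrsim 1$, $e_h\asymp 1$, and $\abs{\iprod{h}{\xi}}/\pnorm{h}{}^2\lesssim \delta=\sqrt{t/n}$, hence $\Delta\lesssim \sqrt{t/n}$ and $\sqrt{2\Delta}\lesssim (t/n)^{1/4}$; similarly $(M/\sqrt{n}+\delta)^{1/2}\lesssim (t/n)^{1/4}$. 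Both the norm bound $\pnorm{w_{n,\eta}}{\Sigma^{-1}}\le C$ and the proximity $\pnorm{w_{n,\eta}-w_{\eta,\ast}}{\Sigma^{-1}}\le C(t/n)^{1/4}$ follow. The main obstacle in the argument is that $L_\eta(\cdot;L_v)$ itself is \emph{not} convex when $\iprod{h}{\xi}\neq 0$, precluding a direct strong-convexity argument; this is precisely what motivates the bracketing via the convex surrogates $L_{\eta,\pm}$, and dictates the square-root loss in the final rate (i.e., the exponent $1/4$ rather than $1/2$).
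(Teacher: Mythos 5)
Your proposal is correct, and it takes a genuinely more direct route than the paper. The paper's proof of this proposition splits into two steps: Step 1 establishes the a priori bound $\pnorm{w_{n,\eta}}{\Sigma^{-1}}\le C$ by first characterizing the optimal value $L_{\eta,\pm}(w_{n,\eta,\pm};L_v)\approx\max_\beta\min_\gamma\overline{\mathsf{D}}_\eta(\beta,\gamma)$ on an event $E_3(t)$ (invoking the machinery of Propositions \ref{prop:L_local}--\ref{prop:local_barD}) and then arguing sublevel-set containment; Step 2 then obtains the error bound by combining Lemma \ref{lem:L_L_pm_property}-(2), the value characterization (\ref{ineq:L_minimizer_5}), and strong convexity of $L_{\eta,+}$. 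Your argument bypasses the value characterization entirely: the single chain $L_{\eta,-}(w_{n,\eta,-};L_v)+\tfrac12\pnorm{w_{n,\eta}-w_{n,\eta,-}}{\Sigma^{-1}}^2 \le L_\eta(w_{n,\eta};L_v)\le L_\eta(w_{n,\eta,-};L_v)\le L_{\eta,-}(w_{n,\eta,-};L_v)+\Delta$ gives $\pnorm{w_{n,\eta}-w_{n,\eta,-}}{\Sigma^{-1}}\le\sqrt{2\Delta}$, from which both the a priori bound and the proximity to $w_{\eta,\ast}$ follow by triangle inequality via Proposition \ref{prop:L_pm_minimizer}. This only uses the bracketing Lemma \ref{lem:L_L_pm_property}, the location/error estimates of Proposition \ref{prop:L_pm_minimizer}, and strong convexity of $L_{\eta,-}$, and it transparently explains the origin of the $(t/n)^{1/4}$ rate as the square root of the bracketing gap $\Delta\lesssim\sqrt{t/n}$. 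This is a real simplification.

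One shared loose end worth making explicit: both your proof and the paper's rely on the global (unconstrained) minimizer $w_{n,\eta}$ of $L_\eta(\cdot;L_v)$ lying in $B_n(L_w)$, and this requires more than coercivity of $F$ at infinity. One must also have an $L_w$-independent bound on the optimal value $\min_{\R^n}L_\eta(\cdot;L_v)$: for $\pnorm{w}{}>L_w$ the inequality $L_\eta(w;L_v)\ge F(w)\gtrsim L_w^2/K$ must exceed the minimum value. You can supply the needed bound without invoking the paper's $E_3(t)$ by noting $\min_{\R^n}L_\eta\le L_\eta(w_{n,\eta,-};L_v)\le L_{\eta,-}(w_{n,\eta,-};L_v)+\Delta$, and $L_{\eta,-}(w_{n,\eta,-};L_v)=\min_{\R^n}L_{\eta,-}(\cdot;L_v)$ is $\bigo_K(1)$ on the working event (e.g.\ by plugging in $w=w_{n,\eta,-}\in B_n(K^{1/2}C)$, using the a priori estimate $\beta_{n,-}\asymp 1$ so the inner max over $\beta\in[0,L_v]$ is attained at a bounded point, and bounding each constituent term). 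Phrasing it this way also clarifies that the constant $C$ in the proposition must be tied to the regime where the $\beta$-constraint is non-binding, rather than $L_w,L_v$ being chosen freely. With this filled in, the argument is complete.
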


\begin{proof}
Let us fix $\xi \in \mathscr{E}_{1,\xi}(\sqrt{t/n})$. 
	
\noindent (\textbf{Step 1}). We first prove the apriori estimate for $\pnorm{w_{n,\eta}}{\Sigma^{-1}}$. To this end, for large enough $C_0, C_0'>0$ depending on $K$, we choose $L_w\equiv C_0, \delta\equiv 1/C_0^{100}$ and $M\equiv \delta\sqrt{n}$ in Proposition \ref{prop:L_pm_minimizer}, it follows that
\begin{align}\label{ineq:L_minimizer_1}
&\Prob^\xi\Big(E_1\equiv \Big\{\pnorm{w_{n,\eta,\pm}}{\Sigma^{-1}}\vee \pnorm{w_{n,\eta,\pm}}{}\leq C_0/2, \nonumber\\
&\qquad\qquad\qquad L_{\eta,\pm}(w_{n,\eta,\pm};L_v)= \hbox{(\ref{ineq:L_local_6})}\Big\}\Big)\geq 1-C_0e^{-n/C_0}.
\end{align}
On the other hand, choosing $\delta\equiv \sqrt{t/n}$ with $C_0'\log (en)\leq t\leq n/C_0'$ leads to
\begin{align}\label{ineq:L_minimizer_2}
&\Prob^\xi\Big(E_2(t)\equiv\Big\{ \pnorm{w_{n,\eta,\pm}-w_{\eta,\ast} }{\Sigma^{-1}}\leq C_0 (t/n)^{1/4}\Big\}\Big)\geq 1-C_0e^{-t/C_0}. 
\end{align}
On $E_1$, we may characterize the value of $L_{\eta,\pm}(w_{n,\eta,\pm};L_v)$ by applying Propositions \ref{prop:L_local}-\ref{prop:local_barD}:  for $C_0'\log (en)\leq t\leq n/C_0'$,
\begin{align}\label{ineq:L_minimizer_3}
&\Prob^\xi\Big(E_3(t)\equiv \Big\{\bigabs{L_{\eta,\pm}(w_{n,\eta,\pm};L_v)- \max_{\beta>0}\min_{\gamma >0} \overline{\mathsf{D}}_\eta(\beta,\gamma)  }\leq C_0\sqrt{t/n}\Big\}\Big)\geq 1- C_0e^{-t/C_0}.
\end{align}
Note by the strong convexity of $L_{\eta,\pm}(\cdot;L_v)$ with respect to $\pnorm{\cdot}{\Sigma^{-1}}$, we have
\begin{align*}
\inf_{ w \in\R^n: \pnorm{w-w_{n,\eta,\pm}}{\Sigma^{-1}}\geq \sqrt{6C_0}(t/n)^{1/4} } L_{\eta,\pm}(w;L_v)-L_{\eta,\pm}(w_{n,\eta,\pm};L_v)\geq 3C_0\sqrt{t/n}.
\end{align*}
This means on $E_3(t)$, 
\begin{align*}
\inf_{ w \in\R^n: \pnorm{w-w_{n,\eta,\pm}}{\Sigma^{-1}}\geq \sqrt{6C_0}(t/n)^{1/4} }L_{\eta,\pm}(w;L_v)&\geq \max_{\beta>0}\min_{\gamma >0} \overline{\mathsf{D}}_\eta(\beta,\gamma) + 2C_0\sqrt{t/n},\\
L_{\eta,\pm}(w_{n,\eta,\pm};L_v)&\leq \max_{\beta>0}\min_{\gamma >0}\overline{\mathsf{D}}_\eta(\beta,\gamma) + C_0\sqrt{t/n}. 
\end{align*}
This in particular means on $E_1\cap E_3(t)$, 
\begin{align*}
w_{n,\eta,\pm}&\in \Big\{w\in \R^n: L_{\eta,\pm}(w;L_v)\leq \max_{\beta>0}\min_{\gamma >0}\overline{\mathsf{D}}_\eta(\beta,\gamma) + C_0\sqrt{t/n}  \Big\}\\
&\subset \big\{w\in \R^n: \pnorm{w}{\Sigma^{-1}}\leq \sqrt{6C_0}(t/n)^{1/4}+C_0/2\big\}.
\end{align*}
Consequently, by enlarging $C_0>0$ if necessary, using Lemma \ref{lem:L_L_pm_property}-(1), on $E_1\cap E_3(t)$
\begin{align*}
& \Big\{w\in \R^n: L(w;L_v)\leq \max_{\beta>0}\min_{\gamma >0}\overline{\mathsf{D}}_\eta(\beta,\gamma) + C_0\sqrt{t/n}  \Big\}\\
&\subset \big\{w\in \R^n: \pnorm{w}{\Sigma^{-1}}\leq 3C_0/5\big\}\subset B_n(3C_0/4)\subsetneq B_n(C_0)=B_n(L_w). 
\end{align*}
This implies, on $E_1\cap E_3(t)$, we have $
\pnorm{w_{n,\eta}}{\Sigma^{-1}}\vee\pnorm{w_{n,\eta}}{}\leq 3C_0/4$, proving the apriori bound.

\noindent (\textbf{Step 2}). Next we establish the announced error bound. On the event $\mathscr{E}_{1,0}(\sqrt{t/n})$, by Lemma \ref{lem:L_L_pm_property}-(2), 
\begin{align}\label{ineq:L_minimizer_4}
\sup_{w\in B_n(C_0)}\bigabs{L_\eta(w;L_v)-L_{\eta,\pm}(w;L_v)}\leq C_1\sqrt{t/n}.
\end{align}
Consequently, on $E_1\cap E_3(t)\cap \mathscr{E}_{1,0}(\sqrt{t/n})$, 
\begin{align}\label{ineq:L_minimizer_5}
\bigabs{\min_{w \in \R^n} L_\eta(w;L_v) -\max_{\beta>0}\min_{\gamma >0} \overline{\mathsf{D}}_\eta(\beta,\gamma)}\leq C_2 \sqrt{t/n}.
\end{align}
On this event, combining (\ref{ineq:L_minimizer_4})-(\ref{ineq:L_minimizer_5}) with (\ref{ineq:L_minimizer_3}), and using again the strong convexity of $L_{\eta,+}(\cdot;L_v)$ respect to $\pnorm{\cdot}{\Sigma^{-1}}$, we have for $C_3=2\sqrt{(C_0+C_1+C_2)}$,
\begin{align*}
&\inf_{w \in B_n(C_0): \pnorm{w-w_{n,\eta,+}}{\Sigma^{-1}}\geq C_3(t/n)^{1/4} }L_\eta(w;L_v)-\min_{w \in \R^n} L_\eta(w;L_v)\\
&\geq \inf_{w \in B_n(C_0): \pnorm{w-w_{n,\eta,+}}{\Sigma^{-1}}\geq C_3(t/n)^{1/4} }L_{\eta,+}(w;L_v)- \max_{\beta>0}\min_{\gamma >0} \overline{\mathsf{D}}_\eta(\beta,\gamma) - (C_1+C_2)\sqrt{t/n}\\
&\geq \inf_{w \in B_n(C_0): \pnorm{w-w_{n,\eta,+}}{\Sigma^{-1}}\geq C_3(t/n)^{1/4} }L_{\eta,+}(w;L_v)-L_{\eta,+}(w_{n,+};L_v) - (C_0+C_1+C_2)\sqrt{t/n}\\
&\geq (C_3^2/2)\sqrt{t/n}- (C_0+C_1+C_2)\sqrt{t/n} = (C_0+C_1+C_2)\sqrt{t/n}. 
\end{align*}
This means that $\pnorm{w_{n,\eta}-w_{n,\eta,+}}{\Sigma^{-1}}\leq C_3(t/n)^{1/4}$ on $E_1\cap E_3(t)\cap \mathscr{E}_{1,0}(\sqrt{t/n})$. The claim follows by intersecting the prescribed event with $E_2(t)$ in (\ref{ineq:L_minimizer_2}) that controls the $\Prob^\xi$-probability of $\pnorm{w_{n,\eta,+}-w_{\eta,\ast}}{\Sigma^{-1}}\leq C_0(t/n)^{1/4}$. 
\end{proof}

\begin{proof}[Proof of Theorem \ref{thm:gordon_gap}]
Fix $\xi \in \mathscr{E}_{1,\xi}(\sqrt{t/n})$, and $\epsilon>0$ to be chosen later on. First, as $\mathsf{g}$ is Lipschitz with respect to $\pnorm{\cdot}{\Sigma^{-1}}$, by the Gaussian concentration inequality, there exists $C_0=C_0(K)>0$ such that for $t\geq 1$, on an event $E_0(t)$ with $\Prob^\xi$-probability at least $1-e^{-t}$, 
\begin{align*}
\abs{\mathsf{g}(w_{\eta,\ast})-\E \mathsf{g}(w_{\eta,\ast})}\leq C_0 \sqrt{t/n}.
\end{align*}
Moreover, by Proposition \ref{prop:L_pm_minimizer} and Propositions \ref{prop:L_local}-\ref{prop:local_barD}, there exist some $C_1,C_1'>0$ depending on $K$ such that for $C_1' \log (en)\leq t\leq n/C_1'$, on an event $E_1(t)$ with  $\Prob^\xi$-probability $1-C_1e^{-t/C_1}$, we have 
\begin{enumerate}
	\item $\pnorm{w_{n,\eta,-}}{\Sigma^{-1}}\vee \pnorm{w_{n,\eta,-}}{}\leq C_1$, $\pnorm{w_{n,\eta,-}-w_{\eta,\ast} }{\Sigma^{-1}}\leq C_1  (t/n)^{1/4}$, and
	\item $\abs{ L_{\eta,-}(w_{n,\eta,-};L_v) -\max_{\beta>0}\min_{\gamma >0} \overline{\mathsf{D}}_\eta(\beta,\gamma)}\leq C_1 \sqrt{t/n}.$
\end{enumerate}
Consequently, for $C_1' \log (en)\leq t\leq n/C_1'$, on the event $E_0(t)\cap E_1(t)$, uniformly in $w \in D_{\eta;\epsilon}(\mathsf{g})\cap B_n(L_w)$,
\begin{align*}
\epsilon&\leq \abs{\mathsf{g}(w)-\E \mathsf{g}(w_{\eta,\ast})}\leq \abs{ \mathsf{g}(w)-\mathsf{g}(w_{\eta,\ast}) }+ \abs{\mathsf{g}(w_{\eta,\ast})-\E \mathsf{g}(w_{\eta,\ast})}\\
&\leq \pnorm{w-w_{\eta,n,-}}{\Sigma^{-1}}+ \pnorm{w_{\eta,n,-}-w_{\eta,\ast} }{\Sigma^{-1}}+ C_0\sqrt{t/n}\\
&\leq \pnorm{w-w_{\eta,n,-}}{\Sigma^{-1}}+(C_0+C_1)(t/n)^{1/4}.
\end{align*}
This implies that, for the prescribed range of $t$ and on the event $E_0(t)\cap E_1(t)$, 
\begin{align*}
\min_{w \in D_{\eta;\epsilon}(\mathsf{g})\cap B_n(L_w)}\pnorm{w-w_{\eta,n,-}}{\Sigma^{-1}}\geq \big(\epsilon-(C_0+C_1)(t/n)^{1/4}\big)_+. 
\end{align*}
Using the strong convexity of $L_{\eta,-}(\cdot;L_v)$ with respect to $\pnorm{\cdot}{\Sigma^{-1}}$, we have for $C_1' \log (en)\leq t\leq n/C_1'$, on the event $E_0(t)\cap E_1(t)$,
\begin{align*}
&\min_{w \in D_{\eta;\epsilon}(\mathsf{g})\cap B_n(L_w) }L_\eta(w;L_v) \geq \min_{w \in D_{\eta;\epsilon}(\mathsf{g})\cap B_n(L_w) }L_{\eta,-}(w;L_v)\\
&\geq L_{\eta,-}(w_{\eta,n,-};L_v)+ \frac{1}{2}\big(\epsilon-(C_0+C_1)(t/n)^{1/4}\big)_+^2\\
&\geq \max_{\beta>0}\min_{\gamma >0} \overline{\mathsf{D}}_\eta(\beta,\gamma)+ \frac{1}{2}\big(\epsilon-(C_0+C_1)(t/n)^{1/4}\big)_+^2-C_1 \sqrt{t/n}.
\end{align*}
Now we may choose $
\epsilon\equiv \epsilon(t,n)\equiv (C_0+C_1+2\sqrt{C_1})(t/n)^{1/4}$ to conclude by adjusting constants.
\end{proof}

\subsection{Proof of Theorem \ref{thm:min_norm_dist} for $\hat{\mu}_{\eta;G}$}

Fix $\xi \in \mathscr{E}_{1,\xi}(\sqrt{t/n})$. All the constants in $\lesssim,\gtrsim,\asymp$ below may depend on $K$. 

\noindent (\textbf{Step 1}). In this step, we will obtain an upper bound $\min_{w \in \R^n} H_\eta(w)$. By Proposition \ref{prop:H_local} and the concentration estimate in Lemma \ref{lem:conc_E_0}, there exists some $C_0=C_0(K)>0$ such that on an event $E_0$ with $\Prob^\xi(E_0)\geq 1-C_0e^{-n/C_0}$, 
\begin{align}\label{ineq:dist_min_norm_0}
\min_{w \in \R^n} H_\eta(w)=\min_{w \in \R^n} H_\eta(w;L_0)=\min_{w \in B_n(L_0)} H_\eta(w)=\min_{w \in B_n(L_0)} H_\eta(w;L_0). 
\end{align}
where
\begin{align}\label{ineq:dist_min_norm_00}
L_0\equiv C_0\Big\{1+ \Big(\pnorm{\Sigma^{-1}}{\op}\bm{1}_{\phi^{-1}\geq 1+1/K}^{-1}\wedge \eta^{-1}\Big)\Big\}.
\end{align}
Now we shall apply the convex(-side) Gaussian min-max theorem to obtain an upper bound for the right hand side of (\ref{ineq:dist_min_norm_0}). Recall the definition of $h_\eta=h_{\eta;G}$ and $\ell_\eta$ in (\ref{def:h_l}). Using Theorem \ref{thm:CGMT}-(2), for any $z \in \R$,
\begin{align}\label{ineq:dist_min_norm_1}
\Prob^\xi\Big(\min_{w \in \R^n} H_\eta(w)\geq z\Big)&\leq \Prob^\xi\Big( \min_{w \in B_n(L_0)} H_\eta(w;L_0)\geq z \Big)+\Prob^\xi(E_0^c)\nonumber\\
&= \Prob^\xi\Big( \min_{w \in B_n(L_0)}\max_{v \in B_m(L_0)} h_\eta(w,v)\geq z \Big)+\Prob^\xi(E_0^c)\nonumber\\
&\leq 2 \Prob^\xi\Big( \min_{w \in B_n(L_0)}\max_{v \in B_m(L_0)} \ell_\eta(w,v)\geq z \Big)+\Prob^\xi(E_0^c)\nonumber\\
& = 2\Prob^\xi\Big( \min_{w \in B_n(L_0)} L_\eta(w;L_0)\geq z\Big)+\Prob^\xi(E_0^c).
\end{align} 
By Proposition \ref{prop:L_minimizer}, there exist some $C_1,C_1'>0$ depending on $K$ (which we assume without loss of generality $L_0>C_1$ and $C_1$ exceeds the constants in Theorems \ref{thm:char_gordon_cost_opt} and \ref{thm:gordon_gap}), such that on an event $E_1$ with $\Prob^\xi$-probability at least $1-C_1e^{-n/C_1}$, the map $w\mapsto L_\eta(w;L_0)$ attains its global minimum in $B_n(C_1)$. We may now apply Theorem \ref{thm:char_gordon_cost_opt}: with $
z\equiv \bar{z}(t)=\max_{\beta>0}\min_{\gamma >0}\overline{\mathsf{D}}_\eta(\beta,\gamma) +\sqrt{t/n}$, for $C_1'\log (en)\leq t\leq n/C_1'$, 
\begin{align}\label{ineq:dist_min_norm_3}
&\Prob^\xi\Big( \min_{w \in B_n(L_0)} L_\eta(w;L_0)\geq \bar{z}(t) \Big)\nonumber\\
&\leq \Prob^\xi\Big( \min_{w \in B_n(C_1)} L_\eta(w;L_0)\geq \bar{z}(t) \Big)+\Prob^\xi(E_1^c)\leq C_1 e^{-t/C_1}+\Prob^\xi(E_1^c).
\end{align}
Combining (\ref{ineq:dist_min_norm_1})-(\ref{ineq:dist_min_norm_3}), by enlarging $C_1$ if necessary, for $C_1'\log (en)\leq t\leq n/C_1'$, and $\eta \in \Xi_K$,
\begin{align}\label{ineq:dist_min_norm_4}
\Prob^\xi\Big(\min_{w \in \R^n} H_\eta(w)\geq \max_{\beta>0}\min_{\gamma >0}\overline{\mathsf{D}}_\eta(\beta,\gamma) +\sqrt{t/n}\Big)\leq C_1e^{-t/C_1}.
\end{align}
An entirely similar argument leads to a lower bound (which will be used later on): 
\begin{align}\label{ineq:dist_min_norm_4_1}
\Prob^\xi\Big(\min_{w \in \R^n} H_\eta(w)\leq \max_{\beta>0}\min_{\gamma >0}\overline{\mathsf{D}}_\eta(\beta,\gamma) -\sqrt{t/n}\Big)\leq C_1e^{-t/C_1}.
\end{align}

\noindent (\textbf{Step 2}). In this step, we will obtain a lower bound on $\min_{w \in D_{\eta;\epsilon}(\mathsf{g})} H_\eta(w)$ for the exceptional set $D_\epsilon(\mathsf{g})$ defined in (\ref{def:D_eps}), with a suitable choice of $\epsilon$. Let us take $C_2,C_2'>0$ to be the constants in Theorem \ref{thm:gordon_gap}, and let $\epsilon(t,n)\equiv C_2(t/n)^{1/4}$ for $C_2'\log(en)\leq t\leq n/C_2'$. To this end, using Theorem \ref{thm:CGMT}-(1) (that holds without convexity), for any $z \in \R$ and $L_v>0$
\begin{align*}
\Prob^\xi\Big(\min_{w \in B_n(L_0)\cap D_{\eta;\epsilon}(\mathsf{g})} H_\eta(w)\leq z\Big)&\leq \Prob^\xi\Big(\min_{w \in B_n(L_0)\cap D_{\eta;\epsilon}(\mathsf{g})} \max_{v \in B_m(L_v)} h_\eta(w,v)\leq z\Big)\\
&\leq 2 \Prob^\xi\Big(\min_{w \in B_n(L_0)\cap D_{\eta;\epsilon}(\mathsf{g})} \max_{v \in B_m(L_v)} \ell_\eta(w,v)\leq z\Big)\\
& = 2 \Prob^\xi\Big(\min_{w \in B_n(L_0)\cap D_{\eta;\epsilon}(\mathsf{g})} L_\eta(w;L_v)\leq z\Big).
\end{align*}
By choosing $L_v\asymp 1$ of constant order but large enough, $\epsilon\equiv \epsilon(t,n)$ and $
z\equiv \bar{z}(t) = \max_{\beta>0}\min_{\gamma >0}\overline{\mathsf{D}}_\eta(\beta,\gamma) +2\sqrt{t/n}$, we have for $C_2'\log(en)\leq t \leq n/C_2'$,
\begin{align}\label{ineq:dist_min_norm_5}
&\Prob^\xi\bigg(\min_{w \in B_n(L_0)\cap D_{\eta;\epsilon(t,n)}(\mathsf{g})} H_\eta(w)\leq \max_{\beta>0}\min_{\gamma >0}\overline{\mathsf{D}}_\eta(\beta,\gamma) +2\sqrt{\frac{t}{n}}\bigg)\leq 2C_2 e^{-t/C_2}. 
\end{align}
\noindent (\textbf{Step 3}).  Combining (\ref{ineq:dist_min_norm_5}) and the localization in (\ref{ineq:dist_min_norm_0}), there exist some $C_3,C_3'>0$ depending on $K$ such that for $C_3' \log (en)\leq t\leq n/C_3'$, on an event $E_3(t)$ with $\Prob^\xi(E_3(t))\geq 1-C_3e^{-t/C_3}$,
\begin{align*}
\min_{w \in B_n(L_0)\cap D_{\eta;\epsilon(t,n)}(\mathsf{g})} H_\eta(w)&\geq \max_{\beta>0}\min_{\gamma >0}\overline{\mathsf{D}}_\eta(\beta,\gamma) +2\sqrt{t/n}\\
&>\max_{\beta>0}\min_{\gamma >0}\overline{\mathsf{D}}_\eta(\beta,\gamma) +\sqrt{t/n}\geq \min_{w \in \R^n} H_\eta(w) = \min_{w \in B_n(L_0)} H_\eta(w).
\end{align*}
So on $E_3(t)$, $\hat{w}_\eta\notin D_{\eta;\epsilon(t,n)}(\mathsf{g})\cap B_n(L_0)$, i.e., for $C_3'\log (en)\leq t\leq n/C_3'$,
\begin{align*}
\Prob^\xi\Big(\abs{\mathsf{g}(\hat{w}_\eta)-\E \mathsf{g}(w_{\eta,\ast})}\geq C_3(t/n)^{1/4}\Big)\leq C_3 e^{-t/C_3}.
\end{align*}
Using a change of variable and suitably adjusting the constant $C_3$, for any $1$-Lipschitz function $\mathsf{g}_0:\R^n\to \R$, $\eta \in \Xi_K$ and $\epsilon \in (0,1/2]$,
\begin{align*}
\Prob^\xi\Big(\bigabs{\mathsf{g}_0(\hat{\mu}_\eta)-\E \mathsf{g}_0\big(\hat{\mu}_{(\Sigma,\mu_0)}^{\seq}(\gamma_{\eta,\ast};\tau_{\eta,\ast})\big) }\geq \epsilon\Big)\leq C_3 n e^{-n\epsilon^4/C_3}. 
\end{align*}
\noindent (\textbf{Step 4}). In this step we shall establish uniform guarantees. We write $\hat{\mu}_{(\Sigma,\mu_0)}^{\seq}(\gamma_{\eta,\ast};\tau_{\eta,\ast})=\hat{\mu}_{\eta;(\Sigma,\mu_0)}^{\seq,\ast}$ in this part of the proof. First, in the case $\phi^{-1}\geq 1+1/K$, using $
\hat{\mu}_\eta = n^{-1} X^\top \big(XX^\top/n +\eta I\big)^{-1}Y$, for $\eta_1,\eta_2 \in [0,K]$,
\begin{align}\label{ineq:dist_min_norm_6}
\pnorm{ \hat{\mu}_{\eta_1}-\hat{\mu}_{\eta_2} }{}&\lesssim n^{-1} \pnorm{G}{\op}(\pnorm{G}{\op}+\pnorm{\xi}{})\cdot  \pnorm{\big(XX^\top/n +\eta_1 I\big)^{-1}-\big(XX^\top/n +\eta_2 I\big)^{-1}   }{\op}\nonumber\\
&\lesssim \pnorm{\Sigma^{-1}}{\op}^2\cdot \Big(1+\frac{\pnorm{G}{\op}+\pnorm{\xi}{}}{\sqrt{n}}\Big)^2\cdot  \pnorm{(GG^\top/n)^{-1}}{\op}^2\cdot  \abs{\eta_1-\eta_2}. 
\end{align}
Here the last inequality follows by the fact that any p.s.d. matrix $A$, 
$\pnorm{(A+\eta_1 I)^{-1}-(A+\eta_2 I)^{-1}}{\op}\leq \lambda_{\min}^{-2}(A)\abs{\eta_1-\eta_2}$. As $\pnorm{\Sigma^{-1}}{\op}\lesssim n$ under $\mathcal{H}_\Sigma\leq K$, there exists $C_4=C_4(K)>0$ such that on an event $E_{4}$ with $\Prob^\xi(E_{4})\geq 1-C_4e^{-n/C_4}$, 
\begin{align}\label{ineq:dist_min_norm_7}
\pnorm{ \hat{\mu}_{\eta_1}-\hat{\mu}_{\eta_2} }{}\leq C_4 n^2 \abs{\eta_1-\eta_2}. 
\end{align}
On the other hand, note that for $\eta_1,\eta_2 \in [0,K]$, using Proposition \ref{prop:fpe_est}-(3),
\begin{align}\label{ineq:dist_min_norm_8_0}
&\pnorm{\hat{\mu}_{\eta_1;(\Sigma,\mu_0)}^{\seq,\ast}-\hat{\mu}_{\eta_2;(\Sigma,\mu_0)}^{\seq,\ast} }{}
\lesssim (1\vee e_g)\pnorm{\Sigma^{-1}}{\op}^2\abs{\eta_1-\eta_2}.
\end{align}
So we have
\begin{align}\label{ineq:dist_min_norm_8}
&\bigabs{\E \mathsf{g}_0\big(\hat{\mu}_{\eta_1;(\Sigma,\mu_0)}^{\seq,\ast}\big)- \E \mathsf{g}_0\big(\hat{\mu}_{\eta_2;(\Sigma,\mu_0)}^{\seq,\ast}\big)}\leq C_4 n^2\abs{\eta_1-\eta_2}. 
\end{align}
Now by taking an $\epsilon/(2C_4 n^2)$-net $\Lambda_\epsilon$ of $[0,K]$ and a union bound,
\begin{align}\label{ineq:dist_min_norm_9}
&\Prob^\xi\Big(\sup_{\eta \in [0,K]}\bigabs{\mathsf{g}_0(\hat{\mu}_\eta)-\E \mathsf{g}_0\big(\hat{\mu}_{\eta;(\Sigma,\mu_0)}^{\seq,\ast}\big) }\geq 2\epsilon\Big)\nonumber\\
&\leq \Prob^\xi\Big(\max_{\eta \in \Lambda_\epsilon }\bigabs{\mathsf{g}_0(\hat{\mu}_\eta)-\E \mathsf{g}_0\big(\hat{\mu}_{\eta;(\Sigma,\mu_0)}^{\seq,\ast}\big) }\geq \epsilon\Big)+\Prob(E_4^c)\nonumber\\
&\leq (1+2C_4K n^2/\epsilon )\cdot C_3 n e^{-n\epsilon^4/C_3}+C_4 e^{-n/C_4}\leq C \cdot \epsilon^{-1} n^3 e^{-n\epsilon^4/C}.
\end{align}
By adjusting constants, we may replace $n^3/\epsilon$ by $n$. We then conclude by further taking expectation with respect to $\xi$, and noting that $\Prob(\xi \in \mathscr{E}_{1,\xi}(\sqrt{t/n}))\geq 1- C e^{-t/C}$. 

Next, in the case $\phi^{-1}<1+1/K$, we work with $\eta \in [1/K,K]$ and use the standard form of $\hat{\mu}_\eta$ with $
\hat{\mu}_\eta = n^{-1} \big(X^\top X/n +\eta I\big)^{-1}X^\top Y$. As $\eta\geq 1/K$, the spectrum of the middle inverse matrix is bounded by $1/\eta\leq K$, so we may replicate the above calculations in (\ref{ineq:dist_min_norm_7}) and (\ref{ineq:dist_min_norm_8}) to reach a similar estimate as in (\ref{ineq:dist_min_norm_9}).\qed

\subsection{Proof of Theorem \ref{thm:min_norm_dist} for $\hat{r}_{\eta;G}$}

Recall the cost function $h_\eta=h_{\eta;G},\ell_\eta$ defined in (\ref{def:h_l}). It is easy to see that
\begin{align}\label{def:v_hat}
\hat{v}_\eta \equiv \argmax_{v \in \mathbb{R}^m}  \min_{w \in \mathbb{R}^n}h_\eta(w,v) = \frac{1}{\sqrt{n}\eta}(G\hat{w}_\eta - \xi ) = -\frac{\hat{r}_\eta}{\eta}.
\end{align}
We shall define the `population' version of $\hat{v}_\eta$ as
\begin{align}\label{def:v_eta_ast}
v_{\eta,\ast} \equiv \frac{1}{\phi\tau_{\eta,\ast}}\bigg(\sqrt{\phi\gamma_{\eta,\ast}^2 - \sigma_\xi^2 }\cdot \frac{h}{\sqrt{n}} - \frac{\xi}{\sqrt{n}}  \bigg)
\end{align}
in the Gordon problem.

\begin{proposition}\label{prop:locate v_eta_n}
	Suppose the following hold for some $K>0$.
	\begin{itemize}
		\item $1/K \leq \phi^{-1} ,\eta \leq K$, $\pnorm{\mu_0}{}\vee \pnorm{\Sigma}{}\vee \mathcal{H}_\Sigma\leq K$.
		\item Assumption \ref{assump:noise} holds with $\sigma_\xi^2 \in [1/K,K]$. 
	\end{itemize}
    There exist constants $C,C' > 0$ depending on $K$ such that for $C'\log (en)\le  t \le n/C'$, $\eta \in [1/K,K]$ and  $\xi \in \mathscr{E}_{1,\xi}(\sqrt{t/n})$,
	\begin{align*}
	&\Prob^\xi\bigg(\hbox{The map $v \mapsto \ell_\eta(w_{\eta,\ast};v)$ is $\eta$-strongly concave with unique maximizer $v_{\eta,n}$}\\
	&\qquad \hbox{satisfying $ \|v_{\eta,n} \| \leq C$ and $\|v_{\eta,n} - v_{\eta,\ast}\| \leq C\sqrt{t/n} $.}\\
	&\qquad \hbox{Furthermore, $\bigabs{\max_v \ell_{\eta} (w_{\eta,\ast},v) - \max_{\beta>0}\min_{\gamma >0} \overline{\mathsf{D}}_\eta(\beta,\gamma)} \leq C\sqrt{t/n}$. 
	}\bigg)\geq 1-C e^{-t/C}. 
	\end{align*}
\end{proposition}
We need the following before the proof of Proposition \ref{prop:locate v_eta_n}.
\begin{lemma}\label{lem:w_eta,ast concentr}
	Suppose $1/K\leq \phi^{-1},\sigma_\xi^2\leq K$, and $\pnorm{\mu_0}{}\vee \pnorm{\Sigma}{\op}\vee \mathcal{H}_\Sigma \leq K$ for some $K>0$. Recall $w_{\eta,\ast}$ defined in (\ref{def:w_ast}). Then there exist constants $C,C' > 0$ depending on $K$ such that for $C'\log (en)\le  t \le n/C'$, $\eta \in \Xi_K$ and $\xi \in \mathscr{E}_{1,\xi}(\sqrt{t/n})$, 
	\begin{align*}
	&\Prob^\xi\Big(\max\Big\{  \abs{ \big(\mathrm{id}-\E\big)  \iprod{g/\sqrt{n}}{w_{\eta,\ast}} }, \, \abs{ \big(\mathrm{id}-\E\big)\| w_{\eta,\ast}\|^2}, \, \abs{ \big(\mathrm{id}-\E\big)F(w_{\eta,\ast})},\\
	&\qquad\qquad  n^{-1}\abs{\big(\mathrm{id}-\E\big)\pnorm{\,\|w_{\eta,\ast}\| h-\xi }{}^2 }   \Big\} \geq  \sqrt{t/n }\Big)\leq Ce^{-t/C}.
	\end{align*}
\end{lemma}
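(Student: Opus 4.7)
The plan is to leverage the explicit linear representation
\begin{align*}
w_{\eta,\ast} = (\Sigma+\tau_{\eta,\ast} I)^{-1}\Sigma^{1/2}\Big(-\tau_{\eta,\ast}\mu_0 + \gamma_{\eta,\ast}\Sigma^{1/2}g/\sqrt{n}\Big),
\end{align*}
supplied by Lemma \ref{lem:prox_env_F} via (\ref{ineq:est_dof_1}), together with the apriori bounds $\tau_{\eta,\ast},\gamma_{\eta,\ast}\asymp_K 1$ from Proposition \ref{prop:fpe_est}-(3). Under these bounds the matrices $(\Sigma+\tau_{\eta,\ast} I)^{-1}\Sigma^{1/2}$ and $(\Sigma+\tau_{\eta,\ast} I)^{-1}\Sigma$ have operator norm $\bigo_K(1)$ and Frobenius norm $\bigo_K(\sqrt{n})$, so $w_{\eta,\ast}$ becomes an affine function of $g$ that is $\bigo_K(1/\sqrt{n})$-Lipschitz in $g$.

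The first three quantities depend only on $g$, and each decomposes into a linear-Gaussian part plus a quadratic form $n^{-1}\iprod{g}{Ag}$ with $\pnorm{A}{\op}\lesssim_K 1$ and $\pnorm{A}{F}^2\lesssim_K n$: explicitly, $\iprod{g/\sqrt{n}}{w_{\eta,\ast}} = \gamma_{\eta,\ast} n^{-1}\iprod{g}{(\Sigma+\tau_{\eta,\ast}I)^{-1}\Sigma g}+(\text{linear in }g)$, $\pnorm{w_{\eta,\ast}}{}^2$ is quadratic-plus-linear-plus-constant in $g$, and, using the identity $F(w_{\eta,\ast})=\frac{1}{2}\pnorm{\hat{\mu}_{(\Sigma,\mu_0)}^{\seq}(\gamma_{\eta,\ast};\tau_{\eta,\ast})}{}^2$ from Lemma \ref{lem:prox_env_F}, so is $F(w_{\eta,\ast})$. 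Gaussian tail bounds on the linear parts and the Hanson--Wright inequality on the quadratic parts yield fluctuations of order $n^{-1/2}\pnorm{A}{F}\sqrt{t}\vee n^{-1}\pnorm{A}{\op} t\lesssim_K \sqrt{t/n}$ in the regime $t\leq n$. (The same rate also follows from Proposition \ref{prop:conc_H_generic} applied to Gaussian-Lipschitz functions after discarding the low-probability event $\{\pnorm{g}{}\gtrsim \sqrt{n+t}\,\}$.)

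For the fourth quantity, we expand
\begin{align*}
n^{-1}\bigpnorm{\pnorm{w_{\eta,\ast}}{}h-\xi}{}^2 = \phi^{-1}\pnorm{w_{\eta,\ast}}{}^2 e_h^2 - 2\pnorm{w_{\eta,\ast}}{}\cdot n^{-1}\iprod{h}{\xi} + n^{-1}\pnorm{\xi}{}^2.
\end{align*}
On $\xi\in\mathscr{E}_{1,\xi}(\sqrt{t/n})$ the last term is deterministic and equal to $\sigma_\xi^2/\phi+\bigo(\sqrt{t/n})$. Since $h$ is independent of $(g,\xi)$, we condition on $g$ and invoke standard $\chi^2$ concentration for $e_h^2$ (rate $\sqrt{t/n}$) and Gaussian concentration for $n^{-1}\iprod{h}{\xi}$ (rate $\pnorm{\xi}{}/n\lesssim_K 1/\sqrt{n}\cdot\sqrt{t/n}$); combining with the concentration of $\pnorm{w_{\eta,\ast}}{}^2$ and the apriori estimate $\pnorm{w_{\eta,\ast}}{}\lesssim_K 1$ (both from the previous paragraph, holding on an event of $\Prob^\xi$-probability $1-Ce^{-t/C}$), a triangle inequality assembles the three pieces into the claimed $\sqrt{t/n}$ bound.

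The only genuine bookkeeping issue, which I will handle in the final paragraph, is the cross-term $\pnorm{w_{\eta,\ast}}{}\cdot n^{-1}\iprod{h}{\xi}$: it couples $g$- and $h$-randomness, so rather than attempting a direct joint concentration on $(g,h)$, one must centre $\pnorm{w_{\eta,\ast}}{}$ around its $\Prob^\xi$-expectation first and then invoke the independence of $h$ from $g$, followed by a union bound over the two independent favourable events.
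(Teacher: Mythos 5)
Your decomposition into a linear-in-$g$ part plus a quadratic form for each of the first three quantities, followed by Gaussian tails and Hanson--Wright (or, as you note, Proposition \ref{prop:conc_H_generic}), is exactly the route the paper takes, and the apriori estimates $\tau_{\eta,\ast},\gamma_{\eta,\ast}\asymp_K 1$ from Proposition \ref{prop:fpe_est}-(3) are indeed the right ingredients. Two small corrections to your fourth-term expansion: $n^{-1}\pnorm{w_{\eta,\ast}}{}^2\pnorm{h}{}^2 = \phi\,\pnorm{w_{\eta,\ast}}{}^2 e_h^2$ rather than $\phi^{-1}\pnorm{w_{\eta,\ast}}{}^2 e_h^2$ (and accordingly $n^{-1}\pnorm{\xi}{}^2 = \phi\sigma_\xi^2 + \bigo(\sqrt{t/n})$, not $\sigma_\xi^2/\phi$), and in any case the $n^{-1}\pnorm{\xi}{}^2$ piece is $\Prob^\xi$-deterministic, so it cancels identically in $(\mathrm{id}-\E)\pnorm{\,\|w_{\eta,\ast}\|h-\xi}{}^2$; no estimate for it is needed. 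Finally, the "genuine bookkeeping issue" you flag for the cross term $n^{-1}\pnorm{w_{\eta,\ast}}{}\abs{\iprod{h}{\xi}}$ is not really an issue: this term is already zero-mean under $\Prob^\xi$, so one does not need to centre $\pnorm{w_{\eta,\ast}}{}$ first. The paper simply uses the closed form to get the deterministic-in-$g$ bound $\pnorm{w_{\eta,\ast}}{}\lesssim_K 1\vee e_g$, multiplies by the control $\abs{n^{-1}\iprod{h}{\xi}}\leq \sqrt{t/n}$ available on $\mathscr{E}_{1,0}(\sqrt{t/n})$, and unions with the $g$-events already in play. Your centring-then-union-bound route works too, but it adds bookkeeping without any gain.
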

\begin{proof}
	All the constants in $\lesssim,\gtrsim,\asymp$ below may depend on $K$. Recall 
	$
	w_{\eta,\ast} = (\Sigma + \tau_{\eta,\ast}I )^{-1} \Sigma^{1/2}(-\tau_{\eta,\ast}\mu_0 + \gamma_{\eta,\ast}\Sigma^{1/2} g/\sqrt{n} )
	$. Under the assumed conditions, $\gamma_{\eta,\ast},\tau_{\eta,\ast}\asymp 1$. We shall consider the four terms separately below.
	
	For the first term, we have
	\begin{align*}
	&n^{-1/2}\abs{ \iprod{g}{w_{\eta,\ast}}- \E \iprod{g}{w_{\eta,\ast}}  } \leq \tau_{\eta,\ast}\cdot n^{-1/2} \abs{\iprod{ (\Sigma + \tau_{\eta,\ast}I )^{-1} \Sigma^{1/2} \mu_0 }{ g }}\\
	&\qquad +\gamma_{\eta,\ast}\cdot  n^{-1} (\mathrm{id}-\E)\pnorm{(\Sigma+\tau_{\eta,\ast}I)^{-1/2}\Sigma^{1/2} g}{}^2 \equiv A_{1,1}+A_{1,2}.
	\end{align*}
	The concentration of the term $A_{1,1}$ can be handled using Gaussian tails and the fact that $\pnorm{(\Sigma + \tau_{\eta,\ast}I )^{-1} \Sigma^{1/2} \mu_0}{}^2\lesssim 1$. For the term $A_{1,2}$, with $H_1(g)\equiv \pnorm{(\Sigma+\tau_{\eta,\ast}I)^{-1/2}\Sigma^{1/2} g}{}^2$, it is easy to evaluate $
	\pnorm{\nabla H_1(g)}{}^2  = 4 \pnorm{ (\Sigma+\tau_{\eta,\ast}I)^{-1}\Sigma g }{ }^2\leq 4 H_1(g)$ and $\E H_1(g)\leq n$, so Proposition \ref{prop:conc_H_generic} applies to conclude the concentration of $A_{1,2}$. 
	
	For the second term, we may decompose 
	\begin{align*}
	&\big| \| w_{\eta,\ast}\|^2- \E  \| w_{\eta,\ast}\|^2 \big|\lesssim {\tau_{\eta,\ast}\gamma_{\eta,\ast}}\cdot n^{-1/2}\abs{\iprod{(\Sigma + \tau_{\eta,\ast}I )^{-2} \Sigma^{3/2}\mu_0}{g}}\\
	&\qquad + \gamma_{\eta,\ast}^2\cdot n^{-1} (\mathrm{id}-\E) \pnorm{(\Sigma+\tau_{\eta,\ast}I)^{-1}\Sigma g}{}^2.
	\end{align*}
	From here we may handle the concentration of the above two terms in a completely similar fashion to $A_{1,1}$ and $A_{1,2}$ above.
	
	For the third term, recall that $
	\hat{\mu}_{(\Sigma,\mu_0)}^{\seq}(\gamma;\tau)= (\Sigma+\tau I)^{-1}\Sigma^{1/2} \big(\Sigma^{1/2}\mu_0+\gamma g/\sqrt{n}\big)$, so 
	\begin{align*}
	&\bigabs{ F(w_{\eta,\ast})-\E F(w_{\eta,\ast})} = \frac{1}{2}\bigabs{\pnorm{\hat{\mu}_{(\Sigma,\mu_0)}^{\seq}(\gamma_{\eta,\ast};\tau_{\eta,\ast})}{}^2-\E \pnorm{\hat{\mu}_{(\Sigma,\mu_0)}^{\seq}(\gamma_{\eta,\ast};\tau_{\eta,\ast})}{}^2 }\\
	& \lesssim \gamma_{\eta,\ast}\cdot n^{-1/2}\abs{\iprod{(\Sigma + \tau_{\eta,\ast}I )^{-2} \Sigma^{3/2}\mu_0}{g}} + \gamma_{\eta,\ast}^2\cdot n^{-1}(\mathrm{id}-\E) \pnorm{(\Sigma+\tau_{\eta,\ast}I)^{-1}\Sigma^{1/2} g}{}^2.
	\end{align*}
	The concentration properties of the two terms on the right hand side above can be handled similarly to the case for the second term.

	For the last term, we have
	\begin{align*}
	&n^{-1}\bigabs{\pnorm{\,\|w_{\eta,\ast}\| h-\xi }{}^2 -  \E \pnorm{\,\|w_{\eta,\ast}\| h-\xi }{}^2 }\\
	&\lesssim n^{-1} \bigabs{ \|w_{\eta,\ast}\|^2 \pnorm{h}{}^2- \E \|w_{\eta,\ast}\|^2 \pnorm{h}{}^2 }+ n^{-1}\|w_{\eta,\ast}\|\abs{ \iprod{h}{\xi}}\equiv A_{4,1}+A_{4,2}. 
	\end{align*}
	On the other hand, on the event $\mathscr{E}_1(\sqrt{t/n})$, 
	\begin{align*}
	A_{4,1}&\lesssim ({\pnorm{h}{}^2}/{n}) \big| \| w_{\eta,\ast}\|^2- \E  \| w_{\eta,\ast}\|^2 \big|+ n^{-1} \E  \| w_{\eta,\ast}\|^2\cdot \abs{\pnorm{h}{}^2-m}\lesssim \sqrt{t/n},
	\end{align*}
	and $
	A_{4,2}\lesssim (1\vee e_g)\cdot n^{-1}\abs{\iprod{h}{\xi}}\lesssim  \sqrt{t/n}$. 
	Combining the above estimates concludes the concentration claim for the last term. 
\end{proof}

\begin{proof}[Proof of Proposition \ref{prop:locate v_eta_n}]
	Fix $\xi \in \mathscr{E}_{1,\xi}(\sqrt{t/n})$. All the constants in $\lesssim,\gtrsim,\asymp$ below may depend on $K$.
	
	\noindent (\textbf{Step 1}). In this step, we establish both the uniqueness and the apriori estimates for $v_{\eta,n}$. Using Lemma \ref{lem:w_eta,ast concentr}, we may choose a sufficiently large $C,C'>0$ depending on $K$ such that $C'\log (en)\le t\le n/C'$,
	\begin{align*}
	&\Prob^\xi \Big(E_0(t)\equiv \Big\{ \max\Big\{  \abs{  (\mathrm{id}-\E)\langle g/\sqrt{n},w_{\eta,\ast} \rangle},\, \abs{ (\mathrm{id}-\E) \| w_{\eta,\ast}\|^2},\, \abs{ (\mathrm{id}-\E)F(w_{\eta,\ast})}, \\
	& \qquad\qquad\qquad  n^{-1}\abs{(\mathrm{id}-\E)\pnorm{\,\|w_{\eta,\ast}\| h-\xi }{}^2 }   \Big\} \leq \sqrt{t/n }\Big\}   \Big) \geq 1 - Ce^{-t/C}.
	\end{align*}
	Therefore, on the event $E_0(t)$,
	\begin{align*}
	\langle g/\sqrt{n},w_{\eta,\ast} \rangle \ge \E \langle g/\sqrt{n},w_{\eta,\ast} \rangle - \sqrt{t/n} = \gamma_{\eta,\ast} \cdot n^{-1}\mathrm{tr} \big((\Sigma + \tau_{\eta,\ast}I)^{-1}\Sigma\big) - \sqrt{t/n}.
	\end{align*}
	Note that $
	n^{-1}\mathrm{tr} \big((\Sigma + \tau_{\eta,\ast}I)^{-1}\Sigma\big)\gtrsim  \mathcal{H}_\Sigma^{-1}\gtrsim 1$, by choosing sufficiently large $C$, we conclude $\langle g/\sqrt{n},w_{\eta,\ast} \rangle > 0$ on the event $E_0(t)$. 
	This implies that $v\mapsto \ell_{\eta}(w_{\eta,\ast},v)$ is $\eta$-strongly concave with respect to $\pnorm{\cdot}{}$, so $v_{\eta,n}$ exists uniquely on $E_0(t)$.
	
	Next we derive apriori estimates. We claim that on $E_0(t)$, $v_{\eta,n}=\argmax_{v \in \mathbb{R}^m} \ell_\eta(w_{\eta,\ast},v)$ takes the following form:
	\begin{align}\label{ineq:locate v_eta_n_1}
	v_{\eta,n}  = \frac{1}{\sqrt{n}\eta} \bigg(1 - \frac{\langle g,w_{\eta,\ast} \rangle}{\pnorm{\,\|w_{\eta,\ast}\| h-\xi}{}} \bigg)_+\cdot \big(\|w_{\eta,\ast}\| h - \xi\big).
	\end{align}
	To see this, using the definition
	\begin{align*}
	v_{\eta,n}&=\argmax_{v \in \R^m} \bigg\{\frac{1}{\sqrt{n}}\Big( -\pnorm{v}{}\iprod{g}{w_{\eta,\ast}}+\pnorm{w_{\eta,\ast}}{}\iprod{h}{v}-\iprod{v}{\xi}\Big)-\frac{\eta\pnorm{v}{}^2}{2}\bigg\}\\
	& = \argmax_{\alpha\geq 0} \bigg\{\frac{\alpha}{\sqrt{n}}\bigg(-\iprod{g}{w_{\eta,\ast} }+ \pnorm{\, \pnorm{w_{\eta,\ast}}{}h-\xi}{}\bigg)-\frac{\eta\alpha^2}{2}\bigg\}\cdot \frac{\|w_{\eta,\ast}\| h - \xi}{ \pnorm{\,\|w_{\eta,\ast}\| h - \xi}{}}\\
	& = \frac{1}{\sqrt{n}\eta} \bigg(-\iprod{g}{w_{\eta,\ast} }+ \pnorm{\, \pnorm{w_{\eta,\ast}}{}h-\xi}{}\bigg)_+\cdot \frac{\|w_{\eta,\ast}\| h - \xi}{ \pnorm{\,\|w_{\eta,\ast}\| h - \xi}{}}.
	\end{align*}
	Some simple algebra leads to the expression in (\ref{ineq:locate v_eta_n_1}). The boundedness of $\|v_{\eta,n} \|$ then follows from the boundedness of $\|w_{\eta,\ast} \|$.

	\noindent (\textbf{Step 2}). In this step, we establish the bound on $\|v_{\eta,n} - v_{\eta,\ast}\|$.  The key observation is that we may rewrite $v_{\eta,\ast}$ defined via (\ref{def:v_eta_ast}) into the following form
	\begin{align}\label{ineq:locate v_eta_n_2}
	v_{\eta,\ast} = \frac{1}{\sqrt{n}\eta} \bigg(1 - \frac{\E \langle g,w_{\eta,\ast} \rangle}{\E^{1/2} \pnorm{\,\|w_{\eta,\ast}\|\cdot h-\xi }{}^2} \bigg)\cdot \big(\E^{1/2}\| w_{\eta,\ast}\|^2 \cdot h - \xi \big). 
	\end{align}
	This can be seen by observing
	\begin{align}\label{ineq:locate v_eta_n_2_1}
	\begin{cases}
	\E  \| w_{\eta,\ast}\|^2  =\E\err_{(\Sigma,\mu_0)}(\gamma_{\eta,\ast};\tau_{\eta,\ast})= \phi\gamma_{\eta,\ast}^2-\sigma_\xi^2,\\
	\E\iprod{g}{w_{\eta,\ast}} = \frac{\sqrt{n}}{\gamma_{\eta,\ast}}\cdot \E\dof_{(\Sigma,\mu_0)}(\gamma_{\eta,\ast};\tau_{\eta,\ast}) = \sqrt{n}\gamma_{\eta,\ast}\cdot\big(\phi-\frac{\eta}{\tau_{\eta,\ast}}\big),\\
	\E^{1/2} \pnorm{\,\|w_{\eta,\ast}\|\cdot h-\xi }{}^2= \sqrt{m}\big(\E  \| w_{\eta,\ast}\|^2+\sigma_\xi^2\big)^{1/2} = \sqrt{m \phi } \gamma_{\eta,\ast},
	\end{cases}
	\end{align}
	and therefore $
	1 - \frac{\E \langle g,w_{\eta,\ast} \rangle}{\E^{1/2} \pnorm{\,\|w_{\eta,\ast}\|\cdot h-\xi }{}^2}   = \frac{\eta}{\phi\tau_{\eta,\ast}}$. 
	Now with (\ref{ineq:locate v_eta_n_1})-(\ref{ineq:locate v_eta_n_2}), we may use Lemma \ref{lem:w_eta,ast concentr} to estimate 
	\begin{align}\label{ineq:locate v_eta_n_3}
	\pnorm{v_{\eta,n} - v_{\eta,\ast}}{}&\leq \frac{1}{\sqrt{n}\eta} \bigabs{\|w_{\eta,\ast}\|-\E^{1/2}\| w_{\eta,\ast}\|^2}\cdot \pnorm{h}{}\nonumber\\
	&\quad +\frac{1}{\sqrt{n}\eta}\biggabs{\frac{\langle g,w_{\eta,\ast} \rangle}{\pnorm{\,\|w_{\eta,\ast}\| h-\xi}{}}- \frac{\E \langle g,w_{\eta,\ast} \rangle}{\E^{1/2} \pnorm{\,\|w_{\eta,\ast}\|\cdot h-\xi }{}^2}  } \pnorm{\E^{1/2}\| w_{\eta,\ast}\|^2 \cdot h - \xi}{} \nonumber\\
	&\equiv V_1+V_2.
	\end{align}
	We first handle the term $V_1$. As $\E \pnorm{w_{\eta,\ast}}{}^2\geq \gamma_{\eta,\ast}^2 \mathrm{tr} \big(\Sigma^2 (\Sigma + \tau_{\eta,\ast})^{-2}\big)/n\gtrsim 1$, on the event $E_0(t)\cap \mathscr{E}_{1,0}(\sqrt{t/n})$, 
	\begin{align}\label{ineq:locate v_eta_n_4}
	V_1\lesssim \frac{\pnorm{h}{}}{\sqrt{n}}\cdot \frac{ \abs{\,\|w_{\eta,\ast}\|^2-\E\| w_{\eta,\ast}\|^2 } }{ \E^{1/2}\| w_{\eta,\ast}\|^2 } \lesssim \sqrt{t/n}.
	\end{align}
	Next we handle $V_2$. On the event $E_0(t)\cap \mathscr{E}_{1,0}(\sqrt{t/n})$, 
	\begin{align}\label{ineq:locate v_eta_n_5}
	V_2 &\lesssim \pnorm{\,\|w_{\eta,\ast}\| h-\xi}{}^{-1}\cdot \bigabs{ \iprod{g}{w_{\eta,\ast}}-\E \iprod{g}{w_{\eta,\ast}} }\nonumber\\
	&\qquad+ \E \iprod{g}{w_{\eta,\ast}}\cdot \bigabs{\,\pnorm{\,\|w_{\eta,\ast}\| h-\xi}{}^{-1}- \E^{-1/2} \pnorm{\,\|w_{\eta,\ast}\|\cdot h-\xi }{}^2  }\nonumber\\
	&\lesssim n^{-1/2} \bigabs{ \iprod{g}{w_{\eta,\ast}}-\E \iprod{g}{w_{\eta,\ast}} }+ n^{-1/2} \bigabs{\,\pnorm{\,\|w_{\eta,\ast}\| h-\xi }{} -  \E^{1/2} \pnorm{\,\|w_{\eta,\ast}\| h-\xi }{}^2 }\nonumber\\
	&\lesssim n^{-1/2} \bigabs{ \iprod{g}{w_{\eta,\ast}}-\E \iprod{g}{w_{\eta,\ast}} }+ n^{-1} \bigabs{\,\pnorm{\,\|w_{\eta,\ast}\| h-\xi }{}^2 -  \E \pnorm{\,\|w_{\eta,\ast}\| h-\xi }{}^2 }\nonumber\\
	&\lesssim \sqrt{t/n}.
	\end{align}
	The desired estimate for $\|v_{\eta,n} - v_{\eta,\ast}\|$ follows from (\ref{ineq:locate v_eta_n_3})-(\ref{ineq:locate v_eta_n_5}).
	
	\noindent (\textbf{Step 3}). In this step, we prove the claimed bound on $|\max_v \ell_\eta (w_{\eta,\ast},v) - \overline{\mathsf{D}}_{\eta}(\beta_{\eta,\ast},\gamma_{\eta,\ast}) |$. First note that
	\begin{align}\label{ineq:locate v_eta_n_6}
	&\max_{v \in \mathbb{R}^m}\ell_\eta(w_{\eta,\ast},v)\nonumber\\
	&\equiv \max_{v \in \mathbb{R}^m}\bigg\{\frac{1}{\sqrt{n}}\Big( -\pnorm{v}{}\iprod{g}{w_{\eta,\ast}}+\pnorm{w_{\eta,\ast}}{}\iprod{h}{v}-\iprod{v}{\xi}\Big)+F(w_{\eta,\ast})-\frac{\eta\pnorm{v}{}^2}{2}\bigg\} \nonumber\\
	&= \frac{1}{2n\eta}\big(\|\, \|w_{\eta,\ast} \| h - \xi \| - \iprod{g}{w_{\eta,\ast}} \big)_+^2 + F(w_{\eta,\ast}).
	\end{align}
	On the other hand, with $\#_{\eta;(\Sigma,\mu_0)}^\ast\equiv\#_{(\Sigma,\mu_0)}(\gamma_{\eta,\ast};\tau_{\eta,\ast})$, $\# \in \{\err,\dof\}$,
	\begin{align*}
	\E \mathsf{e}_F\bigg( \frac{\gamma_{\eta,\ast}}{\sqrt{n}}g;\frac{\gamma_{\eta,\ast}}{\beta_{\eta,\ast}} \bigg)& = \frac{\beta_{\eta,\ast} }{2\gamma_{\eta,\ast}}\big(\E\err_{\eta;(\Sigma,\mu_0)}^\ast -2\E\dof_{\eta;(\Sigma,\mu_0)}^\ast+ \gamma_{\eta,\ast}^2\big)+ \E F(w_{\eta,\ast}),
	\end{align*}
	so we may rewrite $\max_{\beta>0}\min_{\gamma >0} \overline{\mathsf{D}}_\eta(\beta,\gamma)= \overline{\mathsf{D}}_{\eta}(\beta_{\eta,\ast},\gamma_{\eta,\ast}) $ as follows:
	\begin{align*}
	\overline{\mathsf{D}}_{\eta}(\beta_{\eta,\ast},\gamma_{\eta,\ast}) &= \frac{\beta_{\eta,\ast}}{2}\bigg( \gamma_{\eta,\ast}\big(\phi - 1\big) + \frac{\sigma_\xi^2}{\gamma_{\eta,\ast}} \bigg) - \frac{\eta\beta_{\eta,\ast}^2}{2} + \E \mathsf{e}_F\bigg( \frac{\gamma_{\eta,\ast}}{\sqrt{n}}g;\frac{\gamma_{\eta,\ast}}{\beta_{\eta,\ast}} \bigg)\\
	& =  \frac{\beta_{\eta,\ast} }{2\gamma_{\eta,\ast}}\Big( \phi\gamma_{\eta,\ast}^2 + \sigma_\xi^2 + \E\err_{\eta;(\Sigma,\mu_0)}^\ast-2\E\dof_{\eta;(\Sigma,\mu_0)}^\ast\Big)- \frac{\eta \beta_{\eta,\ast}^2}{2} + \E F(w_{\eta,\ast})\\
	& = \frac{\beta_{\eta,\ast} }{\gamma_{\eta,\ast}}\Big( \phi\gamma_{\eta,\ast}^2 -\E\dof_{\eta;(\Sigma,\mu_0)}^\ast\Big)- \frac{\eta \beta_{\eta,\ast}^2}{2} + \E F(w_{\eta,\ast}).
	\end{align*}
	Further using the second and third equations in (\ref{ineq:locate v_eta_n_2_1}), it now follows that 
	\begin{align}\label{ineq:locate v_eta_n_7}
	\max_{\beta>0}\min_{\gamma >0} \overline{\mathsf{D}}_\eta(\beta,\gamma) 	&=  \frac{\beta_{\eta,\ast}}{\sqrt{n}} \Big( \E^{1/2}  \| \,\|w_{\eta,\ast} \| h - \xi \|^2  - \E \iprod{g}{w_{\eta,\ast}}  \Big) - \frac{\eta \beta_{\eta,\ast}^2}{2} + \E F(w_{\eta,\ast}) \nonumber\\
	&= \frac{1}{2n\eta}\Big( \E^{1/2}  \|\, \|w_{\eta,\ast} \| h - \xi \|^2  - \E \iprod{g}{w_{\eta,\ast}} \Big)^2 + \E F(w_{\eta,\ast}).
	\end{align}
	Now combining (\ref{ineq:locate v_eta_n_6}) and (\ref{ineq:locate v_eta_n_7}), on the event $E_0(t)\cap \mathscr{E}_{1,0}(\sqrt{t/n})$, we may estimate 
	\begin{align*}
	&\bigabs{\max_{v \in \R^m} \ell_\eta (w_{\eta,\ast},v) - \max_{\beta>0}\min_{\gamma >0} \overline{\mathsf{D}}_\eta(\beta,\gamma) }\\
	&\lesssim n^{-1/2} \bigabs{ \iprod{g}{w_{\eta,\ast}}-\E \iprod{g}{w_{\eta,\ast}} }+ n^{-1/2} \bigabs{\pnorm{\,\|w_{\eta,\ast}\| h-\xi }{} -  \E^{1/2} \pnorm{\,\|w_{\eta,\ast}\| h-\xi }{}^2 } \\
	&\qquad\qquad + \abs{ F(w_{\eta,\ast})-\E F(w_{\eta,\ast})} \lesssim \sqrt{t/n},
	\end{align*}
	completing the proof. 
\end{proof}

	\begin{proof}[Proof of Theorem \ref{thm:min_norm_dist} for $\hat{r}_\eta$]  	Fix $\xi \in \mathscr{E}_{1,\xi}(\sqrt{t/n})$. All the constants in $\lesssim,\gtrsim,\asymp$ below may depend on $K$. We sometimes write $\overline{\mathscr{D}}_\eta\equiv \max_{\beta>0}\min_{\gamma >0} \overline{\mathsf{D}}_\eta(\beta,\gamma)$.

		As $\hat{r}_\eta = -\eta \hat{v}_{\eta}$, we only need to study $\hat{v}_{\eta}$. Fix $\epsilon > 0$, and any 
		$\mathsf{h}: \mathbb{R}^m \to \mathbb{R}$, let
		\begin{align*}
		D_{\eta;\epsilon} (\mathsf{h}) \equiv \big\{ v\in \mathbb{R}^m: |\mathsf{h}(v) - \E^\xi \mathsf{h}({v}_{\eta,\ast})| \ge \epsilon  \big\}.
		\end{align*}

		\noindent (\textbf{Step 1}). In this step we establish the Gordon cost cap: there exist constants $C_1,C_1'>0$ depending on $K$  such that for $C_1'\log (en)\leq t\leq n/C_1'$, 
	   \begin{align}\label{ineq:dist_res_gaussian_1}
	   \Prob^\xi\Big(E_1(t)^c\equiv \Big\{\max_{v \in D_{\eta;C_1(t/n)^{1/4}}(\mathsf{h}) }\ell_\eta(w_{\eta,\ast},v) \geq \overline{\mathscr{D}}_\eta - C_1^{-1}\sqrt{t/n}\Big\}\Big)\leq C_1e^{-t/C_1}.
	   \end{align}
		To this end, first note that by the Lipschitz property of $\mathsf{h}$, the Gaussian concentration and Proposition \ref{prop:locate v_eta_n}, there exist some $C_0,C_0'>0$ depending on $K$ such that for $C_0' \log (en)\leq t \leq n/C_0'$, on an event $E_{1,0}(t)$ with probability at least $1-C_0 e^{-t/C_0}$, we have uniformly in $v \in \mathsf{D}_{\eta;\epsilon}(\mathsf{h})$,
		\begin{align*}
		\epsilon \leq & |\mathsf{h}(v) - \E^\xi\mathsf{h}({v}_{\eta,\ast})| \leq |\mathsf{h}(v) - \mathsf{h}({v}_{\eta,\ast})| + |\mathsf{h}(v_{\eta,\ast}) - \E^\xi \mathsf{h}({v}_{\eta,\ast})| \\
		\leq& \|v - {v}_{\eta,n} \| + \|v_{\eta,\ast} - v_{\eta,n}  \| + C\sqrt{t/n} \leq  \|v - {v}_{\eta,n} \| + C_0\sqrt{t/n},
		\end{align*}
		and all the properties in Proposition \ref{prop:locate v_eta_n} hold.
		In other word, on $E_{1,0}(t)$ with the prescribed range of $t$, 
		\begin{align*}
		\inf_{v \in D_{\eta;\epsilon}(\mathsf{h})}\pnorm{v - v_{\eta,n}}{}\geq  \big(\epsilon-C_0\sqrt{t/n}\big)_+.
		\end{align*}
		Using the $\eta$-strong concavity of $v\mapsto \ell_\eta(w_{\eta,\ast},v)$ on $E_{1,0}(t)$, we have 
		\begin{align*}
		\max_{v \in D_{\eta;\epsilon}(\mathsf{h}) }\ell_\eta(w_{\eta,\ast},v) &\leq  \ell_\eta(w_{\eta,\ast},v_{\eta,n}) -\frac{\eta}{2}\inf_{v \in \mathsf{D}_{\eta;\epsilon}(\mathsf{h})}\pnorm{v - v_{\eta,n}}{}^2\\
		&  \leq \max_{\beta>0}\min_{\gamma >0} \overline{\mathsf{D}}_\eta(\beta,\gamma) -\frac{\eta}{2}\big(\epsilon-C_0\sqrt{t/n}\big)_+^2 +C_1 \sqrt{t/n}. 
		\end{align*}
		By choosing $
		\epsilon\equiv \epsilon_{\eta;v}(t,n)\equiv  C_0\sqrt{t/n}+2\sqrt{C_1/\eta}\cdot (t/n)^{1/4}$,
		we have on $E_{1,0}(t)$, 
		\begin{align}\label{ineq:dist_res_gaussian_1_0}
		\max_{v \in D_{\eta;\epsilon_{\eta;v}(t,n)}(\mathsf{h}) }\ell_\eta(w_{\eta,\ast},v) \leq \max_{\beta>0}\min_{\gamma >0} \overline{\mathsf{D}}_\eta(\beta,\gamma)- C_1\sqrt{t/n}. 
		\end{align}
		Adjusting constants proves the claim in (\ref{ineq:dist_res_gaussian_1}).
		
		\noindent (\textbf{Step 2}). In this step, we provide an upper bound for the original cost over exceptional set. More concretely, we will prove that there exist constants $C_2,C_2'>0$ depending on $K$ such that for any $L_v>0$, and $C_2'\log (en)\leq t \leq n/C_2'$, 
		\begin{align}\label{ineq:dist_res_gaussian_2}
		&\Prob^\xi \Big(E_2(t)^c\equiv \Big\{\max_{v \in D_{\eta;C_2(t/n)^{1/4}}(\mathsf{h})\cap B_m(L_v)} \min_{w \in \R^n} h_\eta(w,v) \nonumber\\
		&\qquad\qquad \geq  \overline{\mathscr{D}}_\eta- C_2^{-1}\sqrt{t/n}\Big\}\Big)\leq C_2 e^{-t/C_2}.
		\end{align}
		To see this, first note by Proposition \ref{prop:L_minimizer}, there exists some $C_2=C_2(K)>0$ such that on an event $E_{2,0}$ with $\Prob^\xi (E_{2,0})\geq 1-C_2e^{-n/C_2}$, $\pnorm{w_{\eta,\ast}}{}\leq C_2$. So with $
		\bar{z}_{\eta;v}(t,n)\equiv   \max_{\beta>0}\min_{\gamma >0} \overline{\mathsf{D}}_\eta(\beta,\gamma)- C_1^{-1}\sqrt{t/n}$, 
		for any $L_v>0$, an application of Theorem \ref{thm:CGMT}-(1) yields that for $C_1'\log (en)\leq t\leq n/C_1'$,
		\begin{align*}
		&\Prob^\xi \Big(\max_{v \in D_{\eta;C_1(t/n)^{1/4}}(\mathsf{h})\cap B_m(L_v)} \min_{w \in \R^n} h_\eta(w,v) \geq  \bar{z}_{\eta;v}(t,n)\Big)\\
		& \leq \Prob^\xi \Big(\max_{v \in D_{\eta;C_1(t/n)^{1/4}}(\mathsf{h})\cap B_m(L_v)} \min_{w \in B_n(C_2)} h_\eta(w,v) \geq  \bar{z}_{\eta;v}(t,n)\Big) \\
		& \leq 2\Prob^\xi \Big(\max_{v \in D_{\eta;C_1(t/n)^{1/4}}(\mathsf{h})\cap B_m(L_v)} \min_{w \in B_n(C_2)} \ell_\eta(w,v) \geq  \bar{z}_{\eta;v}(t,n)\Big) \\
		& \leq 2\Prob^\xi \Big(\max_{v \in D_{\eta;C_1(t/n)^{1/4}}(\mathsf{h})\cap B_m(L_v)} \ell_\eta(w_{\eta,\ast},v) \geq \bar{z}_{\eta;v}(t,n)\Big)  + 2\Prob^\xi (E_{2,0}^c)\\
		&\leq 2\Prob^\xi \Big(\max_{v \in D_{\eta;C_1(t/n)^{1/4}}(\mathsf{h})} \ell_\eta(w_{\eta,\ast},v) \geq \bar{z}_{\eta;v}(t,n)\Big)  + 2\Prob^\xi (E_{2,0}^c)\leq C e^{-t/C}, 
		\end{align*}
		proving the claim (\ref{ineq:dist_res_gaussian_1}) by possibly adjusting constants. 
	
	    \noindent (\textbf{Step 3}).  In this step, we recall a lower bound for the original cost optimum, essentially established in the Step 1 in the proof of Theorem \ref{thm:min_norm_dist}. In particular, using (\ref{ineq:dist_min_norm_0}), (\ref{ineq:dist_min_norm_00}) and (\ref{ineq:dist_min_norm_4_1}),  there exist $C_3,C_3',C_3''>0$ depending on $K$, such that for $C_3'\log (en)\leq t\leq n/C_3'$, 
	    \begin{align}\label{ineq:dist_res_gaussian_3}
	    \Prob^\xi \Big(E_{3,0}(t)^c\equiv\Big\{\max_{v \in B_m(C_3'')} \min_{w \in \R^n} h_\eta(w,v)  \leq  \overline{\mathscr{D}}_\eta- C_3^{-1}\sqrt{t/n}\Big\}\Big)\leq C_3 e^{-t/C_3},
	    \end{align}
	    and
	    \begin{align}\label{ineq:dist_res_gaussian_4}
	    \Prob^\xi \Big(E_{3,1}^c\equiv \Big\{\max_{v \in B_m(C_3'')} \min_{w \in \R^n} h_\eta(w,v) =\max_{v \in \R^m} \min_{w \in \R^n} h_\eta(w,v)\Big\}\Big)\leq C_3 e^{-n/C_3}.
	    \end{align}
	    
	    \noindent (\textbf{Step 4}). By choosing without loss of generality $C_3>C_2$, on the event $E_2(t)\cap E_{3,0}(t)\cap E_{3,1}$, (\ref{ineq:dist_res_gaussian_2})-(\ref{ineq:dist_res_gaussian_4}) yield that for any $C'\log (en)\leq t\leq n/C'$, 
	    \begin{align*}
	    &\max_{v \in \mathsf{D}_{\eta;C_2(t/n)^{1/4}}(\mathsf{h})\cap B_m(C_3'')} \min_{w \in \R^n} h_\eta(w,v)\leq \max_{\beta>0}\min_{\gamma >0} \overline{\mathsf{D}}_\eta(\beta,\gamma)- C_2^{-1}\sqrt{t/n}\\
	    &< \max_{\beta>0}\min_{\gamma >0} \overline{\mathsf{D}}_\eta(\beta,\gamma)- C_3^{-1}\sqrt{t/n}\leq \max_{v \in B_m(C_3'')} \min_{w \in \R^n} h_\eta(w,v) = \max_{v \in \R^m} \min_{w \in \R^n} h_\eta(w,v). 
	    \end{align*}
	    This means on the event $E_2(t)\cap E_{3,0}(t)\cap E_{3,1}$, $\hat{v}_\eta \notin \mathsf{D}_{\eta;C_2(t/n)^{1/4}}(\mathsf{h})$, i.e., there exist some $C_4,C_4'>0$ depending on $K$ such that for $C_4'\log (en)\leq t\leq n/C_4'$ and $1/K\leq \eta\leq K$, 
	    \begin{align}\label{ineq:dist_res_gaussian_5}
	    \Prob^\xi \Big(\abs{\mathsf{h}(\hat{v}_\eta) - \E^\xi \mathsf{h}(v_{\eta,\ast}) }\geq C_4 (t/n)^{1/4}\Big)\leq C_4 e^{-t/C_4}.
	    \end{align} 
	   \noindent (\textbf{Step 5}). In this final step, we shall prove uniform version of the estimate (\ref{ineq:dist_res_gaussian_5}). For $\eta_1,\eta_2 \in [1/K,K]$, using the definition of $\hat{v}_\eta$ in (\ref{def:v_hat}),
	   \begin{align*}
	   &\abs{\mathsf{h}(\hat{v}_{\eta_1})-  \mathsf{h}(\hat{v}_{\eta_2}) }\leq \pnorm{ \hat{v}_{\eta_1}-\hat{v}_{\eta_2}}{} \\
	   &\leq n^{-1/2}\bigpnorm{\eta_1^{-1}G \hat{w}_{\eta_1}- \eta_2^{-1}G \hat{w}_{\eta_2} }{}+({\pnorm{\xi}{}}/{\sqrt{n}})\cdot \abs{\eta_1^{-1}-\eta_2^{-1}}\\
	   &\leq  \frac{ \pnorm{G\hat{w}_{\eta_1}}{}+\pnorm{\xi}{}}{\sqrt{n}}\cdot \abs{\eta_1^{-1}-\eta_2^{-1}}+ \frac{1}{\sqrt{n}\eta_2} \bigpnorm{G(\hat{w}_{\eta_1}-\hat{w}_{\eta_2})}{}\\
	   &\lesssim \Big(1+\pnorm{\hat{\mu}_{\eta_1}}{}\frac{\pnorm{G}{\op}}{\sqrt{n}}\Big)\cdot \abs{\eta_1-\eta_2}+ \frac{\pnorm{G}{\op}}{\sqrt{n}}\cdot  \pnorm{\hat{\mu}_{\eta_1}-\hat{\mu}_{\eta_2}}{}.
	   \end{align*}
	   Using that $
	   \pnorm{\hat{\mu}_{\eta}}{} = \pnorm{n^{-1}\big(X^\top X/n+\eta I\big)^{-1}X^\top Y}{}\leq  \pnorm{X^\top Y}{}/(n\eta) \lesssim \big(1+{\pnorm{G}{\op}}/{\sqrt{n}}\big)^2$, 
	   we have
	   \begin{align}\label{ineq:dist_res_gaussian_6}
	   \abs{\mathsf{h}(\hat{v}_{\eta_1})-  \mathsf{h}(\hat{v}_{\eta_2}) }\lesssim\big(1+{\pnorm{G}{\op}}/{\sqrt{n}}\big)^3\cdot \big(\abs{\eta_1-\eta_2}\vee\pnorm{\hat{\mu}_{\eta_1}-\hat{\mu}_{\eta_2}}{} \big).
	   \end{align}
	   In view of (\ref{ineq:dist_min_norm_7}), there exists some $C_5>0$ depending on $K$, such that on an event $E_{5,1}$ with $\Prob^\xi(E_{5,1})\geq 1-C_5 e^{-n/C_5}$, 
	   \begin{align}\label{ineq:dist_res_gaussian_7}
	   \abs{\mathsf{h}(\hat{v}_{\eta_1})-  \mathsf{h}(\hat{v}_{\eta_2}) }\leq C_5 n^2 \abs{\eta_1-\eta_2}. 
	   \end{align}
	   On the other hand, using the definition of $v_{\eta,\ast}$ in (\ref{def:v_eta_ast}), Proposition \ref{prop:fpe_est}-(3) and the fact that $\phi\gamma_{\eta,\ast}^2-\sigma_\xi^2 = \E\err_{(\Sigma,\mu_0)}(\gamma_{\eta,\ast};\tau_{\eta,\ast})\geq \tr\big((\Sigma+\tau_{\eta,\ast} I)^{-2}\Sigma^2\big)\gtrsim 1$, we have
	   \begin{align}\label{ineq:dist_res_gaussian_8}
	   &\bigabs{\E^\xi \mathsf{h}(v_{\eta_1,\ast})-\E^\xi \mathsf{h}(v_{\eta_2,\ast})}\leq \E^{1/2, \xi} \pnorm{ v_{\eta_1,\ast}-v_{\eta_2,\ast} }{}^2\nonumber\\
	   &\lesssim \bigabs{{\tau_{\eta_1,\ast}^{-1}} \sqrt{\phi\gamma_{\eta_1,\ast}^2-\sigma_\xi^2 } -  {\tau_{\eta_2,\ast}^{-1}} \sqrt{\phi\gamma_{\eta_2,\ast}^2-\sigma_\xi^2 }}+ \bigabs{\tau_{\eta_1,\ast}^{-1}-\tau_{\eta_2,\ast}^{-1}}\nonumber\\
	   &\lesssim \bigabs{\gamma_{\eta_1,\ast}^2-\gamma_{\eta_2,\ast}^2  }+ \bigabs{\tau_{\eta_1,\ast}^{-1}-\tau_{\eta_2,\ast}^{-1}}\leq C_5 \abs{\eta_1-\eta_2}.
	   \end{align}
	   Now we may mimic the proof in (\ref{ineq:dist_min_norm_9}) to conclude that, by possibly enlarging $C_5>0$, for any $\epsilon \in (0,1/2]$ and $\xi \in \mathscr{E}_{1,\xi}(\epsilon^2/C_5)$, 
	   \begin{align*}
	   \Prob^\xi \Big(\sup_{\eta \in [1/K,K]}\abs{\mathsf{h}(\hat{v}_\eta) - \E^\xi \mathsf{h}(v_{\eta,\ast}) }\geq \epsilon \Big)\leq C_5 n e^{-n\epsilon^4/C_5},
	   \end{align*}
	   as desired.
	\end{proof}

\section{Universality: Proof of Theorem \ref{thm:universality_min_norm}}\label{section:proof_general_design}

\subsection{Comparison inequalities}

For $\mathsf{f}:\R^n\to \R$, let
\begin{align*}
\mathcal{H}_{\mathsf{f}}(w,A)\equiv \frac{1}{2n}\pnorm{Aw-\xi}{}^2+\mathsf{f}(w). 
\end{align*}

The following theorem is proved in \cite[Theorem 2.3]{han2023universality}. 

\begin{theorem}\label{thm:universality_smooth}
	Suppose $1/K\leq \phi^{-1}\leq K$ for some $K>1$. Let $A_0,B_0 \in \R^{m\times n}$ be two random matrices with independent components, such that $\E A_{0;ij}=\E B_{0;ij}=0$ and $\E A_{0;ij}^2 = \E B_{0;ij}^2$ for all $i \in [m], j \in [n]$. Further assume that 
	\begin{align*}
	M\equiv  \max_{i\in[m],j\in[n]}\big(\E \abs{A_{0;ij}}^{6}+\E \abs{B_{0;ij}}^{6}\big)<\infty.
	\end{align*}
	Let $A\equiv A_0/\sqrt{n}$ and $B\equiv B_0/\sqrt{n}$. Then there exists some $C_0=C_0(K,M)>0$ such that the following hold: For any $\mathcal{S}_n \subset [-L_n,L_n]^n$ with $L_n\geq 1$, and any $\mathsf{T} \in C^3(\R)$, we have
	\begin{align*}
	&\Big|\E \mathsf{T}\Big(\min_{w \in \mathcal{S}_n } \mathcal{H}_{\mathsf{f}}(w,A)\Big) - \E  \mathsf{T}\Big(\min_{w \in \mathcal{S}_n } \mathcal{H}_{\mathsf{f}}(w,B)\Big)  \Big|\leq C_0\cdot K_{\mathsf{T}}\cdot \mathsf{r}_{\mathsf{f}}(L_n).
	\end{align*}
	Here $K_{\mathsf{T}}\equiv 1+\max_{\ell \in [0:3]} \pnorm{\mathsf{T}^{(\ell)}}{\infty}$, and $\mathsf{r}_{\mathsf{f}}(L_n)$ is defined by
	\begin{align*}
	\mathsf{r}_{\mathsf{f}}(L_n)&\equiv \inf_{\delta \in (0,n^{-5/2})}\bigg\{ \mathscr{N}_{\mathsf{f}}(L_n,\delta) +\bigg(1+\frac{1}{m}\sum_{i=1}^m \E\abs{\xi_i}^3\bigg)^{1/3}\cdot \frac{L_n^{2}  \log_+^{2/3}(L_n/\delta)}{n^{1/6} }\bigg\},
	\end{align*}
	where $
	\mathscr{N}_{\mathsf{f}}(L_n,\delta)\equiv \sup\, \abs{\mathsf{f}(w)-\mathsf{f}(w') }$
	with the supremum taken over all $w,w' \in [-L_n,L_n]^n$ such that $\pnorm{w-w'}{\infty}\leq \delta$. Consequently, for any $z\in \R,\epsilon>0$,
	\begin{align*}
	&\Prob\Big(\min_{w \in \mathcal{S}_n } \mathcal{H}_{\mathsf{f}}(w,A)>z+3\epsilon \Big)\leq \Prob\Big(\min_{w \in \mathcal{S}_n } \mathcal{H}_{\mathsf{f}}(w,B)>z+\epsilon \Big)+ C_1(1\vee \epsilon^{-3}) \mathsf{r}_{\mathsf{f}}(L_n).
	\end{align*}
	Here $C_1>0$ is an absolute multiple of $C_0$. 
\end{theorem}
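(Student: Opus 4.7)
The plan is to deploy the \emph{Lindeberg exchange principle}: interpolate between the designs $A$ and $B$ by swapping entries $A_{0;ij}$ for $B_{0;ij}$ one index pair at a time, and control the increment of $\E\mathsf{T}\big(\min_{w \in \mathcal{S}_n}\mathcal{H}_{\mathsf{f}}(w,\cdot)\big)$ at each swap via a third-order Taylor expansion in the swapped entry. Matched first and second moments of $A_{0;ij}$ and $B_{0;ij}$ annihilate the zero-th, first and second order Taylor terms, so what survives is a third-order remainder proportional to $(\E|A_{0;ij}|^3 + \E|B_{0;ij}|^3)\cdot n^{-3/2}$ times a bound on the third derivative of $\mathsf{T}(\min \mathcal{H}_{\mathsf{f}})$ in $A_{ij}$.

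The main technical obstruction is that $A\mapsto \min_{w \in \mathcal{S}_n}\mathcal{H}_{\mathsf{f}}(w,A)$ is not differentiable, so direct Taylor expansion is unavailable. I would overcome this in two smoothing steps. First, replace $\mathcal{S}_n$ by a $\pnorm{\cdot}{\infty}$-net $\mathcal{S}_{n,\delta}$ of mesh $\delta$, paying $\mathscr{N}_{\mathsf{f}}(L_n,\delta)$ (this produces the first term in $\mathsf{r}_{\mathsf{f}}(L_n)$). Second, replace the resulting finite minimum by the log-sum-exp proxy
\begin{align*}
F_\beta(A)\equiv -\beta^{-1}\log \sum_{w\in\mathcal{S}_{n,\delta}} \exp\big(-\beta\, \mathcal{H}_{\mathsf{f}}(w,A)\big),
\end{align*}
whose gap from $\min_{w\in\mathcal{S}_{n,\delta}}\mathcal{H}_{\mathsf{f}}(w,A)$ is at most $\beta^{-1}\log|\mathcal{S}_{n,\delta}|\lesssim \beta^{-1} n\log(L_n/\delta)$. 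The virtue of $F_\beta$ is that all derivatives $\partial_{A_{ij}}^\ell F_\beta$ admit closed Gibbs-average representations $\langle\cdot\rangle_{\beta,A}$ in products of residual coordinates $n^{-1/2}(Aw-\xi)_i$ and weight coordinates $w_j\in[-L_n,L_n]$; in particular third derivatives are bounded by $\bigo(L_n^3)$ times polynomials in moments of $\xi$, after a truncation of the atypically large residual coordinates using a quadratic-form tail bound under the sixth moment hypothesis.

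Given these derivative estimates, summing the Taylor remainder across all $mn$ entry swaps gives a Lindeberg error of size roughly $K_{\mathsf{T}}\cdot n^{-1/2}\cdot L_n^3\cdot (1+m^{-1}\sum_i\E|\xi_i|^3)^{1/3}$, plus polynomial-in-$\beta$ corrections from the smoothing. Balancing the log-sum-exp smoothing error $\beta^{-1}n\log(L_n/\delta)$ against these $\beta$-dependent correction terms by an optimal choice of $\beta$ introduces the $n^{-1/6}$ factor and the $\log_+^{2/3}(L_n/\delta)$ factor, reproducing the second term in $\mathsf{r}_{\mathsf{f}}(L_n)$; taking infimum over $\delta$ packages both contributions. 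The hardest part, and where I expect most of the work to go, is precisely this per-entry derivative control under only moment (not sub-Gaussian) assumptions on $A_0,B_0$: a naive derivative bound produces $\pnorm{A}{\op}$-type factors that are not uniformly controllable, so the truncation scheme and the $L_\infty$ constraint $\mathcal{S}_n\subset[-L_n,L_n]^n$ have to be exploited delicately. The final ``high probability'' consequence is then routine: sandwich $\bm{1}_{\{\cdot>z+3\epsilon\}}$ between smooth $\mathsf{T}^\pm$ with $K_{\mathsf{T}^\pm}\lesssim 1\vee\epsilon^{-3}$ and apply the expectation comparison.
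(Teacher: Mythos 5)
The paper does not actually prove Theorem \ref{thm:universality_smooth}; it is imported verbatim from \cite[Theorem 2.3]{han2022universality}. So the correct comparison is with the argument in that source, which is indeed a Lindeberg exchange over entries of the design matrix combined with the two smoothing devices you describe: an $\ell_\infty$-net of mesh $\delta$ that produces the modulus $\mathscr{N}_{\mathsf{f}}(L_n,\delta)$, and a log-sum-exp (soft-min) regularizer at inverse temperature $\beta$ whose Gibbs-measure derivative calculus gives closed-form control of $\partial^\ell_{A_{ij}}F_\beta$, $\ell\le 3$, using only the $L_\infty$ constraint on $w$ and a truncation of large residual coordinates that is paid for by the sixth-moment hypothesis. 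Your back-of-envelope balance also comes out right: per-entry derivative of order $L_n^2/n$, third cumulant $\sim\beta^2 L_n^6 n^{-3}$, Lindeberg prefactor $n^{-3/2}$, $mn\sim n^2$ swaps, smoothing gap $\beta^{-1} n\log(L_n/\delta)$; equating $\beta^2 L_n^6 n^{-5/2}\asymp \beta^{-1}n\log(L_n/\delta)$ gives $\beta\asymp n^{7/6}\log^{1/3}(L_n/\delta)/L_n^2$ and total error $\asymp L_n^2 n^{-1/6}\log^{2/3}(L_n/\delta)$, reproducing the second term in $\mathsf{r}_{\mathsf{f}}(L_n)$; the $(1+m^{-1}\sum_i\E|\xi_i|^3)^{1/3}$ factor is exactly what the residual-moment truncation injects. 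The closing reduction from the expectation comparison to the one-sided probability comparison by sandwiching the indicator between $C^3$ functions with $K_{\mathsf{T}}\lesssim 1\vee\epsilon^{-3}$ is also the standard step and is correct. In short: this is the same route as the source, and the one place you flag as hardest (per-entry third-derivative control under only moment bounds, via truncation) is indeed where the real technical work lives. The only cosmetic slips are that the relevant partial derivative is $\partial_{A_{ij}}\mathcal{H}_{\mathsf{f}}=\frac{1}{n}(Aw-\xi)_i w_j$ (not $n^{-1/2}(Aw-\xi)_i$), and that the $\ell_\infty$-net also perturbs the quadratic term $\frac{1}{2n}\lVert Aw-\xi\rVert^2$ by $\bigo(L_n\delta)$ on the typical $\|A\|_{\op}=\bigo(1)$ event, which is why the infimum over $\delta$ is restricted to $\delta<n^{-5/2}$ so that this contribution is absorbed into the $n^{-1/6}$ term; neither point affects the soundness of the plan.
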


Let for $u \in \R^m, w \in \R^n, A \in \R^{m\times n}$ and a measurable function $Q: \R^m\times \R^n \to \R$
\begin{align}
X(u,w;A)\equiv  u^\top A w + Q(u,w).
\end{align}
The following theorem is proved in \cite[Theorem 2.5]{han2023universality}. 
\begin{theorem}\label{thm:min_max_universality}
	Let $A,B \in \R^{m\times n}$ be two random matrices with independent entries and matching first two moments, i.e., $\E A_{ij}^\ell = \E B_{ij}^\ell$ for all $i \in [m], j\in [n],\ell=1,2$. There exists a universal constant $C_0>0$ such that the following hold. For any measurable subsets $\mathcal{S}_u \subset [-L_u,L_u]^m$, $\mathcal{S}_w \subset [-L_w,L_w]^n$ with $L_u,L_w\geq 1$, and any $\mathsf{T} \in C^3(\R)$, we have
	\begin{align*}
	&\Big|\E \mathsf{T}\Big(\max_{u \in \mathcal{S}_u} \min_{w \in \mathcal{S}_w} X(u,w;A)\Big)-\E \mathsf{T}\Big(\max_{u \in \mathcal{S}_u} \min_{w \in \mathcal{S}_w} X(u,w;B)\Big)\Big|\\
	&\leq C_0\cdot K_{\mathsf{T}}\cdot \inf_{\delta \in (0,1)} \Big\{ M_1 L\delta+ \mathscr{N}_Q(L,\delta)+ \log_+^{2/3}(L/\delta)\cdot (m+n)^{2/3} M_3^{1/3}L^2\Big\}.
	\end{align*}
	Here $K_{\mathsf{T}}\equiv 1+\max_{\ell \in [0:3]} \pnorm{\mathsf{T}^{(\ell)}}{\infty}$, $L\equiv L_u+L_w$, $M_\ell\equiv \sum_{i \in [m], j \in [n]} \big(\E \abs{A_{ij}}^\ell+\E\abs{B_{ij}}^\ell\big)$, and $
	\mathscr{N}_Q(L,\delta)\equiv \sup\,\abs{Q(u,w)-Q(u',w')}$
	with the supremum taken over all $u,u' \in [-L,L]^m, w,w' \in [-L,L]^n$ such that $\pnorm{u-u'}{\infty}\vee \pnorm{w-w'}{\infty}\leq \delta$.
	The conclusion continues to hold when max-min is flipped to min-max. 
\end{theorem}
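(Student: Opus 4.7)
My plan is to prove the statement via the Lindeberg interpolation paradigm, swapping the entries of $A$ for those of $B$ one at a time, but with the swap executed inside a $C^3$ surrogate for the nonsmooth map $\Phi(A) \equiv \max_{u \in \mathcal{S}_u}\min_{w \in \mathcal{S}_w} X(u,w;A)$. Because the first two moments of $A$ and $B$ agree, only the third-order Taylor remainder in a single-entry swap survives in expectation, which is what will produce the dependence on $M_3$ in the final bound. All three error scales in the statement---the $M_1 L \delta$ term, the $\mathscr{N}_Q(L,\delta)$ term, and the $(m+n)^{2/3} M_3^{1/3} L^2 \log_+^{2/3}$ term---will arise naturally from, respectively, bilinear discretization, $Q$-discretization, and the balance between Lindeberg cost and smoothing cost.

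First I would reduce to finite index sets by replacing $\mathcal{S}_u, \mathcal{S}_w$ with $\ell_\infty$ $\delta$-nets $\mathcal{S}_u^\delta, \mathcal{S}_w^\delta$ of cardinalities at most $(3L/\delta)^m$ and $(3L/\delta)^n$. Moving $(u,w)$ to its nearest net point changes $u^\top A w$ by at most $2L\delta \sum_{ij}|A_{ij}|$, so after composing with the $K_{\mathsf{T}}$-Lipschitz $\mathsf{T}$ and taking expectation, the total discretization error is bounded by $K_{\mathsf{T}}(2 M_1 L \delta + \mathscr{N}_Q(L,\delta))$. I would then smooth the min-max by a nested log-sum-exp: set $S_\beta(u;A) \equiv -\beta^{-1}\log \sum_{w\in\mathcal{S}_w^\delta}e^{-\beta X(u,w;A)}$ for the inner soft-min, and $F_\beta(A)\equiv \beta^{-1}\log\sum_{u\in\mathcal{S}_u^\delta} e^{\beta S_\beta(u;A)}$ for the outer soft-max. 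The uniform approximation error $|F_\beta(A)-\Phi(A)|$ is at most $\beta^{-1}(\log|\mathcal{S}_u^\delta|+\log|\mathcal{S}_w^\delta|) \lesssim (m+n)\beta^{-1}\log_+(3L/\delta)$ by the standard log-partition-function bound.

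With smoothing in hand I would run the Lindeberg swap: enumerate the $mn$ entries, replace them one at a time, and Taylor-expand $\mathsf{T}\circ F_\beta$ to third order in the single scalar $A_{ij}$ around its zero mean. By moment matching, the $\ell=0,1,2$ terms vanish in expectation, leaving a remainder bounded by $\|\partial_{A_{ij}}^3(\mathsf{T}\circ F_\beta)\|_\infty\cdot (\E|A_{ij}|^3 + \E|B_{ij}|^3)$. Exploiting $\partial_{A_{ij}}X = u_i w_j$ with $|u_i w_j|\leq L^2$ together with the Gibbs-measure representation of log-partition derivatives, I would establish $\|\partial_{A_{ij}}^k F_\beta\|_\infty \lesssim_k \beta^{k-1} L^{2k}$, and hence $\|\partial_{A_{ij}}^3(\mathsf{T}\circ F_\beta)\|_\infty \lesssim K_{\mathsf{T}}\beta^2 L^6$ by the chain rule; summed over $(i,j)$ this gives a total Lindeberg error $\lesssim K_{\mathsf{T}}\beta^2 L^6 M_3$. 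Finally I would balance the smoothing error $\sim (m+n)\beta^{-1}\log_+(3L/\delta)$ against the Lindeberg error $\sim \beta^2 L^6 M_3$ by taking $\beta^3 \sim (m+n)\log_+(3L/\delta)/(L^6 M_3)$, which reproduces the term $L^2 (m+n)^{2/3} M_3^{1/3}\log_+^{2/3}(L/\delta)$. The min-max $\leftrightarrow$ max-min flip asserted in the last sentence follows by symmetric reasoning with the roles of the two soft operations interchanged.

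The main obstacle I expect is the inductive derivative bound $\|\partial_{A_{ij}}^k F_\beta\|_\infty \lesssim_k \beta^{k-1} L^{2k}$: it is essential that only $\beta^{k-1}$, not $\beta^k$, growth appears, otherwise the optimized exponent on $M_3$ would degrade from $1/3$ to $1/4$. This cancellation is a cumulant phenomenon---derivatives of $\log\sum e^{\beta f}$ are cumulants of a Gibbs distribution on the index set and so are controlled by the $L^\infty$ size of $f$ rather than by the partition function itself---but propagating it through the \emph{nested} log-sum-exp, where the outer exponent carries the inner log-partition in its argument, requires a careful Faà di Bruno expansion to verify that no additional factor of $\beta$ leaks out through the outer Gibbs average. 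Setting up this bookkeeping so that the bound is uniform in $A$ and compatible with the Lindeberg summation over $mn$ entries is the technical heart of the argument.
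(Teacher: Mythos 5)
This statement is not proved in the present paper: it is imported verbatim from the cited reference (han2022universality, Theorem~2.5), so there is no in-paper argument to compare against. Your outline is the standard — and, as far as I can tell, the same as the cited source's — Lindeberg-plus-soft-min-max route, and the overall structure (net discretization $\to$ nested log-sum-exp $\to$ one-entry Taylor swap $\to$ balance $\beta$) does produce exactly the three error scales $M_1 L\delta$, $\mathscr{N}_Q(L,\delta)$, and $(m+n)^{2/3}M_3^{1/3}L^2\log_+^{2/3}(L/\delta)$ with the universal constant the theorem asserts. Your worry about the nested cumulant bookkeeping is legitimate but does check out: with $X$ linear in each scalar entry $A_{ij}$ (so $\partial X = u_i w_j$, $\partial^2 X=\partial^3 X=0$, $|u_iw_j|\le L_uL_w\le L^2$), one gets $|\partial S_\beta|\le L^2$, $|\partial^2 S_\beta|\lesssim\beta L^4$, $|\partial^3 S_\beta|\lesssim\beta^2 L^6$, and these inductive bounds propagate through the outer Gibbs average without leaking an extra power of $\beta$.

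There is, however, a concrete gap in your chain-rule step. Fa\`a di Bruno gives
\begin{equation*}
\partial^3_{A_{ij}}(\mathsf{T}\circ F_\beta)
=\mathsf{T}'''(F_\beta)(\partial F_\beta)^3
+3\mathsf{T}''(F_\beta)\,\partial F_\beta\,\partial^2 F_\beta
+\mathsf{T}'(F_\beta)\,\partial^3 F_\beta,
\end{equation*}
so with your derivative bounds the three terms are of order $K_{\mathsf{T}}L^6$, $K_{\mathsf{T}}\beta L^6$, and $K_{\mathsf{T}}\beta^2 L^6$ respectively. Your claim that the third-derivative norm is $\lesssim K_{\mathsf{T}}\beta^2 L^6$ (rather than $K_{\mathsf{T}}(1+\beta+\beta^2)L^6$) is therefore correct only when $\beta\ge 1$; if $\beta<1$ the $\mathsf{T}'''(F_\beta)(\partial F_\beta)^3$ term dominates, the Lindeberg error is then $\Theta(K_{\mathsf{T}}L^6 M_3)$ with no $\beta$-suppression, and your balancing exponents no longer match. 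The fix is a standard trivial-case reduction that you should state explicitly: the left-hand side of the inequality is always bounded by $2\pnorm{\mathsf{T}}{\infty}\le 2K_{\mathsf{T}}$, so one may assume without loss of generality that the claimed right-hand side is $\le 2K_{\mathsf{T}}$, which (using $m+n\ge 2$, $L\ge 2$, and $\log_+(L/\delta)\ge 1$) forces $L^6 M_3\le (m+n)\log_+(L/\delta)$ and hence $\beta\ge 1$ at your chosen balancing point $\beta^3=(m+n)\log_+(L/\delta)/(L^6M_3)$. With that reduction in place the lower-order Fa\`a di Bruno terms are absorbed and the argument closes as you intend.
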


\subsection{Delocalization}

Recall that $\hat{\mu}_\eta$ defined in (\ref{def:minimum_norm_est}) can be rewritten as 
\begin{align*}
\hat{\mu}_\eta=\argmin_{\mu \in \R^n} \max_{v \in \R^m}\bigg\{\frac{1}{2}\pnorm{\mu}{}^2+\frac{1}{\sqrt{n}}\iprod{v}{X\mu-Y}-\frac{\eta}{2}\pnorm{v}{}^2\bigg\}.
\end{align*}
For any $\eta>0$, we have the following closed form for $\hat{\mu}_\eta$:
\begin{align}\label{eqn:u_v_form}
\hat{\mu}_\eta = n^{-1}\big({X^\top X}/{n}+\eta I_n\big)^{-1}X^\top Y,\quad \hat{v}_\eta = -(\sqrt{n}\eta)^{-1}(Y-X\hat{\mu}_\eta). 
\end{align}
The above formula does not include the interpolating case $\eta=0$ when $n>m$. To give an alternative expression, note that the first-order condition for the above minimax optimization is $
\hat{\mu}_\eta = X^\top \hat{v}_\eta/\sqrt{n}$, $Y - X\hat{\mu}_\eta=-\sqrt{n}\eta \hat{v}_\eta$, or equivalently,
\begin{align}\label{eqn:u_v_alter_form}
\hat{\mu}_\eta=n^{-1}X^\top \big({XX^\top}/{n}+\eta I_m\big)^{-1} Y,\quad \hat{v}_\eta=-n^{-1/2}\big({XX^\top}/{n}+\eta I_m\big)^{-1}Y. 
\end{align}
The following proposition proves delocalization for $\hat{w}_\eta\equiv \Sigma^{1/2}(\hat{\mu}_\eta-\mu_0)$ and $\hat{v}_\eta$.

\begin{proposition}\label{prop:delocal_u_v}
Suppose Assumption \ref{assump:design} holds and the following hold for some $K>0$.
\begin{itemize}
	\item $1/K\leq \phi^{-1}\leq K$, $\pnorm{\Sigma^{-1}}{\op}\vee \pnorm{\Sigma}{\op} \leq K$.
	\item Assumption \ref{assump:noise} holds with $\sigma_\xi^2 \in [1/K,K]$. 
\end{itemize}
Fix $\vartheta \in (0,1/2]$. Then there exist some constant $C=C(K,\vartheta)>0$, two measurable sets $\mathcal{U}_\vartheta\subset B_n(1),\mathcal{E}_\vartheta\subset \R^m$ with $\min\{\mathrm{vol}(\mathcal{U}_\vartheta)/\mathrm{vol}(B_n(1)),\Prob(\xi \in \mathcal{E}_\vartheta)\}\geq 1-Ce^{-n^{2\vartheta}/C}$, such that
\begin{align*}
\sup_{\mu_0 \in \mathcal{U}_\vartheta,\xi \in \mathcal{E}_\vartheta} \Prob^\xi\Big(\sup_{\eta \in \Xi_K}\Big\{\pnorm{\hat{w}_\eta}{\infty}\vee \pnorm{\hat{v}_\eta}{\infty}\Big\}\geq C n^{-1/2+\vartheta}\Big)\leq C n^{-100}. 
\end{align*}
The sets $\mathcal{U}_\vartheta, \mathcal{E}_\vartheta$ can be taken as 
\begin{align*}
\mathcal{U}_\vartheta&\equiv \Big\{\mu_0 \in B_n(1): \sup_{\eta \in \Xi_K}\bigpnorm{\Sigma^{1/2}\big(\E\hat{\mu}_{(\Sigma,\mu_0)}^{\seq}(\gamma_{\eta,\ast};\tau_{\eta,\ast})-\mu_0\big)}{\infty}\leq C_0 n^{-1/2+\vartheta}\Big\},\\
\mathcal{E}_\vartheta&\equiv \Big\{\xi \in \R^m: \pnorm{\xi}{\infty}\leq C_0 n^\vartheta, \bigabs{\pnorm{\xi}{}^2/m-\sigma_\xi^2}\leq C_0 n^{-1/2+\vartheta} \Big\}
\end{align*}
for some large enough $C_0=C_0(K)>0$.
\end{proposition}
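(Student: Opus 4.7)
My plan is to reduce the $\ell_\infty$ delocalization of $\hat{w}_\eta$ and $\hat{v}_\eta$ to their entrywise concentration around the deterministic population analogue $\bar{w}_{\eta,\ast}\equiv \Sigma^{1/2}(\E\hat{\mu}_{(\Sigma,\mu_0)}^{\seq}(\gamma_{\eta,\ast};\tau_{\eta,\ast})-\mu_0)$ (and its counterpart for $\hat{v}_\eta$), whose delocalization at level $n^{-1/2+\vartheta}$ is then built directly into the definition of $\mathcal{U}_\vartheta$. Starting from the alternative closed form (\ref{eqn:u_v_alter_form}) and the algebraic identity $I_n - n^{-1}X^\top(XX^\top/n + \eta I_m)^{-1}X = \eta(X^\top X/n + \eta I_n)^{-1}$ (with the case $\eta = 0$, $\phi^{-1}>1$ handled via the pseudo-inverse on the event $\lambda_{\min}(XX^\top/n)\gtrsim 1$ of probability $1-e^{-n/C}$, cf.\ Lemma~\ref{lem:conc_E_0}), I would decompose
\begin{align*}
\hat{w}_\eta &= -\eta\,\Sigma^{1/2}(X^\top X/n + \eta I_n)^{-1}\mu_0 + n^{-1}\Sigma^{1/2}X^\top(XX^\top/n + \eta I_m)^{-1}\xi,\\
\hat{v}_\eta &= -n^{-1/2}(XX^\top/n + \eta I_m)^{-1}(X\mu_0 + \xi).
\end{align*}

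The core tool will be the anisotropic local laws of \cite{knowles2017anisotropic}: for deterministic unit vectors $a,a'\in\R^n$ and $b\in\R^m$, the bilinear forms $a^\top(X^\top X/n + \eta I_n)^{-1}a'$ and $a^\top X^\top(XX^\top/n + \eta I_m)^{-1}b/\sqrt{n}$ concentrate around their deterministic equivalents with error $n^{-1/2+\vartheta}$ and probability $1-n^{-D}$ for arbitrary $D>0$, the equivalents being expressible through the Stieltjes transform $\mathfrak{m}(-\eta/\phi)$ and, via Proposition~\ref{prop:rmt_effp}, the effective regularization $\tau_{\eta,\ast}$. Taking $a = \lambda_j^{1/2}e_j$, $a' = \mu_0$ for the bias term and $b = \xi/\pnorm{\xi}{}$ for the noise term (with the $\xi$-dependent direction handled by an $\epsilon$-net after conditioning on $\xi\in\mathcal{E}_\vartheta$, which enforces $\pnorm{\xi}{\infty}\lesssim n^{\vartheta}$ and $\pnorm{\xi}{}/\sqrt{m}\asymp 1$), I expect to obtain entrywise bounds $\max_{j\in [n]}\bigabs{(\hat{w}_\eta)_j - (\bar{w}_{\eta,\ast})_j}\lesssim n^{-1/2+\vartheta}$ with the target probability. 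The condition defining $\mathcal{U}_\vartheta$ then caps $\pnorm{\bar{w}_{\eta,\ast}}{\infty}\leq C_0 n^{-1/2+\vartheta}$, yielding the delocalization of $\hat{w}_\eta$; the argument for $\hat{v}_\eta$ is analogous with the roles of $m$ and $n$ swapped.

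For the volume bound, each coordinate $(\bar{w}_{\eta,\ast})_j = -\tau_{\eta,\ast}\lambda_j^{1/2}(\lambda_j+\tau_{\eta,\ast})^{-1}\mu_{0,j}$ (under the diagonal reduction on $\Sigma$) is Lipschitz in $\mu_0$ with constant $O(1)$ uniformly in $\eta\in\Xi_K$, thanks to the apriori estimates $\tau_{\eta,\ast}\asymp 1$ from Proposition~\ref{prop:fpe_est}-(3). Gaussian concentration on $\partial B_n(1)$ then gives $\Prob(\abs{(\bar{w}_{\eta,\ast})_j}\geq C_0 n^{-1/2+\vartheta}) \leq C e^{-n^{2\vartheta}/C}$ uniformly in $\eta$, and a union bound over $j\in[n]$ absorbs the polynomial prefactor. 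Uniformity in $\eta$ for the random part is recovered from the polynomial Lipschitz estimate for $\eta\mapsto\hat{w}_\eta$ (cf.\ (\ref{ineq:dist_min_norm_6})--(\ref{ineq:dist_min_norm_8_0})) combined with an $n^{-10}$-net of $\Xi_K$.

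The main obstacle lies in carrying the anisotropic local law uniformly down to $\eta=0$ in the overparametrized regime: the resolvent identity degenerates at the edge, and the local law must hold at (or infinitesimally beyond) the hard spectral edge, which becomes feasible only under the stronger assumption $\pnorm{\Sigma^{-1}}{\op}\leq K$ (rather than merely $\mathcal{H}_\Sigma\leq K$) via the lower bound $\lambda_{\min}(XX^\top/n)\gtrsim 1$. A related subtlety is that the factor of $\eta$ multiplying the bias term in $\hat{w}_\eta$ must cleanly cancel the $\eta^{-1}$-singularity appearing in the deterministic equivalent of $(X^\top X/n + \eta I_n)^{-1}\mu_0$; this cancellation is precisely what Proposition~\ref{prop:rmt_effp} encodes, so that the two combine to produce the non-degenerate limit $-\tau_{\eta,\ast}(\Sigma+\tau_{\eta,\ast}I)^{-1}\Sigma^{1/2}\mu_0$ even at $\eta=0$.
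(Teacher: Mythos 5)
Your high-level plan coincides with the paper's: decompose $\hat{w}_\eta$ and $\hat{v}_\eta$ into a $\mu_0$-dependent bias piece and a $\xi$-dependent noise piece, control each entry by the anisotropic local laws of \cite{knowles2017anisotropic}, build $\mathcal{U}_\vartheta$ by Gaussian concentration on the sphere, and upgrade to uniformity in $\eta$ by a Lipschitz estimate plus an $\eta$-net. The noise term (via the mixed bilinear form $a^\top X^\top(XX^\top/n+\eta I_m)^{-1}b/\sqrt{n}$) and the construction of $\mathcal{U}_\vartheta$ and $\mathcal{E}_\vartheta$ are essentially identical to the paper's.

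The gap is in the bias term. You propose to apply the anisotropic local law to $a^\top(X^\top X/n+\eta I_n)^{-1}a'$ and rely on the $\eta\cdot\eta^{-1}$ cancellation in the product $\eta(X^\top X/n+\eta I_n)^{-1}$. But in the overparametrized regime $\phi^{-1}>1$, the $n\times n$ matrix $X^\top X/n$ has exactly $n-m$ zero eigenvalues, so its asymptotic spectral measure carries a point mass at the origin, and the spectral parameter $\Re z=-\eta$ sits at distance $\kappa\asymp\eta$ from the support. The local law you cite is only stated for $\kappa\geq n^{-2/3+c}$, so it does not apply for $\eta\lesssim n^{-2/3+c}$ — in particular not at $\eta=0$, which is exactly the regime that must be reached uniformly. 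Your suggested remedy, $\lambda_{\min}(XX^\top/n)\gtrsim 1$, concerns the nonzero eigenvalues and does nothing for the hard edge at $0$ created by the null space. The cancellation you invoke is real at the level of deterministic equivalents (that is Proposition~\ref{prop:rmt_effp}), but not at the level of the local-law fluctuation bound, which simply does not cover this range of $z$.

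The paper sidesteps the issue entirely: it leaves the bias term in the algebraically equivalent form $n^{-1}\iprod{\Sigma^{1/2}e_s}{X^\top(\phi\check{\Sigma}+\eta I_m)^{-1}X\mu_0}-\iprod{\Sigma^{1/2}e_s}{\mu_0}$, so that only the $m\times m$ companion resolvent $(\check{\Sigma}-zI_m)^{-1}$ appears. Under $\phi^{-1}\geq 1+1/K$ (and $\pnorm{\Sigma^{-1}}{\op}\leq K$) the support of $\rho$ is contained in $(C_0^{-1},C_0)$, so $\kappa\gtrsim 1$ uniformly in $\eta\in[0,K]$ and the local-law error $\kappa^{-1}n^{-1/2+\vartheta}$ is uniformly controlled with no singular prefactor to cancel. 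If you rewrite your bias term this way (which your own identity $I_n-n^{-1}X^\top(XX^\top/n+\eta I_m)^{-1}X=\eta(X^\top X/n+\eta I_n)^{-1}$ does for you), your proof goes through.
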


\begin{remark}
Proposition \ref{prop:delocal_u_v} formalizes the delocalization required by our comparison argument: uniformly over $\eta\in\Xi_K$, 
$\hat w_\eta=\Sigma^{1/2}(\hat\mu_\eta-\mu_0)$ (and likewise $\hat v_\eta$) is small in $\ell_\infty$. The set $\mathcal U_\vartheta$ encodes this coordinatewise control via the sequence model proxy, thereby ruling out highly localized signals. The event $\mathcal E_\vartheta$ imposes mild noise concentration
in sup-norm and empirical variance; for i.i.d. sub-gaussian coordinates, this event holds with overwhelming probability.
\end{remark}

\begin{remark}
Delocalization in the same sense of the above proposition holds for $\pnorm{\mathsf{P}\hat{\mu}_\eta+\mathsf{q}}{\infty}$ with any deterministic matrix $\mathsf{P}\in \R^{n\times n}$ and vector $\mathsf{q} \in \R^n$ satisfying $\pnorm{\mathsf{P}}{\op}\vee \pnorm{\mathsf{q}}{}\leq 1$, with a (slightly) different construction of $\mathcal{U}_\vartheta$. 
\end{remark}

\begin{proof}[Proof of Proposition \ref{prop:delocal_u_v}]
All the constants in $\lesssim,\gtrsim,\asymp$ below may depend on $K$.
	
\noindent (1). Let us consider delocalization for $\hat{w}_\eta$. Using (\ref{eqn:u_v_alter_form}), for any $s \in [n]$,
\begin{align}\label{ineq:delocal_u_v_1}
\iprod{e_s}{\hat{w}_\eta}& = n^{-1}\iprod{\Sigma^{1/2}e_s}{X^\top (\phi \check{\Sigma}+\eta I_m)^{-1} X\mu_0}-\iprod{\Sigma^{1/2}e_s}{\mu_0}\nonumber\\
&\qquad + n^{-1}\iprod{\Sigma^{1/2}e_s}{X^\top (\phi \check{\Sigma}+\eta I_m)^{-1} \xi}\equiv A_{1;s}+A_{2;s}. 
\end{align}
We first handle $A_{1;s}$. Let $\rho$ be the asymptotic eigenvalue density of $\check{\Sigma}=XX^\top /m$ and fix $c>0$. By \cite[Theorem 3.16-(i), Remark 3.17 and Lemma 4.4-(i)]{knowles2017anisotropic}, for any small $\vartheta>0$ and large $D>0$,
\begin{align*}
&\Prob^\xi\Big(\bigabs{m^{-1}\iprod{\Sigma^{1/2} e_s}{X^\top (\check{\Sigma}-z I_m)^{-1} X\mu_0}\\
&\qquad - \iprod{\Sigma^{1/2}e_s}{\mathfrak{m}(z)\Sigma(I_n+\mathfrak{m}(z)\Sigma)^{-1} \mu_0 }}\geq n^{-1/2+\vartheta}\sqrt{\Im\mathfrak{m}(z)/\Im z}\Big)\leq C n^{-D}
\end{align*}
holds for all $z \in [-1/c,1/c]\times (0,1/c]$. With $\kappa \equiv \kappa(z) \equiv \mathrm{dist}(\Re z, \mathrm{supp}\;\rho) \ge n^{-2/3+c}$, by further using the simple relation $\Im m(z)/ \Im z=\int \frac{\rho(\d x)}{(\Re z-x)^2+\Im^2 z}\leq \kappa^{-2}$, the error bound $n^{-1/2+\vartheta}\sqrt{\Im\mathfrak{m}(z)/\Im z}$ in the above display can be replaced by $\kappa^{-1} n^{-1/2+\vartheta}$.

When $\phi^{-1} \ge 1 + 1/K$, according to \cite[Theorem 6.3-(2)]{bai2010spectral}, $\mathrm{supp}\;\rho \in (C_0^{-1},C_0)$ for some constant $C_0 > 1$. Therefore, for $z\equiv z(b) \equiv -\eta/\phi + \sqrt{-1}b $ with a small enough $b > 0$ to be chosen later, it is easy to see that $\kappa\geq \kappa_0\equiv (\eta/\phi)\vee C_0^{-1}\bm{1}_{\phi^{-1}\geq 1+1/K}$. Therefore, on an event $E_{1,0;s}(b)$ with $\Prob^\xi(E_{1,0;s}(b))\geq 1- C n^{-D}$, 
\begin{align}\label{ineq:delocal_u_v_2}
&\bigabs{m^{-1}\iprod{\Sigma^{1/2} e_s}{X^\top (\check{\Sigma}-z(0) I_m)^{-1} X\mu_0} \nonumber\\
	&\qquad - \iprod{\Sigma^{1/2}e_s}{\mathfrak{m}(z(0))\Sigma(I_n+\mathfrak{m}(z(0))\Sigma)^{-1} \mu_0 }}\leq (I)+(II)+\kappa_0^{-1} n^{-1/2+\vartheta},
\end{align}
where
\begin{itemize}
	\item $(I)= \abs{ m^{-1}\iprod{\Sigma^{1/2} e_s}{X^\top (\check{\Sigma}-z(b) I_m)^{-1} X\mu_0}-m^{-1}\iprod{\Sigma^{1/2} e_s}{X^\top (\check{\Sigma}-z(0) I_m)^{-1} X\mu_0} }$,
	\item $(II)= \abs{\iprod{\Sigma^{1/2}e_s}{\mathfrak{m}(z(b))\Sigma(I_n+\mathfrak{m}(z(b))\Sigma)^{-1} \mu_0 }-\iprod{\Sigma^{1/2}e_s}{\mathfrak{m}(z(0))\Sigma(I_n+\mathfrak{m}(z(0))\Sigma)^{-1} \mu_0 } }$.
\end{itemize}
By a derivative calculation, it is easy to derive
\begin{align*}
(I)&\lesssim \big(\pnorm{Z}{\op}/\sqrt{n}\big)^2\cdot  \big(\pnorm{(ZZ^\top/n)^{-1}}{\op}\bm{1}_{\phi^{-1}\geq 1+1/K}\wedge \eta^{-1}\big)^2\cdot b.
\end{align*}
Now by using the concentration result in \cite[Theorem 1.1]{rudelson2009smallest}, on an event $E_{1,1;s}$ with $\Prob^\xi(E_{1,1;s})\geq 1- e^{-n/C}$, we have $(I)\leq C b$.

For $(II)$, using the boundedness of $\mathfrak{m}(z(b))$ around $0$ for $\phi^{-1}\geq 1+1/K$, we may estimate
\begin{align*}
(II)&\lesssim \big(\bm{1}_{\phi^{-1}\geq 1+1/K}\wedge \eta^{-1}\big)\cdot \abs{\mathfrak{m}(z(b))-\mathfrak{m}(z(0))}\\
&\leq  \big(\bm{1}_{\phi^{-1}\geq 1+1/K}\wedge \eta^{-1}\big)\cdot \int_{C_0^{-1}\mathbf{1}_{\phi^{-1} \geq 1 + 1/K}}^\infty \frac{b}{|x - z(b)||x-z(0)|}\,  \rho(\mathrm{d}x) \\
&  \leq  \Big\{C_0^2\mathbf{1}_{\phi^{-1} \geq 1 + 1/K}^{-1} \wedge  \eta^{-3}\Big\}\cdot  b.
\end{align*}
Combining the above estimates, for $b$ chosen small enough, say, $b=n^{-100}$, on the event $E_{1,0;s}(n^{-100})\cap E_{1,1;s}$, 
\begin{align*}
\abs{A_{1;s}-  \iprod{\Sigma^{1/2}e_s}{\mathfrak{m}(-\eta/\phi)\Sigma(I_n+\mathfrak{m}(-\eta/\phi)\Sigma)^{-1} \mu_0 -\mu_0}  }\lesssim n^{-1/2+\vartheta}.
\end{align*}
Using $\tau_{\eta,\ast}^{-1} = \mathfrak{m}(-\eta/\phi)$ and the definition of $\hat{\mu}_{(\Sigma,\mu_0)}^{\seq}(\gamma_{\eta,\ast};\tau_{\eta,\ast})$, recall $w_{\eta,\ast}=\Sigma^{1/2}\big(\hat{\mu}_{(\Sigma,\mu_0)}^{\seq}(\gamma_{\eta,\ast};\tau_{\eta,\ast})-\mu_0\big)$ defined in (\ref{def:w_ast}), we then have
\begin{align}\label{ineq:delocal_u_v_3}
\sup_{\mu_0 \in B_n(1)}\Prob^\xi\Big(\max_{s \in [n]}\abs{A_{1;s}-\bigiprod{e_s}{\E w_{\eta,\ast}}} \geq C n^{-1/2+\vartheta}\Big)\leq Cn^{-D}.
\end{align}
The term $A_{2;s}$ can be handled similarly, now reading off the $(1,2)$ element in \cite[Eqn. (3.10)]{knowles2017anisotropic}, which shows that for any $\xi \in \R^m$, 
\begin{align}\label{ineq:delocal_u_v_4}
\Prob^\xi\Big(\max_{s \in [n]}\abs{A_{2;s}}\geq  C(\pnorm{\xi}{}/\sqrt{m})\cdot n^{-1/2+\vartheta}\Big)\leq C n^{-D}.
\end{align}
Combining (\ref{ineq:delocal_u_v_1}), (\ref{ineq:delocal_u_v_3}) and (\ref{ineq:delocal_u_v_4}), we have
\begin{align}\label{ineq:delocal_u_v_5}
\sup_{\mu_0 \in B_n(1),\xi \in \mathcal{E}_\vartheta}\Prob^\xi\Big(\pnorm{\hat{w}_\eta}{\infty}\geq \pnorm{\E w_{\eta,\ast}}{\infty}+C n^{-1/2+\vartheta}\Big)\leq C n^{-D}. 
\end{align}
Now we will construct $\mathcal{U}_\vartheta\subset B_n(1)$ with the desired volume estimate, and $\sup_{\mu_0 \in \mathcal{U}_\vartheta}\sup_{\eta \in \Xi_K}\pnorm{\E w_{\eta,\ast}}{\infty}\leq Cn^{-1/2+\vartheta}$. To this end, we place a uniform prior on $\mu_0\sim U_0g_0/\pnorm{g_0}{}$, where $U_0\sim \mathrm{Unif}[0,1]$ and $g_0\sim \mathcal{N}(0,I_n)$ are independent of all other random variables. Then $\sup_{\eta \in \Xi_K} \pnorm{\E w_{\eta,\ast}}{\infty} \leq \sup_{\eta \in \Xi_K} \tau_{\eta,\ast}\pnorm{(\Sigma+\tau_{\eta,\ast} I_n)^{-1}\Sigma^{1/2} g_0}{\infty}/\pnorm{g_0}{}$. Using Proposition \ref{prop:fpe_est}-(3) and a standard Gaussian tail bound,  $\Prob_{\mu_0}\big(\mathcal{U}_{\vartheta}\equiv\big\{\sup_{\eta \in \Xi_K}\pnorm{\E w_{\eta,\ast}}{\infty}\geq C_1 n^{-1/2+\vartheta}\big\}\big)\leq C e^{-n^{2\vartheta}/C}$. Moreover, $\Prob(\xi \notin \mathcal{E}_\vartheta)\leq e^{-n^{2\vartheta}/C}$. The pointwise-in-$\eta$ delocalization claim on $\hat{w}_\eta$ follows. As $\eta\mapsto \pnorm{\hat{w}_\eta}{\infty}$ is $C$-Lipschitz with exponentially high probability, the uniform version follows by a standard discretization and union bound argument.

\noindent (2). Let us consider delocalization for $\hat{v}_\eta$. Using again (\ref{eqn:u_v_alter_form}), for any $t \in [m]$,
\begin{align*}
-\iprod{e_t}{\hat{v}_\eta}& = n^{-1/2}\iprod{e_t}{(\phi \check{\Sigma}+\eta I_m)^{-1}X\mu_0}+ n^{-1/2}\iprod{e_t}{(\phi \check{\Sigma}+\eta I_m)^{-1}\xi}\\
& \equiv B_{1;t}+B_{2;t}. 
\end{align*}
The term $B_{1;t}$ can be handled, by reading off the $(2,1)$ element in \cite[Eqn. (3.10)]{knowles2017anisotropic}, which shows that 
\begin{align}\label{ineq:delocal_u_v_6}
\sup_{\mu_0 \in B_n(1)}\Prob^\xi\Big(\max_{t \in [m]}\abs{B_{1;t}}\geq  C n^{-1/2+\vartheta}\Big)\leq C n^{-D}.
\end{align}
The term $B_{2;t}$ relies on the local law described by the $(2,2)$ element in \cite[Eqn. (3.10)]{knowles2017anisotropic}: for any $\xi \in \R^m$, 
\begin{align}\label{ineq:delocal_u_v_7}
\Prob^\xi\Big(\max_{t \in [m]}\abs{B_{2;t}-\phi^{-1}\mathfrak{m}(-\eta/\phi)\xi_t}\geq  C (\pnorm{\xi}{}/\sqrt{m})\cdot n^{-1/2+\vartheta}\Big)\leq C n^{-D}.
\end{align}
Consequently, combining (\ref{ineq:delocal_u_v_6})-(\ref{ineq:delocal_u_v_7}), we have
\begin{align*}
\sup_{\mu_0 \in B_n(1),\xi \in \mathcal{E}_\vartheta}\Prob^\xi\Big(\pnorm{\hat{v}_\eta}{\infty}\geq C n^{-1/2+\vartheta}\Big)\leq C n^{-D}. 
\end{align*}
The claim follows.
\end{proof}

\subsection{Universality of the global cost optimum}

\begin{theorem}\label{thm:universality_global_cost}
Suppose Assumption \ref{assump:design} holds and the following hold for some $K>0$.
\begin{itemize}
	\item $1/K\leq \phi^{-1}\leq K$, $\pnorm{\Sigma}{\op}\vee \pnorm{\Sigma^{-1}}{\op} \leq K$.
	\item Assumption \ref{assump:noise} holds with $\sigma_\xi^2 \in [1/K,K]$. 
\end{itemize}
Fix $\vartheta \in (0,1/18)$. There exists some $C=C(K,\vartheta)>0$ such that for $\rho_0 \leq 1/C$, $\eta \in \Xi_K$ and $\xi \in \mathcal{E}_\vartheta$, 
\begin{align*}
\sup_{\mu_0 \in \mathcal{U}_\vartheta}\Prob^\xi\Big(\bigabs{\min_{w \in \R^n} H_{\eta;Z}(w)- \max_{\beta>0}\min_{\gamma >0}\overline{\mathsf{D}}_\eta(\beta,\gamma)} \geq \rho_0\Big)\leq C \rho_0^{-3}\cdot  n^{-1/6+3\vartheta}.
\end{align*}
Here $\mathcal{U}_\vartheta$ is specified as in Proposition \ref{prop:delocal_u_v}.
\end{theorem}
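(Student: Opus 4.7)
\textbf{Proof plan for Theorem \ref{thm:universality_global_cost}.} The argument is a three-step combination of localization, universality comparison, and the Gaussian benchmark already furnished by Theorem \ref{thm:min_norm_dist}. Throughout, fix $\mu_0 \in \mathcal{U}_\vartheta$ and $\xi \in \mathcal{E}_\vartheta$, and set $L_n \equiv C n^\vartheta$ with a sufficiently large $C = C(K) > 0$.

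\textbf{Step 1 (Localization of the primal min-max).} Start from
\begin{align*}
\min_{w \in \R^n} H_{\eta;Z}(w) = \min_{w \in \R^n} \max_{v \in \R^m} h_{\eta;Z}(w,v),
\end{align*}
whose saddle point is $(\hat{w}_\eta, \hat{v}_\eta) = (\Sigma^{1/2}(\hat{\mu}_\eta - \mu_0), -\hat{r}_\eta/\eta)$ for $\eta > 0$ and determined via \eqref{eqn:u_v_alter_form} at $\eta = 0$. Using the standard sub-gaussian bounds $\pnorm{Z}{\op}/\sqrt{n}, \pnorm{(ZZ^\top/n)^{-1}}{\op} \lesssim_K 1$ with probability $1 - e^{-n/C}$ (Lemma \ref{lem:conc_E_0} extended to sub-gaussian entries), combined with $\pnorm{\Sigma^{-1}}{\op} \lesssim 1$, I get the $\ell_2$ bound $\pnorm{\hat{w}_\eta}{} \vee \pnorm{\hat{v}_\eta}{} \leq C_0$ with the same exceptional probability. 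Intersecting with the $\ell_\infty$ delocalization of Proposition \ref{prop:delocal_u_v}, there is an event $E_{\mathrm{loc}}$ with $\Prob^\xi(E_{\mathrm{loc}}^c) \leq C n^{-D}$ on which
\begin{align*}
\min_{w \in \R^n} \max_{v \in \R^m} h_{\eta;Z}(w,v) = \min_{w \in \mathcal{S}_w} \max_{v \in \mathcal{S}_v} h_{\eta;Z}(w,v),
\end{align*}
where $\mathcal{S}_w \equiv B_n(C_0) \cap [-L_n/\sqrt{n}, L_n/\sqrt{n}]^n$ and $\mathcal{S}_v \equiv B_m(C_0) \cap [-L_n/\sqrt{n}, L_n/\sqrt{n}]^m$ (convex-concavity of $h_{\eta;Z}$ makes the unconstrained saddle value coincide with the constrained one whenever the saddle lies in the product of the interiors). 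The identical localization holds with $Z$ replaced by $G$.

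\textbf{Step 2 (Universality on bounded boxes).} Rewrite $h_{\eta;Z}(w,v) = v^\top (Z/\sqrt{n}) w + Q(w,v)$ with $Q(w,v) \equiv -v^\top \xi/\sqrt{n} + F(w) - \eta \pnorm{v}{}^2/2$. Since $F$ has gradient bounded by $\pnorm{\Sigma^{-1}}{\op}(\pnorm{\mu_0}{}+\pnorm{w}{\Sigma^{-1}}) \lesssim 1$ on $\mathcal{S}_w$, and $Q$ is Lipschitz in $v$ on $\mathcal{S}_v$ with a constant $\lesssim \pnorm{\xi}{}/\sqrt{n} + 1 \lesssim 1$, the modulus $\mathscr{N}_Q(L_n, \delta)$ from Theorem \ref{thm:min_max_universality} satisfies $\mathscr{N}_Q(L_n, \delta) \lesssim \sqrt{n} \cdot L_n \cdot \delta$. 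Choosing $\delta$ optimally in Theorem \ref{thm:min_max_universality} applied to $\min_{w \in \mathcal{S}_w} \max_{v \in \mathcal{S}_v}$ yields
\begin{align*}
\Big|\E^\xi \mathsf{T}\Big(\min_{w \in \mathcal{S}_w}\max_{v \in \mathcal{S}_v} h_{\eta;Z}(w,v)\Big) - \E^\xi \mathsf{T}\Big(\min_{w \in \mathcal{S}_w}\max_{v \in \mathcal{S}_v} h_{\eta;G}(w,v)\Big)\Big| \leq C K_{\mathsf{T}} \cdot n^{-1/6 + 3\vartheta}
\end{align*}
for any $\mathsf{T} \in C^3(\R)$, where the factor $n^{3\vartheta}$ absorbs the $L_n^2 \log_+^{2/3}$ factor and the moment terms are uniformly bounded.

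\textbf{Step 3 (Transfer via the Gaussian benchmark and the main obstacle).} The Gaussian bound \eqref{ineq:dist_min_norm_4}--\eqref{ineq:dist_min_norm_4_1} in the proof of Theorem \ref{thm:min_norm_dist} supplies
\begin{align*}
\Prob^\xi\Big(\bigabs{\min_{w \in \R^n} H_{\eta;G}(w) - \max_{\beta>0}\min_{\gamma >0}\overline{\mathsf{D}}_\eta(\beta,\gamma)} \geq \rho_0/3\Big) \leq C e^{-n\rho_0^4/C}.
\end{align*}
Combining Steps 1 and 2 with a smooth bump function $\mathsf{T}_{\rho_0}$ approximating $\mathbf{1}\{|\,\cdot - \max_\beta\min_\gamma \overline{\mathsf{D}}_\eta| \geq \rho_0\}$ from below, with $K_{\mathsf{T}_{\rho_0}} \lesssim \rho_0^{-3}$, and applying Markov's inequality yields the stated bound $C\rho_0^{-3} n^{-1/6 + 3\vartheta}$ for each fixed $\eta$. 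Uniformity over $\eta \in \Xi_K$ is not needed here since the theorem is stated pointwise in $\eta$. The main difficulty of this argument is \emph{not} in the comparison itself but in the localization at $\eta = 0$: the second inequality in \eqref{ineq:proof_outline_5} is singular there, so the approach must route through the min-max form $\min_w \max_v h_{\eta;Z}(w,v)$ rather than $\min_w H_{\eta;Z}(w)$ directly, and this requires the sharp $\ell_\infty$ delocalization of $\hat{v}_\eta$ down to $\eta = 0$ supplied by Proposition \ref{prop:delocal_u_v}, whose proof rests on anisotropic local laws from random matrix theory.
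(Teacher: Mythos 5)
Your argument follows the paper's proof almost exactly: localize using the delocalization bounds of Proposition \ref{prop:delocal_u_v}, compare against the Gaussian design via Theorem \ref{thm:min_max_universality} applied to the min-max form $\min_w\max_v h_{\eta;Z}(w,v)$, and close with the Gaussian benchmark from (\ref{ineq:dist_min_norm_4})--(\ref{ineq:dist_min_norm_4_1}) and a smooth indicator approximation. The one place where the proposal is incomplete in a way that matters is the application of Theorem \ref{thm:min_max_universality} in Step 2. As written you apply the comparison inequality with $A=Z/\sqrt{n}$ and the unscaled boxes $\mathcal{S}_w,\mathcal{S}_v\subset[-L_n/\sqrt{n},L_n/\sqrt{n}]^{\bullet}$; but Theorem \ref{thm:min_max_universality} requires $L_u,L_w\geq 1$, and if one simply enlarges the boxes to $[-1,1]^{\bullet}$ to satisfy the hypothesis, the moment term $M_3^{1/3}(m+n)^{2/3}L^2$ blows up ($M_3\lesssim n^{1/2}$ for $A=Z/\sqrt{n}$, giving a useless $n^{5/6}$ bound). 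The resolution, which the paper carries out in (\ref{ineq:gordon_cost_universality_1})--(\ref{ineq:gordon_cost_universality_3}), is the substitution $\tilde w=\sqrt{n}w$, $\tilde v=\sqrt{n}v$, equivalently replacing $A=Z/\sqrt{n}$ by $A=Z/n^{3/2}$ acting on $[-L_n,L_n]^{\bullet}$; then $M_3\lesssim n^{-5/2}$, the $L_u,L_w\geq1$ constraint is met, and optimizing over $\delta$ produces exactly $\bigo(L_n^2\log^{2/3}_+(\cdot)\,n^{-1/6})\lesssim n^{-1/6+3\vartheta}$. Your notation $\mathscr{N}_Q(L_n,\delta)$ (with $L_n$ rather than $L_n/\sqrt{n}$) suggests you intended this rescaling, but the surrounding text does not say so; make it explicit. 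The extraneous $\sqrt{n}$ in your claimed bound on $\mathscr{N}_Q$ is harmless since $\delta$ is free, and the exponent $\rho_0^4$ in your Gaussian benchmark should be $\rho_0^2$ (coming from $\sqrt{t/n}=\rho_0$ in (\ref{ineq:dist_min_norm_4})), but both are exponentially small and get absorbed. The extra $\ell_2$-ball intersection in your localization sets is unnecessary; the paper's $L_\infty$ boxes alone suffice since the saddle point $(\hat w_\eta,\hat v_\eta)$ lies in their interior with high probability.
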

\begin{proof}
Fix $\vartheta>0$, $\mu_0 \in \mathcal{U}_\vartheta$ and $\xi \in \mathcal{E}_\vartheta$ as specified in Proposition \ref{prop:delocal_u_v}. Let $L_n\equiv C_0n^\vartheta$. By the same proposition, with $\Prob^\xi$-probability at least $1-C_0n^{-100}$, 
\begin{align}\label{ineq:gordon_cost_universality_1}
&\min_{w \in \R^n} H_{\eta;Z}(w)  = \min_{\pnorm{w}{\infty}\leq L_n/\sqrt{n}}\max_{\pnorm{v}{\infty}\leq L_n/\sqrt{n}}\bigg\{\frac{1}{\sqrt{n}}\iprod{v}{Zw}-\frac{1}{\sqrt{n}}\iprod{v}{\xi}-\frac{\eta}{2}\pnorm{v}{}^2+F(w)\bigg\}\nonumber\\
& = \min_{\pnorm{\tilde{w}}{\infty}\leq L_n}\max_{\pnorm{\tilde{v}}{\infty}\leq L_n}\bigg\{\frac{1}{n^{3/2}}\iprod{\tilde{v}}{Z\tilde{w}}-\frac{1}{n}\iprod{\tilde{v}}{\xi}-\frac{\eta}{2n}\pnorm{\tilde{v}}{}^2+F(\tilde{w}/\sqrt{n})\bigg\},
\end{align}
and
\begin{align}\label{ineq:gordon_cost_universality_2}
&\min_{w \in \R^n} H_{\eta;G}(w) =\min_{\pnorm{\tilde{w}}{\infty}\leq L_n}\max_{\pnorm{\tilde{v}}{\infty}\leq L_n}\bigg\{\frac{1}{n^{3/2}}\iprod{\tilde{v}}{G\tilde{w}}-\frac{1}{n}\iprod{\tilde{v}}{\xi}-\frac{\eta}{2n}\pnorm{\tilde{v}}{}^2+F(\tilde{w}/\sqrt{n})\bigg\}.
\end{align}
By writing $Q(\tilde{v},\tilde{w})\equiv -\frac{1}{n}\iprod{\tilde{v}}{\xi}-\frac{\eta}{2n}\pnorm{\tilde{v}}{}^2+F(\tilde{w}/\sqrt{n})$, we have
\begin{align*}
\mathscr{N}_Q(L,\delta)&\equiv \sup_{\substack{\pnorm{\tilde{v} }{\infty}\vee \pnorm{\tilde{v}'}{\infty}\leq L, \pnorm{\tilde{v}-\tilde{v}'}{\infty}\leq \delta,\nonumber\\
\pnorm{\tilde{w}}{\infty}\vee \pnorm{\tilde{w}'}{\infty}\leq L, \pnorm{\tilde{w}-\tilde{w}'}{\infty}\leq \delta }} \bigabs{Q(\tilde{v},\tilde{w})-Q(\tilde{v}',\tilde{w}')}\lesssim_K  (1\vee L) \delta\cdot \bigg(1+ \frac{\pnorm{\xi}{1}}{n}\bigg). 
\end{align*}
Now with $X_Q(\tilde{v},\tilde{w};Z)\equiv n^{-3/2}\iprod{\tilde{v}}{Z\tilde{w}}+Q(\tilde{v},\tilde{w})$, for $\xi \in \mathcal{E}_\vartheta$, by applying Theorem \ref{thm:min_max_universality}, we have for any $\mathsf{T} \in C^3(\R)$,
\begin{align}\label{ineq:gordon_cost_universality_3}
&\Big| \E^\xi \mathsf{T}\Big(\min_{\pnorm{\tilde{w}}{\infty}\leq L_n}\max_{\pnorm{\tilde{v}}{\infty}\leq L_n}X_Q(\tilde{v},\tilde{w};Z)\Big)  -  \E^\xi \mathsf{T}\Big(\min_{\pnorm{\tilde{w}}{\infty}\leq L_n}\max_{\pnorm{\tilde{v}}{\infty}\leq L_n}X_Q(\tilde{v},\tilde{w};G)\Big)  \Big|\nonumber\\
&\lesssim_{K} K_{\mathsf{T}}\cdot \inf_{\delta \in (0,1)}\Big\{\sqrt{n}L_n\delta+L_n\delta+\log_+^{2/3}(L_n/\delta)\cdot  n^{-1/6} L_n^2\Big\} \leq C_1\cdot K_{\mathsf{T}}\cdot  n^{-1/6+3\vartheta}. 
\end{align}
Replicating the last paragraph of proof of \cite[Theorem 2.3]{han2023universality} (right above Section 4.3 therein), for any $z>0,\rho_0>0$,
\begin{align*}
&\Prob^\xi\Big(\min_{\pnorm{\tilde{w}}{\infty}\leq L_n}\max_{\pnorm{\tilde{v}}{\infty}\leq L_n}X_Q(\tilde{v},\tilde{w};Z)> z+3\rho_0\Big)\\
&\leq \Prob^\xi\Big(\min_{\pnorm{\tilde{w}}{\infty}\leq L_n}\max_{\pnorm{\tilde{v}}{\infty}\leq L_n}X_Q(\tilde{v},\tilde{w};G)>z+\rho_0\Big)+ C \rho_0^{-3}  n^{-1/6+3\vartheta}.
\end{align*}
Combined with (\ref{ineq:gordon_cost_universality_1})-(\ref{ineq:gordon_cost_universality_2}), we have 
\begin{align*}
\Prob^\xi\Big(\min_{w \in \R^n} H_{\eta;Z}(w) >z+3\rho_0\Big)&\leq \Prob^\xi\Big(\min_{w \in \R^n} H_{\eta;G}(w) >z+\rho_0\Big)+C_2 \rho_0^{-3}  n^{-1/6+3\vartheta}.
\end{align*}
In view of (\ref{ineq:dist_min_norm_4}) (in Step 1 of the final proof of Theorem \ref{thm:min_norm_dist}), for $\rho_0 \in (C_3n^{-1/2+\vartheta}, 1/C_3)$, we take $
z\equiv z_\eta\equiv \max_{\beta>0}\min_{\gamma >0}\overline{\mathsf{D}}_\eta(\beta,\gamma)$ and $t\equiv \rho_0^2 n/C_3$ therein, so that for $\xi \in \mathcal{E}_\vartheta \subset \mathscr{E}_{1,\xi}(\rho_0/C_3^{1/2})$,
\begin{align*}
\Prob^\xi\Big(\min_{w \in \R^n} H_{\eta;G}(w) >z_\eta+\rho_0\Big)\leq C_3 e^{-\rho_0^2n/C_3}.
\end{align*}
Combining the estimates, for $\xi \in \mathcal{E}_\vartheta$, $\rho_0 \in (C_3n^{-1/2+\vartheta},1/C_3)$,
\begin{align*}
\Prob^\xi\Big(\min_{w \in \R^n} H_{\eta;Z}(w) >z_\eta+3\rho_0\Big)&\leq C_4\big\{e^{-\rho_0^2n/C_4}+\rho_0^{-3}  n^{-1/6+3\vartheta}\big\}.
\end{align*}
The first term above can be assimilated into the second one, and $\rho_0\geq C_3n^{-1/2+\vartheta}$ can be dropped. The lower bound follow similarly by utilizing (\ref{ineq:dist_min_norm_4_1}).
\end{proof}

\subsection{Universality of the cost over exceptional sets}
\begin{theorem}\label{thm:universality_exceptional_set}
Suppose Assumption \ref{assump:design} holds and the following hold for some $K>0$.
\begin{itemize}
	\item $1/K\leq \phi^{-1}\leq K$, $\pnorm{\Sigma}{\op}\vee \pnorm{\Sigma^{-1}}{\op} \leq K$.
	\item Assumption \ref{assump:noise} with variance $\sigma_\xi^2 \in [1/K,K]$. 
\end{itemize}
Fix $\vartheta \in (0,1/18)$. Then there exists some $C=C(K,\vartheta)>0$ such that for $\mathsf{g}:\R^n\to \R$ being $1$-Lipschitz with respect to $\pnorm{\cdot}{\Sigma^{-1}}$, $\rho_0 \leq 1/C$, $\eta \in \Xi_K$ and $\xi \in \mathcal{E}_\vartheta$,
\begin{align*}
\sup_{\mu_0 \in \mathcal{U}_\vartheta}\Prob^\xi\Big(\min_{w \in D_{\eta;C\rho_0^{1/2}}(\mathsf{g})\cap B_{(2,\infty)}(C,\frac{L_n}{\sqrt{n}}) }H_{\eta;Z}(w)\leq \max_{\beta>0}\min_{\gamma >0}\overline{\mathsf{D}}_\eta(\beta,\gamma)+\rho_0\Big)\leq C\rho_0^{-6}\cdot n^{-1/6+3\vartheta}.
\end{align*}
Here $B_{(2,\infty)}(C,L_n/\sqrt{n})\equiv B_n(C)\cap L_\infty(L_n/\sqrt{n})$ with $L_n\equiv C n^{\vartheta}$, and $\mathcal{U}_\vartheta$ is specified as in Proposition \ref{prop:delocal_u_v}.
\end{theorem}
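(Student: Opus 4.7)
The plan is to execute the two-step strategy outlined in Section \ref{subsection:proof_outline_general}, combining the smooth-cost universality (Theorem \ref{thm:universality_smooth}) with the Gordon-side gap for Gaussian designs (Theorem \ref{thm:gordon_gap}) and bridging from $\eta=0$ via an auxiliary small-$\eta$ problem.

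\textbf{Step 1 (strictly positive $\eta$).} For $\eta \in [\eta_0,K]\cap \Xi_K$ with $\eta_0>0$, I rewrite
\begin{align*}
\eta H_{\eta;Z}(w) \;=\; \eta F(w) + \tfrac{1}{2n}\pnorm{Zw-\xi}{}^2 \;=\; \mathcal{H}_{\eta F}(w,Z),
\end{align*}
placing $\eta H_{\eta;Z}$ into the format of Theorem \ref{thm:universality_smooth}. Under $\pnorm{\Sigma^{-1}}{\op}\leq K$, the discretization modulus $\mathscr{N}_{\eta F}(L_n,\delta)$ is $\bigo_K(L_n\delta)$, so the theorem produces a smooth-test-function comparison with error $\bigo(K_{\mathsf{T}}\cdot n^{-1/6+3\vartheta})$ between $Z$ and $G$, uniformly over the constraint set $\mathcal{S}_n=D_{\eta;C\rho_0^{1/2}}(\mathsf{g})\cap B_{(2,\infty)}(C,L_n/\sqrt{n})$ (rescaled to $[-L_n,L_n]^n$). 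Applying this with a smooth approximation of an indicator of bandwidth $\rho_0$, so that $K_{\mathsf{T}}\lesssim \rho_0^{-3}$, and invoking the Markov-type trick embedded in Theorem \ref{thm:universality_smooth} (which loses another factor $\rho_0^{-3}$), gives a one-sided probability comparison with error $\bigo(\rho_0^{-6} n^{-1/6+3\vartheta})$. On the Gaussian side, Theorem \ref{thm:gordon_gap} (setting $\epsilon=C\rho_0^{1/2}$ so that the Gordon-side gap is $\Omega(\rho_0)$) combined with the non-convex direction of CGMT (Theorem \ref{thm:CGMT}-(1)), executed exactly as in (\ref{ineq:dist_min_norm_5})--(\ref{ineq:dist_min_norm_5}) in the proof of Theorem \ref{thm:min_norm_dist}, lower-bounds the primal cost $\min_{w\in\mathcal{S}_n} H_{\eta;G}(w)$ by $\max_{\beta>0}\min_{\gamma>0}\overline{\mathsf{D}}_\eta(\beta,\gamma)+\Omega(\rho_0)$ with high probability. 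Chaining these two estimates proves the theorem when $\eta\in[\eta_0,K]\cap \Xi_K$.

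\textbf{Step 2 (interpolation $\eta=0$).} When $\phi^{-1}\geq 1+1/K$, the pointwise monotonicity $H_{\eta;Z}(w)\leq H_{0;Z}(w)$ (a consequence of $H_{0;Z}(w)=F(w)+\infty\cdot \bm{1}\{Zw\neq \xi\}$ versus $H_{\eta;Z}(w)=F(w)+\pnorm{Zw-\xi}{}^2/(2\eta n)$) immediately gives
\begin{align*}
\min_{w\in D_{0;\epsilon}(\mathsf{g})\cap B_{(2,\infty)}} H_{0;Z}(w) \;\geq\; \min_{w\in D_{0;\epsilon}(\mathsf{g})\cap B_{(2,\infty)}} H_{\eta;Z}(w).
\end{align*}
By Proposition \ref{prop:fpe_est}-(3) the map $\eta\mapsto w_{\eta,\ast}$ is $\bigo_K(1)$-Lipschitz in $\pnorm{\cdot}{\Sigma^{-1}}$, so the Lipschitz property of $\mathsf{g}$ yields $\abs{\E\mathsf{g}(w_{\eta,\ast})-\E\mathsf{g}(w_{0,\ast})}\leq C_1\eta$; hence $D_{0;\epsilon}(\mathsf{g})\subseteq D_{\eta;\epsilon-C_1\eta}(\mathsf{g})$, and the infimum over the smaller set dominates, yielding
\begin{align*}
\min_{w\in D_{0;\epsilon}(\mathsf{g})\cap B_{(2,\infty)}} H_{\eta;Z}(w) \;\geq\; \min_{w\in D_{\eta;\epsilon-C_1\eta}(\mathsf{g})\cap B_{(2,\infty)}} H_{\eta;Z}(w).
\end{align*}
Step 1 (with the shifted threshold $\epsilon-C_1\eta$) lower-bounds the right-hand side by $\max\min\overline{\mathsf{D}}_\eta+\Omega((\epsilon-C_1\eta)^2)$, and then the continuity estimate (\ref{ineq:cont_D_eta}) converts $\max\min\overline{\mathsf{D}}_\eta$ to $\max\min\overline{\mathsf{D}}_0-\bigo(\eta)$. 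With $\epsilon=C\rho_0^{1/2}$ and the choice $\eta=c\rho_0^{1/2}$ for a small constant $c$, the losses $C_1\eta$ and $\bigo(\eta)$ are absorbed into a constant fraction of the $\Omega(\rho_0)$ gap, completing the $\eta=0$ case.

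\textbf{Main obstacle.} The central subtlety is calibrating the bridge parameter $\eta$ in Step 2: the constants hidden in Theorem \ref{thm:gordon_gap} and the smooth-cost error in Theorem \ref{thm:universality_smooth} both depend polynomially on $\eta^{-1}$ (via the $1/(2\eta n)$ factor in $\eta H_{\eta;Z}$), so the Step 1 bound must remain quantitatively valid as $\eta\downarrow 0$ at rate $\rho_0^{1/2}$. This requires tracking the $\eta$-dependence in both constants and ensuring that the final probability estimate does not exceed the target $C\rho_0^{-6}n^{-1/6+3\vartheta}$. A secondary issue is verifying that the additional $L_\infty$-constraint $B_{(2,\infty)}(C,L_n/\sqrt{n})$ is harmless for the Gaussian side: this is precisely what the delocalization estimate Proposition \ref{prop:delocal_u_v} is designed to guarantee, both for $\hat{w}_\eta$ (ensuring the primal minimizer lives inside the box) and, via the CGMT transfer, for the Gordon minimizer as well.
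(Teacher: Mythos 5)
Your overall architecture matches the paper's: rescale $\eta H_{\eta;Z}$ to fit Theorem \ref{thm:universality_smooth}, compare to the Gaussian cost, transfer to Gordon via Theorem \ref{thm:CGMT}-(1) and Theorem \ref{thm:gordon_gap}, and then bridge to $\eta=0$ via monotonicity of $H_{\eta;Z}$ in $\eta$, Lemma \ref{lem:D_0_eta}, and the continuity estimate (\ref{ineq:cont_D_eta}). However, there are two quantitative errors that make the proposal, as written, fall apart.

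\textbf{Bridge calibration in Step 2.} You choose $\eta = c\rho_0^{1/2}$, but this does not work. By Theorem \ref{thm:gordon_gap} with $\epsilon=C\rho_0^{1/2}$, the Gordon gap is $\Omega(\epsilon^2)=\Omega(\rho_0)$. The continuity loss from (\ref{ineq:cont_D_eta}) is $\bigabs{\max_\beta\min_\gamma\overline{\mathsf{D}}_{\eta_1}-\max_\beta\min_\gamma\overline{\mathsf{D}}_{\eta_0}}\leq C\eta$, so with your choice $\eta\asymp\rho_0^{1/2}$ the loss is $\bigo(\rho_0^{1/2})\gg\Omega(\rho_0)$ as $\rho_0\downarrow 0$; it cannot be absorbed into the gap. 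The correct scaling is $\eta\asymp\rho_0$ (this is what the paper uses: $\eta=C_4\rho_0/C_5$). With this choice the set-inclusion constraint $C_1\eta<\epsilon\asymp\rho_0^{1/2}$ is still easily satisfied, and the continuity loss $\bigo(\rho_0)$ is absorbed by shrinking the gap constant.

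\textbf{Step 1 error bookkeeping.} You claim a single application of Theorem \ref{thm:universality_smooth} loses two factors of $\rho_0^{-3}$: once from $K_{\mathsf{T}}\lesssim\rho_0^{-3}$ for a smooth approximation to an indicator, and "another factor" from the Markov-type trick. These are the same step — the $C_1(1\vee\epsilon^{-3})$ factor in the paper's ``Consequently'' clause of Theorem \ref{thm:universality_smooth} \emph{is} the cost of the smooth approximation. There is only one cubic factor, but it must be taken at the rescaled level $\epsilon=\eta\rho_0$ (since the rescaled objective is $\eta H_{\eta;Z}$ compared to a threshold $\eta(z+\rho_0)$), giving an error $\bigo\big((\eta\rho_0)^{-3}n^{-1/6+3\vartheta}\big)$, not $\bigo(\rho_0^{-6}n^{-1/6+3\vartheta})$. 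It is precisely when you substitute the correct bridge choice $\eta\asymp\rho_0$ into $(\eta\rho_0)^{-3}$ that $\rho_0^{-6}$ emerges — your final exponent is right, but obtained from two compensating mistakes. As a corollary, your ``Main obstacle'' claim that Theorem \ref{thm:gordon_gap}'s constants degrade polynomially in $\eta^{-1}$ is also wrong: that theorem is uniform over $\eta\in\Xi_K$; the $\eta^{-1}$ dependence comes solely from the smooth-cost comparison just described.
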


\begin{proof}
Fix $\epsilon,\vartheta>0$, $\mu_0 \in \mathcal{U}_\vartheta$ and $\xi \in \mathcal{E}_\vartheta$ as specified in Proposition \ref{prop:delocal_u_v}. We define a renormalized version of $D_{\epsilon;\eta}(\mathsf{g})$ as
\begin{align*}
\tilde{D}_{\epsilon;\eta}(\mathsf{g})\equiv \big\{\tilde{w}\in \R^n: \abs{\mathsf{g}(\tilde{w}/\sqrt{n})-\E \mathsf{g}(\tilde{w}_{\eta,\ast}/\sqrt{n})}\geq \epsilon\big\},
\end{align*}
where $\tilde{w}_{\eta,\ast}=\sqrt{n} {w}_{\eta,\ast}$.

\noindent (\textbf{Step 1}). Let $L_n\equiv C_0n^{\vartheta}$. For any $z \in \R$ and $\rho_0>0$, with $Z_n\equiv Z/\sqrt{n}$,
\begin{align}\label{ineq:gordon_cost_except_universality_1}
&\Prob^\xi\Big(\min_{w \in D_{\eta;\epsilon}(\mathsf{g})\cap B_{(2,\infty)}(C_0,\frac{L_n}{\sqrt{n}}) }H_{\eta;Z}(w)\leq z+\rho_0\Big)\\
& = \Prob^\xi\Big(\min_{w \in D_{\eta;\epsilon}(\mathsf{g})\cap B_{(2,\infty)}(C_0,\frac{L_n}{\sqrt{n}}) } \Big\{F(w)+\frac{1}{2n\eta}\pnorm{Zw-\xi}{}^2\Big\}\leq z+\rho_0\Big)\nonumber\\
& = \Prob^\xi\Big(\min_{\tilde{w} \in \tilde{D}_{\eta;\epsilon}(\mathsf{g})\cap  B_{(2,\infty)}(\sqrt{n}C_0,L_n)} \Big\{\eta F(\tilde{w}/\sqrt{n})+\frac{1}{2n}\pnorm{Z_n\tilde{w}-\xi}{}^2\Big\}\leq \eta(z+\rho_0)\Big).\nonumber
\end{align}
Now we may apply Theorem \ref{thm:universality_smooth}. To do so, let us write $\mathsf{f}(\tilde{w})\equiv \eta F(\tilde{w}/\sqrt{n})$ to match the notation. Then a simple calculation leads to
\begin{align*}
\mathscr{N}_{\mathsf{f}}(L,\delta) &\equiv \sup_{\pnorm{\tilde{w}}{\infty}\vee \pnorm{\tilde{w}'}{\infty}\leq L, \pnorm{\tilde{w}-\tilde{w}'}{\infty}\leq \delta }\abs{\mathsf{f}(\tilde{w})-\mathsf{f}(\tilde{w}')}\lesssim_K (1\vee L)\delta,
\end{align*}
Consequently, an application of Theorem \ref{thm:universality_smooth} leads to
\begin{align*}
&\hbox{RHS of (\ref{ineq:gordon_cost_except_universality_1})}-C_1\big(1\vee (\eta \rho_0)^{-3}\big) L_n^2 n^{-1/6}\log^{2/3}(L_n n)\\
& \leq \Prob^\xi\Big(\min_{\tilde{w} \in \tilde{D}_{\eta;\epsilon}(\mathsf{g})\cap  B_{(2,\infty)}(\sqrt{n}C_0,L_n) } \Big\{\eta F(\tilde{w}/\sqrt{n}) +\frac{1}{2n}\pnorm{G_n\tilde{w}-\xi}{}^2\Big\} \leq \eta(z+3\rho_0)\Big)\\
&\leq \Prob^\xi\Big(\min_{w \in D_{\eta;\epsilon}(\mathsf{g})\cap  B_{(2,\infty)}(C_0,\frac{L_n}{\sqrt{n}}) }H_{\eta;G}(w)\leq z+3 \rho_0\Big)\\
&\leq \Prob^\xi\Big(\min_{w \in D_{\eta;\epsilon}(\mathsf{g})\cap B_n(C_0)}H_{\eta;G}(w)\leq z+3 \rho_0\Big).
\end{align*}
Here in the last inequality we simply drop the $L_\infty$ constraint. Now for $C_2n^{-1/2+\vartheta}\leq \rho_0\leq 1/C_2$, by choosing $
z\equiv z_\eta \equiv \max_{\beta>0}\min_{\gamma >0}\overline{\mathsf{D}}_\eta(\beta,\gamma)$ and $t\equiv 2\rho_0^2 n/C_3$ in Theorem \ref{thm:gordon_gap}, where $C_3$ is the constant therein, we have 
\begin{align}\label{ineq:gordon_cost_except_universality_2}
&\Prob^\xi\Big(\min_{w \in D_{\eta;C_4\rho_0^{1/2}}(\mathsf{g}) \cap B_{(2,\infty)}(C,\frac{L_n}{\sqrt{n}})   }H_{\eta;Z}(w)\leq \max_{\beta>0}\min_{\gamma >0}\overline{\mathsf{D}}_\eta(\beta,\gamma)+\rho_0\Big)\nonumber\\
&\leq C\Big\{ e^{-\rho_0^2n/C_3}+ (\eta \rho_0)^{-3}\cdot  n^{-1/6+3\vartheta}\Big\}\leq C_4\cdot (\eta \rho_0)^{-3}\cdot  n^{-1/6+3\vartheta}.
\end{align}
The constraints $\rho_0\geq C_2n^{-1/2+\vartheta}$ can be removed by enlarging $C_4$ if necessary. 

\noindent (\textbf{Step 2}). In this step we shall trade the dependence of the above bound with respect to $\eta>0$ with a possible worsened dependence on $\rho_0$, primarily in the regime $\phi^{-1}\geq 1+1/K$. Fix $\eta_0\in \Xi_K$. Let $\eta>0$ be chosen later and $\eta_1\equiv \eta_0+\eta$. Without loss of generality we assume $\eta_0,\eta_1 \in \Xi_K$, so by (\ref{ineq:cont_D_eta}) in Proposition \ref{prop:local_barD}, $\abs{z_{\eta_1}-z_{\eta_0}}\leq C_5\eta$. By enlarging $C_5$ if necessary we assume that $C_5$ exceeds the constant in Lemma \ref{lem:D_0_eta}. Using Lemma \ref{lem:D_0_eta}, for $\epsilon=2C_4\rho_0^{1/2}$, with the choice $\eta= C_4\rho_0/C_5\leq C_4\rho_0^{1/2}/C_5$ (we assume without loss of generality $\rho_0\leq 1$), 
\begin{align*}
&\Prob^\xi\Big(\min_{w \in D_{\eta_0;\epsilon}(\mathsf{g})\cap B_{(2,\infty)}(C_0,\frac{L_n}{\sqrt{n}})  }H_{\eta_0;Z}(w)\leq z_{\eta_0}+\rho_0\Big)\\
&\leq \Prob^\xi\Big(\min_{w \in D_{\eta_0;\epsilon}(\mathsf{g})\cap B_{(2,\infty)}(C_0,\frac{L_n}{\sqrt{n}})  }H_{\eta_1;Z}(w)\leq z_{\eta_0}+\rho_0\Big)\quad \hbox{(since $H_{\eta_1;Z}\leq H_{\eta_0;Z}$)} \\
&\leq \Prob^\xi\Big(\min_{w \in D_{\eta_1;(\epsilon-C_5\eta)_+}(\mathsf{g}) \cap B_{(2,\infty)}(C_0,\frac{L_n}{\sqrt{n}})  }H_{\eta_1;Z}(w)\leq z_{\eta_0}+\rho_0\Big)\quad \hbox{(by Lemma \ref{lem:D_0_eta})} \\
&\leq \Prob^\xi\Big(\min_{w \in D_{\eta_1;(\epsilon-C_5\eta)_+}(\mathsf{g}) \cap B_{(2,\infty)}(C_0,\frac{L_n}{\sqrt{n}}) }H_{\eta_1;Z}(w)\leq z_{\eta_1}+C_5\eta+ \rho_0\Big)\\
& \leq \Prob^\xi\Big(\min_{w \in D_{\eta;C_4\rho_0^{1/2}}(\mathsf{g})\cap B_{(2,\infty)}(C_0,\frac{L_n}{\sqrt{n}}) }H_{\eta_1;Z}(w)\leq \max_{\beta>0}\min_{\gamma >0}\overline{\mathsf{D}}_{\eta_1}(\beta,\gamma)+C\rho_0\Big)\\
&\leq C\cdot  (\eta_0+\rho_0)^{-3} \rho_0^{-3}\cdot  n^{-1/6+3\vartheta} \leq C\cdot  \rho_0^{-6} n^{-1/6+3\vartheta}.
\end{align*}
The proof is complete by adjusting  constants. 
\end{proof}

\begin{lemma}\label{lem:D_0_eta}
Suppose $\pnorm{\mu_0}{}\vee \pnorm{\Sigma}{\op}\vee \pnorm{\Sigma^{-1}}{\op}\leq K$. Let $\mathsf{g}:\R^n\to \R$ be $1$-Lipschitz with respect to $\pnorm{\cdot}{\Sigma^{-1}}$. Then there exists some constant $C=C(K)>0$ such that for any $\epsilon>0,\eta_0,\eta_1 \in \Xi_K$ with $\eta_1\geq \eta_0$, we have $D_{\eta_0;\epsilon}(\mathsf{g})\subset D_{\eta_1;(\epsilon-C(\eta_1-\eta_0))_+}(\mathsf{g})$. 
\end{lemma}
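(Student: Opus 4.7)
My plan is to reduce the set inclusion $D_{\eta_0;\epsilon}(\mathsf{g})\subset D_{\eta_1;(\epsilon-C(\eta_1-\eta_0))_+}(\mathsf{g})$ to a quantitative Lipschitz-in-$\eta$ estimate for the deterministic curve $\eta\mapsto \E\mathsf{g}(w_{\eta,\ast})$. For any $w\in D_{\eta_0;\epsilon}(\mathsf{g})$, the triangle inequality directly gives
\begin{align*}
\abs{\mathsf{g}(w)-\E\mathsf{g}(w_{\eta_1,\ast})}\geq \abs{\mathsf{g}(w)-\E\mathsf{g}(w_{\eta_0,\ast})}-\abs{\E\mathsf{g}(w_{\eta_0,\ast})-\E\mathsf{g}(w_{\eta_1,\ast})}\geq \epsilon-\Delta_{\eta_0,\eta_1},
\end{align*}
where $\Delta_{\eta_0,\eta_1}\equiv \abs{\E\mathsf{g}(w_{\eta_0,\ast})-\E\mathsf{g}(w_{\eta_1,\ast})}$. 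Hence it suffices to establish $\Delta_{\eta_0,\eta_1}\leq C(\eta_1-\eta_0)$ for some $C=C(K)$.

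Next, since $\mathsf{g}$ is $1$-Lipschitz with respect to $\pnorm{\cdot}{\Sigma^{-1}}$ and $\pnorm{w_{\eta_0,\ast}-w_{\eta_1,\ast}}{\Sigma^{-1}}=\pnorm{\hat\mu^{\seq}_{(\Sigma,\mu_0)}(\gamma_{\eta_0,\ast};\tau_{\eta_0,\ast})-\hat\mu^{\seq}_{(\Sigma,\mu_0)}(\gamma_{\eta_1,\ast};\tau_{\eta_1,\ast})}{}$ by the definition (\ref{def:w_ast}), Jensen's inequality yields
\begin{align*}
\Delta_{\eta_0,\eta_1}\leq \E\bigpnorm{\hat\mu^{\seq}_{(\Sigma,\mu_0)}(\gamma_{\eta_0,\ast};\tau_{\eta_0,\ast})-\hat\mu^{\seq}_{(\Sigma,\mu_0)}(\gamma_{\eta_1,\ast};\tau_{\eta_1,\ast})}{}.
\end{align*}
I will parametrize $\eta_t\equiv (1-t)\eta_0+t\eta_1$ for $t\in[0,1]$ and apply the fundamental theorem of calculus with the chain rule. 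Using the closed form (\ref{ineq:est_dof_1}), $\partial_\gamma \hat\mu^{\seq}(\gamma;\tau)=(\Sigma+\tau I)^{-1}\Sigma^{1/2}g/\sqrt{n}$ and $\partial_\tau \hat\mu^{\seq}(\gamma;\tau)=-(\Sigma+\tau I)^{-2}\Sigma^{1/2}(\Sigma^{1/2}\mu_0+\gamma\Sigma^{1/2}g/\sqrt{n})$; under $\pnorm{\Sigma}{\op}\vee\pnorm{\Sigma^{-1}}{\op}\leq K$ combined with the apriori bounds $\tau_{\eta,\ast},\gamma_{\eta,\ast}\asymp_K 1$ from Proposition \ref{prop:fpe_est_simplify}-(2) (for which the standing ambient assumptions on $\phi,\sigma_\xi^2,\pnorm{\mu_0}{}$ in the context where this lemma is invoked are in force), both partial derivatives are bounded pointwise in $g$ by $C_K(1+\pnorm{g}{}/\sqrt{n})$. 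Combined with the uniform derivative estimates $\abs{\partial_\eta\gamma_{\eta,\ast}}\vee \abs{\partial_\eta\tau_{\eta,\ast}}\leq C$ on $\Xi_K$ from the same proposition, we obtain
\begin{align*}
\biggpnorm{\frac{d}{dt}\hat\mu^{\seq}(\gamma_{\eta_t,\ast};\tau_{\eta_t,\ast})}{}\leq C_K(1+\pnorm{g}{}/\sqrt{n})\cdot (\eta_1-\eta_0).
\end{align*}
Integrating in $t\in[0,1]$ and taking expectation in $g$ completes the bound.

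The argument is essentially a one-line triangle inequality reduction combined with the already-established $C^1$-smoothness of the fixed-point solutions $(\gamma_{\eta,\ast},\tau_{\eta,\ast})$ in $\eta$, so no genuine obstacle arises; the only mild care needed is verifying that the standing bounds on $\sigma_\xi^2$ and $\pnorm{\mu_0}{}$ required to invoke Proposition \ref{prop:fpe_est_simplify}-(2) are indeed available each time Lemma \ref{lem:D_0_eta} is applied in Theorem \ref{thm:universality_exceptional_set}.
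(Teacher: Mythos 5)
Your proof is correct and takes essentially the same approach as the paper: a triangle-inequality reduction to the Lipschitz-in-$\eta$ estimate $\E\pnorm{w_{\eta_1,\ast}-w_{\eta_0,\ast}}{\Sigma^{-1}}\leq C(\eta_1-\eta_0)$, which is then obtained from the chain rule and the derivative bounds on $(\gamma_{\eta,\ast},\tau_{\eta,\ast})$ in Proposition \ref{prop:fpe_est_simplify}. The only cosmetic difference is that you re-derive this Lipschitz estimate directly, whereas the paper cites the previously established Eqn.~(\ref{ineq:dist_min_norm_8_0}); your parenthetical caveat about needing the ambient bounds on $\sigma_\xi^2,\pnorm{\mu_0}{}$ to invoke the derivative estimate for $\gamma_{\eta,\ast}$ is a genuine subtlety and is indeed satisfied wherever the lemma is applied.
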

\begin{proof}
Using the definition of $w_{\eta,\ast}$ in (\ref{def:w_ast}), we have
\begin{align*}
&\abs{\E \mathsf{g}(w_{\eta_1,\ast})-\E \mathsf{g}(w_{\eta_0,\ast})}\leq \E\pnorm{ w_{\eta_1,\ast}-w_{\eta_0,\ast}}{\Sigma^{-1}}\\
&=\E \bigpnorm{\hat{\mu}_{(\Sigma,\mu_0)}^{\seq}(\gamma_{\eta_1,\ast};\tau_{\eta_1,\ast})-\hat{\mu}_{(\Sigma,\mu_0)}^{\seq}(\gamma_{\eta_0,\ast};\tau_{\eta_0,\ast})}{}\leq C\cdot (\eta_1-\eta_0).
\end{align*}
Here the last inequality follows from the calculations in (\ref{ineq:dist_min_norm_8_0}). So for any $w \in D_{\eta_0;\epsilon}(\mathsf{g})$,
\begin{align*}
\epsilon\leq \abs{\mathsf{g}(w)-\E \mathsf{g}(w_{\eta_0,\ast})}\leq \abs{\mathsf{g}(w)-\E \mathsf{g}(w_{\eta_1,\ast})}+C(\eta_1-\eta_0). 
\end{align*}
This means $w \in D_{\eta_1;(\epsilon-C(\eta_1-\eta_0))_+}(\mathsf{g})$, as desired. 
\end{proof}

\subsection{Proof of the universality Theorem \ref{thm:universality_min_norm} for $\hat{\mu}_{\eta;Z}$}

Fix $\vartheta>0$, $\mu_0 \in \mathcal{U}_\vartheta$ and $\xi \in \mathcal{E}_\vartheta$. Let $L_n\equiv C_0 n^{\vartheta}$, and $E_0\equiv \{\hat{w}_{n;Z} \in B_{(2,\infty)}(C_0,L_n/\sqrt{n})=B_n(C_0)\cap L_\infty(L_n/\sqrt{n})\}$. We assume that $C_0$ exceeds the constants in Proposition \ref{prop:delocal_u_v} and Theorem \ref{thm:universality_exceptional_set}. By Proposition \ref{prop:delocal_u_v} and a simple $\ell_2$ estimate, $\Prob^\xi(E_0^c)\leq C_0 n^{-100}$. We further let $z_\eta \equiv \max_{\beta>0}\min_{\gamma >0}\overline{\mathsf{D}}_\eta(\beta,\gamma)$ for $\eta\geq 0$. 

Let $\mathsf{g}:\R^n\to \R$ be $1$-Lipschitz with respect to $\pnorm{\cdot}{\Sigma^{-1}}$. Then for $\rho_0 \leq 1/C_0$ and $\eta \in \Xi_K$, we have
\begin{align*}
&\Prob^\xi\big(\hat{w}_{\eta;Z} \in D_{\eta;C_0\rho_0^{1/2}}(\mathsf{g})\big)\\
&\leq \Prob^\xi\big(\hat{w}_{\eta;Z} \in D_{\eta;C_0\rho_0^{1/2}}(\mathsf{g})\cap B_{(2,\infty)}(C_0,L_n/\sqrt{n})\big)+\Prob^\xi(E_0^c)\\
&\leq \Prob^\xi\Big(\min_{w \in B_{(2,\infty)}(C_0,\frac{L_n}{\sqrt{n}})} H_{\eta;Z}(w)\geq z_\eta+\rho_0\Big)\\
&\qquad + \Prob^\xi\Big(\min_{w \in D_{\eta;C_0\rho_0^{1/2}}(\mathsf{g})\cap B_{(2,\infty)}(C_0,\frac{L_n}{\sqrt{n}})} H_{\eta;Z}(w)\leq z_\eta+2\rho_0\Big)+ C_0n^{-100}. 
\end{align*}
Here in the last inequality we used the simple fact that 
\begin{align*}
&\Big\{\min_{w \in B_{(2,\infty)}(C_0,\frac{L_n}{\sqrt{n}})} H_{\eta;Z}(w)<z_\eta+\rho_0\Big\}\cap \Big\{\min_{w \in D_{\eta;C_0\rho_0^{1/2}}(\mathsf{g})\cap B_{(2,\infty)}(C_0,\frac{L_n}{\sqrt{n}})} H_{\eta;Z}(w)>z_\eta+2\rho_0\Big\}\\
&\subset \big\{\hat{w}_{\eta;Z} \notin D_{\eta;C_0\rho_0^{1/2}}(\mathsf{g})\cap B_{(2,\infty)}(C_0,{L_n}/{\sqrt{n}})\big\}.
\end{align*}
Invoking Theorems \ref{thm:universality_global_cost} and \ref{thm:universality_exceptional_set}, by enlarging $C_0$ if necessary, we have for $\rho_0 \leq 1/C_0$ and $\eta \in \Xi_K$,
\begin{align*}
\Prob^\xi\Big(\abs{\mathsf{g}(\hat{w}_{\eta;Z})-\E \mathsf{g}(w_{\eta,\ast})}\geq \rho_0^{1/2}\Big)\leq C_0\cdot  \rho_0^{-6} n^{-1/6+3\vartheta},
\end{align*}
or equivalently, for $\mathsf{g}_0:\R^n\to \R$ being $1$-Lipschitz with respect to $\pnorm{\cdot}{}$, 
\begin{align*}
\Prob^\xi\Big(\bigabs{\mathsf{g}_0(\hat{\mu}_{\eta;Z})-\E \mathsf{g}_0\big(\hat{\mu}_{(\Sigma,\mu_0)}^{\seq}(\gamma_{\eta,\ast};\tau_{\eta,\ast})\big) }\geq \rho_0\Big)\leq C_0\cdot  \rho_0^{-12} n^{-1/6+3\vartheta}. 
\end{align*}
Now we may follow Step 4 in the proof of Theorem \ref{thm:min_norm_dist} to strengthen the above statement to a uniform one in $\eta$; we only sketch the differences below. Using (\ref{ineq:dist_min_norm_6}) with $G$ therein replaced by $Z$, and the assumption $\pnorm{\Sigma^{-1}}{\op}\leq K$, we arrive at a modified form of (\ref{ineq:dist_min_norm_7}): on an event $E_{1}$ with $\Prob^\xi(E_{1})\geq 1-C_1 e^{-n/C_1}$, for any $\eta_1,\eta_2 \in \Xi_K$,
\begin{align}\label{ineq:universality_w_1}
\pnorm{\hat{\mu}_{\eta_1;Z}-\hat{\mu}_{\eta_2;Z}}{}\leq C_1\abs{\eta_1-\eta_2}.
\end{align}
Using (\ref{ineq:dist_min_norm_8_0}) with $\pnorm{\Sigma^{-1}}{\op}\leq K$, we arrive at a modified form of (\ref{ineq:dist_min_norm_8}): for any $\eta_1,\eta_2 \in \Xi_K$,
\begin{align*}
\bigabs{\E \mathsf{g}_0\big(\hat{\mu}_{(\Sigma,\mu_0)}^{\seq}(\gamma_{\eta_1,\ast};\tau_{\eta_1,\ast})\big)- \E \mathsf{g}_0\big(\hat{\mu}_{(\Sigma,\mu_0)}^{\seq}(\gamma_{\eta_2,\ast};\tau_{\eta_2,\ast})\big)}\leq C_1\abs{\eta_1-\eta_2}. 
\end{align*}
Now using a standard discretization and a union bound, we have
\begin{align*}
&\Prob^\xi\Big(\sup_{\eta \in \Xi_K }\bigabs{\mathsf{g}_0(\hat{\mu}_{\eta;Z})-\E \mathsf{g}_0\big(\hat{\mu}_{(\Sigma,\mu_0)}^{\seq}(\gamma_{\eta,\ast};\tau_{\eta,\ast})\big) }\geq \rho_0\Big)
\leq C_2\cdot \rho_0^{-13} n^{-1/6+3\vartheta}.
\end{align*}
The proof is complete by taking expectation with respect to $\xi$ and note that $\Prob(\xi\in \mathcal{E}_\vartheta)\geq 1- C e^{-n^{2\vartheta}/C}$ as in Proposition \ref{prop:delocal_u_v}. \qed

\subsection{Proof of the universality Theorem \ref{thm:universality_min_norm} for $\hat{r}_{\eta;Z}$}

\begin{proposition}\label{prop:universality_exceptional_set_residual}
Suppose Assumption \ref{assump:design} holds and the following hold for some $K>0$.
\begin{itemize}
	\item $1/K\leq \phi^{-1},\eta \leq K$, $\pnorm{\Sigma}{\op}\vee \pnorm{\Sigma^{-1}}{\op} \leq K$.
	\item Assumption \ref{assump:noise} holds with $\sigma_\xi^2 \in [1/K,K]$. 
\end{itemize}
Fix $\vartheta \in (0,1/18)$. Then there exists some $C=C(K,\vartheta)>0$ such that for any $1$-Lipschitz function $\mathsf{h}:\R^m\to \R$, $\rho_0 \leq 1/C$, $\eta \in \Xi_K$ and $\xi \in \mathcal{E}_\vartheta$, 
\begin{align*}
&\sup_{\mu_0 \in \mathcal{U}_\vartheta}\Prob^\xi\Big(\max_{v \in D_{\eta;C\rho_0^{1/2}}(\mathsf{h})\cap L_\infty(\frac{L_n}{\sqrt{n}})} \min_{w \in \R^n} h_{\eta;Z}(w,v) \geq \max_{\beta>0}\min_{\gamma >0} \overline{\mathsf{D}}_\eta(\beta,\gamma)-\rho_0\Big)\leq  C \rho_0^{-3} n^{-1/6+3\vartheta}.
\end{align*}
Here $L_n\equiv C n^{\vartheta}$, and $\mathcal{U}_\vartheta$ is specified as in Proposition \ref{prop:delocal_u_v}.
\end{proposition}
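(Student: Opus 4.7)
The plan is to mirror the proof of Theorem~\ref{thm:universality_exceptional_set}, transposed to the dual (residual) side: first apply the min--max universality inequality (Theorem~\ref{thm:min_max_universality}) to pass from the general design $Z$ to Gaussian $G$, and then apply the Gordon-side bound (Theorem~\ref{thm:CGMT}-(1) together with Proposition~\ref{prop:locate v_eta_n}) to close the Gaussian case. Throughout, set $\mathcal{S}_v \equiv D_{\eta;C\rho_0^{1/2}}(\mathsf{h})\cap L_\infty(L_n/\sqrt n)$ and $\mathcal{S}_w \equiv B_n(C_0)\cap L_\infty(L_n/\sqrt n)$ for a large constant $C_0=C_0(K)>0$; by monotonicity $\min_{w\in\R^n} h_{\eta;Z}(w,v)\leq\min_{w\in\mathcal{S}_w} h_{\eta;Z}(w,v)$ pointwise in $v$, so it suffices to upper-bound
\begin{align*}
\Prob^\xi\Big(\max_{v\in\mathcal{S}_v}\min_{w\in\mathcal{S}_w} h_{\eta;Z}(w,v) \geq z_\eta - \rho_0\Big),\qquad z_\eta\equiv \max_{\beta>0}\min_{\gamma>0}\overline{\mathsf{D}}_\eta(\beta,\gamma).
\end{align*}

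First I would apply Theorem~\ref{thm:min_max_universality} with exactly the same rescaling used in the proof of Theorem~\ref{thm:universality_global_cost}: setting $\tilde v=\sqrt n\, v$ and $\tilde w=\sqrt n\, w$, both variables lie in $L_\infty(L_n)$, the bilinear cross-term becomes $n^{-3/2}\iprod{\tilde v}{Z\tilde w}$, and the same bookkeeping behind (\ref{ineq:gordon_cost_universality_3}) (with $M_3\lesssim n^{-5/2}$ and $\mathscr{N}_Q(L,\delta)\lesssim (1\vee L)\delta(1+\pnorm{\xi}{1}/n)$) yields a universality error $\lesssim K_{\mathsf T}\,n^{-1/6+3\vartheta}$. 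The soft-threshold trick from the last paragraph of the proof of Theorem~2.3 in \cite{han2022universality} then converts this into the tail comparison
\begin{align*}
\Prob^\xi\Big(\max_{\mathcal{S}_v}\min_{\mathcal{S}_w} h_{\eta;Z}\geq z\Big) \leq \Prob^\xi\Big(\max_{\mathcal{S}_v}\min_{\mathcal{S}_w} h_{\eta;G}\geq z-2\rho_0\Big) + C\rho_0^{-3}n^{-1/6+3\vartheta}
\end{align*}
for any $z\in\R$.

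Second, for the Gaussian side at $z=z_\eta-\rho_0$ I would repeat the chain of Step~2 in the Gaussian proof of Theorem~\ref{thm:min_norm_dist} for $\hat r_\eta$: rewriting $\max_v\min_w h_{\eta;G}=-\min_v\max_w(-h_{\eta;G})$ and using $-G\equald G$ brings Theorem~\ref{thm:CGMT}-(1) into its native min--max form (no convexity of $\mathcal{S}_v$ is needed), which delivers
\begin{align*}
\Prob^\xi\Big(\max_{\mathcal{S}_v}\min_{\mathcal{S}_w} h_{\eta;G}\geq z_\eta-3\rho_0\Big) \leq 2\,\Prob^\xi\Big(\max_{\mathcal{S}_v}\min_{\mathcal{S}_w}\ell_\eta\geq z_\eta-3\rho_0\Big).
\end{align*}
Inserting $w=w_{\eta,\ast}\in\mathcal{S}_w$ (valid on an event of $\Prob^\xi$-probability $\geq 1-Cn^{-100}$ under $\mu_0\in\mathcal{U}_\vartheta$, with the $\ell_2$ bound supplied by Lemma~\ref{lem:w_eta,ast concentr} and the $L_\infty$-delocalization $\pnorm{w_{\eta,\ast}}{\infty}\leq CL_n/\sqrt n$ following from the closed form $w_{\eta,\ast}=-\tau_{\eta,\ast}(\Sigma+\tau_{\eta,\ast}I)^{-1}\Sigma^{1/2}\mu_0+\gamma_{\eta,\ast}(\Sigma+\tau_{\eta,\ast}I)^{-1}\Sigma g/\sqrt n$, coordinatewise Gaussian tails, and the very definition of $\mathcal{U}_\vartheta$) kills the inner minimum; finally Proposition~\ref{prop:locate v_eta_n} combined with the strong-concavity gap (\ref{ineq:dist_res_gaussian_1_0}) gives $\Prob^\xi(\max_{\mathcal{S}_v}\ell_\eta(w_{\eta,\ast},v)\geq z_\eta-3\rho_0)\leq Ce^{-c\rho_0 n}$, which is absorbed into the universality error.

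The main technical obstacle is the $L_\infty$-delocalization of the deterministic Gaussian vector $w_{\eta,\ast}$: unlike Proposition~\ref{prop:delocal_u_v}, which handles the random-matrix object $\hat w_\eta$ via anisotropic local laws, $w_{\eta,\ast}$ is an explicit Gaussian expression whose signal part is controlled by $\mu_0\in\mathcal{U}_\vartheta$ and whose stochastic part only needs a uniform coordinatewise Gaussian tail estimate, so this hurdle is entirely soft. All remaining steps are bookkeeping that closely parallels the proofs of Theorems~\ref{thm:universality_global_cost} and~\ref{thm:universality_exceptional_set}, together with Step~2 of the Gaussian $\hat r_\eta$ proof.
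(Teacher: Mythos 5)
Your proposal is correct and follows essentially the same route as the paper's own proof: rescale to apply Theorem~\ref{thm:min_max_universality}, pass to Gaussian via the soft-threshold tail comparison, apply CGMT to reach the Gordon objective $\ell_\eta$, kill the inner minimum by plugging in $w_{\eta,\ast}$ (which lies in the $L_\infty$ constraint set with high probability by the very definition of $\mathcal{U}_\vartheta$), and invoke the strong-concavity gap~(\ref{ineq:dist_res_gaussian_1_0}) with $\epsilon\asymp\rho_0^{1/2}$. The only cosmetic deviations from the paper are the extra $B_n(C_0)$ constraint on $\mathcal{S}_w$ (which the paper does not need since $L_\infty(L_n/\sqrt n)$ suffices) and a small typo $e^{-c\rho_0 n}$ instead of $e^{-c\rho_0^2 n}$ for the Gaussian-side tail, neither of which affects the argument.
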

\begin{proof}
Fix $\epsilon,\vartheta>0$, $\mu_0 \in \mathcal{U}_\vartheta$ and $\xi \in \mathcal{E}_\vartheta$ as specified in Proposition \ref{prop:delocal_u_v}. We define a renormalized version of $D_{\epsilon;\eta}(\mathsf{h})$ as
\begin{align*}
\tilde{D}_{\epsilon;\eta}(\mathsf{h})\equiv \big\{\tilde{r}\in \R^m: \abs{\mathsf{h}(\tilde{r}/\sqrt{n})-\E^\xi \mathsf{h}(\tilde{r}_{\eta,\ast}/\sqrt{n})}\geq \epsilon\big\},
\end{align*}
where $\tilde{r}_{\eta,\ast}=\sqrt{n} {r}_{\eta,\ast}$. Let $L_n=C_0n^{\vartheta}$ and $Q(\tilde{v},\tilde{w})$ be defined as in the proof of Theorem \ref{thm:universality_global_cost}. Then we have,
\begin{align*}
& \max_{v \in D_{\eta;\epsilon}(\mathsf{h})\cap L_\infty(L_n/\sqrt{n})} \min_{w \in L_\infty(L_n/\sqrt{n})} h_{\eta;Z}(w,v)\\
& = \max_{\tilde{v} \in \tilde{D}_{\eta;\epsilon}(\mathsf{h})\cap L_\infty(L_n)} \min_{\tilde{w} \in L_\infty(L_n)} h_{\eta;Z}(\tilde{w}/\sqrt{n},\tilde{v}/\sqrt{n})\\
& = \max_{\tilde{v} \in \tilde{D}_{\eta;\epsilon}(\mathsf{h})\cap L_\infty(L_n)} \min_{\tilde{w} \in L_\infty(L_n)}\bigg\{\frac{1}{n^{3/2}}\iprod{\tilde{v}}{Z \tilde{w}}-\frac{1}{n}\iprod{\tilde{v}}{\xi}+F(\tilde{w}/\sqrt{n})-\frac{\eta\pnorm{\tilde{v}}{}^2}{2n}\bigg\}\\
&= \max_{\tilde{v} \in \tilde{D}_{\eta;\epsilon}(\mathsf{h})\cap L_\infty(L_n)} \min_{\tilde{w} \in L_\infty(L_n)}\bigg\{\frac{1}{n^{3/2}}\iprod{\tilde{v}}{Z \tilde{w}}+Q(\tilde{v},\tilde{w})\bigg\}. 
\end{align*}
Using the comparison inequality in Theorem \ref{thm:min_max_universality} and a similar calculation as in (\ref{ineq:gordon_cost_universality_3}), with $s_{n}(\rho_0)\equiv  \rho_0^{-3} n^{-1/6+3\vartheta}$, 
\begin{align*}
&\Prob^\xi\Big(\max_{v \in D_{\eta;\epsilon}(\mathsf{h})\cap L_\infty(L_n/\sqrt{n})} \min_{w \in L_\infty(L_n/\sqrt{n})} h_{\eta;Z}(w,v)\geq z-\rho_0\Big)\\
& = \Prob^\xi\Big( \max_{\tilde{v} \in \tilde{D}_{\eta;\epsilon}(\mathsf{h})\cap L_\infty(L_n)} \min_{\tilde{w} \in L_\infty(L_n)}\bigg\{\frac{1}{n^{3/2}}\iprod{\tilde{v}}{Z \tilde{w}}+Q(\tilde{v},\tilde{w})\bigg\}\geq z-\rho_0\Big)\\
&\leq \Prob^\xi\Big( \max_{\tilde{v} \in \tilde{D}_{\eta;\epsilon}(\mathsf{h})\cap L_\infty(L_n)} \min_{\tilde{w} \in L_\infty(L_n)}\bigg\{\frac{1}{n^{3/2}}\iprod{\tilde{v}}{G \tilde{w}}+Q(\tilde{v},\tilde{w})\bigg\}\geq z-3\rho_0\Big) +Cs_{n}(\rho_0)\\
&= \Prob^\xi\Big(\max_{v \in D_{\eta;\epsilon}(\mathsf{h})\cap L_\infty(L_n/\sqrt{n})} \min_{w \in L_\infty(L_n/\sqrt{n})} h_{\eta;G}(w,v)\geq z-3\rho_0\Big)+ C_1s_{n}(\rho_0).
\end{align*}
Using the convex Gaussian min-max theorem (cf. Theorem \ref{thm:CGMT}), 
\begin{align}\label{ineq:universality_residual_1}
& \Prob^\xi\Big(\max_{v \in D_{\eta;\epsilon}(\mathsf{h})\cap L_\infty(L_n/\sqrt{n})} \min_{w \in L_\infty(L_n/\sqrt{n})} h_{\eta;Z}(w,v)\geq z-\rho_0\Big)\nonumber\\
&\leq 2 \Prob\Big(\max_{v \in D_{\eta;\epsilon}(\mathsf{h})\cap L_\infty(L_n/\sqrt{n})} \min_{w \in L_\infty(L_n/\sqrt{n})} \ell_{\eta}(w,v)\geq z-3\rho_0\Big) + C_1s_{n}(\rho_0).
\end{align}
On the other hand, using the definition of $w_{\eta,\ast}$ in (\ref{def:w_ast}), and the fact that for any $\mu_0 \in \mathcal{U}_\vartheta$, $\pnorm{\E w_{\eta,\ast}}{\infty}\leq L_n/\sqrt{n}$, we have $
\Prob\big(\pnorm{w_{\eta,\ast}}{\infty} \geq L_n/\sqrt{n}\big)\leq C e^{-n^{2\vartheta}/C}$. Combined with (\ref{ineq:universality_residual_1}), we have
\begin{align*}
& \Prob^\xi\Big(\max_{v \in D_{\eta;\epsilon}(\mathsf{h})\cap L_\infty(L_n/\sqrt{n})} \min_{w \in L_\infty(L_n/\sqrt{n})} h_{\eta;Z}(w,v)\geq z-\rho_0\Big)\\
&\leq 2 \Prob\Big(\max_{v \in D_{\eta;\epsilon}(\mathsf{h})\cap L_\infty(L_n/\sqrt{n})} \ell_{\eta}(w_{\eta,\ast},v)\geq z-3\rho_0\Big)+C_2s_{n}(\rho_0).
\end{align*}
In view of (\ref{ineq:dist_res_gaussian_1_0}), now by choosing $z\equiv z_\eta\equiv \max_{\beta>0}\min_{\gamma >0} \overline{\mathsf{D}}_\eta(\beta,\gamma)$ and $\epsilon\equiv C_3 \rho^{1/2}_0$, for $\rho_0\geq C_4 n^{-1/2+\vartheta}$, $\xi \in \mathcal{E}_\vartheta\subset  \mathscr{E}_{1,\xi}(\rho_0/C)$, it follows that
\begin{align*}
& \Prob^\xi\Big(\max_{v \in D_{\eta;C_3\rho_0^{1/2}}(\mathsf{h})\cap L_\infty(L_n/\sqrt{n})} \min_{w \in \R^n} h_{\eta;Z}(w,v)\geq z_\eta-\rho_0\Big)\\
& \leq \Prob^\xi\Big(\max_{v \in D_{\eta;C_3\rho_0^{1/2}}(\mathsf{h})\cap L_\infty(L_n/\sqrt{n})} \min_{w \in L_\infty(L_n/\sqrt{n})} h_{\eta;Z}(w,v)\geq z_\eta-\rho_0\Big)\leq  C_4s_{n}(\rho_0).
\end{align*}
The claim follows by adjusting constants. 
\end{proof}

\begin{proof}[Proof of Theorem \ref{thm:universality_min_norm} for $\hat{r}_{\eta;Z}$]
Fix $\vartheta>0$, $\mu_0 \in \mathcal{U}_\vartheta$ and $\xi \in \mathcal{E}_\vartheta$ as specified in Proposition \ref{prop:delocal_u_v}. We continue writing $z_\eta\equiv \max_{\beta>0}\min_{\gamma >0} \overline{\mathsf{D}}_\eta(\beta,\gamma)$ in the proof. Using the delocalization results in Proposition \ref{prop:delocal_u_v}, on an event $E_0$ with $\Prob^\xi(E_0)\geq 1-C_0n^{-100}$, we have $\pnorm{\hat{w}_{\eta;Z}}{\infty}\vee \pnorm{\hat{r}_{\eta;Z}}{\infty}\leq L_n/\sqrt{n}$ with $L_n=C_0n^{\vartheta}$. Using Theorem \ref{thm:universality_global_cost}, for $\rho_0 \leq 1/C$, and $\eta \in \Xi_K$, by possibly adjusting $C_0>0$,
\begin{align}\label{ineq:universality_residual_proof_1}
&\Prob^\xi\Big(\max_{v \in L_\infty(L_n/\sqrt{n})} \min_{w \in \R^m} h_{\eta;Z}(w,v)\leq z_\eta-\rho_0/2\Big)\nonumber\\
&\leq \Prob^\xi\Big( \min_{w \in \R^m} H_{\eta;Z}(w)\leq z_\eta-\rho_0/2\Big)+ \Prob^\xi(E_0^c)\leq C_0 \rho_0^{-3}\cdot n^{-1/6+3\vartheta}. 
\end{align}
Let us take $C_1>0$ to be the constant in Proposition \ref{prop:universality_exceptional_set_residual}. By noting that
\begin{align*}
&\Big\{\max_{v \in L_\infty(L_n/\sqrt{n})} \min_{w \in \R^m} h_{\eta;Z}(w,v)> z_\eta-\rho_0/2\Big\}\\
&\quad \cap \Big\{\max_{v \in D_{\eta;C_1\rho_0^{1/2}}(\mathsf{h})\cap L_\infty(L_n/\sqrt{n})} \min_{w \in \R^n} h_{\eta;Z}(w,v)< z_\eta-\rho_0\Big\}\\
& \subset \big\{\hat{v}_{\eta;Z}\notin D_{\eta;C_1\rho_0^{1/2}}(\mathsf{h})\cap L_\infty(L_n/\sqrt{n})\big\},
\end{align*}
it follows from (\ref{ineq:universality_residual_proof_1}) and Proposition \ref{prop:universality_exceptional_set_residual} that
\begin{align*}
&\Prob^\xi\Big(\hat{v}_{\eta;Z}\in D_{\eta;C_1\rho_0^{1/2}}(\mathsf{h})\Big)\\
&\leq \Prob^\xi \Big(\hat{v}_{\eta;Z}\in D_{\eta;C_1\rho_0^{1/2}}(\mathsf{h}) \cap L_\infty(L_n/\sqrt{n})\Big)+\Prob^\xi\Big(\hat{v}_{\eta;Z} \notin L_\infty(L_n/\sqrt{n}) \Big)\\
&\leq \Prob^\xi\Big(\max_{v \in L_\infty(L_n/\sqrt{n})} \min_{w \in \R^m} h_{\eta;Z}(w,v)\leq z_\eta-\rho_0/2\Big)\\
&\qquad + \Prob^\xi\Big(\max_{v \in D_{\eta;C_1\rho_0^{1/2}}(\mathsf{h})\cap L_\infty(L_n/\sqrt{n})} \min_{w \in \R^n} h_{\eta;Z}(w,v)\geq  z_\eta-\rho_0 \Big)+ \Prob^\xi(E_0^c)\\
&\leq C \rho_0^{-3}\cdot n^{-1/6+3\vartheta}. 
\end{align*}	
Finally we only need to extend the above display to a uniform control over $\eta \in [1/K,K]$ by continuity arguments similar to Step 5 of the proof of Theorem \ref{thm:min_norm_dist} for $\hat{r}_{\eta;G}$. By (\ref{ineq:dist_res_gaussian_6}) (where $G$ therein is replaced by $Z$) and (\ref{ineq:universality_w_1}), on an event $E_1$ with $\Prob^\xi(E_1)\geq 1-C e^{-n/C}$, for any $\eta_1,\eta_2 \in [1/K,K]$, 
\begin{align*}
\pnorm{\hat{r}_{\eta_1;Z}-\hat{r}_{\eta_2;Z}}{}\leq C \abs{\eta_1-\eta_2}.
\end{align*}
On the other hand, (\ref{ineq:dist_res_gaussian_8}) remains valid, so we may proceed with an $\epsilon$-net argument over  $[1/K,K]$ to conclude. 
\end{proof}

\section{Proof of Theorem \ref{thm:lq_risk}}\label{subsection:proof_lq_risk}

To keep notation simple, we work with $\mathsf{A}=I_n$ and write $\Gamma_{\eta;(\Sigma,\pnorm{\mu_0}{})}^{I_n}=\Gamma_{\eta;(\Sigma,\pnorm{\mu_0}{})}$. The general case follows from minor modifications.

\begin{lemma}\label{lem:lq_risk_lower_bound}
Suppose the conditions  in Theorem \ref{thm:lq_risk} hold for some $K>0$. Fix $q \in [1,\infty)$. There exists some constant $c=c(K,q)>0$ such that $n^{\frac{1}{2}-\frac{1}{q}}\E \pnorm{ \hat{\mu}_{(\Sigma,\mu_0)}^{\seq}(\gamma_{\eta,\ast};\tau_{\eta,\ast})-\mu_0 }{q}\geq c$ uniformly in $\eta \in \Xi_K$. 
\end{lemma}
\begin{proof}
We may write $\E \pnorm{ \hat{\mu}_{(\Sigma,\mu_0)}^{\seq}(\gamma_{\eta,\ast};\tau_{\eta,\ast})-\mu_0 }{q} = \E \big(\sum_{j=1}^n \abs{a_j+ b_j g_j}^q\big)^{1/q}$ for some $a_j, b_j \in \R$ with $b_j\asymp 1$, and $g_j \sim \mathcal{N}(0,1/n)$ not necessarily independent of each other. So for some $c_j \in \R$,
\begin{align*}
\E \pnorm{ \hat{\mu}_{(\Sigma,\mu_0)}^{\seq}(\gamma_{\eta,\ast};\tau_{\eta,\ast})-\mu_0 }{q}\gtrsim \E \Big(\sum_{j=1}^n \abs{c_j+ g_j}^q\Big)^{1/q}.
\end{align*}
If $\sum_{j=1}^n \abs{c_j}^q \geq C_0 \sum_{j=1}^n \E \abs{g_j}^q$ for a large enough $C_0>0$, the lower bound follows trivially. Otherwise, with $Z\equiv \sum_{j=1}^n \abs{c_j+g_j}^q$, we have $\E Z\geq \sum_{j=1}^n \inf_{c \in \R} \E \abs{c+g_j}^q \gtrsim n^{1-q/2}$ and $\E Z^2 \lesssim \E \big(\sum_{j=1}^n (\abs{g_j}^q+\E\abs{g_j}^q)\big)^2\lesssim (n^{1-q/2})^2$, so by Paley-Zygmund inequality, $\Prob(Z\geq \E Z/2)\geq (\E Z)^2/(4\E Z^2)\geq c_0$ for some $c_0>0$. In other words, on an event $E_0$ with $\Prob(E_0)\geq c_0$, $Z\geq c_0 n^{1-q/2}$. Using the above display, this means that $\E \pnorm{ \hat{\mu}_{(\Sigma,\mu_0)}^{\seq}(\gamma_{\eta,\ast};\tau_{\eta,\ast})-\mu_0 }{q}\gtrsim \E Z^{1/q}\geq \E Z^{1/q}\bm{1}_{E_0}\gtrsim n^{1/q-1/2}$. 
\end{proof}

\begin{lemma}\label{lem:lq_risk_gw}
Suppose the conditions  in Theorem \ref{thm:lq_risk} hold for some $K>0$. Fix $q \in [1,\infty)$. Then there exist constants $C=C(K,q)>1$, $\vartheta=\vartheta(q) \in (0,1/50)$, and a measurable set $\mathcal{U}_\vartheta\subset B_n(1)$ with $\mathrm{vol}(\mathcal{U}_\vartheta)/\mathrm{vol}(B_n(1))\geq 1-C e^{-n^{\vartheta}/C}$, such that 
\begin{align*}
\sup_{\mu_0 \in \mathcal{U}_{\vartheta} } n^{\frac{1}{2}-\frac{1}{q}}\bigabs{\E \pnorm{ \hat{\mu}_{(\Sigma,\mu_0)}^{\seq}(\gamma_{\eta,\ast};\tau_{\eta,\ast})-\mu_0 }{q}-n^{-\frac{1}{2}}\pnorm{\mathrm{diag}\big(\Gamma_{\eta;(\Sigma,\pnorm{\mu_0}{})}\big) }{q/2}^{\frac{1}{2}}M_q  }\leq C n^{-\vartheta}.
\end{align*}
Here $M_q=\E^{1/q}\abs{\mathcal{N}(0,1)}^q$. 
\end{lemma}
\begin{proof}
We write $\tau_{\eta,\ast}=\tau_\eta$, $\gamma_{\eta,\ast}=\gamma_\eta$ for notational simplicity in the proof. All the constants in $\lesssim,\gtrsim,\asymp$ below may depend on $K,q$. Recall the general fact $\pnorm{x}{q}\leq n^{-\frac{1}{2}+\frac{1}{q\wedge 2}} \pnorm{x}{}^{\frac{2}{q\vee 2}}\pnorm{x}{\infty}^{1-\frac{2}{q\vee 2}}$ for $x \in \R^n$ and $q\in (0,\infty)$.

By Proposition \ref{prop:tau_gamma_m} below, for any $\vartheta \in (0,1/2)$, there exists some $\mathcal{U}_{\vartheta}\subset B_n(1)$ with $\mathrm{vol}(\mathcal{U}_{\vartheta})/\mathrm{vol}(B_n(1))\geq 1-Ce^{-n^{1-2\vartheta}/C}$, such that $\sup_{\mu_0 \in \mathcal{U}_{\vartheta}}\sup_{\eta \in \Xi_K}\abs{\gamma_\eta^2-\tilde{\gamma}_\eta^2(\pnorm{\mu_0}{})}\leq n^{-\vartheta}$. Consequently, uniformly in $\mu_0 \in \mathcal{U}_{\vartheta}$ and $\eta \in \Xi_K$,
\begin{align}\label{ineq:lq_risk_gw_1}
&\bigabs{\E \pnorm{ \hat{\mu}_{(\Sigma,\mu_0)}^{\seq}(\gamma_{\eta};\tau_{\eta})-\mu_0 }{q}-\E \pnorm{ \hat{\mu}_{(\Sigma,\mu_0)}^{\seq}(\tilde{\gamma}_\eta(\pnorm{\mu_0}{});\tau_{\eta})-\mu_0 }{q} }\nonumber\\
&\lesssim \abs{\gamma_\eta-\tilde{\gamma}_\eta(\pnorm{\mu_0}{}) }\cdot n^{-1/2} \E \pnorm{(\Sigma+\tau_\eta I)^{-1}\Sigma^{1/2} g}{q}\nonumber\\
&\leq n^{-\frac{1}{2}-\vartheta}\cdot n^{-\frac{1}{2}+\frac{1}{q\wedge 2}}\cdot \E \Big\{\pnorm{(\Sigma+\tau_\eta I)^{-1}\Sigma^{1/2} g}{}^{\frac{2}{q\vee 2}}\cdot \pnorm{(\Sigma+\tau_\eta I)^{-1}\Sigma^{1/2} g}{\infty}^{1-\frac{2}{q\vee 2}}\Big\}\nonumber\\
&\lesssim n^{-\frac{1}{2}+\frac{1}{q}-\vartheta}\big(\log n\big)^{\frac{1}{2}-\frac{1}{q\vee 2}}.
\end{align}
For $g' \in \R^n$, let $\mathsf{f}(g') \equiv n^{-1/2} \pnorm{(\Sigma+\tau_\eta I)^{-1} g'}{q}$, and
\begin{align*}
\mathsf{F}_{\pnorm{\mu_0}{}}(g')&\equiv n^{-1/2}\bigpnorm{(\Sigma+\tau_\eta I)^{-1} \big( -\tau_\eta \pnorm{\mu_0}{}g'+\tilde{\gamma}_\eta(\pnorm{\mu_0}{})\Sigma^{1/2} g\big) }{q},\\
\mathsf{F}_{\pnorm{\mu_0}{},0}(g')& \equiv \biggpnorm{(\Sigma+\tau_\eta I)^{-1}\bigg( -\tau_\eta \pnorm{\mu_0}{}\frac{g'}{\pnorm{g'}{}}+\tilde{\gamma}_\eta(\pnorm{\mu_0}{})\Sigma^{1/2} \frac{g}{\sqrt{n}}\bigg)  }{q}.
\end{align*}
Then for $g_1',g_2' \in \R^n$, 
\begin{align*}
&\bigabs{\mathsf{F}_{\pnorm{\mu_0}{}}(g_1')-\mathsf{F}_{\pnorm{\mu_0}{}}(g_2')}\vee \bigabs{\mathsf{f}(g_1')-\mathsf{f}(g_2')} \lesssim n^{-1+\frac{1}{q\wedge 2}} \pnorm{g_1'-g_2'}{}.
\end{align*} 
By Gaussian concentration inequality, for any $\vartheta \in (0,1/2)$, we may find some $\mathcal{G}_{\vartheta,\pnorm{\mu_0}{}}\subset \R^n$ with $\Prob(g_0 \in \mathcal{G}_{\vartheta,\pnorm{\mu_0}{}})\geq 1- Ce^{-n^{2\vartheta}/C}$, $g_0 \sim \mathcal{N}(0,I_n)$, such that uniformly in $g' \in \mathcal{G}_{\vartheta,\pnorm{\mu_0}{}}$,
\begin{align}\label{ineq:lq_risk_gw_2}
&\max\Big\{\bigabs{\pnorm{g'}{}-\sqrt{n} }, n^{1-\frac{1}{q\wedge 2}}   \bigabs{\mathsf{F}_{\pnorm{\mu_0}{}}(g')-\E_{g_0} \mathsf{F}_{\pnorm{\mu_0}{}}(g_0)}, \nonumber\\
&\qquad\qquad n^{1-\frac{1}{q\wedge 2}} \bigabs{ \mathsf{f}(g')- \E \mathsf{f}(g_0)} \Big\} \leq n^{\vartheta}. 
\end{align}
As $\E \mathsf{f}(g_0)=n^{-1/2}\E \pnorm{(\Sigma+\tau_\eta I) g}{q}\lesssim n^{-\frac{1}{2}+\frac{1}{q}}(\log n)^{\frac{1}{2}-\frac{1}{q\vee 2}}$, for $\vartheta$ small enough, uniformly in $g' \in \mathcal{G}_{\vartheta,\pnorm{\mu_0}{}}$, 
\begin{align}\label{ineq:lq_risk_gw_3}
&\bigabs{ \mathsf{F}_{\pnorm{\mu_0}{}}(g')-\mathsf{F}_{\pnorm{\mu_0}{},0}(g') }\lesssim \abs{\mathsf{f}(g')}\cdot \abs{1-\sqrt{n}/\pnorm{g'}{}}\nonumber\\
&\lesssim \big(\E \mathsf{f}(g_0)+ n^{-1+\frac{1}{q\wedge 2}+\vartheta} \big)\cdot n^{-\frac{1}{2}+\vartheta} \lesssim n^{-1+\frac{1}{q}+\vartheta}(\log n)^{\frac{1}{2}-\frac{1}{q\vee 2}}.
\end{align}
Combining (\ref{ineq:lq_risk_gw_2})-(\ref{ineq:lq_risk_gw_3}), for $\vartheta$ small enough,
\begin{align}\label{ineq:lq_risk_gw_4}
\sup_{g' \in \mathcal{G}_{\vartheta,\pnorm{\mu_0}{}}} n^{1-\frac{1}{q\wedge 2}} \bigabs{\mathsf{F}_{\pnorm{\mu_0}{},0}(g')-\E_{g_0} \mathsf{F}_{\pnorm{\mu_0}{}}(g_0)}\lesssim n^{\vartheta}. 
\end{align}
Now let $\partial \mathcal{G}_{\vartheta,\pnorm{\mu_0}{}}\equiv \{g'/\pnorm{g'}{}: g' \in \mathcal{G}_{\vartheta,\pnorm{\mu_0}{}}\}\subset \partial B_n(1)$. Using that $\{g_0 \in \mathcal{G}_{\vartheta,\pnorm{\mu_0}{}}\} \subset \big\{g_0/\pnorm{g_0}{} \in \partial \mathcal{G}_{\vartheta,\pnorm{\mu_0}{}} \}$, we have $\Prob\big(g_0/\pnorm{g_0}{} \in \partial \mathcal{G}_{\vartheta,\pnorm{\mu_0}{}} \big)\geq \Prob(g_0 \in \mathcal{G}_{\vartheta,\pnorm{\mu_0}{}})\geq 1- Ce^{-n^{2\vartheta}/C}$. So with 
\begin{align*}
\mathcal{V}_{\vartheta}\equiv \big\{\mu_0=U_0g': U_0 \in [0,1], g' \in \partial \mathcal{G}_{\vartheta,U_0} \big\}\subset B_n(1),
\end{align*}
we have $
\Prob\big(\mathrm{Unif}(B_n(1)) \in \mathcal{V}_{\vartheta}\big)=\E_{U_0}\Prob_{g_0}\big(g_0/\pnorm{g_0}{} \in \partial \mathcal{G}_{\vartheta,U_0} \big)\geq 1- Ce^{-n^{2\vartheta}/C}$. In other words, for this constructed set $\mathcal{V}_{\vartheta}$, we have the desired volume estimate $\mathrm{vol}(\mathcal{V}_{\vartheta})/\mathrm{vol}(B_n(1))\geq 1- Ce^{-n^{2\vartheta}/C}$, and by (\ref{ineq:lq_risk_gw_4}),
\begin{align}\label{ineq:lq_risk_gw_5}
n^{1-\frac{1}{q\wedge 2}} \sup_{\mu_0 \in \mathcal{V}_{\vartheta}}\bigabs{\E \pnorm{ \hat{\mu}_{(\Sigma,\mu_0)}^{\seq}(\tilde{\gamma}_\eta(\pnorm{\mu_0}{});\tau_{\eta})-\mu_0 }{q}- \E_{g_0} \mathsf{F}_{\pnorm{\mu_0}{}}(g_0)}\lesssim n^{\vartheta}.
\end{align}
On the other hand, using the definition of  $\Gamma_{\eta;(\Sigma,\pnorm{\mu_0}{})}$ in (\ref{def:Gamma_matrix}), we may compute 
\begin{align}\label{ineq:lq_risk_gw_6}
\E_{g_0} \mathsf{F}_{\pnorm{\mu_0}{}}(g_0) = \E \bigpnorm{\Gamma_{\eta;(\Sigma,\pnorm{\mu_0}{})}^{1/2} g/\sqrt{n}}{q}.
\end{align} 
Combining (\ref{ineq:lq_risk_gw_1}), (\ref{ineq:lq_risk_gw_5}) and (\ref{ineq:lq_risk_gw_6}), for $\vartheta$ chosen small enough, 
\begin{align*}
&\sup_{\mu_0 \in \mathcal{U}_{\vartheta}\cap \mathcal{V}_{\vartheta} } n^{1/2-1/q}\bigabs{\E \pnorm{ \hat{\mu}_{(\Sigma,\mu_0)}^{\seq}(\gamma_{\eta};\tau_{\eta})-\mu_0 }{q}-\E \bigpnorm{\Gamma_{\eta;(\Sigma,\pnorm{\mu_0}{})}^{1/2} g/\sqrt{n}}{q} }\\
&\lesssim n^{\frac{1}{2}-\frac{1}{q}}\cdot \Big(n^{-\frac{1}{2}+\frac{1}{q}-\vartheta}\big(\log n\big)^{\frac{1}{2}-\frac{1}{q\vee 2}}+n^{-1+\frac{1}{q\wedge 2}+\vartheta}\Big)\\
& = n^{-\vartheta}\big(\log n\big)^{\frac{1}{2}-\frac{1}{q\vee 2}}+ n^{-\frac{1}{2}-\frac{1}{q} +\frac{1}{q\wedge 2}+\vartheta }\lesssim n^{-\vartheta/2}.
\end{align*}
The claim follows from Lemma \ref{lem:gaussian_lq}.
\end{proof}

\begin{proof}[Proof of Theorem \ref{thm:lq_risk}]
	We write $\hat{\mu}_{(\Sigma,\mu_0)}^{\seq}(\gamma_{\eta,\ast};\tau_{\eta,\ast})=\hat{\mu}_{\eta;(\Sigma,\mu_0)}^{\seq,\ast}$ in the proof.
	
	First we consider $1\leq  q\leq 2$. This is the easy case, as $\mathsf{g}_q(x)\equiv \pnorm{x-\mu_0}{q}/n^{1/q-1/2}$ is $1$-Lipschitz with respect to $\pnorm{\cdot}{}$. So applying Theorems \ref{thm:min_norm_dist} and \ref{thm:universality_min_norm} verifies the existence of some small $\vartheta>0$ such that for some $\mathcal{U}_{\vartheta} \subset B_n(1)$ with $\mathrm{vol}(\mathcal{U}_{\vartheta})/\mathrm{vol}(B_n(1))\geq 1- Ce^{-n^{\vartheta}/C}$,
	\begin{align*}
	\sup_{\mu_0 \in \mathcal{U}_{\vartheta} }\Prob\Big(\sup_{\eta \in \Xi_K} n^{\frac{1}{2}-\frac{1}{q}}\bigabs{\pnorm{\hat{\mu}_\eta-\mu_0}{q}- \E \pnorm{ \hat{\mu}_{\eta;(\Sigma,\mu_0)}^{\seq,\ast}-\mu_0 }{q} }\geq n^{-\vartheta}\Big)\leq C n^{-1/7}.
	\end{align*}
	The ratio formulation follows from Lemmas \ref{lem:lq_risk_lower_bound} and \ref{lem:lq_risk_gw} by further intersecting $\mathcal{U}_{\vartheta}$ and the set therein. 
	
	Next we consider $q \in (2,\infty)$. Let $L_n\equiv n^{\vartheta_1}$ for some $\vartheta_1$ to be chosen later. Using Proposition \ref{prop:delocal_u_v} and its proofs below (\ref{ineq:delocal_u_v_5}), for $\vartheta_1>0$ chosen small enough, we may find some $\mathcal{U}_{\vartheta_1}\subset B_n(1)$ with the desired volume estimate, such that  $\sup_{\mu_0 \in \mathcal{U}_{\vartheta_1}}\sup_{\eta \in \Xi_K}\pnorm{\E \hat{\mu}_{\eta;(\Sigma,\mu_0)}^{\seq,\ast}-\mu_0}{\infty}\leq L_n/\sqrt{n}$, and 
	\begin{align}\label{ineq:lq_risk_1}
	\sup_{\mu_0 \in \mathcal{U}_{\vartheta_1}} \Prob\bigg(\sup_{\eta \in \Xi_K}\Big\{\pnorm{\hat{\mu}_\eta-\mu_0}{\infty}\vee \pnorm{\hat{\mu}_{\eta;(\Sigma,\mu_0)}^{\seq,\ast}-\mu_0}{\infty}\Big\}\geq \frac{L_n}{\sqrt{n}} \bigg)\leq C n^{-2D},
	\end{align}
	where we choose $D>0$ sufficiently large. Recall for $x\in \R^n$ and $q> 2$, $\pnorm{x}{q}\leq  \pnorm{x}{}^{2/q}\pnorm{x}{\infty}^{1-2/q}$. This motivates the choice 
	\begin{align*}
	\mathsf{g}_q(x)\equiv \bigg[ \bigg(\frac{L_n}{\sqrt{n}}\bigg)^{\frac{2}{q}-1}\biggpnorm{(x-\mu_0) \wedge \bigg(\frac{L_n}{\sqrt{n}}\bigg)^{\frac{2}{q}-1}\vee \bigg\{-\bigg(\frac{L_n}{\sqrt{n}}\bigg)^{\frac{2}{q}-1}\bigg\} }{q}\bigg]^{\frac{q}{2}},
	\end{align*}
	which verifies that $\mathsf{g}_q$ is $1$-Lipschitz with respect to $\pnorm{\cdot}{}$. Using (\ref{ineq:lq_risk_1}), 
	\begin{align}\label{ineq:lq_risk_2}
	&\inf_{\mu_0 \in \mathcal{U}_{\vartheta_1}} \Prob\Big( \mathsf{g}_q(\hat{\mu}_\eta)=n^{(1-\frac{q}{2})\vartheta_1}\cdot \big\{ n^{\frac{1}{2}-\frac{1}{q}}\pnorm{\hat{\mu}_\eta-\mu_0}{q} \big\}^{\frac{q}{2}},\forall \eta \in \Xi_K\Big) \geq 1-C n^{-D},
	\end{align}
	and with $E_{\mu_0}\equiv \big\{\sup_{\eta \in \Xi_K} \pnorm{\hat{\mu}_{\eta;(\Sigma,\mu_0)}^{\seq,\ast}-\mu_0}{\infty}\leq L_n/\sqrt{n}\big\}$,
	\begin{align}\label{ineq:lq_risk_3}
	&\sup_{\mu_0 \in \mathcal{U}_{\vartheta_1}}\sup_{\eta \in \Xi_K}\Big|\E \mathsf{g}_q\big( \hat{\mu}_{\eta;(\Sigma,\mu_0)}^{\seq,\ast} \big)- n^{(1-\frac{q}{2})\vartheta_1}\E \big\{ n^{\frac{1}{2}-\frac{1}{q}}\pnorm{\hat{\mu}_{\eta;(\Sigma,\mu_0)}^{\seq,\ast}-\mu_0}{q} \Big\}^{\frac{q}{2}} \Big| \nonumber\\
	& = n^{(1-\frac{q}{2})\vartheta_1}\sup_{\mu_0 \in \mathcal{U}_{\vartheta_1}}\sup_{\eta \in \Xi_K}\E \big\{ n^{\frac{1}{2}-\frac{1}{q}}\pnorm{\hat{\mu}_{\eta;(\Sigma,\mu_0)}^{\seq,\ast}-\mu_0}{q} \Big\}^{ \frac{q}{2} } \bm{1}_{E_{\mu_0}^c}\nonumber\\
	&\qquad +\sup_{\mu_0 \in \mathcal{U}_{\vartheta_1}}\sup_{\eta \in \Xi_K}\E \mathsf{g}_q\big( \hat{\mu}_{\eta;(\Sigma,\mu_0)}^{\seq,\ast} \big)\bm{1}_{E_{\mu_0}^c}\lesssim n^{-D}. 
	\end{align}
	As the map $g \mapsto \pnorm{\hat{\mu}_{\eta;(\Sigma,\mu_0)}^{\seq,\ast}-\mu_0}{q}= \pnorm{(\Sigma+\tau_{\eta,\ast} I)^{-1}\big(-\tau_{\eta,\ast} \mu_0+\gamma_{\eta,\ast} \Sigma^{1/2} g/\sqrt{n}\big)}{q}$ is $Cn^{-1/2}$-Lipschitz with respect to $\pnorm{\cdot}{}$, Gaussian concentration yields
	\begin{align*}
	\Prob\Big(n^{1/2} \bigabs{\pnorm{\hat{\mu}_{\eta;(\Sigma,\mu_0)}^{\seq,\ast}-\mu_0}{q}- \E \pnorm{\hat{\mu}_{\eta;(\Sigma,\mu_0)}^{\seq,\ast}-\mu_0}{q}}\geq n^{\vartheta_1}\Big)\leq C n^{-2D}.
	\end{align*}
	Using the Lipschitz property of the maps, we may strengthen the above inequality to a uniform control over $\eta \in \Xi_K$. This means uniformly in  $\mu_0 \in \mathcal{U}_{\vartheta_1}, \eta\in \Xi_K$,
	\begin{align}\label{ineq:lq_risk_4}
	\Big|\E \Big\{ n^{\frac{1}{2}-\frac{1}{q}}\pnorm{\hat{\mu}_{\eta;(\Sigma,\mu_0)}^{\seq,\ast}-\mu_0}{q} \Big\}^{\frac{q}{2}}- \Big\{ n^{\frac{1}{2}-\frac{1}{q}}\E\pnorm{\hat{\mu}_{\eta;(\Sigma,\mu_0)}^{\seq,\ast}-\mu_0}{q} \Big\}^{\frac{q}{2}}\Big|\lesssim n^{-\frac{1}{q}+\vartheta_1}. 
	\end{align}
	Combining (\ref{ineq:lq_risk_3})-(\ref{ineq:lq_risk_4}), we have uniformly in  $\mu_0 \in \mathcal{U}_{\vartheta_1}, \eta\in \Xi_K$,
	\begin{align}\label{ineq:lq_risk_5}
	&\Big|\E \mathsf{g}_q\big( \hat{\mu}_{\eta;(\Sigma,\mu_0)}^{\seq,\ast} \big) -n^{(1-\frac{q}{2})\vartheta_1}\big\{ n^{\frac{1}{2}-\frac{1}{q}}\E\pnorm{\hat{\mu}_{\eta;(\Sigma,\mu_0)}^{\seq,\ast}-\mu_0}{q} \Big\}^{q/2} \Big|\leq C n^{(2-\frac{q}{2})\vartheta_1-\frac{1}{q}}. 
	\end{align}
	Combining (\ref{ineq:lq_risk_2}) and (\ref{ineq:lq_risk_5}) proves the existence of some small $\vartheta_2$ and some $\mathcal{U}_{\vartheta_2}\subset B_n(1)$ with the desired volume estimate, such that 
	\begin{align*}
	\sup_{\mu_0 \in \mathcal{U}_{\vartheta_2} }\Prob\Big(n^{\frac{1}{2}-\frac{1}{q}}\sup_{\eta \in \Xi_K}\bigabs{\pnorm{\hat{\mu}_\eta-\mu_0}{q}- \E \pnorm{ \hat{\mu}_{\eta;(\Sigma,\mu_0)}^{\seq,\ast}-\mu_0 }{q} }\geq n^{-\vartheta_2}\Big)\leq C n^{-1/7}.
	\end{align*}
	The ratio formulation follows again from Lemmas \ref{lem:lq_risk_lower_bound} and \ref{lem:lq_risk_gw}.
\end{proof}

\section{Proofs for Section \ref{section:err_phase_trans}}\label{section:proof_error}

\subsection{A rigorous version of (\ref{eqn:errors_explicit}) and its proof}

    The following theorem follows easily from Theorems \ref{thm:min_norm_dist} and \ref{thm:universality_min_norm}.
    
    \begin{theorem}\label{thm:errors_explicit}
    	Suppose Assumption \ref{assump:design} holds and the following hold for some $K>0$.
    	\begin{itemize}
    		\item $1/K\leq \phi^{-1} \leq K$, $\pnorm{\Sigma^{-1}}{\op}\vee \pnorm{\Sigma}{\op}\leq K$.
    		\item Assumption \ref{assump:noise} holds with $\sigma_\xi^2 \in [1/K,K]$. 
    	\end{itemize}
    	Fix a small enough $\vartheta \in (0,1/50)$. Then there exist a constant $C=C(K,\vartheta)>1$, and a measurable set $\mathcal{U}_{\vartheta}\subset B_n(1)$ with $\mathrm{vol}(\mathcal{U}_{\vartheta})/\mathrm{vol}(B_n(1))\geq 1-C e^{-n^{\vartheta}/C}$, such that for any $\epsilon \in (0,1/2]$, and $\# \in \{\pred,\est,\ins,\res\}$,
    	\begin{align*}
    	\sup_{\mu_0 \in \mathcal{U}_{\vartheta}}\Prob\Big(\sup_{\eta \in \Xi^{\#}}\abs{R^{\#}_{(\Sigma,\mu_0)}(\eta)-\bar{R}^{\#}_{(\Sigma,\mu_0)}(\eta)}\geq \epsilon \Big) \leq C\cdot
    	\begin{cases}
    	n e^{-n\epsilon^4/C}, & Z=G;\\
    	\epsilon^{-c_0}n^{-1/6.5}, & \hbox{otherwise}.
    	\end{cases}
    	\end{align*}
    	Here $\Xi^{\#}=\Xi_K$ for $\# \in \{\pred,\est\}$ and $\Xi^{\#}=[1/K,K]$ for  $\# \in \{\ins,\res\}$, and $c_0>0$ is universal. Moreover, when $Z=G$, the supremum in the above display extends to $\mu_0 \in B_n(1)$, and the constant $C>0$ does not depend on $\vartheta$.
    \end{theorem}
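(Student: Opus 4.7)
The overall strategy is to express each of the four empirical risks as a controlled (Lipschitz) function of either $\hat{\mu}_\eta$ or the rescaled residual $\hat{r}_\eta$, apply Theorems \ref{thm:min_norm_dist} and \ref{thm:universality_min_norm} to replace these random objects by their Gaussian-sequence-model counterparts $\hat{\mu}^{\seq}_{(\Sigma,\mu_0)}(\gamma_{\eta,\ast};\tau_{\eta,\ast})$ and $r_{\eta,\ast}$, and finally evaluate the resulting expectations in closed form via the explicit formulas in (\ref{def:ridge_seq}) and in the definition of $r_{\eta,\ast}$. The uniform-in-$\eta$ statement on $\Xi^{\#}$ is inherited directly from the analogous uniformity in the two input theorems, while Proposition \ref{prop:fpe_est_simplify}(2) supplies the uniform two-sided bounds on $(\tau_{\eta,\ast},\gamma_{\eta,\ast})$ needed to keep all constants under control.

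For $R^{\pred}$ and $R^{\est}$ I would use the test functions $\mathsf{g}_{\mathsf{B}}(\mu)\equiv \pnorm{\mathsf{B}(\mu-\mu_0)}{}$ with $\mathsf{B}\in\{\Sigma^{1/2},I_n\}$, both of Lipschitz constant $\le K^{1/2}$. The distributional theorems then yield $\pnorm{\mathsf{B}(\hat{\mu}_\eta-\mu_0)}{}\approx \E\pnorm{\mathsf{B}(\hat{\mu}^{\seq}_{(\Sigma,\mu_0)}(\gamma_{\eta,\ast};\tau_{\eta,\ast})-\mu_0)}{}$ uniformly in $\eta\in\Xi_K$. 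Using the explicit decomposition
\begin{align*}
\hat{\mu}^{\seq}_{(\Sigma,\mu_0)}(\gamma_{\eta,\ast};\tau_{\eta,\ast})-\mu_0 = -\tau_{\eta,\ast}(\Sigma+\tau_{\eta,\ast}I_n)^{-1}\mu_0 + \frac{\gamma_{\eta,\ast}}{\sqrt{n}}(\Sigma+\tau_{\eta,\ast}I_n)^{-1}\Sigma^{1/2}g,
\end{align*}
the second moment $\E\pnorm{\mathsf{B}(\hat{\mu}^{\seq}-\mu_0)}{}^2$ splits into the bias and trace pieces appearing in $\bar{R}^{\pred}$ and $\bar{R}^{\est}$. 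To upgrade the first-moment approximation to a second-moment approximation, I would apply Gaussian concentration to the $O(n^{-1/2})$-Lipschitz-in-$g$ map $g\mapsto \pnorm{\mathsf{B}(\hat{\mu}^{\seq}-\mu_0)}{}$, giving variance of order $n^{-1}$ and hence $(\E\pnorm{\cdot}{})^2 = \E\pnorm{\cdot}{}^2 + O(n^{-1})$. The residual risk $R^{\res}=\pnorm{\hat{r}_\eta}{}^2$ is treated identically: the $1$-Lipschitz test $\mathsf{h}(r)=\pnorm{r}{}$ in part (2) of the distributional theorems reduces it to $\E^\xi\pnorm{r_{\eta,\ast}}{}^2$, which on the event $\{|\pnorm{\xi}{}^2/m-\sigma_\xi^2|\le \epsilon^2/C\}$ matches $(\eta\gamma_{\eta,\ast}/\tau_{\eta,\ast})^2 = \bar{R}^{\res}$. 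For the in-sample risk, the identity $X(\hat{\mu}_\eta-\mu_0)=\xi-(Y-X\hat{\mu}_\eta)$ gives
\begin{align*}
R^{\ins}_{(\Sigma,\mu_0)}(\eta) = \frac{\pnorm{\xi}{}^2}{n} - 2\bigiprod{\xi/\sqrt{n}}{\hat{r}_\eta} + \pnorm{\hat{r}_\eta}{}^2,
\end{align*}
where the first summand concentrates at $\phi\sigma_\xi^2$ by sub-Gaussian concentration, the last is the residual risk just treated, and the cross term is handled by applying part (2) of the distributional theorems to the $\pnorm{\xi}{}/\sqrt{n}$-Lipschitz test $r\mapsto\iprod{\xi/\sqrt{n}}{r}$, producing the limit $\eta\sigma_\xi^2/\tau_{\eta,\ast}$. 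The three pieces recombine to give $\bar{R}^{\ins}_{(\Sigma,\mu_0)}(\eta)$.

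The principal technical point is the passage from first-moment concentration (what the input theorems deliver) to second-moment concentration (what the target requires), together with carefully propagating the $\epsilon$-tolerances through the various Lipschitz rescalings. The former is bypassed via Gaussian concentration so long as $(\tau_{\eta,\ast},\gamma_{\eta,\ast})$ remain bounded away from zero uniformly in $\eta$, which Proposition \ref{prop:fpe_est_simplify}(2) ensures on $\Xi_K$. The latter is where the restricted range $\Xi^{\#}=[1/K,K]$ for $\#\in\{\ins,\res\}$ originates: in the overparametrized regime $\hat{r}_\eta$ collapses as $\eta\downarrow 0$, so the Lipschitz rescalings invoked in part (2) of the distributional theorems cease to yield useful bounds uniformly down to $\eta=0$. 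The $\epsilon^{-c_0}$ factor under a non-Gaussian design arises precisely from these rescalings, most visibly for the cross term $\iprod{\xi/\sqrt{n}}{\hat{r}_\eta}$, whose test function has Lipschitz constant $\pnorm{\xi}{}/\sqrt{n}$ controlled only on the event $\xi\in\mathcal{E}_\vartheta$ provided by Theorem \ref{thm:universality_min_norm}.
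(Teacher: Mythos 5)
Your proposal is essentially correct and follows the same overall approach as the paper: apply Theorems \ref{thm:min_norm_dist} and \ref{thm:universality_min_norm} with appropriate Lipschitz test functions, upgrade first-moment concentration to second-moment concentration via Gaussian concentration (the paper uses the Gaussian--Poincar\'e inequality for the same purpose), and then square the estimate using an apriori bound on $\sup_{\eta}\pnorm{\hat{\mu}_\eta}{}$ (which you leave implicit but which is needed to convert $|a-b|\le\epsilon$ into $|a^2-b^2|\lesssim\epsilon$).

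The one place your route genuinely diverges from the paper is the in-sample risk. You decompose $R^{\ins}_{(\Sigma,\mu_0)}(\eta)=\pnorm{\xi}{}^2/n-2\iprod{\xi/\sqrt{n}}{\hat{r}_\eta}+\pnorm{\hat{r}_\eta}{}^2$ and control each piece separately, the cross term via the test $r\mapsto\iprod{\xi/\pnorm{\xi}{}}{r}$. The paper instead takes a single test function $\mathsf{h}(x)=\pnorm{x-\xi/\sqrt{n}}{}$ (which is $1$-Lipschitz and depends on $\xi$, as the theorem permits), using the identity $n^{-1}\pnorm{X(\hat{\mu}_\eta-\mu_0)}{}^2=\pnorm{\hat{r}_\eta-\xi/\sqrt{n}}{}^2$, and then computes $\E^\xi\pnorm{r_{\eta,\ast}-\xi/\sqrt{n}}{}^2$ in closed form. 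Both routes yield the same deterministic limit $\bar{R}^{\ins}$; the paper's version is marginally cleaner because there is one moment computation instead of three, and one does not need to track the correlation structure between the three pieces. Your explanation of why $\Xi^{\#}$ is restricted to $[1/K,K]$ for $\ins,\res$ is a reasonable intuition but slightly off: the restriction is inherited directly from the fact that parts (2) of Theorems \ref{thm:min_norm_dist} and \ref{thm:universality_min_norm} are only stated for $\eta\in[1/K,K]$ (extendability to $\Xi_K$ at the cost of a worsened rate is addressed in the paper's remark via Lemma \ref{lem:university_squared_risk}). Similarly, the $\epsilon^{-c_0}$ factor under general design comes directly from the probability estimate $C\epsilon^{-13}n^{-1/6+3\vartheta}$ in Theorem \ref{thm:universality_min_norm}, not specifically from the cross-term rescaling.
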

    
    \begin{remark}
    \begin{enumerate}
    	\item For $\# \in \{\ins,\res\}$, we may take $\Xi^{\#}=\Xi_K$ at the cost of an worsened probability estimate $C(ne^{-n\epsilon^{c_0}/C}+\epsilon^{-c_0}n^{-1/6.5}\bm{1}_{Z\neq G})$, cf. Lemma \ref{lem:university_squared_risk}.
    	\item    The closest non-asymptotic results on exact risk characterizations  related to our Theorem \ref{thm:errors_explicit}, appear to be those presented in (i) \cite[Theorems 2 and 5]{hastie2022surprises}, which proved non-asymptotic additive approximations $R^{\pred}_{(\Sigma,\mu_0)}(\eta)= \bar{R}^{\pred}_{(\Sigma,\mu_0)}(\eta)+\smallop(1)$, and (ii) \cite[Theorems 1 and 2]{cheng2024dimension}, which provided substantially refined, multiplicative approximations $R^{\pred}_{(\Sigma,\mu_0)}(\eta)/\bar{R}^{\pred}_{(\Sigma,\mu_0)}(\eta)=1+\smallop(1)$ that hold beyond the proportional regime. Both works \cite{hastie2022surprises,cheng2024dimension} leverage the closed form of the Ridge(less) estimator $\hat{\mu}_\eta$ to analyze the bias and variance terms in $R^{\pred}_{(\Sigma,\mu_0)}(\eta)$, by means of calculus for the resolvent of the sample covariance. Their analysis works under $\eta\gg n^{-c_0}$ for some suitable $c_0>0$. For the case $\#=\pred$, Theorem \ref{thm:errors_explicit} above complements the results in \cite{hastie2022surprises,cheng2024dimension} by providing uniform control in $\eta$ when $\phi^{-1}>1$ (under a set of different conditions). 
    \end{enumerate}
  
    \end{remark}

   \begin{proof}[Proof of Theorem \ref{thm:errors_explicit}]
    For $\vartheta$ chosen small enough, we fix $\mu_0 \in \mathcal{U}_\vartheta$, where $\mathcal{U}_\vartheta$ is specified in Theorem \ref{thm:universality_min_norm}. We omit the subscripts in $R^{\#}_{(\Sigma,\mu_0)}(\eta)=R^{\#}(\eta)$, $\bar{R}^{\#}_{(\Sigma,\mu_0)}(\eta)=\bar{R}^{\#}(\eta)$, and write $\hat{\mu}_{(\Sigma,\mu_0)}^{\seq}(\gamma_{\eta,\ast};\tau_{\eta,\ast})=\hat{\mu}_{\eta;(\Sigma,\mu_0)}^{\seq,\ast}$ in the proof. All the constants in $\lesssim,\gtrsim,\asymp$ and $\bigo$ below may possibly depend on $K$.

	\noindent (1). Consider the case $\#=\pred$. We omit the superscript $\pred$ as well. Using Theorem \ref{thm:universality_min_norm}-(1) with $\mathsf{g}(x)=\pnorm{\Sigma^{1/2}(x-\mu_0)}{}$, on an event $E_0$ with $\Prob(E_0^c)\leq C_0 \epsilon^{-c_0}n^{-1/6.5}$, 
	\begin{align*}
	\sup_{\eta \in \Xi_K}\bigabs{\sqrt{R(\eta)}- \E \pnorm{\Sigma^{1/2}  \big(\hat{\mu}_{\eta;(\Sigma,\mu_0)}^{\seq,\ast}-\mu_0\big)  }{} }\leq \epsilon.
	\end{align*}
	By Gaussian-Poincar\'e inequality, $
	0\leq \bar{R}(\eta)- \big(\E \pnorm{\Sigma^{1/2}  \big(\hat{\mu}_{\eta;(\Sigma,\mu_0)}^{\seq,\ast}-\mu_0\big)  }{}\big)^2 = \var\big(  \pnorm{\Sigma^{1/2}  (\hat{\mu}_{\eta;(\Sigma,\mu_0)}^{\seq,\ast}-\mu_0)  }{} \big) \lesssim n^{-1}$. As $\bar{R}(\eta)\asymp 1$ uniformly in $\eta \in \Xi_K$, on $E_0$,
	\begin{align}\label{ineq:university_squared_risk_1}
	\sup_{\eta \in \Xi_K}\bigabs{R^{1/2}(\eta)- \bar{R}^{1/2}(\eta)}\leq \epsilon+C_0' n^{-1}.
	\end{align}
   On the other hand, using both the standard form $\hat{\mu}_\eta = n^{-1}\big(X^\top X/n+\eta I_n\big)^{-1}X^\top Y$ and the alternative form $\hat{\mu}_\eta= n^{-1}X^\top \big(XX^\top/n+\eta I_m\big)^{-1} Y$,  we have
	\begin{align}\label{ineq:university_squared_risk_2_0}
	\sup_{\eta \in \Xi_K} \pnorm{\hat{\mu}_\eta}{} \lesssim \Big(\pnorm{(ZZ^\top/n)^{-1}}{\op}\bm{1}_{\phi^{-1}\geq 1+1/K}^{-1}\wedge 1\Big)\cdot \Big(1+\frac{\pnorm{Z}{\op}+\pnorm{\xi}{}}{\sqrt{n}}\Big)^2. 
	\end{align}
	Consequently, on an event $E_1$ with $\Prob(E_1^c)\leq C_1 e^{-n/C_1}$, 
	\begin{align}\label{ineq:university_squared_risk_2}
	\sup_{\eta \in \Xi_K} \pnorm{\hat{\mu}_\eta}{}\leq C_1. 
	\end{align}
	Finally, using (\ref{ineq:university_squared_risk_1}) and (\ref{ineq:university_squared_risk_2}), on $E_0\cap E_1$, 
	\begin{align*}
	&\sup_{\eta \in \Xi_K}\abs{R(\eta)-\bar{R}(\eta)}\lesssim \sup_{\eta \in \Xi_K}\abs{R^{1/2}(\eta)- \bar{R}^{1/2}(\eta)} \Big(1+\sup_{\eta \in \Xi_K} \pnorm{\hat{\mu}_\eta}{}\Big)  \lesssim \epsilon+n^{-1}.
	\end{align*}
	The claim follows. The case $\#=\est$ follows from minor modifications so will be omitted.
	
	\noindent (2). Consider the case $\#=\res$. We omit the superscript $\res$ as well. Further fix $\xi \in \mathcal{E}_\vartheta$ as specified in Theorem \ref{thm:universality_min_norm} (the concrete form of $\mathcal{E}_\vartheta$ is given in Proposition \ref{prop:delocal_u_v}). Using the same Theorem \ref{thm:universality_min_norm}-(2) with $\mathsf{h}(x)=\pnorm{x}{}$, 
	\begin{align*}
	\Prob^\xi\Big(\sup_{\eta \in [1/K,K]}\bigabs{  \pnorm{\hat{r}_\eta}{}- \E^\xi \pnorm{r_{\eta,\ast}}{} }\geq \epsilon\Big)\leq C \epsilon^{-c_0}\cdot n^{-1/6.5}. 
	\end{align*}
	By Gaussian-Poincar\'e inequality, $
	0\leq \E^\xi \pnorm{r_{\eta,\ast}}{}^2-\big(\E^\xi \pnorm{r_{\eta,\ast}}{}\big)^2= \var^{\xi}\big(\pnorm{r_{\eta,\ast}}{}\big)\lesssim 1/n$. 
	Combined with the fact that $\E^\xi \pnorm{r_{\eta,\ast}}{}^2=(\eta \gamma_{\eta,\ast}/\tau_{\eta,\ast})^2+\bigo(\abs{\pnorm{\xi}{}^2/m-\sigma_\xi^2})$, for $\eta \in [1/K,K]$, using the stability estimate in Proposition \ref{prop:fpe_est}-(3), 
	\begin{align*}
	\bigabs{\E^\xi \pnorm{r_{\eta,\ast}}{}-  \eta \gamma_{\eta,\ast}/\tau_{\eta,\ast} }\lesssim  \bigabs{\big(\E^\xi \pnorm{r_{\eta,\ast}}{}\big)^2-(\eta \gamma_{\eta,\ast}/\tau_{\eta,\ast})^2} \lesssim n^{-1/2+\vartheta}. 
	\end{align*}
	So for $\epsilon \in (C n^{-1/2+\vartheta},1/C]$, 
	\begin{align*}
	\Prob^\xi\Big(\sup_{\eta \in [1/K,K]}\bigabs{  \pnorm{\hat{r}_\eta}{}- \eta \gamma_{\eta,\ast}/\tau_{\eta,\ast} }\geq \epsilon\Big)\leq C \epsilon^{-c_0}\cdot n^{-1/6.5}. 
	\end{align*}
	Now taking expectation over $\xi$, for the same range of $\epsilon$,
	\begin{align}\label{ineq:university_squared_risk_3}
	\Prob\Big(\sup_{\eta \in [1/K,K]}\bigabs{  \pnorm{\hat{r}_\eta}{}- \eta \gamma_{\eta,\ast}/\tau_{\eta,\ast} }\geq \epsilon\Big)\leq C \epsilon^{-c_0}\cdot n^{-1/6.5}. 
	\end{align}
	On the other hand, using (\ref{ineq:university_squared_risk_2_0}),
	\begin{align*}
	\sup_{\eta \in [1/K,K]}\pnorm{\hat{r}_\eta}{}&\lesssim \Big(\pnorm{(ZZ^\top/n)^{-1}}{\op}\bm{1}_{\phi^{-1}\geq 1+1/K}^{-1}\wedge 1\Big)\cdot \Big(1+\frac{\pnorm{Z}{\op}+\pnorm{\xi}{}}{\sqrt{n}}\Big)^3.
	\end{align*}
	Consequently, on an event $E_3$ with $\Prob(E_3^c)\leq C_3 e^{-n/C_3}$, $
	\sup_{\eta \in [1/K,K]} \pnorm{\hat{r}_\eta}{}\leq C_3$, and therefore
	\begin{align*}
	\sup_{\eta \in [1/K,K]}\bigabs{  \pnorm{\hat{r}_\eta}{}^2- \big(\eta \gamma_{\eta,\ast}/\tau_{\eta,\ast}\big)^2 }
	&\leq C_3\cdot \sup_{\eta \in [1/K,K]}\bigabs{  \pnorm{\hat{r}_\eta}{}- \eta \gamma_{\eta,\ast}/\tau_{\eta,\ast} }. 
	\end{align*}
	The claim follows. The case $\#=\ins$ proceeds similarly, but with the function now taken as $\mathsf{h}(x)=\pnorm{x-\xi/\sqrt{n}}{}$, and the claim follows by computing that
	\begin{align*}
	&\E^\xi  \pnorm{r_{\eta,\ast}-\xi/\sqrt{n}}{}^2= \phi\cdot\bigg\{\bigg(\frac{\eta}{\phi \tau_{\eta,\ast}}\bigg)^2\big(\phi \gamma_{\eta,\ast}^2-\sigma_\xi^2\big)+\frac{\pnorm{\xi}{}^2}{m}\cdot \bigg(\frac{\eta}{\phi \tau_{\eta,\ast}}-1\bigg)^2 \bigg\}\\
	& = \bigg(\frac{\eta \gamma_{\eta,\ast}}{\tau_{\eta,\ast}}\bigg)^2+\phi \sigma_\xi^2\cdot \bigg[\bigg(\frac{\eta}{\phi \tau_{\eta,\ast}}-1\bigg)^2-\bigg(\frac{\eta}{\phi \tau_{\eta,\ast}}\bigg)^2\bigg]+\bigo\big(\abs{\pnorm{\xi}{}^2/m-\sigma_\xi^2}\big).
	\end{align*} 
	The proof is complete. 
    \end{proof}

\subsection{Proof of Theorem \ref{thm:error_rmt}}

\begin{lemma}\label{lem:conc_g_quad}
	Suppose $1/K\leq \phi^{-1}\leq K$, and $\pnorm{\Sigma}{\op}\vee \mathcal{H}_\Sigma\leq K$ for some $K>0$. Then with $g\sim \mathcal{N}(0,I_n)$, there exists some $C=C(K)>0$ such that for $\epsilon \in (0,1)$, and $q \in \{0,1/2\}$,
	\begin{align*}
	\Prob\Big(\sup_{\eta \in \Xi_K}\bigabs{\pnorm{(\Sigma+\tau_{\eta,\ast}I)^{-1}\Sigma^{q}g/\pnorm{g}{}}{}^2-n^{-1}\tr\big((\Sigma+\tau_{\eta,\ast}I)^{-2}\Sigma^{2q}\big)}>\epsilon\Big)\leq C\epsilon^{-1} e^{-n\epsilon^2/C}.
	\end{align*}
\end{lemma}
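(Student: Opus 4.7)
The plan is to reduce the uniform-in-$\eta$ statement to a uniform-in-$\tau$ statement over a constant-sized interval, then use pointwise Gaussian concentration together with a discretization argument and a standard bound on $\pnorm{g}{}^2/n$.

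\textbf{Step 1 (Reparametrization).} By Proposition \ref{prop:fpe_est_simplify}-(2), under the assumed conditions we have $\tau_{\eta,\ast}\in[1/C_0,C_0]$ uniformly in $\eta\in\Xi_K$ for some $C_0=C_0(K)>1$, and moreover $\eta\mapsto\tau_{\eta,\ast}$ is Lipschitz with constant bounded by a $K$-dependent constant. Hence it suffices to bound, uniformly in $\tau\in I\equiv[1/C_0,C_0]$,
\[
\Delta_\tau\equiv \biggabs{\frac{Q(\tau)}{\pnorm{g}{}^2}-\frac{T(\tau)}{n}},\qquad Q(\tau)\equiv\pnorm{(\Sigma+\tau I)^{-1}\Sigma^{q}g}{}^2,\ T(\tau)\equiv\tr\bigl((\Sigma+\tau I)^{-2}\Sigma^{2q}\bigr).
\]
Writing $\Delta_\tau\le \abs{Q(\tau)-T(\tau)}/\pnorm{g}{}^2 + \bigl(T(\tau)/\pnorm{g}{}^2\bigr)\cdot\abs{1-\pnorm{g}{}^2/n}$ and using $T(\tau)\asymp n$ (since $\pnorm{\Sigma}{\op}\vee\mathcal{H}_\Sigma\lesssim 1$), it is enough to control $n^{-1}\abs{Q(\tau)-T(\tau)}$ uniformly in $\tau\in I$, combined with the standard estimate $\Prob(\abs{\pnorm{g}{}^2/n-1}\ge\epsilon)\le 2e^{-n\epsilon^2/C}$ to handle the denominator.

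\textbf{Step 2 (Pointwise concentration).} For fixed $\tau\in I$, the map $g\mapsto Q(\tau)$ has gradient $\nabla Q(\tau)=2(\Sigma+\tau I)^{-2}\Sigma^{2q}g$, so
\[
\pnorm{\nabla Q(\tau)}{}^2\le 4\pnorm{(\Sigma+\tau I)^{-1}\Sigma^q}{\op}^2\cdot Q(\tau)\le C_1\, Q(\tau),
\]
and $\E Q(\tau)=T(\tau)\lesssim n$. Applying Proposition \ref{prop:conc_H_generic} yields, for $t\ge 0$,
\[
\Prob\bigl(\abs{Q(\tau)-T(\tau)}\ge C_2(\sqrt{nt}+t)\bigr)\le e^{-t}.
\]
Setting $t\asymp n\epsilon^2$ gives $\abs{Q(\tau)-T(\tau)}\le n\epsilon$ with probability at least $1-e^{-n\epsilon^2/C}$.

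\textbf{Step 3 (Discretization and union bound).} A direct computation shows
$\abs{\partial_\tau Q(\tau)}\le 2\pnorm{(\Sigma+\tau I)^{-3}\Sigma^{2q}}{\op}\pnorm{g}{}^2\le C_3\pnorm{g}{}^2$ and $\abs{\partial_\tau T(\tau)}\le C_3 n$ uniformly on $I$. Let $\mathcal{S}_\delta\subset I$ be a $\delta$-net with $\abs{\mathcal{S}_\delta}\le C/\delta$. Then on the event $\{\pnorm{g}{}^2\le 2n\}$ (of probability $\ge 1-e^{-n/C}$),
\[
\sup_{\tau\in I}\tfrac{1}{n}\abs{Q(\tau)-T(\tau)}\le \max_{\tau\in\mathcal{S}_\delta}\tfrac{1}{n}\abs{Q(\tau)-T(\tau)}+C_4\delta.
\]
Taking $\delta\equiv\epsilon$ and a union bound over $\mathcal{S}_\epsilon$ (of size $\lesssim \epsilon^{-1}$) with the pointwise estimate from Step 2, we obtain $\sup_{\tau\in I}n^{-1}\abs{Q(\tau)-T(\tau)}\le C_5\epsilon$ with probability $\ge 1-C\epsilon^{-1}e^{-n\epsilon^2/C}$.

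\textbf{Step 4 (Conclusion).} Combining Steps 2--3 with the bound on $\abs{\pnorm{g}{}^2/n-1}$ of Step 1 and absorbing constants into $\epsilon$ yields the claimed probability bound uniformly over $\tau\in I$, hence over $\eta\in\Xi_K$ after the reparametrization. I expect no serious obstacle here: the main ``care point'' is verifying that $\eta\mapsto\tau_{\eta,\ast}$ stays inside a fixed compact interval (which is precisely what Proposition \ref{prop:fpe_est_simplify}-(2) provides), so that the net argument only needs $\bigo(\epsilon^{-1})$ points, matching the $\epsilon^{-1}$ prefactor in the stated bound.
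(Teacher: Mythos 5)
Your proposal is correct, and it follows the same three-step architecture as the paper's proof: separate the normalization $\pnorm{g}{}^2\approx n$, establish pointwise concentration for the quadratic form $Q(\tau)=\pnorm{(\Sigma+\tau I)^{-1}\Sigma^q g}{}^2$, then pass to a uniform statement via a Lipschitz discretization backed by Proposition \ref{prop:fpe_est_simplify}. The one substantive difference is the concentration ingredient in your Step 2: the paper writes $Q=\pnorm{A_\eta^{1/2}g}{}^2$ with $A_\eta=(\Sigma+\tau_{\eta,\ast}I)^{-2}\Sigma^{2q}$, verifies $\pnorm{A_\eta}{\op}\asymp 1$ and $\pnorm{A_\eta}{F}\asymp\sqrt{n}$, and invokes the Hanson--Wright inequality; you instead check $\pnorm{\nabla Q(\tau)}{}^2\le 4\pnorm{(\Sigma+\tau I)^{-1}\Sigma^q}{\op}^2 Q(\tau)\lesssim Q(\tau)$ and apply Proposition \ref{prop:conc_H_generic} (the paper's log-Sobolev/Herbst lemma). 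For a Gaussian PSD quadratic form these yield essentially the same tail: your route produces a variance proxy $\pnorm{A_\eta}{\op}\tr A_\eta$, Hanson--Wright produces the slightly tighter $\pnorm{A_\eta}{F}^2$, and here both are $\asymp n$, so the resulting $e^{-n\epsilon^2/C}$ rate is identical. Your choice has the small aesthetic advantage of reusing the paper's own internal concentration tool (already used for $\mathsf{e}_F$) rather than pulling in Hanson--Wright; the paper's choice is the more standard citation and sidesteps the need to check the self-bounding gradient condition. The remaining differences---discretizing in $\tau$ over a fixed interval $[1/C_0,C_0]$ rather than directly in $\eta$ over $\Xi_K$, and bounding $|\partial_\tau Q|$ by $C\pnorm{g}{}^2$ on the event $\pnorm{g}{}^2\le 2n$ rather than working with the normalized quantity---are cosmetic and correct. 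No gap.
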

\begin{proof}
	We only prove the case $q=1/2$. All the constants in $\lesssim,\gtrsim,\asymp$ below may depend on $K$. We write $A_\eta\equiv (\Sigma+\tau_{\eta,\ast}I)^{-2}\Sigma$ for notational simplicity. Note that
	\begin{align*}
	&\bigabs{\pnorm{(\Sigma+\tau_{\eta,\ast}I)^{-1}\Sigma^{1/2}g/\pnorm{g}{}}{}^2-n^{-1}\tr\big((\Sigma+\tau_{\eta,\ast}I)^{-2}\Sigma\big)}\\
	& = n^{-1}\bigabs{e_g^{-2}\pnorm{A_\eta^{1/2}g}{}^2 - \E \pnorm{A_\eta^{1/2}g}{}^2 }\\
	& \lesssim e_g^{-2}\cdot n^{-1}\bigabs{ \pnorm{A_\eta^{1/2}g}{}^2-\E \pnorm{A_\eta^{1/2}g}{}^2  }+ \abs{e_g^{-2}-1}.
	\end{align*}
	Here in the last inequality we used $\E \pnorm{A_\eta^{1/2}g}{}^2\lesssim n$. As 
	\begin{itemize}
		\item $\pnorm{A_\eta}{F}^2 = \tr\big((\Sigma+\tau_{\eta,\ast} I)^{-4}\Sigma^2\big)\lesssim n (1\wedge\tau_{\eta,\ast})^{-4}\asymp n$, and
		\item $\pnorm{A_\eta}{F}^2\gtrsim \tr(\Sigma^2)\cdot (1\vee \tau_{\eta,\ast})^{-4}\gtrsim n$,
	\end{itemize}
	we have uniformly in $\eta \in [0,K]$, $\pnorm{A_\eta}{F}\asymp \sqrt{n}$. It is easy to see that $\pnorm{A_\eta}{\op}\asymp 1$. So by Hanson-Wright inequality, there exists some constant $C_1=C_1(K)$ such that for $\epsilon \in (0,1)$,
	\begin{align*}
	&\Prob\Big(\bigabs{\pnorm{(\Sigma+\tau_{\eta,\ast}I)^{-1}\Sigma^{1/2}g/\pnorm{g}{}}{}^2-n^{-1}\tr\big((\Sigma+\tau_{\eta,\ast}I)^{-2}\Sigma\big)}>\epsilon\Big)\\
	&\leq \Prob\Big(\bigabs{n^{-1}\big(\pnorm{A_\eta^{1/2}g}{}^2-\E \pnorm{A_\eta^{1/2}g}{}^2\big)  }>\epsilon/4\Big)+ \Prob\big(\abs{e_g^{-2}-1}>\epsilon/2\big)+\Prob(e_g^2\leq 1/2)\\
	&\leq C_1 e^{-n\epsilon^2/C_1}.
	\end{align*}
	On the other hand, for any $\eta_1,\eta_2 \in \Xi_K$, using Proposition \ref{prop:fpe_est}-(3), 
	\begin{align*}
	\bigabs{\pnorm{(\Sigma+\tau_{\eta_1,\ast}I)^{-1}\Sigma^{1/2}g/\pnorm{g}{}}{}^2-\pnorm{(\Sigma+\tau_{\eta_2,\ast}I)^{-1}\Sigma^{1/2}g/\pnorm{g}{}}{}^2}&\lesssim \abs{\eta_1-\eta_2},\\
	n^{-1}\bigabs{\tr\big((\Sigma+\tau_{\eta_1,\ast}I)^{-2}\Sigma\big)-\tr\big((\Sigma+\tau_{\eta_2,\ast}I)^{-2}\Sigma\big)}&\lesssim \abs{\eta_1-\eta_2},
	\end{align*}
	so we may conclude by a standard discretization and union bound argument.
\end{proof}

\begin{proposition}\label{prop:tau_gamma_m}
	The following hold with $ \mathfrak{m}_\eta\equiv  \mathfrak{m}(-\eta/\phi)$, $ \mathfrak{m}_\eta'\equiv  \mathfrak{m}'(-\eta/\phi)$. 
	\begin{enumerate}
		\item $\tau_{\eta,\ast}=1/\mathfrak{m}_\eta$ and $\partial_\eta \tau_{\eta,\ast} =  \mathfrak{m}_\eta'/(\phi  \mathfrak{m}_\eta^2)$. 
		\item It holds that
		\begin{align*}
		\frac{1}{n}\tr \big((\Sigma+\tau_{\eta,\ast} I)^{-2} \Sigma\big)& =\frac{\phi \mathfrak{m}_\eta^2}{\mathfrak{m}_\eta'}\big(\mathfrak{m}_\eta-({\eta}/{\phi})\mathfrak{m}_\eta'\big),\\
		\frac{1}{n}\tr \big((\Sigma+\tau_{\eta,\ast} I)^{-2}\big)& =\frac{\phi \mathfrak{m}_\eta^2}{\mathfrak{m}_\eta'}\big((\phi^{-1}-1) \mathfrak{m}_\eta' +2({\eta}/{\phi})\cdot \mathfrak{m}_\eta\mathfrak{m}_\eta'- \mathfrak{m}_\eta^2\big).
		\end{align*}
		\item Suppose $1/K\leq \phi^{-1} \leq K$, and $\pnorm{\Sigma}{\op}\vee \mathcal{H}_\Sigma\leq K$ for some $K>0$. There exists some constant $C=C(K)>0$ such that the following hold. For any $\epsilon \in (0,1/2]$, for some $\mathcal{U}_\epsilon\subset B_n(1)$ with $\mathrm{vol}(\mathcal{U}_\epsilon)/\mathrm{vol}(B_n(1))\geq 1- C \epsilon^{-1}e^{-n\epsilon^2/C}$, 
		\begin{align*}
		\sup_{\mu_0 \in \mathcal{U}_\epsilon} \sup_{\eta \in \Xi_K} \biggabs{\gamma_{\eta,\ast}^2-  \frac{\sigma_\xi^2 \mathfrak{m}_\eta'+ \pnorm{\mu_0}{}^2  \big(\phi\mathfrak{m}_\eta-\eta\mathfrak{m}_\eta'\big) }{\phi \mathfrak{m}_\eta^2} } \leq \epsilon.
		\end{align*}
		When $\Sigma=I_n$, we may take $\mathcal{U}_\epsilon=B_n(1)$ and the above inequality holds with $\epsilon=0$.
	\end{enumerate}
\end{proposition}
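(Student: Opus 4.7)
\textbf{Proof plan for Proposition \ref{prop:tau_gamma_m}.}

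Part (1) is essentially obtained for free from Proposition \ref{prop:rmt_effp}: the identification $\tau_{\eta,\ast} = 1/\mathfrak{m}(-\eta/\phi)$ was already established in the course of proving that proposition. The derivative formula follows by differentiating the identity $\mathfrak{m}_\eta \tau_{\eta,\ast} = 1$ in $\eta$, which gives $\partial_\eta \tau_{\eta,\ast} = \mathfrak{m}_\eta'/(\phi \mathfrak{m}_\eta^2)$.

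For part (2), the cleanest route is to start from the derivative formula \eqref{ineq:fpe_property_5_0}, which reads $\tau_{\eta,\ast}'(\eta) = \tau_{\eta,\ast}/\big(\eta + \tau_{\eta,\ast}^2 T_{-2,1}(\eta)\big)$ with $T_{-p,q}(\eta) \equiv n^{-1}\tr((\Sigma+\tau_{\eta,\ast} I)^{-p}\Sigma^q)$. Solving for $T_{-2,1}$ and plugging in the formulas from part (1) yields the first trace identity. For the second trace identity, I will use the algebraic relation $(\Sigma+\tau I)^{-2} = \tau^{-1}\big[(\Sigma+\tau I)^{-1} - (\Sigma+\tau I)^{-2}\Sigma\big]$ to reduce $T_{-2,0}$ to $T_{-2,1}$ plus $n^{-1}\tr((\Sigma+\tau_{\eta,\ast} I)^{-1})$. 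The latter is obtained from the second fixed point equation in \eqref{eqn:fpe} by writing $(\Sigma+\tau I)^{-1}\Sigma = I - \tau(\Sigma+\tau I)^{-1}$, giving $n^{-1}\tr((\Sigma+\tau_{\eta,\ast} I)^{-1}) = \mathfrak{m}_\eta(1-\phi) + \eta \mathfrak{m}_\eta^2$. Substituting yields the second identity.

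For part (3), I start by simplifying the denominator of the explicit formula \eqref{ineq:fpe_property_2} for $\gamma_{\eta,\ast}^2$. Writing $\Sigma^2 = (\Sigma+\tau_{\eta,\ast} I)\Sigma - \tau_{\eta,\ast}\Sigma$ reduces $n^{-1}\tr((\Sigma+\tau_{\eta,\ast} I)^{-2}\Sigma^2)$ to a combination of $n^{-1}\tr((\Sigma+\tau_{\eta,\ast} I)^{-1}\Sigma) = \phi - \eta \mathfrak{m}_\eta$ and $T_{-2,1}(\eta)$ from part (2); the denominator then collapses to $\phi\mathfrak{m}_\eta^2/\mathfrak{m}_\eta'$. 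Substituting this back gives the exact identity
\begin{equation*}
\gamma_{\eta,\ast}^2 = \frac{\sigma_\xi^2 \mathfrak{m}_\eta'}{\phi\mathfrak{m}_\eta^2} + \frac{\mathfrak{m}_\eta'}{\phi\mathfrak{m}_\eta^2}\cdot \tau_{\eta,\ast}^2 \pnorm{(\Sigma+\tau_{\eta,\ast} I)^{-1}\Sigma^{1/2}\mu_0}{}^2.
\end{equation*}
Noting that $\tau_{\eta,\ast}^2 \mathfrak{m}_\eta' T_{-2,1}(\eta)/(\phi\mathfrak{m}_\eta^2) = (\phi\mathfrak{m}_\eta - \eta\mathfrak{m}_\eta')/(\phi\mathfrak{m}_\eta^2)$ by the first identity of part (2), the target approximation reduces to showing that, for most $\mu_0 \in B_n(1)$ and uniformly in $\eta \in \Xi_K$,
\begin{equation*}
\bigabs{\pnorm{(\Sigma+\tau_{\eta,\ast} I)^{-1}\Sigma^{1/2}\mu_0}{}^2 - \pnorm{\mu_0}{}^2 \cdot T_{-2,1}(\eta)} \lesssim \epsilon.
\end{equation*}

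The main technical step is therefore the above uniform concentration. I will place a uniform prior $\mu_0 = U_0 \cdot g'/\pnorm{g'}{}$ with $U_0 \sim \mathrm{Unif}[0,1]$ and $g' \sim \mathcal{N}(0,I_n)$ independent, exactly as in the construction of $\mathcal{U}_\vartheta$ in the proof of Proposition \ref{prop:delocal_u_v}. Conditionally on $U_0$, the quantity $\pnorm{(\Sigma+\tau_{\eta,\ast} I)^{-1}\Sigma^{1/2} g'/\pnorm{g'}{}}{}^2$ concentrates around $T_{-2,1}(\eta)$ uniformly in $\eta \in \Xi_K$ by Lemma \ref{lem:conc_g_quad} (with $q=1/2$), producing a deterministic direction set on $\partial B_n(1)$ of Gaussian probability at least $1 - C\epsilon^{-1}e^{-n\epsilon^2/C}$. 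Rescaling by $U_0 \in [0,1]$ produces a set $\mathcal{U}_\epsilon \subset B_n(1)$ of the same fractional volume, on which the uniform bound holds. The hardest part of this argument is tracking the uniform-in-$\eta$ control: this is already handled in Lemma \ref{lem:conc_g_quad} by a discretization of $\Xi_K$ combined with Lipschitz continuity of $\eta \mapsto \tau_{\eta,\ast}$ from Proposition \ref{prop:fpe_est}-(3). For the isotropic case $\Sigma = I_n$, the quantity $\pnorm{(I + \tau_{\eta,\ast} I)^{-1} \mu_0}{}^2 = (1+\tau_{\eta,\ast})^{-2}\pnorm{\mu_0}{}^2$ is exact for every $\mu_0$, so we may take $\mathcal{U}_\epsilon = B_n(1)$ and $\epsilon = 0$.
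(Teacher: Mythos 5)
Your proposal is correct, and the overall architecture (identify $\tau_{\eta,\ast}=1/\mathfrak{m}_\eta$, reduce $\gamma_{\eta,\ast}^2$ to a deterministic formula modulo the single quantity $\pnorm{(\Sigma+\tau_{\eta,\ast}I)^{-1}\Sigma^{1/2}\mu_0}{}^2$, then invoke Lemma \ref{lem:conc_g_quad} with $q=1/2$ and the uniform-prior construction) matches the paper's for parts (1) and (3). The only genuine deviation is in part (2), and particularly the second trace identity: the paper proves it by differentiating the relation $n^{-1}\tr((\Sigma+\tau_{\eta,\ast}I)^{-1})=\mathfrak{m}_\eta(1-\phi+\eta\mathfrak{m}_\eta)$ in $\eta$ and then dividing out $\partial_\eta\tau_{\eta,\ast}$, whereas you derive it purely algebraically from the resolvent identity $(\Sigma+\tau I)^{-2}=\tau^{-1}\big[(\Sigma+\tau I)^{-1}-(\Sigma+\tau I)^{-2}\Sigma\big]$ together with the already-established first trace identity. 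I verified the algebra: both give $T_{-2,0}=\mathfrak{m}_\eta^2-\phi\mathfrak{m}_\eta^2+2\eta\mathfrak{m}_\eta^3-\phi\mathfrak{m}_\eta^4/\mathfrak{m}_\eta'$, which matches the stated formula. Your route avoids a second differentiation and is arguably cleaner; the paper's is more systematic in that it treats both identities by the same differentiation mechanism. Similarly, for part (3) you first collapse the denominator of \eqref{ineq:fpe_property_2} exactly to $\phi\mathfrak{m}_\eta^2/\mathfrak{m}_\eta'$ and then approximate only the numerator's $\mu_0$-term, whereas the paper approximates first and then simplifies; the net computation is identical. No gaps.
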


\begin{proof}
	(1) follows from definition so we focus on (2)-(3).

	\noindent (2). Differentiating both sides of (\ref{eq:traceSigma_to_m}) with respect to $\eta$ yields that
	\begin{align*}
	-n^{-1}\tr \big((\Sigma+\tau_{\eta,\ast} I)^{-2} \Sigma\big)\cdot \partial_\eta \tau_{\eta,\ast} = -\big(\mathfrak{m}_\eta-({\eta}/{\phi})\mathfrak{m}_\eta'\big).
	\end{align*}
	Now using $\partial_\eta \tau_{\eta,\ast} =  \mathfrak{m}_\eta'/(\phi  \mathfrak{m}_\eta^2)$ to obtain the formula for $n^{-1}\tr \big((\Sigma+\tau_{\eta,\ast} I)^{-2} \Sigma\big)$. 
	
	Next, using that $
	\phi- \frac{\eta}{\tau_{\eta,\ast}} =n^{-1}\tr \big((\Sigma+\tau_{\eta,\ast} I)^{-1} \Sigma\big) = 1-\tau_{\eta,\ast}\cdot n^{-1}\tr \big((\Sigma+\tau_{\eta,\ast} I)^{-1} \big)$,
	we may solve
	\begin{align*}
	n^{-1}\tr \big((\Sigma+\tau_{\eta,\ast} I)^{-1} \big) = \mathfrak{m}_\eta\big(1-\phi+\eta \cdot\mathfrak{m}_\eta\big). 
	\end{align*}
	Differentiating with respect to $\eta$ on both sides of the above display, we obtain
	\begin{align*}
	-n^{-1}\tr \big((\Sigma+\tau_{\eta,\ast} I)^{-2} \big)\cdot \partial_\eta \tau_{\eta,\ast} &= -\phi^{-1} {\mathfrak{m}_\eta'}\big(1-\phi+\eta \cdot\mathfrak{m}_\eta\big)+ \mathfrak{m}_\eta\cdot\big(\mathfrak{m}_\eta-({\eta}/{\phi})\mathfrak{m}_\eta'\big)\\
	& = -(\phi^{-1}-1) \mathfrak{m}_\eta' -2({\eta}/{\phi})\cdot \mathfrak{m}_\eta\mathfrak{m}_\eta'+ \mathfrak{m}_\eta^2,
	\end{align*}
	proving the second identity.

	\noindent (3).	Let $\mu_0 \equiv U_0 g_0/\pnorm{g_0}{}$, where $U_0\sim \mathrm{Unif}[0,1]$ and $g_0\sim \mathcal{N}(0,I_n)$ are independent variables. Then $\mu_0$ is uniformly distributed on $B_n(1)$. 
	For some $\epsilon>0$ to be chosen later, let 
	\begin{align}\label{ineq:tau_gamma_m_1}
	\mathcal{G}_\epsilon&\equiv \Big\{g \in \R^n: \sup_{\eta \in \Xi_K} \Big|\bigpnorm{(\Sigma+\tau_{\eta,\ast}I)^{-1}\Sigma^{1/2}\frac{g}{\pnorm{g}{}} }{}^2 - \frac{1}{n}\tr\big((\Sigma+\tau_{\eta,\ast}I)^{-2}\Sigma\big) \Big|\leq \epsilon\Big\}.
	\end{align}
	Let $\mathcal{U}_\epsilon\equiv \{U g/\pnorm{g}{}: U \in [0,1], g \in \mathcal{G}_\epsilon\}\subset B_n(1)$. Using Lemma \ref{lem:conc_g_quad}, there exists some constant $C_0=C_0(K)>0$ such that $
	{\mathrm{vol}(\mathcal{U}_\epsilon)}/{\mathrm{vol}(B_n(1))}=\Prob_{\mu_0}(\mu_0 \in \mathcal{U}_\epsilon)\geq 1-C_0 \epsilon^{-1} e^{-n\epsilon^2/C_0}$, and moreover, 
	\begin{align*}
	\sup_{\mu_0 \in \mathcal{U}_\epsilon} \sup_{\eta \in \Xi_K} \bigabs{\pnorm{(\Sigma+\tau_{\eta,\ast}I)^{-1}\Sigma^{1/2}\mu_0}{}^2- \pnorm{\mu_0}{}^2\cdot n^{-1}\tr\big((\Sigma+\tau_{\eta,\ast}I)^{-2}\Sigma\big) }\leq \epsilon.
	\end{align*}
	Note that when $\Sigma=I_n$, the above estimate holds for all $\mu_0 \in B_n(1)$ with $\epsilon=0$.
	
	Combining the above display with the formula (\ref{ineq:fpe_property_2}) for $\gamma_{\eta,\ast}^2$, and the fact that the denominator therein is of order $1$ (depending on $K$), we have 
	\begin{align*}
	\sup_{\mu_0 \in \mathcal{U}_\epsilon} \sup_{\eta \in \Xi_K} \biggabs{\gamma_{\eta,\ast}^2- \frac{\sigma_\xi^2+\pnorm{\mu_0}{}^2\tau_{\eta,\ast}^2\cdot \frac{1}{n}\tr\big((\Sigma+\tau_{\eta,\ast}I)^{-2}\Sigma\big)}{\frac{\eta}{\tau_{\eta,\ast}}+\tau_{\eta,\ast}\cdot \frac{1}{n}\tr\big((\Sigma+\tau_{\eta,\ast}I)^{-2}\Sigma\big)  }  }\leq C_1\epsilon.
	\end{align*}
	Now using (2), the second term in the above display equals to  
	\begin{align*}
	&\frac{\sigma_\xi^2+\pnorm{\mu_0}{}^2\cdot \frac{\phi }{\mathfrak{m}_\eta'}\big(\mathfrak{m}_\eta-\frac{\eta}{\phi}\mathfrak{m}_\eta'\big)}{\eta \mathfrak{m}_\eta+ \frac{\phi \mathfrak{m}_\eta}{\mathfrak{m}_\eta'}\big(\mathfrak{m}_\eta-\frac{\eta}{\phi}\mathfrak{m}_\eta'\big) } = \frac{\sigma_\xi^2 \mathfrak{m}_\eta'+ \pnorm{\mu_0}{}^2  \big(\phi\mathfrak{m}_\eta-\eta\mathfrak{m}_\eta'\big) }{\phi \mathfrak{m}_\eta^2}.
	\end{align*}
	The claim follows by adjusting constants. 
\end{proof}

\begin{proof}[Proof of Theorem \ref{thm:error_rmt}]
	As $\bar{R}^{\pred}_{(\Sigma,\mu_0)}(\eta) = \phi \gamma_{\eta,\ast}^2-\sigma_\xi^2$, directly invoking Proposition \ref{prop:tau_gamma_m}-(3) yields the claim for $\bar{R}^{\pred}_{(\Sigma,\mu_0)}(\eta)$.
	
	Next we handle $\bar{R}^{\est}_{(\Sigma,\mu_0)}(\eta)$. Note that
	\begin{align}\label{ineq:error_rmt_1}
	\bar{R}^{\est}_{(\Sigma,\mu_0)}(\eta) = \tau_{\eta,\ast}^2 \pnorm{(\Sigma+\tau_{\eta,\ast}I)^{-1}\mu_0}{}^2+ \gamma_{\eta,\ast}^2 \cdot n^{-1} \tr\big((\Sigma+\tau_{\eta,\ast} I)^{-2}\Sigma\big). 
	\end{align}
	Using a similar construction as in the proof of Proposition \ref{prop:tau_gamma_m} via the help of Lemma \ref{lem:conc_g_quad}, this time with $q=0$ therein, we may find some $\mathcal{U}_\epsilon \subset B_n(1)$ with the desired volume estimate, such that both Proposition \ref{prop:tau_gamma_m}-(3) and
	\begin{align}\label{ineq:error_rmt_2}
	\sup_{\mu_0 \in \mathcal{U}_\epsilon} \sup_{\eta \in \Xi_K} \bigabs{\pnorm{(\Sigma+\tau_{\eta,\ast}I)^{-1}\mu_0}{}^2- \pnorm{\mu_0}{}^2\cdot n^{-1}\tr\big((\Sigma+\tau_{\eta,\ast}I)^{-2}\big) }\leq \epsilon
	\end{align}
	hold. Combining (\ref{ineq:error_rmt_1})-(\ref{ineq:error_rmt_2}), we may set
	\begin{align*}
	\mathscr{R}^{\est}_{(\Sigma,\mu_0)}(\eta)&\equiv \tau_{\eta,\ast}^2 \pnorm{\mu_0}{}^2\cdot  n^{-1}\tr\big((\Sigma+\tau_{\eta,\ast}I)^{-2}\big)\\
	&\qquad + (\phi \mathfrak{m}_\eta^2)^{-1}\Big(\sigma_\xi^2 \mathfrak{m}_\eta'+ \pnorm{\mu_0}{}^2   \big(\phi\mathfrak{m}_\eta-\eta\mathfrak{m}_\eta'\big) \Big)\cdot n^{-1} \tr\big((\Sigma+\tau_{\eta,\ast} I)^{-2}\Sigma\big)\\
	&\equiv R_{2,1}+R_{2,2}.
	\end{align*}
	By Proposition \ref{prop:tau_gamma_m}-(2), we may compute $R_{2,1},R_{2,2}$ separately:
	\begin{align*}
	R_{2,1}& =  \pnorm{\mu_0}{}^2 \cdot \frac{\phi }{\mathfrak{m}_\eta'}\cdot \Big((\phi^{-1}-1) \mathfrak{m}_\eta' +2({\eta}/{\phi})\cdot \mathfrak{m}_\eta\mathfrak{m}_\eta'- \mathfrak{m}_\eta^2\Big)\\
	& = \pnorm{\mu_0}{}^2(1-\phi)+ \Big\{2\pnorm{\mu_0}{}^2 \eta \mathfrak{m}_\eta- \pnorm{\mu_0}{}^2\phi\cdot  \frac{\mathfrak{m}_\eta^2}{\mathfrak{m}_\eta'}\Big\},\\
	R_{2,2}& = \frac{1}{\mathfrak{m}_\eta'} \Big(\sigma_\xi^2 \mathfrak{m}_\eta'+ \pnorm{\mu_0}{}^2  \big(\phi\mathfrak{m}_\eta-\eta\mathfrak{m}_\eta'\big)\Big)\cdot \big(\mathfrak{m}_\eta-({\eta}/{\phi})\mathfrak{m}_\eta'\big)\\
	& = \sigma_\xi^2 \big(\mathfrak{m}_\eta-({\eta}/{\phi})\mathfrak{m}_\eta'\big)+ \phi^{-1} \pnorm{\mu_0}{}^2 \eta^2 \mathfrak{m}_\eta' - \Big\{2\pnorm{\mu_0}{}^2 \eta \mathfrak{m}_\eta- \pnorm{\mu_0}{}^2\phi\cdot \frac{\mathfrak{m}_\eta^2}{\mathfrak{m}_\eta'}\Big\}. 
	\end{align*}
	Consequently,
	\begin{align*}
	\mathscr{R}^{\est}_{(\Sigma,\mu_0)}(\eta)& = \pnorm{\mu_0}{}^2(1-\phi)+ \sigma_\xi^2 \big(\mathfrak{m}_\eta-({\eta}/{\phi})\mathfrak{m}_\eta'\big)+ \phi^{-1} \pnorm{\mu_0}{}^2 \eta^2 \mathfrak{m}_\eta'\\
	& = \sigma_\xi^2 \cdot \Big\{\SNR_{\mu_0}(1-\phi)+  \mathfrak{m}_\eta + ({\eta}/{\phi})\big(\eta\cdot \SNR_{\mu_0}-1\big) \mathfrak{m}_\eta'\Big\}.
	\end{align*}
	The claims for $\mathscr{R}^{\ins}_{(\Sigma,\mu_0)}(\eta)$ and $\mathscr{R}^{\res}_{(\Sigma,\mu_0)}(\eta)$ follow from Proposition \ref{prop:tau_gamma_m}-(3).
\end{proof}

\subsection{Proof of Proposition \ref{prop:derivative_R_simplify}}

We will prove the following version of Proposition \ref{prop:derivative_R_simplify}, where $\mathfrak{M}^{\#}$ is represented via $\tau_{\eta,\ast}$ instead of $\mathfrak{m}$. In the proof below, we will also verify the representation of $\mathfrak{M}^{\#}$ via $\mathfrak{m}$ as stated in Proposition \ref{prop:derivative_R_simplify}.

\begin{proposition}\label{prop:derivative_R}
	Recall $\SNR_{\mu_0}=\pnorm{\mu_0}{}^2/\sigma_\xi^2$. Then for $\# \in \{\pred,\est,\ins\}$,
	\begin{align*}
	\partial_\eta \mathscr{R}^{\#}_{(\Sigma,\mu_0)}(\eta) & = \sigma_\xi^2 \cdot \mathfrak{M}^{\#}(\eta)\cdot  \big(\eta\cdot \SNR_{\mu_0}-1\big).
	\end{align*}
	Here with $T_{-p,q}(\eta)\equiv n^{-1} \tr\big((\Sigma+\tau_{\ast}(\eta)I)^{-p}\Sigma^q\big)$ for $p,q \in \mathbb{N}$,
	\begin{align*}
	\mathfrak{M}^{\#}(\eta)\equiv 
	\begin{cases}
	\phi \big(-\tau_{\ast}''(\eta)\big), & \# = \pred;\\
	2(\tau_\ast'(\eta))^2\big(T_{-3,1}(\eta)+\tau_\ast'(\eta) T_{-2,1}(\eta)T_{-3,2}(\eta)\big), & \# = \est;\\
	\frac{2(\tau_\ast'(\eta))^2}{\tau_\ast^2(\eta)}\Big(\eta^2 \tau_\ast'(\eta) T_{-3,2}(\eta)+\tau_\ast^3(\eta) T_{-2,1}^2(\eta)\Big), & \#=\ins.
	\end{cases}
	\end{align*}
	Suppose further $1/K\leq \phi^{-1}\leq K$ and $\pnorm{\Sigma}{\op} \vee \mathcal{H}_\Sigma\leq K$ for some $K>0$. Then there exists some $C=C(K)>0$ such that uniformly in $\eta \in \Xi_K$ and for all $\# \in \{\pred,\est,\ins\}$,
	\begin{enumerate}
		\item $1/C\leq \mathfrak{M}^{\#}(\eta)\leq C$, and
		\item if $\eta_\ast\equiv \SNR_{\mu_0}^{-1} \in \Xi_K$, then $
		1/C\leq {\abs{\mathscr{R}^{\#}_{(\Sigma,\mu_0)}(\eta)- \mathscr{R}^{\#}_{(\Sigma,\mu_0)}(\eta_\ast)}}\big/{ \pnorm{\mu_0}{}^2(\eta-\eta_\ast)^2}\leq C$. 
	\end{enumerate}
	 
\end{proposition}

\begin{proof}
	In the proof we write $\tau_{\eta,\ast}=\tau_\eta$. Recall the notation $\mathfrak{m}_\eta=\mathfrak{m}(-\eta/\phi)$, $\mathfrak{m}_\eta'= \mathfrak{m}'(-\eta/\phi)$, and we naturally write $\mathfrak{m}_\eta''\equiv \mathfrak{m}''(-\eta/\phi)$. By differentiating with respect to $\eta$ for both sides of $\mathfrak{m}_\eta=1/\tau_\eta$, with some calculations we have
	\begin{align}\label{ineq:derivative_R_1}
	\mathfrak{m}_\eta = \tau_{\eta}^{-1},\quad \mathfrak{m}_\eta' =  {\phi \tau_{\eta}'}/{\tau_\eta^2},\quad \mathfrak{m}_\eta'' = -{\phi^2}\big(\tau_{\eta}''\tau_{\eta}-2(\tau_{\eta}')^2\big)/\tau_{\eta}^3.
	\end{align}	
	Using $\rho$, we may also write $
	\mathfrak{m}_\eta^{(q)} =  q!\int \frac{\rho(\d x)}{(x+\eta/\phi)^{q+1}}$ for $q \in \mathbb{N}$. Here by convention $0!=1$.

	\noindent (1). Using the formula for $\mathscr{R}^{\pred}_{(\Sigma,\mu_0)}$, 
	\begin{align*}
	\partial_\eta \mathscr{R}^{\pred}_{(\Sigma,\mu_0)}(\eta)& = \sigma_\xi^2\cdot \partial_\eta \Big\{ \mathfrak{m}_\eta^{-2}\big(\phi\cdot \SNR_{\mu_0} \mathfrak{m}_\eta-\big(\eta\cdot \SNR_{\mu_0}-1\big) \mathfrak{m}_\eta'\big)  \Big\}\\
	& =  \phi^{-1}\sigma_\xi^2\cdot \mathfrak{m}_\eta^{-3}\big(\mathfrak{m}_\eta\mathfrak{m}_\eta''-2(\mathfrak{m}_\eta')^2\big)\cdot \big(\eta\cdot \SNR_{\mu_0}-1\big).
	\end{align*}
	Some calculations show that
	\begin{align*}
	\mathfrak{m}_\eta\mathfrak{m}_\eta''-2(\mathfrak{m}_\eta')^2&=\tau_\eta^{-3}\phi^2(-\tau_\eta'')= 2\bigg\{\int \frac{\rho(\d x)}{(x+\eta/\phi)}\int \frac{\rho(\d x)}{(x+\eta/\phi)^3}-\bigg(\int \frac{\rho(\d x)}{(x+\eta/\phi)^2}\bigg)^2\bigg\},
	\end{align*}
	 so the identity follows.

	\noindent (2). Using the formula for $\mathscr{R}^{\est}_{(\Sigma,\mu_0)}$, 
	\begin{align}\label{ineq:derivative_R_2}
	\partial_\eta \mathscr{R}^{\est}_{(\Sigma,\mu_0)}(\eta) &=\sigma_\xi^2\cdot \partial_\eta \big(\SNR_{\mu_0}(1-\phi)+  \mathfrak{m}_\eta + ({\eta}/{\phi})(\eta\cdot \SNR_{\mu_0}-1) \mathfrak{m}_\eta'\big)\nonumber\\
	& = \phi^{-1}\sigma_\xi^2 \cdot \big(2\mathfrak{m}_\eta' -({\eta}/{\phi}) \mathfrak{m}_\eta'' \big)\cdot (\eta\cdot \SNR_{\mu_0}-1).
	\end{align}
	To compute the second term in the above display, recall the identity for $\tau_\eta',\tau_\eta''$ in (\ref{ineq:fpe_property_5_0})-(\ref{ineq:fpe_property_5_1}). Also recall $G_0(\eta)=\eta+\tau_\eta^2 T_{-2,1}(\eta)=\tau_\eta/\tau_\eta'$ defined in (\ref{ineq:fpe_property_5_0}). Then
	\begin{align*}
	&2\mathfrak{m}_\eta' -\frac{\eta}{\phi} \mathfrak{m}_\eta'' = \frac{\phi}{\tau_{\eta}}\bigg\{\frac{2\tau_\eta'}{\tau_\eta}\bigg(1-\frac{\eta\tau_\eta'}{\tau_\eta}\bigg)+\frac{\eta\tau_\eta''}{\tau_\eta}\bigg\} = \frac{\phi}{\tau_{\eta}}\bigg\{\frac{2\tau_\eta'}{\tau_\eta}\frac{\tau_\eta^2 T_{-2,1}(\eta)}{G_0(\eta)}-\frac{2\eta \tau_\eta\tau_\eta'}{G_0^2(\eta)}T_{-3,2}(\eta)\bigg\}\\
	& = \frac{2\phi \tau_{\eta}'}{G_0(\eta)} \Big(T_{-2,1}(\eta)-\frac{\eta}{G_0(\eta)} T_{-3,2}(\eta)\Big)\stackrel{(\ast)}{=}\frac{2\phi \tau_{\eta}'}{G_0(\eta)}\bigg\{\tau_\eta T_{-3,1}(\eta)+\bigg(1-\frac{\eta}{G_0(\eta)}\bigg)T_{-3,2}(\eta) \bigg\}\\
	& = 2\phi (\tau_\eta')^2\big(T_{-3,1}(\eta)+\tau_\eta' T_{-2,1}(\eta)T_{-3,2}(\eta)\big).
	\end{align*}
	Here in $(\ast)$ we used $T_{-2,1}(\eta)-T_{-3,2}(\eta)=\tau_\eta T_{-3,1}(\eta)$. The claimed identity follows by combining the above display and (\ref{ineq:derivative_R_2}). Using $\rho$, we may write 
	\begin{align*}
	2\mathfrak{m}_\eta' -({\eta}/{\phi}) \mathfrak{m}_\eta''&= 2\int \frac{x}{(x+\eta/\phi)^3}\,\rho(\d{x}).
	\end{align*}

	\noindent (3). Using the formula for $\mathscr{R}^{\ins}_{(\Sigma,\mu_0)}$, 
	\begin{align}\label{ineq:derivative_R_3}
	\partial_\eta \mathscr{R}^{\ins}_{(\Sigma,\mu_0)}(\eta) &=\sigma_\xi^2\cdot \partial_\eta\Big\{ \phi^{-1}\eta^2 \big(\phi\cdot \SNR_{\mu_0} \mathfrak{m}_\eta-(\eta\cdot \SNR_{\mu_0}-1) \mathfrak{m}_\eta'\big)+  (\phi- 2\eta\mathfrak{m}_\eta)\Big\}\nonumber\\
	& =\sigma_\xi^2\cdot \big(2\mathfrak{m}_\eta-4({\eta}/{\phi})\mathfrak{m}_\eta'+\phi^{-2}\eta^2 \mathfrak{m}_\eta''\big)\cdot (\eta\cdot\SNR_{\mu_0}-1).
	\end{align}
	The second term in the above display requires some non-trivial calculations: 
	\begin{align*}
	2\mathfrak{m}_\eta-\frac{4\eta}{\phi}\mathfrak{m}_\eta'+\frac{\eta^2}{\phi^2} \mathfrak{m}_\eta''&=\frac{1}{\tau_\eta}\bigg\{2-4\eta \frac{\tau_\eta'}{\tau_\eta}-\eta^2 \frac{\tau_\eta''}{\tau_\eta}+2\eta^2 \bigg(\frac{\tau_\eta'}{\tau_\eta}\bigg)^2\bigg\}\\
	& = \frac{1}{\tau_\eta}\bigg\{2-\frac{4\eta}{G_0(\eta)}+\eta^2 \frac{2\tau_\eta\tau_\eta' T_{-3,2}(\eta)}{G_0^2(\eta)}+\frac{2\eta^2}{G_0^2(\eta)}\bigg\}\\
	&= \frac{2}{\tau_\eta G_0^2(\eta)}\big\{G_0^2(\eta)-2\eta G_0(\eta)+\eta^2 \tau_\eta\tau_\eta' T_{-3,2}(\eta)+\eta^2\big\}.
	\end{align*}
	Expanding the $G_0(\eta)$ terms in the bracket using $G_0(\eta)=\eta+\tau_\eta^2 T_{-2,1}(\eta)$, with some calculations we arrive at
	\begin{align*}
	2\mathfrak{m}_\eta-\frac{4\eta}{\phi}\mathfrak{m}_\eta'+\frac{\eta^2}{\phi^2} \mathfrak{m}_\eta''&=\frac{2}{ G_0^2(\eta)}\Big(\eta^2 \tau_\eta' T_{-3,2}(\eta)+\tau_\eta^3 T_{-2,1}^2(\eta)\Big). 
	\end{align*}
	The claimed identity follows by combining the above display and (\ref{ineq:derivative_R_3}). Using $\rho$,  
	\begin{align*}
	2\mathfrak{m}_\eta-\frac{4\eta}{\phi}\mathfrak{m}_\eta'+\frac{\eta^2}{\phi^2} \mathfrak{m}_\eta''= 2 \int \frac{x^2}{(x+\eta/\phi)^3}\,\rho(\d{x}).
	\end{align*}	
	Finally, the claimed first two-sided bound on $\mathfrak{M}^{\#}$ follows from Proposition \ref{prop:fpe_est}, and the second bound follows by using the fundamental theorem of calculus. 
\end{proof}

\subsection{A rigorous version of (\ref{eqn:opt_reg_l2}) and its proof} \label{sec:thm_statement_opt_reg_l2}

The theorem below presents a rigorous formulation of (\ref{eqn:opt_reg_l2}).
\begin{theorem}\label{thm:small_intep}
	Suppose Assumptions \ref{assump:design}-\ref{assump:noise} hold, and $\pnorm{\Sigma^{-1}}{\op}\vee \pnorm{\Sigma}{\op}\leq K$ for some $K>0$. Fix a small enough $\vartheta \in (0,1/50)$.  The following hold for all $\# \in \{\pred,\est,\ins\}$. 
	\begin{enumerate}
		\item (\textbf{Noisy case}). Suppose $1/K\leq \phi^{-1} \leq K$  and $1/K\leq \sigma_\xi^2\leq K$. Fix $\delta \in (0,1/2]$ and $L\geq K/\delta^2$. There exist a constant $C=C(K,L,\delta,\vartheta)>0$ and a measurable set $\mathcal{U}_{\delta,\vartheta}\subset B_n(1)\setminus B_n(\delta)$ with $\mathrm{vol}(\mathcal{U}_{\delta,\vartheta})/\mathrm{vol}(B_n(1)\setminus B_n(\delta))\geq 1- C e^{-n^{\vartheta}/C}$, such that
		\begin{align*}
		\sup_{\mu_0 \in \mathcal{U}_{\delta,\vartheta}}\Prob\bigg(\inf_{\eta' \in \Xi_L: \abs{\eta'-\SNR_{\mu_0}^{-1}}\geq \delta}\bigabs{R^{\#}_{(\Sigma,\mu_0)}(\eta')- \min_{\eta \in \Xi_{L}}R^{\#}_{(\Sigma,\mu_0)}(\eta)}<\frac{1}{C}\bigg)\leq Cn^{-1/7}.
		\end{align*}
		\item (\textbf{Noiseless case}). Suppose $1+1/K\leq \phi^{-1} \leq K$  and $\sigma_\xi^2=0$. There exist a constant $C=C(K,\vartheta)>0$ and a measurable set $\mathcal{U}_{\vartheta}\subset B_n(1)$ with $\mathrm{vol}(\mathcal{U}_{\vartheta})/\mathrm{vol}(B_n(1))\geq 1- C e^{-n^{\vartheta}/C}$, such that
		\begin{align*}
		\sup_{\mu_0 \in \mathcal{U}_{\vartheta}}\Prob\Big(R^{\#}_{(\Sigma,\mu_0)}(0)\geq \min_{\eta \in [0,K]}R^{\#}_{(\Sigma,\mu_0)}(\eta)+n^{-\vartheta}\Big)\leq Cn^{-1/7}.
		\end{align*}
	\end{enumerate}
\end{theorem}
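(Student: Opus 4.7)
The plan is to combine Theorems \ref{thm:errors_explicit} and \ref{thm:error_rmt}---which together give $R^{\#}_{(\Sigma,\mu_0)} \approx \mathscr{R}^{\#}_{(\Sigma,\mu_0)}$ uniformly on $\Xi_K$ for ``most'' $\mu_0$---with the derivative identity in Proposition \ref{prop:derivative_R_simplify} and the uniform bound (\ref{eqn:M_stable}) on $\mathfrak{M}^{\#}$. The identity $\partial_\eta \mathscr{R}^{\#}(\eta) = \sigma_\xi^2 \mathfrak{M}^{\#}(\eta)(\eta \SNR_{\mu_0} - 1)$ pins down $\eta_\ast = \SNR_{\mu_0}^{-1}$ as the unique minimizer of $\mathscr{R}^{\#}$, and $\mathfrak{M}^{\#} \asymp 1$ upgrades this to a locally quadratic lower bound.

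\emph{Noisy case.} For $\mu_0 \in B_n(1)\setminus B_n(\delta)$ and $\sigma_\xi^2 \in [1/K,K]$ one has $\eta_\ast \in [1/K, K/\delta^2] \subset \Xi_L$ once $L \ge K/\delta^2$. Integrating the derivative identity from $\eta_\ast$ to $\eta'$ and using $\mathfrak{M}^{\#}(\eta) \gtrsim_{K,L} 1$ on $\Xi_L$ (by (\ref{eqn:M_stable}) with $L$ in place of $K$) yields
\[
\mathscr{R}^{\#}(\eta') - \mathscr{R}^{\#}(\eta_\ast) \;\gtrsim_{K,L}\; \sigma_\xi^2 \SNR_{\mu_0}\,(\eta'-\eta_\ast)^2 \;=\; \pnorm{\mu_0}{}^2 (\eta'-\eta_\ast)^2 \;\ge\; \delta^4
\]
for every $\eta' \in \Xi_L$ with $|\eta'-\eta_\ast| \ge \delta$. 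Applying Theorems \ref{thm:errors_explicit} and \ref{thm:error_rmt} with $\epsilon$ a small constant depending on $K,L,\delta$ gives $\sup_{\eta \in \Xi_L}|R^{\#}(\eta) - \mathscr{R}^{\#}(\eta)| \le \delta^4/C$ on a high-probability event; comparing $R^{\#}(\eta')$ to $R^{\#}(\eta_\ast) \ge \min_{\eta \in \Xi_L} R^{\#}(\eta)$ then yields the claimed $1/C$ separation. The ``good'' $\mu_0$ set is taken as the intersection of those produced by Theorems \ref{thm:errors_explicit} and \ref{thm:error_rmt}, which still carries an exponentially large volume fraction.

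\emph{Noiseless case.} Setting $\sigma_\xi^2 = 0$ and using the convention $\sigma_\xi^2 \SNR_{\mu_0} = \pnorm{\mu_0}{}^2$ gives $\partial_\eta \mathscr{R}^{\#}(\eta) = \eta \pnorm{\mu_0}{}^2 \mathfrak{M}^{\#}(\eta) \ge 0$, so $\mathscr{R}^{\#}$ is non-decreasing on $[0,K]$ and minimized at $\eta = 0$. For $\# = \ins$ the conclusion is immediate: in the overparametrized regime $\phi^{-1} \ge 1 + 1/K$ the interpolator satisfies $X\hat{\mu}_0 = Y = X\mu_0$, so $R^{\ins}(0) = 0 = \min_\eta R^{\ins}(\eta)$ deterministically. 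For $\# = \est$, the explicit noiseless identity $\hat{\mu}_\eta - \mu_0 = -\eta(n^{-1}X^\top X + \eta I)^{-1}\mu_0$ reduces $R^{\est}(\eta)$ to $\sum_i \iprod{\mu_0}{V_i}^2 \eta^2/(s_i+\eta)^2$ in the eigenbasis $\{V_i\}$ of $n^{-1}X^\top X$, which is term-by-term monotone, hence $R^{\est}(0) = \min_\eta R^{\est}(\eta)$ deterministically. For $\# = \pred$, $\Sigma$ and $n^{-1}X^\top X$ do not commute so monotonicity of the empirical curve is not automatic; instead I would combine the monotonicity of $\mathscr{R}^{\pred}$ with a uniform approximation $\sup_{\eta \in [0,K]}|R^{\pred}(\eta) - \mathscr{R}^{\pred}(\eta)| = o(n^{-\vartheta})$ to conclude $R^{\pred}(0) \le \min_\eta R^{\pred}(\eta) + n^{-\vartheta}$ with high probability.

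\emph{Main obstacle.} Theorem \ref{thm:errors_explicit} as stated requires $\sigma_\xi^2 \in [1/K,K]$, so the step $R^{\pred} \approx \mathscr{R}^{\pred}$ for the noiseless case is not immediate. I expect to handle this either by (a) re-running the proof of Theorem \ref{thm:errors_explicit} for $\# = \pred$ at $\sigma_\xi^2 = 0$---which should be routine since $R^{\pred}(\eta) = \eta^2 \mu_0^\top(n^{-1}X^\top X + \eta I)^{-1}\Sigma(n^{-1}X^\top X+\eta I)^{-1}\mu_0$ is quadratic in the resolvent of $n^{-1}X^\top X$ and amenable to the anisotropic local laws already invoked in Section \ref{section:err_phase_trans}---or by (b) a perturbation argument: replace $\xi = 0$ by $\xi_\epsilon \sim \mathcal{N}(0,\epsilon I_m)$, apply the noisy theorem uniformly in $\epsilon$, and send $\epsilon \downarrow 0$ using Lipschitz continuity of $\xi \mapsto R^{\pred}(\eta)$.
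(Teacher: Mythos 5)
Your noisy-case argument is correct and is essentially the paper's proof: you both intersect the "good" $\mu_0$ sets from Theorem \ref{thm:errors_explicit} and Theorem \ref{thm:error_rmt}, observe $\eta_\ast = \SNR_{\mu_0}^{-1} \in \Xi_L$ once $L \ge K/\delta^2$ and $\pnorm{\mu_0}{}\ge\delta$, and then convert the quadratic lower bound $\mathscr{R}^{\#}(\eta')-\mathscr{R}^{\#}(\eta_\ast) \gtrsim \pnorm{\mu_0}{}^2(\eta'-\eta_\ast)^2 \ge \delta^4$ into a gap for the empirical risk via the uniform approximation, choosing $\epsilon$ a small constant. (The paper phrases the quadratic bound via Proposition \ref{prop:derivative_R}-(2) rather than integrating the derivative identity plus (\ref{eqn:M_stable}), but these are the same fact.)

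In the noiseless case your route genuinely diverges, and in an interesting way. For $\#\in\{\ins,\est\}$ you observe that the empirical risk curves themselves are deterministically minimized at $\eta=0$: $X\hat{\mu}_0 = Y = X\mu_0$ gives $R^{\ins}(0)=0$, and the eigendecomposition of $n^{-1}X^\top X$ gives termwise monotonicity of $R^{\est}(\eta)$ in $\eta$. This is correct and yields a strictly stronger statement (equality, not $+n^{-\vartheta}$, on an event of probability one) at essentially no cost, and cleanly avoids invoking the RMT approximation at all for these two risks. The paper instead handles all three $\#$ uniformly through Lemma \ref{lem:university_squared_risk}, which extends Theorem \ref{thm:errors_explicit} to $\sigma_\xi^2\in[0,K]$ when $\phi^{-1}\ge 1+1/K$ by a Lipschitz-in-$\sigma_\xi$ perturbation: bound $R^{\#}(\eta,\sigma_\xi)$ and $\bar{R}^{\#}(\eta,\sigma_\xi)$ in terms of a small positive noise level $\sigma_\xi'\asymp\epsilon$ (where the original theorem applies, with constants polynomial in $K\asymp\epsilon^{-2}$) and absorb the Lipschitz error. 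Your route (b) for $\pred$ is precisely this in spirit; route (a) would also work but is heavier. So the only real loose end in the proposal is the $\pred$ noiseless case, which you flag honestly, and either of your two suggested fixes closes it.
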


We need a few lemmas to prove Theorem \ref{thm:small_intep}.

The following lemma gives a technical extension of Theorem \ref{thm:errors_explicit} for $\# \in \{\pred,\est,\ins\}$ under  $\sigma_\xi^2\approx 0$ when $\phi^{-1}>1$. For $\# = \ins$, the extension also allows uniform control over $\eta \approx 0$ under both the above small variance scenario with $\phi^{-1}>1$, and under the original conditions.

\begin{lemma}\label{lem:university_squared_risk}
	Suppose Assumption \ref{assump:design} holds and the following hold for some $K>0$.
	\begin{itemize}
		\item $1+1/K\leq \phi^{-1} \leq K$, $ \pnorm{\Sigma^{-1}}{\op}\vee \pnorm{\Sigma}{\op}\leq K$.
		\item Assumption \ref{assump:noise}  with $\sigma_\xi^2 \in [0,K]$. 
	\end{itemize} 
   Fix a small enough $\vartheta \in (0,1/50)$. Then there exist a constant $C=C(K,\vartheta)>1$, and a measurable set $\mathcal{U}_{\vartheta}\subset B_n(1)$ with $\mathrm{vol}(\mathcal{U}_{\vartheta})/\mathrm{vol}(B_n(1))\geq 1-C e^{-n^{\vartheta}/C}$, such that for any $\epsilon \in (0,1/2]$, and $\# \in \{\pred,\est,\ins,\res\}$,
	\begin{align*}
	&\sup_{\mu_0 \in \mathcal{U}_\vartheta}\Prob\bigg(\sup_{\eta \in \Xi_K}\abs{R^{\#}_{(\Sigma,\mu_0)}(\eta,\sigma_\xi)-\bar{R}^{\#}_{(\Sigma,\mu_0)}(\eta,\sigma_\xi)}\geq \epsilon \bigg) \leq C\cdot
	\begin{cases}
	 n e^{-n\epsilon^{c_0}/C}, & Z=G;\\
	\epsilon^{-c_0}n^{-1/6.5}, & \hbox{otherwise}.
	\end{cases}
	\end{align*}
\end{lemma}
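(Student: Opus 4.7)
The plan is to derive Lemma~\ref{lem:university_squared_risk} from Theorem~\ref{thm:errors_explicit} via two reductions that exploit the strict overparametrization $\phi^{-1}\geq 1+1/K$. First, to handle the small-noise case $\sigma_\xi^2 < 1/K$, I will introduce a Gaussian noise-augmentation: let $g'\sim \mathcal{N}(0,I_m)$ be independent of $(Z,\xi)$, set $\sigma' \equiv \epsilon^{c_1}$ for some small $c_1 > 0$ to be chosen, and consider the augmented response $\tilde{Y}\equiv Y + \sigma' g' = X\mu_0 + \tilde{\xi}$ with $\tilde{\xi}\equiv \xi + \sigma' g'$. Its noise variance $\tilde{\sigma}^2 \equiv \sigma_\xi^2 + (\sigma')^2 \in [(\sigma')^2, K + 1]$ now lies in a regime where Theorem~\ref{thm:errors_explicit} applies, albeit with enlarged constant $K'\equiv K\vee (\sigma')^{-2}\sim \epsilon^{-2c_1}$. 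Running Theorem~\ref{thm:errors_explicit} on the augmented model yields $\sup_{\eta \in \Xi_K}\abs{R^{\#}(\tilde{\mu}_\eta,\tilde{\sigma}^2) - \bar{R}^{\#}(\eta,\tilde{\sigma}^2)}\leq \epsilon/3$ with probability at least $1 - C(K')\cdot \epsilon^{-c_0}n^{-1/6.5}$. The remaining task is to patch back to the original model by bounding the two discrepancies $\pnorm{\tilde{\mu}_\eta - \hat{\mu}_\eta}{}$ and $\abs{\bar{R}^{\#}(\eta,\tilde{\sigma}^2) - \bar{R}^{\#}(\eta,\sigma_\xi^2)}$. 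For the former, writing $\hat{\mu}_\eta = n^{-1}X^\top (XX^\top/n + \eta I_m)^{-1}Y$ and using the high-probability bound $\pnorm{(XX^\top/n)^{-1}}{\op}\lesssim 1$ from Lemma~\ref{lem:conc_E_0} gives $\sup_{\eta \in \Xi_K}\pnorm{\tilde{\mu}_\eta - \hat{\mu}_\eta}{}\lesssim \sigma'\pnorm{g'}{}/\sqrt{n}\lesssim \epsilon^{c_1}$. For the latter, the implicit function theorem applied to (\ref{eqn:fpe}), together with the apriori estimates of Proposition~\ref{prop:fpe_est} which remain valid under $\phi^{-1}\geq 1+1/K$ and $\sigma_\xi^2\geq 0$, yields Lipschitz continuity of $(\gamma_{\eta,\ast},\tau_{\eta,\ast})$ in $\sigma_\xi^2$ with constant depending only on $K$; hence $\abs{\bar{R}^{\#}(\eta,\tilde{\sigma}^2) - \bar{R}^{\#}(\eta,\sigma_\xi^2)}\lesssim (\sigma')^2$. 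Choosing $c_1$ large enough makes both discrepancies at most $\epsilon/3$.

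Second, to extend the uniform $\eta$-control for $\#\in\{\ins,\res\}$ from $[1/K,K]$ down to $\Xi_K$, I use the fact that at $\eta = 0$, interpolation forces $X\hat{\mu}_0 = Y$, so $\hat{r}_0 = 0$ and $R^{\ins}_{(\Sigma,\mu_0)}(0) = \pnorm{\xi}{}^2/n$, which concentrates around $\phi\sigma_\xi^2 = \bar{R}^{\ins}_{(\Sigma,\mu_0)}(0)$ at sub-exponential rate. For $\eta > 0$, the alternative form $\hat{r}_\eta = -\eta n^{-1/2}(XX^\top/n + \eta I_m)^{-1}Y$ combined with $\pnorm{(XX^\top/n)^{-1}}{\op}\lesssim 1$ yields $\pnorm{\hat{r}_\eta - \hat{r}_0}{}\lesssim \eta$ with exponentially high probability, and the same type of bound holds for the theoretical counterpart $\bar{R}^{\res}_{(\Sigma,\mu_0)}(\eta) = (\eta\gamma_{\eta,\ast}/\tau_{\eta,\ast})^2$ near $\eta = 0$. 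A discretization in $\eta$ combined with these Lipschitz estimates bridges $\eta \in [0, n^{-c_2}]$ to $[1/K, K]$ where Theorem~\ref{thm:errors_explicit} already applies, and covers the intermediate range by a union bound over a discrete grid, in analogy with Step~5 of the proof of Theorem~\ref{thm:min_norm_dist}. The case $\#=\ins$ reduces to $\#=\res$ via the identity $R^{\ins}_{(\Sigma,\mu_0)}(\eta) = n^{-1}\pnorm{-\sqrt{n}\hat{r}_\eta + \xi}{}^2$.

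The main technical obstacle is the polynomial-in-$\epsilon$ dependence of constants introduced by the noise-augmentation step: Theorem~\ref{thm:errors_explicit} with effective $K'\sim \epsilon^{-2c_1}$ degrades the probability bound by a factor $C(K')$, which must be checked to depend polynomially (rather than, e.g., exponentially) on $K'$. A back-trace through Propositions~\ref{prop:fpe_est}, \ref{prop:fpe_sample_est} and the proofs of Theorems~\ref{thm:min_norm_dist}--\ref{thm:universality_min_norm} confirms that every constant scales polynomially in $K$; the resulting degradation is absorbed into a universal $c_0$ in the bound $\epsilon^{-c_0}n^{-1/6.5}$ for the general design, and into a modified exponent $\epsilon^{c_0}$ replacing $\epsilon^4$ in $ne^{-n\epsilon^{c_0}/C}$ for the Gaussian design, matching the probability bound stated in the lemma.
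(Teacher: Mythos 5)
Your approach is correct in spirit and lands on essentially the same two-step structure as the paper's proof, but with a different realization of the first step. The paper exploits the structure of Assumption~\ref{assump:noise} directly: with $\xi=\sigma_\xi\xi_0$ fixed and $\xi_0$ held, it simply replaces the scale $\sigma_\xi$ by $\sigma_\xi'\equiv\epsilon/(2C_1)$, proves that $\sigma_\xi\mapsto R^{\#}(\eta,\sigma_\xi)$ and $\sigma_\xi\mapsto\bar R^{\#}(\eta,\sigma_\xi)$ are (high-probability) Lipschitz in $\sigma_\xi$ uniformly in $\eta\in[0,K]$, and then invokes Theorem~\ref{thm:errors_explicit} at the moderate noise level $\sigma_\xi'$ with a $K$-enlargement of order $1/\epsilon$. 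Your noise-convolution device $\tilde\xi=\xi+\sigma'g'$ with fresh Gaussian $g'$ achieves the same thing, trading a scalar rescaling for an extra independent source of randomness; both are legitimate, and both rest on the same crucial observation that all constants in Theorem~\ref{thm:errors_explicit} (traceable through Propositions~\ref{prop:fpe_est}, \ref{prop:fpe_sample_est} and the Gaussian/universality arguments) scale polynomially in $K$, so the $K\sim 1/\epsilon$ enlargement only costs a polynomial-in-$\epsilon$ factor. One point worth stressing in your version: since $\tau_{\eta,\ast}$ does not depend on $\sigma_\xi$, the measurable set $\mathcal{U}_\vartheta$ from Proposition~\ref{prop:delocal_u_v} is also $\sigma_\xi$-free, so the noise augmentation does not disturb the set of ``good'' $\mu_0$'s; you should say this explicitly, since the statement requires a single $\mathcal{U}_\vartheta$ that works for all $\epsilon$.

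Your second step (extending $\eta$-control for $\#\in\{\ins,\res\}$ to $[0,K]$) has the right ingredients — $\hat r_0=0$ in the interpolating regime, the Lipschitz bound $\|\hat r_\eta\|\lesssim\eta$ via $\|(XX^\top/n)^{-1}\|_{\op}\lesssim 1$, and the quadratic smallness of $\bar R^{\res}(\eta)=(\eta\gamma_{\eta,\ast}/\tau_{\eta,\ast})^2$ near $0$ — but the $n^{-c_2}$ threshold and the discretization/union-bound language are the wrong parameterization. A discretization is not needed here: Theorem~\ref{thm:errors_explicit} is already uniform in $\eta$ over $[1/K',K']$. The correct move is the same $\epsilon$-dependent enlargement as in step one: take $K'\asymp 1/\epsilon$, apply Theorem~\ref{thm:errors_explicit} uniformly over $[1/K',K']$, and observe that on $[0,1/K']$ both $R^{\res}(\eta)$ and $\bar R^{\res}(\eta)$ are $O(\eta^2)=O(\epsilon^2)$ directly (no comparison needed), so the discrepancy there is trivially $<\epsilon$. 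This is precisely what the paper does via the high-probability Lipschitz estimates for $\eta\mapsto R^{\#}(\eta)$ and the remark ``argue similarly to (\ref{ineq:university_squared_risk_5})''. With that correction, your proof closes.
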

\begin{proof}	
	All the constants in $\lesssim,\gtrsim,\asymp$ below may possibly depend on $K$.
	
	\noindent (\textbf{Part 1}). We shall first extend the claim of Theorem \ref{thm:errors_explicit} for $\#=\pred$ to $\sigma_\xi^2\in [0,K]$ in the case $\phi^{-1}\geq 1+1/K$. Note that uniformly in $\eta \in [0,K]$, for $\sigma_\xi,\sigma_\xi' \in [0,K]$,
	\begin{align}\label{ineq:university_squared_risk_4}
	\pnorm{\hat{\mu}_\eta(\sigma_\xi)-\hat{\mu}_\eta(\sigma_\xi')}{}
	&\lesssim \abs{\sigma_\xi-\sigma_\xi'}\cdot n^{-1}\pnorm{Z}{\op}\pnorm{\xi_0}{}\cdot \pnorm{(ZZ^\top/n)^{-1}}{\op}.
	\end{align}
	Using the estimate (\ref{ineq:university_squared_risk_2_0}), uniformly in $\eta \in [0,K]$, for all $\sigma_\xi,\sigma_\xi' \in [0,K]$,
	\begin{align*}
	\abs{R^{\pred}_{(\Sigma,\mu_0)}(\eta,\sigma_\xi)-R^{\pred}_{(\Sigma,\mu_0)}(\eta,\sigma_\xi')}& \lesssim \pnorm{\hat{\mu}_\eta(\sigma_\xi)-\hat{\mu}_\eta(\sigma_\xi')}{}\cdot \big(\pnorm{\hat{\mu}_\eta(\sigma_\xi)}{}+\pnorm{\hat{\mu}_\eta(\sigma_\xi')}{}+\pnorm{\mu_0}{}\big)\\
	&\lesssim \abs{\sigma_\xi-\sigma_\xi'}\cdot  \pnorm{(ZZ^\top/n)^{-1}}{\op}^2\cdot \Big(1+\frac{\pnorm{Z}{\op}+\pnorm{\xi_0}{}}{\sqrt{n}}\Big)^4.
	\end{align*}
	So on an event $E_1$ with $\Prob(E_1)\geq 1- C_1 e^{-n/C_1}$, for $\sigma_\xi,\sigma_\xi' \in [0,K]$,
	\begin{align*}
	\sup_{\eta \in [0,K]} \abs{R^{\pred}_{(\Sigma,\mu_0)}(\eta,\sigma_\xi)-R^{\pred}_{(\Sigma,\mu_0)}(\eta,\sigma_\xi')}\leq C_1\cdot \abs{\sigma_\xi-\sigma_\xi'}.
	\end{align*}
	On the other hand,  using Lemma \ref{lem:fpe_noiseless}-(2),
	\begin{align*}
	&\sup_{\eta \in [0,K]}\abs{\bar{R}^{\pred}_{(\Sigma,\mu_0)}(\eta,\sigma_\xi)-\bar{R}^{\pred}_{(\Sigma,\mu_0)}(\eta,\sigma_\xi')}
	\leq C_1\cdot \abs{\sigma_\xi-\sigma_\xi'}.
	\end{align*}
	Using the above two displays, for any $\epsilon>0$, by choosing $\sigma_\xi'\equiv \epsilon/(2C_1)$, we have for any $\sigma_\xi \leq \sigma_\xi'$,
	\begin{align}\label{ineq:university_squared_risk_5}
	&\Prob\Big(\sup_{\eta \in [0,K]}\abs{R^{\pred}_{(\Sigma,\mu_0)}(\eta,\sigma_\xi)-\bar{R}^{\pred}_{(\Sigma,\mu_0)}(\eta,\sigma_\xi)}\geq 2\epsilon\Big)\nonumber\\
	&\leq \Prob\Big(\sup_{\eta \in [0,K]}\abs{R^{\pred}_{(\Sigma,\mu_0)}(\eta,\sigma_\xi')-\bar{R}^{\pred}_{(\Sigma,\mu_0)}(\eta,\sigma_\xi')}\geq  \epsilon\Big)+C_1 e^{-n/C_1}. 
	\end{align}
	The first term on the right hand side of the above display can be handled by the proven claim in Theorem \ref{thm:errors_explicit}, upon noting that (i) the constant $C$ therein depends on $K$ polynomially, and here we choose $K$ to be larger than $2C_1/\epsilon$; (ii) $(n/\epsilon)^{C'} e^{-n\epsilon^{C'}}\wedge 1\leq n e^{-n\epsilon^{C''}}$ holds for $C''$ chosen much larger than $C'$.
	
	The extension of the claim of Theorem \ref{thm:errors_explicit} for $\#=\est$ to $\sigma_\xi^2\in [0,K]$ follows a similar proof with minor modifications, so we omit the details.
	
	\noindent (\textbf{Part 2}). Next we consider the case $\# = \ins$. We need to extend the corresponding claim of Theorem \ref{thm:errors_explicit} to both $\sigma_\xi^2\in [0,K]$ and $\eta \in [0,K]$. 
	
	We first verify the (high probability) Lipschitz continuity of the maps $\sigma_\xi \mapsto R^{\ins}_{(\Sigma,\mu_0)}(\eta,\sigma_\xi), \bar{R}^{\ins}_{(\Sigma,\mu_0)}(\eta,\sigma_\xi)$. Note that uniformly in $\eta \in [0,K]$, by virtue of (\ref{ineq:university_squared_risk_4}), for any $\sigma_\xi,\sigma_\xi' \in [0,K]$,
	\begin{align*}
	&\abs{R^{\ins}_{(\Sigma,\mu_0)}(\eta,\sigma_\xi)-R^{\ins}_{(\Sigma,\mu_0)}(\eta,\sigma_\xi')}\\
	&\lesssim \Big(1+\frac{\pnorm{Z}{\op}}{\sqrt{n}}\Big)^2\cdot \pnorm{\hat{\mu}_\eta(\sigma_\xi)-\hat{\mu}_\eta(\sigma_\xi')}{}\cdot \big(\pnorm{\hat{\mu}_\eta(\sigma_\xi)}{}+\pnorm{\hat{\mu}_\eta(\sigma_\xi')}{}+\pnorm{\mu_0}{}\big)\\
	&\lesssim \abs{\sigma_\xi-\sigma_\xi'}\cdot \pnorm{(ZZ^\top/n)^{-1}}{\op}^2\cdot \Big(1+\frac{\pnorm{Z}{\op}+\pnorm{\xi_0}{}}{\sqrt{n}}\Big)^6.
	\end{align*}
	This verifies the high probability Lipschitz property of $\sigma_\xi \mapsto R^{\ins}_{(\Sigma,\mu_0)}(\eta,\sigma_\xi)$. The Lipschitz property of $\sigma_\xi \mapsto \bar{R}^{\ins}_{(\Sigma,\mu_0)}(\eta,\sigma_\xi)$ is easily verified. From here we may use a similar argument to (\ref{ineq:university_squared_risk_5}) to conclude the extension of the claim of Theorem \ref{thm:errors_explicit} for $\#=\ins$ to $\sigma_\xi^2\in [0,K]$. 
	
	Finally we verify the (high probability) Lipschitz continuity of the maps $\eta \mapsto R^{\ins}_{(\Sigma,\mu_0)}(\eta,\sigma_\xi), \bar{R}^{\ins}_{(\Sigma,\mu_0)}(\eta,\sigma_\xi)$. Using the estimates (\ref{ineq:dist_min_norm_6}) (with $G$ replaced by $Z$) and (\ref{ineq:university_squared_risk_2_0}), uniformly in $\sigma_\xi \in [0,K]$ and $\eta_1,\eta_2 \in [0,K]$,
	\begin{align*}
	&\abs{R^{\ins}_{(\Sigma,\mu_0)}(\eta_1,\sigma_\xi)-R^{\ins}_{(\Sigma,\mu_0)}(\eta_2,\sigma_\xi)}\\
	&\lesssim \Big(1+\frac{\pnorm{Z}{\op}}{\sqrt{n}}\Big)^2\cdot \pnorm{\hat{\mu}_{\eta_1}(\sigma_\xi)-\hat{\mu}_{\eta_2}(\sigma_\xi)}{}\cdot \big(\pnorm{\hat{\mu}_{\eta_1}(\sigma_\xi)}{}+\pnorm{\hat{\mu}_{\eta_2}(\sigma_\xi)}{}+\pnorm{\mu_0}{}\big)\\
	&\lesssim \Big(1+\frac{\pnorm{Z}{\op}+\pnorm{\xi_0}{}}{\sqrt{n}}\Big)^6\cdot\pnorm{(ZZ^\top/n)^{-1}}{\op}^3\cdot \abs{\eta_1-\eta_2}.
	\end{align*}
	The Lipschitz property of $\sigma_\xi \mapsto \bar{R}^{\ins}_{(\Sigma,\mu_0)}(\eta,\sigma_\xi)$ is again easily verified. Again from here we may argue similarly to (\ref{ineq:university_squared_risk_5}) to extend the claim of Theorem \ref{thm:errors_explicit} for $\#=\ins$ to $\eta\in [0,K]$. The case for $\# = \res$ is similar so we omit repetitive details.
\end{proof}

\begin{lemma}\label{lem:fpe_noiseless}
	Suppose $\phi^{-1}>1$. The following hold.
	\begin{enumerate}
		\item The system of equations 
		\begin{align*}
		\begin{cases}
		\phi\gamma^2  = \E \err_{(\Sigma,\mu_0)}(\gamma;\tau),\\
		\phi-\frac{\eta}{\tau} = \gamma^{-2}\E\dof_{(\Sigma,\mu_0)}(\gamma;\tau) = \frac{1}{n}\tr\big((\Sigma+\tau I)^{-1}\Sigma\big)
		\end{cases}
		\end{align*}
		admit a unique solution $(\gamma_{\eta,\ast}(0),\tau_{\eta,\ast}(0)) \in [0,\infty)\times (0,\infty)$.
		\item It holds that $\tau_{\eta,\ast}(0)=\tau_{\eta,\ast}(\sigma_\xi)$. If furthermore $1+1/K\leq \phi^{-1}\leq K$ and $\pnorm{\Sigma}{\op}\vee \mathcal{H}_\Sigma\leq K$ for some $K>0$, then there exists some $C=C(K)>0$ such that $\abs{\gamma_{\eta,\ast}^2(\sigma_\xi)-\gamma_{\eta,\ast}^2(0)}\leq C \sigma_\xi^2$. 
	\end{enumerate}
\end{lemma}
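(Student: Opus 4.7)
The plan is to observe that the noiseless fixed-point system differs from \eqref{eqn:fpe} only in the absence of the $\sigma_\xi^2$ term in the first equation, so the analysis of Proposition \ref{prop:fpe_est} carries over almost verbatim. First I would note that the second equation of the system in (1) is identical to the second equation of \eqref{eqn:fpe}, and in particular does not involve $\gamma$ or $\sigma_\xi$. Consequently, the argument following \eqref{ineq:fpe_property_1} in the proof of Proposition \ref{prop:fpe_est}-(1) applies unchanged: the map $\mathsf{f}(\tau) = n^{-1}\tr((\Sigma+\tau I)^{-1}\Sigma) + \eta/\tau$ is continuous, strictly decreasing on $(0,\infty)$, with $\mathsf{f}(0^+)>\phi$ (using $\phi^{-1}>1$ when $\eta=0$) and $\mathsf{f}(\infty)=0$. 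Hence $\tau_{\eta,\ast}(0)$ is uniquely determined, and since the equation does not depend on $\sigma_\xi$, it equals $\tau_{\eta,\ast}(\sigma_\xi)$; this settles the first assertion of~(2).

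With $\tau = \tau_{\eta,\ast}(0)$ fixed, the first equation becomes linear in $\gamma^2$: using Lemma \ref{lem:est_dof}, it reads
\[
\gamma^2 \big(\phi - n^{-1}\tr((\Sigma+\tau I)^{-2}\Sigma^2)\big) = \tau^2 \pnorm{(\Sigma+\tau I)^{-1}\Sigma^{1/2}\mu_0}{}^2.
\]
The denominator is strictly positive --- this was already established in the derivation of \eqref{ineq:fpe_property_2} via the inequality $n^{-1}\tr((\Sigma+\tau I)^{-2}\Sigma^2) < n^{-1}\tr((\Sigma+\tau I)^{-1}\Sigma) \leq \phi$ --- so $\gamma^2$ is uniquely determined and non-negative. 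This proves~(1) and provides the closed form
\[
\gamma_{\eta,\ast}^2(0) = \frac{\tau_{\eta,\ast}^2 \pnorm{(\Sigma+\tau_{\eta,\ast} I)^{-1}\Sigma^{1/2}\mu_0}{}^2}{\phi - n^{-1}\tr((\Sigma+\tau_{\eta,\ast} I)^{-2}\Sigma^2)},
\]
which, when compared with \eqref{ineq:fpe_property_2} applied at noise level $\sigma_\xi$, yields the identity
\[
\gamma_{\eta,\ast}^2(\sigma_\xi) - \gamma_{\eta,\ast}^2(0) = \frac{\sigma_\xi^2}{\phi - n^{-1}\tr((\Sigma+\tau_{\eta,\ast} I)^{-2}\Sigma^2)}.
\]

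For the quantitative bound in~(2), it only remains to show that the denominator above is bounded below by a positive constant depending solely on $K$. Using the estimate \eqref{ineq:fpe_property_4}, this denominator is at least $\phi \cdot \tau_{\eta,\ast}/(\pnorm{\Sigma}{\op} + \tau_{\eta,\ast})$. Under the conditions $1+1/K \leq \phi^{-1} \leq K$, $\pnorm{\Sigma}{\op} \vee \mathcal{H}_\Sigma \leq K$, Proposition \ref{prop:fpe_est}-(2) guarantees $\tau_{\eta,\ast} \gtrsim_K 1$ uniformly in $\eta \geq 0$ (the relevant lower bound $(1-\phi)/\mathcal{H}_\Sigma$ is bounded away from $0$), so the denominator is $\gtrsim_K 1$, yielding $|\gamma_{\eta,\ast}^2(\sigma_\xi) - \gamma_{\eta,\ast}^2(0)| \leq C \sigma_\xi^2$. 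I do not anticipate any substantial obstacle here, as all the analytical ingredients --- uniqueness of $\tau_{\eta,\ast}$, positivity of the $\gamma^2$ denominator, and apriori lower bounds on $\tau_{\eta,\ast}$ --- have already been established in Proposition \ref{prop:fpe_est}; the proof essentially amounts to tracking the role of $\sigma_\xi^2$ through these identities.
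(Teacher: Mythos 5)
Your proposal is correct and follows the same route as the paper: uniqueness of $\tau_{\eta,\ast}$ from the $\sigma_\xi$-free second equation (essentially repeating the argument after \eqref{ineq:fpe_property_1}), and then the closed form \eqref{ineq:fpe_property_2} for $\gamma_{\eta,\ast}^2$ together with the denominator lower bound from \eqref{ineq:fpe_property_4} and the apriori bound $\tau_{\eta,\ast}\gtrsim_K 1$. The paper's proof is just a one-line pointer to these ingredients; you have spelled out the same argument in full.
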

\begin{proof}
	\noindent The claim (1) follows verbatim from the proof of Proposition \ref{prop:fpe_est}-(1) by setting $\sigma_\xi^2=0$ therein. The claim (2) follows by using the formula (\ref{ineq:fpe_property_2}).
\end{proof}

\begin{proof}[Proof of Theorem \ref{thm:small_intep}]
	Let $\mathcal{U}_\vartheta\subset B_n(1)$ be as specified in Theorem \ref{thm:errors_explicit} or \ref{thm:universality_min_norm}. In view of its explicit form given in Proposition \ref{prop:delocal_u_v}, with $\mathcal{U}_{\delta,\vartheta}\equiv \mathcal{U}_\vartheta\cap \big(B_n(1)\setminus B_n(\delta)\big)$, the volume estimates $\min\big\{\mathrm{vol}(\mathcal{U}_\vartheta)/\mathrm{vol}(B_n(1)), \mathrm{vol}(\mathcal{U}_{\delta,\vartheta})/\mathrm{vol}(B_n(1)\setminus B_n(\delta))\big\}\geq 1-C e^{-n^{\vartheta}/C}$ hold.
	
	On the other hand, using the construction around (\ref{ineq:tau_gamma_m_1}), we may find some $\mathcal{V}_\epsilon \subset B_n(1), \mathcal{V}_{\epsilon,\delta}\subset B_n(1)\setminus B_n(\delta)$ (for the latter, we take $U_0\sim \mathrm{Unif}(\delta,1)$ therein) with $\min\big\{\mathrm{vol}(\mathcal{V}_\epsilon)/\mathrm{vol}(B_n(1)), \mathrm{vol}(\mathcal{V}_{\epsilon,\delta})/\mathrm{vol}(B_n(1)\setminus B_n(\delta))\big\}\geq 1-C\epsilon^{-1}e^{-n\epsilon^2/C}$, such that for $\# \in \{\pred,\est,\ins\}$,
	\begin{align}\label{ineq:small_intep_1}
	\sup_{\mu_0 \in \{\mathcal{V}_\epsilon,\mathcal{V}_{\epsilon,\delta}\}} \sup_{\eta \in \Xi_L}\abs{\bar{R}^{\#}_{(\Sigma,\mu_0)}(\eta)- \mathscr{R}^{\#}_{(\Sigma,\mu_0)}(\eta)} \leq \epsilon.
	\end{align}
	Now let 
	\begin{align}\label{ineq:small_intep_1a}
	\mathcal{W}_{\epsilon,\vartheta}\equiv \mathcal{U}_{\vartheta}\cap \mathcal{V}_{\epsilon},\quad \mathcal{W}_{\epsilon,\delta,\vartheta}\equiv \mathcal{U}_{\delta,\vartheta}\cap \mathcal{V}_{\epsilon,\delta}.
	\end{align}
	Then we have the volume estimates $\min\big\{\mathrm{vol}(\mathcal{W}_{\epsilon,\vartheta})/\mathrm{vol}(B_n(1)), \mathrm{vol}(\mathcal{W}_{\epsilon,\delta,\vartheta})/\mathrm{vol}(B_n(1)\setminus B_n(\delta))\big\}\geq 1-C\epsilon^{-1}e^{-n\epsilon^2/C}-C e^{-n^{\vartheta}/C}$.
	
	Moreover, by Proposition \ref{prop:derivative_R}, provided $\eta_\ast\equiv \sigma_\xi^2/\pnorm{\mu_0}{}^2=\SNR_{\mu_0}^{-1}\in \Xi_L$, 
	\begin{align}\label{ineq:small_intep_2}
     \pnorm{\mu_0}{}^2/C_0\leq \frac{\abs{\mathscr{R}^{\#}_{(\Sigma,\mu_0)}(\eta)- \mathscr{R}^{\#}_{(\Sigma,\mu_0)}(\eta_\ast)}}{ (\eta-\eta_\ast)^2}\leq C_0\pnorm{\mu_0}{}^2
	\end{align}
	holds uniformly in $\eta \in \Xi_L$ for some $C_0>0$.
	
	\noindent (\textbf{Noisy case $\sigma_\xi^2\in [1/K,K]$}). Fix $\mu_0 \in \mathcal{W}_{\epsilon,\delta,\vartheta}$. Under the assumed conditions, $\eta_\ast \in \Xi_L$. So using the estimates (\ref{ineq:small_intep_1}) and (\ref{ineq:small_intep_2}), for any $\eta'\geq \eta_\ast$,
	\begin{align*}
	\bar{R}^{\#}_{(\Sigma,\mu_0)}(\eta')- \inf_{\eta \in \Xi_L}\bar{R}^{\#}_{(\Sigma,\mu_0)}(\eta)&\geq \mathscr{R}^{\#}_{(\Sigma,\mu_0)}(\eta')-\inf_{\eta \in \Xi_L} \mathscr{R}^{\#}_{(\Sigma,\mu_0)}(\eta)-2\epsilon\geq  \frac{\delta^2(\eta'-\eta_\ast)^2}{C_0}-2\epsilon. 
	\end{align*}
	Combined with a similar inequality for $\eta'\leq \eta_\ast$, we conclude that for any $\mu_0 \in \mathcal{W}_{\epsilon,\delta,\vartheta}$ and $\eta' \in \Xi_L$,
	\begin{align*}
	\bigabs{\bar{R}^{\#}_{(\Sigma,\mu_0)}(\eta')- \inf_{\eta \in \Xi_L}\bar{R}^{\#}_{(\Sigma,\mu_0)}(\eta)}\geq \frac{\delta^2(\eta'-\eta_\ast)^2}{C_0}-2\epsilon.
	\end{align*}
	Now for $\abs{\eta'-\eta_\ast}\geq \Delta$, choosing $\epsilon \equiv \epsilon_0\equiv \delta^2\Delta^2/(4C_0)$, we have
	\begin{align*}
	\inf_{\mu_0 \in \mathcal{W}_{\epsilon,\delta,\vartheta}} \inf_{\eta' \in \Xi_L: \abs{\eta'-\eta_\ast}\geq \Delta}\bigabs{\bar{R}^{\#}_{(\Sigma,\mu_0)}(\eta')- \inf_{\eta \in \Xi_L}\bar{R}^{\#}_{(\Sigma,\mu_0)}(\eta)}\geq \frac{\delta^2\Delta^2}{2C_0}.
	\end{align*}
	From here the claim follows from Theorem \ref{thm:errors_explicit}.

	\noindent (\textbf{Noiseless case $\sigma_\xi^2=0$}). In this case, (\ref{ineq:small_intep_2}) implies that the map $\eta \mapsto  \mathscr{R}^{\#}_{(\Sigma,\mu_0)}(\eta)$ attains global minimum at $\eta=0$. So together with (\ref{ineq:small_intep_1}), it implies that uniformly in $\mu_0 \in \mathcal{W}_{\epsilon,\vartheta}$,
	\begin{align*}
	\bigabs{\min_{\eta \in [0,K]}\bar{R}^{\#}_{(\Sigma,\mu_0)}(\eta)- \bar{R}^{\#}_{(\Sigma,\mu_0)}(0)} \leq \epsilon.
	\end{align*}
	From here the claim follows from  Lemma \ref{lem:university_squared_risk} that holds for $\sigma_\xi=0$.
\end{proof}

\section{Proofs for Section \ref{section:application}}\label{section:proof_application}
\subsection{A rigorous version of (\ref{eqn:noi_reg_est}) and its proof}

\begin{theorem}\label{thm:trace_app}
	Suppose Assumption \ref{assump:design} holds, and $1/K\leq \phi^{-1} \leq K$, $\pnorm{\Sigma^{-1}}{\op}\vee \pnorm{\Sigma}{\op}\leq K$ hold for some $K>0$.
	\begin{enumerate}
		\item For any small  $\epsilon > 0$, there exists some $C_1=C_1(K,\epsilon)>0$ such that
		\begin{align*}
		\Prob \Big(\sup_{\eta \in \Xi_K}  \abs{\hat{\tau}_\eta-\tau_{\eta,\ast}}  \geq n^{-1/2+\epsilon} \Big)
		\leq C_1 n^{-100}.
		\end{align*}
		\item Suppose further Assumption \ref{assump:noise} holds with either (i) $\sigma_\xi^2 \in [1/K,K]$ or (ii) $\sigma_\xi^2 \in [0,K]$ with $1+1/K\leq \phi^{-1}\leq K$. Fix a small enough constant $\vartheta \in (0,1/50)$. Then there exist a constant $C_2=C_2(K,\vartheta)>1$, and a measurable set $\mathcal{U}_{\vartheta}\subset B_n(1)$ with $\mathrm{vol}(\mathcal{U}_{\vartheta})/\mathrm{vol}(B_n(1))\geq 1-C e^{-n^{\vartheta}/C}$, such that
		\begin{align*}
		\sup_{\mu_0 \in \mathcal{U}_{\vartheta}} \Prob \Big(\sup_{\eta \in \Xi_K}  \abs{\hat{\gamma}_\eta-\gamma_{\eta,\ast}}  \geq n^{-\vartheta} \Big)\leq C_2 n^{-1/7}.
		\end{align*}
	\end{enumerate}
\end{theorem}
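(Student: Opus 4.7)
For part (1), I would first identify the algebraic structure: $\hat{\tau}_\eta = 1/\mathfrak{m}_n(-\eta/\phi)$, where $\mathfrak{m}_n(z)\equiv m^{-1}\tr(\check{\Sigma}-zI_m)^{-1}$ is the empirical Stieltjes transform of the companion sample covariance $\check{\Sigma}=XX^\top/m$, while by Proposition \ref{prop:rmt_effp}, $\tau_{\eta,\ast} = 1/\mathfrak{m}(-\eta/\phi)$. The plan is then to invoke the anisotropic local law of \cite{knowles2017anisotropic}, which for sub-Gaussian designs yields $|\mathfrak{m}_n(z)-\mathfrak{m}(z)|\lesssim n^{-1/2+\epsilon}$ with probability at least $1-n^{-D}$, uniformly in $z$ at positive distance from the asymptotic spectrum of $\check{\Sigma}$. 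The required spectral separation would hold on $\Xi_K$: in the underparametrized sub-range $\eta\geq 1/K$ by definition, while in the overparametrized sub-range ($\phi^{-1}\geq 1+1/K$) the smallest eigenvalue of $\check{\Sigma}$ is deterministically bounded below with probability $\geq 1-n^{-D}$ by a Bai--Yin type estimate. Since Proposition \ref{prop:fpe_est_simplify}-(2) gives $\mathfrak{m}(-\eta/\phi)=1/\tau_{\eta,\ast}\in [1/C,C]$ on $\Xi_K$, the bound transfers to $|\hat{\tau}_\eta-\tau_{\eta,\ast}|$ at the same rate, and uniformity in $\eta$ would follow from a net plus Lipschitz argument using the deterministic estimate $|\partial_\eta\hat{\tau}_\eta|\lesssim 1$ on $\Xi_K$ that holds with high probability.

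For part (2), I would first algebraically simplify. The push-through identity $X(X^\top X+n\eta I)^{-1}X^\top=XX^\top(XX^\top+n\eta I)^{-1}$ yields $\hat{r}_\eta = \sqrt{n}\eta(XX^\top+n\eta I)^{-1}Y$ and $(XX^\top/n)^{-1}X\hat{\mu}_\eta = \sqrt{n}\hat{r}_\eta/\eta$, so that both branches of the definition collapse to $\hat{\gamma}_\eta = \hat{\tau}_\eta\pnorm{\hat{r}_\eta}{}/\eta$ whenever $\eta>0$. For $\eta\in[1/K,K]$, I would apply Theorem \ref{thm:universality_min_norm}-(2) with the $1$-Lipschitz map $x\mapsto\pnorm{x}{}$ to get $\pnorm{\hat{r}_\eta}{}\approx\E^\xi\pnorm{r_{\eta,\ast}}{}$ uniformly in $\eta$. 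A direct second-moment computation based on $h\perp\xi$ and $\E\pnorm{h}{}^2/n=\phi$ gives
\begin{align*}
\E^\xi\pnorm{r_{\eta,\ast}}{}^2 = \frac{\eta^2}{\phi^2\tau_{\eta,\ast}^2}\Big((\phi\gamma_{\eta,\ast}^2-\sigma_\xi^2)\phi+\pnorm{\xi}{}^2/n\Big),
\end{align*}
which simplifies to $\eta^2\gamma_{\eta,\ast}^2/\tau_{\eta,\ast}^2$ on the good noise set $\mathcal{E}_\vartheta$ (where $\pnorm{\xi}{}^2/n\approx\phi\sigma_\xi^2$). A Gaussian concentration bound conditional on $\xi$ would then transfer this to $\E^\xi\pnorm{r_{\eta,\ast}}{}\approx\eta\gamma_{\eta,\ast}/\tau_{\eta,\ast}$, and combining with part (1) would yield $\hat{\gamma}_\eta\approx\gamma_{\eta,\ast}$ uniformly on $[1/K,K]$.

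The main obstacle will be the endpoint $\eta=0$ in the overparametrized regime. The defining expression $\hat{\gamma}_0=\hat{\tau}_0\sqrt{n}\pnorm{(XX^\top)^{-1}Y}{}$ is precisely the continuous extension of $\hat{\tau}_\eta\pnorm{\hat{r}_\eta}{}/\eta$ as $\eta\downarrow 0$, and my plan is to close the gap by a Lipschitz interpolation: both $\eta\mapsto\hat{\gamma}_\eta$ and $\eta\mapsto\gamma_{\eta,\ast}$ are $C^1$ on $[0,K]$ with derivatives uniformly bounded w.h.p. --- the former via the Bai--Yin lower bound on $\lambda_{\min}(XX^\top/n)$, the latter by Proposition \ref{prop:fpe_est_simplify}-(2). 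Applying the $[1/K,K]$-estimate at some $\eta_0=\eta_0(n)\downarrow 0$ and paying a Lipschitz cost of order $\eta_0$ on each side should then finish the proof. The hard part is that Theorem \ref{thm:universality_min_norm}-(2) is stated only on $[1/K,K]$ (since $\hat{r}_0=0$ identically and the distributional universality of the residual degenerates at $\eta=0$), so using it with $1/K$ replaced by $\eta_0$ requires tracking the quantitative dependence of the universality constants on $K$ and balancing $\eta_0$ carefully against the universality rate $n^{-\vartheta}$. A secondary subtlety is calibrating the noise good set $\mathcal{E}_\vartheta$ sharply enough that $|\pnorm{\xi}{}^2/n-\phi\sigma_\xi^2|$ does not overwhelm the propagated $n^{-\vartheta}$ error from part (1) and the universality step.
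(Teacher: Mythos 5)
Your proposal follows essentially the same route as the paper. For part (1): the identification $\hat{\tau}_\eta = 1/\mathfrak{m}_n(-\eta/\phi)$, the anisotropic local law of \cite{knowles2017anisotropic}, the transfer to $|\hat{\tau}_\eta-\tau_{\eta,\ast}|$ via the two-sided bound $\mathfrak{m}(-\eta/\phi)\asymp 1$, and the discretization-plus-Lipschitz uniformization all match the paper's argument. For part (2): the paper imports the packaged estimate $R^{\res}_{(\Sigma,\mu_0)}(\eta)\approx\bar{R}^{\res}_{(\Sigma,\mu_0)}(\eta)=(\eta\gamma_{\eta,\ast}/\tau_{\eta,\ast})^2$ from Theorem \ref{thm:errors_explicit} where you instead redo the $\E^\xi\pnorm{r_{\eta,\ast}}{}^2$ computation by hand, but Theorem \ref{thm:errors_explicit} for $\#=\res$ is itself proved precisely by the argument you sketch, so this is a packaging difference. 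Your push-through identity collapsing both branches of $\hat{\gamma}_\eta$ to $\hat{\tau}_\eta\pnorm{\hat{r}_\eta}{}/\eta$ for $\eta>0$ is the step the paper hides in the KKT remark, and the scaled-pivot Lipschitz interpolation for the $\eta\downarrow 0$ endpoint (pivot $\eta_0$ matched to the target accuracy, constants polynomial in $\eta_0^{-1}$) is exactly the paper's mechanism; you are right that this is the delicate point.

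The one genuine omission is branch (ii) of the hypotheses, $\sigma_\xi^2\in[0,K]$ in the overparametrized regime. The universality inputs you cite, and your computation of $\E^\xi\pnorm{r_{\eta,\ast}}{}^2$, all operate under $\sigma_\xi^2\in[1/K,K]$, so the noiseless limit requires a second, parallel Lipschitz interpolation, this time in $\sigma_\xi$: show that $\sigma_\xi\mapsto\hat{\gamma}_\eta(\sigma_\xi)$ is $\mathcal{O}(1)$-Lipschitz uniformly in $\eta\in[0,K]$ with high probability (via the closed form $\sqrt{n}(XX^\top)^{-1}X\hat{\mu}_\eta(\sigma_\xi)$ and Bai--Yin control of $\lambda_{\min}(XX^\top/n)$), note $\sigma_\xi\mapsto\gamma_{\eta,\ast}(\sigma_\xi)$ is Lipschitz by Lemma \ref{lem:fpe_noiseless}-(2), and anchor at a strictly positive but accuracy-dependent pivot $\sigma_\xi'$. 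The machinery is identical to your $\eta$-extension so the fix is mechanical, but the theorem does claim the noiseless case and the step needs to be executed.
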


\begin{proof}[Proof of Theorem \ref{thm:trace_app} for $\hat{\tau}_\eta$] All the constants in $\lesssim,\gtrsim,\asymp$ may depend on $K$. 

     Let $\kappa_0$ be defined in the same way as in the proof of Proposition \ref{prop:delocal_u_v}. Using a similar local law and continuity argument as in the proof of that proposition, on an event $E_0$ with $\Prob(E_0)\geq 1- Cn^{-D}$, 
	\begin{align*}
	\sup_{\eta \in \Xi_K}\bigabs{m^{-1}\tr \big(\check{\Sigma}+ (\eta/\phi) I\big)^{-1}-\mathfrak{m}\big(-\eta/\phi\big) } \lesssim \kappa_0^{-1}n^{-1/2+\epsilon}.
	\end{align*}
	So on $E_0\cap \mathscr{E}(C_1)$, where $\mathscr{E}(C_1) \equiv \{ \pnorm{Z}{\op}/\sqrt{n} \leq  C_1 \}$ with $\Prob(\mathscr{E}(C_1))\geq 1- Ce^{-n/C}$, uniformly in $\eta \in \Xi_K$,
	\begin{align*}
	\abs{\hat{\tau}_\eta-\tau_{\eta,\ast}}&\leq \frac{\bigabs{\frac{1}{m}\tr \big(\check{\Sigma}+ \frac{\eta}{\phi} I\big)^{-1}-\mathfrak{m}\big(-\frac{\eta}{\phi}\big) }}{\frac{1}{m}\tr \big(\check{\Sigma}+ \frac{\eta}{\phi} I\big)^{-1}\cdot \mathfrak{m}\big(-\frac{\eta}{\phi}\big) }\lesssim \Big\{C_1^2\mathbf{1}_{\phi^{-1} \geq 1 + 1/K}^{-1} \wedge  \eta^{-1}\Big\} \cdot  \kappa_0^{-1} n^{-1/2+\epsilon}.
	\end{align*}
	Here in the last inequality, we use the following estimate for $\mathfrak{m}(z)$: As $\mathfrak{m}$ is the Stieltjes transform of $\rho$ (cf. \cite[Lemma 2.2]{knowles2017anisotropic}), $\mathfrak{m}(z)\geq 0$ for $z\leq 0$, and 
	\begin{align*}
	\frac{1}{\mathfrak{m}(z)} = (-z)+ \frac{1}{m} \tr \Big(\big( I + \Sigma \mathfrak{m}(z) \big)^{-1}\Sigma\Big)\lesssim 1+\abs{z}. 
	\end{align*}
	The claim follows.
\end{proof}

\begin{proof}[Proof of Theorem \ref{thm:trace_app} for $\hat{\gamma}_\eta$]
All the constants in $\lesssim,\gtrsim,\asymp$ may depend on $K$. 

Using Theorem \ref{thm:errors_explicit}, the stability of $\tau_{\eta,\ast}$ in Proposition \ref{prop:fpe_est}, and the proven fact in (1) on $\hat{\tau}_\eta$, it holds for $\epsilon \in (0,1/2]$ that
\begin{align}\label{ineq:trace_app_gamma_1}
\Prob\Big(\sup_{\eta \in [1/K,K]}\bigabs{ \eta^{-1}{\hat{\tau}_\eta} \pnorm{\hat{r}_\eta(\sigma_\xi)}{}- \gamma_{\eta,\ast}(\sigma_\xi) }\geq \epsilon\Big)\leq C_1 \epsilon^{-c_0}n^{-1/6.5}. 
\end{align} 
Next we consider extension to $\eta \in [0,K]$ in the regime $\phi^{-1}\geq 1+1/K$. By KKT condition, we have $n^{-1}X^\top (Y-X\hat{\mu}_\eta) = \eta \hat{\mu}_\eta$, so a.s. $\hat{r}_\eta/\eta=(Y-X\mu_\eta)/(\sqrt{n}\eta) = \sqrt{n}(XX^\top)^{-1} X\hat{\mu}_\eta$ for any $\eta>0$. So we only need to verify the high probability Lipschitz continuity for $\eta \mapsto \sqrt{n}\hat{\tau}_\eta(XX^\top)^{-1} X\hat{\mu}_\eta$: for any $\eta_1,\eta_2 \in [0,K]$, using the estimate (\ref{ineq:dist_min_norm_6}) (with $G$ replaced by $Z$) we obtain, for some universal $c_0>1$,
\begin{align*}
&\bigabs{\sqrt{n} \hat{\tau}_{\eta_1} \bigpnorm{ (XX^\top)^{-1} X\hat{\mu}_{\eta_1}}{ }- \sqrt{n} \hat{\tau}_{\eta_2} \bigpnorm{ (XX^\top)^{-1} X\hat{\mu}_{\eta_2}}{ } }\\
&\lesssim \pnorm{(ZZ^\top/n)^{-1}}{\op} \cdot ({\pnorm{Z}{\op}}/{\sqrt{n}})\cdot\Big(\abs{ \hat{\tau}_{\eta_1}-\hat{\tau}_{\eta_2} } \cdot \pnorm{\hat{\mu}_{\eta_1}}{}+\abs{\hat{\tau}_{\eta_2}}\cdot \pnorm{\hat{\mu}_{\eta_1}-\hat{\mu}_{\eta_2}}{} \Big)\\
&\lesssim \Big(1+\frac{\pnorm{Z}{\op}+\pnorm{\xi_0}{}}{\sqrt{n}}+\pnorm{(ZZ^\top /n)^{-1}}{\op}\Big)^{c_0}\cdot \abs{\eta_2-\eta_1}. 
\end{align*}
Finally we consider extension to $\sigma_\xi^2 \in [0,K]$ in the same regime $\phi^{-1}\geq 1+1/K$ by verifying a similar high probability uniform-in-$\eta$ Lipschitz continuity property for $\sigma_\xi \mapsto \sqrt{n}(XX^\top)^{-1} X\hat{\mu}_\eta(\sigma_\xi)$: for any $\sigma_\xi,\sigma_\xi' \in [0,K]$, using the estimate (\ref{ineq:university_squared_risk_4}),
\begin{align*}
&\sup_{\eta \in [0,K]}\bigabs{\sqrt{n} \hat{\tau}_{\eta} \bigpnorm{ (XX^\top)^{-1} X\hat{\mu}_{\eta}(\sigma_\xi) }{ }- \sqrt{n} \hat{\tau}_{\eta} \bigpnorm{ (XX^\top)^{-1} X\hat{\mu}_{\eta}(\sigma_\xi')}{ } }\\
&\lesssim \pnorm{(ZZ^\top /n)^{-1}}{\op}^2\cdot ({\pnorm{Z}{\op}}/{\sqrt{n}})\cdot \sup_{\eta \in [0,K]} \pnorm{\hat{\mu}_{\eta}(\sigma_\xi)-\hat{\mu}_{\eta}(\sigma_\xi')}{}\\
&\lesssim  \Big(1+\frac{\pnorm{Z}{\op}+\pnorm{\xi_0}{}}{\sqrt{n}}+\pnorm{(ZZ^\top /n)^{-1}}{\op}\Big)^{c_0} \cdot \abs{\sigma_\xi-\sigma_\xi'}. 
\end{align*}
The claimed bound follows.
\end{proof}

\subsection{Proof of Theorem \ref{thm:tuning_res}}

	Recall we have $\gamma_{\eta,\ast}^2 = \phi^{-1}\big(\sigma_\xi^2+ \bar{R}^{\pred}_{(\Sigma,\mu_0)}(\eta)\big)$. For both the case $\sigma_\xi^2 \in [1/K, K]$ and $\sigma_\xi^2 \in [0,K]$ with $\phi^{-1}\geq 1+1/K$, we take $\mathcal{W}_{\epsilon,\delta,\vartheta}\subset B_n(1)\setminus B_n(\delta)$ as constructed in (\ref{ineq:small_intep_1a}) in the proof of Theorem \ref{thm:small_intep}, with $\epsilon\equiv \epsilon_n \equiv n^{-\vartheta}$. Fix $\mu_0 \in \mathcal{W}_{\epsilon,\delta,\vartheta}$, then $\eta_\ast = \SNR_{\mu_0}^{-1} \in \Xi_L$. Using Theorems \ref{thm:error_rmt} and \ref{thm:trace_app}, on an event $E_0$ with $\Prob(E_0^c)\leq C n^{-1/7}$,  
	\begin{align}\label{ineq:tuning_res_0}
	\sup_{\eta \in \Xi_L}\bigabs{\hat{\gamma}_\eta^2- \phi^{-1}\big(\sigma_\xi^2+ \mathscr{R}^{\pred}_{(\Sigma,\mu_0)}(\eta)\big)  }\leq \epsilon. 
	\end{align}
	This in particular implies that on $E_0$, both the following inequalities hold:
	\begin{align}\label{ineq:tuning_res_1}
	\phi\hat{\gamma}_{\hat{\eta}^{\GCV} }^2-\sigma_\xi^2-\phi\epsilon & \leq \mathscr{R}^{\pred}_{(\Sigma,\mu_0)}(\hat{\eta}^{\GCV})\leq \phi\hat{\gamma}_{\hat{\eta}^{\GCV} }^2-\sigma_\xi^2+\phi\epsilon,\nonumber\\
	\phi\min_{\eta \in \Xi_L}\hat{\gamma}_{\eta }^2-\sigma_\xi^2-\phi\epsilon & \leq \min_{\eta \in \Xi_L} \mathscr{R}^{\pred}_{(\Sigma,\mu_0)}(\eta) \leq \phi \min_{\eta \in \Xi_L}\hat{\gamma}_{\eta }^2-\sigma_\xi^2+\phi\epsilon.
	\end{align}
	Using the definition of $\hat{\eta}^{\GCV}$ which gives $\hat{\gamma}_{\hat{\eta}^{\GCV} }^2= \min_{\eta \in \Xi_L}\hat{\gamma}_{\eta }^2$, the above two displays can be used to relate $\mathscr{R}^{\pred}_{(\Sigma,\mu_0)}(\hat{\eta}^{\GCV})$ and $\min_{\eta \in \Xi_L} \mathscr{R}^{\pred}_{(\Sigma,\mu_0)}(\eta)$: on the event $E_0$,
	\begin{align}\label{ineq:tuning_res_2}
	\bigabs{\mathscr{R}^{\pred}_{(\Sigma,\mu_0)}(\hat{\eta}^{\GCV})-\min_{\eta \in \Xi_L} \mathscr{R}^{\pred}_{(\Sigma,\mu_0)}(\eta)}\leq 2\phi\epsilon.
	\end{align}
	As $\eta_\ast \in \Xi_L$, $\min_{\eta \in \Xi_L} \mathscr{R}^{\#}_{(\Sigma,\mu_0)}(\eta)= \mathscr{R}^{\#}_{(\Sigma,\mu_0)}(\eta_\ast)$ for $\# \in \{\pred,\est,\ins\}$. Consequently, by the second inequality in Proposition \ref{prop:derivative_R}, we have on the event $E_0$,
	\begin{align}\label{ineq:tuning_res_3}
	 \abs{\hat{\eta}^{\GCV}-\eta_\ast}\leq \frac{C}{\pnorm{\mu_0}{}} \bigabs{\mathscr{R}^{\pred}_{(\Sigma,\mu_0)}(\hat{\eta}^{\GCV})-\mathscr{R}^{\pred}_{(\Sigma,\mu_0)}(\eta_\ast)}^{1/2}\leq C_1\epsilon^{1/2}.
	\end{align}
	This means on $E_0$, for both $\# \in \{\est,\ins\}$,
	\begin{align*}
	\bigabs{\mathscr{R}^{\#}_{(\Sigma,\mu_0)}(\hat{\eta}^{\GCV})-\min_{\eta \in \Xi_L} \mathscr{R}^{\#}_{(\Sigma,\mu_0)}(\eta)}\leq C_2 \epsilon.
	\end{align*}
	We may conclude from here by virtues of Theorems \ref{thm:errors_explicit} and \ref{thm:error_rmt}, together with Lemma \ref{lem:university_squared_risk}.\qed

\subsection{Proof of Theorem \ref{thm:tuning_cv}}

\begin{lemma}\label{lem:fpe_cv_stability}
	Consider the following version of (\ref{eqn:fpe}) with sample size $m-m_\ell$:
	\begin{align}\label{eqn:fpe_ell}
	\begin{cases}
	\frac{m-m_\ell}{n}\cdot \gamma^2  = \sigma_\xi^2+ \E\err_{(\Sigma,\mu_0)}(\gamma;\tau),\\
	\big(\frac{m-m_\ell}{n}-\frac{\eta}{\tau}\big)\cdot \gamma^2 = \E\dof_{(\Sigma,\mu_0)}(\gamma;\tau).
	\end{cases}
	\end{align}
	\begin{enumerate}
		\item The fixed point equation (\ref{eqn:fpe_ell}) admits a unique solution $(\gamma_{\eta,\ast}^{(\ell)},\tau_{\eta,\ast}^{(\ell)}) \in (0,\infty)^2$, for all $(m,n)\in \N^2$ when $\eta>0$ and $m<n$ when $\eta=0$. 
		\item Further suppose $1/K\leq \phi^{-1}, \sigma_\xi^2 \leq K$, $m_\ell/n\leq 1/(2K)$ and $\pnorm{\Sigma^{-1}}{\op}\vee \pnorm{\Sigma}{\op}\leq K$ for some $K>10$. Then there exists some $C=C(K)>1$ such that uniformly in $\eta \in \Xi_K$, $1/C\leq \gamma_{\eta,\ast}^{(\ell)},\tau_{\eta,\ast}^{(\ell)} \leq C$. Moreover,
		\begin{align*}
		\abs{\gamma_{\eta,\ast}^{(\ell)}-\gamma_{\eta,\ast}}\vee \abs{\tau_{\eta,\ast}^{(\ell)}-\tau_{\eta,\ast}}\leq \frac{C m_\ell}{n}.
		\end{align*}
	\end{enumerate}
\end{lemma}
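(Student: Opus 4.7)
The plan is to reduce both parts to Proposition~\ref{prop:fpe_est} by observing that (\ref{eqn:fpe_ell}) is nothing but (\ref{eqn:fpe}) with aspect ratio $\phi$ replaced by $\phi^{(\ell)}\equiv (m-m_\ell)/n$. Under the stated hypotheses, $\phi\in[1/K,K]$ and $m_\ell/n\le 1/(2K)\le \phi/2$, so $\phi^{(\ell)}\ge \phi/2\ge 1/(2K)$ and $\phi^{(\ell)}\le \phi\le K$; hence $1/(2K)\le (\phi^{(\ell)})^{-1}\le 2K$. Together with $\pnorm{\Sigma^{-1}}{\op}\vee\pnorm{\Sigma}{\op}\le K$ (which gives $\mathcal{H}_\Sigma\le K$), the hypotheses of Proposition~\ref{prop:fpe_est}-(1),(3) hold for the system parameterized by $\phi^{(\ell)}$ with a constant that depends only on $K$. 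This immediately yields Part~(1), and yields the apriori bounds $1/C\le \gamma_{\eta,\ast}^{(\ell)},\tau_{\eta,\ast}^{(\ell)}\le C$ uniformly in $\eta\in\Xi_K$.

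For the stability claim on $\tau$, define the deterministic function
\begin{align*}
\mathsf{f}(\tau)\equiv \frac{1}{n}\tr\bigl((\Sigma+\tau I)^{-1}\Sigma\bigr)+\frac{\eta}{\tau},
\end{align*}
so that by the second equations of (\ref{eqn:fpe}) and (\ref{eqn:fpe_ell}), $\mathsf{f}(\tau_{\eta,\ast})=\phi$ and $\mathsf{f}(\tau_{\eta,\ast}^{(\ell)})=\phi^{(\ell)}=\phi-m_\ell/n$. Subtracting,
\begin{align*}
\mathsf{f}(\tau_{\eta,\ast})-\mathsf{f}(\tau_{\eta,\ast}^{(\ell)})=m_\ell/n.
\end{align*}
As computed in the proof of Proposition~\ref{prop:fpe_sample_est}, on the apriori range $[1/C_0,C_0]$ we have $|\mathsf{f}'(\tau)|\ge (\pnorm{\Sigma}{\op}+C_0)^{-2}\mathcal{H}_\Sigma^{-1}\gtrsim_K 1$, so the mean value theorem yields $|\tau_{\eta,\ast}-\tau_{\eta,\ast}^{(\ell)}|\lesssim_K m_\ell/n$.

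For the stability of $\gamma$, I would use the first equations of (\ref{eqn:fpe}) and (\ref{eqn:fpe_ell}) together with the closed form of $\E\err_{(\Sigma,\mu_0)}(\gamma;\tau)$ from Lemma~\ref{lem:est_dof}. Rewriting,
\begin{align*}
\phi\gamma_{\eta,\ast}^2-\phi^{(\ell)}(\gamma_{\eta,\ast}^{(\ell)})^2=\E\err_{(\Sigma,\mu_0)}(\gamma_{\eta,\ast};\tau_{\eta,\ast})-\E\err_{(\Sigma,\mu_0)}(\gamma_{\eta,\ast}^{(\ell)};\tau_{\eta,\ast}^{(\ell)}),
\end{align*}
and inserting Lemma~\ref{lem:est_dof} on both sides, one obtains (using the apriori bounds, $\pnorm{\mu_0}{}\le 1$, and the just-proven $|\tau_{\eta,\ast}-\tau_{\eta,\ast}^{(\ell)}|\lesssim m_\ell/n$) that the RHS is $\bigo_K(m_\ell/n)+ \bigo_K(|\gamma_{\eta,\ast}^2-(\gamma_{\eta,\ast}^{(\ell)})^2|)$ with a small multiplicative constant from the $\gamma^2\cdot n^{-1}\tr(\Sigma^2(\Sigma+\tau I)^{-2})$ term. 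Rearranging yields $|\gamma_{\eta,\ast}^2-(\gamma_{\eta,\ast}^{(\ell)})^2|\lesssim_K m_\ell/n$, and the apriori lower bound on $\gamma_{\eta,\ast},\gamma_{\eta,\ast}^{(\ell)}$ converts this into $|\gamma_{\eta,\ast}-\gamma_{\eta,\ast}^{(\ell)}|\lesssim_K m_\ell/n$. This step is essentially the deterministic, noise-free analogue of Step~(2) in the proof of Proposition~\ref{prop:fpe_sample_est}, and I anticipate it to be the only mildly delicate computation; the apriori invertibility from Part~(1) is what makes it routine.
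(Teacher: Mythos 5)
Your proposal is correct and follows essentially the same strategy as the paper: view (\ref{eqn:fpe_ell}) as (\ref{eqn:fpe}) with the aspect ratio perturbed from $\phi$ to $\phi^{(\ell)}=\phi-m_\ell/n$, reduce the existence/uniqueness and apriori bounds to Proposition \ref{prop:fpe_est}, and then quantify the stability. The paper implements the stability step slightly differently, introducing the continuous family (\ref{ineq:fpe_cv_stability_1}) indexed by $\alpha\in[0,1/(2K)]$ (with $\alpha=0$ recovering (\ref{eqn:fpe}) and $\alpha=m_\ell/n$ recovering (\ref{eqn:fpe_ell})), showing $\tau_{\eta,\ast}'(\alpha)\asymp 1$ by implicit differentiation, and handling $\gamma$ via the quotient rule on $\gamma_{\eta,\ast}^2(\alpha)=G_{1,\eta}(\alpha)/G_{2,\eta}(\alpha)$; your direct subtraction between the two endpoints yields the same Lipschitz estimates. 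One point of phrasing to sharpen in your $\gamma$ step: the coefficient multiplying $|\gamma_{\eta,\ast}^2-(\gamma_{\eta,\ast}^{(\ell)})^2|$ on the right side is $T\equiv n^{-1}\tr\big(\Sigma^2(\Sigma+\tau I)^{-2}\big)$, which need not be ``small''; what makes the rearrangement valid is the uniform gap $\phi-T\gtrsim_K 1$, which follows from (\ref{ineq:fpe_property_4}) together with the apriori lower bound $\tau_{\eta,\ast}\gtrsim_K 1$. This is the same bound that keeps $G_{2,\eta}(\alpha)\gtrsim 1$ in the paper's version, and it is implicitly present in your invocation of Step (2) of Proposition \ref{prop:fpe_sample_est}, but it should be named explicitly rather than described as a small constant.
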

\begin{proof}
	All the constants in $\lesssim,\gtrsim,\asymp$ below may depend on $K$. We only need to prove (2). The method of proof is similar to that of Proposition \ref{prop:fpe_est}-(3). Instead of considering (\ref{eqn:fpe_ell}), we shall consider the system of equations
	\begin{align}\label{ineq:fpe_cv_stability_1}
	\begin{cases}
	\phi-\alpha  = \frac{1}{\gamma^2}\big(\sigma_\xi^2+\tau^2 \pnorm{(\Sigma+\tau I)^{-1}\Sigma^{1/2}\mu_0}{}^2\big)+ \frac{1}{n} \tr\big( (\Sigma+\tau I)^{-2}\Sigma^2\big),\\
	\phi-\alpha = \frac{1}{n}\tr\big((\Sigma+\tau I)^{-1}\Sigma\big)+ \frac{\eta}{\tau},
	\end{cases}
	\end{align}
	indexed by $\alpha\geq 0$. For $\alpha \in [0,1/(2K)]$, the solution $(\gamma_{\eta,\ast}(\alpha),\tau_{\eta,\ast}(\alpha))$ exists uniquely for $\eta>0$ and also for $\eta=0$ if additionally $m<n$. Moreover, using the apriori estimate in Proposition \ref{prop:fpe_est}-(2), we have uniformly in $\eta \in \Xi_K$ and $\alpha \in [0,1/(2K)]$, $\gamma_{\eta,\ast}(\alpha),\tau_{\eta,\ast}(\alpha)\asymp 1$. Now differentiating on both sides of the second equation in (\ref{ineq:fpe_cv_stability_1}) with respect to $\alpha$, we obtain
	\begin{align*}
	1 = \Big(n^{-1}\tr\big((\Sigma+\tau_{\eta,\ast}(\alpha)I)^{-2}\Sigma\big)+{\eta}{\tau_{\eta,\ast}^{-2}(\alpha)}\Big)\cdot \tau_{\eta,\ast}'(\alpha).
	\end{align*}
	This means uniformly in $\eta \in \Xi_K$ and $\alpha \in [0,1/(2K)]$, $\tau_{\eta,\ast}'(\alpha)\asymp 1$. Next, using the first equation in (\ref{ineq:fpe_cv_stability_1}), we obtain 
	\begin{align*}
	\gamma_{\eta,\ast}^2(\alpha) = \frac{\sigma_\xi^2+\tau_{\eta,\ast}^2(\alpha) \pnorm{(\Sigma+\tau_{\eta,\ast}(\alpha) I)^{-1}\Sigma^{1/2}\mu_0}{}^2}{\phi-\alpha- \frac{1}{n}\tr\big( (\Sigma+\tau_{\eta,\ast}(\alpha) I)^{-2}\Sigma^2\big) }\equiv \frac{G_{1,\eta}(\alpha)}{G_{2,\eta}(\alpha)}.
	\end{align*}
	Using similar calculations as in (\ref{ineq:fpe_property_7})-(\ref{ineq:fpe_property_8}), we have uniformly in $\eta \in \Xi_K$ and $\alpha \in [0,1/(2K)]$, $G_{1,\eta}(\alpha),G_{2,\eta}(\alpha)\asymp 1$, and $
	\abs{ G_{1,\eta}'(\alpha) }\vee \abs{ G_{2,\eta}'(\alpha) }  \lesssim 1$. 
	This concludes the claim. 
\end{proof}

\begin{proof}[Proof of Theorem \ref{thm:tuning_cv}]	
	All the constants in $\lesssim,\gtrsim,\asymp$ below may  depend on $K,L$.
	
	As $\pnorm{Y^{(\ell)}-X^{(\ell)} \hat{\mu}^{(\ell)}_{\eta} }{}^2=\pnorm{Z^{(\ell)}\Sigma^{1/2}(\mu_0-\hat{\mu}^{(\ell)}_{\eta})+\xi^{(\ell)}}{}^2$ and $\hat{\mu}^{(\ell)}_{\eta}$ is independent of $(Z^{(\ell)}, \xi^{(\ell)})$, by using Lemma \ref{lem:conc_quad} first conditionally on $(Z^{(-\ell)},\xi^{(-\ell)})$ and then further taking expectation over $(Z^{(-\ell)},\xi^{(-\ell)})$, we have for $0<\varrho\leq 1$,
	\begin{align*}
	&\Prob\Big(E_{0,\ell}^c(\eta)\equiv \Big\{\bigabs{m_\ell^{-1}\pnorm{Y^{(\ell)}-X^{(\ell)} \hat{\mu}^{(\ell)}_{\eta} }{}^2-\big(\pnorm{\Sigma^{1/2}(\hat{\mu}^{(\ell)}_{\eta}-\mu_0)}{}^2+\sigma_\xi^2\big)}\\
	&\qquad\qquad \geq C_0\big(\sigma_\xi^2\vee \pnorm{\Sigma^{1/2}(\hat{\mu}^{(\ell)}_{\eta}-\mu_0)}{}^2\big) m_\ell^{-(1-\varrho)/2}\Big\}\Big)\leq C_0 e^{-m_\ell^\varrho/C_0}.
	\end{align*}
	Here $C_0>0$ is a universal constant. Using similar arguments as in (\ref{ineq:university_squared_risk_2}) (by noting that the normalization in $\hat{\mu}_\eta^{(\ell)}$ is still $n$), there exists some constant $C_1>0$ such that for any $\ell \in [k]$, on an event $E_{1,\ell}$ with $\Prob(E_{1,\ell}^c)\leq C_1 e^{-m_\ell/C_1}$, $\sup_{\eta \in \Xi_L} \pnorm{\hat{\mu}^{(\ell)}_{\eta}}{}\leq C_1$. This means that for any $\eta \in \Xi_L$, on the event $\cap_{\ell \in [k]}(E_{0,\ell}(\eta)\cap E_{1,\ell})$, 
	\begin{align}\label{ineq:tuning_cv_1}
	\max_{\ell \in [k]} m_\ell^{(1-\varrho)/2}\cdot \bigabs{m_\ell^{-1}\pnorm{Y^{(\ell)}-X^{(\ell)} \hat{\mu}^{(\ell)}_{\eta} }{}^2-\big(\pnorm{\Sigma^{1/2}(\hat{\mu}^{(\ell)}_{\eta}-\mu_0)}{}^2+\sigma_\xi^2\big) }\leq C_1'. 
	\end{align}
	On the other hand, using Theorem \ref{thm:errors_explicit}, we may find some $\mathcal{U}_{\vartheta;\ell}\subset B_n(1)$ with $\mathrm{vol}(\mathcal{U}_{\vartheta;\ell})/\mathrm{vol}(B_n(1))\geq 1- C_2 e^{-n^{\vartheta}/C_2}$, such that for $\epsilon \in (0, 1/2]$, on an event $E_{2,\ell}(\epsilon)$ with $\Prob(E_{2,\ell}^c(\epsilon))\leq C_2 (n e^{-n\epsilon^4/C_2}+\epsilon^{-c_0} n^{-1/6.5}\bm{1}_{Z\neq G})$, for $\mu_0 \in \mathcal{U}_{\vartheta;\ell}$,
	\begin{align*}
	\sup_{\eta \in \Xi_L}\biggabs{ \pnorm{\Sigma^{1/2}(\hat{\mu}^{(\ell)}_{\eta}-\mu_0)}{}^2- \bigg\{\frac{m-m_\ell}{n}(\gamma_{\eta,\ast}^{(\ell)})^2-\sigma_\xi^2\bigg\} }\leq \epsilon. 
	\end{align*}
	Here $\gamma_{\eta,\ast}^{(\ell)}$ is taken from Lemma \ref{lem:fpe_cv_stability}, and we extend the definition to $\ell=0$ with $\hat{\mu}_\eta^{(0)}\equiv \hat{\mu}_\eta$ and $\gamma_{\eta,\ast}^{(0)}\equiv \gamma_{\eta,\ast}$.  Using the statement (2) of the same Lemma \ref{lem:fpe_cv_stability}, on the event $E_{2,\ell}(\epsilon)$, we then have
	\begin{align*}
	\sup_{\eta \in \Xi_L}\bigabs{ \pnorm{\Sigma^{1/2}(\hat{\mu}^{(\ell)}_{\eta}-\mu_0)}{}^2- \big\{\phi\gamma_{\eta,\ast}^2-\sigma_\xi^2\big\} }\leq \epsilon+\frac{C_2 m_\ell}{n}.
	\end{align*}
	Replacing $\phi\gamma_{\eta,\ast}^2-\sigma_\xi^2$ by $R^{\pred}_{(\Sigma,\mu_0)}(\eta)=\pnorm{\Sigma^{1/2}(\hat{\mu}_{\eta}-\mu_0)}{}^2$ yields that, on $\cap_{\ell \in [0:k]} E_{2,\ell}(\epsilon)$,
	\begin{align}\label{ineq:tuning_cv_2}
	\sup_{\eta \in \Xi_L}\bigabs{ \pnorm{\Sigma^{1/2}(\hat{\mu}^{(\ell)}_{\eta}-\mu_0)}{}^2- R^{\pred}_{(\Sigma,\mu_0)}(\eta)}\leq 2\epsilon+\frac{C_2 m_\ell}{n}.
	\end{align}
	Combining (\ref{ineq:tuning_cv_1})-(\ref{ineq:tuning_cv_2}), for $\mu_0\in \mathcal{U}_{\vartheta}\equiv \cap_{\ell \in [0:k]} \mathcal{U}_{\vartheta;\ell}$, $\epsilon \in (0, 1/2]$ and $\eta \in \Xi_L$, 
	\begin{align}\label{ineq:tuning_cv_3}
	&\Prob\bigg(\bigabs{R^{\CV,k}_{(\Sigma,\mu_0)}(\eta)-\big(R^{\pred}_{(\Sigma,\mu_0)}(\eta)+\sigma_\xi^2\big)} \geq C_2'\cdot \bigg\{\frac{1}{k}\sum_{\ell\in [k]}\frac{1}{m_\ell^{(1-\varrho)/2}}+\frac{1}{k}+\epsilon\bigg\} \bigg)\nonumber\\
	&\leq C_2' \cdot 
	\begin{cases}
	\sum_{\ell \in [k]}e^{-m_\ell^\varrho/C_0}+ kne^{-n\epsilon^4/C_2}, & Z=G;\\
	\sum_{\ell \in [k]}e^{-m_\ell^\varrho/C_0}+ \epsilon^{-c_0}\cdot kn^{-1/6.5}, & \hbox{otherwise}.
	\end{cases}
	\end{align}
	Now we strengthen the estimate (\ref{ineq:tuning_cv_3}) into a uniform version. It is easy to verify that on an event $E_{3,\ell}$ with $\Prob(E_{3,\ell}^c)\leq C_3 e^{-m_\ell/C_3}$, $\pnorm{Z^{(\ell)}}{\op}\leq C_3(\sqrt{m_\ell}+\sqrt{n})$, $\pnorm{\xi^{(\ell)}}{}\leq C_3 \sqrt{m_\ell}$, and for $\eta_1,\eta_2 \in \Xi_L$, $
	\pnorm{ \hat{\mu}_{\eta_1}^{(\ell)}- \hat{\mu}_{\eta_2}^{(\ell)}}{}\leq C_3 \abs{\eta_1-\eta_2}$. 
	So on $\cap_{\ell \in [k]} (E_{1,\ell}\cap E_{3,\ell})$, for $\eta_1,\eta_2 \in \Xi_L$,
	\begin{align*}
	\bigabs{R^{\CV,k}_{(\Sigma,\mu_0)}(\eta_1)-R^{\CV,k}_{(\Sigma,\mu_0)}(\eta_2)}&\lesssim \frac{1}{k}\sum_{\ell \in [k]} \frac{1}{m_\ell} \bigabs{ \pnorm{Z^{(\ell)}}{\op}\pnorm{ \hat{\mu}_{\eta_1}^{(\ell)}- \hat{\mu}_{\eta_2}^{(\ell)}}{}\cdot \big(\pnorm{Z^{(\ell)}}{\op}+\pnorm{\xi^{(\ell)}}{}\big)  }\\
	&\lesssim \frac{1}{k}\sum_{\ell \in [k]} \frac{m_\ell+n}{m_\ell}\cdot \abs{\eta_1-\eta_2} \leq C_3'\cdot  \frac{1}{k}\sum_{\ell \in [k]} \frac{n}{m_\ell}\cdot \abs{\eta_1-\eta_2},
	\end{align*}
	and
	\begin{align*}
	\bigabs{R^{\pred}_{(\Sigma,\mu_0)}(\eta_1)-R^{\pred}_{(\Sigma,\mu_0)}(\eta_2)}\leq C_3' \abs{\eta_1-\eta_2}.
	\end{align*}
	From here, using (i) (\ref{ineq:tuning_cv_3}) along with a discretization and union bound that strengthens (\ref{ineq:tuning_cv_3}) to a uniform control, and (ii) Theorem \ref{thm:errors_explicit} which replaces $R^{\pred}_{(\Sigma,\mu_0)}(\eta)$ by $\bar{R}^{\pred}_{(\Sigma,\mu_0)}(\eta)$, we obtain for $\mu_0\in \mathcal{U}_{\vartheta}$ and $\epsilon \in (0,1/2]$,
	\begin{align*}
	&\Prob\bigg(\sup_{\eta \in \Xi_L}\bigabs{R^{\CV,k}_{(\Sigma,\mu_0)}(\eta)-\big(\bar{R}^{\pred}_{(\Sigma,\mu_0)}(\eta)+\sigma_\xi^2\big)} \geq C_3''\cdot \bigg\{\frac{1}{k}\sum_{\ell\in [k]}\frac{1}{m_\ell^{(1-\varrho)/2}}+\frac{1}{k}+\epsilon\bigg\}\equiv \epsilon_{\{m_\ell\}} \bigg)\nonumber\\
	&\leq \mathfrak{p}_0\equiv \frac{C_3''}{\epsilon k}\sum_{\ell \in [k]} \frac{n}{m_\ell} \cdot
	\begin{cases}
	\sum_{\ell \in [k]}e^{-m_\ell^\varrho/C_0}+ kne^{-n\epsilon^4/C_2}, & Z=G;\\
	\sum_{\ell \in [k]}e^{-m_\ell^\varrho/C_0}+ \epsilon^{-c_0}\cdot kn^{-1/6.5}, & \hbox{otherwise}.
	\end{cases}
	\end{align*}
	Now with the same $\mathcal{W}_{\epsilon,\delta,\vartheta}$ as in  (\ref{ineq:small_intep_1a}) using $\epsilon\equiv \epsilon_n\equiv n^{-\vartheta}$, for any $\mu_0 \in \mathcal{U}_\vartheta\cap \mathcal{W}_{\epsilon,\delta,\vartheta}$, we may further replace $\bar{R}^{\pred}_{(\Sigma,\mu_0)}(\eta)$ by $\mathscr{R}^{\pred}_{(\Sigma,\mu_0)}(\eta)$ in the above display (with a possibly slightly larger $\epsilon_{\{m_\ell\}}$, but for notational simplicity we abuse this notation). In summary, for any $\mu_0 \in \mathcal{U}_\vartheta\cap\mathcal{W}_{\epsilon,\delta,\vartheta}$, on an event $E_4$ with $\Prob(E_4^c)\leq \mathfrak{p}_0$,
	\begin{align*}
    \sup_{\eta \in \Xi_L} \bigabs{ R^{\CV,k}_{(\Sigma,\mu_0)}(\eta)-\big(\mathscr{R}^{\pred}_{(\Sigma,\mu_0)}(\eta)+\sigma_\xi^2\big)}\leq \epsilon_{\{m_\ell\}}.
	\end{align*}
    From here, using similar arguments as in (\ref{ineq:tuning_res_1})-(\ref{ineq:tuning_res_2}), on the event $E_4$, 
    \begin{align*}
    \bigabs{\mathscr{R}^{\pred}_{(\Sigma,\mu_0)}(\hat{\eta}^{\CV})-\min_{\eta \in \Xi_L}\mathscr{R}^{\pred}_{(\Sigma,\mu_0)}(\eta) }\leq 2 \epsilon_{\{m_\ell\}}.
    \end{align*}
    Similar to (\ref{ineq:tuning_res_3}), on the event $E_4$, we have
    \begin{align}\label{ineq:tuning_cv_4}
    \abs{\hat{\eta}^{\CV}-\eta_\ast}\leq  C_4\cdot \epsilon_{\{m_\ell\}}^{1/2}.
    \end{align} 
    From here we may argue along the same lines as those following (\ref{ineq:tuning_res_3}) in the proof of Theorem \ref{thm:tuning_res} to conclude with probability estimated at $\mathfrak{p}_0$, by further noting that $\mathcal{U}_\vartheta\cap \mathcal{W}_{\epsilon,\delta,\vartheta}$ satisfies the desired volume estimate. Under the further condition $\min_{\ell \in [k]} m_\ell \geq\log^{2/\delta} m$, by taking $\varrho=\delta$, $\mathfrak{p}_0$ simplifies as indicated in the statement of the theorem for $n$ large. 
\end{proof}

\subsection{Proof of Theorem \ref{thm:CI_cv}}

We only prove the case for $\#=\GCV$; the other case is similar. All constants in $\lesssim,\gtrsim,\asymp$ and  $\mathcal{O}$ may possibly depend on $K,L$. Let $\mathcal{W}_{\epsilon,\delta,\vartheta}\subset B_n(1)\setminus B_n(\delta)$ be as constructed in (\ref{ineq:small_intep_1a}) with $\epsilon\equiv \epsilon_n\equiv n^{-\vartheta}$. 

\noindent (1). We first prove the statement for the length of the CI. Note that $|\mathrm{CI}_j(\eta)|=2\hat{\gamma}_\eta (\Sigma^{-1})_{jj}^{1/2}z_{\alpha/2}/\sqrt{n}$. By Theorem \ref{thm:trace_app}-(2), on an event $E_0$ with the probability indicated therein, 
\begin{align*}
&\max_{j \in [n]}\sup_{\eta \in \Xi_L}\biggabs{|\mathrm{CI}_j(\eta)|- 2\gamma_{\eta,\ast} (\Sigma^{-1})_{jj}^{1/2} \frac{z_{\alpha/2}}{\sqrt{n}}}\leq \frac{2\pnorm{\Sigma^{-1}}{\op}^{1/2} z_{\alpha/2}}{\sqrt{n}} \sup_{\eta \in \Xi_L}\abs{\hat{\gamma}_\eta-\gamma_{\eta,\ast}}\lesssim \frac{z_{\alpha/2}}{\sqrt{n}}\cdot \epsilon. 
\end{align*}
Consequently, on the event $E_0$, for any $\mu_0 \in \mathcal{W}_{\epsilon,\delta,\vartheta}$,
\begin{align*}
&\sqrt{n} z_{\alpha/2}^{-1}\cdot \max_{j \in [n]} \bigabs{ |\mathrm{CI}_j(\hat{\eta}^{\GCV})|-\min_{\eta \in \Xi_L} |\mathrm{CI}_j(\eta)| }\\
&\lesssim \abs{\gamma_{\hat{\eta}^{\GCV},\ast}-\min_{\eta \in \Xi_L}\gamma_{\eta,\ast}}+\epsilon \\
&\lesssim \abs{\gamma_{\hat{\eta}^{\GCV},\ast}^2-\min_{\eta \in \Xi_L}\gamma_{\eta,\ast}^2}+\epsilon \quad  (\hbox{using Proposition \ref{prop:fpe_est}-(3)})\\
&\asymp \bigabs{\bar{R}^{\pred}_{(\Sigma,\mu_0)}(\hat{\eta}^{\GCV})-\min_{\eta \in \Xi_L} \bar{R}^{\pred}_{(\Sigma,\mu_0)}(\eta) }+\epsilon \quad (\hbox{using definition of $\gamma_{\eta,\ast}^2$})\\
& \lesssim \bigabs{\mathscr{R}^{\pred}_{(\Sigma,\mu_0)}(\hat{\eta}^{\GCV})-\min_{\eta \in \Xi_L} \mathscr{R}^{\pred}_{(\Sigma,\mu_0)}(\eta) }+\epsilon\quad (\hbox{using Theorem \ref{thm:error_rmt}}).
\end{align*}
As in the proof of Theorem \ref{thm:tuning_res}, for $\sigma_\xi^2\leq K$, $\eta_\ast=\SNR_{\mu_0}^{-1} \in \Xi_L$, so by using Proposition \ref{prop:derivative_R}-(2), on the event $E_0$, for any $\mu_0 \in \mathcal{W}_{\epsilon,\delta,\vartheta}$,
 \begin{align*}
 &\sqrt{n} z_{\alpha/2}^{-1}\cdot \max_{j \in [n]} \bigabs{ |\mathrm{CI}_j(\hat{\eta}^{\GCV})|-\min_{\eta \in \Xi_L} |\mathrm{CI}_j(\eta)| }\\
 &\lesssim \abs{\mathscr{R}^{\pred}_{(\Sigma,\mu_0)}(\hat{\eta}^{\GCV})- \mathscr{R}^{\pred}_{(\Sigma,\mu_0)}(\eta_\ast) }+\epsilon \lesssim \abs{ \hat{\eta}^{\GCV}-\eta_\ast}^2+\epsilon. 
 \end{align*}
 The above reasoning also proves that on the same event $E_0$, for any $\mu_0 \in \mathcal{W}_{\epsilon,\delta,\vartheta}$,
 \begin{align*}
 \abs{\gamma_{\hat{\eta}^{\GCV},\ast}-\gamma_{\eta_\ast,\ast} }\lesssim \abs{ \hat{\eta}^{\GCV}-\eta_\ast}^2+\epsilon.
 \end{align*}
 From here, in view of (\ref{ineq:tuning_res_3}), by adjusting constants, on an event $E_1$ with $\Prob(E_1^c)\leq C_1 n^{-1/7}$, it holds that
 \begin{align}\label{ineq:CI_cv_1}
 &\sqrt{n} z_{\alpha/2}^{-1}\cdot \max_{j \in [n]} \bigabs{ |\mathrm{CI}_j(\hat{\eta}^{\GCV})|-\min_{\eta \in \Xi_L} |\mathrm{CI}_j(\eta)| } \vee \abs{\gamma_{\hat{\eta}^{\GCV},\ast}-\gamma_{\eta_\ast,\ast} }\vee \abs{ \hat{\eta}^{\GCV}-\eta_\ast}^2  \leq \epsilon.
 \end{align}
 This proves the claim for the length of the CI.
 
 \noindent (2). Next we prove the statement for the coverage. We note that a similar Lipschitz continuity argument as in the proof of Lemma \ref{lem:university_squared_risk} shows that for any $1$-Lipschitz $\mathsf{g}:\R^n \to \R$, on an event $E_2(\mathsf{g})$ with $\Prob(E_2(\mathsf{g})^c)\leq Cn^{-1/7}$,
 \begin{align}\label{ineq:CI_cv_2}
 \sup_{\eta \in \Xi_L}\bigabs{\mathsf{g}(\hat{\mu}^{\dR}_{\eta})-\E \mathsf{g}\big(\mu_0+ \gamma_{\eta,\ast} \Sigma^{-1/2}g/\sqrt{n}\big) }\leq \epsilon.
 \end{align}
 On the other hand, using the Lipschitz continuity of $\eta\mapsto \tau_{\eta,\ast}$ in Proposition \ref{prop:fpe_est}-(3), 
 \begin{align*}
 \bigabs{\mathsf{g}(\hat{\mu}^{\dR}_{\hat{\eta}^{\GCV}})-\mathsf{g}(\hat{\mu}^{\dR}_{\eta_\ast})}&\leq \pnorm{ \hat{\mu}^{\dR}_{\hat{\eta}^{\GCV}}- \hat{\mu}^{\dR}_{\eta_\ast}}{}\lesssim \abs{\tau_{ \hat{\eta}^{\GCV},\ast}-\tau_{\eta_\ast,\ast}}\sup_{\eta \in \Xi_L} \pnorm{\hat{\mu}_\eta}{}+\pnorm{ \hat{\mu}_{\hat{\eta}^{\GCV}}- \hat{\mu}_{\eta_\ast}}{}\\
 &\lesssim \pnorm{ \hat{\eta}^{\GCV}-\eta_\ast}{}\sup_{\eta \in \Xi_L} \pnorm{\hat{\mu}_\eta}{}+\pnorm{ \hat{\mu}_{\hat{\eta}^{\GCV}}- \hat{\mu}_{\eta_\ast}}{}.
 \end{align*}
 So by enlarging $C_1$ if necessary, we may assume without loss of generality that on $E_1\cap E_2(\mathsf{g})$,
 \begin{align}\label{ineq:CI_cv_3}
 \bigabs{\mathsf{g}(\hat{\mu}^{\dR}_{\hat{\eta}^{\GCV}})-\mathsf{g}(\hat{\mu}^{\dR}_{\eta_\ast})}\leq C_1 \epsilon^{1/2}.
 \end{align} 
 Now we shall make a good choice of $\mathsf{g}$ in (\ref{ineq:CI_cv_1}).  Let $\Delta \in (0,1)$ and $\mathsf{g}_{0,\Delta}:\R\to [0,1]$ be a function such that $\mathsf{g}_{0,\Delta}=1$ on $[-1,1]$, $\mathsf{g}_{0,\Delta}=0$ on $\R\setminus (-1-\Delta,1+\Delta)$, and linearly interpolated in $(-1-\Delta,-1)\cup (1,1+\Delta)$. Let
 \begin{align}\label{ineq:CI_cv_4}
 \mathsf{g}(u)\equiv \frac{\Delta}{n}\sum_{j=1}^n \mathsf{g}_{0,\Delta}\bigg(\frac{u_j-\mu_{0,j}}{(\gamma_{\eta_\ast,\ast}+\epsilon)(\Sigma^{-1})_{jj}^{1/2}z_{\alpha/2}/\sqrt{n} }\bigg).
 \end{align}
 It is easy to verify the Lipschitz property of $\mathsf{g}$: for any $u_1,u_2 \in \R^n$, $
 \abs{\mathsf{g}(u_1)-\mathsf{g}(u_2)}\lesssim n^{-1/2} \Delta\pnorm{\mathsf{g}_{0,\Delta}}{\mathrm{Lip}}\sum_{j=1}^n \abs{u_{1,j}-u_{2,j}}\lesssim \pnorm{u_1-u_2}{}$. 
 Consequently, we may apply (\ref{ineq:CI_cv_2}) with $\mathsf{g}$ defined in (\ref{ineq:CI_cv_4}) to obtain that on the event $E_1\cap E_2(\mathsf{g})$,
 \begin{align}\label{ineq:CI_cv_5}
 \mathscr{C}^{\dR}(\hat{\eta}^{\GCV})& = \frac{1}{n}\sum_{j=1}^n \bm{1}\Big(\hat{\mu}^{\dR}_{\hat{\eta}^{\GCV},j} \in \Big[\mu_{0,j}\pm \hat{\gamma}_{\hat{\eta}^{\GCV} }(\Sigma^{-1})^{1/2}_{jj} \frac{z_{\alpha/2}}{\sqrt{n} }\Big]\Big)\nonumber\\
 &\leq \frac{1}{n}\sum_{j=1}^n \bm{1}\Big(\hat{\mu}^{\dR}_{\hat{\eta}^{\GCV},j} \in \Big[\mu_{0,j}\pm (\gamma_{\eta_\ast,\ast}+\epsilon) (\Sigma^{-1})^{1/2}_{jj} \frac{z_{\alpha/2}}{\sqrt{n} }\Big]\Big)\nonumber\\
 &\leq \Delta^{-1} \cdot \mathsf{g}\big(\hat{\mu}^{\dR}_{ \hat{\eta}^{\GCV} }\big)\quad \hbox{(using $\bm{1}_{[-1,1]}\leq \mathsf{g}_{0,\Delta}$)}\nonumber\\
 & \leq \Delta^{-1}\cdot  \mathsf{g}\big(\hat{\mu}^{\dR}_{ \eta_\ast }\big) + \bigo(\epsilon^{1/2}/\Delta) \quad \hbox{(by (\ref{ineq:CI_cv_3}))}\nonumber\\
 &\leq \Delta^{-1}\cdot \E \mathsf{g}\big(\mu_0+ \gamma_{\eta_\ast,\ast} \Sigma^{-1/2}g/\sqrt{n}\big)+ \bigo(\epsilon^{1/2}/\Delta). 
 \end{align}
 Now using $\mathsf{g}_{0,\Delta}\leq \bm{1}_{[-1-\Delta,1+\Delta]}$ and the anti-concentration of the standard normal random variable, we may compute
 \begin{align}\label{ineq:CI_cv_6}
 &\Delta^{-1}\cdot \E \mathsf{g}\big(\mu_0+ \gamma_{\eta_\ast,\ast} \Sigma^{-1/2}g/\sqrt{n}\big) = \E \mathsf{g}_{0,\Delta}\bigg(\frac{\gamma_{\eta_\ast,\ast}}{\gamma_{\eta_\ast,\ast}+\epsilon}\cdot \frac{g}{z_{\alpha/2}}\bigg)\nonumber\\
 &\leq \Prob\Big(\mathcal{N}(0,1) \in \Big[\pm z_{\alpha/2}\cdot \big(1+{\epsilon}/{ \gamma_{\eta_\ast,\ast}}\big)\cdot(1+\Delta)\Big]\Big)\leq 1-\alpha +\bigo(\epsilon+\Delta). 
 \end{align}
 Combining the above two displays (\ref{ineq:CI_cv_5})-(\ref{ineq:CI_cv_6}), on the event $E_1\cap E_2(\mathsf{g})$,
 \begin{align*}
 \mathscr{C}^{\dR}(\hat{\eta}^{\GCV})\leq 1-\alpha+ \bigo(\epsilon+\Delta+\epsilon^{1/2}/\Delta). 
 \end{align*}
 Finally choosing $\Delta=\epsilon^{1/4}$ to conclude the upper control. The lower control can be proved similarly so we omit the details. \qed

\section{Auxiliary results}

\begin{proposition}\label{prop:conc_H_generic}
	Let $H:\R^n \to \R_{\geq 0}$ be a non-negative, differentiable function. Suppose there exists some deterministic $\Gamma>0$ such that $\pnorm{\nabla H(g)}{}^2\leq \Gamma^2 H(g)$ almost surely for $g\sim \mathcal{N}(0,I_n)$. Then there exists some universal constant $C>0$ such that for all $t\geq 0$,
	\begin{align*}
	\Prob\Big(\abs{H(g)-\E H(g)}/C\geq \Gamma \E^{1/2} H(g)\cdot \sqrt{t}+\Gamma^2\cdot t \Big)\leq Ce^{-t/C}. 
	\end{align*}
\end{proposition}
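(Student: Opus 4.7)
The plan is to use the well-known observation that the gradient bound $\pnorm{\nabla H(g)}{}^2 \leq \Gamma^2 H(g)$ is equivalent to saying that $Y \equiv \sqrt{H(g)}$ is a $(\Gamma/2)$-Lipschitz function of $g$. Indeed, on the set where $H(g)>0$ we have $\nabla \sqrt{H} = \nabla H/(2\sqrt{H})$, so $\pnorm{\nabla \sqrt{H}}{}^2 \leq \Gamma^2/4$; by an approximation argument this Lipschitz estimate extends globally. This reduces the concentration of $H$ to the concentration of the Lipschitz function $Y$, where classical Gaussian tools apply.

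Concretely, I would first invoke Borell-TIS (or the Gaussian log-Sobolev inequality) to conclude the sub-Gaussian tail
\[
\Prob\bigl(|Y - \E Y| \geq s\bigr) \leq 2\exp(-2s^2/\Gamma^2), \quad s \geq 0,
\]
and the Gaussian Poincar\'e inequality to bound $\var(Y) \leq \Gamma^2/4$. Writing $M_Y \equiv \E Y$, $V \equiv Y - M_Y$, we have the algebraic identity $H - \E H = Y^2 - \E Y^2 = 2M_Y V + (V^2 - \E V^2)$, which decomposes the fluctuations of $H$ into a ``linear'' term $2M_Y V$ and a ``quadratic'' term $V^2 - \E V^2$.

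Next, I would estimate each term separately: on the event $\{|V| \leq C_0 \Gamma \sqrt{t}\}$, which has probability at least $1 - 2e^{-t}$, the linear term satisfies $|2M_Y V| \leq 2C_0 M_Y \Gamma\sqrt{t}$, and the quadratic term satisfies $|V^2 - \E V^2| \leq C_0^2 \Gamma^2 t + \Gamma^2/4$. Using Jensen's inequality $M_Y = \E \sqrt{H(g)} \leq \sqrt{\E H(g)}$, these combine to give
\[
|H(g) - \E H(g)| \leq 2C_0 \Gamma \E^{1/2}H(g)\cdot \sqrt{t} + C_0^2 \Gamma^2 t + \Gamma^2/4
\]
with $\Prob$-probability at least $1 - 2e^{-t}$.

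Finally, I would absorb the residual $\Gamma^2/4$ into the announced bound by enlarging the universal constant $C$: for $t \geq 1$ the term $\Gamma^2/4$ is dominated by $C_0^2\Gamma^2 t$, while for $t \in [0,1]$ one simply chooses $C$ large enough that $Ce^{-t/C} \geq 1$ on this range, making the probability estimate trivially true. There is no serious obstacle here; the only point requiring mild care is the low-$t$ regime, which is handled by inflating the constant, and the technical justification that the Lipschitz property of $\sqrt{H}$ survives at the zero set of $H$, which follows from a routine mollification argument.
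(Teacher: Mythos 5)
Your proof is correct and takes a genuinely different route from the paper's. The paper applies the Gaussian log-Sobolev inequality directly to $H$ together with Herbst's argument on the entropy functional, obtaining a differential inequality for $\log \E e^{\lambda(H-\E H)}$ that integrates to the sub-gamma tail. You instead exploit the equivalence between the self-bounding gradient hypothesis and the Lipschitz property of $\sqrt{H}$, then use Borell--TIS and Poincar\'e on $Y=\sqrt{H}$ and the algebraic decomposition $Y^2-\E Y^2 = 2M_Y V + (V^2-\E V^2)$. Your route is arguably more elementary (it needs only the concentration of Lipschitz Gaussian functionals and the Poincar\'e inequality, rather than the log-Sobolev/entropy machinery), and the decomposition into a linear and a quadratic fluctuation makes the origin of the two terms $\Gamma\E^{1/2}H\cdot\sqrt{t}$ and $\Gamma^2 t$ transparent. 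The cost is the mild regularity issue at $\{H=0\}$, which the paper's approach sidesteps entirely by never dividing by $\sqrt{H}$; your mollification via $\phi_\epsilon(t)=\sqrt{t+\epsilon}$ (which gives $\|\nabla(\phi_\epsilon\circ H)\|^2 = \|\nabla H\|^2/(4(H+\epsilon)) \le \Gamma^2/4$ and passes to the limit uniformly) resolves this cleanly. One minor bookkeeping point worth stating explicitly in the write-up: the leftover $\Gamma^2/4$ from $\E V^2$ is absorbed into $C\Gamma^2 t$ for $t\ge 1$ and handled for $t<1$ by choosing $C$ large enough that $Ce^{-t/C}\ge 1$ on that range, exactly as you indicate.
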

\begin{proof}
	The method of proof via the Gaussian log-Sobolev inequality and the Herbst's argument is well known. We give some details for the convenience of the reader. Let $Z\equiv H(g)-\E H(g)$ be the centered version of $H$, and $G(g)\equiv \lambda Z = \lambda(H(g)-\E H(g))$. Then $
	\pnorm{\nabla G(g)}{}^2 = \lambda^2 \pnorm{\nabla H(g)}{}^2\leq \lambda^2 \Gamma^2 \cdot H(g)=\lambda^2\Gamma^2\cdot \big(Z+\E H(g)\big)$. 
	By the Gaussian log-Sobolev inequality (see e.g., \cite[Theorem 5.4]{boucheron2013concentration}, or \cite[Theorem 2.5.6]{gine2015mathematical}), 
	\begin{align*}
	\mathrm{Ent}(e^{\lambda Z})=\E[\lambda Z e^{\lambda Z}]-\E e^{\lambda Z}\log \E e^{\lambda Z}\leq \frac{1}{2} \E \big[\lambda^2\Gamma^2 \big(Z+\E H(g)\big) e^{\lambda Z} \big].
	\end{align*}
	With $m_Z(\lambda)\equiv \E e^{\lambda Z}$ denoting the moment generation function of $Z$, the above inequality is equivalent to 
	\begin{align*}
	\lambda m_Z'(\lambda)-m_Z(\lambda) \log m_Z(\lambda)\leq \frac{\Gamma^2 \lambda^2 }{2}\Big(m_Z'(\lambda)+\E H(g)\cdot m_Z(\lambda)\Big).
	\end{align*}
	Now dividing $\lambda^2 m_\lambda(Z)$ on both sides of the above display, we have $
	\big({\log m_Z(\lambda)}/{\lambda}\big)' \leq \frac{\Gamma^2}{2}\big(\log m_Z(\lambda)+\lambda \E H(g)\big)'$. 
	Integrating both sides with the condition $\lim_{\lambda\downarrow 0}(\log m_Z(\lambda)/\lambda)=0$ and $\log m_Z(\lambda)=0$, we arrive at $
	\log m_Z(\lambda)\leq \frac{\Gamma^2}{2}\big(\lambda \log m_Z(\lambda)+\lambda^2 \E H(g)\big)$. 
	Solving for $\log m_Z(\lambda)$ and using the standard method to convert to tail bound yield the claimed inequality. 
\end{proof}

\begin{lemma}\label{lem:gaussian_lq}
	Let $\Sigma \in \R^{n\times n}$ be an invertible covariance matrix with $\pnorm{\Sigma}{\op}\vee \pnorm{\Sigma^{-1}}{\op}\leq K$ for some $K>0$. Then for any $q \in [1,\infty)$, there exists some $C=C(K,q)>0$ such that 
	\begin{align*}
	\biggabs{ \frac{\E \pnorm{\mathcal{N}(0,\Sigma)}{q}}{\pnorm{\mathrm{diag}(\Sigma)}{q/2}^{1/2} M_q}-1}\leq C n^{-\frac{1}{q\vee 2}}\sqrt{\log n}.
	\end{align*}
	where $M_q \equiv  \E^{1/q}\abs{\mathcal{N}(0,1)}^q=2^{1/2}\big\{\Gamma\big((q+1)/2\big)/\sqrt{\pi}\big\}^{1/q}$.
\end{lemma}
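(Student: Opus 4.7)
My plan begins with the observation that the denominator $\pnorm{\mathrm{diag}(\Sigma)}{q/2}^{1/2}M_q$ is precisely $\bigl(\E\pnorm{X}{q}^q\bigr)^{1/q}$ for $X\sim \mathcal{N}(0,\Sigma)$: this follows since $\E|X_i|^q = \Sigma_{ii}^{q/2}M_q^q$ under $X_i\sim\mathcal{N}(0,\Sigma_{ii})$, whence $\E\pnorm{X}{q}^q = M_q^q\sum_{i=1}^n\Sigma_{ii}^{q/2} = M_q^q\pnorm{\mathrm{diag}(\Sigma)}{q/2}^{q/2}$. The claim then reduces to bounding the relative gap between $\E\pnorm{X}{q}$ and $\nu_n\equiv(\E\pnorm{X}{q}^q)^{1/q}$, both of order $n^{1/q}$ under $\pnorm{\Sigma}{\op}\vee\pnorm{\Sigma^{-1}}{\op}\leq K$.

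The heart of the argument is a concentration estimate showing $\pnorm{X}{q} = \nu_n(1 + \bigop(\epsilon_n))$ with $\epsilon_n = Cn^{-1/(q\vee 2)}\sqrt{\log n}$. I would split by regime: (i) for $q\geq 2$, the inequality $\pnorm{\cdot}{q}\leq\pnorm{\cdot}{2}$ makes $g\mapsto\pnorm{\Sigma^{1/2}g}{q}$ $\sqrt{K}$-Lipschitz, so Gaussian concentration yields sub-Gaussian fluctuation of order $\sqrt{K\log n}$; (ii) for $q\in[1,2)$, the Lipschitz constant of $\pnorm{\cdot}{q}$ with respect to $\pnorm{\cdot}{2}$ is $n^{1/q-1/2}$, giving fluctuations of order $n^{1/q-1/2}\sqrt{\log n}$; (iii) for $q\in(0,1)$, where $\pnorm{\cdot}{q}$ is not a norm, I would work with $\pnorm{X}{q}^q=\sum_i|X_i|^q$ and bound $\var(\pnorm{X}{q}^q)\lesssim n$ via a Hermite-expansion argument: since $x\mapsto|x|^q$ is even, only even-degree Hermite polynomials appear in its expansion, and Cauchy--Schwarz on the resulting covariance sums yields $|\cov(|X_i|^q,|X_j|^q)|\lesssim\rho_{ij}^2$ for the normalized correlation $\rho_{ij}\equiv\Sigma_{ij}/\sqrt{\Sigma_{ii}\Sigma_{jj}}$, so $\sum_{i,j}|\cov|\lesssim\pnorm{\Sigma^{-1}}{\op}\pnorm{\Sigma}{F}^2\lesssim n$.

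With the variance or sub-Gaussian estimate in hand, Chebyshev's inequality (resp.\ Gaussian concentration) gives $|W-1|\leq\epsilon_n$ with high probability, where $W\equiv\pnorm{X}{q}^q/\nu_n^q$ satisfies $\E W=1$. Gaussian moment bounds on $\pnorm{X}{q}$ provide uniform integrability of $W^{1/q}$, allowing passage from concentration to expectations: $\E W^{1/q} = 1 + \bigo(\epsilon_n)$. Multiplying by $\nu_n$ yields $\E\pnorm{X}{q} = \nu_n(1+\bigo(\epsilon_n))$, which is equivalent to the stated conclusion.

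The main technical obstacle is the case $q\in(0,1)$. Here direct Lipschitz-based Gaussian concentration is unavailable, and the Gaussian Poincar\'e bound $\var(\sum|X_i|^q)\lesssim \sum\E|X_i|^{2q-2}$ diverges for $q\leq 1/2$ due to the singularity of $|x|^{q-1}$ near $0$. The Hermite-expansion route sidesteps this pathology by working directly with covariances rather than gradients, but requires careful bookkeeping to verify that every nonzero-order contribution is controlled by a factor $\rho_{ij}^{2k}\leq \rho_{ij}^2$; the $\sqrt{\log n}$ factor in the final rate then arises when converting the polynomial Chebyshev-type estimate into a high-probability tail statement needed to justify the uniform-integrability step for $W^{1/q}$.
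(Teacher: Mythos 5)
Your proposal is correct and takes a genuinely different route from the paper for the small-$q$ regime, and in fact the distinction matters: the paper's proof asserts that $g\mapsto\pnorm{\Sigma^{1/2}g}{q}$ is $\pnorm{\Sigma}{\op}^{1/2}n^{-1/2+1/(q\wedge 2)}$-Lipschitz and applies Gaussian concentration directly, an argument that needs the reverse triangle inequality $\abs{\pnorm{x}{q}-\pnorm{y}{q}}\leq\pnorm{x-y}{q}$. For $q\in(0,1)$ this inequality fails ($\pnorm{\cdot}{q}$ is only a quasi-norm, and indeed $\pnorm{\cdot}{q}$ is not globally Lipschitz: near a coordinate axis, $\pnorm{x+\epsilon e_j}{q}-\pnorm{x}{q}\asymp\epsilon^q$, so the ratio to $\epsilon$ blows up). You correctly flag this and, rather than concentrating $\pnorm{X}{q}$ directly, work with $S\equiv\pnorm{X}{q}^q=\sum_i|X_i|^q$ and bound $\var(S)$ by Hermite expansion. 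The even-degree-only expansion of $|x|^q$ together with Mehler orthogonality gives $0\leq\cov(|X_i|^q,|X_j|^q)\lesssim_K\rho_{ij}^2$, hence $\var(S)\lesssim\pnorm{\Sigma^{-1}}{\op}^2\pnorm{\Sigma}{F}^2\lesssim n$; you then convert $\var(S)\lesssim n$ into a bound on $\E S^{1/q}$ via a Taylor expansion of $u\mapsto u^{1/q}$ near $1$ plus Cauchy--Schwarz for the tail contribution (controlled by moments of $\pnorm{X}{q}$). This sidesteps the gap in the Poincar\'e route you also note (finiteness of $\E|X_i|^{2q-2}$ fails when $q\leq 1/2$) and actually gives the sharper rate $\bigo(n^{-1/2})$ for $q<2$, without the $\sqrt{\log n}$ factor that the paper's high-probability-tail argument produces. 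The trade-off is that your route requires the uniform-integrability bookkeeping (higher moments of $S^{1/q}$, truncation near $1$), whereas for $q\geq 1$ the paper's Lipschitz argument is shorter and self-contained.

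Two small remarks. First, the covariance bound should read $\sum_{i,j}\rho_{ij}^2\leq\pnorm{\Sigma^{-1}}{\op}^2\pnorm{\Sigma}{F}^2$ (the square is dropped in your write-up; it is harmless under $\pnorm{\Sigma^{-1}}{\op}\leq K$ but worth fixing). Second, your Hermite/variance argument works uniformly over all $q\in(0,\infty)$, so the case split by Lipschitz regime is an exposition choice rather than a necessity; if one wants a single argument that is simultaneously correct for $q<1$ and covers $q\geq 1$, the variance route alone suffices, with a slight loss in the probability-tail strength that is irrelevant here.
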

\begin{proof}
	Let $g\sim \mathcal{N}(0,I_n)$. We first prove that for some $C_0>1$,
	\begin{align}\label{ineq:gaussian_lq_1}
	n^{\frac{1}{q\vee 2}}/C_0\leq \E \pnorm{\Sigma^{1/2}g}{q}\leq C_0 n^{\frac{1}{q}}.
	\end{align}
	The upper bound in the above display is trivial. For the lower bound, using $\pnorm{x}{}\leq n^{\frac{1}{2}-\frac{1}{q\vee 2}}\pnorm{x}{q}$, we find $\E \pnorm{\Sigma^{1/2}g}{q}\geq n^{-\frac{1}{2}+\frac{1}{q\vee 2}}\E\pnorm{\Sigma^{1/2} g}{}\gtrsim n^{\frac{1}{q\vee 2}}$. This proves (\ref{ineq:gaussian_lq_1}).
	
	As $\pnorm{x}{q}\leq n^{-\frac{1}{2}+\frac{1}{q\wedge 2}} \pnorm{x}{}$,  the map $g\mapsto \pnorm{\Sigma^{1/2} g}{q}$ is $\pnorm{\Sigma}{\op}^{1/2} n^{-\frac{1}{2}+\frac{1}{q\wedge 2}}$-Lipschitz with respect to $\pnorm{\cdot}{}$. So by Gaussian concentration, for any $t\geq 0$,
	\begin{align*}
	\Prob\Big( E(t)^c\equiv \Big\{n^{ \frac{1}{2}-\frac{1}{q\wedge 2}}\bigabs{\pnorm{\Sigma^{1/2} g}{q}-\E \pnorm{\Sigma^{1/2} g}{q}}\geq C\sqrt{t}\Big\} \Big)\leq Ce^{-t/C}.
	\end{align*}
	Consequently, using the above concentration and (\ref{ineq:gaussian_lq_1}),
	\begin{align*}
	\E \pnorm{\Sigma^{1/2} g}{q}^q &\leq \E \pnorm{\Sigma^{1/2} g}{q}^q\bm{1}_{E(t)}+ \E^{1/2} \pnorm{\Sigma^{1/2}g}{q}^{2q}\cdot \Prob^{1/2}(E(t)^c)\\
	&\leq \big(\E \pnorm{\Sigma^{1/2}g}{q}+C\sqrt{t}\big)^q+C\cdot n^{1/q} \Prob^{1/2}(E(t)^c)\\
	&\leq \big(\E \pnorm{\Sigma^{1/2}g}{q}\big)^q\cdot\big\{\big(1+C n^{-\frac{1}{q\vee 2}}\sqrt{t}\big)^q+ C\cdot n^{\frac{1}{q}-\frac{1}{q\vee 2}} \Prob^{1/2}(E(t)^c)\big\}.
	\end{align*}
	By choosing $t=C_1\log n$ for some sufficiently large $C_1>0$, we have
	\begin{align*}
	\frac{\E \pnorm{\mathcal{N}(0,\Sigma)}{q}}{\pnorm{\mathrm{diag}(\Sigma)}{q/2}^{1/2} M_q} = \frac{ \E \pnorm{\Sigma^{1/2}g}{q} }{  \E^{1/q} \pnorm{\Sigma^{1/2}g}{q}^q }\geq \big(1-C n^{-\frac{1}{q\vee 2}}\sqrt{\log n}\big)_+.
	\end{align*}
	The upper bound follows similarly. 
\end{proof}

\begin{lemma}\label{lem:conc_quad}
	Let $Z \in \R^{m\times n}$ be a random matrix with independent, mean-zero, unit variance, uniformly sub-gaussian components. Suppose the coordinates of $\xi$ are i.i.d. mean zero and uniformly subgaussian with variance $\sigma_\xi^2>0$, and are independent of $Z$. Then there exists some universal constant $C>0$ such that for any $b \in \R^n$ and $0<\varrho\leq 1$, with probability at least $1-Ce^{-m^\varrho/C}$, 
	\begin{align*}
	\bigabs{m^{-1}\pnorm{Z b +\xi}{}^2- \big(\pnorm{b}{}^2+\sigma_\xi^2\big)} \leq C\cdot (\sigma_\xi^2\vee \pnorm{b}{}^2)\cdot m^{-(1-\varrho)/2}.
	\end{align*}
\end{lemma}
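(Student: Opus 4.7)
The plan is to reduce the claim to a one-dimensional Bernstein inequality for sums of independent sub-exponential random variables. Write $W_i \equiv (Zb+\xi)_i = \sum_{j=1}^n Z_{ij}b_j + \xi_i$ for $i\in[m]$. By the independence of the rows of $Z$ and of $\xi$, the variables $\{W_i\}_{i\in[m]}$ are mutually independent. Each $W_i$ is a sum of independent, mean-zero, uniformly sub-gaussian components, so $\pnorm{W_i}{\psi_2} \lesssim (\pnorm{b}{}^2 + \sigma_\xi^2)^{1/2}$, and $\E W_i^2 = \pnorm{b}{}^2 + \sigma_\xi^2$.

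Squaring gives $\pnorm{W_i^2}{\psi_1} \lesssim \pnorm{W_i}{\psi_2}^2 \lesssim \pnorm{b}{}^2 + \sigma_\xi^2 \lesssim K_b \equiv \sigma_\xi^2 \vee \pnorm{b}{}^2$, so the centered variables $V_i \equiv W_i^2 - \E W_i^2$ are independent, mean-zero, and sub-exponential with $\pnorm{V_i}{\psi_1} \lesssim K_b$. The target quantity becomes
\begin{align*}
m^{-1}\pnorm{Zb+\xi}{}^2 - (\pnorm{b}{}^2 + \sigma_\xi^2) = m^{-1}\sum_{i=1}^m V_i.
\end{align*}
By the classical Bernstein inequality for sub-exponential sums (e.g., \cite[Theorem 2.8.1]{vershynin2018high}), for any $t>0$,
\begin{align*}
\Prob\Big(\bigabs{m^{-1}\textstyle\sum_{i=1}^m V_i} \geq t\Big) \leq 2\exp\Big(-c\,m \cdot \min\big\{(t/K_b)^2,\, t/K_b\big\}\Big)
\end{align*}
for a universal $c>0$.

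Setting $t \equiv C K_b \cdot m^{-(1-\varrho)/2}$ and noting $t/K_b \leq 1$ for $\varrho\leq 1$ and $m$ large, the quadratic regime is active and gives $m\cdot (t/K_b)^2 = C^2 m^{\varrho}$. Choosing $C$ large enough absorbs the small-$m$ boundary case into the constant, yielding the claimed bound $\Prob(\,\cdot\,) \leq C e^{-m^\varrho/C}$. There is no real obstacle here: the only mild point to verify is that the uniform sub-gaussianity assumption on the entries of $Z$ and $\xi$ passes through to a uniform bound on $\pnorm{W_i}{\psi_2}$ in terms of $\sqrt{K_b}$, which follows directly from the Hoeffding-type bound $\pnorm{\sum_j a_j X_j}{\psi_2}^2 \lesssim \sum_j a_j^2 \pnorm{X_j}{\psi_2}^2$ for independent sub-gaussian $X_j$'s, applied with $a_j = b_j$ (and $a_0=1$ for the $\xi_i$ coordinate).
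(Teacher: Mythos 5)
Your proof is correct, and it takes a slightly different route from the paper. The paper's proof expands the quadratic form into three pieces,
\begin{align*}
\frac{1}{m}\pnorm{Zb+\xi}{}^2 = \pnorm{b}{}^2\cdot \frac{1}{m}\sum_{i} \biggiprod{Z_i}{\frac{b}{\pnorm{b}{}}}^2 + \frac{2\sigma_\xi\pnorm{b}{}}{m}\sum_{i}\frac{\xi_i}{\sigma_\xi}\biggiprod{Z_i}{\frac{b}{\pnorm{b}{}}} + \sigma_\xi^2\frac{\pnorm{\xi/\sigma_\xi}{}^2}{m},
\end{align*}
and invokes standard concentration for each of the three centered sums separately (a quadratic term in $Z$, a cross term, and a quadratic term in $\xi$). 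You instead bundle $(Zb+\xi)_i$ into a single sub-gaussian variable $W_i$ with $\pnorm{W_i}{\psi_2}\lesssim \sqrt{\pnorm{b}{}^2+\sigma_\xi^2}$, square it to get a sub-exponential $V_i$, and apply Bernstein once. Your version is more compact and avoids the three-way bookkeeping, at the price of needing to verify the sub-gaussian norm bound on the full linear form $W_i$ (which you correctly do via the Hoeffding-type $\psi_2$ estimate, since the $Z_{ij}$ and $\xi_i$ contributing to $W_i$ are mutually independent). The threshold choice $t = CK_b m^{-(1-\varrho)/2}$ and the discussion of when the quadratic versus linear regime of Bernstein is active are handled correctly: for $\varrho<1$ and $m$ large the quadratic regime gives $\exp(-c\,C^2 m^\varrho)$, and in the remaining boundary cases (small $m$, or $\varrho=1$) the linear regime gives at least as good a bound since $m^{(1+\varrho)/2}\geq m^\varrho$. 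Both proofs deliver the same tail and the same $K_b = \sigma_\xi^2\vee\pnorm{b}{}^2$ scaling; yours is arguably the cleaner argument.
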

\begin{proof}
	Let $Z_1,\ldots,Z_m \in \R^n$ be the rows of $Z$. Then
	\begin{align*}
	\frac{1}{m}\pnorm{Z b +\xi}{}^2=\pnorm{b}{}^2 \frac{1}{m}\sum_{i=1}^m \biggiprod{Z_i}{\frac{b}{\pnorm{b}{}} }^2+\frac{2\sigma_\xi \pnorm{b}{}}{m}\sum_{i=1}^n\frac{\xi_i}{\sigma_\xi}\biggiprod{Z_i}{\frac{b}{\pnorm{b}{}} }+\sigma_\xi^2\frac{ \pnorm{\xi/\sigma_\xi}{}^2}{m}.
	\end{align*}
	Using standard concentration estimates, with probability at least $1-Ce^{-m^\varrho/C}$,
	\begin{itemize}
		\item $\bigabs{\pnorm{b}{}^2 \frac{1}{m}\sum_{i=1}^m \bigiprod{Z_i}{\frac{b}{\pnorm{b}{}} }^2 - \pnorm{b}{}^2}\leq  {C\pnorm{b}{}^2}\cdot {m^{-(1-\varrho)/2}}$,
		\item $\bigabs{\frac{2\sigma_\xi \pnorm{b}{}}{m}\sum_{i=1}^n\frac{\xi_i}{\sigma_\xi}\bigiprod{Z_i}{\frac{b}{\pnorm{b}{}} }}\leq  {C\sigma_\xi \pnorm{b}{}}\cdot{m^{-(1-\varrho)/2}}$,
		\item $\bigabs{\sigma_\xi^2\frac{ \pnorm{\xi/\sigma_\xi}{}^2}{m}-\sigma_\xi^2}\leq  {C \sigma_\xi^2 }\cdot{m^{-(1-\varrho)/2}}$. 
	\end{itemize}
	Collecting the bounds to conclude. 
\end{proof}

\section{Simulation details for Figure \ref{fig:2} and additional simulations}\label{section:simulation}

\subsection{Common numerical settings} We set $\Sigma = 1.99 \cdot I_n + 0.01 \cdot \mathsf{1}_n\mathsf{1}_n^{\top}$, with $\mathsf{1}_n$ representing an $n$-dimensional all one vector. The random design matrix $Z$ and the error $\xi$ are both generated by $t$-distribution with $10$ degrees of freedom, scaled by $\sqrt{0.8}$. This scaling choice ensures that $Z_{ij}$ and $\xi_i$ have mean zero and variance one. The concrete choice of the signal dimension $n$, the sample size $m$, and $\mu_0$ will be specified later.

\subsection{Simulation details for Figure \ref{fig:2}}
We investigate the efficacy of two cross validation schemes in Section \ref{section:application}, namely $\hat{\eta}^{\GCV}$ in (\ref{def:res_tuning}) and $\hat{\eta}^{\CV}$ in (\ref{def:CV_tuning}). We keep the sample size fixed at $m = 500$, and allow the signal dimension $n$ to vary so that the aspect ratio $\phi=m/n$ ranges from $[0.5,1.5]$. To facilitate the tuning process, we employ $31$ equidistant $\eta$'s within the range of $[0, 1.5]$. Moreover, the $k$-fold cross validation scheme $\hat{\eta}^{\CV}$ is carried out with the default choice $k=5$. 

To empirically verify Theorem \ref{thm:tuning_res} and \ref{thm:tuning_cv}, we report in the left panel of Figure \ref{fig:2} the empirical risks $R^{\#}_{(\Sigma,\mu_0)}(\hat{\eta}^{\GCV}), R^{\#}_{(\Sigma,\mu_0)}(\hat{\eta}^{\CV})$ for all $\# \in \{\pred,\est,\ins\}$. All the empirical risk curves are found to concentrate around their theoretical optimal counterparts $\mathscr{R}^{\#}_{(\Sigma,\mu_0)}(\eta_\ast)$. We note again that as $\hat{\eta}^{\GCV}$ and $\hat{\eta}^{\CV}$ are designed to tune the prediction risk, it is not surprising that $R^{\pred}_{(\Sigma,\mu_0)}(\hat{\eta}^{\GCV}), R^{\pred}_{(\Sigma,\mu_0)}(\hat{\eta}^{\CV})$ concentrate around $\mathscr{R}^{\pred}_{(\Sigma,\mu_0)}(\eta_\ast)$. The major surprise appears to be that $\hat{\eta}^{\GCV}$ and $\hat{\eta}^{\CV}$ also provide optimal tuning for estimation and in-sample risks, both theoretically validated in our Theorems \ref{thm:tuning_res} and \ref{thm:tuning_cv} and empirically confirmed here.

To empirically verify Theorem \ref{thm:CI_cv}, we report in the middle and right panels of Figure \ref{fig:2} the averaged coverage and length for the $95\%$-debiased Ridge CI's with cross-validation, namely $\{\mathrm{CI}_j(\hat{\eta}^{\#})\}$ for $\# \in \{\GCV,\CV\}$, and with oracle tuning $\eta_\ast=\SNR_{\mu_0}^{-1}$. For the middle panel, we observe that adaptive tuning via $\hat{\eta}^{\GCV}$ and $\hat{\eta}^{\CV}$ both provide approximate nominal coverage for a moderate sample size $m$ and signal dimension $n$. For the right panel, as the lengths of $\{\mathrm{CI}_j(\hat{\eta}^{\#})\}$ are solely determined by $\hat{\gamma}_{\hat{\eta}^{\#}}$, we report here only the length of $\mathrm{CI}_1(\hat{\eta}^{\#})$. We observe that the CI length for both $\mathrm{CI}_1(\hat{\eta}^{\GCV}),\mathrm{CI}_1(\hat{\eta}^{\CV})$ are also in excellent agreement to the oracle length across different aspect ratios.

\subsection{Validation of (\ref{eqn:opt_reg_l2})}
\begin{figure}[t]
	\begin{minipage}[t]{0.3\textwidth}
		\includegraphics[width=\textwidth]{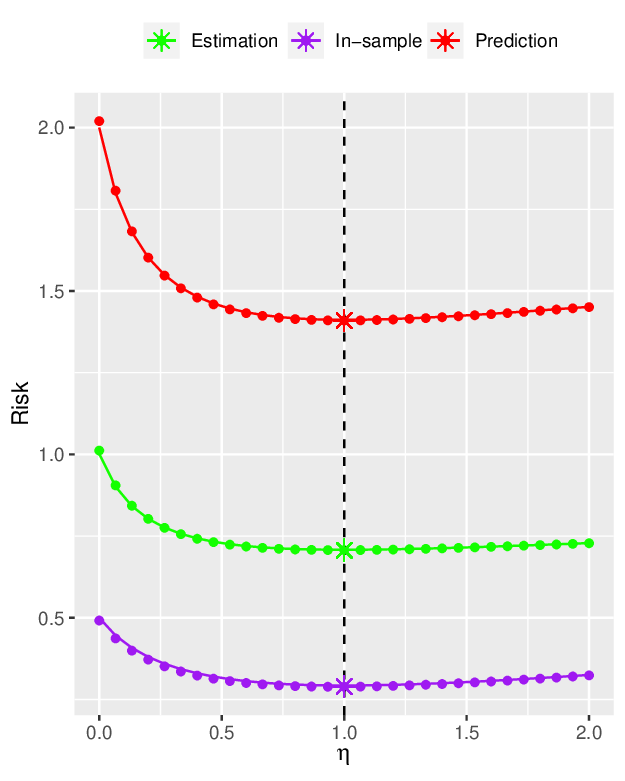}
	\end{minipage}
	\begin{minipage}[t]{0.3\textwidth}
		\includegraphics[width=\textwidth]{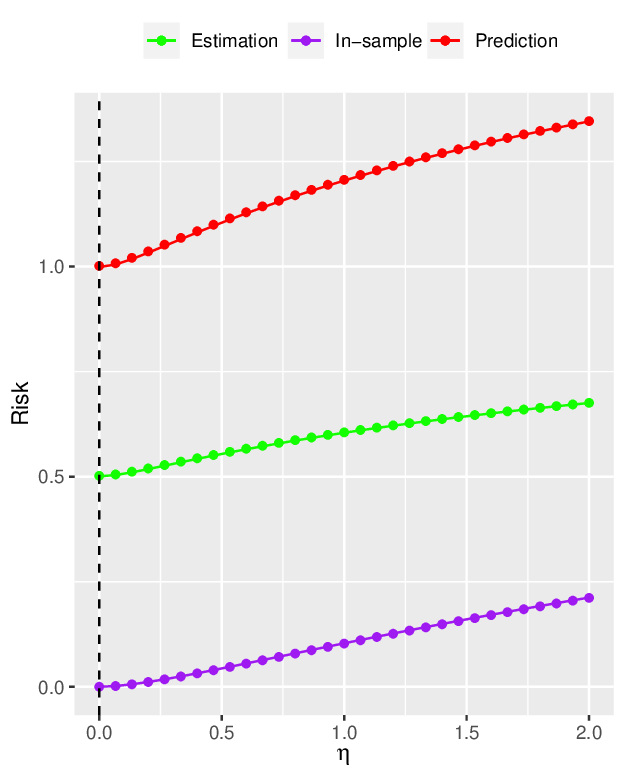}
	\end{minipage}
	\begin{minipage}[t]{0.3\textwidth}
		\includegraphics[width=\textwidth]{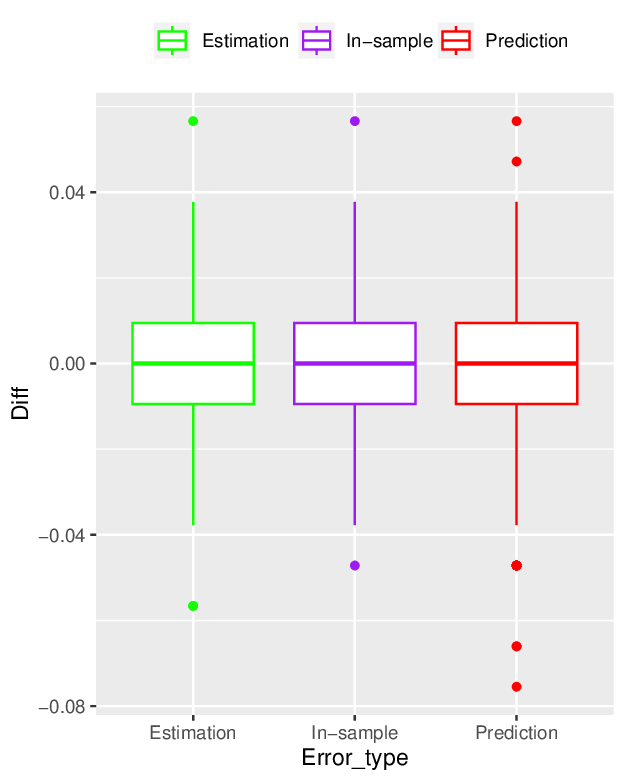}
	\end{minipage}
	\caption{Validation of (\ref{eqn:opt_reg_l2}) (see also Theorem~\ref{thm:small_intep} for a rigorous formulation). The theoretical risks $\bar{R}^{\#}_{(\Sigma,\mu_0)}(\eta)$ are computed by solving (\ref{eqn:fpe}), and the empirical risks $R^{\#}_{(\Sigma,\mu_0)}(\eta)$ are computed via Monte Carlo simulation over 200 repetitions. \emph{Left panel}: noisy case with minimal empirical risks attained at $\eta_\ast=\SNR_{\mu_0}^{-1}=1$ (marked with $\ast$). \emph{Middle panel}: noiseless case with all risks minimized at the interpolation regime $\eta_\ast =\SNR_{\mu_0}^{-1}=0$. \emph{Right panel}: differences between the global minimizer of the empirical risk curves and the oracle $\eta_\ast$ are concentrated around $0$ over 500 different $\mu_0$’s.}
	\label{fig:1}
\end{figure}
We next verify the optimal oracle regularization rule in (\ref{eqn:opt_reg_l2}) (see Theorem \ref{thm:small_intep} for a rigorous formulation) by simulation. 
We use $m = 100$, $n = 200$, and a unit vector $\mu_0$ chosen randomly (and then fixed) from the sphere $\partial B_n(1)$. For this setting, we plot both the theoretical risk curve $\eta \mapsto \bar{R}^{\#}_{(\Sigma,\mu_0)}(\eta)$ and the empirical risk curve $\eta \mapsto R^{\#}_{(\Sigma,\mu_0)}(\eta)$ for all $\# \in \{\pred,\est,\ins\}$. The left panel of Figure~\ref{fig:1} reports the noisy case with noise level $\sigma_\xi^2=1$ and $\SNR_{\mu_0}^{-1}=1$, while the middle panel reports the noiseless case $\sigma_\xi^2=0$ with $\SNR_{\mu_0}^{-1}=0$. These plots show excellent agreement with (\ref{eqn:opt_reg_l2}) in that the global minimum of both the theoretical and empirical risk curves is attained roughly at $\eta_\ast=\SNR_{\mu_0}^{-1}$.

In order to demonstrate the validity of the above phenomenon for `most' $\mu_0$'s, as claimed in Theorem~\ref{thm:small_intep}, we uniformly generate $500$ different $\mu_0$’s over $\partial B_n(1)$. For each $\mu_0$, we discretize $\eta \in [0,1.5]$ into $160$ grid points and select the empirical optimal value $\eta^{\#}$ by minimizing the empirical prediction, estimation, and in-sample risks. The difference between the empirical optimal $\eta^{\#}$ and the theoretically optimal tuning $\eta_\ast$ is depicted in the right panel of Figure~\ref{fig:1} through a boxplot of $\eta^{\#}- \eta_\ast$. It is easily seen that, for all three risks, these differences are highly concentrated around $0$.

We finally explain how the theoretical risk curves $\eta \mapsto \bar{R}^{\#}_{(\Sigma,\mu_0)}(\eta)$ are computed in practice. For each fixed $\eta$, we solve the fixed-point system (\ref{eqn:fpe}) for $(\gamma_{\eta,\ast},\tau_{\eta,\ast})$ as follows. The second equation in (\ref{eqn:fpe}) involves only the scalar variable $\tau$; under our assumptions, its right-hand side is monotone in $\tau$, so the solution $\tau_{\eta,\ast}$ is unique. We therefore solve this one-dimensional fixed-point equation for $\tau_{\eta,\ast}$ by a standard bisection method on a prescribed interval, up to a given numerical tolerance. Once $\tau_{\eta,\ast}$ is obtained, we plug it into the first equation in (\ref{eqn:fpe}) to compute $\gamma_{\eta,\ast}$. The resulting pair $(\gamma_{\eta,\ast},\tau_{\eta,\ast})$ is then substituted into the closed-form expressions for $\bar{R}^{\#}_{(\Sigma,\mu_0)}(\eta)$.

\section*{Acknowledgments}
The research of Q. Han is partially supported by NSF grant DMS-2143468. Both authors would like to thank the referees for their helpful comments and suggestions that significantly improved the quality of the paper.

\bibliographystyle{alpha}
\bibliography{mybib}

\end{document}